\documentclass[12pt, reqno]{amsart}

\usepackage{amssymb}
\usepackage{amsthm}
\usepackage{amscd}

\usepackage{mathtools}
\mathtoolsset{showonlyrefs}
\numberwithin{equation}{section}

\usepackage{mathrsfs}
\usepackage{hyperref}

\usepackage{amsmath}
\usepackage{epic,eepic}
\usepackage{graphicx}

\setlength{\textwidth}{480pt}
\setlength{\textheight}{650pt}
\setlength{\hoffset}{-0.90in}
\setlength{\voffset}{-0.40in}

\DeclareMathAlphabet{\mathpzc}{OT1}{pzc}{m}{it}

\usepackage{latexsym}

\usepackage{pifont}

\theoremstyle{plain}
\newtheorem{lemma}{Lemma}[section]
\newtheorem{prop}[lemma]{Proposition}
\newtheorem{thm}[lemma]{Theorem}
\newtheorem{cor}[lemma]{Corollary}
\newtheorem{aplemma}{Lemma~A.\hspace{-1.5mm}}
\newtheorem{approp}{Proposition~A.\hspace{-1.5mm}}
\newtheorem{apthm}{Theorem~A.\hspace{-1.5mm}}
\newtheorem{apcor}{Corollary~A.\hspace{-1.5mm}}
\newtheorem{intthm}{Theorem}
\newtheorem{prdef}[lemma]{Proposition-Definition}
\newtheorem{tdef}[lemma]{Theorem-Definition}

\usepackage[all]{xy}

\newtheorem{conj}[lemma]{Conjecture}

\theoremstyle{definition}

\newtheorem{rema}[lemma]{Remark}

\newtheorem{remb}{Remark}

\newtheorem{defi}[lemma]{Definition}
\newtheorem{exa}[lemma]{Example}
\newtheorem{aprem}{Remark~A.\hspace{-1.5mm}}
\newtheorem{apdefi}{Definition~A.\hspace{-1.5mm}}
\newcommand{\bde}{\begin{defi}}
\newcommand{\ede}{\end{defi}\vspace{1mm}}
\newcommand{\ble}{\begin{lemma}}
\newcommand{\ele}{\end{lemma}}
\newcommand{\bpr}{\begin{prop}}
\newcommand{\epr}{\end{prop}}
\newcommand{\bt}{\begin{thm}}
\newcommand{\et}{\end{thm}}
\newcommand{\bco}{\begin{cor}}
\newcommand{\eco}{\end{cor}}
\newcommand{\bre}{\begin{rema}}
\newcommand{\ere}{\end{rema}}
\newcommand{\brea}{\begin{rema}}
\newcommand{\erea}{\end{rema}\vspace{1mm}}
\newcommand{\breb}{\begin{remb}}
\newcommand{\ereb}{\end{remb}\vspace{1mm}}
\newcommand{\bex}{\begin{exa}}
\newcommand{\eex}{\end{exa}}
\newcommand{\bpf}{\begin{proof}}
\newcommand{\epf}{\end{proof}\vspace{1mm}}

\newcommand{\bade}{\begin{apdefi}}
\newcommand{\eade}{\end{apdefi}}
\newcommand{\bale}{\begin{aplemma}}
\newcommand{\eale}{\end{aplemma}}
\newcommand{\bapr}{\begin{approp}}
\newcommand{\eapr}{\end{approp}}
\newcommand{\bat}{\begin{apthm}}
\newcommand{\eat}{\end{apthm}}
\newcommand{\baco}{\begin{apcor}}
\newcommand{\eaco}{\end{apcor}}
\newcommand{\bare}{\begin{aprem}}
\newcommand{\eare}{\end{aprem}}


\newcommand{\be}{\begin{enumerate}}
\newcommand{\ee}{\end{enumerate}}
\newcommand{\bcd}{\[\begin{CD}}
\newcommand{\ecd}{\end{CD}\]}
\newcommand{\bit}{\begin{itemize}}
\newcommand{\eit}{\end{itemize}}
\newcommand{\bq}{\begin{quote}}
\newcommand{\eq}{\end{quote}}
\newcommand{\ba}{\begin{array}}
\newcommand{\ea}{\end{array}}
\newcommand{\mcA}{\mathcal{A}}
\newcommand{\mcB}{\mathcal{B}}
\newcommand{\mcC}{\mathcal{C}}
\newcommand{\mcD}{\mathcal{D}}
\newcommand{\mcE}{\mathcal{E}}
\newcommand{\mcF}{\mathcal{F}}
\newcommand{\mcG}{\mathcal{G}}
\newcommand{\mcH}{\mathcal{H}}
\newcommand{\mcI}{\mathcal{I}}
\newcommand{\mcJ}{\mathcal{J}}
\newcommand{\mcK}{\mathcal{K}}
\newcommand{\mcL}{\mathcal{L}}
\newcommand{\mcM}{\mathcal{M}}
\newcommand{\mcN}{\mathcal{N}}
\newcommand{\mcO}{\mathcal{O}}
\newcommand{\mcP}{\mathcal{P}}
\newcommand{\mcQ}{\mathcal{Q}}
\newcommand{\mcR}{\mathcal{R}}
\newcommand{\mcS}{\mathcal{S}}
\newcommand{\mcT}{\mathcal{T}}
\newcommand{\mcU}{\mathcal{U}}
\newcommand{\mcV}{\mathcal{V}}
\newcommand{\mcW}{\mathcal{W}}

\newcommand{\mcZ}{\mathcal{Z}}

\newcommand{\mbC}{\mathbb{C}}

\newcommand{\mbF}{\mathbb{F}}
\newcommand{\mbG}{\mathbb{G}}
\newcommand{\mbH}{\mathbb{H}}

\newcommand{\mbM}{\mathbb{M}}
\newcommand{\mbN}{\mathbb{N}}

\newcommand{\mbP}{\mathbb{P}}
\newcommand{\mbQ}{\mathbb{Q}}
\newcommand{\mbR}{\mathbb{R}}
\newcommand{\mbS}{\mathbb{S}}

\newcommand{\mbV}{\mathbb{V}}

\newcommand{\mbY}{\mathbb{Y}}
\newcommand{\mbZ}{\mathbb{Z}}

\newcommand{\mfS}{\mathfrak{S}}

\newcommand{\mfa}{\mathfrak{a}}
\newcommand{\mfb}{\mathfrak{b}}

\newcommand{\mfe}{\mathfrak{e}}

\newcommand{\mfg}{\mathfrak{g}}

\newcommand{\mfl}{\mathfrak{l}}
\newcommand{\mfm}{\mathfrak{m}}

\newcommand{\mfs}{\mathfrak{s}}
\newcommand{\mft}{\mathfrak{t}}


\newcommand{\mpa}{\mathpzc{a}}

\newcommand{\mpD}{\mathpzc{D}}


\newcommand{\msD}{\mathscr{D}}
\newcommand{\msE}{\mathscr{E}}
\newcommand{\msF}{\mathscr{F}}
\newcommand{\msG}{\mathscr{G}}

\newcommand{\msL}{\mathscr{L}}

\newcommand{\msO}{\mathscr{O}}
\newcommand{\msP}{\mathscr{P}}

\newcommand{\msU}{\mathscr{U}}
\newcommand{\msV}{\mathscr{V}}

\newcommand{\msX}{\mathscr{X}}
\newcommand{\msY}{\mathscr{Y}}



\newcommand{\migi}{\rightarrow}

\newcommand{\isom}{\stackrel{\sim}{\migi}}

\newcommand{\migiincl}{\hookrightarrow}

\newcommand{\migisurj}{\twoheadrightarrow}


\newcommand{\N}{N}
\newcommand{\M}{m}





\newcommand{\mr}{\mathrm}
\newcommand{\hidden}[1]{\,}

\newcommand{\SSP}{\vspace{0mm}}
\newcommand{\LSP}{\vspace{0mm}}

\newcommand{\Fus}{\rotatebox[origin=c]{180}{$\mbY$}}

\newcommand{\Dual}{\rotatebox[origin=c]{180}{$C$}\hspace{-0.5mm}}

\newcommand{\GR}{{^{\rotatebox[origin=c]{-20}{$\vartriangle$}}}\hspace{-2.6mm}G}

\newcommand{\DDual}{\rotatebox[origin=c]{180}{$D$}\hspace{-0.5mm}}

\newcommand{\ZZZ}{{^{\mr{Zzz...}}}}
\newcommand{\sss}{\spadesuit}
\newcommand{\ST}{\blacklozenge}

\newcommand{\Diag}{\rotatebox[origin=c]{45}{$\Leftarrow$}}
\newcommand{\Diagg}{\rotatebox[origin=c]{45}{$\Rightarrow$}}
\newcommand{\DMO}{\nabla}
\newcommand{\STR}{\phi}
\newcommand{\CH}{\ell}
\newcommand{\CL}{L}
\newcommand{\EX}{d}
\newcommand{\IN}{a}
\newcommand{\BB}{\varTheta}

\newcommand{\bb}{\vartheta}
\newcommand{\CC}{{^{\rotatebox{180}{{\tiny  $\clubsuit$}}}}}
\newcommand{\ep}{a}

\pagestyle{myheadings}

\setcounter{tocdepth}{1}
\usepackage{bm}
\usepackage {cancel}

\begin{document}

\title[Arithmetic liftings and $2$d TQFT for dormant opers  of higher level]{
Arithmetic liftings and $2$d TQFT \\ for dormant opers  of higher level}
\author{Yasuhiro Wakabayashi}
\date{}
\markboth{Yasuhiro Wakabayashi}{}
\maketitle
\footnotetext{Y. Wakabayashi: 
Graduate School of Information Science and Technology, Osaka University, Suita, Osaka 565-0871, Japan;}
\footnotetext{e-mail: {\tt wakabayashi@ist.osaka-u.ac.jp};}
\footnotetext{2020 {\it Mathematical Subject Classification}: Primary 14H60, Secondary 14F10;}
\footnotetext{Key words: algebraic curve, positive characteristic, differential equations,  oper,  flat bundle, moduli space.}
\begin{abstract}

This manuscript represents an advance in the enumerative geometry of opers that takes the subject beyond our previous work. Motivated by a counting problem of linear differential equations in positive characteristic, we investigate the moduli space of opers from arithmetic and combinatorial points of view. We construct a compactified moduli space classifying dormant $\mathrm{PGL}_n^{(N)}$-opers (i.e., dormant $\mathrm{PGL}_n$-opers of level $N$) on pointed stable curves in characteristic $p>0$. One of the key results is the generic \'{e}taleness of that space for $n=2$, which is proved by obtaining a detailed understanding of relevant deformation spaces. This fact induces a certain arithmetic lifting of each dormant $\mathrm{PGL}_2^{(N)}$-oper on a general curve to characteristic $p^N$; this lifting is called the canonical diagonal lifting. On the other hand, the generic \'{e}taleness also implies that the degree function for the moduli spaces in the rank $2$ case  satisfies factorization properties determined by various gluing  morphisms of the underlying curves. That is to say, the degree function forms a $2$d TQFT (= a $2$-dimensional topological quantum field theory); it leads us to describe dormant $\mathrm{PGL}_2^{(N)}$-opers in terms  of edge numberings on trivalent graphs, as well as lattice points inside generalized rational polytopes. These results yield an effective way of computing the numbers of such objects and $2$nd order differential equations in characteristic $p^N$ with a full set of solutions.

\end{abstract}
\setcounter{tocdepth}{2}
\tableofcontents 

\section{Introduction}

\LSP
\subsection{Linear differential equations with a full set of solutions} \label{WW44}

 Let us consider the  linear differential equation  $Dy =0$ 
 on a smooth complex algebraic curve $X$ 
 associated to an operator  $D$ 
 expressed locally  as 
\begin{align} \label{ee650}
D := \frac{d^n}{d x^n}  +q_1 \frac{d^{n-1}}{d x^{n-1}} + \cdots + q_{n-1} \frac{d}{dx} + q_n
\end{align}
($n >1$).
Here,   $x$ denotes a local coordinate in $X$ and  $q_1, \cdots, q_n$ are variable coefficients.
To each such  differential equation,
 one can  associate
a flat connection on a vector bundle whose matrix representation is locally given by 
\begin{align} \label{ee300}
\nabla = \frac{d}{dx} -   
\begin{pmatrix} 
-q_1 & -q_2  & -q_3 & \cdots  & -q_{n-1} & -q_n \\
 1 & 0 & 0 & \cdots    & 0 & 0 \\
  0 & 1 & 0&  \cdots   & 0 & 0 \\
   0 & 0 & 1&  \cdots  & 0 & 0 \\ 
  \vdots    & \vdots  & \vdots & \ddots  &  \vdots & \vdots \\ 
       0 & 0  & 0 & \cdots    & 1 & 0
     \end{pmatrix}.
\end{align}

A vector bundle equipped with such a flat connection  is known as  a {\it $\mr{GL}_n$-oper}.
 Conversely, any flat bundle (i.e., a vector bundle equipped with a flat connection) defining a $\mr{GL}_n$-oper becomes a flat connection of that  form after possibly carrying out a suitable gauge transformation.
See ~\cite{BD1} or ~\cite{BD2} for the precise  definition of a $\mr{GL}_n$-oper, or more generally a $G$-oper for an algebraic group $G$ of a certain sort.

Note that the assignment $y \mapsto {^t(} \frac{d^{n-1}y}{dx^{n-1}}, \cdots, \frac{dy}{dx},  y)$ gives  a bijective  correspondence between  
 solutions of the equation $D y=0$ with $D$ as above 
and  horizontal  sections of
the associated 
$\mr{GL}_n$-oper.
In particular,  such a differential equation has  a full set of algebraic solutions  (i.e., has $n$ linearly independent algebraic solutions)
precisely when
the corresponding  $\mr{GL}_n$-oper
has  finite monodromy. 

The problem of classifying and counting  linear ODE's
with a full set of algebraic  solutions
  has long been one of the fundamental topics in mathematics.
The study of them
  was 
tackled and developed  from  the 
19th century onwards 
 by many mathematicians:
 H. A. Schwarz (for the hypergeometric equations), L. I. Fuchs, P. Gordan, and C. F.  Klein (for the second order equations), C. Jordan (for the $n$-th order equations) et al.

Regarding  the relationship 
with the case of characteristic $p$ (where $p$ is a prime number),
there is a well-known conjecture, i.e., the so-called Grothendieck-Katz $p$-curvature conjecture; it  
predicts  the existence of 
a full set of algebraic solutions  of a given linear differential equation
 in terms of  its reductions modulo $p$ for various primes $p$.
See, e.g.,   ~\cite{And} or  ~\cite{Kat4} for  detailed accounts of this topic.

\LSP
\subsection{Counting problem of dormant  opers} \label{WW60}

We want to focus on  the situation  where the underlying curve has  prime-power characteristic $p^{\N}$ ($\N \in \mbZ_{>0}$).
One central theme of our study stated in the most primitive form  is 
to answer the following natural question:
\begin{quote}
{\it How many homogeneous  linear differential equations in characteristic $p^{\N}$ 
 (associated to  differential operators $D$ as in  \eqref{ee650})  have a full set of solutions?}
\end{quote}
We can find some previous studies related to this question  for $\N =1$  under the correspondence with $\mr{GL}_n$-opers
  (cf. ~\cite{Ihara1}, ~\cite{JP},   ~\cite{Jo14}, \  ~\cite{LP}, ~\cite{Mzk2}, ~\cite{O4}, ~\cite{Wak}, ~\cite{Wak3}, ~\cite{Wak2}, and ~\cite{Wak8}). 
 Here, note that one may define the notion of a $G$-oper (for an arbitrary $G$)  on a curve in positive characteristic 
 because of the  algebraic nature of  its formulation.
For example, $G$-opers in characteristic $p$ have been investigated in the context of $p$-adic Teichm\"{u}ller theory.

One of the common key ingredients in these developments is the study of {\it $p$-curvature}.
The $p$-curvature of a flat connection in characteristic $p$ is an invariant that measures the obstruction to the compatibility of $p$-power structures appearing in certain associated spaces of infinitesimal symmetries.
A $G$-oper  is called {\it dormant} if it has vanishing $p$-curvature.
It follows from a classical result by Cartier (cf. ~\cite[Theorem (5.1)]{Kal})   that 
a $\mr{GL}_n$-oper,  or a $\mr{PGL}_n$-oper,   in characteristic $p$ is dormant if and only if it arises from a differential  equation $D y=0$ having  a full set of  solutions.
Thus, the above question  for $\N =1$ 
can be 
reduced to asking  
the number of 
all possible  dormant $\mr{GL}_n$-opers or their projectivizations, i.e., dormant $\mr{PGL}_n$-opers.

In what follows, we briefly review 
previous results 
concerning the counting problem of dormant opers.
  Let $X$ be a connected proper  smooth curve of genus $g >1$ over an algebraically closed field of  characteristic $p >2$.
In the case of $g=2$,
S. Mochizuki (cf. ~\cite[Chap.\,V, Corollary 3.7]{Mzk2}), H. Lange-C. Pauly (cf. ~\cite[Theorem 2]{LP}), and B. Osserman (cf. ~\cite[Theorem 1.2]{O4})
computed, by applying different methods,   
the total number of dormant $\mr{PGL}_2$-opers on a general $X$, as follows:
 \begin{align} \label{fffrr}
 \sharp \left\{ \begin{matrix} \text{dormant $\mr{PGL}_2$-opers on $X$} 
 \end{matrix}\right\}
 =
 \frac{p^3-p}{24}.
 \end{align}
For example,  this computation was made by using a correspondence with the base locus of the Verschiebung rational map on the moduli space of rank $2$ semistable  bundles (cf. \S\,\ref{SS101}). 

Moreover, by extending the relevant formulations to the case where $X$ admits marked points or  nodal singularities,
   S. Mochizuki also gave
  an explicit description of dormant $\mr{PGL}_2$-opers on each totally degenerate curve
   in terms of  {\it radius}   (cf. ~\cite[Introduction, Theorem 1.3]{Mzk2}).
The notion of radius  is an invariant determining a sort of boundary condition at a marked point to glue together dormant opers  in accordance with  the attachment of underlying curves at this point.
Mochizuki's description   is essentially equivalent to  a previous result obtained by Y. Ihara (cf. ~\cite[\S\,1.6]{Ihara1}), who investigated the situation when a given Gauss hypergeometric differential equation in characteristic $p$ had a full set of solutions.
  As a result of this  explicit description, we  obtained 
   a combinatorial procedure for explicitly computing   the number  of dormant $\mr{PGL}_2$-opers.

This description also   leads to a work by
  F. Liu and B. Osserman (cf. ~\cite[Theorem 2.1]{LO}), who  have shown  that
the value in question
may be expressed as  a degree $3g-3$ polynomial with respect to ``$p$''.
They did so by applying
Ehrhart's theory, which concerns computing   the cardinality of the set of lattice points inside a polytope.

For general rank cases,
K. Joshi and C. Pauly showed that there are only finitely many  dormant $\mr{PGL}_n$-opers  on a fixed curve (cf. ~\cite[Corollary 6.1.6]{JP}), and  later 
  Joshi conjectured an explicit description 
of their total  number (cf. ~\cite[Conjecture 8.1]{Jo14}).
As a consequence of developing  the moduli theory of dormant $G$-opers on pointed stable curves,
we solved affirmatively this conjecture for a general curve (cf.  ~\cite[Theorem H]{Wak8}), which  is described as 
\begin{align} \label{eeQQ782}
 \sharp \left\{ \begin{matrix} \text{dormant $\mr{PGL}_n$-opers on} \\ \text{a general stable curve} \\ \text{of genus $g$ in characteristic $p$}\end{matrix}\right\}
 = \frac{ p^{(n-1)(g-1)-1}}{n!} \cdot  \hspace{-5mm}
 \sum_{\genfrac{.}{.}{0pt}{}{(\zeta_1, \cdots, \zeta_n) \in \mbC^{\times n} }{ \zeta_i^p=1, \ \zeta_i \neq \zeta_j (i\neq j)}}
 \frac{(\prod_{i=1}^n\zeta_i)^{(n-1)(g-1)}}{\prod_{i\neq j}(\zeta_i -\zeta_j)^{g-1}} \hspace{3mm}
 \end{align}
 for $p > n \cdot \mr{max}\{ g-1, 2 \}$.
 Based on the idea of Joshi
 et al. (cf. ~\cite{JP}, ~\cite{Jo14}) and a work  by Holla (cf. ~\cite{Hol}),
this formula was proved  
   by establishing
 a relationship  with the Gromov-Witten theory of Grassmann varieties  
 and then applying an explicit computation of their Gromov-Witten invariants, called  the Vafa-Intriligator formula.

When $n=2$,  we can extend this result to {\it pointed stable} curves
by establishing the relationship   with   the $\mfs \mfl_2 (\mbC)$-WZW (= Wess-Zumino-Witten) conformal field theory (cf. ~\cite[Theorem 7.41]{Wak8}).
As a result, the Verlinde formula for that CFT yields  the following formula: 
 \begin{align} \label{QH882}
 \sharp \left\{ \begin{matrix} \text{dormant $\mr{PGL}_2$-opers on} \\ \text{a general $r$-pointed  stable curve} \\ \text{of genus $g$ in characteristic $p$}\end{matrix}\right\}
 =
  \frac{p^{g-1}}{2^{2g-1+r}} \cdot \sum_{j =1}^{p-1} 
\frac{\left(1-(-1)^j \cdot \cos \left(\frac{j\pi}{p} \right)\right)^r}{\sin^{2(g-1+r)} \left(\frac{j \pi}{p} \right)},
 \end{align}
 which is consistent with \eqref{eeQQ782} in the case of  $r = 0$ and $n=2$.

Including such discussions,  we refer to the endeavor of describing the behavior of the moduli space under deformations and degenerations of the underlying algebraic curve, based on various methods and perspectives originating from $p$-adic Teichm\"{u}ller theory, while establishing  correspondences with other enumerative geometries to explicitly address the aforementioned problem, as the ``{\bf enumerative geometry of dormant opers}." Our ultimate goal is to further develop this theory.

\LSP
\subsection{Generalization to characteristic $p^\N$} \label{WW61}

The purpose of this manuscript is  
 to develop   the enumerative geometry of dormant $\mr{PGL}_n$-opers in prime-power characteristic so that \eqref{eeQQ782} and \eqref{QH882} are generalized.
From now on, let $X$ be  a geometrically connected, proper, and smooth curve in characteristic $p^\N$, where $\N$ is a positive integer.
Just as in  the case of characteristic $p$, 
an oper, or more generally a flat bundle, on $X$
   will be called {\it dormant} if  it is spanned by its horizontal sections, i.e., isomorphic locally  to the trivial flat bundle (cf. Definitions \ref{D019},  \ref{DD45YY}, and Proposition \ref{P016dd}, (ii)).
 In particular,
\begin{quote}
{\it Our central theme posed earlier can be rephrased  essentially as the issue of counting  dormant opers on a (general) curve in characteristic $p^\N$.}
\end{quote}
(Note that we also discuss, at the same time,  the case where the underlying curve has nodal singularities or marked points; in such a generalized situation, the dormancy condition has to be formulated in a different and more complicated manner.)

Unfortunately, the lack of a reasonable  invariant
exactly like the $p$-curvature for $\N =1$
makes it difficult to handle with dormant opers in characteristic $p^\N$.
To overcome this difficulty, we  relate, partly on the basis  of  the argument of ~\cite[\S\,2.1, Chap.\,II]{Mzk2},  dormant opers on $X$ to certain objects defined on the mod $p$ reduction $X_0$ of $X$;
we shall refer to the operation resulting from  this argument as   {\bf diagonal reduction/lifting}.

One remarkable observation is that each dormant $\mr{GL}_n$-oper $\msE^\spadesuit$ on $X$ induces,
via diagonal reduction, a dormant $\mr{GL}_n$-oper ${^{\Diag}}\!\!\msE^\spadesuit$ on $X_0$ equipped with a $\mcD^{(\N-1)}$-module structure extending its flat structure.
Here, $\mcD^{(\N -1)}$ denotes the sheaf of differential operators of level $\N -1$ introduced by P. Berthelot in ~\cite{PBer1} and ~\cite{PBer2}.
A dormant $\mr{GL}_n$-oper 
on $X_0$ equipped  with such an additional structure is called a {\bf dormant $\mr{GL}_n$-oper of level $\N$},
 or  a {\bf  dormant  $\mr{GL}_n^{(\N)}$-oper}, 
  for short.
Similarly, we obtain  the notion of a dormant $\mr{PGL}_n^{(\N)}$-oper such that $\mr{PGL}_n^{(1)}$-opers are equivalent to  $\mr{PGL}_n$-opers in the classical sense.
See Definitions \ref{D39}, \ref{DD45YY}, \ref{D01234}, and \ref{D50} for their precise definitions.

We expect that, for a general curve $X$,  the assignment $\msE^\spadesuit \mapsto {^{\Diag}}\!\!\msE^\spadesuit$ given by taking the diagonal reductions is {\it invertible}, i.e., 
 each dormant $\mr{PGL}_n^{(\N)}$-oper $\msE_0^\spadesuit$ on $X_0$ may be  lifted {\it uniquely} to a dormant $\mr{PGL}_n$-oper ${^{\Diagg}}\!\!\msE_0^\spadesuit$ on $X$.
One of  the consequences in this manuscript shows    that this  is in fact true for $n=2$.
  The results of the lifting $\msE_0^\spadesuit \mapsto {^{\Diagg}}\!\!\msE_0^\spadesuit$   will be called  the {\bf canonical diagonal liftings}.
 The bijective correspondence given by $\msE^\spadesuit \mapsto {^{\Diag}}\!\!\msE^\spadesuit$ and $\msE_0^\spadesuit \mapsto {^{\Diagg}}\!\!\msE_0^\spadesuit$ enables  us to obtain   a detailed understanding 
of    dormant $\mr{PGL}_2$-opers in characteristic $p^\N$
by applying, via diagonal reduction,  various methods and perspectives inherent in characteristic-$p$-geometry  established in ~\cite{Wak8}.

In particular, because of  a  factorization property on the moduli space classifying  dormant $\mr{PGL}_2^{(\N)}$-opers  in accordance with  clutching morphisms,
we 
 reduce the counting problem under consideration to the cases of small $g$  and $r$.
Some explicit computations based on this argument will be made in, e.g.,   Corollary \ref{C19} and Example \ref{Example33}.

\hspace{-5mm} 
 \includegraphics[width=18cm,bb=0 0 1012 230,clip]{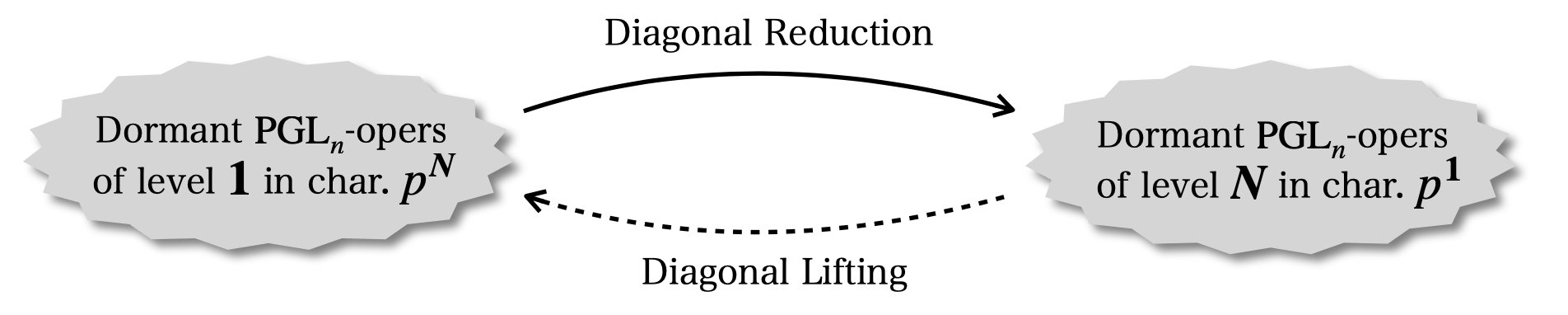}

\LSP
\subsection{Part I: $\mcD^{(\M)}$-modules and dormant flat bundles} \label{S02}

In the rest of this Introduction, we shall describe the contents and  some (relatively important) results of this manuscript.
Note that we substantially apply various results and discussions of ~\cite{Wak8}, in which the author developed the theory of (dormant) opers on pointed stable curves from the viewpoint of logarithmic geometry.
Our study could be placed in a higher-level  generalization of  that theory.

The discussions in this manuscript  may be divided into three parts.
The first part  is devoted to a general study of  flat bundles  formulated in terms of logarithmic geometry.
In Chap.\,\ref{S145}, 
we  discuss  flat bundles on a log curve.
To do this, we use  the logarithmic  generalization of the sheaf ``$\mcD^{(\M)}$" ($\M \in \mbZ_{\geq 0}$) introduced  by C. Montagnon   (cf. \cite{Mon}).   
This formulation is essential in investigating   how the related moduli spaces behave  when
 the underlying curve degenerates.
 Also, the Cartier operator  associated to a $p^{\M +1}$-flat $\mcD^{(\M)}$-module (i.e., a $\mcD^{(\M)}$-module with vanishing $p^{\M +1}$-curvature)  is defined by composing the usual Cartier operators of the flat bundles constituting that $\mcD^{(\M)}$-module at the respective levels   (cf. Definition \ref{DDDf}).

In Chap.\,\ref{S200}, 
we generalize the classical notion of a dormant flat bundle (i.e., a flat bundle with vanishing $p$-curvature) 
   to characteristic $p^{\M +1}$ (cf. Definition \ref{D019}).
 At the same time,  we define the diagonal reduction of a dormant flat bundle, as well as a diagonal lifting of a $p^{\M +1}$-flat $\mcD^{(\M)}$-module.
Roughly speaking, the diagonal reduction ${^{\Diag}}\!\!\msF$ of a given dormant flat bundle $\msF$ is obtained in
such a way that, for each  $\CH \leq \M$,  the reduction of $\msF$  modulo $p^{\CH +1}$ corresponds, via the Cartier operator,   to the reduction of   the  level of ${^{\Diag}}\!\!\msF$  to $\CH$.
If the log structure of the underlying curve is trivial, then the dormancy condition can be characterized  by  the  vanishing of the   $p$-curvature  of  the flat  bundles appearing in the inductive procedure for constructing  the diagonal reduction (cf. Proposition \ref{P016dd}, (i)).
In this sense, a dormant flat bundle in characteristic $p^{\M +1}$ may be seen as something like a successive {\it $p$-flat} deformation of 
a $p^{\M +1}$-flat $\mcD^{(\M)}$-module.

In Chap.\,\ref{S0111},
we discuss the local  description of a dormant flat bundle around a marked/nodal point  of the underlying curve.
For each scheme $S$ flat over $\mbZ/p^{\M +1}\mbZ$,
we shall set
\begin{align}
U_\oslash := \mcS pec (\mcO_S [\![t]\!])
\end{align}
(cf. \eqref{YY124}), where $t$ denotes a formal parameter.
By equipping $U_\oslash$ with the log structure determined by the divisor $t = 0$,
we obtain a log scheme  $U_\oslash^\mr{log}$,
regarded as a formal neighborhood of each marked point in   a pointed curve. 
One of the results in Chap.\,\ref{S0111} generalizes  ~\cite[Proposition 1.1.12]{Kin1},  ~\cite[Corollary 2.10]{O2}, and it tells us by what data a dormant flat bundle can be exactly characterized.
The statement is described  as follows:

\begin{intthm}[cf. Proposition-Definition \ref{P724}]  \label{TT3fA}
Suppose that the reduction $S_0$ of $S$ modulo $p$  is isomorphic to $\mr{Spec}(R)$ for a local ring $(R, \mfm)$ over $\mbF_p$ whose  residue field $R/\mfm$ is algebraically closed. 
Let $\msF = (\mcF, \DMO)$ be a dormant flat bundle on $U_\oslash^\mr{log}/S$ of rank $n >0$.
Then, there exists
 an isomorphism of  flat bundles
\begin{align}
\bigoplus_{i=1}^n \msO_{\oslash, \EX_i} \isom \msF
\end{align}
(cf. \eqref{WW102} for the definition of the dormant flat line bundle $\msO_{\oslash, (-)}$)
for some  $\EX_1, \cdots, \EX_n \in \mbZ/p^\N \mbZ$.
In particular, the monodromy operator $\mu (\DMO)$ of $\DMO$ in the sense of Definition \ref{D98} can be transposed, via conjugation, into the diagonal matrix with diagonal entries $-\EX_1, \cdots, - \EX_n$, and
 the resulting multiset $[\EX_1, \cdots, \EX_n]$ depends only on the isomorphism class of $\msF$.
\end{intthm}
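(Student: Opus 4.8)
The proof I propose reduces the whole statement to a Frobenius descent computation in residue characteristic $p$. Because the assertion is local on $S$ and $p$ is nilpotent while $S_{0}=\mr{Spec}(R)$ is affine, I may take $S=\mr{Spec}(A)$ for a local ring $A$, flat over $\mbZ/p^{\N}\mbZ$, with $A/pA=R$; then $A[\![t]\!]$ is local, $\mcF$ is free, and after fixing a trivialization the connection becomes a matrix $\nabla_{\theta}=\theta\cdot\mr{id}+M(t)$ with $M(t)\in\mr{Mat}_{n}(A[\![t]\!])$, where $\theta$ is the logarithmic derivation $t\,d/dt$. Here $\mu(\DMO)$ is, up to the sign fixed in Definition~\ref{D98}, the residue $M(0)\in\mr{Mat}_{n}(A)$, and $\msO_{\oslash,\EX}$ is the rank-one flat bundle of \eqref{WW102}, presented as $(\mcO_{S}[\![t]\!],\,\theta+\EX)$. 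Thus the theorem amounts to exhibiting $P\in\mr{GL}_{n}(A[\![t]\!])$ conjugating $M(t)$ into a \emph{constant} diagonal matrix whose entries lie in the subring $\mbZ/p^{\N}\mbZ\subseteq A$, plus the uniqueness of the resulting multiset.

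The idea is to leave characteristic $p^{\N}$. By the diagonal reduction of \S\,\ref{S200}, $\msF$ produces a $p^{\N}$-flat $\mcD^{(\N-1)}$-module ${^{\Diag}}\!\!\msF$ on $U_{\oslash,0}^{\mr{log}}/S_{0}$ (in the sense of \S\,\ref{S145}), and over the formal neighbourhood $U_{\oslash}^{\mr{log}}$ this is inverted by the diagonal lifting, so $\msF\mapsto{^{\Diag}}\!\!\msF$ is a bijection on isomorphism classes. I then iterate the Berthelot--Montagnon Frobenius (Cartier) descent for higher-level logarithmic $\mcD$-modules --- one Cartier step being available at each of the $\N$ levels since the logarithmic symbol satisfies $\theta^{[p]}=\theta$ --- turning ${^{\Diag}}\!\!\msF$ into a vector bundle $\mcG$ on $\mr{Spec}(R[\![s]\!])$, $s:=t^{p^{\N}}$, together with a commuting family $r_{0},\dots,r_{\N-1}$ of residue endomorphisms of $\mcG|_{s=0}$, one per descent step. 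Each $r_{\CH}$ satisfies $r_{\CH}^{\,p}=r_{\CH}$ (vanishing $p$-curvature at that step), hence is semisimple with eigenvalues in $\mbF_{p}$; as $R$ is local with algebraically closed residue field, the $r_{\CH}$ are simultaneously diagonalizable over $R$, and their common eigenspace decomposition $\mcG|_{s=0}=\bigoplus_{\EX\in\mbZ/p^{\N}\mbZ}V_{\EX}$ is indexed by $\mbZ/p^{\N}\mbZ$, with $r_{\CH}$ acting on $V_{\EX}$ as the $\CH$-th $p$-adic digit of $\EX$.

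It remains to lift this decomposition over the local ring $R[\![s]\!]$ to $\mcG=\bigoplus_{\EX}\mcG_{\EX}$, $\mcG_{\EX}|_{s=0}=V_{\EX}$: this is possible since each $V_{\EX}$ is a free direct summand at $s=0$, and --- crucially --- the entire $\mcD^{(\N-1)}$-module structure is reconstructed from the datum $(\mcG,(r_{\CH})_{\CH})$, so distinct blocks do not interact. A free block $\mcG_{\EX}$ of rank $n_{\EX}:=\dim V_{\EX}$ corresponds under the descent to $\mcN_{\EX}^{\oplus n_{\EX}}$, where $\mcN_{\EX}:={^{\Diag}}\!(\msO_{\oslash,\EX})$; hence ${^{\Diag}}\!\!\msF\cong\bigoplus_{\EX}\mcN_{\EX}^{\oplus n_{\EX}}\cong{^{\Diag}}\!\bigl(\bigoplus_{\EX}\msO_{\oslash,\EX}^{\oplus n_{\EX}}\bigr)$, and the bijectivity above yields $\msF\cong\bigoplus_{\EX}\msO_{\oslash,\EX}^{\oplus n_{\EX}}$; listing the exponents with multiplicity produces $\EX_{1},\dots,\EX_{n}\in\mbZ/p^{\N}\mbZ$. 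Finally $\mu(\DMO)$ is intrinsic and is identified by this decomposition with $\bigoplus_{i}(-\EX_{i})$, so it is conjugate over $A$ to $\mr{diag}(-\EX_{1},\dots,-\EX_{n})$; being semisimple, its conjugacy class is determined by the multiset of its eigenvalues, and therefore $[\EX_{1},\dots,\EX_{n}]$ depends only on the isomorphism class of $\msF$.

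The step I expect to be the main obstacle is the claim that distinct blocks do not interact. In the original matrix picture, normalizing $M(t)$ term by term meets resonances $k+\EX_{i}-\EX_{j}\equiv 0\pmod p$ exactly when two exponents are congruent modulo $p$ while unequal in $\mbZ/p^{\N}\mbZ$, so the offending off-diagonal terms cannot be removed by an elementary gauge transformation; the resolution is the passage to the descended datum $(\mcG,(r_{\CH})_{\CH})$, where the logarithmic $\mcD$-module is rigidly determined by its residues and such resonant deformations simply have no room to exist. A second point that must be secured is the formal-local bijectivity of diagonal reduction and lifting on $U_{\oslash}^{\mr{log}}$ --- the global invertibility being only conjectural, but over the disk it should be accessible with the machinery of \S\,\ref{S200} --- and the case $\N=1$ of the argument is precisely the classical structure theorem of \cite{Kin1} and \cite{O2}, which one may cite as the base case if one prefers to present the proof as an induction on $\N$.
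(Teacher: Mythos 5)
Your overall strategy coincides with the paper's: pass to the diagonal reduction ${^{\Diag}}\!\!\msF$, establish a structure theorem for the resulting $p^{\N}$-flat $\mcD^{(\N-1)}_\oslash$-module, and then lift the decomposition back to characteristic $p^{\N}$. Your route to the characteristic-$p$ structure theorem (iterated Cartier descent producing commuting residues $r_0,\dots,r_{\N-1}$ with $r_\CH^p=r_\CH$, simultaneous diagonalization, and the observation that resonances $k+\EX_i-\EX_j\equiv 0 \pmod p$ are killed by the higher-level rigidity) is a legitimate re-derivation of what the paper simply imports from Kindler's Proposition 1.1.12 over the residue field and then lifts to $R$ by Nakayama in Proposition-Definition \ref{P022}.

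The genuine gap is exactly the step you flag but do not close: the passage from ${^{\Diag}}\!\!\msF\cong\bigoplus_{\EX}\msO_{\oslash,\EX,0}^{(\N-1)\oplus n_{\EX}}$ back to $\msF\cong\bigoplus_{\EX}\msO_{\oslash,\EX}^{\oplus n_{\EX}}$. You invoke a ``formal-local bijectivity of diagonal reduction and lifting,'' which is never established and which your argument cannot do without. The paper does not need (and does not prove) any such bijectivity; it uses a weaker and readily available input. Namely, for each index $j$ one twists by $\msO_{\oslash,-\EX_j}$ so that the $j$-th summand of the reduction acquires exponent $0$; the image of the $j$-th basis vector is then a section of $\mcS ol$ of the diagonal reduction, and property $(\beta)_{\N-1}$ of Proposition \ref{L0ddd10}, (i) (equivalently Corollary \ref{YY79}) guarantees it lifts to a section of $\mcF$ horizontal for the twisted connection. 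Untwisting gives a morphism $\msO_{\oslash,\EX_j}\migi\msF$, and the direct sum of these over $j$ reduces modulo $p$ to the given isomorphism, hence is an isomorphism by Nakayama. Without substituting this (or an actual proof of your bijectivity claim, which would amount to the same work) the proof is incomplete. A secondary, smaller issue: your uniqueness argument for the multiset $[\EX_1,\dots,\EX_n]$ via ``semisimplicity of $\mu(\DMO)$'' runs in the wrong direction — over the local ring $\mcO_S$ one needs that the conjugacy class determines the multiset, which is not immediate (characteristic polynomials over $\mbZ/p^{\N}\mbZ$ can factor non-uniquely); the paper instead reads off the multiset from the Hom-computation of Proposition \ref{YY44}, (ii), which shows a surjection $\msO_{\oslash,\EX}\migisurj\msO_{\oslash,c}$ exists only when $\EX=c$.
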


We remark that another  main  result in that chaper gives an explicit description of a $p^{\M +1}$-flat  $\mcD^{(\M)}$-module around  a nodal/marked point  (cf. Proposition-Definition \ref{P022}); in a certain sense, that  result commutes  with Theorem \ref{TT3fA}  via diagonal reduction.

\LSP
\subsection{Part II: Moduli space  and $2$d TQFT of dormant $\mr{PGL}_n^{(\N)}$-opers} \label{SWW02}

In Chap.\,\ref{DDe345}-\ref{S0133}, we discuss 
dormant opers of finite level  on pointed curves and their moduli space.
For  a pair of nonnegative integers  $(g, r)$  with $2g-2+r>0$, 
we denote by  $\overline{\mcM}_{g,r}$ the moduli stack classifying $r$-pointed stable curves of genus $g$ over $\mbF_p$.
Also, let  us fix integers $n$,  $\N$  with $1 < n < p$ and $\N > 0$.

An important achievement of our discussion is to construct   a compactification of the moduli space by allowing nodal singularities on the underlying curves. 
This compactification  enables us to  investigate how the moduli space deforms when the underlying curve degenerates.
 Additionally,  when it has actually occurred, 
 elements of a certain finite  set
 \begin{align}
 \Xi_{n, \N}
  \end{align}
 (cf. \eqref{e401YY}) provide  
   a boundary condition (i.e., the coincidence of {\it radii}) for gluing dormant $\mr{PGL}_n^{(\N)}$-opers in accordance with an attachment of two curves along respective marked points
    (cf. Definition \ref{epaddd}).

  With that in mind, the moduli space we have to deal with is 
  defined as the category classifying  pairs $(\msX, \msE^\spadesuit)$ consisting of   an $r$-pointed stable curve $\msX$ of genus $g$ in characteristic $p$ (i.e., an object  classified by $\overline{\mcM}_{g,r}$) and  a dormant $\mr{PGL}_n^{(\N)}$-oper  $\msE^\spadesuit$ on $\msX$ {\it  of fixed radii $\rho \in \Xi_{n, \N}^{\times r} \left(=  \Xi_{n, \N} \times \cdots \times \Xi_{n, \N} \right)$}; this category will be denoted by 
\begin{align} \label{dEE9091}
\mcO p_{n, \N, \rho, g,r, \mbF_p}^{^\mr{Zzz...}}, \text{or simply} \ \mcO p_{\rho, g,r}^{^\mr{Zzz...}}
\end{align}
(cf. \eqref{e4000}).
 Forgetting the data of dormant $\mr{PGL}_n^{(\N)}$-opers yields  
  the projection
  \begin{align} \label{dEE9090}
  \Pi_{n, \N, \rho, g, r, \mbF_p} \ (\text{or simply} \ \Pi_{\rho, g, r}) :  \mcO p_{\rho, g,r}^{^\mr{Zzz...}} \migi \overline{\mcM}_{g,r}
  \end{align}
 (cf. \eqref{TTT15}).
 Our results concerning the geometric structures of
  $\mcO p_{\rho, g,r}^{^\mr{Zzz...}}$ and $\Pi_{\rho, g, r}$ are summarized as  follows. 

\SSP
\begin{intthm}[cf.  Theorems \ref{P91} and 
 \ref{y0176}] \label{TT3f}
Let $\rho$ be an element of $\Xi_{n, \N}^{\times r}$ (where $\rho := \emptyset$ if $r = 0$).
Then, the category   $\mcO p_{\rho, g,r}^{^\mr{Zzz...}}$ may be represented by  a  proper Deligne-Mumford stack over $\mbF_p$, and the projection $\Pi_{\rho, g, r}$
 is finite.

Moreover, for  clutching data $\mbG := (\GR, \{ (g_j, r_j)\}_{j=1}^J, \{ \lambda_j \}_{j=1}^J)$  of type $(g,r)$ (cf. Definition \ref{Def112}) and 
  a set of $\mbG$-$\Xi_{n, \N}$-radii $\rho_\mbG:= \{ \rho^j \}_{j=1}^J$ with $\rho_{\mbG \Rightarrow \emptyset} = \rho$ (cf. Definition \ref{y0175} and \eqref{fff036}),
 there exists a canonical  clutching morphism 
\begin{align} \label{WW82}
\mr{Clut}_{\mbG, \rho_\mbG} : \prod_{j =1}^J \mcO p_{\rho^j, g_j,r_j}^{^\mr{Zzz...}} \migi  
\mcO p_{\rho, g,r}^{^\mr{Zzz...}}
\end{align}
 such that
 the square diagram
\begin{align} \label{1050JJ}
\vcenter{\xymatrix@C=76pt@R=36pt{
\coprod_{ \rho_\mbG := \{ \rho^j\}_{j=1}^J}\prod_{j =1}^J \mcO p^\ZZZ_{\rho^j, g_j,r_j} 
\ar[r]^-{\coprod_{\rho_\mbG} \mr{Clut}_{\mbG, \rho_\mbG}} \ar[d]_-{\coprod_{\rho_\mbG} \prod_j \Pi_{\rho^j, g_j, r_j}}& \mcO p^\ZZZ_{\rho, g,r} 
\ar[d]^-{\Pi_{\rho, g, r}}\\
 \prod_{j=1}^J \overline{\mcM}_{g_j, r_j}  \ar[r]_-{\mr{Clut}_\mbG}& \overline{\mcM}_{g, r}
}}
\end{align}
  is commutative and  Cartesian,  where 
  \begin{itemize}
  \item
  the products ``$\prod$'' are all taken over $\mbF_p$ and 
  the disjoint union on the upper-left corner runs  over  the  sets of $\mbG$-$\Xi_{n, \N}$-radii $\rho_\mbG$  with $\rho_{\mbG \Rightarrow \emptyset} = \rho$;
  \item
   $\mr{Clut}_\mbG$ denotes the usual clutching morphism associated to $\mbG$ (cf. \eqref{WW81}).
  \end{itemize}
 \end{intthm}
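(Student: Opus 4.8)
The plan is to prove Theorem~\ref{TT3f} in the two stages recorded as Theorems~\ref{P91} and~\ref{y0176}: first the representability of $\mcO p^\ZZZ_{\rho, g, r}$ by a proper Deligne--Mumford stack together with the finiteness of $\Pi_{\rho, g, r}$, and then the construction of $\mr{Clut}_{\mbG, \rho_\mbG}$ and the Cartesian property of~\eqref{1050JJ}; the unifying idea is to transplant the level-$1$ theory of~\cite{Wak8} to (logarithmic) $\mcD^{(\N-1)}$-modules in the sense of Montagnon~\cite{Mon}, using the local classification of Theorem~\ref{TT3fA} at marked and nodal points as the key rigidity input. For representability I would realize $\mcO p^\ZZZ_{\rho, g, r}$ inside an auxiliary algebraic stack over $\overline{\mcM}_{g,r}$, namely the stack of triples $(\msX, \mcF, \DMO)$ with $\mcF$ a rank-$n$ vector bundle on the $r$-pointed stable curve $\msX$ and $\DMO$ a logarithmic $\mcD^{(\N-1)}$-module structure on $\mcF$; since the relevant sheaves of differential operators are in each degree coherent and locally free over $\mcO$, this stack is algebraic and of finite type over $\mbF_p$, by the argument used for $\N=1$ in~\cite{Wak8}. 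Inside it, the oper condition (an $n$-step filtration with the prescribed Kodaira--Spencer isomorphisms) is locally closed; the dormancy condition (vanishing of the relevant curvature invariants) is closed; and fixing the radii $\rho$ is closed, since by Theorem~\ref{TT3fA} the radius at a marked point is the conjugacy invariant of the monodromy operator. Passing to the $\mr{PGL}_n$-version by quotienting out the twisting action of line bundles gives $\mcO p^\ZZZ_{\rho, g, r}$ as an algebraic stack of finite type over $\mbF_p$, and it is Deligne--Mumford because the automorphism group of an object $(\msX, \msE^\spadesuit)$ is finite and étale --- it is a subgroup of the finite automorphism group of $\msX$, the dormant $\mr{PGL}_n^{(\N)}$-oper itself being rigid.

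\textbf{Finiteness of $\Pi_{\rho, g, r}$.} I would show $\Pi_{\rho, g, r}$ is proper and quasi-finite. For properness, use the valuative criterion relative to $\overline{\mcM}_{g,r}$: over a complete DVR $R$ over $\mbF_p$ with algebraically closed residue field and fraction field $K$, given a pointed stable curve $\msX_R/R$ and a dormant $\mr{PGL}_n^{(\N)}$-oper on $\msX_K$ of radii $\rho$, one must produce a unique extension over $\msX_R$ of radii $\rho$. Uniqueness follows because $\mcF$, its oper filtration, and the $\mcD^{(\N-1)}$-action are each recovered as the saturation of their generic-fibre counterparts; for existence one checks that the flat limit remains an oper (the filtration steps remain subbundles) and remains dormant (a closed condition), the behaviour at the nodes being exactly what the requirement ``radius $\in \Xi_{n,\N}$'' is designed to control --- the argument running parallel to the $\N=1$ case of~\cite{Wak8}. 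Since $\overline{\mcM}_{g,r}$ is proper over $\mbF_p$, this also shows $\mcO p^\ZZZ_{\rho, g, r}$ is proper over $\mbF_p$. For quasi-finiteness it suffices to bound the fibres over geometric points: over a smooth curve, the level-$0$ part of a dormant $\mr{PGL}_n^{(\N)}$-oper is a dormant $\mr{PGL}_n$-oper of radii determined by $\rho$, of which there are finitely many by~\cite[Corollary 6.1.6]{JP}, while the ways to promote a fixed one to a dormant level-$\N$ oper compatible with the oper filtration are finite (via the level-raising equivalences this is again a dormant-oper problem on successive Frobenius twists); over a nodal curve one reduces to the smooth case using the Cartesian square~\eqref{1050JJ}.

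\textbf{The clutching morphism and the Cartesian property.} Given clutching data $\mbG$ and $\mbG$-$\Xi_{n,\N}$-radii $\rho_\mbG = \{\rho^j\}_{j=1}^J$ with $\rho_{\mbG \Rightarrow \emptyset} = \rho$, I construct $\mr{Clut}_{\mbG, \rho_\mbG}$ as follows: given dormant $\mr{PGL}_n^{(\N)}$-opers $\msE^\spadesuit_j$ on $\msX_j$ of radii $\rho^j$, form the glued curve $\msX = \mr{Clut}_\mbG(\{\msX_j\}_j)$; at each node two marked points with dual radii (by the definition of a $\mbG$-$\Xi_{n,\N}$-set) are identified, and by Theorem~\ref{TT3fA} the restrictions of the two opers to the respective punctured formal neighbourhoods are isomorphic to explicit diagonal models determined by the radius, hence are identified canonically across the node; the result is a dormant $\mr{PGL}_n^{(\N)}$-oper $\msE^\spadesuit$ on $\msX$ of radii $\rho$ (by the nodal definition), and the construction is functorial in the base, giving~\eqref{WW82}. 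Commutativity of~\eqref{1050JJ} is immediate, as both composites forget the opers and glue the curves. For the Cartesian property I would exhibit the inverse of the induced map to the fibre product $\mcO p^\ZZZ_{\rho, g, r} \times_{\overline{\mcM}_{g,r}} \prod_{j=1}^J \overline{\mcM}_{g_j, r_j}$: given $\msX$ in the image of $\mr{Clut}_\mbG$ and a dormant $\mr{PGL}_n^{(\N)}$-oper $\msE^\spadesuit$ on it of radii $\rho$, restrict $\msE^\spadesuit$ along each partial-normalization map $\msX_j \to \msX$ to obtain a dormant $\mr{PGL}_n^{(\N)}$-oper $\msE^\spadesuit_j$ on $\msX_j$; its radii at the original marked points are the corresponding components of $\rho$, and at the points over the nodes the two radii coming from the two branches are dual (they are the two branch-monodromies of a single oper near the node), so the radii assemble into a $\mbG$-$\Xi_{n,\N}$-set $\rho_\mbG$ with $\rho_{\mbG \Rightarrow \emptyset} = \rho$. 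That restriction and gluing are mutually inverse is precisely the rigidity of Theorem~\ref{TT3fA}: a dormant oper on $\msX$ is recovered from its restrictions to the pieces together with the canonical, parameter-free identification at each node, and both operations commute with base change.

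\textbf{Main obstacle.} The technical heart is the properness and quasi-finiteness of $\Pi_{\rho, g, r}$ --- showing that a family of dormant $\mr{PGL}_n^{(\N)}$-opers over a punctured base extends, with the limit remaining an oper, remaining dormant, and keeping the same radii --- since one must control the degeneration of the bundle together with its $\mcD^{(\N-1)}$-action at all levels $0, 1, \dots, \N-1$ simultaneously; the level-$\N$ bookkeeping is the genuinely new difficulty beyond~\cite{Wak8}. A secondary subtlety is verifying the Cartesian property in arbitrary families rather than just on geometric points, which is handled by noting that restriction to the normalization and the gluing construction both commute with base change, the node identifications being rigid by Theorem~\ref{TT3fA}.
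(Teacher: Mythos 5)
Your overall architecture matches the paper's (representability, then properness and finiteness, then gluing via the local models at the nodes), but the finiteness argument has genuine gaps. The step ``one checks that the flat limit remains an oper (the filtration steps remain subbundles)'' is precisely where the work lies, and your sketch supplies no mechanism for it. The paper's device is to encode a dormant $(\mr{GL}_n^{(\N)},\vartheta)$-oper as a surjection $\nu_{\nabla^\diamondsuit}\colon \mcP_\varTheta \twoheadrightarrow \mcF_\varTheta$ from a fixed $\mcD^{(\N-1)}$-module $\mcP_\varTheta$ (the quotient of $\mcD^{(\N-1)}\otimes\varTheta$ by the $\mcD^{(\N-1)}$-submodule generated by the image of the $p^\N$-curvature map), to extend this quotient across the closed fiber by properness of Quot schemes, and then to recover the oper structure from the extension by a degree/rank computation; without this, or a substitute carried out at all levels simultaneously, the valuative criterion is not verified. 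Your quasi-finiteness argument is also incomplete: \cite[Corollary 6.1.6]{JP} treats unpointed smooth curves only, and the assertion that each level-raising step admits only finitely many choices is not justified --- the set of $p$-flat connections on $\mcS ol(\nabla^{(\M)})$ extending a given dormant structure is a priori a subset of a torsor under $H^0(\Omega^{(\M+1)}\otimes\mcE nd(\mcS ol(\nabla^{(\M)})))$ and its finiteness is essentially equivalent to what is being proved. The paper avoids quasi-finiteness altogether: it fixes a dormant $n^{(\N)}$-theta characteristic to rigidify the moduli problem (Theorem~\ref{P14}), realizes the rigidified functor as a closed subscheme of a relative affine space over the level-$1$ moduli space, and concludes finiteness as affine plus proper. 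Your alternative of forming the big stack of pairs $(\mcF,\nabla)$ and quotienting by line-bundle twists likewise leaves unaddressed both the boundedness of the underlying bundles and the fact that the Picard stack is not finite, so it is not clear the quotient is Deligne--Mumford of finite type.

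On the gluing, your construction is the paper's, but ``identified canonically across the node'' and ``canonical, parameter-free identification'' overstate what Theorem~\ref{TT3fA} gives. The local models $\bigoplus_i\msO_{\oslash,d_i}$ have nontrivial automorphisms, and two $\mcD_\oslash^{(\N-1)}$-module structures on the two branches glue to a $\mcD_\otimes^{(\N-1)}$-module structure only when their monodromy operators are exchanged by the explicit involution $\mr{sw}$ (Proposition~\ref{P27}); moreover, one must adjust the chosen local trivializations so that the fibers of the oper filtrations agree over the node. What makes the glued object well defined is not canonicity of the local identification but the absence of nontrivial automorphisms of dormant opers (Proposition~\ref{P44}), which upgrades ``unique up to isomorphism'' to an equivalence of set-valued functors and hence yields the Cartesian square in arbitrary families.
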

 \SSP

In Chap.\,\ref{S18}-\ref{S44}, we focus on the case of $n =2$ and develop the deformation theory   of dormant $\mr{PGL}_2^{(\N)}$-opers.
The deformation space of a fixed  dormant $\mr{PGL}_2^{(\N)}$-oper  is described by using  the hypercohomology group of a certain complex associated to that oper (cf. Theorem \ref{P47}). 
By this description  together with the local study of  finite-level $\mcD$-modules discussed  in the first part, 
we can show more than the facts stated in Theorem \ref{TT3f}, as described below (cf. \cite[Chap.\,II, Theorem 2.8]{Mzk2} for the case of $\N =1$).

The most important of our results would be  the generic \'{e}taleness of $\Pi_{\rho, g, r}$, since  it  deduces decompositions of  its  degree $\mr{deg}(\Pi_{\rho, g, r})$  with respect to various   clutching morphisms $\mr{Clut}_{\mbG, \rho_\mbG}$.
According to  the discussion in  Chap.\,\ref{S44}, these  decompositions can be collectively explained by the notion of a $2$d TQFT, which is by definition a symmetric monoidal functor from 
the category of $2$-dimensional cobordisms $2\text{-}\mcC ob$ to 
the category of $K$-vector spaces  $\mcV ect_K$ for a field $K$, say, $\mbQ$ or $\mbC$ (cf. Definition \ref{Def5932}).
Applying a well-known generalities on  $2$d TQFTs, 
we obtain an approach to compute $\mr{deg}(\Pi_{\rho, g, r})$'s  by means of the ring-theoretic structure of the corresponding Frobenius algebra, or equivalently, the fusion ring of the associated fusion rule.

\SSP
\begin{intthm}[cf. Corollary \ref{C90},  Theorems \ref{Theorem4f4}, and \ref{c43} for the full statement]\label{TTTB2}
Suppose 
that $n=2$ (and $p>2$).
Then, the following assertions hold:
\begin{itemize}
\item[(i)]
Let
$\rho := (\rho_i)_{i=1}^r$ be an element of the set $((\mbZ/p^\N \mbZ)^\times/\{ \pm 1 \})^{\times r}$ (which is identified with $\Xi_{2, \N}^{\times r}$ via \eqref{wpaid}), where $\rho := \emptyset$ if $r = 0$.
Then, 
the  Deligne-Mumford stack  $\mcO p_{\rho, g,r}^\ZZZ$
is smooth over $\mbF_p$ and equidimensional of dimension $3g-3+r$.
Moreover,   
 the projection $\Pi_{\rho, g, r}$ is  faithfully flat and \'{e}tale over the points of $\overline{\mcM}_{g, r}$ classifying totally degenerate curves.
 In particular, $\Pi_{\rho, g, r}$ is generically \'{e}tale.
 \item[(ii)]
  There exists a  $2$d TQFT 
 \begin{align}
 \mcZ_{2, \N} : 2\text{-}\mcC ob \rightarrow \mcV ect_\mbQ
 \end{align}
  over $\mbQ$  satisfying  the following rules:
  \begin{itemize}
  \item
 If $\mbS^r$  (for $r \in \mbZ_{> 0}$) denotes  the disjoint union of $r$ copies of the circle  $\mbS := \{ (x, y) \in \mbR^2 \, | \, x^2 + y^2 =1 \}$,  then we have
 \begin{align}
 \mcZ_{2, \N} (\mbS^r) = (\mbQ^{\Xi_{2, \N}})^{\otimes r},
 \end{align}
 i.e., the $r$-fold  tensor product of  the $\mbQ$-vector space  $\mbQ^{\Xi_{2, \N}}$ with basis $\Xi_{2, \N}$;
  \item
   If  $\mbM_g^{r \Rightarrow s}$ (for $(g, r, s) \in \mbZ_{\geq 0}^{\times 3}$ with $2g-2 + r+s >0$) denotes   a connected, compact oriented surface whose in-boundary and out-boundary are $\mbS^r$ and $\mbS^s$, respectively, then 
the $\mbQ$-linear map $\mcZ_{2, \N} (\mbM_g^{r \Rightarrow s}) : (\mbQ^{\Xi_{2, \N}})^{\otimes r} \rightarrow (\mbQ^{\Xi_{2, \N}})^{\otimes s}$ is given  by 
\begin{align} \label{eeQQ4r215}
\mcZ_{2, \N} (\mbM_g^{r \Rightarrow s}) (\bigotimes_{i=1}^r \rho_i) = \sum_{(\lambda_j)_j \in \Xi_{2, \N}^{\times s}} \mr{deg}(\Pi_{((\rho_i)_i, ( \lambda_{j})_{j}), g, r +s}) \bigotimes_{j=1}^{s} \lambda_{j}.
\end{align}
  \end{itemize}
 \end{itemize}
 \end{intthm}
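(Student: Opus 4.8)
The plan is to derive part (i) from the deformation theory of dormant $\mr{PGL}_2^{(\N)}$-opers together with a miracle-flatness argument and an explicit analysis of the genus-$0$ three-pointed case, and then to obtain part (ii) almost formally from (i) and the Cartesian square of Theorem~\ref{TT3f}. For (i), I would use the hypercohomological description of Theorem~\ref{P47}: the deformations of a pair $(\msX, \msE^\spadesuit)$ of an $r$-pointed stable curve and a dormant $\mr{PGL}_2^{(\N)}$-oper on it are governed by the hypercohomology of a two-term complex $\mcK^\bullet$ on the log curve $\msX$, with $\mbH^0$ the infinitesimal automorphisms, $\mbH^1$ the tangent space, and $\mbH^2$ containing the obstructions. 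The first task is to prove $\mbH^0(\mcK^\bullet)=0$ (rigidity: a $\mr{PGL}_2$-oper has no infinitesimal automorphisms), $\mbH^2(\mcK^\bullet)=0$, and $\dim\mbH^1(\mcK^\bullet)=3g-3+r$. I would obtain the vanishing of $\mbH^2$ by Serre duality on the log curve, reducing it to the vanishing of $\mbH^0$ of the Serre-dual complex; the essential inputs are the local descriptions of dormant flat bundles and of $p^{\N}$-flat $\mcD^{(\N-1)}$-modules near marked and nodal points from Part~I (Proposition-Definition~\ref{P724} and Proposition-Definition~\ref{P022}), which pin down $\mcK^\bullet$ near the special points. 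A Riemann--Roch computation for $\mcK^\bullet$ then gives $\dim\mbH^1=3g-3+r$. Hence $\mcO p_{\rho,g,r}^\ZZZ$ is smooth over $\mbF_p$ and equidimensional of dimension $3g-3+r$; since $\Pi_{\rho,g,r}$ is finite (Theorem~\ref{TT3f}) with source Cohen--Macaulay and equidimensional of dimension $3g-3+r$ over the regular base $\overline{\mcM}_{g,r}$ of the same dimension, miracle flatness (the local criterion of flatness over a regular base) gives flatness of $\Pi_{\rho,g,r}$.

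For étaleness over a totally degenerate curve $X_0$, I would apply the Cartesian square of Theorem~\ref{TT3f} to the clutching datum realizing the trivalent dual graph $\Gamma$ of $X_0$: this identifies $\Pi_{\rho,g,r}^{-1}(X_0)$ with the disjoint union, over the $\Xi_{2,\N}$-labelings of the edges of $\Gamma$ restricting to $\rho$ on the legs, of the products $\prod_{v}\mcO p_{\rho^v,0,3}^\ZZZ$ over the vertices. It therefore suffices to treat $\mcO p_{(\rho_1,\rho_2,\rho_3),0,3}^\ZZZ$ over $\overline{\mcM}_{0,3}=\mr{Spec}(\mbF_p)$: using Proposition-Definition~\ref{P724} to fix the local structure at the three marked points and the diagonal-reduction formalism of Part~I to present such an oper as a ``hypergeometric datum of level $\N-1$'' on the projective line, one shows that its moduli is finite and étale over $\mbF_p$. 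Thus $\Pi_{\rho,g,r}^{-1}(X_0)$ is finite and étale over the residue field of $X_0$, so $\Pi_{\rho,g,r}$ is étale near $X_0$; non-emptiness of this fiber — equivalently $\mr{deg}(\Pi_{\rho,g,r})\geq 1$ — follows from positivity of the attached count of edge-labelings, i.e. from the explicit fusion rule, and with flatness this gives faithful flatness over the totally degenerate locus. Finally, the locus in $\overline{\mcM}_{g,r}$ over which the finite flat morphism $\Pi_{\rho,g,r}$ is étale is open (it is the complement of the image of $\mr{Supp}\,\Omega_{\mcO p_{\rho,g,r}^\ZZZ/\overline{\mcM}_{g,r}}$, which is closed since $\Pi_{\rho,g,r}$ is finite) and contains the totally degenerate points; as $\overline{\mcM}_{g,r}$ is irreducible, this locus is dense, which is the asserted generic étaleness.

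For part (ii), granting (i): the Cartesian square of Theorem~\ref{TT3f} together with flatness of $\Pi_{\rho,g,r}$ makes the degree function multiplicative under clutching, $\mr{deg}(\Pi_{\rho,g,r})=\sum_{\rho_\mbG}\prod_{j}\mr{deg}(\Pi_{\rho^j,g_j,r_j})$, so all degrees are determined by the three-pointed numbers $N_{\rho\sigma\tau}:=\mr{deg}(\Pi_{(\rho,\sigma,\tau),0,3})$. These lie in $\mbZ_{\geq 0}$ (finiteness and flatness) and are symmetric in the three indices (the $S_3$-action permuting the marked points induces isomorphisms of moduli problems). I would then prove the one genuinely computational fact — that the class of $1\in(\mbZ/p^\N\mbZ)^\times$, viewed in $\Xi_{2,\N}$, satisfies $N_{\rho\sigma[1]}=\delta_{\rho\sigma}$, i.e. $[1]$ is a unit with trivial involution — from the genus-$0$ analysis above specialized to one trivial exponent; associativity of $N$ is then free, since $\sum_\eta N_{\rho\sigma\eta}N_{\eta\tau\upsilon}$ equals $\mr{deg}(\Pi_{(\rho,\sigma,\tau,\upsilon),0,4})$ by the Cartesian square applied to the three boundary degenerations of a four-pointed line, hence is symmetric in $\rho,\sigma,\tau,\upsilon$. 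Thus $(\mbQ^{\Xi_{2,\N}},N,[1])$ is a commutative Frobenius algebra (equivalently, $N$ is a fusion rule with trivial involution), and the standard equivalence between commutative Frobenius algebras and $2$d TQFTs produces $\mcZ_{2,\N}$ with $\mcZ_{2,\N}(\mbS^r)=(\mbQ^{\Xi_{2,\N}})^{\otimes r}$. Formula~\eqref{eeQQ4r215} then follows by induction on $2g-2+r+s$: both sides are multiplicative under cutting along an essential circle (on the functor side by the symmetric-monoidal axioms, on the geometric side by multiplicativity of degrees under clutching), and they agree on the finitely many building blocks with $2g-2+r+s=1$ — the thrice-punctured sphere (by the definition of $N$), the pair of pants and its variants (the same curve, with index-raising by the trivial pairing $N_{\cdot\,\cdot\,[1]}=\delta$), and the once-punctured torus (where both sides equal $\sum_\sigma N_{\rho\sigma\sigma}$).

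The main obstacle I anticipate is the deformation-theoretic and local analysis underlying (i): proving $\mbH^2(\mcK^\bullet)=0$ and the exact dimension $3g-3+r$ in the presence of nodes and of marked points carrying nontrivial radii, and — most delicately — the rigidity and reducedness of dormant $\mr{PGL}_2^{(\N)}$-opers on the three-pointed projective line (the ``level-$\N$ hypergeometric'' input), which rests on a careful study of finite-level $\mcD$-modules near a marked point and of how diagonal reduction interacts with that local structure. Once these are secured, the passage to arbitrary $(g,r)$ via the Cartesian square and the extraction of the $2$d TQFT are comparatively routine.
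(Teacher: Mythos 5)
Your proposal is correct and follows essentially the same architecture as the paper: deformation theory (vanishing of the obstruction space and $\dim \mcG^1_{\msE^\spadesuit}=3g-3+r$) plus miracle flatness for (i), reduction via the clutching Cartesian square to the rigidity of the $(0,3)$ case for étaleness over totally degenerate curves, openness of the étale locus plus irreducibility of $\overline{\mcM}_{g,r}$ for generic étaleness, and the fusion-rule/Frobenius-algebra formalism for (ii). The only points to watch are that the paper's cohomological computations proceed by induction on the level through Cartier operators and a duality lemma accounting for the local exponents at the marked points (Propositions \ref{Pee4}, \ref{e128}, \ref{C91}) rather than a one-shot Serre duality/Riemann--Roch on a single two-term complex, and that under the identification \eqref{wpaid} the unit of the fusion ring is $\varepsilon = \pi_\Delta ([\overline{0}, \overline{1}])$, i.e.\ the class of $1/2$ in $(\mbZ/p^\N \mbZ)^\times/\{\pm 1\}$ (exponent difference $1$), not the class of $1$.
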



\LSP
\subsection{Part III: Canonical diagonal liftings and combinatorics of dormant $\mr{PGL}_2^{(\N)}$-opers} \label{S03}

In Chap.\,\ref{dEr45},
 the  canonical diagonal lifting of a dormant $\mr{PGL}_2^{(\N)}$-oper
is constructed by applying again the generic \'{e}taleness of $\Pi_{\rho, g, r}$'s.
As discussed in Proposition \ref{Pr459},
there exists a direct linkage between raising  the level of a dormant $\mr{PGL}_2$-oper  in characteristic $p$ to $\N$ and lifting  that oper to characteristic $p^\N$.
To be more precise,  
we show that 
{\it the  space of
 geometric deformations (which mean  deformations classified by   the moduli space $\mcO p^{^\mr{Zzz...}}_{\rho, g, r}$) of a given dormant $\mr{PGL}_2^{(\N)}$-oper 
  is, in a certain sense, dual to the space of 
 its arithmetic deformations (which mean liftings 
  to characteristic $p^\N$ forgetting the data of higher-level structures)}.
 See 
Corollary \ref{T34}, (ii), Propositions \ref{C91}, \ref{P29}, and \ref{P037}, and the picture displayed below.

In particular, 
when a given smooth curve  is $\N$-ordinary  in the sense of   Definition \ref{DD456},
any dormant $\mr{PGL}_2^{(\N)}$-oper on it admits  a canonical lifting.
This construction of  liftings reverses the operation of taking the diagonal reductions, and it is available  to general curves because the  $\N$-ordinary locus  in $\overline{\mcM}_{g, r}$  is dense.

Now, let $S$ be a flat scheme over $\mbZ/p^\N \mbZ$ (with $p>2$) and $X$ a geometrically connected,  proper, and  smooth curve over $S$ of genus $g >1$.
Denote by $S_0$ and $X_0$ the mod $p$  reductions  of $S$ and $X$, respectively.
Also, denote by
\begin{align} \label{TTT556}
\mr{Op}_{1, X}^{^\mr{Zzz...}}  \left(\text{resp.,} \  \mr{Op}_{\N, X_0}^{^\mr{Zzz...}} \right)
\end{align}
(cf. \eqref{e209}) the set of isomorphism classes of dormant $\mr{PGL}_2$-opers  on $X/S$ (resp.,  dormant $\mr{PGL}_2^{(\N)}$-opers on $X_0/S_0$).
Taking the diagonal reductions induces  a map of sets
\begin{align}\label{WW3343}
\Diag_{\!\!\spadesuit}
 : \mr{Op}^{^\mr{Zzz...}}_{1, X} \migi  \mr{Op}^{^\mr{Zzz...}}_{\N, X_0}.
\end{align}
(cf. \eqref{e210www}). Then, we obtain the following assertion.

\SSP
\begin{intthm}[cf. Theorem-Definition \ref{T04}, Remark \ref{WW90}, and Corollary \ref{C998}] \label{TTTB}
 Suppose that $X_0/S_0$ is general in $\overline{\mcM}_{g, 0}$.
 Then, the map $\Diag_{\!\!\spadesuit}$  has an inverse map
 \begin{align} \label{e191}
\Diagg_{\!\!\spadesuit}
 :   \mr{Op}^{^\mr{Zzz...}}_{\N, X_0} \isom \mr{Op}^{^\mr{Zzz...}}_{1, X}.
\end{align}
That is to say, for each dormant $\mr{PGL}_2^{(\N)}$-oper $\msE^\spadesuit_0$  on $X_0/S_0$,
there exists a unique (up to isomorphism) dormant $\mr{PGL}_2$-oper ${^{\Diagg}}\!\!\msE^\spadesuit_0$ on $X/S$ whose diagonal reduction is isomorphic to $\msE^\spadesuit_0$.
In particular, if $S_0 = \mr{Spec}(k)$ for an algebraically closed field $k$ over $\mbF_p$, then  the set  $\mr{Op}^{^\mr{Zzz...}}_{1, X}$ is finite and  its cardinality
 $\sharp (\mr{Op}^{^\mr{Zzz...}}_{1, X})$ satisfies 
\begin{align}
\sharp (\mr{Op}^{^\mr{Zzz...}}_{1, X}) = \left(\sharp (\mr{Op}^{^\mr{Zzz...}}_{\N, X_0}) =\right) \mr{deg} (\Pi_{2, \N, \emptyset, g, 0, \mbF_p}).
\end{align}
 \end{intthm}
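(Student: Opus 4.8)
The plan is to deduce the existence of the inverse map $\Diagg_{\!\!\spadesuit}$ from the generic étaleness in Theorem \ref{TTTB2}(i), together with the comparison between arithmetic and geometric deformations alluded to in Part III. First I would set up the two sides carefully: over the Witt-type base $S$ flat over $\mbZ/p^\N\mbZ$, the set $\mr{Op}^{^\mr{Zzz...}}_{1, X}$ is the fiber of a moduli space of dormant $\mr{PGL}_2$-opers on $X/S$, while $\mr{Op}^{^\mr{Zzz...}}_{\N, X_0}$ is the fiber of $\Pi_{2, \N, \emptyset, g, 0, \mbF_p}$ over the point $[X_0] \in \overline{\mcM}_{g,0}$. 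The map $\Diag_{\!\!\spadesuit}$ of \eqref{WW3343} is the fiberwise diagonal reduction. The key input is that, for $X_0/S_0$ general, $[X_0]$ lies in the locus over which $\Pi_{2, \N, \emptyset, g, 0, \mbF_p}$ is étale (this locus being dense, indeed containing the $\N$-ordinary locus), so $\mr{Op}^{^\mr{Zzz...}}_{\N, X_0}$ consists of $\mr{deg}(\Pi_{2, \N, \emptyset, g, 0, \mbF_p})$ reduced points, and each such dormant $\mr{PGL}_2^{(\N)}$-oper $\msE_0^\spadesuit$ is unobstructed and rigid as a geometric deformation.

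Next I would argue surjectivity of $\Diag_{\!\!\spadesuit}$ by lifting obstruction theory along $S_0 \hookrightarrow S$. Given $\msE_0^\spadesuit$ on $X_0/S_0$, one wants to lift it step by step along the thickenings $S \otimes \mbZ/p^{k}\mbZ$ to a dormant $\mr{PGL}_2$-oper on $X/S$. The obstruction to each infinitesimal lift lies in an $H^2$ of the deformation complex of the oper, and the ambiguity in $H^1$. Here the duality indicated in Proposition \ref{Pr459} and Corollary \ref{T34}(ii) is crucial: the arithmetic deformation space (liftings mod $p^{k+1}$ of a dormant $\mr{PGL}_2$-oper, forgetting higher-level data) is dual to the geometric deformation space of the corresponding dormant $\mr{PGL}_2^{(\N)}$-oper. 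Since generic étaleness forces the geometric deformation space to vanish, the dual arithmetic obstruction/ambiguity spaces vanish too, so the lift exists and is unique at each stage; assembling these gives a well-defined ${^{\Diagg}}\!\!\msE_0^\spadesuit$ on $X/S$ whose diagonal reduction is $\msE_0^\spadesuit$. Uniqueness of the lift (injectivity of $\Diagg_{\!\!\spadesuit}$, equivalently that $\Diag_{\!\!\spadesuit}$ is injective on the relevant locus) follows from the same vanishing of the arithmetic deformation spaces: two lifts with isomorphic diagonal reductions differ by a class in a space that has been shown to be zero.

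Finally, once $\Diag_{\!\!\spadesuit}$ is shown to be a bijection, the counting statement is immediate: when $S_0 = \mr{Spec}(k)$ with $k$ algebraically closed, $\mr{Op}^{^\mr{Zzz...}}_{\N, X_0}$ is the set of $k$-points of an étale (hence finite) fiber of $\Pi_{2, \N, \emptyset, g, 0, \mbF_p}$, so its cardinality is exactly $\mr{deg}(\Pi_{2, \N, \emptyset, g, 0, \mbF_p})$, and the bijection transports this count to $\sharp(\mr{Op}^{^\mr{Zzz...}}_{1, X})$. The main obstacle I anticipate is making the "duality between arithmetic and geometric deformations" precise enough to conclude the vanishing of obstruction spaces at every level $k \le \N$ simultaneously — in particular, checking that the hypercohomology complexes controlling the two deformation problems are genuinely dual (cf. Theorem \ref{P47}), and that the generic étaleness, which a priori is a statement about the mod $p$ moduli space $\mcO p^\ZZZ_{\emptyset, g, 0}$, really does kill the obstruction to lifting all the way to characteristic $p^\N$. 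Everything else — representability, finiteness, properness — is already supplied by Theorems \ref{TT3f} and \ref{TTTB2}.
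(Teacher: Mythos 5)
Your high-level strategy --- reduce to the ordinariness/\'{e}taleness of $\Pi_{2,\N,\emptyset,g,0,\mbF_p}$ over $[X_0]$ and exploit a duality between arithmetic and geometric deformations --- is the right one, and you flag the weak point yourself. But the mechanism you propose for the lifting step does not work as stated. The relevant arithmetic deformation space (the set of liftings of a dormant $(\mr{GL}_2^{(1)},\vartheta)$-oper from $X_{\N'-1}$ to $X_{\N'}$) is a nonempty torsor under $H^0(X_0,\Omega_0^{\otimes 2})$, and the corresponding geometric deformation space (the set of level-raisings of its diagonal reduction from level $\N'$ to level $\N'+1$) is a torsor under $H^0(X_0^{(\N'+1)},\Omega_0^{(\N'+1)}\otimes \mcE nd^0(\mcS ol(-)))$. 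Neither space vanishes --- both have dimension $3g-3$ --- so an argument of the form ``the obstruction and ambiguity spaces are zero'' cannot be run, and the $H^1$/$H^2$ of the deformation complex of the oper are not the groups that control the problem. What the paper actually proves (Propositions \ref{P037} and \ref{Pr5221}) is that diagonal reduction is a map of torsors equivariant with respect to a Cartier-operator-type map $D''$ between these two nonzero groups, that $D''$ is Serre-dual to the map $D'$ whose bijectivity is exactly the ordinariness (= \'{e}taleness) condition (Propositions \ref{WW222} and \ref{P29}), and that the dormant loci on the two sides correspond via a Cartesian square (Proposition \ref{Pr459}). It is the resulting identification of the two torsors together with their dormant subsets, not a vanishing, that produces existence and uniqueness of the lift.

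Two further points would need attention. First, the induction climbs the arithmetic tower and the level tower simultaneously: at the step from $X_{\N'-1}$ to $X_{\N'}$ you must select the unique dormant lift whose diagonal reduction equals the level-$(\N'+1)$ truncation of the given $\msE_0^\spadesuit$; otherwise nothing forces the final diagonal reduction to recover $\msE_0^\spadesuit$ rather than some other level-$\N$ extension of its level-$1$ part. Your description of ``lifting step by step along the thickenings'' omits this matching. Second, the argument requires $D''$ to be an isomorphism at every intermediate level $\N'\leq\N$, i.e.\ the full $\N$-ordinariness of $X_0$ (Definition \ref{DD456}), which is the precise meaning of ``general'' here; \'{e}taleness of the single projection $\Pi_{2,\N,\emptyset,g,0,\mbF_p}$ at $[X_0]$ is a priori weaker. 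Once the bijection is in place, your deduction of the counting statement is correct and agrees with the paper.
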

\SSP

For each $\msE_0^\spadesuit$ as in the above theorem, 
we shall refer to 
the resulting dormant $\mr{PGL}_2$-oper  ${^{\Diagg}}\!\!\msE^\spadesuit_0$
as  the {\bf canonical diagonal lifting} of $\msE_0^\spadesuit$. 
This fact will  also be  formulated as  
the existence of canonical liftings of
 {\it  Frobenius-projective structures} (cf. Theorem-Definition \ref{T1dddd}), which are 
 characteristic-$p$ analogues of  complex projective structures\footnote{A {\it (complex) projective structure} is defined as  an additional structure on a 
Riemann surface
consisting  of local coordinate charts defining its complex structure such that on any two overlapping patches, the change of coordinates may be described as a M\"{o}bius transformation. Projective structures 
are in bijection with  $\mr{PGL}_2$-opers (in our sense)  via the Riemann-Hilbert correspondence and the algebraization of the underlying Riemann surface.} (cf. ~\cite[Definition 2.1]{Hos2}, or Definition \ref{D0188}).

We expect  that 
canonical liftings exist even for a general  rank $n$ (sufficiently small relative to $p$).
To prove it in accordance with the arguments of this manuscript, we will have to prove the generic \'{e}taleness of 
$\Pi_{n, \N, \rho, g, r}$'s.
(Other constructions of  liftings of $\mr{PGL}_2$-opers in characteristic $p$ can be found in 
~\cite{LSYZ},  ~\cite{Mzk1}, and ~\cite{Mzk2}; but they differ from ours, as mentioned in   Remark \ref{WW92}).

\hspace{15mm} 
 \includegraphics[width=14cm,bb=0 0 1012 350,clip]{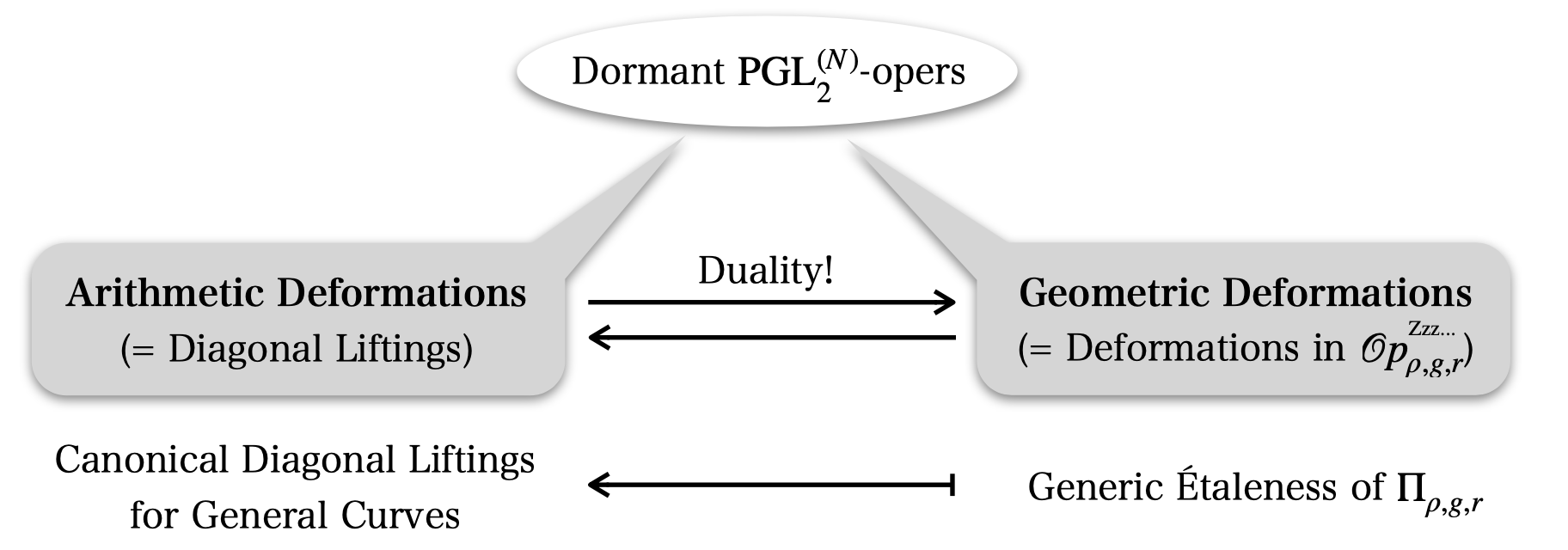}

\vspace{5mm}

In  Chap.\,\ref{S10},  we translate 
the  $2$d TQFT $\mcZ_{2, \N}$
   into combinatorial objects in order to solve our counting problem in a practical manner.
This is possible because 
the factorization property of $\mr{deg}(\Pi_{\rho, g, r})$'s  
reduces the problem to the case where the underlying curve is totally degenerate (cf. Definition \ref{DD3WW}).
In this case, the normalizations of its  components are $3$-pointed projective lines, so dormant $\mr{PGL}_2^{(\N)}$-opers are  determined by purely combinatorial patterns of their radii (cf. ~\cite{LO} in the case of $\N =1$).

To make the discussion clearer, we set  ${^\dagger}C_\N$ (cf. \eqref{eeQQ112}) to be the collection  of triples of  nonnegative integers $(s_1, s_2, s_3)$ satisfying the following conditions\footnote{The second one can be rephrased as the condition that, for every positive integer $\N' < \N$, the inequalities $\sum_{i=1}^3 \hat{s}_i \leq p^{\N'} -2$ and $|\hat{s}_2 - \hat{s}_3| \leq \hat{s}_1 \leq \hat{s}_2 + \hat{s}_3$ hold, where $\hat{a} := \frac{p^{\N'}-1}{2} - \left| a- p^{\N'} \cdot \left\lfloor \frac{a}{p^{\N'}}\right\rfloor - \frac{p^{\N'}-1}{2} \right|$ for each nonnegative integer $a$.}:
\begin{itemize}
\item
$\sum_{i=1}^3 s_i \leq p^\N -2$ and $|s_2 -s_3| \leq s_1 \leq s_2 + s_3$;
\item
For every positive integer $\N' < \N$, we can choose a triple of integers  $(s'_1, s'_2, s'_3)$ with  $s'_i \in \{ [s_i]_{\N'}, p^{\N'}-1-[s_i]_{\N'} \}$ ($i=1, 2, 3$)  that satisfies  $\sum_{i=1}^3 s'_i \leq p^{\N'} -2$ and $|s'_2 -s'_3| \leq s'_1 \leq s'_2 + s'_3$.
\end{itemize}
Here, for each nonnegative integer $a$, 
we set 
$[a]_{\N'}$ to be 
the remainder obtained by dividing $a$ by $p^{\N'}$.
Also, 
fix   trivalent clutching  data $\mbG$  of type $(g, 0)$ (cf. Definition \ref{Def112}, (iii)), and denote by $X_\mbG$ the totally degenerate curve corresponding to $\mbG$. 

A {\it balanced $(p, \N)$-edge numbering} on $\mbG$ (cf. Definition \ref{De113}, (i)) is a collection of nonnegative integers  $(a_e)_{e \in E}$ indexed by the set $E$ of edges of $\mbG$ such that, for 
each triple of edges $(e_1, e_2, e_3)$ (with multiplicity) incident to a common vertex, 
the integers  $(a_{e_1}, a_{e_2}, a_{e_3})$  belongs to ${^\dagger}C_\N$.
The set of  balanced $(p, \N)$-edge numberings on $\mbG$ is denoted by 
\begin{align}
\mr{Ed}_{p, \N, \mbG}
\end{align}
(cf. \eqref{eeQQ79}).
This set is  finite, and it is possible to explicitly find out  which combinations of nonnegative integers belong to it.

\hspace{20mm} 
 \includegraphics[width=14cm,bb=0 0 1212 280,clip]{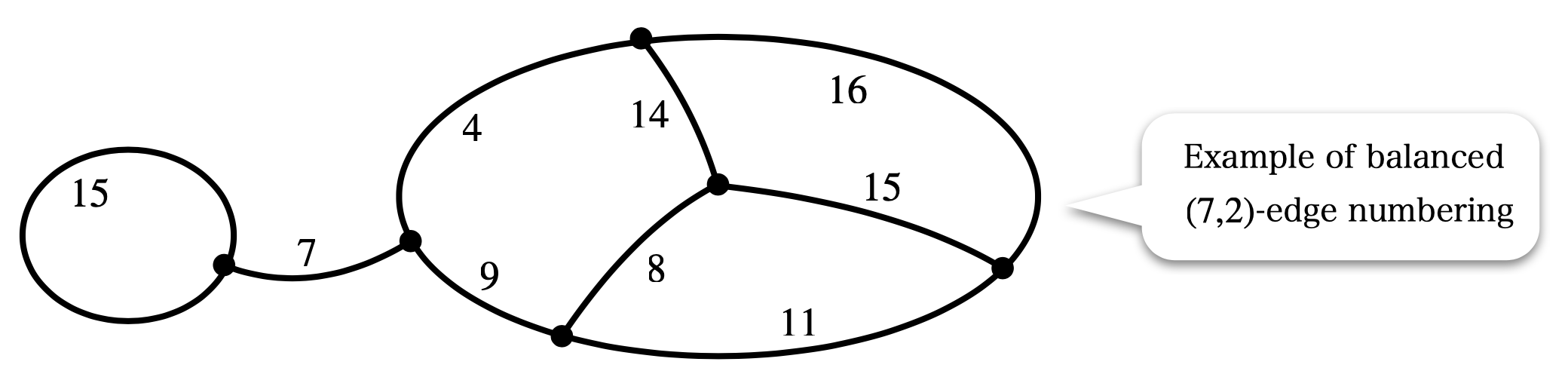}

\vspace{5mm}

The study  of  Gauss hypergeometric differential operators with a full set of solutions shows that 
 dormant $\mr{PGL}_n^{(\N)}$-opers on a $3$-pointed projective line  are verified to be parametrized by  ${^\dagger}C_\N$ (cf. Propositions \ref{Prop21}, \ref{P05}).
Thus, 
dormant  $\mr{PGL}_n^{(\N)}$-opers on $X_\mbG$  correspond bijectively to 
elements of $\mr{Ed}_{p, \N, \mbG}$.
Moreover, such numberings on $\mbG$ may be identified with 
   lattice points inside a certain subset   of a Euclidean space of dimension $(3g-3)\cdot \N$   (cf. Proposition \ref{Prop4378}).
This subset  is constructed as the union of a finite number of rational convex polytopes without boundary.
It follows that  we can apply  Ehrhart's theory for lattice-point counting of  polytopes, and obtain a quasi-polynomial
  realizing its lattice-point function.
The consequence  of this argument is as follows.

\SSP
 \begin{intthm}[cf. Theorem \ref{Thm5} for the full statement] \label{Th4ke2}
Let $\mbG$ be trivalent clutching data of type $(g, 0)$.
Then, there exists a canonical bijection between $\mr{Ed}_{p, \N, \mbG}$ and the set of dormant $\mr{PGL}_2^{(\N)}$-opers on $X_\mbG$.
In particular, 
 one can find a  quasi-polynomial  $H_{\N, \mbG} (t)$ (independent of $p$) with coefficients in $\mbQ$ of degree $(3g-3) \cdot \N$ such that its minimum period divides $4$ and satisfies  the equalities
\begin{align} \label{eeQQ4j0}
\mr{deg}(\Pi_{2, \N, g, r, \mbF_p}) = \sharp (\mr{Ed}_{p, \N, \mbG})  = H_{\N, \mbG} (p).
\end{align}
(Note that the odd constituents of $H_{\N, \mbG} (t)$ do not depend on the choice of $\mbG$, i.e., depend only on the genus  ``$g$" and the positive integer ``$\N$").
  \end{intthm}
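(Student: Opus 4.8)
The plan is to deduce everything from the combinatorial and factorization results already established, together with Ehrhart theory. First I would establish the canonical bijection between $\mr{Ed}_{p, \N, \mbG}$ and the set of dormant $\mr{PGL}_2^{(\N)}$-opers on $X_\mbG$. By the local classification (Theorem \ref{TT3fA} and the description of $p^{\M+1}$-flat $\mcD^{(\M)}$-modules around nodal points), a dormant $\mr{PGL}_2^{(\N)}$-oper on the totally degenerate curve $X_\mbG$ is determined by its restriction to each irreducible component together with the matching of radii along each node. Each component is a $3$-pointed projective line, and by Propositions \ref{Prop21} and \ref{P05} the dormant $\mr{PGL}_2^{(\N)}$-opers on $\mbP^1$ with three marked points are parametrized precisely by ${^\dagger}C_\N$, with the three radii read off as the parameters. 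Gluing along a node forces the radii coming from the two incident components to agree (in $\Xi_{2,\N} = (\mbZ/p^\N\mbZ)^\times/\{\pm 1\}$); packaging this across all components and nodes is exactly the definition of a balanced $(p,\N)$-edge numbering on $\mbG$. The generic étaleness from Theorem \ref{TTTB2}(i) (more precisely, the étaleness of $\Pi_{\rho, g, r}$ over totally degenerate curves together with the Cartesian clutching diagram \eqref{1050JJ}) guarantees that these gluings produce genuinely distinct opers and that every oper on $X_\mbG$ arises this way, giving the desired bijection and the first equality in \eqref{eeQQ4j0}. The second equality, $\mr{deg}(\Pi_{2, \N, g, r, \mbF_p}) = \sharp(\mr{Ed}_{p, \N, \mbG})$, then follows because $\Pi_{\rho, g, r}$ is finite, flat, and generically étale, so its degree may be computed fiberwise over a totally degenerate point, where the fiber is exactly $\mr{Ed}_{p, \N, \mbG}$.

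Next I would realize $\mr{Ed}_{p, \N, \mbG}$ as the set of lattice points in a bounded region built from rational polytopes, following Proposition \ref{Prop4378}. For each edge $e \in E$ write $a_e$ in base $p$ using its $\N$ digits, so that an edge numbering corresponds to a point in $\mbZ^{(3g-3)\cdot\N}$ (there are $3g-3$ edges in a trivalent graph of type $(g,0)$). The membership condition ``$(a_{e_1}, a_{e_2}, a_{e_3}) \in {^\dagger}C_\N$'' at each vertex unwinds, via the recursive definition of ${^\dagger}C_\N$ and the digit-reversal alternatives $s_i' \in \{[s_i]_{\N'}, p^{\N'}-1-[s_i]_{\N'}\}$, into a finite disjoint collection of systems of linear inequalities in the digit variables with coefficients that are polynomials in $p$ — equivalently, after rescaling by $p$, into a finite union of rational convex polytopes (open, i.e. without boundary, because of the strict-type inequalities $\sum s_i \leq p^{\N'}-2$ etc.). Taking the product over all vertices and imposing the consistency of the shared edge variables yields a region $P_\mbG \subseteq \mbR^{(3g-3)\cdot\N}$ that is a finite union of rational polytopes, with $\sharp(\mr{Ed}_{p,\N,\mbG})$ equal to the number of lattice points in the $p$-th dilate of the underlying rational structure.

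Finally I would invoke Ehrhart's theorem (in the form for finite unions of rational polytopes, using inclusion–exclusion over intersections) to conclude that there is a quasi-polynomial $H_{\N,\mbG}(t) \in \mbQ[t]$ of degree equal to the dimension $(3g-3)\cdot\N$ with $\sharp(\mr{Ed}_{p,\N,\mbG}) = H_{\N,\mbG}(p)$; independence of $p$ is automatic since the polytopes are defined uniformly in the dilation parameter. For the parenthetical claim about odd constituents, I would argue that the defining inequalities for odd $p$ are insensitive to $\mbG$ beyond the edge-count $3g-3$: for $p$ odd, the digit-reversal symmetry $s_i \leftrightarrow p^{\N'}-1-s_i$ at each vertex acts freely and compatibly, so the odd-residue constituent of the Ehrhart quasi-polynomial of the vertex-local polytope depends only on $\N$, and assembling $3g-3$ such identical local contributions along the graph produces a constituent depending only on $g$ and $\N$; the combinatorial type of $\mbG$ only affects how the even constituents (where the symmetry has fixed points) interact. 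The main obstacle I anticipate is the bookkeeping in the second step: correctly translating the recursive, level-by-level definition of ${^\dagger}C_\N$ — with its branching over the two choices $s_i' \in \{[s_i]_{\N'}, p^{\N'}-1-[s_i]_{\N'}\}$ at each of the $\N-1$ lower levels — into an honest finite union of rational polytopes in the digit coordinates, and verifying that the resulting region has the full dimension $(3g-3)\cdot\N$ (so that the Ehrhart degree is as claimed) rather than dropping in dimension because of hidden dependencies among the constraints. The rest is routine once that dictionary between the arithmetic conditions and the polytopal data is set up carefully.
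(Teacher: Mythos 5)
Your argument for the canonical bijection and for the existence of the Ehrhart quasi-polynomial follows essentially the paper's route (Propositions \ref{LLL001} and \ref{Prop4378}, Theorem \ref{Thm5}): decompose an oper on $X_\mbG$ into opers on the $3$-pointed components via the gluing equivalence of Proposition \ref{YY188}, use the classification of the latter by ${^\dagger}C_\N$ (Theorem \ref{T04rr}), identify the resulting balanced edge numberings with lattice points in dilates of rational constructible sets indexed by sign patterns, and apply Ehrhart's theorem together with the generic \'{e}taleness of Theorem \ref{c43} to get $\mr{deg}(\Pi_{2,\N,g,r,\mbF_p})=\sharp(\mr{Ed}_{p,\N,\mbG})$. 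Two minor imprecisions: the paper dilates the level-$1$ block by $p-1$ and the higher-level blocks by $p$, not uniformly by $p$ (the product of the two lattice-point counts is still a quasi-polynomial in $p$); and the bijection itself requires only the gluing equivalence plus the uniqueness of opers of prescribed radii on the $3$-pointed line, while \'{e}taleness enters only to identify the fiber count with the generic degree.

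The genuine gap is your treatment of the parenthetical claim about odd constituents. The assertion that ``the digit-reversal symmetry $s_i \leftrightarrow p^{\N'}-1-[s_i]_{\N'}$ acts freely, so the odd-residue constituent of the vertex-local polytope depends only on $\N$'' is not a proof: a symmetry of the defining inequalities does not by itself control individual constituents of an Ehrhart quasi-polynomial, and nothing in your argument explains why assembling the local data along two different trivalent graphs of the same type would force equal odd constituents while permitting different even ones. The paper's argument (Remark \ref{Remkkid}) is of a different and much simpler nature: (a) $\sharp(\mr{Ed}_{p,\N,\mbG})$ equals $\mr{deg}(\Pi_{2,\N,g,r,\mbF_p})$, a quantity defined without reference to $\mbG$, so the value of $H_{\N,\mbG}$ at every odd prime is independent of $\mbG$; (b) the vertices of the polytopes $\mcP_{i,\mbG}(\mpa)$ have coordinates of the form $a/2^n$, so the period $\ell$ of the quasi-polynomial is even; (c) by Dirichlet's theorem every odd residue class modulo $\ell$ contains infinitely many primes, so the odd constituents are uniquely pinned down by the values at primes and hence are independent of $\mbG$. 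You should replace the symmetry heuristic by this argument, or else supply an actual proof of the claimed behavior of the constituents under the $\mbZ/2\mbZ$-action, which would require substantially more work.
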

\SSP

Finally,
as a consequence of this fact, 
a partial  answer to  the question displayed  at the beginning of \S\,\ref{WW60} can be given as follows.

\SSP
 \begin{intthm}[cf. Theorem \ref{Theorem895}] \label{Th4zz}
Let  $X/S$ be  as in Theorem \ref{TTTB}, and suppose that $S = \mr{Spec}(R)$ for a flat $\mbZ /p^\N \mbZ$-algebra $R$ whose mod $p$ reduction $R/pR$ is an algebraically closed field over $\mbF_p$.
Also, let $\mcL$ be a line  bundle on $X$, and denote by 
\begin{align}
\mr{Diff}_{2, \mcL}^\mr{full}
\end{align}
 (cf. \eqref{eeQQ571}) the set of $2$nd order linear differential operators on $\mcL$ with unit principal symbol and having a full set of root functions.
\begin{itemize}
\item
Suppose that  $\mr{obs}(\mcL^{\otimes 2} \otimes \Omega) \neq 0$ (cf.  \eqref{eeQQ468} for the definition of $\mr{obs}(-)$).
Then, we have $\mr{Diff}_{2, \mcL}^\mr{full} = \emptyset$.
\item
Suppose 
$\mr{obs}(\mcL^{\otimes 2} \otimes \Omega) = 0$ and 
$X_0$ is sufficiently general in $\overline{\mcM}_{g, 0}$.
Then, 
$\mr{Diff}_{2, \mcL}^\mr{full}$ is finite and its cardinality $\sharp (\mr{Diff}_{2, \mcL}^\mr{full})$ satisfies
\begin{align}
\sharp (\mr{Diff}_{2, \mcL}^\mr{full}) = p^{g\N} \cdot 
\sharp (\mr{Ed}_{p, \N, \mbG}) = p^{g\N} \cdot H_{\N, \mbG}(p)
\end{align}
for any trivalent clutching data $\mbG$ of type $(g, 0)$, where  $H_{\N, \mbG}(t)$ denotes the quasi-polynomial resulting from Theorem \ref{Th4ke2}.
In particular,  $\sharp (\mr{Diff}_{2, \mcL}^\mr{full})$ may be expressed as a rational  quasi-polynomial in $p$  of degree $(4g-3) \cdot \N$. 
\end{itemize}
  \end{intthm}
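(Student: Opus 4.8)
The plan is to reduce the counting of $2$nd order linear differential operators on $\mcL$ with unit principal symbol and a full set of root functions to the already-understood count of dormant $\mr{PGL}_2^{(\N)}$-opers, and then apply Theorem~\ref{Th4ke2}. First I would set up the projectivization map: to a differential operator $D \in \mr{Diff}^{\mr{full}}_{2, \mcL}$ one associates the flat bundle given locally by the companion matrix construction \eqref{ee300} (in the rank $2$ case), and passing to the associated $\mr{PGL}_2$-bundle with its induced connection yields a dormant $\mr{PGL}_2$-oper on $X/S$. Conversely, recovering $D$ from its projectivization requires choosing a rank $2$ vector bundle in the appropriate Hodge-filtered form together with a genuine (not merely projective) connection lifting the $\mr{PGL}_2$-connection. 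The obstruction to making such a lift — and the obstruction to the existence of the oper bundle itself in the correct filtered shape relative to $\mcL$ — is precisely what $\mr{obs}(\mcL^{\otimes 2} \otimes \Omega)$ measures: this is why the first bullet holds, namely $\mr{Diff}^{\mr{full}}_{2, \mcL} = \emptyset$ when $\mr{obs}(\mcL^{\otimes 2} \otimes \Omega) \neq 0$.

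For the second bullet, assuming $\mr{obs}(\mcL^{\otimes 2} \otimes \Omega) = 0$, I would show that the fiber of the forgetful (projectivization) map over a fixed dormant $\mr{PGL}_2$-oper is a torsor under a group of "gauge" ambiguities: the freedom in lifting the connection and in twisting by the relevant line-bundle data. Over the base $X_0/S_0$ (characteristic $p$) this ambiguity count should be $p^{g}$ per level-$1$ factor — coming from the $g$-dimensional space of regular differentials / the Picard-type contribution to the lift of a flat connection — and since $D$ lives in characteristic $p^\N$ one accumulates an independent such factor at each of the $\N$ levels appearing in the diagonal-reduction tower, giving a total multiplicity $p^{g\N}$. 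Combining this fiber count with Theorem~\ref{TTTB}, which identifies $\mr{Op}^{\ZZZ}_{1,X}$ bijectively with $\mr{Op}^{\ZZZ}_{\N, X_0}$ when $X_0$ is general, and with Theorem~\ref{Th4ke2}, which gives $\sharp(\mr{Op}^{\ZZZ}_{\N, X_0}) = \sharp(\mr{Ed}_{p, \N, \mbG}) = H_{\N, \mbG}(p)$ for any trivalent clutching data $\mbG$ of type $(g,0)$, yields
\begin{align}
\sharp (\mr{Diff}_{2, \mcL}^\mr{full}) = p^{g\N} \cdot \sharp (\mr{Ed}_{p, \N, \mbG}) = p^{g\N} \cdot H_{\N, \mbG}(p).
\end{align}
Finiteness is immediate since both factors are finite. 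For the degree statement, $H_{\N, \mbG}(t)$ is a quasi-polynomial of degree $(3g-3)\cdot \N$ by Theorem~\ref{Th4ke2}, and multiplying by $p^{g\N}$ — i.e. composing with multiplication by the monomial $t^{g\N}$ — raises the degree by $g\N$, producing a rational quasi-polynomial in $p$ of degree $(3g-3)\N + g\N = (4g-3)\cdot\N$.

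The main obstacle I anticipate is making the fiber-multiplicity computation precise and level-by-level independent: one must verify that the "arithmetic" lifting ambiguities at the distinct levels $\CH = 0, 1, \dots, \N-1$ genuinely contribute multiplicatively and each contributes exactly $p^g$, without interference from the dormancy (horizontality) constraints that couple the levels through the Cartier operators. This is where the local analysis of finite-level $\mcD^{(\M)}$-modules from Part~I — in particular Proposition-Definition~\ref{P022} and the behavior of the Cartier operator in Definition~\ref{DDDf} — together with the duality between geometric and arithmetic deformations (Proposition~\ref{Pr459}, Corollary~\ref{T34}) must be invoked to decouple the tower. The role of the hypothesis $\mr{obs}(\mcL^{\otimes 2} \otimes \Omega) = 0$ is exactly to guarantee that this torsor is nonempty at every stage, so that the multiplicity is $p^{g\N}$ on the nose rather than $0$; handling the vanishing-obstruction bookkeeping carefully is the delicate point, whereas the passage from the oper count to the differential-operator count and the final degree arithmetic are then routine.
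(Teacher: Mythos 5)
Your overall strategy --- fibering $\mr{Diff}^{\mr{full}}_{2,\mcL}$ over the set of dormant $\mr{PGL}_2$-opers, letting the obstruction class kill everything in the first bullet, and multiplying the oper count from Theorems \ref{TTTB} and \ref{Th4ke2} by a fiber multiplicity of $p^{g\N}$ --- is the same as the paper's, and your degree arithmetic $(3g-3)\N + g\N = (4g-3)\N$ is correct. The paper organizes the fibering as the decomposition $\mr{Diff}^{\mr{full}}_{2,\mcL} = \coprod_{\vartheta\in W}\mr{Diff}^{\mr{full}}_{\clubsuit,\vartheta}$ over the set $W$ of dormant $2^{(1)}$-theta characteristics $\vartheta=(\varTheta,\nabla_\vartheta)$ with $\varTheta^\vee=\mcL$, each summand being in bijection with $\mr{Op}^{\ZZZ}_{1,X}$ via \eqref{GGGjjG}; the first bullet is then immediate, since $\mr{obs}(\mcL^{\otimes 2}\otimes\Omega)\neq 0$ means $\mcT\otimes(\mcL^\vee)^{\otimes 2}$ carries no $S$-connection at all, so $W=\emptyset$.

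The genuine gap is the computation of the multiplicity. You assert that ``the freedom in lifting the connection'' contributes $p^g$ at each of the $\N$ levels, attributing this to the $g$-dimensional space of regular differentials, and you propose to decouple the levels using Proposition-Definition \ref{P022}, Proposition \ref{Pr459} and Corollary \ref{T34}. As written this does not compute anything: the set of connection lifts at each stage is a torsor under $H^0(X_0,\Omega_{X_0/S_0})$, which is infinite, and the results you cite concern rank-$2$ deformations of opers, not the rank-$1$ count at issue. What actually produces the number is the following chain: (i) Proposition \ref{Prop7872} --- which uses the ordinariness of $X_0$ in the classical sense, so that the Cartier operator \eqref{eeQQ801} is bijective --- identifies $W$ bijectively with the set ${^{\Diag}}W$ of dormant $2^{(\N)}$-theta characteristics on $X_0/S_0$ with underlying line bundle dual to $\mcL_0$; (ii) Cartier descent \eqref{YY6} identifies ${^{\Diag}}W$ with the set of line bundles $\mcN$ on $X_0^{(\N)}$ satisfying $F^{(\N)*}_{X_0/S_0}(\mcN)\cong\mcL_0$; (iii) this last set is a fiber of the $\N$-th Verschiebung $\mr{Pic}^0_{X_0^{(\N)}/S_0}\to\mr{Pic}^0_{X_0/S_0}$, which is \'{e}tale of degree $p^{g\N}$. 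The multiplicity is thus a Verschiebung-fiber count on the Jacobian, not a count of lifting ambiguities; without step (iii) your argument does not yield the stated cardinality, and it is also here (not only in decoupling the levels) that the ``sufficiently general'' hypothesis on $X_0$ enters.
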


\LSP
\subsection{Notation and Conventions} 

Throughout this manuscript, all schemes
 are assumed to
be locally noetherian.
 We fix a prime $p$,  and write $\mbF_p := \mbZ/p\mbZ$.

Let $S$ be a scheme.
 Given  a sheaf $\mcV$ on $S$, 
 we use the notation ``\,$v \in \mcV$\,''
   for a local section $v$ of  $\mcV$.
If $\mcV$ is an $\mcO_S$-module,  then we denote by $\mcV^\vee$ \index{$\mcV^\vee$, dual of $\mcV$} the dual of $\mcV$, i.e., $\mcV^\vee := \mcH om_{\mcO_S}(\mcV, \mcO_S)$.
By a {\bf vector bundle} \index{vector bundle} on $S$, we mean a locally free $\mcO_S$-module of finite rank. 
 If $X$ is a scheme 
  over $S$,
 then we shall write $\Omega_{X/S}$   for 
 the sheaf of $1$-forms on $X$ over $S$, and $\mcT_{X/S}$ for its dual.

 For  basic properties on log schemes,
we refer the reader to  ~\cite{KaKa}, ~\cite{ILL}, and ~\cite{KaFu}.
Given a log scheme (or more generally, a log stack) indicated, say,  by   $S^\mr{log}$, we shall write $S$ for the underlying scheme (stack) of $S^\mr{log}$, and 
$\alpha_{S^\mr{log}} : \mcM_{S^\mr{log}} \migi \mcO_S$ for the morphism of sheaves of monoids defining the log structure of $S^\mr{log}$.
For any morphism of log schemes $f^\mr{log} : X^\mr{log} \rightarrow S^\mr{log}$, we write $\overline{\mcM}_{X^\mr{log}/S^\mr{log}} := \mcM_{X^\mr{log}}/\mr{Im}(f^*(\mcM_{S^\mr{log}}) \rightarrow \mcM_{X^\mr{log}})$ (cf. ~\cite[Introduction]{KaFu}), and call it the {\bf relative characteristic} of $f^\mr{log}$ (or,  of $X^\mr{log}/S^\mr{log}$).

Let $\nabla : \mcK^0 \migi \mcK^1$ be a morphism of sheaves of abelian groups on a scheme $S$.
It may be regarded as a complex concentrated at degrees $0$ and $1$; we denote this complex by 
$\mcK^\bullet[\nabla]$. 
\index{$\mcK^\bullet[\nabla]$, complex defined by $\nabla$}
Next, let $f : X \migi S$ be  a morphism of schemes and $i$ an integer $\geq 0$.
 Then,
one may define the sheaf
  $\mbR^i f_*(\mcK^\bullet [\nabla])$ 
on $S$ obtained from $\mcK^\bullet [\nabla]$ by applying 
the $i$-th hyper-derived functor $ \mbR^i f_*(-)$  of $f_*(-)$ (cf. ~\cite{Kal}, (2.0)).
In particular, $\mbR^0f_*(\mcK^\bullet [\nabla]) = f_*(\mr{Ker}(\nabla))$.
If $S$ is affine, then $H^0 (S,   \mbR^i f_*(\mcK^\bullet [\nabla]))$ may be identified with the $i$-th hypercohomology group 
$\mbH^i (X, \mcK^\bullet [\nabla])$.
Given an integer $n$ and a sheaf $\mcF$ on $S$,
  we define the complex
$\mcF [n]$  to be  $\mcF$ (considered as a complex concentrated at degree $0$) shifted down by $n$, so that $\mcF[n]^{-n} = \mcF$ and $\mcF[n]^i = 0$ ($i \neq -n$).

Denote by   $\mbG_m$
  the multiplicative group.
Also, for a positive  integer $n$, 
we shall write $\mr{GL}_n$ 
 (resp., $\mr{PGL}_{n}$) 
  for the general (resp.,  projective) linear group  of rank $n$.

\vspace{10mm}
\section{$\mcD$-modules of finite level} \label{S145}\SSP

In this section, we discuss  some basics related to sheaves of logarithmic differential operators of finite level and  higher-level flat bundles  on a log curve.

\LSP
\subsection{Logarithmic differential operators of finite  level} \label{SS040}

First, we briefly recall sheaves  of logarithmic differential operators of finite level discussed in ~\cite{PBer1}, ~\cite{PBer2},  and ~\cite{Mon}.

Let $\M$ be a nonnegative integer.
Also, let  $S^\mr{log}$ be   an fs log scheme over 
$\mbZ_{(p)}$
and $X^\mr{log}$ an fs log scheme equipped with a morphism of log schemes $f^\mr{log} : X^\mr{log} \migi S^\mr{log}$ which is log smooth  (i.e., ``smooth" in the sense of ~\cite{KaKa}, (3.3)).
 Denote by $\Omega_{X^\mr{log}/S^\mr{log}}$
  the sheaf of logarithmic $1$-forms  on $X^\mr{log}/S^\mr{log}$ (cf. ~\cite{KaKa}, (1.7))
  and by 
 $\mcT_{X^\mr{log}/S^\mr{log}}$
  the sheaf of logarithmic vector fields on $X^\mr{log}/S^\mr{log}$, i.e., the dual of $\Omega_{X^\mr{log}/S^\mr{log}}$.
 For simplicity, we occasionally write $\Omega$ and $\mcT$ instead of $\Omega_{X^\mr{log}/S^\mr{log}}$ and $\mcT_{X^\mr{log}/S^\mr{log}}$, respectively.
 Since $f^\mr{log}$ is log smooth, both $\Omega$ and $\mcT$ are vector bundles (cf ~\cite{KaKa}, Proposition (3.10)).
 
 Suppose that $p$ is locally nilpotent on $S$ and 
 $S$ is  equipped with an $\M$-PD structure 
 that extends to 
    $X$ via $f$. 
Denote by $P_{(\M)}^\mr{log}$
  the  log $\M$-PD envelope 
  of the diagonal embedding  $X^\mr{log} \migi X^\mr{log} \times_{S^\mr{log}} X^\mr{log}$ (cf. ~\cite[Proposition 2.1.1]{Mon}) and 
 by $\mcP_{(\M)}$  the structure sheaf of $P_{(\M)}^\mr{log}$.
 The defining ideal $\overline{\mcI}$ of  the strict closed immersion  $X^\mr{log} \migiincl P_{(\M)}^\mr{log}$
admits   the $\M$-PD-adic filtration $\{ \overline{\mcI}^{\{ \ell \}} \}_{\ell \in \mbZ_{\geq 0}}$ constructed in the manner of ~\cite[Definition A.\,3]{PBer2}.
For each  $\ell \in \mbZ_{\geq 0}$,   the quotient sheaf $\mcP_{(\M)}^\ell$ of $\mcP_{(\M)}$   by $\overline{\mcI}^{\{ \ell+1 \}}$ 
determines a strict closed subscheme
$P^{\ell, \mr{log}}_{(\M)}$  of  $P_{(\M)}^\mr{log}$.
We shall write 
$\mr{pr}_1$ and $\mr{pr}_2$ (resp., $\mr{pr}_1^{\ell}$ and $\mr{pr}_2^\ell$) for   the morphisms $P_{(\M)}^{\mr{log}} \migi X^\mr{log}$ (resp., $P_{(\M)}^{\ell, \mr{log}} \migi X^\mr{log}$) induced by 
the first and second projections  $X^\mr{log} \times_{S^\mr{log}} X^\mr{log} \migi X^\mr{log}$,
  respectively.
Note that $\mcP^\ell_{(\M)}$ may be regarded as a sheaf on $X$.
Moreover, it  has  an $\mcO_X$-module structure via $\mr{pr}_1^\ell$ (resp., $\mr{pr}_2^\ell$); it  will be  applied whenever  we are  considering  an action of  $\mcO_X$ 
 by  left (resp., right) multiplication.
 When there is a fear of confusion, we use the notation $\mr{pr}_{1*}^\ell (\mcP_{(\M)}^{\ell})$ (resp., $\mr{pr}_{2*}^\ell (\mcP_{(\M)}^{\ell})$) for  writing the sheaf $\mcP_{(\M)}^{\ell}$ equipped with this $\mcO_X$-module structure.

Given  an integer $\M' \geq  \M$, we obtain  a natural morphism
\begin{align}
\varsigma^\ell_{\M', \M} : \mcP_{(\M')}^{\ell} \migi \mcP_{(\M)}^{\ell}
\end{align}
 preserving  both the left and right $\mcO_X$-module structures.
As 
mentioned in ~\cite[\S\,2.3]{Mon},
there exists a canonical  $\mcO_X$-algebra morphism
\begin{align} \label{EOFOSQ98}
\delta_\M^{\ell, \ell'}: \mcP_{(\M)}^{\ell+\ell'} \migi \mcP^\ell_{(\M)} \otimes_{\mcO_X} \mcP^{\ell'}_{(\M)}
\end{align}
for each pair of nonnegative integers $(\ell, \ell')$ such that  if $\M'$ is an 
integer 
 $\geq \M$, then 
 the following diagram  is commutative:
\begin{align} \label{COWQ231}
\vcenter{\xymatrix@C=46pt@R=36pt{
\mcP_{(\M')}^{\ell+\ell'} \ar[r]^-{\delta_{\M'}^{\ell, \ell'}} \ar[d]_-{\varsigma^{\ell+\ell'}_{\M', \M}}& \mcP_{(\M')}^\ell \otimes_{\mcO_X} \mcP_{(\M')}^{\ell'} \ar[d]^-{\varsigma^{\ell}_{\M', \M} \otimes \varsigma^{\ell'}_{\M', \M}}
\\
\mcP_{(\M)}^{\ell+\ell'} \ar[r]_-{\delta_{\M}^{\ell, \ell'}}& \mcP_{(\M)}^\ell \otimes_{\mcO_X} \mcP_{(\M)}^{\ell'}.
}}
\end{align}
 The morphism $\delta_\M^{\ell, \ell'}$ determines  a morphism  of log schemes
 \begin{align} \label{MROD3}
 \delta_\M^{\ell, \ell' \sharp}:  P_{ (\M)}^{\ell, \mr{log}}  \times_{X^\mr{log}} P_{(\M)}^{\ell', \mr{log}} \migi P_{(\M)}^{\ell+\ell', \mr{log}}.
 \end{align}
 over $X^\mr{log}$.

For each $\ell \in \mbZ_{\geq 0}$,
we shall  set
 \begin{align} \label{MROD2}
\mcD_{X^\mr{log}/S^\mr{log}, \leq \ell}^{(m)} := \mcH om_{\mcO_X} (\mr{pr}_{1*}^{\ell}(\mcP^\ell_{(\M)}), \mcO_X)
\end{align}
(cf. ~\cite[Definition 2.3.1]{Mon}).
In particular, we have natural identifications $\mcD_{X^\mr{log}/S^\mr{log}, \leq 0}^{(\M)} = \mcO_X$,  $\mcD_{X^\mr{log}/S^\mr{log},  \leq \ell}^{(\M)}/\mcD_{X^\mr{log}/S^\mr{log},  \leq \ell-1}^{(\M)}= \mcT^{\otimes \ell}$ ($l \geq 1$).
The  {\it sheaf of logarithmic differential operators of level $m$} is defined by 
\begin{align} \label{MROD1}
\mcD_{X^\mr{log}/S^\mr{log}}^{(\M)} := \bigcup_{\ell \in \mbZ_{\geq 0}} \mcD_{X^\mr{log}/S^\mr{log}, \leq \ell}^{(\M)}.
\end{align}
For simplicity, we occasionally write  $\mcD_{\leq \ell}^{(\M)}$ and  $\mcD^{(\M)}$
instead of $\mcD_{X^\mr{log}/S^\mr{log}, \leq \ell}^{(\M)}$ and $\mcD_{X^\mr{log}/S^\mr{log}}^{(\M)}$, respectively.
The morphisms  $\delta_{\M}^{\ell, \ell'}$ (for $\ell, \ell' \in \mbZ_{\geq 0}$) determine   a structure of  (possibly noncommutative) $f^{-1}(\mcO_S)$-algebra
$\mcD^{(\M)} \otimes_{\mcO_X} \mcD^{(\M)} \migi \mcD^{(\M)}$ on $\mcD^{(\M)}$.
The collection of morphisms $\varsigma_{\M, \M'}^\ell$ 
(with $\M' \leq \M$) induces an inductive system of sheaves 
  $\{ \mcD^{(\M)}_{\leq \ell} \}_{\M \in \mbZ_{\geq 0}}$.
We shall write ${^L \mcD}^{(\M)}_{\leq \ell}$ (resp., ${^R \mcD}^{(\M)}_{\leq \ell}$) for the sheaf $\mcD^{(\M)}_{\leq \ell}$ endowed with a structure of $\mcO_X$-module  arising from  left (resp., right) multiplication by sections of $\mcD_{\leq 0}^{(\M)}  \left(=\mcO_X \right)$.

Given an $\mcO_X$-module $\mcF$, we equip the tensor product $\mcD_{\leq \ell}^{(\M)} \otimes \mcF := {^R}\mcD_{\leq \ell}^{(\M)} \otimes \mcF$ (resp., $\mcF \otimes \mcD^{(\M)}_{\leq \ell} :=\mcF \otimes {^L}\mcD^{(\M)}_{\leq \ell}$) with the $\mcO_X$-module structure given by left (resp., right) multiplication.

Note that $\mcD^{(0)}$ coincides with the sheaf of noncommutative rings ``$\msD_{\hslash, Y^\mr{log}/T^\mr{log}}^{<\infty}$" defined in ~\cite[Eq.\,(463)]{Wak8} such that the triple $(\hslash, T^\mr{log}, Y^\mr{log})$ is taken to be $(1, S^\mr{log}, X^\mr{log})$.
With this in mind, 
whenever  we deal with  
the sheaf $\mcD^{(0)}$ (as well as the sheaves $\mcD_{\leq \ell}^{(0)}$'s for $\ell \in \mbZ_{\geq 0}$) 
 without choosing a ($0$-)PD structure on $S$,
 we take it 
 in the sense of ~\cite{Wak8}. 
On the other hand,  $S$ is always assumed to be  equipped the trivial $\M$-PD structure when it is defined  over $\mbF_p$.

\LSP
\subsection{$\mcD^{(\M)}$-modules on log curves}
\label{SS04f4}

To simplify the discussion,  
we restrict ourselves to the case where 
 $X^\mr{log}/S^\mr{log}$ is a log curve, in the following sense\footnote{For the purposes of the various discussions to be made in ~\cite{Wak8} and this monograph, it is desirable to include the situation where a log curve admits an empty fiber.
 Hence, to be precise, the description of  ~\cite[Definition 1.40]{Wak8} needs to be modified slightly as in Definition \ref{pY5019}.
 Note that our definition also differs from   ~\cite[Definition 1.1]{KaFu} and ~\cite[Definition 4.5]{ACGH}.}.

\SSP
\bde \label{pY5019} 
We say that $f^\mr{log} : X^\mr{log} \migi S^\mr{log}$ is 
a {\bf log curve}  (over $S^\mr{log}$) if it is a  log smooth  integral morphism 
 such that each geometric fiber of the underlying morphism of schemes $f : X \migi S$  is either empty or a reduced  $1$-dimensional scheme.
 (In particular, both $\Omega$ and $\mcT$ are line bundles, and  the underlying morphism $f : X \migi S$ is flat,  according to  ~\cite[Corollary 4.5]{KaKa}.)
   \ede

Hereinafter,  suppose that $f^\mr{log} : X^\mr{log} \migi S^\mr{log}$ is a log curve.
Recall from ~\cite[\S\,2.2.2]{Mon} that 
there exists  a short  exact sequence
\begin{align} \label{ddfg1}
0 \migi \varDelta^{-1}(1 + \overline{\mcI}) \xrightarrow{\lambda} \varDelta^{-1} (\mcM_{P_{(\M)}^\mr{log}}) \xrightarrow{\varDelta^*} \mcM_{X^\mr{log}} \migi 0,
\end{align}
where
$\varDelta$ denotes the natural morphism   $X^\mr{log} \migiincl P_{(\M)}^\mr{log}$ and  
 $\lambda$  denotes  the morphism obtained by  restricting $\varDelta^{-1}(\alpha^{-1}_{P_{(\M)}^\mr{log}}) : \varDelta^{-1}(\mcO^\times_{P_{(\M)}}) \migiincl   \varDelta^{-1}(\mcM^\times_{P^\mr{log}_{(\M)}})$.

For any local section $a \in \mcM_{X^\mr{log}}$,
$\mr{pr}_1^* (a)$ and $\mr{pr}_2^* (a)$ have the same image in $\mcM_{X^\mr{log}}$.
From the above exact sequence, there exists a unique section $\mu_{(\M)} (a) \in \varDelta^{-1}(1 + \overline{\mcI})$ with  $\mr{pr}_1^* (a) = \mr{pr}_2^* (a) \cdot \lambda (\mu_{(\M)} (a))$.
Thus, the assignment $a \mapsto \mu_{(\M)} (a)$ defines a well-defined morphism $\mu_{(\M)} : \mcM_{X^\mr{log}} \migi \varDelta^{-1}(1 + \overline{\mcI})$.

Now, let us take a (locally defined) logarithmic coordinate $x \in \mcM_{X^\mr{log}}^\mr{gr}$, i.e., 
the section $d \mr{log} (x)$ forms a local generator of $\Omega$.
We shall set $\eta := \mu_{(\M)}(x) -1$.
For each $\ell \in \mbZ_{\geq 0}$,
the collection $\{ \eta^{\{ j \}} \, | \, j \leq \ell \}$ (cf. ~\cite[\S\,1.2.3]{Mon} for the definition of $(-)^{\{ j \}}$) forms  a local basis of $\mcP^\ell_{(\M)}$.
Hence, by setting   $\partial^{\langle j \rangle}$ as the dual of $\eta^{\{ j \}}$,
we obtain a  local basis $\{ \partial^{\langle  j \rangle}\}_{j \leq \ell}$ of $\mcD_{\leq \ell}^{(\M)}$.
For any nonnegative  integers $j'$ and $j''$,
the following equality  holds:
\begin{align} \label{e364}
\partial^{\langle j' \rangle} \cdot \partial^{\langle j'' \rangle} = \sum_{j = \mr{max} \{ j', j'' \} }^{j' + j''} \frac{j!}{(j' + j'' - j)! \cdot (j - j')! \cdot  (j-j'')!} \cdot \frac{q_{j'}! \cdot q_{j''}!}{q_j !} \cdot \partial^{\langle j \rangle}
\end{align}
(cf. ~\cite[Lemme 2.3.4]{Mon}), 
where, 
for  each  $j \in \mbZ_{\geq 0}$,  let $(q_j, r_j)$ be the pair of nonnegative integers uniquely determined by the condition that $j = p^\M \cdot q_j + r_j$ and $0 \leq r_j < p^\M$.
In particular, we have  $\partial^{\langle j' \rangle} \cdot \partial^{\langle j'' \rangle} = \partial^{\langle j'' \rangle} \cdot \partial^{\langle j' \rangle}$.

The following assertion will be applied in the subsequent discussion (cf. Lemma \ref{P238}).

\SSP
\ble \label{L99}
Let us keep the above notation.
Moreover,  suppose that $S$ is a scheme over $\mbF_p$.
Then, 
for each nonnegative  integer $\ell\leq \M$,
the following equality holds:
\begin{align}
\sum_{j=0}^{p^\ell-1} (-1)^j \cdot \partial^{\langle j \rangle} = 
 \prod_{s = 0}^{\ell -1} (1 - (\partial^{\langle p^s \rangle})^{p-1}),
\end{align}
where we set $\prod_{s = 0}^{-1} (-) := 1$.
\ele
\begin{proof}
Let $a$,  $b$, and $d$  be nonnegative  integers with
$1 \leq a \leq p-1$ and $p^d (p-1 +pb) < p^\M$ (which implies $q_{j} = 0$ for any $j \leq p^d (a-1) + p^{d+1}b$).
Since $S$ is a scheme over $\mbF_p$,
  we have
\begin{align}
& \hspace{8mm} \partial^{\langle p^{d}(a-1) + p^{d+1}b\rangle} \cdot  \partial^{ \langle p^d \rangle} \\
& =  \sum_{j=p^{d}(a-1)+ p^{d+1}b}^{p^d a + p^{d+1}b} \frac{j!}{(p^da + p^{d+1}b -j)! \cdot  (j - p^d (a-1)- p^{d+1}b)!  \cdot (j - p^d)!}
 \cdot \partial^{\langle j \rangle}  \notag \\
& \, = (a-1) \cdot \partial^{\langle p^d(a-1) + p^{d+1}b\rangle} + a  \cdot  \partial^{\langle p^d a+ p^{d+1}b\rangle}, \notag
\end{align}
where the first equality follows from \eqref{e364}.
By using this, we have
\begin{align} \label{dGG4}
\partial^{\langle p^d a + p^{d+1}b\rangle} & = \frac{1}{a } \cdot \left(\partial^{\langle p^d \rangle} - (a-1) \right) \cdot  \partial^{\langle  p^d (a-1) + p^{d+1}b\rangle}  \\
& =  \frac{1}{a} \cdot \left(\partial^{\langle p^d \rangle} - (a-1) \right) \cdot \left(  \frac{1}{a-1} \cdot  \left(\partial^{\langle p^d \rangle} - (a-2) \right) \cdot \partial^{\langle  p^d (a-2) + p^{d+1}b\rangle}  \right) \notag \\
&  \ \  \vdots \notag \\
& =  \frac{1}{a!} \cdot \partial^{\langle p^{d+1}b \rangle} \cdot \prod_{j = 1}^{a}(\partial^{\langle p^d \rangle} - (a-j)). \notag
\end{align}

Here, note that the equality 
\begin{align} \label{dGG2}
1-x^{p-1} = \sum_{a = 0}^{p-1} \frac{(-1)^a}{a!} \cdot \prod_{j=1}^{a} (x- (a-j))
\end{align}
 holds in $\mbF_p [x]$.
Indeed,  if $h (x)$ denotes the right-hand side of \eqref{dGG2}, then, for each  $n =1, \cdots, p-1$, we have
\begin{align}
h(n)  
= \sum_{a = 0}^{n} \frac{(-1)^a}{a!} \cdot \prod_{j=1}^{a} (x- (a-j)) \Bigl|_{x = n}
= \sum_{a=0}^n (-1)^a \cdot \binom{n}{a} = (1-1)^n = 0.
\end{align}
Moreover, since the equality  $h (0) = 1$ is immediately verified, we obtain \eqref{dGG2}, as desired.

The equalities   \eqref{dGG4} and \eqref{dGG2} together imply
\begin{align} \label{dGG1}
\sum_{a=0}^{p-1} (-1)^{p^{d}a + p^{d+1}b} \cdot \partial^{\langle p^{d} a + p^{d+1}b \rangle} &= \partial^{\langle p^{d+1}b \rangle} \cdot \left( \sum_{a = 0}^{p-1}\frac{(-1)^{p^d a + p^{d+1}b}}{a!} \cdot \prod_{j = 1}^{a}(\partial^{\langle p^d \rangle} - (a-j))\right)\\
&= (-1)^{p^{d+1}b} \cdot  \partial^{\langle p^{d+1}b \rangle} \cdot \left( \sum_{a = 0}^{p-1}\frac{(-1)^{a}}{a!} \cdot \prod_{j = 1}^{a}(\partial^{\langle p^d \rangle} - (a-j))\right)\notag\\
& =  (-1)^{p^{d+1}b} \cdot \partial^{\langle p^{d+1}b \rangle} \cdot (1- (\partial^{\langle p^d \rangle})^{p-1}). \notag
\end{align}
By applying \eqref{dGG1} for various $b$'s and $d$'s, we obtain the following sequence of equalities:
\begin{align}
\sum_{j =0}^{p^l-1} (-1)^j  \cdot \partial^{\langle j \rangle} &= \sum_{b = 0}^{p^{l-1}-1}
\sum_{a = 0}^{p-1}   (-1)^{a+ pb} \cdot \partial^{\langle a + p b \rangle} \\
& \stackrel{\eqref{dGG1}}{=} (1-(\partial^{\langle 1\rangle})^{p-1}) \cdot \sum_{b = 0}^{p^{l-1}-1} (-1)^{pb} \cdot \partial^{\langle pb \rangle} \notag  \\
& = (1-(\partial^{\langle 1\rangle})^{p-1}) \cdot 
\sum_{b=0}^{p^{l-2}-1} \sum_{a=0}^{p-1} (-1)^{pa + p^2 b} \cdot \partial^{\langle pa + p^2 b \rangle} \notag \\
& \stackrel{\eqref{dGG1}}{=}  (1-(\partial^{\langle 1\rangle})^{p-1}) (1-(\partial^{\langle p\rangle})^{p-1}) \cdot \sum_{b = 0}^{p^{l-2}-1} (-1)^{p^2 b} \cdot \partial^{\langle p^2 b \rangle}
\notag \\
&  \ \,  \vdots \notag \\
& = \prod_{s=0}^{l-1} (1-(\partial^{\langle p^s\rangle})^{p-1}). \notag
\end{align}
This completes the proof of this assertion.
\end{proof}
\SSP

By  a {\bf (left) $\mcD^{(\M)}$-module}, we shall mean  a pair $(\mcF, \DMO)$ consisting of  an $\mcO_X$-module $\mcF$
and an $\mcO_X$-linear morphism of  $f^{-1}(\mcO_S)$-algebras $\DMO : {^L \mcD}^{(\M)} \migi \mcE nd_{\mcO_S} (\mcF) \left(:= \mcE nd_{f^{-1}(\mcO_S)}(\mcF) \right)$.
We refer to $\DMO$ as  a {\bf $\mcD^{(\M)}$-module structure} on $\mcF$.
When we want to clarify the level ``$\M$",
we  write $\DMO^{(\M)}$ instead of $\DMO$. 
Also, an {\bf invertible $\mcD^{(\M)}$-module} is  a $\mcD^{(\M)}$-module $(\mcL, \DMO)$ such that $\mcL$ is a line bundle.
Given   a $\mcD^{(\M)}$-module $(\mcF, \DMO)$,
we shall write
\begin{align}\label{dE23}
\mcS ol (\DMO)
\end{align}
for the subsheaf of $\mcF$ on which $\mcD_{ +}^{(\M)}$ acts as zero via $\DMO$, where $\mcD_{+}^{(\M)}$ denotes the kernel of the canonical projection $\mcD^{(\M)} \migisurj \mcO_X$.

Recall from ~\cite[Definition 4.1]{Wak8}  that an {\bf $S^\mr{log}$-connection} on  an $\mcO_X$-module $\mcF$ is an $f^{-1}(\mcO_S)$-linear morphism 
$\nabla : \mcF \rightarrow \Omega  \otimes \mcF$ satisfying $\nabla (a \cdot v) = da \otimes v + a \cdot \nabla (v)$ for any local sections $a \in \mcO_X$ and $v \in \mcF$. 
(Since we have assumed that $X^\mr{log}/S^\mr{log}$ is a log curve, any $S^\mr{log}$-connection is automatically flat, in the sense of ~\cite[Definition 4.3]{Wak8}.)
Under the bijective correspondence mentioned in ~\cite[Eq.\,(468)]{Wak8},
we will not distinguish between a $\mcD^{(0)}$-module structure on $\mcF$ and an $S^\mr{log}$-connection   on $\mcF$.
In particular, if $\nabla$ is an $S^\mr{log}$-connection on $\mcF$,
then we have $\mcS ol (\nabla) = \mr{Ker}(\mcF \xrightarrow{\nabla} \Omega\otimes \mcF)$; each local section of $\mcS ol (\nabla)$ is called {\bf horizontal}.

By a {\bf flat module} (resp., a {\bf flat bundle}) on $X^\mr{log}/S^\mr{log}$,
we mean 
 an $\mcO_X$-module (resp., a vector bundle on $X$) together with an $S^\mr{log}$-connection on it.
 (Note that flat modules/bundles can be discussed without choosing a PD structure on $S$.)

Let  $(\mcF_\circ, \DMO_{\circ})$ and $(\mcF_\bullet, \DMO_{\bullet})$ be $\mcD^{(\M)}$-modules.
Then,   a {\bf morphism} from  $(\mcF_\circ, \DMO_{\circ})$ to $(\mcF_\bullet, \DMO_{\bullet})$ is  defined as an $\mcO_X$-linear morphism $\mcF_\circ \migi \mcF_\bullet$  compatible with  the respective $\mcD^{(\M)}$-module structures $\DMO_{\circ}$, $\DMO_{\bullet}$.

The structure  sheaf $\mcO_X$ admits 
 the trivial 
 $\mcD^{(\M)}$-module structure 
 \begin{align} \label{dGG6}
 \DMO_{X, \mr{triv}}^{(\M)} : {^L}\mcD^{(\M)} \migi \mcE nd_{\mcO_S} (\mcO_X)
 \end{align}
 determined uniquely by the condition that
 if we write $t := \alpha_{X^\mr{log}} (x)$, where $x$ denotes a logarithmic coordinate as introduced 
 in the discussion preceding Lemma \ref{L99}, then
\begin{align} \label{dd123}
\DMO_{X, \mr{triv}}^{(\M)} (\partial^{\langle j \rangle}) (t^n) = q_j! \cdot \binom{n}{j} \cdot t^n
\end{align}
for every $j \in \mbZ_{\geq 0}$ (cf. ~\cite[Lemme 2.3.3]{Mon}).
Thus, we obtain the trivial  (invertible) $\mcD^{(\M)}$-module
$(\mcO_X, \DMO_{X, \mr{triv}}^{(\M)})$.

Let 
$\M'$ be an integer $\geq \M$.
Each    $\mcD^{(\M')}$-module
$(\mcF, \DMO^{(\M')})$ induces   a $\mcD^{(\M')}$-module structure
\begin{align} \label{dGG10}
\DMO^{(\M')\Rightarrow (\M)} : {^L}\mcD^{(\M)} \migi \mcE nd_{\mcO_S} (\mcF)
\end{align}
  on $\mcF$, i.e., the composite of $\DMO^{(\M')}$ and the natural morphism   ${^L}\mcD^{(\M)} \migi {^L}\mcD^{(\M')}$.
 In particular, we obtain 
 a $\mcD^{(\M)}$-module 
$(\mcF, \nabla^{(\M')\Rightarrow (\M)})$.

\LSP
\subsection{$m$-PD stratifications  on  $G$-bundles} \label{SS0239}

Let us fix a smooth  affine algebraic group $G$ over $S$.
Denote by $\mcO_G$ the $\mcO_S$-algebra defined as the coordinate ring of $G$.
Also, let us fix $m \in \mbZ_{\geq 0}$.

\SSP
\bde  \label{D057}
\begin{itemize}
\item[(i)]
Let $\mcE$ be a $G$-bundle on $X$.
An {\bf $m$-PD stratification} on $\mcE/X^\mr{log}/S^\mr{log}$ (or simply, an $\M$-PD stratification on $\mcE$)  is
a collection
\begin{align}
\STR :=  \{\STR_\ell  \}_{\ell \in \mbZ_{\geq 0}},
\end{align}
where each $\STR_\ell$ denotes an isomorphism $P_{(\M)}^{\ell}  \times_X \mcE
\left(=\mr{pr}_2^{\ell  *}(\mcE)\right)
 \isom \mcE \times_X P^{\ell}_{(\M)}
  \left(=\mr{pr}^{\ell*}_1 (\mcE)\right)$
   of $G$-bundles  on $P_{(\M)}^{\ell}$
satisfying the following conditions:
\begin{itemize}
\item
 $\STR_0$ coincides with the identity morphism $\mr{id}_\mcE$ of $\mcE \left(= \mr{pr}_1^{0 *}(\mcE) = \mr{pr}_{2}^{0 *}(\mcE) \right)$, and  the equality  $\STR_{\ell'} |_{P_{(\M)}^{\ell}} = \STR_{\ell}$  holds  for  any pair of integers 
 $(\ell, \ell')$ with $\ell \leq \ell'$;
\item
The cocycle condition holds:
to be precise, for any pair of nonnegative  integers $(\ell, \ell')$,  the following diagram is commutative:
\begin{align} \label{E00235}
\vcenter{\xymatrix@C=6pt@R=36pt{
P_{(\M)}^\ell \times_X P_{(\M)}^{\ell'} \times_X  \mcE
\ar[rr]_-{\sim}^{\delta_{\M}^{\ell, \ell' \sharp *}(\STR_{\ell+\ell'})} \ar[rd]^-{\sim}_-{q_2^{\ell, \ell'*}(\STR_{\ell+\ell'})}&&  
\mcE\times_X P_{(\M)}^\ell \times_X P_{(\M)}^{\ell'}
\\
& P_{(\M)}^{\ell} \times_X \mcE \times_X P_{(\M)}^{\ell'} \ar[ru]^-{\sim}_-{q_1^{\ell, \ell' *}(\STR_{\ell+\ell'})}, &
 }}
\end{align}
where $q_1^{\ell, \ell'}$ and $q_2^{\ell, \ell'}$  denote  the morphisms defined as
 \begin{align} \label{Ee1}
 q_1^{\ell, \ell'} : P_{(\M)}^\ell \times_X P_{(\M)}^{\ell'} \xrightarrow{\mr{pr}_1} P_{(\M)}^\ell \migiincl  P_{(\M)}^{\ell+\ell'} \ \ \text{and} \ 
 \
  q_2^{\ell, \ell'} : P_{(\M)}^\ell \times_X P_{(\M)}^{\ell'} \xrightarrow{\mr{pr}_2} P_{(\M)}^{\ell'} \migiincl  P_{(\M)}^{\ell+\ell'},
 \end{align}
 respectively.
\end{itemize}

Also, by an {\bf $m$-PD stratified $G$-bundle}
   on $X^\mr{log}/S^\mr{log}$, we mean    a pair 
\begin{align}
(\mcE, \STR)
\end{align}
 consisting of a $G$-bundle $\mcE$ on $X$ and  an $m$-PD stratification $\STR$ on $\mcE$.
\item[(ii)]
Let $(\mcE_\circ, \STR_\circ)$ and 
$(\mcE_\bullet, \STR_\bullet)$ be $m$-PD stratified   $G$-bundles on $X^\mr{log}/S^\mr{log}$, where $\STR_\circ := \{ \STR_{\circ, \ell} \}_\ell$,  $\STR_\bullet :=\{ \STR_{\bullet, \ell} \}_\ell$.
An {\bf isomorphism of $m$-PD stratified   $G$-bundles} from 
$(\mcE_\circ, \STR_\circ)$ to  $(\mcE_\bullet, \STR_\bullet)$
is defined as an isomorphism of $G$-bundles $h : \mcE_\circ \isom  \mcE_\bullet$   such that, for each $\ell \in \mbZ_{\geq 0}$, the following square diagram is commutative:
\begin{align} \label{E00237}
\vcenter{\xymatrix@C=46pt@R=36pt{
  P_{(\M)}^\ell \times_X \mcE_\circ 
 \ar[r]_-{\sim}^-{\STR_{\circ, \ell}} \ar[d]^-{\wr}_-{\mr{id} \times h} &
 \mcE_\circ \times_X P_{(\M)}^\ell
\ar[d]_-{\wr}^-{h\times \mr{id}}
\\
  P_{(\M)}^\ell \times_X \mcE_\bullet
 \ar[r]^-{\sim}_{\STR_{\bullet, \ell}}&
 \mcE_\bullet \times_X P_{(\M)}^{\ell}.
 }}
\end{align}

\end{itemize}
 \ede
\SSP

\begin{exa}[Trivial $\M$-PD stratified $G$-bundle] \label{NN601}
The trivial $G$-bundle $X \times_S G$ on $X$ admits an $\M$-PD stratification.
Indeed, for each $\ell\in \mbZ_{\geq 0}$,  let $\STR_{\ell, \mr{triv}}$ denote the composite of natural morphisms
\begin{align}
P^\ell_{(\M)} \times_X (X \times_S G) \isom P^\ell_{(\M)} \times_S G
\xrightarrow{(a, b) \mapsto (b, a)} G \times_S P^\ell_{(\M)}  \isom 
(X \times_S G) \times_X P^\ell_{(\M)}.
\end{align}
Then, the resulting collection
\begin{align} \label{NN600}
\STR_{\mr{triv}} := \{ \STR_{\ell, \mr{triv}} \}_{\ell \in \mbZ_{\geq 0}}
\end{align}
forms an  $\M$-PD stratification on $X \times_S G$, which will be called the {\bf trivial $\M$-PD stratification}.
\end{exa}

\SSP
\begin{rema}[Case of $G = \mr{GL}_n$] \label{dGGg5}
Let  $\mcF$ be a rank $n \left(\in \mbZ_{> 0}\right)$ vector bundle on $X$, and denote by
$\mcE$ the $\mr{GL}_n$-bundle corresponding to $\mcF$.
Suppose that we are given an $\M$-PD stratification $\STR := \{ \STR_\ell \}_{\ell}$ on $\mcE$.
Since $\mcE$ represents  the  sheaf of isomorphisms $\mcI som(\mcO_X^{\oplus n}, \mcF)$,
each $\STR_\ell$ ($\ell \in \mbZ_{\geq 0}$) may be regarded as  an isomorphism
\begin{align}
\mcI som (\mcO_{P_{(\M)}^{\ell}}^{\oplus n}, \mr{pr}_2^{\ell*}(\mcF)) \isom \mcI som (\mcO_{P_{(\M)}^{\ell}}^{\oplus n}, \mr{pr}_1^{\ell*}(\mcF)),
\end{align}
 which is given by composition with  some  isomorphism $\STR^\natural_\ell :  \mr{pr}_2^{\ell*}(\mcF) \isom \mr{pr}_1^{\ell*}(\mcF)$.
The resulting collection $\STR^\natural := \{ \STR^\natural_\ell\}_\ell$ defines 
      an $\M$-PD stratification on $\mcF$ in the classical sense.
 Moreover, it determines a $\mcD^{(\M)}$-module structure on $\mcF$ (cf. ~\cite[Proposition 2.6.1]{Mon}).
 The assignment $\STR \mapsto \STR^\natural$ gives  an equivalence  between
  $\M$-PD stratifications on $\mcE$  and $\mcD^{(\M)}$-module structures on $\mcF$.
\end{rema}
\SSP

In what follows, we describe an $m$-PD stratification on a $G$-bundle 
not only a $\mr{GL}_n$-bundle (cf. Remark \ref{dGGg5} above)
 by using 
 the classical notion of an $m$-PD stratification defined for  an $\mcO_X$-module.

Let $\mcE$ be a $G$-bundle on $X$.
Since  $\mcE$ is affine over $X$ because of the affineness assumption on $G$,   it determines  an $\mcO_X$-algebra;
we shall  write $\mcO_\mcE$ for   this $\mcO_X$-algebra  by abuse of notation (hence, $\mcS pec (\mcO_\mcE) = \mcE$).
If $\mr{R}_\mcE : \mcE \times_S G \migi G$ denotes
the $G$-action  on $\mcE$,  then it corresponds to 
an $\mcO_X$-algebra morphism $\mr{R}_\mcE^\sharp : \mcO_\mcE \migi \mcO_\mcE \otimes_{\mcO_S}  \mcO_G$. (We shall refer to  $\mr{R}_\mcE^\sharp$ as the {\it $G$-coaction} on $\mcO_\mcE$.)

Now, let
  $\STR := \{ \STR_\ell \}_{\ell}$ be an $m$-PD stratification on $\mcE$.
For each  $\ell \in \mbZ_{\geq 0}$, the isomorphism $\STR_\ell$ defines a $\mcP_{(\M)}^\ell$-algebra isomorphism 
\begin{align}
\STR^\natural_\ell : \mcP_{(\M)}^{\ell} \otimes_{\mcO_X} \mcO_{\mcE} \isom \mcO_{\mcE} \otimes_{\mcO_X} \mcP_{(\M)}^{\ell}.
\end{align}
The  $G$-equivariance condition on  $\STR_\ell$ can be  interpreted as the commutativity of the following square diagram:
\begin{align} \label{E00100}
\vcenter{\xymatrix@C=28pt@R=36pt{
\mcP_{(\M)}^{\ell} \otimes_{\mcO_X} \mcO_{\mcE}  \ar[r]_-{\sim}^{\STR^\natural_\ell} \ar[d]_-{\mr{id} \otimes\mr{R}_\mcE^\sharp} & \mcO_{\mcE} \otimes_{\mcO_X} \mcP_{(\M)}^{\ell}  \ar[d]^-{\mr{R}_\mcE^\sharp \otimes \mr{id}}
\\
(\mcP_{(\M)}^{\ell} \otimes_{\mcO_X} \mcO_{\mcE}) \otimes_{\mcO_S} \mcO_G \ar[r]^-{\sim}_-{\STR_\ell^\natural \otimes \mr{id}}& (\mcO_{\mcE} \otimes_{\mcO_X} \mcP_{(\M)}^{\ell}) \otimes_{\mcO_S} \mcO_G \left(=  (\mcO_{\mcE} \otimes_{\mcO_S}  \mcO_G) \otimes_{\mcO_X} \mcP_{(\M)}^{\ell}\right).
 }} \hspace{-10mm}
\end{align}
Moreover, the commutativity of \eqref{E00235}  reads the commutativity of the following diagram:
\begin{align} \label{E00121}
\vcenter{\xymatrix@C=-32pt@R=36pt{
\mcP_{(\M)}^\ell \otimes_{\mcO_X} \mcP_{(\M)}^{\ell'} \otimes_{\mcO_X} \mcO_\mcE
&& 
\mcO_\mcE \otimes_{\mcO_X} \mcP_{(\M)}^\ell \otimes_{\mcO_X} \mcP_{(\M)}^{\ell'}
\ar[ll]^-{\sim}_-{\delta_{m}^{\ell+\ell'\sharp *} (\STR^\natural_{\ell+\ell'})} \ar[ld]_-{\sim}_-{\sim}^-{q_1^{\ell, \ell'  *}( \STR^\natural_{\ell+\ell'})}
\\
& \mcP^\ell_{(\M)} \otimes_{\mcO_X} \mcO_\mcE \otimes_{\mcO_X} \mcP^{\ell'}_{(\M)} \ar[ul]_-{\sim}^-{q_2^{\ell,  \ell'  *}( \STR^\natural_{\ell+\ell'})}. &
}}
\end{align}
Thus, 
the resulting collection
\begin{align} \label{Ertfq}
\STR^{\natural} := \{ \STR^\natural_\ell \}_{\ell \in \mbZ_{\geq 0}}
\end{align}
 forms an $m$-PD stratification on $\mcO_\mcE$ in the usual sense (cf. ~\cite[Definition 2.3.1]{PBer1}).

Conversely, suppose that we are given an $m$-PD stratification  $\STR^{\natural} := \{ \STR_\ell^\natural\}_{\ell \geq 0}$ on $\mcO_\mcE$ such that  each $\STR^\natural_\ell$ is   a  $\mcP_{(\M)}^\ell$-algebra  isomorphism $\mcP_{(\M)}^\ell \otimes_{\mcO_X} \mcO_\mcE \isom \mcO_\mcE \otimes_{\mcO_X} \mcP_{(\M)}^\ell$ and
the square diagram  \eqref{E00100} 
  for this collection is  commutative.
By applying the functor $\mcS pec (-)$ to various  $\STR_\ell^\natural$'s,
we obtain a collection
of isomorphisms $P_{(\M)}^\ell \times_X \mcE \isom \mcE \times_X P_{(\M)}^\ell$ ($\ell  \in \mbZ_{\geq 0}$), forming 
  an  $m$-PD stratification on $\mcE$.
\SSP

\begin{rema}[$\mcD^{(\M)}$-module associated to an $\M$-PD stratification] \label{Eruy78}
One may verify that giving a collection as in \eqref{Ertfq} is equivalent to giving 
a {\it compatible} collection
\begin{align} \label{Ler46}
\STR^{\natural \natural} := \{ \STR_\ell^{\natural \natural}\}_{\ell \in \mbZ_{\geq 0}},
\end{align}
where each $\STR_\ell^{\natural \natural}$ denotes an $\mcO_X$-algebra  morphism $\mcO_\mcE \migi \mcO_\mcE \otimes_{\mcO_X} \mcP_{(\M)}^\ell$, satisfying the following two conditions:
\begin{itemize}
\item
For each $\ell \in \mbZ_{\geq 0}$, the following square diagram is commutative:
\begin{align} \label{E00df1}
\vcenter{\xymatrix@C=32pt@R=36pt{
\mcO_\mcE \ar[r]^-{\STR_\ell^{\natural \natural}} \ar[d]_-{\mr{R}_\mcE^\sharp} &  \mcO_\mcE \otimes_{\mcO_X} \mcP_{(\M)}^\ell\ar[d]^-{\mr{R}_\mcE^\sharp \otimes \mr{id}}
\\
\mcO_\mcE \otimes_{\mcO_S} \mcO_G \ar[r]_-{\STR_\ell^{\natural \natural} \otimes \mr{id}} & (\mcO_\mcE \otimes_{\mcO_X} \mcP_{(\M)}^\ell) \otimes_{\mcO_S} \mcO_G \left(= (\mcO_\mcE \otimes_{\mcO_S} \mcO_G) \otimes_{\mcO_X} \mcP_{(\M)}^\ell \right).
}}
\end{align}
\item
The equality $\STR_0^{\natural \natural} = \mr{id}_{\mcO_\mcE}$ holds, and for each pair of nonnegative integers $(\ell, \ell')$, the following square diagram is commutative:
\begin{align} \label{E00df1}
\vcenter{\xymatrix@C=46pt@R=36pt{
\mcO_\mcE \otimes_{\mcO_X} \mcP_{(\M)}^{\ell+\ell'} \ar[r]^-{\mr{id} \otimes \delta_\M^{\ell, \ell' }} &  \mcO_\mcE \otimes_{\mcO_X} \mcP_{(\M)}^{\ell} \otimes_{\mcO_X} \mcP_{(\M)}^{\ell'}
\\
\mcO_\mcE\ar[r]_-{\STR_{\ell'}^{\natural \natural}} \ar[u]^-{\STR_{\ell+\ell'}^{\natural \natural}}& \mcO_\mcE \otimes_{\mcO_X} \mcP_{(\M)}^{\ell'}\ar[u]_-{\STR_\ell^{\natural \natural}\otimes \mr{id}}.
}}
\end{align}
\end{itemize}
In particular,  the collection  $\STR^{\natural \natural}$ may be regarded as  a left $\mcD^{(\M)}$-module structure  on $\mcO_\mcE$ (cf. ~\cite[Proposition 2.3.2]{PBer1}).
\end{rema}
\SSP

\begin{rema}[Case of $\M =0$] \label{WW411}
Recall from ~\cite[Definition 1.28]{Wak8} the notion of a  {\it flat $G$-bundle} on $X^\mr{log}/S^\mr{log}$.
(Although all the schemes and algebraic groups appearing in ~\cite{Wak8} are  defined over a field,  the various   formulations related to flat $G$-bundles can be generalized to  our situation here.)
By an argument similar to the proof of 
~\cite[Proposition 7.8.1]{Wak7} (which deals with the non-logarithmic case),  there exists 
 an equivalence of categories 
  \begin{align} \label{Efjj2}
\begin{pmatrix}
\text{ the groupoid  of } \\
\text{$0$-PD stratified $G$-bundles on $X^\mr{log}/S^\mr{log}$} 
\end{pmatrix}
\isom \begin{pmatrix}
\text{ the groupoid of } \\
\text{flat $G$-bundles  on $X^\mr{log}/S^\mr{log}$} \\
\end{pmatrix}.
\end{align}
With this in mind,
whenever we deal with a $0$-PD stratification  without fixing a ($0$-)PD structure 
on $S$, it is assumed to mean a flat connection.
\end{rema} 
\SSP

Let $(\mcE, \STR)$ (where $\STR := \{ \STR_\ell \}_\ell$) be an $\M$-PD stratified $G$-bundle on $X^\mr{log}/S^\mr{log}$.
Also, let $G'$ be another smooth affine algebraic group over $S$ and  $w : G \migi G'$ a morphism of $S$-groups.
Denote by $\mcE \times^{G, w} G'$, or simply by $\mcE \times^G G'$, the $G'$-bundle induced from $\mcE$ via change of structure group along $w$.
Then, the isomorphism $\STR_\ell$ (for each $\ell \in \mbZ_{\geq 0}$) induces a $G'$-equivariant  isomorphism 
\begin{align}\label{dFFgy7}
\STR_\ell \times^G G' : P_{(\M)}^\ell \times_X (\mcE \times^{G} G') \isom (\mcE \times^{G} G') \times_X P_{(\M)}^\ell,
\end{align}
and the collection
\begin{align} \label{dFFgy6}
\STR \times^G G' := \{ \STR_\ell \times^G G' \}_{\ell \in \mbZ_{\geq 0}}
\end{align}
forms 
 an $\M$-PD stratified $G'$-bundle on $\mcE \times^G G'$.

\LSP
\subsection{Relative Frobenius morphisms} \label{SS058}

In the rest of this section,
we suppose that
 $S$ is   a scheme over $\mbF_p$ (equipped with the trivial $\M$-PD structure).
We shall  write 
 $F_{X}$ and $F_S$ for
the absolute Frobenius (i.e., $p$-power) endomorphisms of $X$ and $S$, respectively.
Also, let us fix a nonnegative integer $\M$.

The  {\bf $(\M +1)$-st Frobenius twist of $X$ over $S$} is, by definition,  the base-change $X^{(\M +1)}$ $\left(:= X \times_{S, F_S^{\M +1}} S \right)$ of $X$ along 
 the $(\M +1)$-st iterate $F_S^{\M +1}$ of $F_S$.
Denote by $f^{(\M +1)} : X^{(\M +1)} \migi S$ the structure morphism of 
$X^{(\M +1)}$, which defines  a log curve 
$X^{(\M +1)\mr{log}} := X^{(\M +1)} \times_X X^\mr{log}$ over $S^\mr{log}$.

The {\bf $(\M +1)$-st relative Frobenius morphism of $X$ over $S$} is  the unique morphism $F_{X/S}^{(\M +1)} : X \migi X^{(\M +1)}$ over $S$ that makes the following diagram commute:
\begin{align}
\vcenter{\xymatrix@C=36pt@R=36pt{
X \ar@/^10pt/[rrrrd]^{F_X^{\M +1}}\ar@/_10pt/[ddrr]_{f} \ar[rrd]_{  \ \ \ \  \  \   F_{X/S}^{(\M +1)}} & & &   &   \\
& & X^{(\M +1)}  \ar[rr]_{\mr{id}_X \times F_S^{\M +1}} \ar[d]^-{f^{(\M +1)}}  \ar@{}[rrd]|{\Box}  &  &  X \ar[d]^-{f} \\
&  & S \ar[rr]_{F_S^{\M +1}} & &  S.
}}
\end{align}
To simplify the notation,  
 we write $F_{X/S} := F_{X/S}^{(1)}$, and write $\Omega^{(\M +1)} := \Omega_{X^{(\M +1)\mr{log}}/S^\mr{log}}$,   $\mcT^{(\M +1)} := \mcT_{X^{(\M +1)\mr{log}}/S^\mr{log}}$.
Also, for convenience, we occasionally write  $X^{(0)}$, $f^{(0)}$,  $\Omega^{(0)}$, and $\mcT^{(0)}$ instead of $X$, $f$,  $\Omega$, and $\mcT$, respectively.

\SSP
\begin{rema}[Cartier type] \label{QH45}
Recall from ~\cite[Theorem 1.1]{KaFu} and ~\cite[Remark 1.2.3]{Og}  (or  the comment following ~\cite[Definition (4.8)]{KaKa}) that the log curve  $X^\mr{log}/S^\mr{log}$
  is {\it of Cartier type}.
  Hence,  the exact relative Frobenius map in the statement  of ~\cite[Theorem 3.1.1]{Og3}  coincides with the usual relative Frobenius morphism $F_{X/S}$.
\end{rema}
\SSP

\bpr \label{Prr4e}
Let $\mcF$ be 
a relatively torsion-free sheaf  on  $X$ of rank $n \in\mbZ_{> 0}$ (cf. ~\cite[Chap.\,7, D\'{e}finition 1]{Ses} or ~\cite[Definition 3.1]{Wak8} for the definition of a relatively torsion-free sheaf).
Then, the $\mcO_{X^{(\M +1)}}$-module $F_{X/S*}^{(\M +1)}(\mcF)$
is a relatively torsion-free sheaf of rank $n \cdot p^{\M +1}$.
\epr
\begin{proof}
First, let us consider the case of  $\M = 0$.
Since $F_{X/S*}(\mcF)$ is  flat over $S$,
we may assume, after restricting to  the fiber over each geometric point of $S$, that
$S = \mr{Spec}(k)$ for an algebraically closed field $k$ over $\mbF_p$.
Recall from ~\cite[Theorem 1.1]{KaFu} that $X$ has at most nodal singularities.
When restricted to the smooth locus $X^\mr{sm}$ of $X$,  $\mcF$ becomes  locally free and $F_{X/k}$ becomes  finite and flat of degree $p$.
This implies that  $F_{X/k*}(\mcF) |_{X^{\mr{sm}(1)}}$ is locally free (and hence, relatively torsion-free) of rank $n \cdot p$.
Hence, the problem is reduced to proving  that $F_{X/k*}(\mcF)$ is (relatively) torsion-free of rank $n \cdot p$ at each  nodal point of $X$.

Let us take 
 a nodal point $q$ of  $X$, and write $q^{(1)} := F_{X/k}(q) \in X^{(1)}(k)$.
Denote by $\widehat{\mcF}_{q}$ (resp., $\widehat{\mcO}_{X, q}$; resp., $\widehat{\mcO}_{X^{(1)}, q^{(1)}}$) the completion of the stalk of $\mcF$ (resp., $\mcO_{X}$; resp., $\mcO_{X^{(1)}}$) at $q$ (resp., $q$; resp., $q^{(1)}$) with respect to its maximal ideal 
 and  by $\widehat{\mfm}_{X, q}$ (resp.,   $\widehat{\mfm}_{X^{(1)},q^{(1)}}$)  the maximal ideal of $\widehat{\mcO}_{X, q}$ (resp., $\widehat{\mcO}_{X^{(1)}, q^{(1)}}$).
It follows from ~\cite[Chap.\,8, Proposition 2]{Ses}
that $\widehat{\mcF}_{q} \cong \widehat{\mcO}_{X, q}^{\oplus a} \oplus \widehat{\mfm}_{X,q}^{\oplus (n-a)}$ for some $a \in \{0, \cdots, n \}$.
Hence, it suffices to consider the cases where $\widehat{\mcF}_{q} \cong \widehat{\mcO}_{X, q}$ and  $\widehat{\mcF}_{q} \cong \widehat{\mfm}_{X,q}$.
Since the first case was essentially proved in ~\cite[Proposition 3.2]{Wak8}, we only consider the second case, i.e., $\widehat{\mcF}_q \cong \widehat{\mfm}_{X,q}$.

Recall that  $\widehat{\mcO}_{X, q} \cong k [\![x,y]\!]/(xy)$ and $\widehat{\mfm}_{X,q} = x \cdot k[\![x]\!] \oplus y \cdot k[\![y]\!]$.
By using  the injection  $F_{X/k}^* : \widehat{\mcO}_{X^{(1)}, q^{(1)}} \migi \widehat{\mcO}_{X, q}$ induced by $F_{X/k}$, we shall identify $\widehat{\mcO}_{X^{(1)}, q^{(1)}}$ with the subring $k [\![x^p,y^p]\!]/(x^py^p)$ of $k [\![x,y]\!]/(xy)$.
In particular,  we have $\widehat{\mfm}_{X^{(1)}, q^{(1)}} \cong x^p \cdot k[\![x^p]\!] \oplus y^p \cdot k[\![y^p]\!]$.
Let us consider the  $k [\![x^p,y^p]\!]/(x^py^p)$-linear  morphism 
\begin{equation}
(x^p \cdot k[\![x^p]\!] \oplus y^p \cdot k[\![y^p]\!])^{\oplus p} \migi 
x \cdot k[\![x]\!] \oplus y \cdot k[\![y]\!]
\end{equation}
given by 
\begin{equation}
(x^p \cdot A_i + y^p \cdot B_i)_{i=1}^{p} \mapsto  \sum_{i =1}^{p}(x^i \cdot A_i+ y^i \cdot B_i)
 \end{equation}
for 
 any $A_i \in k[\![x^p]\!]$ and  $B_i \in k[\![y^p]\!]$ ($i = 1, \cdots, p$).
This morphism is verified to be bijective.
It follows that  $\widehat{\mfm}_{X, q} $ is isomorphic to $\widehat{\mfm}_{X^{(1)},q^{(1)}}^{\oplus p}$, which is relatively torsion-free.
This completes the proof of the assertion for $\M =0$.

Moreover, since $F_{X/S}^{(\M +1)} = F_{X^{(\M)}/S} \circ  \cdots \circ F_{X^{(1)}/S}\circ F_{X/S}$, the assertion  for $\M \in \mbZ_{> 0}$ can be proved by  successively applying  the assertion just proved.
Thus, we have finished the proof of this proposition.
\end{proof}

\LSP
\subsection{$p^{\M +1}$-curvature} \label{SS0dd58}

The image of the natural morphism
$\mcD^{(\M)}_{\leq p^{\M +1}} \migi 
\mcD^{(\M +1)}_{\leq p^{\M +1}}$ coincides with  $\mcD^{(\M +1)}_{\leq p^{\M +1}-1}$.
If $\varpi : \mcD^{(\M)}_{\leq p^{\M +1}} \migi \mcD^{(\M +1)}_{\leq p^{\M +1}-1}$ denotes  the resulting surjection, then the composite
\begin{align} \label{UU555}
\mcD^{(\M)}_{\leq p^{\M +1}-1} \xrightarrow{\mr{inclusion}}  \mcD^{(\M)}_{\leq p^{\M +1}} \xrightarrow{\varpi}
\mcD^{(\M +1)}_{\leq p^{\M +1}-1}
\end{align}
is an isomorphism.
The composite $\varpi' :  \mcD^{(\M)}_{\leq p^{\M +1}} \migi \mcD^{(\M)}_{\leq p^{\M +1}-1}$ of $\varpi$ and the inverse of  \eqref{UU555}
defines a split surjection of the short exact sequence
\begin{align} \label{WW700}
0 \migi \mcD_{\leq p^{\M +1}-1}^{(\M)} \migi \mcD_{\leq p^{\M +1}}^{(\M)} \migi \left(\mcD_{\leq p^{\M +1}}^{(\M)}/\mcD_{\leq p^{\M +1}-1}^{(\M)} =  \right) \mcT^{\otimes p^{\M +1}} \migi 0.
\end{align} 
Thus, we obtain the $\mcO_X$-linear composite
\begin{align} \label{dGG50}
\psi_{X^\mr{log}/S^\mr{log}} :  F_{X/S}^{(\M +1)*} (\mcT^{(\M +1)}) \left(=\mcT_{}^{\otimes p^{\M +1}} \right) \migiincl \mcD_{\leq p^{\M +1}}^{(\M)} \xrightarrow{\mr{inclusion}}  \mcD_{}^{(\M)},
\end{align}
where the first arrow denotes the split injection of \eqref{WW700} corresponding  to $\varpi'$.
Note that this morphism coincides, via the adjunction relation $F_{X/S}^{(\M +1)*}(-) \dashv F_{X/S*}^{(\M +1)}(-)$, with   the $p^{\M +1}$-curvature map $\mcT^{(\M +1)} \migi F_{X/S*}^{(\M +1)}(\mcD_{}^{(\M)})$  discussed  in ~\cite[Definition 3.10]{Ohk}.

If 
$x$ is a logarithmic coordinate as in \S\,\ref{SS04f4} and $\partial$ denotes the dual base of $d \mr{log} (x)$, then 
$\psi_{X^\mr{log}/S^\mr{log}}$ sends $\partial^{\otimes p^{\M+1}}$ to $\partial^{\langle p^{\M +1} \rangle}$ (cf. ~\cite[Proposition 3.11]{Ohk}).

\SSP
\bde \label{dGGd9}
\begin{itemize}
\item[(i)]
Let $(\mcF, \DMO)$ be a $\mcD^{(\M)}$-module.
The composite
\begin{align} \label{dGG55}
\psi (\DMO) : F_{X/S}^{(\M +1)*} (\mcT^{(\M +1)}) \left( = \mcT_{}^{\otimes p^{\M +1}}\right) \xrightarrow{\psi_{X^\mr{log}/S^\mr{log}}} \mcD_{}^{(\M)}  \xrightarrow{\DMO} \mcE nd_{\mcO_S} (\mcF)
\end{align}
is called  the {\bf $p^{\M +1}$-curvature (map)} of $\DMO$.
Also,  we shall say that $(\mcF, \DMO)$  is {\bf $p^{\M +1}$-flat}, or {\bf dormant},  if $\psi (\DMO) = 0$.
\item[(ii)]
Let  $(\mcE, \STR)$ be an $\M$-stratified $G$-bundle on $X^\mr{log}/S^\mr{log}$.
The {\bf $p^{\M +1}$-curvature (map)}   of  $\STR$ is defined  to be  the $p^{\M +1}$-curvature
 \begin{align}
 \psi (\STR) := \psi (\STR^{\natural \natural}) : F_{X/S}^{(\M +1)*} (\mcT^{(\M +1)}) \left(=\mcT_{}^{\otimes p^{\M +1}}\right) \migi \mcE nd_{\mcO_S} (\mcO_\mcE)
 \end{align}
   of  
the $\mcD_{}^{(\M)}$-module structure $\STR^{\natural \natural}$ on   $\mcO_\mcE$ corresponding to $\STR$ (cf. Remark \ref{Eruy78}).
Also, we shall say that  $(\mcE, \STR)$ is {\bf $p^{\M +1}$-flat}, or {\bf dormant},  if  $\psi (\STR) = 0$ (cf. ~\cite[Definition 3.8]{Wak8} for the case of $\M = 0$).
\end{itemize}
\ede
\SSP

\begin{rema}[The $p^{\M +1}$-curvature of a $\mr{GL}_n$-bundle] \label{dGGj1}
Let $\mcF$ and $\mcE$ be as in Remark \ref{dGGg5}.
Also, let $\DMO$ be a $\mcD^{(\M)}$-module structure on  $\mcF$, and denote by $\STR$ the $\M$-PD stratification on $\mcE$ corresponding to $\DMO$ via the equivalence mentioned in  Remark \ref{dGGg5}.
Then,  it is immediately verified that $\DMO$ has vanishing $p^{\M +1}$-curvature
 if and only if $\STR$ has vanishing  $p^{\M +1}$-curvature.
 In the case where $\M =0$ and $G$ is an algebraic group as before,
 \eqref{Efjj2} restricts to 
an equivalence of categories
 \begin{align} \label{Efjj32}
\begin{pmatrix}
\text{ the groupoid  of $p$-flat $0$-PD stratified} \\
\text{$G$-bundles on $X^\mr{log}/S^\mr{log}$} 
\end{pmatrix}
\isom \begin{pmatrix}
\text{ the groupoid of } \\
\text{$p$-flat $G$-bundles  on $X^\mr{log}/S^\mr{log}$} \\
\end{pmatrix}.
\end{align}
\end{rema}
\SSP

\begin{rema}[Classical definition] \label{RRfg5}
When the log structures of $S$ and $X$ are trivial,
the notion of  $p^{\M +1}$-curvature defined above is essentially the same as the notion of {\it $p$-$\M$-curvature} introduced   in ~\cite[Definition 3.1.1]{LSQ} (for $\mcD^{(\M)}$-modules) and ~\cite[Definition 7.7.1]{Wak7} (for $\M$-PD stratified $G$-bundles).
In particular,  a $\mcD^{(\M)}$-module or an $\M$-PD stratified $G$-bundle is $p^{\M +1}$-flat in the sense of Definition \ref{dGGd9} if and only if it has vanishing $p$-$\M$-curvature in the classical sense.
\end{rema}
\SSP

Let $\mcG$ be an $\mcO_{X^{(\M +1)}}$-module.
The comment at the beginning of  ~\cite[\S\,4.10.3]{Wak8} says that 
there exists a canonical $\mcD_{X^{(\M)\mr{log}}/S^\mr{log}}^{(0)}$-module structure on 
$F_{X^{(\M)}/S}^*(\mcG)$.
By applying  ~\cite[Corollaire 3.3.1]{Mon} to this,
we obtain  a  $\mcD^{(\M)}$-module  structure
\begin{align} \label{QQwkko}
\nabla_{\mcG, \mr{can}}^{(\M)} : {^L}\mcD^{(\M)} \migi \mcE nd_{\mcO_S} (F_{X/S}^{(\M +1)*}(\mcG))
\end{align}
on  the pull-back $F^{(\M +1)*}_{X/S} (\mcG) \left(= F^{(\M)*}_{X/S} (F_{X^{(\M)}/S}^*(\mcG)) \right)$.
It is immediately verified that $\nabla_{\mcG, \mr{can}}^{(\M)}$ has vanishing $p^{\M +1}$-curvature.

\SSP
\bde \label{dGGe11}
We shall refer to  $\nabla_{\mcG, \mr{can}}^{(\M)}$ as the {\bf canonical $\mcD^{(\M)}$-module  structure} on $F^{(\M +1)*}_{X/S} (\mcG)$.
\ede
\SSP

Note that, for   a $\mcD^{(\M)}$-module $(\mcF, \DMO)$, the subsheaf $\mcS ol (\DMO)$ of $\mcF$ (cf. \eqref{dE23}) may be regarded as an $\mcO_{X^{(\M +1)}}$-module  via the underlying homeomorphism of $F_{X/S}^{(\M +1)}$.
Here, suppose that
 the relative characteristic  of $X^\mr{log}/S^\mr{log}$
    is trivial, which implies that $X/S$ is smooth and $\mcD^{(\M)} = \mcD_{X/S}^{(\M)}$.
Then,   the assignments $\mcG \mapsto (F_{X/S}^{(\M +1)*}(\mcG), \DMO_{\mcG, \mr{can}}^{(\M)})$ and $(\mcF, \DMO) \mapsto \mcS ol (\DMO)$ determine an equivalence of categories 
  \begin{align} \label{YY6}
\begin{pmatrix}
\text{ the category  of } \\
\text{$\mcO_{X^{(\M +1)}}$-modules} 
\end{pmatrix}
\isom \begin{pmatrix}
\text{ the category of $\mcD^{(\M)}$-modules} \\
\text{ with vanishing $p^{\M +1}$-curvature} \\
\end{pmatrix}
\end{align}
(cf. ~\cite[Corollary 3.2.4]{LSQ}).

\LSP
\subsection{Cartier operator of a $p^{\M+1}$-flat $\mcD^{(\M)}$-module} \label{SS0ddd58}

Let $(\mcF, \DMO^{(\M)})$ be a $\mcD^{(\M)}$-module. 
For an  integer $a$ with $0\leq a \leq m+1$,  we set 
\begin{align} \label{dE48}
\mcF^{[a]} := \begin{cases} \mcF & \text{if $a = 0$};  \\
\mcS ol (\DMO^{(\M) \Rightarrow (a-1)}) & \text{if $a > 0$}.
\end{cases}
\end{align}
In particular, $\mcF^{[a]}$ is an $\mcO_{X^{(a)}}$-module.

In what follows, let us define an $S^\mr{log}$-connection on the $\mcO_{X^{(a)}}$-module $\mcF^{[a]}$ for $a = 0, \cdots, \M$.
First, we shall set $\nabla^{[0]}$ (or $(\nabla^{(\M)})^{[0]}$) $:= \nabla^{(\M) \Rightarrow (0)}$.

Next, let us choose  $a \in \{1, \cdots, \M\}$.
Since  
$\mcD_{\leq p^a -1}^{(a)} = \mr{Im}\left(\mcD^{(a-1)}_{\leq p^{a}} \migi \mcD^{(a)}_{\leq p^{a}}\right)$ and 
 $\mcD_{\leq p^a }^{(a)}/\mcD_{\leq p^a -1}^{(a)} = \mcT^{\otimes p^a}$, we obtain 
an exact sequence
\begin{align} \label{dE45}
\mcD^{(a-1)}_{\leq p^{a}}\cap \mcD^{(a-1)}_{+} \migi
 \mcD^{(a)}_{\leq p^{a}}\cap \mcD^{(a)}_{+}  \xrightarrow{\delta} \mcT^{\otimes p^{a}} \left(= F_{X/S}^{(a)*}(\mcT^{(a)}) \right) \migi 0,
\end{align}
where the first arrow is the morphism obtained by restricting the  morphism $\mcD^{(a-1)} \migi \mcD^{(a)}$.
Let us take a local section $\partial$ of $\mcT^{(a)}$.
There exists locally  a section $\widetilde{\partial}$ in $\delta^{-1} ((F_{X/S}^{(a)})^{-1}(\partial)) \left(\subseteq \mcD^{(a)}_{\leq p^{a}}\cap \mcD^{(a)}_{+} \right)$.
The exactness of \eqref{dE45} and the definition of $\mcF^{[a]}$ imply  that 
 the $f^{-1}(\mcO_S)$-linear endomorphism $\DMO_\partial^{[a]} := \DMO^{(\M)\Rightarrow {(a)}} (\widetilde{\partial})$ of $\mcF^{[a]}$ does not depend on the choice of  $\widetilde{\partial}$ (i.e., depends only on $\partial$).
 Hence, the morphism 
 \begin{align} \label{dE60}
 \nabla^{[a]}  \left( \text{or} \ (\DMO^{(\M)})^{[a]} \right) : \mcF^{[a]} \migi \Omega^{(a)} \otimes \mcF^{[a]}
 \end{align}
 determined  by assigning $\partial \mapsto \DMO_\partial^{[a]}$ is well-defined, and 
 this  is verified to form an $S^\mr{log}$-connection.

Thus, we obtain a flat module
\begin{align} \label{J16}
(\mcF^{[a]}, \DMO^{[a]})
\end{align}
on the log curve $X^{(a)\mr{log}}/S^\mr{log}$.
 The collection $\{ \mcF^{[a]} \}_{0 \leq a \leq \M +1}$ defines a decreasing filtration on $\mcF$ such that $\mr{Ker}(\nabla^{[a]}) = \mcF^{[a+1]}$ for every $a = 0,1, \cdots, \M$.

\SSP
\bpr \label{UU577}
Let us keep the above notation.
\begin{itemize}
\item[(i)]
  If $\DMO^{(\M)}$ has vanishing $p^{\M +1}$-curvature,
  then $\nabla^{[\M]}$ has vanishing $p$-curvature.
    \item[(ii)]
  For  every $a = 0, \cdots,\M -1$,
the $S^\mr{log}$-connection $\nabla^{[a]}$ 
  has vanishing $p$-curvature.
\end{itemize}
\epr
\begin{proof}
First, we shall prove assertion (i).
Denote by $\mcE nd (\mcF, \DMO^{(\M)})$ (resp., $\mcE nd (\mcF^{[\M]}, \DMO^{[\M]})$) the subsheaf of $\mcE nd_{\mcO_S} (\mcF)$  (resp., $\mcE nd_{\mcO_S} (\mcF^{[\M]})$)
consisting of endomorphisms  preserving the $\mcD^{(\M)}$-module structure $\DMO^{(\M)}$ (resp., the $S^\mr{log}$-connection   $\DMO^{[\M]}$).
The sheaf   $\mcE nd (\mcF, \DMO^{(\M)})$  (resp., $\mcE nd (\mcF^{[\M]}, \DMO^{[\M]})$)  is equipped with an  $\mcO_{X^{(\M +1)}}$-module structure in a natural manner.
The $p^{\M +1}$-curvature $\psi (\DMO^{(\M)}) : F_{X/S}^{(\M +1)*} (\mcT^{(\M +1)}) \migi \mcE nd_{\mcO_S} (\mcF)$ of $\DMO^{(\M)}$  (resp., the $p$-curvature $\psi (\DMO^{[\M]}) : F_{X^{(\M)}/S}^* (\mcT^{(\M +1)}) \migi \mcE nd_{\mcO_S} (\mcF^{[\M]})$ of $\DMO^{[\M]}$) restricts to an $\mcO_{X^{(\M +1)}}$-linear  morphism
\begin{align} \label{UU580}
\psi (\DMO^{(\M)})^\nabla : \mcT^{(\M +1)}\migi  \mcE nd (\mcF, \DMO^{(\M)}) \ \left(\text{resp.,} \ \psi (\DMO^{[\M]})^\nabla : \mcT^{(\M +1)} \migi \mcE nd (\mcF^{[\M]}, \DMO^{[\M]})\right).
\end{align}
We obtain an equivalence
 \begin{align} \label{UU814}
 \psi (\DMO^{(\M)}) = 0 \ 
 \Longleftrightarrow \ 
  \psi (\DMO^{(\M)})^\nabla = 0 \ \left(\text{resp.,} \  \psi (\DMO^{[\M]}) = 0
\ \Longleftrightarrow \ 
 \psi (\DMO^{[\M]})^\nabla = 0\right).
  \end{align}
Hence, the assertion follows from \eqref{UU814} and  the commutativity of the following diagram:
\begin{align} \label{UU812}
\vcenter{\xymatrix@C=46pt@R=36pt{
&\mcT^{(\M +1)} \ar[rd]^-{\psi (\DMO^{[\M]})^\nabla} \ar[ld]_-{\psi (\DMO^{(\M)})^\nabla}&
\\
\mcE nd(\mcF, \DMO^{(\M)}) \ar[rr]_-{\mr{Res}}&& \mcE nd (\mcF^{[\M]}, \DMO^{[\M]}),
}}
\end{align}
where the lower horizontal arrow $\mr{Res}$ denotes the morphism given by $h \mapsto h |_{\mcF^{[\M]}}$ for any $h \in \mcE nd (\mcF, \DMO^{(\M)})$.

To prove assertion (ii),
we note that  the $\mcD_{}^{(a)}$-module structure $\DMO^{(\M)\Rightarrow (a)}$ has vanishing $p^{(a+1)}$-curvature.
Hence, the assertion can be proved by the same  argument  as the proof of the first  assertion of (i),   where $\DMO^{(\M)}$ and $\DMO^{[\M]}$ are  replaced by $\DMO^{(\M)\Rightarrow (a)}$ and $\DMO^{(a)}$, respectively.
\end{proof}
\SSP

 The following assertion is a slight generalization of ~\cite[Proposition 3.2]{Wak8}.

\SSP
\bpr \label{PPer4}
Let us keep the above notation.
Suppose further  that $\psi (\DMO^{(\M)}) = 0$  and that $\mcF$ is  a relatively torsion-free sheaf  of rank $n \in \mbZ_{> 0}$.
Also, let us fix an integer $a$ with $0 \leq a \leq \M+1$.
\begin{itemize}
\item[(i)]
The $\mcO_{X^{(a)}}$-module $\mcF^{[a]}$ is relatively torsion-free of rank $n$. 
In particular, the $\mcO_{X^{(\M +1)}}$-module $\mcS ol (\DMO^{(\M)})$ is a relatively torision-free sheaf of rank $n$.
\item[(ii)]
The formation of $(\mcF^{[a]}, \DMO^{[a]})$ commutes with base-change over $S$-schemes.
To be precise, let $s : S' \migi S$ be a morphism of $\mbF_p$-schemes, and  use the notation ``$s^*(-)$" to denote the result of base-changing along $s$.
(In particular, we obtain a $\mcD_{X'^{\mr{log}}/S'^{\mr{log}}}^{(\M)}$-module $(s^*(\mcF), s^*(\DMO^{(\M)}))$, where $X'^{\mr{log}} := S' \times_{S^\mr{log}} X^\mr{log}$.)
Then, the natural   morphism  of flat modules
\begin{align}
(s^*(\mcF^{[a]}), s^* (\DMO^{[a]}))\migi ((s^*(\mcF))^{[a]}, (s^*(\DMO^{(\M)}))^{[a]})
\end{align}
constructed inductively on $a$ is an isomorphism.
In particular, the natural morphism $s^*(\mcS ol (\DMO^{(\M)})) \migi \mcS ol (s^*(\DMO^{(\M)}))$ is an isomorphism.
\end{itemize}
\epr
\begin{proof}
There is nothing to prove when  $a = 0$.
Assertions (i) and  (ii) for $a =1$ can be proved by  arguments entirely similar to the proof of ~\cite[Proposition  6.13]{Wak8} together with Proposition \ref{Prr4e} for $\M = 0$.
Moreover,  the remaining cases, i.e.,  the assertions for $a > 1$, can be proved by successively applying  the  assertions for $a = 1$ and Proposition \ref{UU577}.
\end{proof}
\SSP

\begin{rema}[Local case] \label{WW9}
Assertion (ii) of Proposition \ref{PPer4} remains true (for the same reason) even  if we replace $X^\mr{log}/S^\mr{log}$ with  $U_{\oslash}^{\mr{log}}/S$ or $U_{\otimes}^\mr{log}/S^\mr{log}$ introduced in \S\,\ref{SS1071} later.
The assertion for these situations  will be used in the proof of Proposition-Definition \ref{P022}.
\end{rema}
\SSP

\bco \label{KK1}
Let us keep the above notation.
Suppose that  either $\mcF$ is relatively torsion-free or the relative characteristic  of $X^\mr{log}/S^\mr{log}$ is trivial.
Then, the  converse of Proposition \ref{UU577}, (i),  is true.
To be precise, $\DMO^{(\M)}$ has vanishing $p^{\M +1}$-curvature if $\DMO^{[\M]}$ has vanishing $p$-curvature.
\eco
\begin{proof}
Denote by $\iota : U \migiincl X^{(\M +1)}$  the open subscheme of $X^{(\M +1)}$ where 
the relative characteristic of $X^{(\M +1)\mr{log}}/S^\mr{log}$ is trivial.
Since $U$  is scheme-theoretically dense (cf. ~\cite[Lemma 1.4]{KaFu}),
 the  assumption imposed  above and Propositions \ref{Prr4e} and \ref{PPer4} together  imply  that
the natural morphisms  
\begin{align} \label{UU813}
\mcE nd(\mcF, \DMO^{(\M)}) \migi \iota_*(\iota^*(\mcE nd (\mcF, \DMO^{(\M)}))), \hspace{3mm}
\mcE nd (\mcF^{[\M]}, \DMO^{[\M]}) \migi \iota_* (\iota^* (\mcE nd (\mcF^{[\M]}, \DMO^{[\M]}))) \hspace{5mm}
\end{align}
 are injective.
 By the equivalence of categories \eqref{YY6},
 there exists a canonical isomorphism
 \begin{align}
 \iota_*(\iota^*(\mcE nd (\mcF, \DMO^{(\M)})))\isom \iota_* (\iota^* (\mcE nd (\mcF^{[\M]}, \DMO^{[\M]}))),
 \end{align}
 which is compatible with 
 $\mr{Res}$ (in \eqref{UU812})
  via the morphisms  \eqref{UU813}.
It follows that
the morphism $\mr{Res}$ is  injective.
Hence, the assertion  follows from  this fact together with \eqref{UU814} and the commutativity of \eqref{UU812}.
\end{proof}
\SSP

Next, suppose further  that $(\mcF, \DMO^{(\M)})$ is $p^{\M +1}$-flat.
Let us take  an integer $a$ with  $0 \leq a \leq \M$.
Recall from ~\cite[Proposition 1.2.4]{Og} that the {\it Cartier operator} associated to $(\mcF^{[a]}, \DMO^{[a]})$ is, by definition, an $\mcO_{X^{(a+1)}}$-linear morphism 
\begin{align} \label{dGGfh3}
\Omega^{(a)} \otimes \mcF^{[a]} \migi \Omega^{(a+1)} \otimes F_{X^{(a)}/S*}(\mcF^{[a]})
\end{align}
 satisfying a certain condition, where the domain of this morphism is regarded as an $\mcO_{X^{(a+ 1)}}$-module via $F_{X^{(a)}/S}$. 
Since  $(\mcF^{[a]}, \DMO^{[a]})$ has vanishing $p$-curvature,
the image of this morphism lies in $\Omega^{(a+1)} \otimes \mcF^{[a +1]}\left( =\Omega^{(a+1)} \otimes \mr{Ker}(\nabla^{[a]})\right)$ (cf. the comment following ~\cite[Proposition 1.2.4]{Og}).
Hence, by restricting the codomain of  \eqref{dGGfh3},
we obtain an $\mcO_{X^{(a+1)}}$-linear morphism
\begin{align} \label{dEEw2}
C_{(\mcF^{[a]}, \nabla^{[a]})} : \Omega^{(a)} \otimes \mcF^{[a]} \migi  \Omega^{(a+1)} \otimes \mcF^{[a +1]}.
\end{align}

Moreover,
the composite $C_{(\mcF^{[\M]}, \nabla^{[\M]})} \circ \cdots \circ C_{(\mcF^{[1]}, \nabla^{[1]})} \circ C_{(\mcF^{[0]}, \nabla^{[0]})}$
determines an $\mcO_{X^{(\M +1)}}$-linear morphism
\begin{align} \label{dE3}
C_{(\mcF, \DMO^{(\M)})} : \Omega \otimes \mcF \migi \Omega^{(\M +1)} \otimes \mcS ol (\DMO^{(\M)}).
\end{align}

\SSP
\bde \label{DDDf}
We shall refer to $C_{(\mcF, \DMO^{(\M)})}$ as the {\bf  Cartier operator} of $(\mcF, \DMO^{(\M)})$.
\ede
\SSP

\begin{rema}[Functoriality of the Cartier operator] \label{Rem4451}
The formation of the Cartier operators is functorial in the following sense:
Let $h : (\mcF_\circ, \DMO_\circ) \migi (\mcF_\bullet, \DMO_\bullet)$ be a morphism of $p^{\M +1}$-flat $\mcD^{(\M)}$-modules.
Then, this morphism restricts to an $\mcO_{X^{(\M +1)}}$-linear morphism $\mcS ol (h) : \mcS ol (\DMO_\circ) \migi \mcS ol (\DMO_\bullet)$, and the following square diagram is commutative:
\begin{align}  \label{44f01}
\vcenter{\xymatrix@C=56pt@R=36pt{
 \Omega \otimes \mcF_\circ\ar[r]^-{C_{(\mcF_\circ, \DMO_\circ)}} \ar[d]_-{\mr{id} \otimes h} & \Omega^{(\M +1)} \otimes \mcS ol (\DMO_\circ)  \ar[d]^-{\mr{id} \otimes \mcS ol (h)}\\
 \Omega \otimes \mcF_\bullet\ar[r]_-{C_{(\mcF_\bullet, \DMO_\bullet)}}  &\Omega^{(\M +1)} \otimes \mcS ol (\DMO_\bullet^{(\M)}).
}}
\end{align}
\end{rema}

\LSP
\subsection{Dual of the Cartier operator} \label{SS44534}

Suppose that  the relative characteristic   of $X^\mr{log}/S^\mr{log}$ is trivial (which implies that $X/S$ is smooth).
Let $(\mcF, \DMO)$ be a $p^{\M+1}$-flat $\mcD^{(\M)}$-module with $\mcF$ locally free.
By   Grothendieck-Serre duality,  there exists a canonical isomorphism of $\mcO_S$-modules
\begin{align} \label{dEr3}
\int_{(\mcF, \DMO)}^{(\IN)} : f_*^{(\IN)} (\Omega^{(\IN)} \otimes \mcF^{[\IN]\vee})
\isom \mbR^1 f_*^{(\IN)} (\mcF^{[\IN]})^\vee
\end{align}
for each $\IN =0, \cdots, \M +1$.
Also, the inclusion $\mcF^{[\IN +1]} \migiincl \mcF^{[\IN]}$ ($\IN = 0, \cdots \M$) induces an $\mcO_S$-linear morphism
\begin{align}
\Dual_{\mr{Ker}}^{\, (\IN)} : \mbR^1 f_*^{(\IN)} (\mcF^{[\IN]})^\vee \migi
\mbR^1 f_*^{(\IN +1)} (\mcF^{[\IN+1]})^\vee.
\end{align}

On the other hand, 
we obtain the composite isomorphism
\begin{align} \label{dEE90}
\mr{Ker}(\DMO^{[\IN]\vee}) &\isom  \mcH om (\mr{Coker}(\DMO^{[\IN]}), \Omega^{(\IN+1)}) \\
& \isom \mcH om (\Omega^{(\IN+1)} \otimes\mr{Ker} (\DMO^{[\IN]}), \Omega^{(\IN+1)})  \notag \\
& \isom \mr{Ker}(\DMO^{[\IN]})^\vee \left(= \mcF^{[\IN +1]\vee} \right), \notag
\end{align}
where the first arrow follows from ~\cite[Proposition 6.15]{Wak8} 
 and the second arrow denotes  the isomorphism induced by $C_{(\mcF^{[\IN]}, \DMO^{[\IN]})}$.
Under the identification $\mr{Ker}(\DMO^{[\IN]\vee}) =\mcF^{[\IN+1]\vee}$ given by \eqref{dEE90},
 the Cartier operator $C_{(\mcF^{[\IN]\vee}, \DMO^{[\IN]\vee})}$
determines a morphism
\begin{align} \label{ddE10}
\Dual_{\mr{Coker}}^{\, (\IN)} : f_*^{(\IN)} (\Omega^{(\IN)} \otimes \mcF^{[\IN]\vee}) \migi 
 f_*^{(\IN+1)} (\Omega^{(\IN+1)} \otimes \mcF^{[\IN +1]\vee}).
\end{align}

The following square diagram is verified to be commutative:
\begin{align} \label{dEEr}
\vcenter{\xymatrix@C=46pt@R=36pt{
f_*^{(\IN)} (\Omega^{(\IN)}\otimes \mcF^{[\IN]\vee})
\ar[r]^-{\tiny{\Dual_{\mr{Coker}}^{\, (\IN)}}} \ar[d]^-{\wr}_-{\int_{(\mcF, \DMO)}^{(\IN)}} &
 f_*^{(\IN+1)} (\Omega^{(\IN+1)}\otimes \mcF^{[\IN+1]\vee})
 \ar[d]_-{\wr}^-{\int_{(\mcF, \DMO)}^{(\IN+1)}}
\\
\mbR^1 f_*^{(\IN)} (\mcF^{[\IN]})^\vee
\ar[r]_-{\tiny{\Dual_{\mr{Ker}}^{\, (\IN)}}} &
\mbR^1 f_*^{(\IN+1)} (\mcF^{[\IN+1]})^\vee.
}}
\end{align}
By composing the diagrams \eqref{dEEr} for various $\IN$'s,
we obtain a commutative square diagram
\begin{align} \label{dEEr2}
\vcenter{\xymatrix@C=46pt@R=36pt{
f_* (\Omega \otimes \mcF^\vee)
\ar[r]^-{\tiny{{\Dual}_{\mr{Coker}}}} \ar[d]^-{\wr}_-{\int_{(\mcF, \DMO)}^{(0)}} &
f_*^{(\M +1)} (\Omega^{[\M +1]} \otimes \mcS ol (\DMO)^\vee)
 \ar[d]_-{\wr}^-{\int_{(\mcF, \DMO)}^{(\M +1)}}
\\
\mbR^1 f_{*}(\mcF)^\vee
\ar[r]_-{\tiny{{\Dual}_{\mr{Ker}}}} &
\mbR^1 f_*^{(\M +1)}(\mcS ol (\DMO))^\vee.
}}
\end{align}

\vspace{10mm}
\section{Diagonal reductions/liftings of flat modules}  \label{S200}\SSP

This section discusses the constructive definition  of a dormant flat module in prime-power characteristic, partly on the basis of  the argument in ~\cite[\S\,2.1, Chap.\,II]{Mzk2}.
  At the same time, both the diagonal reduction of a dormant flat bundle and a diagonal lifting of a $p^{\M +1}$-flat $\mcD^{(\M)}$-module are  defined.
Also, we describe a map between  certain  deformation spaces  induced by the operation of taking the diagonal reductions in terms of  Cartier operator (cf. Proposition \ref{Pr459}).

Throughout this section, we fix  a nonnegative integer $\CH$.

\LSP
\subsection{Dormant flat modules} \label{SS010}
Let 
$S^\mr{log}$ be an fs log scheme whose underlying scheme $S$ is   flat over $\mbZ /p^{\CH+1}\mbZ$.
Also, let  $f^\mr{log} : X^\mr{log} \migi S^\mr{log}$
be a log curve over $S^\mr{log}$.
For each integer $\M$ with 
$0 \leq \M\leq \CH$, we will denote by a subscript $\M$ the result of reducing an object over $\mbZ/p^{\CH+1}\mbZ$ modulo $p^{\M+1}$.
In particular, we obtain a log curve $X_\M^\mr{log}/S_\M^\mr{log}$.

Let us fix an integer $\M$ with $0 \leq \M \leq  \CH$. 
For simplicity, we write $\mcD^{(\M)}_0 := \mcD^{(\M)}_{X_0^\mr{log}/S^\mr{log}_0}$, $\mcD^{(0)} := \mcD^{(0)}_{X^\mr{log}/S^\mr{log}}$ (cf. the comment at the end of \S\,\ref{SS040}), and  $\Omega^{(\M)} := \Omega_{X^{(\M)\mr{log}}/S^\mr{log}}$.

Denote by $\mr{Diag}'_\M$ the set of pairs 
\begin{align} \label{KK5}
(\msF, \DMO_0^{(\M)})
\end{align}
where
\begin{itemize}
\item
$\msF := (\mcF, \DMO)$  is a flat module 
on $X_\M^\mr{log}/S_\M^\mr{log}$  such that $\mcF$ is relatively torsion-free;
\item
$\DMO_0^{(\M)}$ denotes   a $\mcD_0^{(\M)}$-module structure  on $\mcF_0$ (= the reduction modulo $p$ of $\mcF$).
\end{itemize}
 In what follows, we shall define  a subset  $\mr{Diag}_\M$ of $\mr{Diag}'_\M$  inductively  on $\M$.
 
First, we set $\mr{Diag}_0$ to be the subset of $\mr{Diag}'_0$
consisting of pairs $(\msF, \DMO_0^{(0)})$, where $\msF := (\mcF, \DMO)$,
such that $\DMO_0^{(0)}$ coincides with $\DMO$ and has vanishing $p$-curvature.

Next, suppose that we have defined a subset $\mr{Diag}_{\M -1}$ of $\mr{Diag}'_{\M-1}$ for $\M \geq 1$.
Let us take an element 
$(\msF, \DMO_0^{(\M )})$
of $\mr{Diag}'_\M$, where $\msF := (\mcF, \DMO)$,
such that 
  $(\msF_{\M -1}, \DMO_0^{(\M)\Rightarrow (\M -1)})\in \mr{Diag}_{\M -1}$.
 We shall set 
 \begin{align} \label{dFFe3}
 \mcV_{\msF} := \mcS ol ( \DMO_0^{(\M)\Rightarrow (\M -1)}) \left(= \mcF_0^{[\M]} \right),
 \end{align}
 which may be regarded as   an $\mcO_{X^{(\M)}}$-module.
 As we will prove below (cf. Proposition  \ref{L0ddd10}, (i)),
the pair $(\msF, \DMO_0^{(\M)})$ and  $\mcV_{\msF}$
 satisfy the following properties:
\begin{itemize}
\item[]
\begin{itemize}
\item[$(\alpha)_{\M-1}$] : 
$\DMO_0^{(\M)\Rightarrow (\M -1)}$ has vanishing $p^\M$-curvature;
\item[$(\beta)_{\M-1}$] : 
Every local section of $\mcV_\msF \left(\subseteq \mcF_0 \right)$
can be lifted, locally on $X$, to a horizontal section in $\mcF_{\M-1}$ with respect to the $S_{\M -1}^\mr{log}$-connection $\DMO_{\M-1}$.
\end{itemize}
\end{itemize}

 Now, let us choose 
 a local section $v$ of $\mcV_{\msF}$ defined over an open subscheme $U$ of $X$.
By the property $(\beta)_{\M -1}$ described above, 
there exists, after possibly  shrinking  $U$,
a section $\widetilde{v}$ of $\mcF$ which is a lifting of $v$
  and horizontal modulo $p^\M$ with respect to $\nabla$.
Then, $\nabla (\widetilde{v})$ lies in $p^\M \cdot \Omega_\M \otimes \mcF$.
By the flatness of  $X_\M$ over $\mbZ/p^{m+1} \mbZ$,
we can divide it  by $p^\M$ to obtain a local section
 of $\Omega_0\otimes \mcF_0$;  we denote this section  by  $\frac{1}{p^\M} \cdot  \nabla (\widetilde{v})$.
 Because of  the property $(\alpha)_{\M -1}$, the $\mcD_0^{(\M -1)}$-module $(\mcF_0, \DMO_0^{(\M) \Rightarrow (\M -1)})$ associates the  Cartier operator
\begin{align}
C_{(\mcF_0, \DMO_0^{(\M) \Rightarrow (\M -1)})}
 : \Omega_0 \otimes\mcF_0 \migi \Omega^{(\M)}_0 \otimes \mcV_\msF
\end{align}
(cf. \eqref{dE3}).
The image of $\frac{1}{p^\M} \cdot  \nabla (\widetilde{v})$  via 
this morphism
 specifies  a local section $w$ of $\Omega^{(\M)}_0 \otimes \mcV_\msF$.
The section $w$ does not depend on the choice of the lifting $\widetilde{v}$, 
and  the resulting assignment $v \mapsto w$  determines a well-defined  $S_0^\mr{log}$-connection 
\begin{align} \label{J2}
\DMO_{\msF} :  \mcV_\msF \migi \Omega^{(\M)}_0 \otimes \mcV_\msF
\end{align}
on the $\mcO_{X_0^{(\M)}}$-module 
$\mcV_\msF$.
In particular, we obtain a flat module 
\begin{align} \label{J13}
(\mcV_\msF, \DMO_\msF)
\end{align}
on the log curve $X_0^{(\M)\mr{log}}/S_0^\mr{log}$.

Then, we shall define 
\begin{align} \label{ee1}
\mr{Diag}_\M
\end{align}
 to be the set of pairs $(\msF, \DMO_0^{(\M)})$ as above satisfying that $\DMO^{[\M]}_0 = \DMO_{\msF}$
   and $\psi (\DMO_{\msF}) = 0$.
The assignments $(\msF, \DMO_0^{(\M)}) \mapsto (\msF_{\M -1}, \DMO_0^{(\M)\Rightarrow (\M -1)})$  ($\M = 1, 2, \cdots, \CH$) yields a sequence of maps
\begin{align} \label{J11}
\mr{Diag}_\CH \migi \cdots \migi \mr{Diag}_\M \migi \cdots \migi \mr{Diag}_1 \migi \mr{Diag}_0.
\end{align}

The following assertion was used in the inductive construction of $\mr{Diag}_{\M}$ just discussed.

\SSP
\bpr \label{L0ddd10} 
Let  $\M$ be an integer with $0 \leq \M \leq \CH$.
\begin{itemize}
\item[(i)]
Each  element $(\msF, \DMO_0^{(\M)})$ (where $\msF:= (\mcF, \DMO)$)  of $\mr{Diag}_{\M}$ 
 satisfies  the following three properties:
\begin{itemize}
\item[$(\alpha)_\M$] : 
$\DMO_0^{(\M)}$ has vanishing $p^{\M+1}$-curvature;
\item[$(\beta)_\M$] : 
 Every local section of $\mcS ol (\DMO_0^{(\M)}) \left(= \mcF_0^{[\M +1]} \right)$ can be lifted, locally on $X$, to a horizontal section in $\mcF$ with respect to the $S_\M^\mr{log}$-connection $\DMO$;
 \item[$(\gamma)_\M$] : 
 For every $a = 0, \cdots, \M$, we have $(\mcV_{\msF_a}, \nabla_{\msF_a}) = (\mcF_0^{[a]}, (\DMO_0^{(\M)})^{[a]})$;
\end{itemize}
\item[(ii)]
Let $(\msF_\circ, \DMO_{\circ, 0}^{(\M)})$  and $(\msF_\bullet, \DMO_{\bullet, 0}^{(\M)})$ (where $\msF_\circ := (\mcF_\circ, \nabla_\circ)$ and $\msF_\bullet := (\mcF_\bullet, \nabla_\bullet)$) be  elements of $\mr{Diag}_{\M}$
 such that $\msF_\circ = \msF_\bullet$.
Then,  the equality  $\DMO_{\circ, 0}^{(\M)} = \DMO_{\bullet, 0}^{(\M)}$ holds.
\end{itemize}
 \epr
\begin{proof}
First, 
we shall prove assertion (i)  inductively on $\M$.
There is nothing to prove for the base step, i.e., the case of $\M =0$.
In order to discuss the induction step, suppose that we have proved the three properties $(\alpha)_{\M'}$, $(\beta)_{\M'}$, and $(\gamma)_{\M'}$ ($0 \leq \M' < \M$) for any element of $\mr{Diag}_{\M '}$, in particular, for $(\msF_{\M'}, \DMO_0^{(\M)\Rightarrow (\M ')})$.
Since the equality 
\begin{align} \label{KK2}
(\mcV_{\msF}, \nabla_{\msF}) = (\mcF_0^{[\M]}, (\DMO_0^{(\M)})^{[\M]})
\end{align}
holds, 
  the  property $(\gamma)_{\M-1}$ for $(\msF_{\M-1}, \DMO_0^{(\M)\Rightarrow (\M -1)})$  implies   that
 $(\gamma)_{\M}$ is satisfied for $(\msF, \DMO_0^{(\M)})$.
Moreover, by  \eqref{KK2},   the assertion $(\alpha)_{\M}$ follows from  Corollary \ref{KK1} and  the assumption $\psi (\DMO_\msF) = 0$.
To consider the property $(\beta)_{\M}$,
let us take a local section $v$ of $\mcS ol (\DMO_0^{(\M)}) \left(\subseteq \mcV_\msF \right)$.
Just as in the discussion at the beginning of this subsection,
we can choose  (locally on $X$) local sections $\widetilde{v}$ and  $\frac{1}{p^\M} \cdot \DMO (\widetilde{v})$ associated to $v$.
The following sequence of equalities holds:
\begin{align}
C_{(\mcF_0, \DMO_0^{(\M)})} \left(\frac{1}{p^\M} \cdot \DMO (\widetilde{v})\right)
&= C_{(\mcF_0^{[\M]}, (\DMO_0^{(\M)})^{[\M]})} \left( C_{(\mcF_0, \DMO_0^{(\M)\Rightarrow (\M -1)})} \left(\frac{1}{p^\M} \cdot \DMO (\widetilde{v})\right)\right)\\
& = C_{(\mcF_0^{[\M]}, (\DMO_0^{(\M)})^{[\M]})}(\nabla_\msF (v) )  \notag \\
& = C_{(\mcV_\msF, \nabla_\msF)}(\nabla_\msF (v) ) \notag \\
&  = 0. \notag
\end{align}
It follows that 
the integer 
\begin{align}
a (\widetilde{v}) := \mr{min} \left\{ a \, \Biggl| \,  -1 \leq a \leq \M, \   C_{(\mcF_0, \DMO_0^{(\M)\Rightarrow (a)})} \left(\frac{1}{p^\M} \cdot \DMO (\widetilde{v})\right)  = 0 \right\}
\end{align}
is well-defined, where  $C_{(\mcF_0, \DMO_0^{(\M)\Rightarrow (-1)})} := \mr{id}$.
Now, suppose that $a (\widetilde{v}) \geq 0$.
We shall set $u :=  C_{(\mcF_0, \DMO_0^{(\M)\Rightarrow (a (\widetilde{v})-1)})} \left(\frac{1}{p^\M} \cdot \DMO (\widetilde{v})\right)$, which is nonzero by the definition of $a (\widetilde{v})$.
Observe that
\begin{align}
C_{(\mcF_0^{[a (\widetilde{v})]}, (\nabla_0^{(\M)})^{[a (\widetilde{v}) ]})} (u) = 
 C_{(\mcF_0, \DMO_0^{(\M)\Rightarrow (a (\widetilde{v}))})} \left(\frac{1}{p^\M} \cdot \DMO (\widetilde{v})\right) = 0.
\end{align}
Hence, by  the comment following  ~\cite[Proposition 1.2.4]{Og}, there exists (locally on $X$) a local section $u'$ of $\mcF_0^{[a (\widetilde{v})]}$
  satisfying  $(\DMO_0^{(\M)})^{[a (\widetilde{v})]}(u') = u$.
Let us choose 
a section $\widetilde{u}'$   of $\mcF_{a (\widetilde{v})}$ which is 
a lifting  of $u'$ and    horizontal modulo $p^{a (\widetilde{v})}$
 with respect to $\nabla_{a (\widetilde{v})}$.
(Even if $a (\widetilde{v}) > 0$,  such a section always exists locally on $X$ because of 
 the equality $\mcF_0^{[a (\widetilde{v})]} =  \mcS ol (\DMO_0^{(\M)\Rightarrow (a (\widetilde{v})-1)})$ and the induction hypothesis, i.e., the property $(\beta)_{a (\widetilde{v})-1}$ for $(\msF_{a (\widetilde{v})-1}, \DMO_0^{(\M)\Rightarrow (a (\widetilde{v})-1)})$.) 
By the flatness of $X_\M$ over $\mbZ/p^{\M +1}\mbZ$,
we obtain a well-defined element $p^{\M -a (\widetilde{v})} \cdot \widetilde{u}'$ of $\mcF$.
If we  write $\widetilde{v}' := \widetilde{v} - p^{\M -a (\widetilde{v})} \cdot \widetilde{u}'$, then 
the following sequence of equalities holds:
\begin{align}
&  \ \ \  \  C_{(\mcF_0, \DMO_0^{(\M)\Rightarrow (a (\widetilde{v}) -1)})} \left(\frac{1}{p^\M} \cdot \DMO (\widetilde{v}')\right)  \\
 & = 
  C_{(\mcF_0, \DMO_0^{(\M)\Rightarrow (a (\widetilde{v}) -1)})} \left(\frac{1}{p^\M} \cdot \DMO (\widetilde{v})\right) -
   C_{(\mcF_0, \DMO_0^{(\M)\Rightarrow (a (\widetilde{v}) -1)})} \left(\frac{1}{p^\M} \cdot \DMO (p^{\M -a (\widetilde{v})} \cdot \widetilde{u}')\right) \notag \\
   & = u -  C_{(\mcF_0, \DMO_0^{(\M)\Rightarrow (a (\widetilde{v})-1)})} \left(\frac{1}{p^{a (\widetilde{v})}} \cdot \DMO (\widetilde{u}')\right) \notag \\
   & = u - \nabla_{\msF_{a (\widetilde{v})}}(u')\notag \\
   & = u - (\DMO_0^{(\M)})^{[a (\widetilde{v})]} (u') \notag \\
   & = u - u\notag \\
    & = 0. \notag
\end{align}
This implies $a (\widetilde{v})> a (\widetilde{v}')$.
By repeating the procedure for constructing $\widetilde{v}'$ from $\widetilde{v}$ just discussed, we can find  a lifting $\widetilde{v}^\circledcirc \in \mcF$ of $v$ with $a (\widetilde{v}^\circledcirc) =-1$, i.e., $\nabla (\widetilde{v}^\circledcirc) = 0$. 
This  proves the property  $(\beta)_{\M}$, thus completing the proof of assertion (i).

Moreover,  we again use induction on $\M$ to  prove assertion (ii).
Since the base  step is trivial, it suffices to consider
the induction step.
Suppose that the assertion with  $\M$ replaced by $\M -1$ ($\M \geq 1$) has been proved.
Now, let 
$(\msF_\circ, \DMO_{\circ, 0}^{(\M)})$ and  $(\msF_\bullet, \DMO_{\bullet, 0}^{(\M)})$
(where $\msF_\circ := (\mcF_\circ, \nabla_{\circ})$ and $\msF_\bullet := (\mcF_\bullet, \nabla_\bullet)$)
 be elements of $\mr{Diag}_\M$  with $\msF_\circ = \msF_\bullet$.
By the induction hypothesis,
the equality $\DMO_{\circ, 0}^{(\M)\Rightarrow (\M-1)} = \DMO_{\bullet, 0}^{(\M)\Rightarrow (\M-1)}$ holds via the equality  $\mcF_{\circ, 0} = \mcF_{\bullet, 0}$.
This implies  $\mcV_{\msF_\circ} = \mcV_{\msF_\bullet}$.
Under this equality, 
 the $S^\mr{log}$-connection  $\nabla_{\msF_\circ}$ coincides with  $\nabla_{\msF_\bullet}$ because of  the assumption $\msF_\circ = \msF_\bullet$.
 Hence,  it  follows from Lemma \ref{NN1} proved below that $\DMO_{\circ, 0}^{(\M)} = \DMO_{\bullet, 0}^{(\M)}$.
 We have finished   the proof of this proposition.
\end{proof}
\SSP

The following lemma was applied in the proof of the above proposition.

\SSP
\ble \label{NN1}
Suppose that 
$S^\mr{log}$ is an fs log scheme over $\mbF_p$ and that
$X^\mr{log}$ is  a log curve over $S^\mr{log}$.
Let $\M$ be a positive integer, and 
 let $(\mcF_{\circ}, \DMO_{\circ}^{(\M)})$, 
$(\mcF_{\bullet}, \DMO_{\bullet}^{(\M)})$ be $p^{\M +1}$-flat $\mcD^{(\M)}_0$-modules.
We shall assume the following conditions:
\begin{itemize}
\item[(a)]
The equality  $(\mcF_{\circ}, \DMO_{\circ}^{(\M)\Rightarrow (\M -1)}) = (\mcF_{\bullet}, \DMO_{\bullet}^{(\M)\Rightarrow (\M -1)})$ holds;
\item[(b)]   
The $S^\mr{log}$-connection  $(\DMO_{\circ}^{(\M)})^{[\M]}$ corresponds to  $(\DMO_{\bullet}^{(\M)})^{[\M]}$ via the equality $\mcF_{\circ}^{[\M]} = \mcF_{\bullet}^{[\M]}$ resulting from  the condition (a);
\item[(c)]
$\mcF_\circ \left(= \mcF_{\bullet} \right)$ is relatively torsion-free.
\end{itemize}
Then, we have $\DMO_{\circ}^{(\M)} = \DMO_{\bullet}^{(\M)}$.
\ele
\begin{proof}
We shall set $\mcW := \mr{Ker}((\DMO_{\circ}^{(\M)})^{[\M]}) = \mr{Ker}((\DMO_{\bullet}^{(\M)})^{[\M]})$.
Denote by $U$ the open subscheme of $X_0$ where the relative characteristic  of $X^\mr{log}/S^\mr{log}$ is trivial.
The $\mcO_{X^{(\M +1)}}$-linear inclusion $\left(\mcW = \right) \mr{Ker}((\DMO_{\circ}^{(\M)})^{[\M]}) \migiincl \mcF_{\circ}$ (resp., $\left(\mcW = \right) \mr{Ker}((\DMO_{\bullet}^{(\M)})^{[\M]})  \migiincl \mcF_{\bullet}$) extends to a morphism 
 of $\mcD^{(\M)}$-modules
\begin{align} \label{dE1}
(F_{X/S}^{(\M +1)*}(\mcW), \nabla_{\mcW, \mr{can}}^{(\M)}) \migi (\mcF_{\circ}, \DMO_{\circ}^{(\M)}) \ \left(\text{resp.,} \ (F_{X/S}^{(\M +1)*}(\mcW), \nabla_{\mcW, \mr{can}}^{(\M)}) \migi (\mcF_{\bullet}, \DMO_{\bullet}^{(\M)}) \right).
\end{align}
By the equivalence of categories \eqref{YY6},
this morphism becomes an isomorphism when restricted to  $U$.
This means that
 $\DMO_{\circ}^{(\M)}$ coincides with $\DMO_{\bullet}^{(\M)}$ over   $U$.
Since $U$ is a scheme-theoretically dense in  $X_0$ (cf. ~\cite[Lemma 1.4]{KaFu}) and $\mcF_{\circ}\left(=\mcF_{\bullet}\right)$ is relatively torsion-free by assumption,
we obtain the equality  $\DMO_{\circ}^{(\M)} = \DMO_{\bullet}^{(\M)}$, as desired.
\end{proof}
\SSP

\bco \label{YY79}
Let $(\msF, \DMO_0^{(\M)})$ (where $\msF := (\mcF, \nabla)$) be an element of $\mr{Diag}_\M$.
Then, each horizontal local section of $\mcF$ specifies a section of $\mcS ol (\DMO_0^{(\M)})$ via reduction  modulo $p$.
Moreover, the resulting morphism of sheaves
\begin{align} \label{YY82}
\left(\mr{Ker}(\nabla)_0 =\right)\mcS ol (\nabla)_0 \migi \mcS ol (\DMO_0^{(\M)})
\end{align}
is surjective.
\eco
\begin{proof}
We shall prove the first assertion by induction on $\M$.
The base step, i.e., the case of $\M = 0$, is trivial because
$\mcS ol (\DMO) = \mcS ol (\DMO_0^{(\M)})$.
To consider the induction step, suppose that we have proved the assertion with $\M$ replaced by $\M -1$ ($\M \geq 1$).
Let us take an arbitrary local section $v$ in $\mcS ol (\DMO)$.
By the induction hypothesis, the reduction modulo $p$ of $v$, i.e., $v_0 \left(= (v_{\M-1})_0\right)$, belongs to $\mcV_\msF \left(= \mcS ol (\DMO_0^{(\M)\Rightarrow (\M -1)})\right)$.
Since the reduction modulo $p^\M$ of $v$ is horizontal with respect to $\nabla_{\M-1}$,
it follows from the definition of $\nabla_\msF$ that
$\nabla_\msF (v_0) = \nabla (v) = 0$.
Thus, we have $v_0 \in \mcS ol (\nabla_\msF) = \mcS ol (\DMO_0^{(\M)})$.
This completes the proof of the first assertion.
The second assertion, i.e., the surjectivity of \eqref{YY82}, follows from the property $(\beta)_\M$ asserted in  Proposition \ref{L0ddd10}, (i).
\end{proof}

\SSP
\bde \label{D019} 
\begin{itemize}
\item[(i)]
Let $\msF := (\mcF, \DMO)$ be a flat module  on $X^\mr{log}_\M/S^\mr{log}_\M$.
  We shall say  that $\msF$ (or $\DMO$) is {\bf dormant}
  if there exists a $\mcD_0^{(\M)}$-module structure  
  $\DMO_0^{(\M)}$ on $\mcF_0$
    satisfying  $(\msF, \DMO_0^{(\M)}) \in \mr{Diag}_{\M}$.
  \item[(ii)]
   Let  $(\mcE, \DMO)$ be a flat $\mr{PGL}_n$-bundle (where $n >1$) on $X^\mr{log}_\M/S^\mr{log}_\M$ in the sense of ~\cite[Definition 1.28]{Wak8}.
  We shall say that $(\mcE, \DMO)$ is {\bf dormant} if, after possibly replacing $X$ with its \'{e}tale covering,  it may be described as  the projectivization of a rank $n$ dormant flat bundle on $X^\mr{log}_\M/S^\mr{log}_\M$. 
  \end{itemize}
    \ede
\SSP

\begin{rema}[Some properties on dormant flat bundles] \label{erd2}
Whether a flat module  $(\mcF, \DMO)$ (or a flat $\mr{PGL}_n$-bundle $(\mcE, \DMO)$) as in the above definition is dormant or not 
      does not depend on the integer  ``$\CH$".
 Also,  the property of being dormant  is of local nature with respect to the \'{e}tale topology on $X$.
  In the case of $\M =0$, a flat module  (or a flat $\mr{PGL}_n$-bundle) on $X_0^\mr{log}/S_0^\mr{log}$ is dormant if and only if it has vanishing $p$-curvature.
\end{rema}
\SSP

\begin{rema}[Well-definedness of $(\mcV_{\msF}, \nabla_{\msF})$] \label{erd}
Let $\msF$ be a flat module  on $X_\M^\mr{log}/S^\mr{log}_{\M}$ whose reduction modulo $p^{\M}$ is dormant.
Then, by Proposition \ref{L0ddd10}, (ii),
 the flat module  $(\mcV_{\msF}, \nabla_{\msF})$ is well-defined in the sense that it  does not depend on the choice of a $\mcD_0^{(\M)}$-module structure  $\DMO_0^{(\M)}$ with $(\msF, \DMO_0^{(\M)}) \in \mr{Diag}_\M$ (because such a choice $\DMO_0^{(\M)}$ is, if it exists, uniquely determined).
\end{rema}

\LSP
\subsection{Diagonal reductions/liftings} \label{SS0DDD10}

Let $\msF := (\mcF, \nabla)$ be a dormant flat module on $X^\mr{log}_\M/S^\mr{log}_\M$.
According to Proposition \ref{L0ddd10}, (ii),
there exists a unique $\mcD_0^{(\M)}$-module structure
 \begin{align} \label{YY223}
 {^{\Diag}}\!\!\DMO : {^L}\mcD_0^{(\M)} \migi \mcE nd_{\mcO_{S_0}} (\mcF_0)
 \end{align}
  on $\mcF_0$ with $(\msF, {^{\Diag}}\!\!\DMO) \in \mr{Diag}_\M$.
In particular, we obtain
a $\mcD_0^{(\M)}$-module
 \begin{align} \label{ee3}
{^{\Diag}}\!\!\msF := (\mcF_0, {^{\Diag}}\!\!\DMO),
 \end{align}
which  has 
 vanishing $p^{\M +1}$-curvature (cf.  Corollary \ref{KK1}).

Also, let $\msE := (\mcE, \nabla)$ be a dormant  flat $\mr{PGL}_n$-bundle on $X_\M^\mr{log}/S_\M^\mr{log}$.
Then, by choosing  \'{e}tale locally on $X$ a dormant flat bundle $\msF := (\mcF, \nabla)$ inducing $\msE$ and successively taking the projectivization of ${^{\Diag}}\!\!\nabla$, we obtain a well-defined  $\M$-PD stratification
\begin{align} \label{eeYY3}
{^{\Diag}}\!\!\STR :=  \{ {^{\Diag}}\!\!\STR_{\ell} : P_{(\M)}^\ell \times_X \mcE \isom \mcE \times_X P_{(\M)}^{\ell} \}_{\ell \in \mbZ_{\geq 0}}
 \end{align}
 on $\mcE_0$.
 We shall set
 \begin{align} \label{YY241}
 {^{\Diag}}\!\!\msE :=  (\mcE_0, {^{\Diag}}\!\!\STR),
 \end{align}
 which forms an $\M$-PD stratified $\mr{PGL}_n$-bundle on $X^\mr{log}_0/S_0^\mr{log}$.

\SSP
\bde \label{D010} 
\begin{itemize}
\item[(i)]
With the above notation, 
we shall refer to  ${^{\Diag}}\!\!\msF$ (resp., ${^{\Diag}}\!\!\msE$) as   the {\bf diagonal reduction} of $\msF$ (resp., $\msE$).
\item[(ii)]
Given a $\mcD_0^{(\M)}$-module  (resp., an $\M$-PD stratified $\mr{PGL}_n$-bundle) $\msG$,
we shall refer to any dormant flat module $\msF$  (resp.,  any dormant flat $\mr{PGL}_n$-bundle $\msE$) on $X_\M^\mr{log}/S_\M^\mr{log}$ satisfying  ${^{\Diag}}\!\!\msF = \msG$ (resp., ${^{\Diag}}\!\!\msE = \msG$) as 
  a {\bf diagonal lifting} of $\msG$.
  \end{itemize}
 \ede
\SSP

\begin{rema}[Compatibility with level reduction] \label{YY250}
Let $\msF$ (resp., $\msE$) be as above.
If $\M'$ is a nonnegative integer $\leq \M$, then
the reduction $\msF_{\M'}$  (resp., $\msE_{\M'}$) modulo $p^{\M' +1}$ of $\msF$  (resp., $\msE$) is dormant 
 and  
 the diagonal reduction
   ${^{\Diag}}\!\!(\msF_{\M'})$ (resp., 
   ${^{\Diag}}\!\!(\msE_{\M'})$) is obtained   by  reducing   the level of ${^{\Diag}}\!\!\msF$  (resp., ${^{\Diag}}\!\!\msE$)  to $\M'$.
\end{rema}
\SSP

\begin{rema}[Diagonal reduction of non-dormant flat modules] \label{YY456}
Suppose that the relative characteristic of $X^\mr{log}/S^\mr{log}$ is trivial.
Let $\msF := (\mcF, \nabla)$ be a flat bundle on $X_\M^\mr{log}/S_\M^\mr{log}$
such that $\msF_{\M -1}$ is dormant.  
Write ${^{\Diag}}\!\!\msF_{\M -1} = (\mcF_0, \nabla_0^{(\M -1)})$.
Just as in the discussion at the beginning of the previous subsection,
$(\msF, \nabla_0^{(\M -1)})$ associates a flat bundle $(\mcV_\msF, \nabla_\msF)$ (cf. \eqref{J13}).
According to the equivalence of categories \eqref{YY6} and ~\cite[Proposition 2.2.3, (i)]{PBer2} (or, ~\cite[Corollaire 3.3.1]{Mon}),
 $\nabla_\msF$  induces a $\mcD^{(\M)}_0$-module structure $\nabla_0^{(\M)}$ on 
$F^{(\M)*}_{X_0/S_0} (\mcV_\msF)$ compatible with $\nabla_0^{(\M-1)}$ via the isomorphism  $F^{(\M)*}_{X_0/S_0} (\mcV_\msF) \isom \mcF_0$ induced by the inclusion $\mcV_\msF \migiincl \mcF_0$.
Thus, 
 $\nabla_0^{(\M)}$ may be regarded  as a $\mcD^{(\M)}_0$-module structure on $\mcF_0$ via this isomorphism and satisfies $\nabla^{(\M)\Rightarrow (\M -1)}_0 = \nabla_0^{(\M -1)}$.

Similarly to 
 the case of dormant flat modules, we set
\begin{align} \label{eeQQ3101}
{^{\Diag}}\!\!\msF := (\mcF_0, \nabla_0^{(\M)}),
\end{align}
which we refer to as the {\bf diagonal reduction} of $\msF$.
If $\msF$ is dormant, then this coincides with the diagonal reduction in the sense of Definition \ref{D010}. 
\end{rema}
\SSP

\begin{exa}[Trivial flat bundle] \label{E14}
One may verify that the diagonal reduction of the trivial flat bundle $(\mcO_{X_\M}, \DMO^{(0)}_{X_\M, \mr{triv}})$ (cf. \eqref{dGG6}) is the trivial $\mcD_0^{(\M)}$-module $(\mcO_{X_0}, \DMO^{(\M)}_{X_0, \mr{triv}})$, i.e., 
\begin{align} \label{J10}
{^{\Diag}}\!\!(\mcO_{X_\M}, \DMO^{(0)}_{X_\M, \mr{triv}}) = (\mcO_{X_0}, \DMO^{(\M)}_{X_0, \mr{triv}}).
\end{align}
\end{exa}
\SSP

\bpr \label{P016dd} 
Suppose that the relative characteristic of $X^\mr{log}/S^\mr{log}$ is trivial.
\begin{itemize}
\item[(i)]
Let $\msF := (\mcF, \nabla)$ be a flat module on $X_\M^\mr{log}/S_\M^\mr{log}$ whose reduction modulo $p^\M$ (i.e., $\msF_{\M -1}$) is dormant.
Then, 
$\msF$ is dormant if and only if $\psi (\nabla_\msF) =0$ (cf. Remark \ref{erd}).
In particular, if $\mcF$ is assumed to be a vector bundle and we fix a scheme-theoretically dense open subscheme $U$ of  $X_\M$ (equipped with a log structure pulled-back from $X^\mr{log}$, by which we obtain a log curve $U^\mr{log}/S^\mr{log}_\M$),
then $\msF$ is dormant if and only if its restriction $\msF |_U$ to $U$ is dormant.
\item[(ii)]
Let  $n$ be a positive integer and $\msF := (\mcF, \DMO)$  a rank $n$ flat bundle on $X_\M^\mr{log}/S_\M^\mr{log}$.
Note that, since $\DMO$ is $\mcS ol (\DMO_{X_\M, \mr{triv}}^{(0)})$-linear,
the sheaf of horizontal sections $\mr{Ker}(\DMO)$ forms an $\mcS ol (\DMO_{X_\M, \mr{triv}}^{(0)})$-submodule of $\mcF$.
Then, the following three conditions are equivalent to each other:
\begin{itemize}
\item[(a)]
$\msF$ is dormant;
\item[(b)]
$\msF$ is locally trivial, i.e., isomorphic, Zariski locally on $X$, to $(\mcO_{X_\M}, \DMO_{X_\M, \mr{triv}}^{(0)})^{\oplus n}$;
\item[(c)]
 The $\mcS ol (\DMO_{X_\M, \mr{triv}}^{(0)})$-module $\mr{Ker}(\DMO)$ is locally free of rank $n$ (which implies that the natural morphism $\mcO_{X_\M} \otimes_{\mcS ol (\DMO_{X_\M, \mr{triv}}^{(0)})} \mr{Ker}(\nabla) \rightarrow \mcF$ is an isomorphism).
\end{itemize}
\end{itemize}
 \epr
\begin{proof}
First, we shall prove the first assertion of  (i).
Since the ``only if" part of the required equivalence is immediate from Proposition \ref{UU577}, (i), we only consider the ``if" part.
Suppose that $\psi (\nabla_{\msF}) =0$.
Then, 
the morphism 
\begin{align} \label{YY2}
F_{X_0^{(\M)}/S_0}^* (\mcS ol (\nabla_\msF)) \migi \mcV_\msF
\end{align}
 induced naturally by the inclusion $\mcS ol (\nabla_\msF) \migiincl \mcV_\msF$  is an isomorphism (cf. ~\cite[Theorem (5.1)]{Kal}).
On the other hand,
if we set   $(\mcF_0, \DMO_0^{(\M-1)}) := {^{\Diag}}\!\!\msF_{\M-1}$, 
then  it follows from
the equivalence of categories \eqref{YY6}
 that 
the morphism 
\begin{align} \label{YY1}
\left(  F_{X_0/S_0}^{(\M)*}(\mcS ol (\DMO_0^{(\M-1)})) = \right) F_{X_0/S_0}^{(\M)*}(\mcV_\msF) \migi \mcF_0
\end{align}
 induced by 
the inclusion $\mcS ol (\DMO_0^{(\M-1)}) \migiincl  \mcF_0$ is an isomorphism.
Hence, we obtain the composite isomorphism
\begin{align}
F^{(\M +1)*}_{X_0/S_0}(\mcS ol(\nabla_{\msF})) \left( =  
F^{(\M)*}_{X_0/S_0} (F^*_{X_0^{(\M)}/S_0}(\mcS ol(\nabla_{\msF})))
\right)
\isom F^{(\M)*}_{X_0/S_0} (\mcV_\msF) \xrightarrow{\eqref{YY1}} \mcF_0,
\end{align}
where the first arrow denotes the pull-back of \eqref{YY2} by $F_{X_0/S_0}^{(\M)}$.
Denote by $\DMO_0^{(\M)}$ the $\mcD_0^{(\M)}$-module structure  on $\mcF_0$ corresponding to $\DMO_{\mcS ol(\nabla_\msF), \mr{can}}^{(\M)}$ (cf. Definition \ref{dGGe11}) via this isomorphism.
One may verify that $(\msF, \DMO_0^{(\M)})$ belongs to $\mr{Diag}_\M$,  meaning that  $\msF$ is dormant.
This proves the ``if" part of the required equivalence.

The second assertion of (i)  follows from the first assertion together with the fact that the $p$-curvature of a flat bundle in characteristic $p$ can be identified with a global section of  a certain associated vector bundle.

Next, we shall consider  assertion (ii).
The implication (b) $\Rightarrow$ (c)  is clear.
Also, since the dormancy condition is closed under taking direct sums,
 the 
 implication (b) $\Rightarrow$ (a)
 follows  from the fact that the trivial flat bundle is dormant (cf. Example \ref{E14}).

Let us  prove  the implication (a) $\Rightarrow$ (b).
Suppose that $\msF$ is dormant, i.e., 
there exists a $\mcD_0^{(\M)}$-module structure   $\DMO_0^{(\M)}$ on $\mcF_0$ with $(\msF, \DMO_0^{(\M)}) \in \mr{Diag}_{\M}$.
By the equivalence of categories \eqref{YY6},
the morphism $F^{(\M+1)*}_{X_0/S_0}(\mcS ol (\DMO_0^{(\M)})) \migi \mcF_0$
extending the inclusion $\mcS ol (\DMO_0^{(\M)}) \migiincl \mcF_0$ is an isomorphism.
Hence, the faithful flatness of  $F^{(\M+1)}_{X/S}$ (cf. ~\cite[Theorem 1.1]{KaFu})
implies that 
$\mcS ol (\DMO_0^{(\M)})$ forms   a rank $n$ vector bundle on $X^{(\M+1)}$.
By this fact and  the property $(\beta)_\M$ for $(\msF, \DMO_0^{(\M)})$ (cf. Proposition \ref{L0ddd10}, (i)),
we can  find, for each point $q$ of $X_\M$, a collection of data
\begin{align}
(U; v_1, \cdots, v_n),
\end{align} 
where  $U$  denotes an open neighborhood of $q$ in $X_\M$ and 
$v_1, \cdots, v_n$ are  sections  of $\mcF$  defined on $U$ which are horizontal  with respect to  $\DMO$ and whose reductions $v_{1, 0}, \cdots, v_{n, 0}$  modulo $p$ form a local  basis of $\mcS ol (\DMO_0^{(\M)})$.
The   morphism 
$\mcO_{U}^{\oplus n} \migi \mcF |_U$
 given by $(a_i)_{i=1}^n \mapsto \sum_{i=1}^n a_i \cdot v_i$ (for any $a_1, \cdots, a_n \in \mcO_{U}$) 
 is compatible with the respective 
  $S^\mr{log}$-connections $(\DMO_{U, \mr{triv}}^{(0)})^{\oplus n}$ and  $\nabla$.
 The resulting  morphism  of 
  flat modules
\begin{align} \label{ee9}
(\mcO_{U}, \DMO_{U, \mr{triv}}^{(0)})^{\oplus n}\migi   \msF |_U
\end{align}
is an isomorphism by Nakayama's lemma.
In particular,  $\msF$ is locally trivial, and 
this completes the proof of  (a) $\Rightarrow$ (b).

Finally, we shall prove  (c) $\Rightarrow$ (b).
To this end, we may assume that there exists an $\mcS ol (\DMO_{X_\M, \mr{triv}}^{(0)})$-linear isomorphism $h^\nabla : \mcS ol (\DMO_{X_\M, \mr{triv}}^{(0)})^{\oplus n} \isom \mr{Ker}(\DMO)$. 
For each $j =1, \cdots, n$, denote by $e_j$ the image of $1$ via the inclusion into the $j$-th factor $\mcS ol (\DMO_{X_\M, \mr{triv}}^{(0)}) \migiincl \mcS ol (\DMO_{X_\M, \mr{triv}}^{(0)})^{\oplus n}$.
Then, we obtain the morphism  of flat bundles
\begin{align} \label{TTT10}
h : (\mcO_{X_\M}, \DMO_{X_\M, \mr{triv}}^{(0)})^{\oplus n} \migi \msF
\end{align}
 given by $(a_j)_{j=1}^n \mapsto \sum_{j=1}^n a_j \cdot h^\nabla (e_j)$.
 The reduction of $h$ modulo $p$ determines a morphism
 \begin{align}
 h_0 : (\mcO_{X_0}, \DMO_{X_0, \mr{triv}}^{(0)})^{\oplus n} \migi \msF_0.
 \end{align}
 Since $\msF_0$ has vanishing $p$-curvature, it follows from (the proof of) ~\cite[Proposition 4.60]{Wak8} that $h_0$ is an isomorphism.
 Hence, by Nakayama's lemma,
  $h$ turns out to be an isomorphism.
 This proves  (c) $\Rightarrow$ (b).
\end{proof}
\SSP

\bco \label{YY102}
Suppose that the relative characteristic of $X^\mr{log}/S^\mr{log}$ is trivial.
Let $(\mcE, \DMO)$ be a flat   $\mr{PGL}_n$-bundle  on $X^\mr{log}_\M/S^\mr{log}_\M$.
Then, $(\mcE, \DMO)$ is dormant if and only if  it is, \'{e}tale locally  on $X$,  isomorphic to the trivial flat   $\mr{PGL}_n$-bundle (cf. ~\cite[\S\,1.3.2]{Wak8} or Example \ref{NN601}).
\eco
\begin{proof}
The assertion follows from the equivalence (a) $\Leftrightarrow$ (b) obtained  in Proposition \ref{P016dd}, (ii).
\end{proof}
\SSP

Let   $h : \msF \migi \msG$ be a morphism between  dormant  flat modules on $X_\M^\mr{log}/S_\M^\mr{log}$, and write $(\mcF_0, \DMO_{\msF, 0}) :=  {^{\Diag}}\!\!\msF$ and $(\mcG_0, \DMO_{\msG, 0}) :=  {^{\Diag}}\!\!\msG$.
By the functoriality of Cartier operator (cf. Remark \ref{Rem4451}),
we see that $h$ induces a morphism of flat bundles $h_a : (\mcV_{\msF_a}, \nabla_{\msF_a}) \migi (\mcV_{\msG_a}, \nabla_{\msG_a})$ for every $a = 0, \cdots, \M$.
It follows from the property $(\gamma_\M)$ asserted in Proposition \ref{L0ddd10}, (i),  that $h_a$ may be regarded as a morphism
$(\mcF_0^{[a]}, \DMO_{\msF, 0}^{[a]}) \migi (\mcG_0^{[a]}, \DMO_{\msG, 0}^{[a]})$.
Since the  $h_a$'s are compatible with each other via restriction,
the morphism $h_0 : \mcF_0 \migi \mcG_0$ preserves the $\mcD_0^{(\M)}$-module structure.
In other words, it determines a morphism    of $\mcD_0^{(\M)}$-modules
  \begin{align} \label{J22}
  {^{\Diag}}\!\!h : {^{\Diag}}\!\!\msF \migi {^{\Diag}}\!\!\msG.
  \end{align}
  The assignments $\msF \mapsto {^{\Diag}}\!\!\msF$ and  $h \mapsto {^{\Diag}}\!\!h$ define a functor
 \begin{align} \label{ee18}
 {^{\Diag}}\!\!(-) : 
 \left(\begin{matrix} \text{the category of dormant} \\  \text{flat modules
 on $X_\M^\mr{log}/S_\M^\mr{log}$}\end{matrix} \right)
 \rightarrow \left(\begin{matrix} \text{the category of dormant} \\ \text{ (i.e., $p^{\M +1}$-flat) $\mcD_0^{(\M)}$-modules}\end{matrix} \right).
  \end{align}
This functor commutes  with the formations of direct sums, tensor products,  and determinants (in the case where the underlying $\mcO_{X_\M}$-modules of dormant flat modules  are locally free of finite rank).

\LSP
\subsection{Change of structure group} \label{S21Y}

We shall prove  the following two assertions, which are generalizations of ~\cite[Lemma 3.6.3]{Wak6} and ~\cite[Proposition 4.22]{Wak8}.

\SSP
\bpr \label{P129}
Let $n$ be a positive integer with
 $p \nmid n$ and 
 $\mcL$  a line bundle on $X$.
 Suppose that
either $\CH = 0$ or $\M =0$ is satisfied.
Also, suppose that we are given 
a $\mcD^{(\M)}$-module structure  $\DMO_{\mcL^{\otimes n}}$
 on the $n$-fold tensor product $\mcL^{\otimes n}$ of $\mcL$.
\begin{itemize}
\item[(i)]
There exists a unique 
$\mcD^{(\M)}$-module structure  $\DMO_{\mcL}$
 on $\mcL$ satisfying $\DMO^{\otimes n}_\mcL = \DMO_{\mcL^{\otimes n}}$ (cf. ~\cite[Corollaire 2.6.1]{Mon} for the definition of the $n$-th fold tensor product of a $\mcD^{(\M)}$-module structure).
\item[(ii)]
 If, moreover, $(\mcL^{\otimes n}, \DMO_{\mcL^{\otimes n}})$ 
is dormant, then 
  the resulting $\mcD^{(\M)}$-module  $(\mcL, \DMO_\mcL)$ is dormant.
\end{itemize}
\epr
\begin{proof}
First, we shall consider assertion (i).
Denote by
 $\STR_{\mcL^{\otimes n}}$ the $\M$-PD stratification on $\mcL^{\otimes n}$ corresponding to $\DMO_{\mcL^{\otimes n}}$ (cf. Remark \ref{dGGg5}).
Let us take an open covering $\msU := \{ \mcU_\alpha \}_{\alpha \in I}$ of $X$ such that, for each $\alpha \in I$, there exists a trivialization $\tau_\alpha : \mcL |_{U_\alpha} \isom \mcO_{U_\alpha}$.
For each $\ell \in \mbZ_{\geq 0}$ and $d \in \mbZ_{> 0}$,
$\tau_\alpha$ induces  isomorphisms 
\begin{align}
{^L}\tau_{\alpha, \ell, d} : \mcP^\ell_{(\M)} \otimes \mcL^{\otimes d} |_{U_\alpha} \isom \mcO_{P^\ell_{(\M)}} |_{U_\alpha} \ \ \text{and} \ \   {^R}\tau_{\alpha, \ell, d} : \mcL^{\otimes d} |_{U_\alpha} \otimes \mcP^\ell_{(\M)} \isom \mcO_{P^\ell_{(\M)}} |_{U_\alpha}.
\end{align}
The isomorphisms  ${^L}\tau_{\alpha, \ell, n}$ and ${^R}\tau_{\alpha, \ell, n}$ allow us to regard 
$\phi_{\mcL^{\otimes}} |_{U_\alpha}$  as an automorphism of $\mcO_{P^\ell_{(\M)}} |_{U_\alpha}$; it  may be described as the multiplication by some  element $\eta_{\alpha, \ell}$ of $\mcO_{P^\ell_{(\M)} |_{U_\alpha}}^\times$.
  Since $\eta_{\alpha, 0} = 1$ and $p \nmid n$, 
   we can find  a unique compatible collection $\{ \lambda_{\alpha, \ell}\}_{\ell \in \mbZ_{\geq 0}}$, where each $\lambda_{\alpha, \ell}$ is an element of $\mcO_{P^\ell_{(\M)} |_{U_\alpha}}^\times$  with $\lambda_{\alpha, \ell}^{\otimes n} = \eta_{\alpha, \ell}$.
 By  ${^L}\tau_{\alpha, \ell, 1}$ and ${^R}\tau_{\alpha, \ell, 1}$,
the automorphisms given by
multiplication by $ \lambda_{\alpha, \ell}$ for various $\ell$'s together 
 determine an $\M$-PD stratification $\phi_{\mcL, \alpha}$ on $\mcL |_{U_\alpha}$. 
 The uniqueness of the collection $\{ \lambda_{\alpha, \ell}\}_{\ell}$ implies $\STR_{\mcL, \alpha} |_{U_\alpha \cap U_\beta} = \STR_{\mcL, \beta} |_{U_\alpha \cap U_\beta}$  for any pair  $(\alpha, \beta) \in I \times I$ with $U_\alpha \cap U_\beta \neq \emptyset$.
 Hence, $\STR_{\mcL, \alpha}$ may be glued together to obtain an $\M$-PD stratification $\STR_\mcL$ on $\mcL$.
 Let  $\DMO_\mcL$ denote the $\mcD^{(\M)}$-module structure on $\mcL$ corresponding to $\STR_\mcL$.
 Then, it follows from the definitions of $\STR_{\mcL, \alpha}$'s that
 the equality $\DMO_{\mcL}^{\otimes n} = \DMO_{\mcL^{\otimes n}}$ holds, and that an $\M$-PD stratification satisfying this equality is uniquely determined.
 This completes the proof of assertion (i).
 
 Next, we shall consider assertion (ii).
 To consider the case of $\CH =0$, let $U$ denote  the open subscheme of $X$ where the relative characteristic  of $X^\mr{log}/S^\mr{log}$ 
 is trivial.
 Since $U$ is scheme-theoretically dense (cf. ~\cite[Lemma 1.4]{KaFu}),
 the problem is reduced, after restriction to $U$, to the case where $S^\mr{log} = S$ and $X^\mr{log} = X$.
 Hence, the assertion follows from  ~\cite[Lemma 3.6.3]{Wak6}.
 
Finally, let us  prove the case of $\M = 0$ by induction on $\CH$.
The base step, i.e., the case of $\CH =0$, was already proved.
To consider the induction step, we suppose that the assertion with $\CH$ replaced by $\CH -1$ ($\CH \geq 1$) has been proved.
Denote by  $\DMO_{\mcL^{\otimes n}, 0}^{(\CH)}$ the $\mcD_0^{(\CH)}$-module structure on  $\mcL_0^{\otimes n}$ obtained as the diagonal reduction of 
$(\mcL^{\otimes n}, \nabla_{\mcL^{\otimes n}})$.
By applying assertion (i) to $\DMO_{\mcL^{\otimes n}, 0}^{(\CH)}$,
we obtain uniquely a $\mcD_0^{(\CH)}$-module structure $\DMO_{\mcL, 0}^{(\CH)}$ on $\mcL_0$ with  $(\DMO_{\mcL, 0}^{(\CH)})^{\otimes n} = \DMO_{\mcL^{\otimes n}, 0}^{(\CH)}$.
By the induction hypothesis, $\nabla_{\mcL, \CH -1}$ is dormant and the uniqueness of $\DMO_{\mcL, 0}^{(\CH)}$ implies ${^{\Diag}}\!\!\msL_{\CH -1}
= (\mcL_0, \DMO_{\mcL, 0}^{(\CH) \Rightarrow (\CH -1)})$, where $\msL := (\mcL, \nabla_\mcL)$. 
If $U$ is as above, then
the restriction $\mcV_\msL |_{U^{(\CH)}}$ of $\mcV_\msL$ is a line bundle on $U^{(\CH)}$.
The diagonal reduction of $\msL^{\otimes n}$ is $(\DMO_{\mcL, 0}^{(\CH)})^{\otimes n}$, so
the uniqueness portion of assertion (i) (together with  the equivalence of categories \eqref{YY6})
 implies  that 
 $\nabla_\msL$ and $(\nabla_{\mcL, 0}^{(\CH)})^{[\CH]}$ are equal when restricted to
 $U^{(\CH)}$.
But,   since $U^{(\CH)}$ is scheme-theoretically dense in $X^{(\CH)}$ and  $\mcV_\msL$ is relatively torsion-free  (cf. Proposition \ref{PPer4}, (i)),  the equality $\nabla_\msL = (\nabla_{\mcL, 0}^{(\CH)})^{[\CH]}$ holds.
 This implies $(\msL, \nabla_\mcL) \in \mr{Diag}_\CH$, and hence
 completes the proof of assertion (ii).
\end{proof}
\SSP

\bpr \label{PPYY1}
Let $n$ be a positive integer with $p\nmid n$ and 
$\mcE_{\mr{GL}}$  a $\mr{GL}_n$-bundle on $X$.
Suppose that
either $\CH = 0$ or $\M =0$ is satisfied.
Also, suppose  that we are given a pair $(\phi_{\mr{PGL}}, \phi_{\mbG_m})$, where
\begin{itemize}
\item
$\phi_{\mr{PGL}}$ denotes an $\M$-PD stratification on the $\mr{PGL}_n$-bundle $\mcE_{\mr{PGL}}:= \mcE_{\mr{GL}} \times^{\mr{GL}_n} \mr{PGL}_n$ induced from  $\mcE_{\mr{GL}}$ via the natural quotient $\mu_{\mr{PGL}} : \mr{GL}_n \migisurj \mr{PGL}_n$;
\item
$\phi_{\mbG_m}$ denotes an $\M$-PD stratification on the $\mbG_m$-bundle $\mcE_{\mbG_m} := \mcE_{\mr{GL}} \times^{\mr{GL}_n} \mbG_m$ induced from $\mcE_{\mr{GL}}$ via the determinant map $\mu_{\mbG_m} : \mr{GL}_n \migisurj \mbG_m$.
\end{itemize}
Then, the following assertions hold:
\begin{itemize}
\item[(i)]
There exists a unique $\M$-PD stratification $\STR_{\mr{GL}}$ on $\mcE_{\mr{GL}}$ satisfying the following equalities:
\begin{align} \label{dEEEr}
\STR_{\mr{GL}} \times^{\mr{GL}_n, \mu_{\mr{PGL}}} \mr{PGL}_n = \STR_{\mr{PGL}}, \hspace{10mm} \STR_{\mr{GL}} \times^{\mr{GL}_n, \mu_{\mbG_m}} \mbG_m = \STR_{\mbG_m}
\end{align}
(cf. \eqref{dFFgy6} for the  change of structure group of an $\M$-PD stratification).
\item[(ii)]
 If, moreover,  both $\STR_{\mr{PGL}}$ and $\STR_{\mbG_m}$ are dormant, then
 the resulting pair  $(\mcE_{\mr{GL}}, \STR_{\mr{GL}})$
   (or equivalently, the corresponding $\mcD^{(\M)}$-module)
     is dormant.
   \end{itemize}
\epr
\begin{proof}
First, we shall consider assertion (i).
Let us take  an open covering $\msU := \{ U_\alpha \}_{\alpha \in I}$ of $X$ such that, for each $\alpha \in I$, there exists an $\M$-PD stratification $\phi'_\alpha$ on $\mcE_{\mr{GL}}|_{U_\alpha}$ with $\phi'_\alpha \times^{\mr{GL}_n}\mr{PGL}_n = \phi_{\mr{PGL}}|_{U_\alpha}$.
By  Proposition \ref{P129},  (i), 
there exists an $\M$-PD stratification $\phi_{\mbG_m, \alpha}$ on the trivial $\mbG_m$-bundle  on $U_\alpha$ such that $\phi_{\mbG_m, \alpha}^{\otimes n} \otimes (\phi'_\alpha \times^{\mr{GL}_n} \mbG_m) = \phi_{\mbG_m}|_{U_\alpha}$, where 
the tensor product ``$\otimes$" is defined 
 in a natural manner.
Write $\phi_{\alpha} := \phi'_\alpha \otimes \phi_{\mbG_m, \alpha}$.
Then, it is verified that
 $\phi_\alpha \times^{\mr{GL}_n} \mbG_m = \phi_{\mbG_m}|_{U_\alpha}$ and 
  $\phi_{\alpha}\times^{\mr{GL}_n} \mr{PGL}_n = \phi_{\mr{PGL}} |_\alpha$.
By the uniqueness assertion of Proposition \ref{P129},  (i), 
an  $\M$-PD stratification on $\mcE_{\mr{GL}}|_{U_\alpha}$ satisfying these properties  is uniquely determined.
In particular, 
for any pair $(\alpha, \beta) \in I \times I$ with $U_\alpha\cap U_\beta \neq\emptyset$, 
we have $\phi_{\alpha} |_{U_\alpha\cap U_\beta} = \phi_\beta |_{U_\alpha\cap U_\beta}$.
This implies that
$\{ \phi_\alpha \}_{\alpha \in I}$  may be glued together to obtain an $\M$-PD  stratification $\STR_{\mr{GL}}$ on $\mcE_{\mr{GL}}$ satisfying \eqref{dEEEr}.
This completes the proof of assertion (i).

Next,  we shall prove  assertion (ii) under the assumption that $\CH = 0$.
Similarly to  the proof of Proposition \ref{P129}, (ii), 
it suffices to consider the case where $S^\mr{log} = S$ and $X^\mr{log} = X$.
Denote by 
$\mcQ$ (resp., $\mcN$) the $\mbP^n$-bundle (resp., the line bundle)
 on $X^{(\M +1)}$ corresponding to $(\mcE_{\mr{PGL}}, \STR_{\mr{PGL}})$ (resp., $(\mcE_{\mbG_m}, \STR_{\mbG_m})$) via the equivalence of categories asserted in  
~\cite[Proposition 7.7.3]{Wak7} (cf. Remark \ref{RRfg5}).
Since $p \nmid n$, there exists, 
after possibly replacing $S$ with its \'{e}tale covering,  a pair $(\mcV', \mcM)$, where
\begin{itemize}
\item
 $\mcV'$ denotes a rank $n$ vector bundle  on $X^{(\M +1)}$ whose projectivization is $\mcQ$;
 \item
 $\mcM$ denote a line bundle on $X^{(\M +1)}$ with $\mcM^{\otimes n} \otimes \mr{det} (\mcV')\cong \mcN$.
 \end{itemize}
 Write  $\mcV := \mcM \otimes \mcV'$.
 Then, it is verified that $\mr{det} (\mcV) \cong \mcN$ and that the projectivization of $\mcV$ is isomorphic to $\mcQ$.
The vector bundle $\mcV$ corresponds to 
 a $p^{\M +1}$-flat  $\M$-PD stratified $\mr{GL}_n$-bundle $(\mcE'_{\mr{GL}}, \STR'_\mr{GL})$  via  the equivalence of categories in  ~\cite[Proposition 7.7.3]{Wak7}.
This equivalence of categories also  implies $(\mcE'_{\mr{GL}}, \STR'_\mr{GL}) \cong (\mcE_{\mr{GL}}, \STR_{\mr{GL}})$, so $\STR_{\mr{GL}}$ is $p^{\M +1}$-flat.

Finally,  we shall prove  assertion (ii) under the assumption that $\M  = 0$.
 The problem is of local nature with respect to the \'{e}tale topology on $X$,
 we may assume that there exists a rank $n$ dormant flat  bundle $(\mcF, \DMO_\mcF)$ on $X^\mr{log}/S^\mr{log}$ whose projectivization corresponds to $\STR_{\mr{PGL}}$.
 Denote by $(\mcV, \DMO_\mcV)$ the flat bundle corresponding to $(\mcE_{\mr{GL}}, \STR_{\mr{GL}})$.
 Then, we obtain an isomorphism $h : \mcV \isom \mcL \otimes \mcF$ for some line bundle $\mcL$.
 It follows from Proposition \ref{P129}, (i), that there exists $S^\mr{log}$-connection $\nabla_\mcL$ on $\mcL$ such that 
 the determinant of $h$ defines an isomorphism of flat line bundles
 \begin{align}
 (\mr{det}(\mcV), \mr{det}(\DMO_\mcV))\cong (\mcL^{\otimes n} \otimes \mr{det}(\mcF),  \nabla_\mcL^{\otimes n} \otimes \nabla_\mcF).
 \end{align}
  But, since both   $(\mr{deg} (\mcV), \mr{det}(\DMO_\mcV))$ (which corresponds to $(\mcE_{\mbG_m}, \STR_{\mbG_m})$)
and $(\mr{det}(\mcF), \mr{det} (\nabla_\mcF))$ are dormant,
Proposition \ref{P129}, (ii), implies that $(\mcL, \nabla_\mcL)$ is dormant.
By the uniqueness portion of assertion (i),
$h$ defines an isomorphism $(\mcV, \nabla_\mcV) \isom (\mcL \otimes \mcF, \nabla_\mcL \otimes \nabla_\mcF)$.
 In particular, $(\mcF, \nabla)$ turns out to be  dormant.
 This completes the proof of assertion (ii).
 \end{proof}

\LSP
\subsection{Deformation space of a dormant flat bundle} \label{S21}

In the rest of this section, suppose that $X$ is a geometrically connected, proper, and smooth  curve of genus $> 1$ over $S := \mr{Spec}(R)$ for 
a flat $\mbZ/p^{\CH+1}\mbZ$-algebra $R$.
Let us fix an element $(\msL, \DMO_{\mcL, 0}^{(\CH)})$ (where $\msL := (\mcL, \nabla_\mcL)$) of $\mr{Diag}_{\CH}$ such that $\mcL$ is a line bundle on $X$.

Let $\M$ be an integer with $0 \leq \M < \CH$,  and let
$(\msF, \DMO_0^{(\M)})$ (where $\msF := (\mcF, \DMO)$) be an element of $\mr{Diag}_{\M }$ such that  $\mcF$ is  a vector bundle on $X$ and 
  a pair of identifications $(\mr{det}(\msF), \mr{det} (\DMO_0^{(\M)})) = (\msL_{\M}, \DMO_{\mcL, 0}^{(l) \Rightarrow (\M )})$ is fixed.
We shall write
\begin{align} \label{NN288}
\mr{Lift} (\DMO_0^{(\M )}) \ \left(\text{resp.,} \  \mr{Lift}(\DMO^{(\M)}_0)_\msL\right)
\end{align}
for the set of $\mcD_0^{(\M +1)}$-module structures  $\DMO^{(\M +1)}_0$ on $\mcF_0$ with $\DMO_0^{(\M +1)\Rightarrow (\M)} = \DMO_0^{(\M )}$
 (resp., $\DMO_0^{(\M +1)\Rightarrow (\M)} = \DMO^{(\M )}_0$ and $\mr{det}(\DMO^{(\M +1)}_0)= \DMO_{\mcL, 0}^{(l) \Rightarrow (\M +1)}$). 

According to ~\cite[Th\'{e}or\`{e}me 2.3.6]{PBer2}, 
 giving a $\mcD_0^{(\M +1)}$-module structure  classified by  this set  amounts  to giving   an $S$-connection on the $\mcO_{X_0^{(\M)}}$-module $\mcS ol (\DMO_0^{(\M)})$
  (resp., an $S$-connection on the $\mcO_{X_0^{(\M +1)}}$-module $\mcS ol (\DMO_0^{(\M )})$ whose determinant  coincides with $\mr{det} ((\DMO_{\mcL, 0}^{(l)})^{[\M +1]}$). 
Hence, the set  $\mr{Lift} (\DMO_0^{(\M )})$ (resp.,  $\mr{Lift} (\DMO_0^{(\M )})_\msL$)  is, if it is nonempty, equipped with  a canonical structure of torsor modeled on  the $R$-module
\begin{align} \label{fE5}
H^0 (X_0^{(\M +1)}, \Omega^{(\M +1)}_0 \otimes \mcE nd (\mcS ol (\DMO_0^{(\M )})))  \ \ \ \ \\
\left(\text{resp.,} \  H^0 (X_0^{(\M +1)}, \Omega_0^{(\M +1)} \otimes \mcE nd^0   (\mcS ol (\DMO_0^{(\M )}))) \right),  \notag
\end{align}
where $\mcE nd (\mcS ol (\DMO_0^{(\M )})))$ (resp., $\mcE nd^0 (\mcS ol (\DMO_0^{(\M )}))$) denotes the  sheaf of  $\mcO_{X_0^{(\M +1)}}$-linear endomorphisms of $\mcS ol (\DMO_0^{(\M )}))$ (resp., $\mcO_{X_0^{(\M +1)}}$-linear endomorphisms of $\mcS ol (\DMO_0^{(\M )})$ with vanishing trace).
This torsor structure is defined in such a way that
if $\DMO'$ is the $S$-connection corresponding to an element of $\mr{Lift}(\DMO_0^{(\M )})$ (resp., $\mr{Lift}(\DMO_0^{(\M )})_\msL$) and $A$ is an element of the $R$-module displayed in  \eqref{fE5},
then the result of  the action by $A$ on $\DMO'$ is the $S$-connection $\DMO' + A$.

\SSP
\bpr \label{PR345}
The natural inclusion $\mr{Lift} (\DMO_0^{(\M )})_\msL \migiincl \mr{Lift} (\DMO_0^{(\M )})$ commutes  with the respective torsor structures via
the injection
\begin{align} \label{fE10}
& H^0 (X_0^{(\M +1)}, \Omega_0^{(\M +1)} \otimes \mcE nd^0  (\mcS ol (\DMO_0^{(\M )})))  
\migiincl  H^0 (X_0^{(\M +1)}, \Omega_0^{(\M +1)} \otimes \mcE nd (\mcS ol (\DMO_0^{(\M )}))) 
\end{align}
induced by the inclusion $\mcE nd^0 (\mcS ol (\DMO_0^{(\M )})) \migiincl \mcE nd (\mcS ol (\DMO^{(\M )}_0))$.
\epr
\begin{proof}
The assertion follows from the torsor structures  of  $\mr{Lift} (\DMO_0^{(\M )})$ and $\mr{Lift} (\DMO_0^{(\M )})_\msL$.
\end{proof}
\SSP

We shall  denote by 
\begin{align} \label{fE11}
\nabla^{(\M )}_{0, \mcE nd} \ \left(\text{or} \  (\nabla^{(\M )}_{0})_{\mcE nd} \right) : {^L}\mcD_0^{(\M)} \migi \mcE nd_{\mcO_{S_0}} (\mcE nd (\mcF_0))
\end{align}
 the 
$\mcD_0^{(\M )}$-module structure on 
 $\mcE nd (\mcF_0) \left(:= \mcE nd_{\mcO_{X_0}} (\mcF_0) \right)$ induced naturally by $\nabla_0^{(\M )}$.
It  restricts to
a $\mcD_0^{(\M )}$-module structure
\begin{align} \label{fE12}
\nabla^{(\M )}_{0, \mcE nd^0}  \ \left(\text{or} \  (\nabla^{(\M )}_{0})_{\mcE nd^0} \right) : {^L}\mcD_0^{(\M)} \migi \mcE nd_{\mcO_S} (\mcE nd^0 (\mcF_0))
\end{align}
  on the subsheaf $\mcE nd^0 (\mcF_0)$ consisting of $\mcO_{X_0}$-linear endomorphisms with vanishing trace.
The $S$-connection $\nabla_{0, \mcE nd}^{(\M )\Rightarrow (0)}$ (resp., $\nabla_{0, \mcE nd^0}^{(\M )\Rightarrow (0)}$) may be regarded as 
 a complex of sheaves  $\mcK^\bullet [\nabla_{0, \mcE nd}^{(\M )\Rightarrow (0)}]$ (resp., $\mcK^\bullet [\nabla_{0, \mcE nd^0}^{(\M )\Rightarrow (0)}]$)
 concentrated at degrees $0$ and $1$.

Next, we shall write
\begin{align} \label{NN203}
\mr{Lift} (\msF) \  \left(\text{resp.,}  \ \mr{Lift} (\msF)_\msL \right)
\end{align}
for  the set of isomorphism classes of  flat bundles $\msG$ on $X_{\M+1}/S_{\M+1}$ with $\msG_{\M} = \msF$ (resp., flat bundles $\msG$ on $X_{\M+1}/S_{\M+1}$  with $\msG_{\M} = \msF$  together with an identification $\mr{det} (\msG) = \msL_{\M+1}$ compatible with the fixed identification $\mr{det}(\msF) = \msL_{\M}$). 
It follows from  well-known generalities of deformation theory that
the set $\mr{Lift} (\msF)$ (resp., $\mr{Lift} (\msF)_\msL$) is, if it is nonempty, equipped with a  structure of torsor modeled on
$\mbH^1 (X_0, \mcK^\bullet [\DMO_{0, \mcE nd}^{(\M )\Rightarrow (0)}])$
(resp., $\mbH^1 (X_0, \mcK^\bullet [\DMO_{0, \mcE nd^0}^{(\M )\Rightarrow{(0)}}])$) in the manner of ~\cite[\S\,6.1.4]{Wak8}.

\SSP
\bpr \label{WWW1}
The natural inclusion  $\mr{Lift} (\msF)_\msL \migiincl \mr{Lift} (\msF)$ commutes  with the respective torsor structures via the morphism 
\begin{align}
\mbH^1 (X_0, \mcK^\bullet [\DMO_{0, \mcE nd^0}^{(\M )\Rightarrow (0)}]) \migi \mbH^1 (X_0, \mcK^\bullet [\DMO_{0, \mcE nd}^{(\M )\Rightarrow (0)}])
\end{align}
induced by the inclusion  $\mcK^\bullet [\DMO_{0, \mcE nd^0}^{(\M )\Rightarrow (0)}] \migiincl \mcK^\bullet [\DMO_{0, \mcE nd}^{(\M )\Rightarrow (0)}]$.
\epr
\begin{proof}
The assertion follows from the torsor structures of $\mr{Lift} (\msF)$ and $\mr{Lift} (\msF)_\msL$.
\end{proof}
\SSP

Since the equivalence of categories \eqref{YY6} commutes  with the formation of internal  Homs,
there exists a natural identification
\begin{align} \label{fE1}
\mcE nd (\mcS ol (\DMO_0^{(\M )})) = \mcS ol (\DMO_{0, \mcE nd}^{(\M )})
 \  \left(\text{resp.,} \ 
 \mcE nd^0 (\mcS ol (\DMO_0^{(\M )})) = \mcS ol (\DMO_{0, \mcE nd^0}^{(\M )})
  \right).
\end{align}
Under this  identification,  the  Cartier operator 
associated to 
$
(\mcE nd (\mcF_0), \DMO_{0, \mcE nd}^{(\M )})$ (resp., $(\mcE nd^0 (\mcF_0), \DMO_{0, \mcE nd^0}^{(\M )})$) yields a morphism of complex 
\begin{align} \label{NN204}
\mcK^\bullet [\DMO_{0, \mcE nd}^{(\M )\Rightarrow (0)}] \migi (\Omega_0^{(\M +1)} \otimes \mcE nd (\mcS ol (\DMO_0^{(\M )}))) [-1] \\
\left(\text{resp.,} \  \mcK^\bullet [\DMO_{0, \mcE nd^0}^{(\M )\Rightarrow (0)}] \migi (\Omega_0^{(\M +1)} \otimes \mcE nd^0 (\mcS ol (\DMO_0^{(\M )}))) [-1]  \right). \notag
\end{align}
By applying the $1$-st hypercohomology functor  to this morphism, we obtain an $R$-linear morphism
\begin{align} \label{fE8}
\mbH^1 (X_0, \mcK^\bullet [\DMO_{0, \mcE nd}^{(\M )\Rightarrow (0)}]) \migi H^0 (X_0^{(\M +1)}, \Omega_0^{(\M +1)}\otimes \mcE nd (\mcS ol (\DMO_0^{(\M )}))) \ \ \  \ \   \, \\
\left(\text{resp.,} \  \mbH^1 (X_0, \mcK^\bullet [\DMO_{0, \mcE nd^0}^{(\M )\Rightarrow (0)}]) \migi H^0 (X_0^{(\M +1)}, \Omega_0^{(\M +1)} \otimes \mcE nd^0 (\mcS ol (\DMO_0^{(\M )})))\right). \notag
\end{align}

\SSP
\bpr \label{P016} 
The assignment  $\msG \mapsto {^{\Diag}}\!\!\msG$ (cf. Remark \ref{YY456}) for each $\msG \in \mr{Lift}(\msF)$ (resp., $\msG \in \mr{Lift}(\msF)_\msL$)
  defines 
  a map of sets
\begin{align} \label{fE6}
\mr{Lift} (\msF) \migi \mr{Lift}(\DMO_0^{(\M )})  \ \left(\text{resp.,} \ \mr{Lift} (\msF)_\msL \migi \mr{Lift}(\DMO_0^{(\M )})_\msL  \right).
\end{align}
Moreover, this map commutes  with
the respective  torsor structures via the morphism \eqref{fE8}.
 \epr
\begin{proof}
The assertion follows immediately from 
 the fact mentioned in Remark \ref{YY250} and the definition of the functor ${^{\Diag}}\!\!(-)$.
\end{proof}
\SSP

Finally, we shall  denote by 
\begin{align}
\mr{Lift} (\DMO_0^{(\M )})^{\psi} \ \left(\text{resp.,} \ \mr{Lift} (\DMO_0^{(\M )})^{\psi}_\msL \right)
\end{align}
the subset of  $\mr{Lift} (\DMO_0^{(\M )})$ (resp., $\mr{Lift} (\DMO_0^{(\M )})_\msL$) consisting of $\mcD_0^{(\M)}$-module structures  with vanishing $p^{\M +1}$-curvature.
Also, 
denote by
\begin{align}
\mr{Lift} (\msF)^{\psi} \ \left(\text{resp.,} \ \mr{Lift} (\msF)_\msL^{\psi}\right)
\end{align}
the subset of $\mr{Lift} (\msF)$ (resp., $\mr{Lift} (\msF)_\msL$)
 consisting of dormant flat bundles.
 The following assertion is immediately verified.

\SSP
\bpr \label{Pr459}
The assignment  $\msG \mapsto {^{\Diag}}\!\!\msG$ for each $\msG \in \mr{Lift}(\msF)^\psi$ (resp., $\msG \in \mr{Lift}(\msF)_\msL^\psi$)
 yields a map of sets
\begin{align} \label{fE17}
 \mr{Lift} (\msF)^{\psi} \migi  \mr{Lift} (\DMO_0^{(\M )})^{\psi}
 \ \left(\text{resp.,} \ \mr{Lift} (\msF)^{\psi}_\msL \migi  \mr{Lift} (\DMO_0^{(\M )})^{\psi}_\msL \right)
\end{align}
Moreover,   the following commutative square diagram   is Cartesian:
\begin{align}  \label{fdd01}
\vcenter{\xymatrix@C=36pt@R=36pt{
 \mr{Lift} (\msF)^{\psi}\ar[r]^-{\eqref{fE17}} \ar[d]_-{\mr{inclusion}} &  \mr{Lift} (\DMO_0^{(\M )})^{\psi}
 \ar[d]^-{\mr{inclusion}}
 \\
 \mr{Lift} (\msF) \ar[r]_-{\eqref{fE6}} & \mr{Lift} (\DMO_0^{(\M )})
 }}  \ 
 \left(\text{resp.,} \ \vcenter{\xymatrix@C=36pt@R=36pt{
 \mr{Lift} (\msF)^{\psi}_\msL\ar[r]^-{\eqref{fE17}} \ar[d]_{\mr{inclusion}} &  \mr{Lift} (\DMO_0^{(\M )})^{\psi}_\msL
 \ar[d]^-{\mr{inclusion}}
 \\
 \mr{Lift} (\msF)_\msL \ar[r]_-{\eqref{fE6}} & \mr{Lift} (\DMO_0^{(\M )})_\msL
 }} \right). \hspace{-3mm}
\end{align}
\epr
\begin{proof}
The assertion follows from Propositions \ref{UU577}, (i),   \ref{P016dd}, (i), and Corollary \ref{KK1}.
\end{proof}

\LSP
\subsection{Canonical diagonal liftings of $p^{\M +1}$-flat line bundles} \label{SSrrfe}

In what follows, we will show that any $\mcD^{(\M)}_0$-module structure on a line bundle with vanishing $p^{\M +1}$-curvature admits a unique diagonal lifting  (cf. Proposition \ref{Prop7872}).

Denote by $d \mr{log} : \mcO_X^\times \rightarrow \Omega$ the morphism given by assigning $v \mapsto d\mr{log} (v) := \frac{dv}{v}$.
The natural short exact sequence $0 \migi \Omega [-1] \migi \mcK^\bullet [d \mr{log}] \rightarrow \mcO^\times_X [0] \migi 0$ yields an exact sequence of (hyper)cohomology groups
\begin{align}
\mbH^1 (X, \mcK^\bullet [d\mr{log}]) \xrightarrow{\mr{lin}} H^1 (X, \mcO^\times_X) \xrightarrow{\mr{obs} \,  (:=H^1(d\mr{log}))} H^1 (X, \Omega).
\end{align}
Recall from ~\cite[Proposition (7.2.1)]{Katz2} that
the group $\mbH^1 (X, \mcK^\bullet [d\mr{log}])$ (resp., $H^1 (X, \mcO^\times_X)$) parametrizes 
 isomorphism classes of flat line bundles on $X/S$ (resp., line bundles on $X$), and
  the map ``$\mr{lin}$"  is given by forgetting the data of $S$-connections.

For each line bundle $\mcN$ on $X$,
we denote by 
\begin{align} \label{eeQQ468}
\mr{obs}(\mcN) \in H^1 (X, \Omega)
\end{align}
 the image via the map ``$\mr{obs}$" of the element in  $H^1 (X, \mcO^\times_X)$ represented by $\mcN$.
 The obstruction to the surjectivity of $\mr{lin}$ lies in $H^1 (X, \Omega)$, which means that
$\mr{obs}(\mcN) = 0$ if and only if $\mcN$ admits an $S$-connection.

Now, let us fix a line bundle $\mcL$ on $X$ with $\mr{obs}(\mcL) = 0$.
Also, 
 let $\M$ be an integer with $0 \leq \M < \CH$ and $\nabla_{\M}$ an $S_\M$-connection on $\mcL_\M$ such that $\msL_\M :=(\mcL_\M, \nabla_\M)$ is dormant.
 Write ${^{\Diag}}\!\!\msL_\M := (\mcL_0, \nabla_0^{(\M)})$.
As discussed in \eqref{NN288}, the set $\mr{Lift} (\nabla_0^{(\M)})$ of $\mcD_0^{(\M +1)}$-module structures $\nabla_0^{(\M +1)}$ on $\mcL_0$ with $\nabla_0^{(\M +1) \Rightarrow (\M)} = \nabla_0^{(\M)}$ forms, if it is nonempty, a torsor modeled on $H^0 (X_0^{(\M +1)}, \Omega_0^{(\M +1)})$; it contains a subset  $\mr{Lift} (\nabla_0^{(\M)})^\psi$ consisting of $\mcD_0^{(\M +1)}$-module structures with vanishing $p^{\M +1}$-curvature.

Moreover, similarly to \eqref{NN203},
we  have the set 
\begin{align}
\mr{Lift} (\msL_\M)_\mcL \ \left(\text{resp.,} \ \mr{Lift} (\msL_\M)_\mcL^\psi  \right)
\end{align}
  of $S$-connections (resp., dormant $S$-connections) on $\mcL_{\M +1}$  whose reduction modulo $p^{\M+1}$ coincides with $\nabla_\M$.  
Since $\mr{obs}(\mcL) = 0$ and $S$ is affine,
$\mr{Lift} (\msL_\M)_\mcL$ is verified to be nonempty and forms a torsor modeled on $H^0 (X_0, \Omega)$.

The operation of taking diagonal reductions ${^{\Diag}}\!\!(-)$ gives a map of sets
\begin{align} \label{eeQQ511}
\mr{Lift}(\msL_\M)_\mcL \rightarrow   \mr{Lift} (\nabla_0^{(\M)})
\end{align}
(cf. Remark \ref{YY456}), by which we see  that $\mr{Lift} (\nabla_0^{(\M)})$ is nonempty.
By the dormancy condition, it can be restricted  to a map
\begin{align} \label{fewidw}
\mr{Lift}(\msL_\M)_\mcL^\psi \rightarrow \mr{Lift} (\nabla_0^{(\M)})^\psi.
\end{align}

Note that the map \eqref{eeQQ511} commutes  with the respective torsor structures via the  Cartier operator
\begin{align} \label{eeQQ801}
H^0 (C_{(\mcO_{X_0}, \nabla_{X_0, \mr{triv}}^{(\M)})}) : H^0 (X_0, \Omega_0) \rightarrow H^0 (X_0^{(\M +1)}, \Omega_0^{(\M +1)})
\end{align}
of $(\mcO_{X_0}, \nabla_{X_0, \mr{triv}}^{(\M)})$ (cf. \eqref{dGG6}, \eqref{dE3}).
Hence, if the curve $X_0/S_0$ is ordinary in the usual  sense, then  (since \eqref{eeQQ801} is bijective) the map \eqref{eeQQ511}, as well as \eqref{fewidw},  becomes a bijection. 
Applying inductively the bijectivity of \eqref{fewidw}  for various $\M$'s,
we obtain the following proposition, asserting the existence of canonical diagonal liftings for the rank one case.

\SSP
\bpr \label{Prop7872}
Let $\mcL$ be as above and  $\nabla^{(\M)}_{0}$  a $\mcD^{(\M)}_0$-module structure on $\mcL_0$ with vanishing $p^{\M +1}$-curvature.
Suppose  that the curve $X_0/S_0$ is ordinary.
Then, there exists a unique  diagonal lifting of $(\mcL_0, \nabla^{(\M)}_{0})$
whose underlying $\mcO_X$-module coincides with $\mcL$.
\epr

\vspace{10mm}
\section{Local study  of  $\mcD^{(\M)}$-modules}\label{S0111}
\SSP

In this section, we study the local description of $p^{\M +1}$-flat $\mcD^{(\M)}$-modules   in characteristic $p$ around a marked/nodal point of the underlying pointed stable curve.
By applying the resulting  description, we show  that
 each  dormant flat bundle in characteristic $p^{\M +1}$ can be decomposed into    the direct sum of rank $1$ flat bundles of a certain type (cf. Proposition-Definition \ref{P724}).

Throughout this section, we fix a scheme $S$.

\LSP
\subsection{Formal neighborhoods of  a marked/nodal point} \label{SS1071}
We shall set
\begin{align} \label{YY124}
U_\oslash := \mcS pec (\mcO_S [\![t ]\!]), \hspace{5mm}
 U_\otimes := \mcS pec (\mcO_S [\![t_1, t_2 ]\!]/(t_1 t_2)),
\end{align}
where $t$, $t_1$, and $t_2$ are formal parameters.
For simplicity, we write $\mcO_{\oslash} := \mcO_{U_\oslash}$ and $\mcO_{\otimes} := \mcO_{U_\otimes}$.

Hereinafter, let us fix a pair of nonnegative integers  $(\CH, \M)$ with $\CH = 0$ or $\M =0$, and suppose that $S$ is   a flat scheme  over $\mbZ/p^{\CH +1}\mbZ$.
Denote by  $S^\mr{log}$ the log scheme defined as $S$ equipped with the trivial log structure.
Also,
 we equip $U_\oslash$ with the log structure associated to the monoid morphism $\mbN \migi \mcO_\oslash$  given by $n \mapsto t^n$; if $U_\oslash^{\mr{log}}$ denotes the resulting log scheme, then
 we obtain the sheaf  of  noncommutative rings on $U_\oslash$    defined as 
 \begin{align} \label{dE91}
 \mcD_{\oslash}^{(\M)}
   := \varprojlim_{n \geq 1} \mcD_{U_{\oslash, n}^\mr{log}/S^\mr{log}}^{(\M)},
 \end{align}
where  $U_{\oslash, n}^\mr{log}$ ($n \geq 1$) denotes the strict closed subscheme of $U_{\oslash}^\mr{log}$ defined by the ideal sheaf $(t^n)\subseteq \mcO_S [\![t]\!]$.
This sheaf has two $\mcO_\oslash$-module structures ${^L}\mcD_{\oslash}^{(\M)}$ and ${^R}\mcD_{\oslash}^{(\M)}$ arising from  those of ${^L}\mcD_{U_{\oslash, n}^\mr{log}/S^\mr{log}}^{(\M)}$'s and ${^R}\mcD_{U_{\oslash, n}^\mr{log}/S^\mr{log}}^{(\M)}$'s, respectively.
The $\mcO_{\oslash}$-module  ${^L}\mcD_{\oslash}^{(\M)}$ can be decomposed as the direct sum 
$\bigoplus_{j \in \mbZ_{\geq 0}} \mcO_\oslash \cdot \partial^{\langle j \rangle}$, where
$\partial^{\langle j \rangle}$'s 
denote the sections associated to the logarithmic coordinate $t$ as defined in \S\,\ref{SS04f4}.
In particular, the multiplication in   $ \mcD_{\oslash}^{(\M)}$ is given by \eqref{e364}.

On the other hand, we equip $U_\otimes$ (resp., $S$) with the log structure associated to the monoid morphism $\mbN \oplus \mbN \migi \mcO_{\otimes}$ (resp., $\mbN \migi \mcO_S$) given by $(n_1, n_2) \mapsto t^{n_1} \cdot t^{n_2}$ (resp., $n \mapsto 0^n$); denote by  $U_\otimes^\mr{log}$ (resp., $S^\mr{log}$) the resulting log scheme.
By the diagonal embedding $\mbN \migi \mbN \oplus \mbN$, 
the projection  $U_\otimes \migi S$ extends  to a morphism of log schemes $U_\otimes^\mr{log} \migi S^\mr{log}$.
We obtain the sheaf on $U_\otimes$ defined as 
\begin{align}
\mcD^{(\M)}_{\otimes} := \varprojlim_{n \geq 1} \mcD_{U_{\otimes, n}^\mr{log}/S^\mr{log}}^{(\M)},
\end{align}
where  $U_{\otimes, n}^\mr{log}$ ($n \geq 1$) denotes the strict closed subscheme of $U_{\otimes}^\mr{log}$ defined by the ideal sheaf $(t_1, t_2)^n \subseteq \mcO_S [\![t_1, t_2]\!]/(t_1t_2)$.
Just as in the case of  $\mcD^{(\M)}_{\oslash}$, this sheaf has two $\mcO_\otimes$-module structures  ${^L}\mcD^{(\M)}_{\otimes}$ and ${^R}\mcD^{(\M)}_{\otimes}$.

For each $i=1, 2$, denote by $\{ \partial_i^{\langle j \rangle}\}_{j \in \mbZ_{\geq 0}}$ the basis of $\mcD^{(\M)}_{\otimes}$ associated to the logarithmic coordinate $t_i$.
In particular, we have ${^L}\mcD^{(\M)}_{\otimes} = \bigoplus_{j \in \mbZ_{\geq 0}} \mcO_\otimes \cdot  \partial_i^{\langle j \rangle}$.

\SSP
\ble \label{P238}
For each positive  integer $j \leq p^\M$, the following equality holds:
\begin{align} \label{YY8}
\partial_{2}^{\langle j \rangle} = (-1)^j \cdot \sum_{j'=1}^j \binom{j-1}{j'-1} \cdot \partial_1^{\langle j'\rangle}.
\end{align}
In particular, for each nonnegative  integer $a\leq \M$,
 the following equalities hold:
\begin{align} \label{e444}
\partial_{2}^{\langle p^a \rangle} =   \sum_{j'=1}^{p^a} (-1)^{j'} \cdot \partial_1^{\langle j'\rangle} = - \partial_1^{\langle p^a \rangle} -1 + \prod_{b = 0}^{a-1} (1 -(\partial_1^{\langle p^b \rangle})^{p-1}),
\end{align}
where we set $\prod_{b =0}^{-1} (-) := 1$.
\ele
\begin{proof}
Note that the morphism $\mu_{(\M)}$ and the sheaves $\mcP_{(\M)}^\ell$ ($\ell \in \mbZ_{\geq 0}$) discussed  in \S\,\ref{SS04f4} can be defined
even in the case where   $X^\mr{log}/S^\mr{log}$ is replaced by  $U_\otimes^\mr{log}/S^\mr{log}$.
 In particular, we obtain  the sections 
 $\eta_i := \mu_{(\M)}(t_i)-1$  ($i =1,2$) of $\mcP_{(\M)}^{\ell}$.
Since $\mu_{(\M)}$ preserves the monoid structure, we have
\begin{align}
(1 + \eta_1)(1 + \eta_2) = \mu_{(\M)}(t_1) \cdot \mu_{(\M)}(t_2) = \mu_{(\M)}(t_1 \cdot t_2) = \mu_{(\M)} (0) =1.
\end{align}
This implies 
\begin{align}
\eta_1 =  -1 +(1 + \eta_2)^{-1}
 =  - \eta_2  +  \eta_2^2  - \eta_2^3 + \eta_2^4 - \cdots \left(= \sum_{j = 1}^\infty (-\eta_2)^j \right). \notag
\end{align}
Hence, for each  $j' \leq  p^\M$, 
the following equalities hold:
\begin{align} \label{e342}
\eta_1^{\{ j'\}} = \eta_1^{j'}   = ( \sum_{j = 1}^\infty (-\eta_2)^j )^{j'}
= \sum_{i=0}^\infty  \binom{j'+ i -1}{j'-1}\cdot (-\eta_2)^{j' + i}.
\end{align}
By taking duals, we obtain \eqref{YY8}.
This completes  the proof of the first assertion.
Moreover,
the second  assertion follows from \eqref{YY8} together with Lemma \ref{L99}.
\end{proof}
\SSP

Next, we shall set 
\begin{align} \label{MM3}
\iota_1 :U_\oslash \migiincl U_\otimes   \ \left(\text{resp.,} \ \iota_2 : U_\oslash \migiincl U_\otimes  \right)
\end{align}
to be  the closed  immersion  corresponding to the surjection  $\mcO_S [\![t_1, t_2 ]\!]/(t_1t_2) \migisurj \mcO_S [\![t]\!]$ given by $t_1 \mapsto t$ and $t_2 \mapsto 0$ (resp., $t_1 \mapsto 0$ and $t_2 \mapsto t$).
The inclusion into the $i$-th factor $\mbN \migiincl \mbN \oplus \mbN$ determines a morphism of log schemes
\begin{align} \label{MM2}
U_\oslash \times_{\iota_i, U_\otimes} U_\otimes^\mr{log} \migi U_\oslash^\mr{log}
\end{align}
whose underlying morphism of $S$-schemes  coincides with the identity morphism of $U_\oslash$.
This morphism induces an identification $\iota_i^* ({^L}\mcD^{(\M)}_\otimes) = {^L}\mcD_\oslash^{(\M)}$.
 The $\mcO_\otimes$-linear  surjection
\begin{align} \label{MM1}
\omega_i : {^L}\mcD_\otimes^{(\M)} \migisurj \left(\iota_{i*}(\iota_i^* ({^L}\mcD^{(\M)}_\otimes)) =  \right) \iota_{i*}({^L}\mcD_\oslash^{(\M)})
\end{align}
corresponding to this identification via the adjunction relation ``$\iota^*_i(-) \dashv \iota_{i*}(-)$
 maps  $\partial_i^{\langle j \rangle}$ to $\partial^{\langle j \rangle}$ for every $j \geq 0$.

For a $\mcD_\otimes^{(\M)}$-module  $(\mcF, \DMO)$,
we shall write
\begin{align} \label{dE106}
\iota^*_i (\DMO) : {^L}\mcD_\oslash^{(\M)} \migi \mcE nd_{\mcO_S} (\iota_i^* (\mcF))
\end{align} 
for the  $\mcD_\oslash^{(\M)}$-module structure on $\iota_i^* (\mcF)$ induced naturally by $\DMO$ under the identification $\iota_i^* ({^L}\mcD^{(\M)}_\otimes) = {^L}\mcD_\oslash^{(\M)}$.

\LSP
\subsection{Monodromy operator} \label{SS1076}

We shall set
\begin{align} \label{YY11}
\mcB_S := \bigoplus_{j \in \mbZ_{\geq 0}} \mcO_S \cdot \partial_\mcB^{\langle j \rangle},
\end{align}
where $\partial_\mcB^{\langle i \rangle}$'s are   abstract symbols.
We equip $\mcB_S$ with a structure of $\mcO_S$-algebra given by 
\begin{align} \label{YY12}
\partial_\mcB^{\langle j' \rangle} \cdot \partial_\mcB^{\langle j''\rangle} = \sum_{j = \mr{max}\{j', j'' \}}^{j' + j''} \frac{j!}{(j' + j''-j)! \cdot (j-j')! \cdot (j-j'')!}\cdot \frac{q_{j'}! \cdot q_{j''}!}{q_j !} \cdot \partial_\mcB^{\langle j\rangle}.
\end{align}
In particular,  the  $\mcO_S$-algebra $\mcB_S$ is commutative and  generated by the sections $\partial_\mcB^{\langle 1 \rangle}, \partial_\mcB^{\langle p \rangle}, \cdots, \partial_\mcB^{\langle p^{\M}\rangle}$.

Let $\mcG$ be an $\mcO_S$-module and $\mu$ a morphism of $\mcO_S$-algebras 
$\mcB_S \migi \mcE nd_{\mcO_S} (\mcG)$.
For each $\IN \in \mbZ_{\geq 0}$,
we set 
$\mu^{\langle p^\IN \rangle} := \mu  (\partial_\mcB^{\langle p^\IN \rangle})$, which is an element of $\mr{End}_{\mcO_S}(\mcG)$.
The morphism $\mu$ is uniquely determined by  
the $(\M+1)$-tuple
\begin{align} \label{e187}
\mu^{\langle \bullet \rangle} := (\mu^{\langle 1 \rangle}, \mu^{\langle p \rangle} \cdots,  \mu^{\langle p^{\M} \rangle}) \in \mr{End}_{\mcO_S}(\mcG)^{\oplus (\M +1)}.
\end{align}

Denote by $\sigma_\oslash : S \migi U_\oslash$ (resp., $\sigma_\otimes : S \migi U_\otimes$) the closed immersion corresponding to the surjection $\mcO_S [\![t]\!] \migisurj \mcO_S$ given by $t \mapsto 0$ (resp., the surjection $\mcO_S [\![t_1, t_2]\!] /(t_1t_2)\migisurj \mcO_S$ given by $t_1 \mapsto 0$ and $t_2 \mapsto 0$).
It follows from \eqref{e364} and  \eqref{YY12} that the assignment $\partial^{\langle j \rangle} \mapsto \partial_\mcB^{\langle j \rangle}$ ($j \in \mbZ_{\geq 0}$) determines an isomorphism of $\mcO_S$-algebras 
$\sigma_\oslash^*({^L}\mcD_\oslash^{(\M)}) \isom \mcB_S$.
This morphism induces, via the adjunction relation ``$\sigma_\oslash^* (-) \dashv \sigma_{\oslash*} (-)$",
an $\mcO_{\oslash}$-linear surjection
\begin{align} \label{YY16}
\omega : {^L}\mcD_\oslash^{(\M)} \migisurj \sigma_{\oslash *} (\mcB_S).
\end{align}

Now, let $(\mcF, \DMO)$ be a $\mcD_\oslash^{(\M)}$-module.
The  $\mcD_\oslash^{(\M)}$-module structure  $\DMO$ induces  an $\sigma_\oslash^*({^L}\mcD_\oslash^{(\M)})$-action $\sigma_\oslash^*(\DMO)$  on $\sigma_\oslash^*(\mcF)$; it gives 
the composite
\begin{align}
\mu (\DMO) : \mcB_S \isom \sigma_\oslash^*({^L}\mcD_\oslash^{(\M)}) \xrightarrow{\sigma^*_\oslash (\DMO)} \mcE nd_{\mcO_S} (\sigma_\oslash^* (\mcF)).
\end{align}
In particular, we obtain 
\begin{align} \label{e18d7}
\mu (\DMO)^{\langle \bullet \rangle} := (\mu (\DMO)^{\langle 1 \rangle}, \mu (\DMO)^{\langle p \rangle} \cdots,  \mu (\DMO)^{\langle p^{\M} \rangle}) \in \mr{End}_{\mcO_S}(\sigma_\oslash^*(\mcF))^{\oplus (\M +1)}.
\end{align} 
Since $\mcB_S$ is commutative,
the elements $\mu (\DMO)^{\langle 1 \rangle}, \cdots, \mu (\DMO)^{\langle p^\M \rangle}$ commute with each other.

\SSP
\bde \label{D98}
We shall refer to $\mu (\DMO)$ and $\mu (\DMO)^{\langle \bullet \rangle}$ as 
the {\bf monodromy operator} of $\DMO$. 
Also, for each $a = 0, \cdots, \M$,
we shall refer to $\mu (\DMO)^{\langle p^a \rangle}$ as the {\bf $a$-th monodromy operator} of $\DMO$.
\ede
\SSP

Note  that there exists uniquely an automorphism
\begin{align} \label{e445eee}
\mr{sw} : \mcB_S \isom \mcB_S
\end{align}
 of the $\mcO_S$-algebra $\mcB_S$ 
 determined by 
 $\mr{sw} (\partial_\mcB^{\langle j \rangle}) = (-1)^j \cdot \sum_{j'=1}^j \binom{j-1}{j'-1}\cdot \partial_\mcB^{\langle j'\rangle}$
  for every positive integer $j \leq p^\M$ (cf. Lemma \ref{P238}).
 In particular,  the equality   $\mr{sw} (\partial_\mcB^{\langle p^a \rangle}) = \sum_{j'=1}^{p^a} (-1)^{j'} \cdot \partial_\mcB^{\langle j'\rangle}$ holds for $a \leq \M$.
 This automorphism is   involutive, i.e., $\mr{sw} \circ \mr{sw} = \mr{id}_{\mcB_S}$.

 For an $\mcO_S$-module $\mcG$,
 we shall set
\begin{align} \label{dE102}
\mr{sw}^\bullet_\mcG : \mr{End}_{\mcO_S} (\mcG)^{\oplus (\M +1)} \isom \mr{End}_{\mcO_S} (\mcG)^{\oplus (\M +1)}
\end{align}
to be the  endomap of  $\mr{End}_{\mcO_S} (\mcG)^{\oplus (\M +1)}$ given by
\begin{align} \label{dE10122}
\mr{sw}^\bullet_\mcG ((A_\IN)_{\IN=0}^{\M}) := ((-A_\IN -\mr{id}_\mcG + \prod_{b=0}^{\IN-1}(\mr{id}_\mcG - A_b^{p-1}))_{\IN=0}^\M),
\end{align}
where $\prod_{b=0}^{-1} (-) := \mr{id}_\mcG$ and $\prod_{b = 0}^{\IN-1} B_b := B_0 \circ  B_1 \circ  \cdots \circ B_{\IN-1}$ ($\IN  \geq 1$).
If $(\mcF, \DMO)$ is as above, then 
 the second equality in \eqref{e444} implies  
 \begin{align} \label{dE110}
 (\mu (\DMO) \circ \mr{sw})^{\langle \bullet \rangle} = \mr{sw}_{\sigma_\oslash^*(\mcF)}^\bullet (\mu (\DMO)^{\langle \bullet \rangle}).
 \end{align}  

 \SSP
\bpr \label{P27}
Let $\mcF$ be an $\mcO_{\otimes}$-module.
\begin{itemize}
\item[(i)]
Let $\DMO$ be a $\mcD_\otimes^{(\M)}$-module structure on $\mcF$.
Then, the equality 
\begin{align}
\mu (\iota_1^*(\DMO)) \circ \mr{sw} = \mu (\iota_2^*(\DMO))  
\left(\Longleftrightarrow  \mr{sw}^\bullet_{\sigma_\otimes^*(\mcF)}(\mu (\iota_1^*(\DMO))^{\langle \bullet \rangle}) = \mu (\iota_2^*(\DMO))^{\langle \bullet \rangle} \ \text{by \eqref{dE110}} \right)
\end{align}
 holds.
\item[(ii)]
Conversely, 
let $\DMO_i$ (for each   $i =1,2$) be   a $\mcD_\oslash^{(\M)}$-module structure on $\iota_i^*(\mcF)$, 
 and suppose that
the equality $\mu (\DMO_1) \circ \mr{sw} = \mu (\DMO_2)$ (or equivalently, $\mr{sw}^\bullet_{\sigma_\otimes^*(\mcF)}(\mu (\DMO_1)^{\langle \bullet \rangle}) = \mu (\DMO_2)^{\langle \bullet \rangle}$) holds.
Then, there exists a unique $\mcD_\otimes^{(\M)}$-module structure $\DMO$ on $\mcF$
 satisfying the equalities  $\iota_1^*(\DMO) = \DMO_1$ and $\iota_2^*(\DMO) = \DMO_2$ under the natural  identifications $\iota_1^* ({^L}\mcD_{\otimes}^{(\M)}) = {^L}\mcD_\oslash^{(\M)}$ and $\iota_2^* ({^L}\mcD_{\otimes}^{(\M)}) = {^L}\mcD_\oslash^{(\M)}$, respectively.
\end{itemize}
\epr
\begin{proof}
We shall set  ${^L}\breve{\mcD}_\otimes^{(\M)}$ to be the sheaf defined as
\begin{align} \label{MM6}
{^L}\breve{\mcD}_\otimes^{(\M)}&
:= \iota_{1*}({^L}\mcD_\oslash^{(\M)})  \times_{\omega, \mcB_S, \mr{sw}\circ \omega} \iota_{2*}({^L}\mcD_\oslash^{(\M)})
\\
& \left(=\left\{(s_1, s_2) \in \iota_{1*}({^L}\mcD_\oslash^{(\M)})  \times \iota_{2*}({^L}\mcD_\oslash^{(\M)})  \, \Big| \, \iota_{1*}(\omega)  (s_1) = (\sigma_{\otimes *}(\mr{sw})\circ \iota_{2*}(\omega)) (s_2)  \right\} \right).\notag
\end{align}
By using the  isomorphism $\mcO_\otimes \isom  \iota_{1*}(\mcO_\oslash) \times_{\sigma_{\otimes*}(\mcO_S)} \iota_{2*}(\mcO_\oslash)$ given by $t_1 \mapsto (t, 0)$ and $t_2 \mapsto (0, t)$,
we equip ${^L}\breve{\mcD}_\otimes^{(\M)}$ with an $\mcO_\otimes$-module structure.
It follows from Lemma \ref{P238} that the assignment
$s \mapsto (\omega_1(s), \omega_2(s))$ for each local section $s \in {^L}\mcD_\otimes^{(\M)}$ defines an $\mcO_\otimes$-linear isomorphism 
\begin{align} \label{e459}
{^L}\mcD_\otimes^{(\M)} \isom {^L}\breve{\mcD}_\otimes^{(\M)}.
\end{align}
Hence,  both assertions (i) and (ii)  are  direct consequences of this isomorphism.
\end{proof}
\SSP

\LSP
\subsection{$\mcD^{(\M)}$-module structures  $\nabla_{\oslash, \EX}^{(\M)}$ and $\nabla_{\otimes, \EX}^{(\M)}$} \label{SS182}

In \S\S\,\ref{SS182}-\ref{SS203},
we suppose that $S$ is a scheme over $\mbF_p$ (equipped with the trivial $\M$-PD structure).
Note that the discussions in \S\S\,\ref{SS0dd58}-\ref{SS0ddd58} can be applied 
 even when ``$X^\mr{log}/S^\mr{log}$" is replaced by $U_\oslash^\mr{log}/S^\mr{log}$ or $U_\otimes^\mr{log}/S^\mr{log}$.
 In particular, 
we can define the $p^{\M +1}$-curvature of a $\mcD_\oslash^{(\M)}$-module, as well as of a $\mcD_\otimes^{(\M)}$-module.

The $(\M+1)$-st Frobenius twist $U_{\oslash}^{(\M +1)}$ (resp., $U_{\otimes}^{(\M +1)}$) of $U_\oslash$ (resp., $U_\otimes$) over $S$ may be identified with  
the relative affine scheme over $S$ associated to the $\mcO_S$-subalgebra  
\begin{align}
\mcO_S [\![t^{p^{\M+1}}]\!] \ \left(\text{resp.},  \ \mcO_S [\![t^{p^{\M+1}}_1, t_2^{p^{\M+1}}]\!]/(t_1^{p^{\M+1}}t_2^{p^{\M+1}})\right)
\end{align}
 of $\mcO_S  [\![t ]\!]$ (resp., $\mcO_S  [\![t_1, t_2 ]\!]/(t_1 t_2)$).
For simplicity, we write $\mcO_{\oslash}^{(\M +1)} := \mcO_{U_{\oslash}^{(\M +1)}}$ and
$\mcO_{\otimes}^{(\M +1)} := \mcO_{U_{\otimes}^{(\M +1)}}$.

Given an element   $\EX$ of $\mbZ/p^{\M +1}\mbZ$, 
we denote by $\widetilde{\EX}$ the  integer defined as the unique lifting of $d$ via the natural surjection $\mbZ \migisurj \mbZ/p^{\M +1}\mbZ$ satisfying $0 \leq \widetilde{\EX} <p^{\M +1}$.
Let $\widetilde{\EX}_{[0]}, \cdots, \widetilde{\EX}_{[\M]}$
be 
the  collection of  integers uniquely determined by the condition that    $\widetilde{\EX}= \sum_{\IN=0}^{\M}p^\IN \cdot \widetilde{d}_{[\IN]}$ and $0 \leq \widetilde{\EX}_{[\IN]} <p$ ($\IN = 0, \cdots, \M$).
Also, 
for  each  $\IN = 0, \cdots, \M$, we write
 $\widetilde{\EX}_{[0, \IN]} := \sum_{b=0}^{\IN} p^b \cdot \widetilde{d}_{[b]}$, i.e., the remainder obtained by dividing $\widetilde{d}$ by $p^{\IN +1}$,  and write
$d_{[\IN]}$ (resp., $d_{[0, \IN]}$) for  the image of $\widetilde{d}_{[\IN]}$ (resp., $\widetilde{d}_{[0, \IN]}$) via the natural projection $\mbZ \migisurj \mbF_p$ (resp., $\mbZ \migisurj \mbZ/p^{\IN+1}\mbZ$).

Now, let us fix  an element $d \in \mbZ/p^{\M +1}\mbZ$.
Then, there exists a unique $\mcD_\oslash^{(\M)}$-module structure 
\begin{align} \label{YY52}
\DMO_{\oslash, \EX}^{(\M)} : {^L}\mcD_{\oslash}^{(\M)} \migi \mcE nd_{\mcO_S} (\mcO_{\oslash})
\end{align}
on $\mcO_{\oslash}$  determined by the condition that
$\DMO_{\oslash, \EX}^{(\M)} (\partial^{\langle j \rangle}) (t^n) = q_j ! \cdot \binom{n -\widetilde{\EX}}{j}\cdot t^n$
   for every $j$, $n \in \mbZ_{\geq 0}$.
The resulting $\mcD_{\oslash}^{(\M)}$-module 
\begin{align} \label{UU3}
\msO_{\oslash, \EX}^{(\M)}
   := (\mcO_{\oslash}, \DMO_{\oslash, \EX}^{(\M)})
\end{align}
 is isomorphic  to the unique extension of 
 $\msO_{\oslash, 0}^{(\M)}$
   to $t^{-\widetilde{\EX}}\cdot \mcO_{\oslash} \left(\supseteq \mcO_\oslash \right)$.
In particular  $\DMO^{(\M)}_{\oslash, \EX}$ has vanishing $p^{\M +1}$-curvature.

\SSP
\bpr \label{L093}
For each $\IN = 0, \cdots, \M$, we have 
\begin{align}
\mcS ol (\DMO_{\oslash, \EX}^{(\M)\Rightarrow (\IN)}) = t^{\widetilde{\EX}_{[0, \IN]}}\cdot \mcO_{\oslash}^{(\IN+1)} \left(\subseteq \mcO_\oslash \right).
\end{align}
Moreover, under the identification $\left(\mcS ol (\DMO_{\oslash, \EX}^{(\M)\Rightarrow (\IN-1)}) =\right) t^{\widetilde{\EX}_{[0, \IN-1]}}\cdot \mcO_{\oslash}^{(\IN)} = \mcO_{\oslash}$ given by $t^{\widetilde{\EX}_{[0, \IN-1]}} \cdot h (t^{p^{\IN}})\leftrightarrow h (t)$ for each $h (t) \in \mcO_{S}[\![t]\!]$ (where $t^{\widetilde{\EX}_{[0, \IN-1]}}\cdot \mcO_{\oslash}^{(\IN)} := \mcO_\oslash$ if $a = 0$),
the following equality of $S$-connections holds:
\begin{align}
(\DMO_{\oslash, \EX}^{(\M)})^{[\IN]} = \DMO_{\oslash, \EX_{[\IN ]}}^{(0)}.
\end{align}
\epr
\begin{proof}
The assertion follows from the definition of $\DMO_{\oslash, \EX}^{(\M)}$.
\end{proof}

\SSP
\bpr \label{L090}
Under the natural identification $\mr{End}_{\mcO_S} (\sigma_\oslash^* (\mcO_\oslash)) = H^0 (S, \mcO_S) \left(\supseteq \mbF_p \right)$,
the following equality holds:
\begin{align}
\mu (\DMO_{\oslash, \EX}^{(\M)})^{\langle \bullet \rangle} = ((-\EX)_{[0]}, \cdots, (-\EX)_{[\M]}).
\end{align}
Moreover, 
 the equality  
\begin{align} \label{e200}
\mu (\DMO_{\oslash, \EX}^{(\M)})  \circ \mr{sw} = \mu (\DMO_{\oslash, -\EX}^{(\M)}) \ \left(\Longleftrightarrow \mr{sw}^\bullet_{\mcO_S} (\mu (\DMO_{\oslash, \EX}^{(\M)})^{\langle \bullet \rangle}) 
= \mu (\DMO_{\oslash, -\EX}^{(\M)})^{\langle \bullet \rangle} \right)
\end{align}
holds.
\epr
\begin{proof}
Since the first assertion follows from the definition of $\DMO_{\oslash, \EX}^{(\M)}$, we only prove the second assertion.
To this end, it suffices to consider the case of $\EX \neq 0$.
Let us write $c := - \EX$. 
Since $c \neq 0$,  the nonnegative integer $\IN_0 := \mr{min}\left\{ \IN \, | \,  c_{[\IN]} \neq 0\right\}$ is well-defined.
For each  $\IN \leq  \IN_0$, the following equalities  hold:
\begin{align} \label{TTT1234}
-c_{[\IN]} - 1 + \prod_{b =0}^{\IN -1} (1-c_{[b]}^{p-1}) = -c_{[\IN]} - 1 + \prod_{b =0}^{\IN -1} (1-0) = -c_{[\IN]}  -1 + 1 = d_{[\IN]}.
\end{align}
On the other hand,  if $\IN > \IN_0$, then we have 
\begin{align} \label{TTT1235}
-c_{[\IN]} - 1 + \prod_{b =0}^{\IN -1} (1-c_{[b]}^{p-1})   
 =  -c_{[\IN]} - 1 = d_{[\IN]},
\end{align}
where the first equality follows from  $1- c_{[\IN_0]}^{p-1} = 0$.
By \eqref{TTT1234}, \eqref{TTT1235}, and the first assertion,
the following sequence of equalities holds:
\begin{align}
\mr{sw}^\bullet_{\mcO_S} \left(\mu (\DMO_{\oslash, \EX}^{(\M)})^{\langle \bullet \rangle}\right) 
& = \mr{sw}^\bullet_{\mcO_S} \left(c_{[0]}, \cdots, c_{[\M]}\right) \\
& = (-c_{[\IN]} - 1 + \prod_{b =0}^{\IN-1} (1-c_{[b]}^{p-1}))_{\IN =0}^\M \notag \\
&= \left(d_{[\IN]}\right)_{\IN=0}^\M \notag \\
& = \mu (\DMO_{\oslash, -\EX}^{(\M)})^{\langle \bullet \rangle}. \notag
\end{align}
This completes the proof of the assertion.
\end{proof}
\SSP

By Propositions \ref{P27}, (ii), and  \ref{L090},
there exists a unique $\mcD_{\otimes}^{(\M)}$-module structure 
\begin{align}
\DMO_{\otimes, \EX}^{(\M)} :  {^L}\mcD_{\otimes}^{(\M)} \migi \mcE nd_{\mcO_S} (\mcO_{\otimes})
\end{align}
on $\mcO_\otimes$ 
with $\iota_1^*(\DMO_{\otimes, \EX}^{(\M)}) = \DMO_{\oslash, \EX}^{(\M)}$ and  $\iota_2^*(\DMO_{\otimes , \EX}^{(\M)}) = \DMO_{\oslash, -\EX}^{(\M)}$.
It is verified that $\DMO_{\otimes, \EX}^{(\M)}$ has vanishing $p^{\M +1}$-curvature.
We shall write
\begin{align} \label{WW2}
\msO_{\otimes, \EX}^{(\M)}
  := (\msO_\otimes, \DMO_{\otimes, \EX}^{(\M)}).
\end{align}

\SSP
\bpr \label{Ldd093}
We shall write $c := -\EX$.
Also, let us fix  $\IN \in \{ 0, \cdots, \M\}$.
\begin{itemize}
\item[(i)]
We have 
\begin{align}
\mcS ol  (\DMO_{\otimes, \EX}^{(\M)\Rightarrow (\IN)})
  = \begin{cases}\mcO_\otimes^{(\IN +1)} & \text{if $\widetilde{\EX}_{[0, \IN]} =0$;} \\  t_1^{\widetilde{\EX}_{[0, \IN]}} \cdot  \iota_{1*}(\mcO_{\oslash}^{(\IN+1)}) \oplus 
 t_2^{\widetilde{c}_{[0, \IN]}} \cdot \iota_{2*}(\mcO_{\oslash}^{(\IN+1)}) 
 & \text{if otherwise.} \end{cases}
\end{align}
\item[(ii)]
Suppose that $a = 0$ or $\widetilde{\EX}_{[0, \IN-1]} = 0$.
 Then, 
 under the identification $\mcO_\otimes^{(a)} = \mcO_\otimes$ given by $h (t^{p^a}) \leftrightarrow h (t)$ for each $h (t) \in \mcO_S [\![t]\!]$,
 the following equality of $S^\mr{log}$-connections  holds:
 \begin{align}
 (\DMO_{\otimes, \EX}^{(\M)})^{[\IN]} = \DMO^{(0)}_{\otimes, d_{[\IN]}}.
 \end{align}
\item[(iii)]
Suppose that $\IN \geq 1$ and 
 $\widetilde{\EX}_{[0, \IN-1]} \neq 0$.
Then, 
 under the identification 
 \begin{align}
\left(\mcS ol  (\DMO_{\otimes, \EX}^{(\M)\Rightarrow (\IN -1)}) = \right) t_1^{\widetilde{\EX}_{[0, \IN-1]}} \cdot  \iota_{1*}(\mcO_{\oslash}^{(\IN)}) \oplus 
 t_2^{\widetilde{c}_{[0, \IN-1]}} \cdot \iota_{2*}(\mcO_{\oslash}^{(\IN)}) = \iota_{1 *}(\mcO_\oslash) \oplus \iota_{2*} (\mcO_\oslash)
 \end{align}
  given by 
 $(t_1^{\widetilde{\EX}_{[0, \IN-1]}} \cdot h_1 (t_1^{p^{\IN}}), t_2^{\widetilde{c}_{[0, \IN-1]}} \cdot h_2 (t_2^{p^{\IN}})) \leftrightarrow (h_1 (t), h_2 (t))$  for  each pair $(h_1 (t), h_2 (t)) \in \mcO_S[\![t]\!]^{\times 2}$,
 the following equality of $S^\mr{log}$-connections holds:
\begin{align}
(\DMO_{\otimes, \EX}^{(\M)})^{[\IN]} = \iota_{1*}(\DMO_{\oslash, \EX_{[\IN]}}^{(0)}) \oplus \iota_{2*}(\DMO_{\oslash, c_{[\IN]}}^{(0)}).
\end{align}
\end{itemize}
\epr
\begin{proof}
The assertions follow from Proposition \ref{L093}.
\end{proof}
\SSP

Also, the following assertion can be proved immediately.

\SSP
\bpr \label{YY30}
Let $d$ and  $c$ be elements of $\mbZ/p^{\M +1} \mbZ$.
Also, let $\odot \in \{ \oslash, \otimes \}$.
\begin{itemize}
\item[(i)]
The canonical isomorphism $\mcO_{\odot} \otimes_{\mcO_\odot} \mcO_\odot \isom \mcO_\odot$ defines  an isomorphism of $\mcD_\odot^{(\M)}$-modules 
\begin{align} \label{YY31}
\msO_{\odot, d}^{(\M)} \otimes \msO_{\odot, c}^{(\M)} \isom \msO_{\odot, d+c}^{(\M)}.
\end{align}
In particular, we have
$\msO_{\odot, d}^{(\M)\vee} = \msO_{\odot, -d}^{(\M)}$ (cf.  ~\cite[Corollaire 2.6.1, (ii)]{Mon} for the definition of dual $(-)^\vee$).
\item[(ii)]
The  following equality holds:
\begin{align} \label{YY32}
\mr{Hom}(\msO_{\odot, d}^{(\M)}, \msO_{\odot, c}^{(\M)})
= \left\{ \mr{mult}_{s} \, \Big| \, s \in H^0 (U_\odot, \mcS ol (\DMO_{\odot, c-\EX}^{(\M)}))\right\},
\end{align}
where  $\mr{mult}_{s}$ denotes the endomorphism of $\mcO_\odot$ given by multiplication by $s$.
In particular, there exists a surjective morphism 
$\msO_{\odot, d}^{(\M)} \migisurj \msO_{\odot, c}^{(\M)}$
 if and only if the equality $d = c$ holds.
\item[(iii)]
Let $\M'$ be a nonnegative integer $\leq \M$.
Then, the following equality holds:
\begin{align} \label{YY69}
\DMO_{\odot, d}^{(\M)\Rightarrow (\M')} = \DMO_{\odot, d_{[0, \M']}}^{(\M')}.
\end{align}
\end{itemize}
\epr

\LSP
\subsection{Local description of  $p^{\M +1}$-flat $\mcD^{(\M)}$-modules} \label{SS203}

In this subsection, we show that a $p^{\M +1}$-flat  $\mcD_{\odot}^{(\M)}$-module (where $\odot \in \{ \oslash, \otimes \}$) can be described as the direct sum of 
$\msO_{\odot, d}^{(\M)}$'s
 for  some elements $d \in \mbZ/p^{\M +1}\mbZ$.
This assertion generalizes ~\cite[Proposition 1.1.12]{Kin1}, in which $\mcD^{(\M)}$-modules  on the formal disc $\mr{Spec}(k[\![t]\!])$ over an algebraically closed field  $k$ were discussed.
See also ~\cite[Corollary 2.10]{O2} for the case of $\M = 0$ and ~\cite[Theorem 3.3]{Gie} for the case of so-called  $F$-divided (or, stratified) bundles, i.e., the case of $\M = \infty$.

\SSP
\begin{prdef}\label{P022}
Suppose that $S = \mr{Spec}(R)$ for
a local ring  $(R, \mfm)$ over $\mbF_p$ such that the residue field $k := R/\mfm$ is algebraically closed.
Let $\odot \in \{ \oslash, \otimes \}$, and let
 $(\mcF, \DMO^{(\M)})$ be a
  $p^{\M +1}$-flat  $\mcD_{\odot}^{(\M)}$-module  such that $\mcF$ is a (locally) free $\mcO_\odot$-module of 
  rank $n >0$.
Then, there exists an isomorphism of $\mcD_{\odot}^{(\M)}$-modules
\begin{align} \label{YY19}
 \bigoplus_{i=1}^n  \msO_{\odot, d_i}^{(\M)}
 \isom  (\mcF, \DMO^{(\M)})
\end{align}
for some $d_1, \cdots, d_n \in \mbZ/p^{\M +1}\mbZ$.
(This implies that, for each $a=0, \cdots, \M$, the $a$-th monodromy operator $\mu (\DMO^{(\M)})^{\langle p^a \rangle}$ of $\DMO^{(\M)}$
can be transformed, after choosing a suitable trivialization $\sigma_\odot^*(\mcF) \isom \mcO_S^{\oplus n}$ of $\sigma_\odot^*(\mcF)$,  into 
 the diagonal matrix with diagonal entries $(-d)_{1[a]}, \cdots, (-d)_{n[a]}$.)
Moreover, the resulting  multiset 
\begin{align} \label{YY23}
e (\DMO^{(\M)}) := [d_1, \cdots, d_n]
\end{align}
 depends only on the isomorphism class of $(\mcF, \nabla^{(\M)})$.
We shall refer to 
$e (\DMO^{(\M)})$
 as the {\bf exponent} of $(\mcF, \DMO^{(\M)})$.
\end{prdef}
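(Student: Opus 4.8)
The plan is to prove the decomposition \eqref{YY19} by induction on the level $\M$, using the monodromy operator and the filtration by Cartier operators developed in \S\ref{SS0ddd58}. For the base case $\M = 0$, one reduces to the classical statement: over $\mathrm{Spec}(R)$ with $(R,\mfm)$ local and $k = R/\mfm$ algebraically closed, a flat bundle with vanishing $p$-curvature on $U_\odot^\mr{log}/S^\mr{log}$ decomposes into rank-one pieces $\msO_{\odot, d_i}^{(0)}$; this is exactly \cite[Corollary 2.10]{O2} for $\odot = \oslash$, and the $\otimes$-case follows by gluing via Proposition \ref{P27}, (ii), once the $\oslash$-case is known on each branch together with the matching condition on monodromy operators from Proposition \ref{L090}. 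Concretely, the $0$-th monodromy operator $\mu(\DMO^{(0)})^{\langle 1\rangle} \in \mr{End}_{\mcO_S}(\sigma_\odot^*(\mcF))$ is an endomorphism of a free module over a local ring with algebraically closed residue field, so it can be triangularized; one then uses the eigenvalue decomposition modulo $\mfm$ together with the rigidity coming from vanishing $p$-curvature (there are no nontrivial extensions among distinct $\msO_{\oslash, d}^{(0)}$'s by Proposition \ref{YY30}, (ii)) to split $\mcF$.

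For the induction step, suppose $\CH = 0$ so that $\M$ may be positive (the hypothesis of Proposition-Definition \ref{P022} forces $\CH = 0$ when $\M > 0$ via the standing assumption of \S\ref{SS1071}). Given a $p^{\M+1}$-flat $\mcD_\odot^{(\M)}$-module $(\mcF, \DMO^{(\M)})$, first restrict the level to obtain the $p^{\M}$-flat $\mcD_\odot^{(\M-1)}$-module $(\mcF, \DMO^{(\M)\Rightarrow(\M-1)})$, which by induction decomposes as $\bigoplus_{i=1}^n \msO_{\odot, c_i}^{(\M-1)}$ for suitable $c_i \in \mbZ/p^\M\mbZ$. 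Next, pass to the solution sheaf $\mcF^{[\M]} = \mcS ol(\DMO^{(\M)\Rightarrow(\M-1)})$, which by Proposition \ref{PPer4}, (i), is a locally free $\mcO_{\odot}^{(\M)}$-module of rank $n$, equipped via \eqref{dE60} with the $S^\mr{log}$-connection $\DMO^{[\M]}$ of vanishing $p$-curvature (Proposition \ref{UU577}, (i)). Since $U_\odot^{(\M)}$ is again of the form $U_\odot$ (after the Frobenius-twist identification of \S\ref{SS182}) over the same base $S$ over $\mbF_p$, the base case applied to $(\mcF^{[\M]}, \DMO^{[\M]})$ yields a decomposition $\mcF^{[\M]} \cong \bigoplus_{i=1}^n \msO_{\odot, d_{i[\M]}}^{(0)}$ for some residues $d_{i[\M]} \in \mbF_p$. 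Combining the $\M$-th monodromy operator $\mu(\DMO^{(\M)})^{\langle p^\M\rangle}$, which is governed by $\DMO^{[\M]}$, with the lower-level data recorded by the $c_i$, one assembles elements $d_i \in \mbZ/p^{\M+1}\mbZ$ with $d_{i[0,\M-1]} = c_i$ and $d_{i[\M]}$ as above, and then checks that the natural morphism $\bigoplus_i \msO_{\odot, d_i}^{(\M)} \to (\mcF, \DMO^{(\M)})$ built from horizontal lifts (using property $(\beta)_\M$ of Proposition \ref{L0ddd10}, (i), together with Corollary \ref{YY79}) is an isomorphism, by Nakayama.

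The main obstacle I expect is the bookkeeping needed to \emph{match} the level-$(\M-1)$ decomposition with the level-$\M$ solution-sheaf decomposition so that both come from a single coherent choice of $d_1, \dots, d_n \in \mbZ/p^{\M+1}\mbZ$ simultaneously splitting $(\mcF,\DMO^{(\M)})$ — i.e., lifting the two compatible decompositions to one. This requires a vanishing-of-obstructions argument: the relevant $\mathrm{Ext}^1$ between $\msO_{\odot, d}^{(\M)}$ and $\msO_{\odot, d'}^{(\M)}$ must be controlled, which follows from Proposition \ref{YY30} and the flatness of $\mcD_\odot^{(\M)}$-modules over local rings, but care is needed when some of the $d_i$ coincide modulo lower powers of $p$ but differ modulo $p^{\M+1}$ (the filtration by $p$-adic valuation of $d_i - d_j$ must be peeled off one level at a time, mirroring the inductive structure). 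For the $\otimes$-case there is the additional subtlety, visible in Proposition \ref{Ldd093}, that $\mcS ol(\DMO^{(\M)\Rightarrow(\IN)})$ changes shape depending on whether $\widetilde{\EX}_{[0,\IN]}$ vanishes, so the gluing via Proposition \ref{P27} must be performed branch-by-branch with the involution $\mr{sw}$ relating the two branch-exponents by negation (Proposition \ref{L090}, \eqref{e200}). Finally, the well-definedness of the multiset $e(\DMO^{(\M)})$ is comparatively routine: distinct $\msO_{\odot, d}^{(\M)}$'s are non-isomorphic by Proposition \ref{YY30}, (ii), and an isomorphism of direct sums of indecomposables over a local ring permutes the summands, so the multiset of exponents is an invariant of the isomorphism class.
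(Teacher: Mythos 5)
Your route is genuinely different from the paper's: the paper does not induct on the level $\M$ at all. For $\odot = \oslash$ it reduces modulo $\mfm$ and invokes Kindler's result \cite[Proposition 1.1.12]{Kin1}, which already gives the full level-$\M$ decomposition over the algebraically closed residue field $k$ in one stroke; the only work then is to lift the splitting from $k$ to $R$ (choose horizontal sections $v_j \in \mcS ol(\nabla_j^{(\M)})$ lifting the images of the standard basis, using the surjectivity of $\mcS ol(\nabla_j^{(\M)}) \to \mcS ol(\nabla_{j,k}^{(\M)})$ from Proposition \ref{PPer4}, (ii), and conclude by Nakayama). For $\odot = \otimes$ the paper glues the two branch decompositions by an explicit matrix computation: the discrepancy $A = \sigma_\oslash^*(\zeta_{\oslash,2})^{-1}\circ\sigma_\oslash^*(\zeta_{\oslash,1})$ is shown to commute with the diagonal monodromy matrices $D_{[0]},\dots,D_{[\M]}$, forcing $a_{ij}=0$ whenever $d_i\neq d_j$, so that $A$ is itself an automorphism of $\bigoplus_i\iota_2^*(\msO^{(\M)}_{\otimes,d_i})$ and the two isomorphisms glue via \eqref{e459}. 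Your proposal, if completed, would amount to an independent proof of Kindler's theorem by induction on the level.

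The induction step, however, has a genuine gap at exactly the point you flag as ``the main obstacle,'' and flagging it is not the same as closing it. Given the level-$(\M-1)$ decomposition $\mcF \cong \bigoplus_i \msO^{(\M-1)}_{\oslash,c_i}$, the solution sheaf $\mcF^{[\M]}$ identifies with $\bigoplus_i t^{\widetilde{c}_i}\mcO_\oslash^{(\M)}$, but the connection $\nabla^{[\M]}$ has no reason to be diagonal in that basis; applying the base case to $(\mcF^{[\M]},\nabla^{[\M]})$ produces a \emph{new} basis, related to the old one by a matrix over $\mcO_\oslash^{(\M)}$, and you must show this change of basis extends across the origin to an automorphism of $\mcF$ compatible with the level-$(\M-1)$ splitting. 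Whether it does depends on the $t$-adic valuations of the matrix entries relative to the differences $\widetilde{c}_i-\widetilde{c}_j$, and this is precisely the hard content of Kindler's argument. Your appeal to a ``vanishing-of-obstructions'' via $\mathrm{Ext}^1$ does not work as stated: Proposition \ref{YY30}, (ii) computes $\mathrm{Hom}$'s, not $\mathrm{Ext}^1$'s, and extensions between the $\msO^{(\M)}_{\oslash,d}$'s on the logarithmic formal disc do not obviously vanish (indeed the category of $p^{\M+1}$-flat $\mcD_\oslash^{(\M)}$-modules is not semisimple at the origin). Until you either supply the valuation bookkeeping for the peeling-off argument you sketch, or simply cite \cite[Proposition 1.1.12]{Kin1} over $k$ and lift as the paper does, the proof is incomplete. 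The well-definedness of the exponent and the $\otimes$-gluing strategy are otherwise sound, though for the latter you will still need the commutation argument $DA=AD$ (or an equivalent) to make the two branch isomorphisms agree at the node.
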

\begin{proof}
First, let us consider the case where $\odot = \oslash$.
Hereinafter, we shall use subscripted ``$k$" to denote the result of reducing modulo $\mfm$.
Since the $\mcD_{\otimes, k}^{(\M)}$-module $(\mcF_k, \nabla_k^{(\M)})$ has vanishing $p^{\M +1}$-curvature, 
it follows from ~\cite[Proposition 1.1.12]{Kin1} that
there exist a multiset $[\EX_1, \cdots, \EX_n]$ of elements in $\mbZ/p^{\M +1}\mbZ$  and 
 an isomorphism  of $\mcD_{\otimes, k}^{(\M)}$-modules
\begin{align}
\xi : \left(\bigoplus_{i=1}^n (\mcO_{\oslash, k}, \DMO_{\oslash, d_i, k}^{(\M)}) \cong\right)\bigoplus_{i=1}^n (t^{-\widetilde{\EX}_i} \cdot \mcO_{\oslash, k}, \nabla_{\oslash, 0, k}^{(\M)}) \isom (\mcF_k, \DMO_k^{(\M)}),
\end{align}
where, for each $\widetilde{\EX} \in \mbZ_{\geq 0}$, we abuse notation by writing  $\nabla_{\oslash, 0, k}^{(\M)}$ for  the $\mcD_{\otimes, k}^{(\M)}$-module structure 
 on the  line bundle $t^{-\widetilde{\EX}} \cdot \mcO_{\oslash, k} \left(\supseteq  \mcO_{\oslash, k}\right)$ extending
 $(\mcO_{\oslash, k},\nabla_{\oslash, 0, k}^{(\M)})$.

  Now, let us choose $j \in \{1, \cdots, n \}$.
  Denote by $\DMO^{(\M)}_j$ the $\mcD_{\oslash}^{(\M)}$-module structure on $t^{\widetilde{\EX}_j} \cdot \mcF \left(\subseteq \mcF \right)$ obtained by restricting $\DMO^{(\M)}$.
Then,  $\xi$ restricts to an isomorphism
\begin{align}
\xi_j : \bigoplus_{i=1}^n (t^{-\widetilde{\EX}_i+ \widetilde{d}_j} \cdot \mcO_{\oslash, k}, \nabla_{\oslash, 0, k}^{(\M)}) \isom (t^{\widetilde{\EX}_j} \cdot \mcF_k, \DMO_{j, k}^{(\M)}).
\end{align}
  If  $e_j$ denotes the image of $1 \in \mcO_{\oslash, k}$ via the inclusion into the  $j$-th  factor $\mcO_{\oslash, k} \migiincl \mcO_{\oslash, k}^{\oplus n}$,
  then  it  is  a horizontal section in  the domain of $\xi_j$.
  In particular, we have $\xi_j (e_j) \in \mcS ol (\DMO_{j, k}^{(\M)})$.
  Since the natural morphism $\mcS ol (\DMO_{j}^{(\M)}) \migi \mcS ol (\DMO_{j, k}^{(\M)})$ is surjective  by Proposition \ref{PPer4}, (ii), 
  we can find a section $v_j \in \mcS ol (\DMO_{j}^{(\M)})$ mapped to  $\xi_j (e_j)$ via this surjection.
  The section $v_j$ determines a morphism  of
$\mcD_{\oslash}^{(\M)}$-modules $(\mcO_{\oslash}, \nabla_{\oslash, 0}^{(\M)}) \migi (t^{\widetilde{\EX}_j} \cdot\mcF, \nabla_j^{(\M)})$;
it  can be extended uniquely  to a morphism $\zeta_j : (t^{-\widetilde{\EX}_j}\cdot \mcO_{\oslash}, \nabla_{\oslash, 0}^{(\M)}) \migi (\mcF, \nabla^{(\M)})$.
Thus, we obtain the composite  
\begin{align}
\zeta : \bigoplus_{i=1}^n (\mcO_{\oslash}, \nabla_{\oslash, \EX_j}^{(\M)}) \xrightarrow{\sim} \bigoplus_{i=1}^n (t^{-\widetilde{\EX}_i} \cdot \mcO_{\oslash}, \nabla_{\oslash, 0}^{(\M)}) \xrightarrow{\bigoplus_i \zeta_i}  (\mcF, \nabla^{(\M)}),
\end{align}
where the first arrow is  the direct sum of the natural isomorphisms
$(\mcO_{\oslash}, \nabla_{\oslash, \EX_j}^{(\M)}) \isom (t^{-\widetilde{\EX}_i} \cdot \mcO_{\oslash}, \nabla_{\oslash, 0}^{(\M)})$.
Note that the reduction modulo $\mfm$ of $\bigoplus_i \zeta_i$ coincides with the isomorphism $\xi$.
Hence, $\zeta$ turns out to be an isomorphism by Nakayama's lemma.
This  proves  the first assertion for $\odot = \oslash$.
Moreover, the second assertion  follows immediately from Proposition \ref{YY30}, (ii), in the case of $\odot = \oslash$.

Next, we shall consider the case of $\odot = \otimes$.
It follows from the above discussion that
there exists an isomorphism of $\mcD_\oslash^{(\M)}$-modules
\begin{align}
\zeta_{\oslash, 1} : \left(\bigoplus_{i=1}^n (\iota_1^*(\mcO_\otimes), \iota_1^*(\DMO_{\otimes, \EX_i}^{(\M)})) =\right) \bigoplus_{i=1}^n (\mcO_{\oslash},  \nabla^{(\M)}_{\oslash, \EX_{i}}) \isom (\iota_1^*(\mcF), \iota_1^*(\DMO^{(\M)}))
\end{align}
for some $\EX_{1}, \cdots, \EX_{n} \in \mbZ/p^{\M +1}\mbZ$.
By  Propositions \ref{P27}, (i), and \ref{L090}, 
the exponent of $(\iota_2^*(\mcF), \iota_2^*(\nabla^{(\M)}))$ coincides  with $[-\EX_1, \cdots, -\EX_n]$.
Hence,  we can find an isomorphism of $\mcD_{\oslash}^{(\M)}$-modules
\begin{align}
\zeta_{\oslash, 2} : \bigoplus_{i=1}^n (\iota_2^*(\mcO_\otimes),  \iota_2^*(\DMO_{\otimes, \EX_i}^{(\M)})) \isom (\iota_2^*(\mcF), \iota_2^*(\nabla^{(\M)})).
\end{align}
The  automorphism $\sigma_\oslash^* (\zeta_{\oslash, 2})^{-1} \circ  \sigma_\oslash^*(\zeta_{\oslash, 1})$ defines  an 
$n \times n$ matrix $A := (a_{ij})_{1 \leq i, j \leq n} \in \mr{GL}_n(R)$ (under the natural identification $H^0 (S, \sigma_\otimes^* (\mcO_\otimes)) = R$).
We shall set $D := (D_{[0]}, \cdots, D_{[\M]})$, where each $D_{[\IN]}$ ($\IN=0, \cdots, \M$) denotes   the diagonal matrix with diagonal entries  $(-\EX)_{1[\IN]}, \cdots, (-\EX)_{n [\IN]}$.
 By regarding this matrix as an element of $\mr{End}_{\mcO_S}(\mcO_S^{\oplus n})^{\oplus (\M +1)}$, 
 we can identify $D$ (resp., $\mr{sw}^\bullet_{\mcO_S} (D)$) with the monodromy operator $\mu (\bigoplus_{i=1}^n \iota_1^*(\DMO_{\otimes, \EX_i}^{(\M)}))^{\langle \bullet \rangle}$ (resp., $\mu (\bigoplus_{i=1}^n \iota_2^*(\DMO_{\otimes, \EX_i}^{(\M)}))^{\langle \bullet \rangle}$) of $\bigoplus_{i=1}^n \iota_1^*(\DMO_{\otimes, \EX_i}^{(\M)})$ (resp., $\bigoplus_{i=1}^n \iota_2^*(\DMO_{\otimes, \EX_i}^{(\M)})$).
Since both $\zeta_{\oslash, 1}$ and $\zeta_{\oslash, 2}$ preserve the $\mcD_\oslash^{(\M)}$-module structure, we obtain the following equalities of elements in $\mr{End}_{\mcO_S}(\mcO_S^{\oplus n})^{\oplus (\M +1)}$:
\begin{align}
\mu \circ \sigma_\oslash^* (\zeta_{\oslash, 1}) = \sigma_\oslash^* (\zeta_{\oslash, 1}) \circ D,
 \hspace{5mm}
 \mr{sw}_{\mcO_S}^\bullet (\mu) \circ \sigma_\oslash^* (\zeta_{\oslash, 2}) = \sigma_\oslash^* (\zeta_{\oslash, 2}) \circ \mr{sw}_{\mcO_S}^\bullet (D),
\end{align}
where $\mu := \mu (\iota_1^*(\DMO^{(\M)}))^{\langle \bullet \rangle}$.
These equalities imply
\begin{align}
  \sigma_\oslash^* (\zeta_{\oslash, 2})  \circ D \circ   \sigma_\oslash^* (\zeta_{\oslash, 2})^{-1} & =  \sigma_\oslash^* (\zeta_{\oslash, 2})  \circ  \mr{sw}_{\mcO_S}^\bullet ( \mr{sw}_{\mcO_S}^\bullet (D)) \circ   \sigma_\oslash^* (\zeta_{\oslash, 2})^{-1} \\
&
=  \mr{sw}_{\mcO_S}^\bullet( \sigma_\oslash^* (\zeta_{\oslash, 2}) \circ \mr{sw}_{\mcO_S}^\bullet (D) \circ \sigma_\oslash^* (\zeta_{\oslash, 2})^{-1})
\notag \\
& =   \mr{sw}_{\mcO_S}^\bullet(\mr{sw}_{\mcO_S}^\bullet (\mu)) \notag \\
& = \mu \notag \\
& = \sigma_\oslash^* (\zeta_{\oslash, 1}) \circ D \circ\sigma_\oslash^* (\zeta_{\oslash, 1})^{-1}. \notag
\end{align}
 It follows that $D A = A D$, i.e., $(D_{[0]}A, \cdots, D_{[\M]}A) = (A D_{[0]}, \cdots, A D_{[\M]})$.
Hence, for each pair $(i, j)$ with $d_i \neq d_j$,
we have $a_{ij} = a_{ji} =  0$.
By Proposition \ref{YY30}, (ii),  
 the matrix $A$  defines an automorphism $\alpha$ of the $\mcD_{\oslash}^{(\M)}$-module 
$\bigoplus_{i=1}^n (\iota_2^*(\mcO_\otimes),  \iota_2^*(\DMO_{\otimes, \EX_i}^{(\M)}))$ when regarded  as an element of $\mr{GL}_n (R[\![t]\!])$ via the natural inclusion $R \migiincl R[\![t]\!]$.
The equality $\sigma_{\oslash}^* (\zeta_{\oslash, 1}) = \sigma_{\oslash}^* (\zeta_{\oslash, 2}\circ\alpha)$ holds, so the two morphisms of $\mcD_{\oslash}^{(\M)}$-modules 
$\zeta_{\oslash, 1}$,  $\zeta_{\oslash, 2} \circ \alpha$ may be glued together (by using \eqref{e459}) to obtain an isomorphism 
\begin{align}
\bigoplus_{i=1}^n (\mcO_\otimes, \nabla_{\otimes, d_i}^{(\M)}) \isom (\mcF, \DMO^{(\M)}).
\end{align}
This completes the proof of  the first assertion for $\odot = \otimes$.
Moreover, the second assertion can be proved by applying Proposition \ref{YY30}, (ii),  in the case of  $\odot = \otimes$.
Thus, we have finished the proof of this proposition.
 \end{proof}
\SSP

The following assertion is  
 a corollary of the above proposition.
 
\SSP
\bco \label{PP4}
Let  $\IN \in \{ 0, \cdots, \M \}$, and let $(\mcF, \DMO)$ be a $p^{\M +1}$-flat  $\mcD_{\otimes}^{(\M)}$-module   such that $\mcF$ is locally free   of rank $n > 0$.
Then, the  natural $\mcO_{\otimes}^{(\IN)}$-linear morphism 
$(\mcF^{\vee})^{[\IN]} \migi (\mcF^{[\IN]})^\vee$
 restricts to an $\mcO_{\otimes}^{(\IN+1)}$-linear isomorphism
\begin{align}
\left((\mcF^{\vee})^{[\IN+1]} =  \right)\mr{Ker}((\DMO^\vee)^{[\IN]}) \isom \mr{Ker} ((\DMO^{[\IN]})^\vee).
\end{align}
\eco
\begin{proof}
It suffices to prove the assertion under the assumption  in Proposition-Definition \ref{P022}.
 Then,  by that proposition, the problem is reduced to the case where $(\mcF, \DMO)  = (\mcO_\otimes, \DMO_{\otimes, d}^{(\M)})$ for an element  $d$ of $\mbZ /p^\M \mbZ$.
 
 If $\IN=0$ or $\widetilde{d}_{[0, \IN -1]} = 0$ ($\IN \geq 1$), then the assertion follows immediately from Proposition \ref{Ldd093}, (ii).
 Before  proving  the remaining case,
 let us make the following observation.  
 For each $u$, $v \in \mbF_p$, we shall denote by $\DMO_{\oslash, (u, v)}^{(0)}$ the 
 $\mcD_\otimes^{(0)}$-module structure
  on the $\mcO_\otimes$-module  $\iota_{1*}(\mcO_\oslash)\oplus \iota_{2*}(\mcO_\oslash)$ expressed as 
 $\DMO_{\oslash, (u, v)}^{(0)} := \iota_{1*}(\DMO_{\oslash, u}^{(0)}) \oplus  \iota_{2*}(\DMO_{\oslash, v}^{(0)})$.
By regarding $\iota_{1*}(\mcO_\oslash)\oplus \iota_{2*}(\mcO_\oslash)$ as an $\mcO_{\otimes}$-submodule of $\mcO_{\otimes}$ via the  injection
$(\iota_{1*}, \iota_{2*}) : \iota_{1*}(\mcO_\oslash)\oplus \iota_{2*}(\mcO_\oslash) \migi \mcO_\otimes$,
  we have
 \begin{align} \label{YY40}
 &  \ \ \ \  \mr{Hom}\left((\iota_{1*}(\mcO_\oslash)\oplus \iota_{2*}(\mcO_\oslash), \DMO_{\oslash, (u, v)}^{(0)}), (\mcO_\otimes, \nabla_{\otimes, 0}^{(0)})\right)  \left(=H^0 (U_\otimes, \mcS ol ((\DMO_{\oslash, (u, v)}^{(0)})^\vee))  \right) \\
& =      \left\{ (\mr{mult}_{\alpha}, \mr{mult}_{\beta}) \, \Big| \, \alpha \in t_1^{p-\widetilde{u}} \cdot R[\![t_1^p]\!],
\beta \in t_2^{p-\widetilde{v}} \cdot R[\![t_2^p]\!]
\right\}, 
\notag
 \end{align}
 where 
  $\mr{mult}_{(-)}$ denotes the morphism given by multiplication by $(-)$.
 
 Now, let us go back to the proof of the remaining case, i.e., the case where $\IN \geq 1$ and $\widetilde{d}_{[0, \IN-1]} \neq 0$.
 Write $c := - d$.
 By taking account of \eqref{YY40} and Proposition \ref{Ldd093}, (iii),
 we obtain the following sequence of equalities:
 \begin{align}
 &  \ \ \ \ H^0 (U_\otimes, \mr{Ker} (((\DMO_{\otimes, d}^{(\M)})^{[\IN]})^\vee)) \\
&  = 
 \mr{Hom}\left(
 (t_1^{\widetilde{d}_{[0, \IN-1]}} \cdot \iota_{1*}(\mcO_\oslash^{(\IN)}) \oplus t_2^{\widetilde{c}_{[0, \IN-1]}} \cdot \iota_{2*}(\mcO_\oslash^{(\IN)}), (\DMO_{\otimes, d}^{(\M)})^{[\IN]}), (\mcO_{\otimes}^{(\IN)}, \DMO_{U_\otimes^{(\IN)}, \mr{triv}}^{(0)})\right) \notag\\
  & = \left\{ (\mr{mult}_\alpha, \mr{mult}_\beta) \, \bigg| \, \alpha \in  t_1^{-\widetilde{d}_{[0, \IN-1]}+ p^{\IN}(p-\widetilde{d}_{[\IN]})} \cdot R[\![t_1^{p^{\IN+1}}]\!], \beta \in   t_2^{-\widetilde{c}_{[0, \IN-1]}+ p^{\IN}(p-\widetilde{c}_{[\IN]})} \cdot R[\![t_2^{p^{\IN+1}}]\!] \right\} \notag \\
 & = \left\{ (\mr{mult}_\alpha, \mr{mult}_\beta) \, \bigg| \, \alpha \in t_1^{\widetilde{c}_{[0, \IN]}} \cdot R[\![t_1^{p^{\IN+1}}]\!], \beta \in   t_2^{\widetilde{d}_{[0, \IN]}} \cdot R[\![t_2^{p^{\IN+1}}]\!] \right\} \notag \\
 & = H^0 (U_\otimes, \mr{Ker}((\DMO_{\otimes, -d}^{(\M)})^{[\IN]})) \notag \\
 & = H^0 (U_\otimes, \mr{Ker}  (((\DMO_{\otimes, d}^{(\M)})^\vee)^{[\IN]})). \notag
 \end{align}
 This completes the proof of the assertion.
\end{proof}

\LSP
\subsection{Local description of dormant flat bundles} \label{SS034}

In the rest of this section,  we suppose that   $S$ is   a flat scheme over $\mbZ/p^{\M +1}\mbZ$.
Note that  the notion of a {\it dormant} flat bundle on $U_\oslash^\mr{log}/S^\mr{log}$, as well as the functor ${^{\Diag}}\!\!(-)$,  can be defined in the same  manner as the case where the underlying space is a log curve. 
Just as in the discussion of \S\,\ref{S200}, 
we will denote by a subscript  $\M' $ (for each $\M' \leq \M$)  the result of reducing an object over $\mbZ/p^{\M +1}\mbZ$ modulo $p^{\M' +1}$.
 Also, we use the notation ``$\DMO_{\oslash, d, 0}^{(\M)}$" to denote the $\mcD_{\oslash, 0}^{(\M)}$-module structure $\DMO_{\oslash, \EX}^{(\M)}$  on $\mcO_{\oslash, 0}$ introduced in \eqref{YY52}.
 In particular, we obtain  $\msO_{\oslash, \EX, 0}^{(\M)}: = (\mcO_{\oslash, 0}, \DMO_{\oslash, d, 0}^{(\M)})$.

Let $(\mcF, \nabla)$ be a flat module  on $U_\oslash^\mr{log}/S$.
Since $\nabla$ induces an $\mcO_S$-linear morphism $\sigma_\oslash^*(\nabla) : \sigma_\oslash^* (\mcF) \migi \sigma_{\oslash}^* (\Omega_{U_\oslash^\mr{log}/S^\mr{log}} \otimes \mcF)$,
we obtain the $\mcO_{S}$-linear composite
\begin{align} \label{YY89}
\mu (\nabla) : \sigma_{\oslash}^* (\mcF) & \xrightarrow{\sigma_\oslash^*(\nabla)} \sigma_{\oslash}^* (\Omega_{U_\oslash^\mr{log}/S^\mr{log}} \otimes \mcF)  \\
& \isom \sigma_\oslash^* (\Omega_{U_\oslash^\mr{log}/S^\mr{log}}) \otimes \sigma_\oslash^* (\mcF) \notag \\
& \xrightarrow{\mr{Res}_{\sigma_\oslash} \otimes \mr{id}} \left(\mcO_S \otimes \sigma_\oslash^*(\mcF) = \right) \sigma_\oslash^*(\mcF), \notag
\end{align} 
where $\mr{Res}_{\sigma_\oslash}$ denotes the residue map $\sigma_\oslash^* (\Omega_{U_\oslash^\mr{log}/S^\mr{log}}) \isom \mcO_S$.

\SSP
\bde \label{YY88}
We shall refer to $\mu (\nabla) \left(\in \mr{End}_{\mcO_S} (\sigma_\oslash^* (\mcF)) \right)$ as the {\bf monodromy operator} of $\nabla$.
(This is essentially the same as  the notion of ($0$-th) monodromy operator of the corresponding  $\mcD_\oslash^{(0)}$-module, in the sense of Definition \ref{D98}.)
\ede
\SSP

For each  element $\EX$ of $\mbZ/p^{\M+1}\mbZ$,
we denote by $\DMO_{\oslash, \EX} : \mcO_\oslash \migi \Omega_{U_\oslash^\mr{log}/S^\mr{log}}$ the $S^\mr{log}$-connection on $\mcO_\oslash$ given by 
 \begin{align} \label{WW101}
\DMO_{\oslash, \EX} (v) := D (v) - \EX \cdot  v \cdot \frac{dt}{t}
\end{align}
for any local section $v \in \mcO_\oslash$, where $D$ denotes the universal derivation $\mcO_\oslash \migi \Omega_{U_\oslash^\mr{log}/S^\mr{log}}$.
Thus, we obtain a  flat line bundle
\begin{align} \label{WW102}
\msO_{\oslash, \EX} := (\mcO_{\oslash}, \DMO_{\oslash, \EX})
\end{align}
on $U^\mr{log}_\oslash/S^\mr{log}$.
Note that $\msO_{\oslash, \EX}$ is 
  isomorphic to the unique extension of 
 $\msO_{\oslash, 0}$
  to $t^{-\widetilde{\EX}} \cdot \mcO_{\oslash} \left(\supseteq \mcO_\oslash \right)$.
Also, the equality  $\mu (\nabla_{\oslash, \EX}) = -\EX$ holds.

\SSP
\bpr \label{P72e}
Let $\EX$ be an element of $\mbZ/p^{\M+1}\mbZ$.
\begin{itemize}
\item[(i)]
The flat bundle $\msO_{\oslash, \EX}$ 
is dormant
 and satisfies 
\begin{align}\label{YY67}
{^{\Diag}}\!\!\msO_{\oslash, \EX} = \msO_{\oslash, \EX, 0}^{(\M)}.
\end{align}
Moreover, $\msO_{\oslash, \EX}$ is the unique (up to isomorphism)
diagonal lifting of $\msO_{\oslash, \EX, 0}^{(\M)}$.
\item[(ii)]
The following equality holds:
\begin{align} \label{YY75}
\mr{Ker} (\nabla_{\oslash, \EX}) = t^{\widetilde{\EX}} \cdot \left( \sum_{\IN=0}^{\M} p^{\M  -\IN} \cdot \mcO_S [\![t^{p^{\IN+1}}]\!]\right)  \left(\subseteq \mcO_S [\![t]\!]  \right). 
\end{align}
\end{itemize}
\epr
\begin{proof}
We shall prove assertion (i) by induction on $\M$.
There is  nothing to prove for the base step, i.e., the case of $\M = 0$.
To consider the induction step, we suppose  that the assertion with $\M$ replaced by $\M -1$ ($\M \geq 1$) has been proved.
We shall set $\msF := (\mcO_\oslash, \DMO_{\oslash, \EX})$.
By the induction hypothesis, 
$(\mcO_{\oslash, \M-1}, \DMO_{\oslash,  \EX_{[0, \M-1]}})$ is the unique diagonal lifting of $(\mcO_{\oslash, 0}, \DMO_{\oslash, \EX, 0}^{(\M)\Rightarrow (\M-1)})$  (cf. Proposition \ref{YY30}, (iii)).
It follows from Proposition \ref{L093} that
$\mcV_\msF \left(=\mcS ol (\DMO_{\oslash, \EX, 0}^{(\M) \Rightarrow (\M -1)}) \right)= t^{\widetilde{d}_{[0, \M -1]}} \cdot \mcO_{S_0} [\![t^{p^\M}]\!]$.
The section  $t^{\widetilde{d}_{[0, \M -1]}}$ of $\mcO_S [\![t]\!]$ is horizontal modulo $p^\M$ with respect to $\DMO_{\oslash, \EX}$ and  satisfies
\begin{align}
\DMO_{\oslash, \EX} (t^{\widetilde{d}_{[0, \M -1]}}) =  - p^\M \cdot (d_{[\M]} \cdot t^{\widetilde{d}_{[0, \M -1]}}) \cdot \frac{dt}{t}.
\end{align}
Hence, under the identification 
\begin{align} \label{YY71}
t^{\widetilde{d}_{[0, \M -1]}} \cdot \mcO_{S_0}  [\![t^{p^\M}]\!] = \mcO_{S_0}  [\![t]\!]
\end{align}
 given by $t^{\widetilde{d}_{[0, \M -1]}} \cdot h(t^{p^\M})\leftrightarrow  h(t)$ (for any $h(t) \in \mcO_{S_0}  [\![t]\!]$), 
the flat bundle  $(\mcV_\msF, \DMO_{\msF})$ coincides with  $(\mcO_{\oslash, 0}, \DMO_{\oslash, d_{[\M]}})$.
 By ~\cite[Proposition 3.2.1, (i) and (iv)]{Wak9},
  $\DMO_{\oslash, \EX, 0}^{(\M)}$ is the unique  $\mcD_{\oslash, 0}^{(\M)}$-module structure $\DMO^{(\M)}$ on $\mcO_{\oslash, 0}$ satisfying  $\DMO^{(\M)\Rightarrow (\M-1)} = \DMO_{\oslash, d, 0}^{(\M) \Rightarrow (\M-1)}$ and $(\mcV_\msF, \DMO_{\msF}) =  (\mcO_{\oslash, 0}^{[\M]}, (\DMO^{(\M)})^{[\M]})$ (under the identification \eqref{YY71}).
  In particular, 
  $(\mcO_\oslash, \DMO_{\oslash, \EX})$
  turns out to be the unique diagonal lifting of  $(\mcO_{\oslash, 0}, \DMO_{\oslash, \EX, 0}^{(\M)})$.
  This completes the proof of assertion (i).
  
  Also, assertion (ii) follows immediately from the definition of $\DMO_{\oslash, \EX}$.
 \end{proof}
\SSP

The following assertion can be proved immediately, so we omit the proof.

\SSP
\bpr \label{YY44}
Let $d$ and $c$ be elements of $\mbZ/p^{\M +1}\mbZ$.
\begin{itemize}
\item[(i)]
The canonical isomorphism $\mcO_\oslash \otimes_{\mcO_\oslash} \mcO_\oslash \isom \mcO_\oslash$ defines  an isomorphism of flat bundles
\begin{align} \label{YY45}
\msO_{\oslash, d} \otimes \msO_{\oslash, c} \isom \msO_{\oslash, d +c}.
\end{align}
In particular, we have 
$\msO_{\oslash, \EX}^\vee = \msO_{\oslash, - \EX}$.
Moreover, 
the isomorphism \eqref{YY45} is compatible with \eqref{YY31}
    via \eqref{YY67}.
\item[(ii)]
We have
\begin{align} \label{YY84}
\mr{Hom}(\msO_{\oslash, \EX}, \msO_{\oslash, c})
= \left\{ \mr{mult}_{s} \, | \, s \in  H^0 (U_\oslash, \mr{Ker}(\DMO_{\oslash, c-d})) \right\},
\end{align}
where $\mr{mult}_{s}$  denotes the endomorphism of $\mcO_\oslash$ given by multiplication by $s$.
In particular, 
there exists a surjection morphism $\msO_{\oslash, \EX} \migisurj \msO_{\oslash, c}$ if and only if the equality $\EX = c$ holds.
Moreover, 
the equality \eqref{YY84} is compatible,
in a natural sense, with    \eqref{YY32}  via the functor ${^{\Diag}}\!\!(-)$ and the composite
\begin{align}
H^0 (U_\oslash, \mr{Ker}(\DMO_{\oslash, c-d})) \xrightarrow{\mr{mod} \, p}
H^0 (U_{\oslash, 0}, \mr{Ker} (\DMO_{\oslash, c-d})_0)
\xrightarrow{\eqref{YY82}} H^0 (U_{\oslash, 0}, \mcS ol (\DMO_{\oslash, c-d, 0}^{(\M)})).
\end{align}
\end{itemize}
\epr
\SSP

Also, by applying Proposition-Definition \ref{P022} (for  $\odot = \oslash$),
we obtain the following assertion.

\SSP
\begin{prdef} \label{P724}
Suppose that  $S_0 = \mr{Spec}(R)$ for a local ring $(R, \mfm)$ over $\mbF_p$ such that the residue field $k := R/\mfm$ is algebraically closed.
Let $\msF := (\mcF, \DMO)$ be a dormant flat bundle on $U_\oslash^\mr{log}/S^\mr{log}$ of rank $n >0$.
Then,  there exists an isomorphism  of flat bundles
\begin{align} \label{ee14}
\bigoplus_{i=1}^n 
\msO_{\oslash, \EX_i}
\isom \msF 
\end{align}
  for some $\EX_1, \cdots, \EX_n \in \mbZ/p^{\M +1}\mbZ$.
  (This implies that the monodromy operator $\mu (\nabla)$ of $\nabla$ can be transformed, after choosing a suitable trivialization $\sigma_\oslash^*(\mcF) \isom \mcO_S^{\oplus n}$ of $\sigma_\oslash^*(\mcF)$, into the diagonal matrix with diagonal entries $-\EX_1, \cdots, -\EX_n$.)
Moreover,  the resulting multiset 
\begin{align}
e (\nabla) := [\EX_1, \cdots, \EX_n]
\end{align}
  depends only on the isomorphism class of  $\msF$.
We shall refer  to $e (\nabla)$ as the {\bf exponent} of $\msF$.
\end{prdef}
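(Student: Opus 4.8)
The plan is to deduce Proposition-Definition \ref{P724} from the already-established local description of $p^{\M+1}$-flat $\mcD^{(\M)}$-modules, namely Proposition-Definition \ref{P022} for $\odot = \oslash$, by lifting the decomposition through the diagonal-reduction/lifting correspondence. First I would take the dormant flat bundle $\msF = (\mcF, \DMO)$ and form its diagonal reduction ${^{\Diag}}\!\!\msF = (\mcF_0, {^{\Diag}}\!\!\DMO)$, which by construction (see \eqref{ee3} and Corollary \ref{KK1}) is a $p^{\M+1}$-flat $\mcD^{(\M)}_{\oslash, 0}$-module whose underlying $\mcO_{\oslash, 0}$-module is locally free of rank $n$ (note $\mcF_0 \cong \mcO_{\oslash, 0}^{\oplus n}$ as a module over $\mcO_{\oslash, 0} = k'[\![t]\!]$-type ring since $U_\oslash$ is a formal disc, so $\mcF$ itself is free by Nakayama). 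Applying Proposition-Definition \ref{P022} to ${^{\Diag}}\!\!\msF$ gives an isomorphism of $\mcD^{(\M)}_{\oslash, 0}$-modules $\bigoplus_{i=1}^n \msO_{\oslash, \EX_i, 0}^{(\M)} \isom {^{\Diag}}\!\!\msF$ for some $\EX_1, \ldots, \EX_n \in \mbZ/p^{\M+1}\mbZ$, together with the well-definedness of the multiset $[\EX_1, \ldots, \EX_n]$ as an invariant of the isomorphism class of ${^{\Diag}}\!\!\msF$.

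Next I would transport this decomposition back to characteristic $p^{\M+1}$. By Proposition \ref{P72e}, (i), each $\msO_{\oslash, \EX_i}$ is dormant with ${^{\Diag}}\!\!\msO_{\oslash, \EX_i} = \msO_{\oslash, \EX_i, 0}^{(\M)}$, and moreover $\msO_{\oslash, \EX_i}$ is the \emph{unique} (up to isomorphism) diagonal lifting of $\msO_{\oslash, \EX_i, 0}^{(\M)}$. Since ${^{\Diag}}\!\!(-)$ commutes with direct sums (cf. the functor \eqref{ee18} and the remark following it — the analogous statement holds verbatim for $U_\oslash^\mr{log}/S$ in place of a log curve, by Remark \ref{WW9}), the flat bundle $\bigoplus_{i=1}^n \msO_{\oslash, \EX_i}$ is dormant with diagonal reduction $\bigoplus_{i=1}^n \msO_{\oslash, \EX_i, 0}^{(\M)} \cong {^{\Diag}}\!\!\msF$. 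Thus both $\bigoplus_i \msO_{\oslash, \EX_i}$ and $\msF$ are dormant flat bundles on $U_\oslash^\mr{log}/S^\mr{log}$ sharing the same diagonal reduction; by the uniqueness-of-diagonal-lifting part of Proposition \ref{L0ddd10}, (ii) (more precisely the fact that the $\mcD^{(\M)}_0$-module structure realizing membership in $\mr{Diag}_\M$ is unique, which forces uniqueness of the lift when the underlying module is fixed — here one needs the two underlying modules to be identified, which is arranged by first choosing an isomorphism of $\mcO_\oslash$-modules $\bigoplus_i \mcO_\oslash \isom \mcF$ lifting a chosen trivialization of $\mcF_0$), we obtain an isomorphism of flat bundles $\bigoplus_{i=1}^n \msO_{\oslash, \EX_i} \isom \msF$. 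This gives \eqref{ee14}.

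For the statement about the monodromy operator $\mu(\nabla)$: under the isomorphism \eqref{ee14} the connection $\nabla$ becomes $\bigoplus_i \DMO_{\oslash, \EX_i}$, and by the equality $\mu(\DMO_{\oslash, \EX_i}) = -\EX_i$ recorded after \eqref{WW102}, the induced trivialization $\sigma_\oslash^*(\mcF) \isom \mcO_S^{\oplus n}$ conjugates $\mu(\nabla)$ into the diagonal matrix with entries $-\EX_1, \ldots, -\EX_n$. Finally, for the independence of the multiset $e(\nabla) := [\EX_1, \ldots, \EX_n]$ on the isomorphism class of $\msF$: the operation $\msF \mapsto {^{\Diag}}\!\!\msF$ is functorial (it sends isomorphisms to isomorphisms, by \eqref{J22}), so an isomorphism $\msF \isom \msF'$ induces one ${^{\Diag}}\!\!\msF \isom {^{\Diag}}\!\!\msF'$, whence by the last clause of Proposition-Definition \ref{P022} the multisets $[\EX_1, \ldots, \EX_n]$ extracted from ${^{\Diag}}\!\!\msF$ and ${^{\Diag}}\!\!\msF'$ coincide.

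\textbf{Main obstacle.} The step I expect to require the most care is the passage from "same diagonal reduction" to "isomorphic lifts" in the second paragraph: the uniqueness assertions available (Proposition \ref{L0ddd10}, (ii), and Proposition \ref{P72e}, (i)) are phrased for a \emph{fixed} underlying module, whereas a priori the isomorphism \eqref{ee14} must be constructed, not merely have its existence asserted. I would handle this by a Nakayama-type argument exactly parallel to the one used inside the proof of Proposition-Definition \ref{P022}: lift the decomposition isomorphism $\bigoplus_i \msO_{\oslash, \EX_i, 0}^{(\M)} \isom \mcF_0$ of $\mcO_{\oslash, 0}$-modules to an $\mcO_\oslash$-module isomorphism $\bigoplus_i \mcO_\oslash \isom \mcF$ (possible since $\mcF$ is free), transport $\nabla$ to a connection on $\bigoplus_i \mcO_\oslash$ whose reduction is $\bigoplus_i \DMO_{\oslash, \EX_i, 0}^{(\M)}$ and whose diagonal reduction is therefore $\bigoplus_i \msO_{\oslash, \EX_i, 0}^{(\M)}$, and then invoke uniqueness of the diagonal lifting of $\bigoplus_i \msO_{\oslash, \EX_i, 0}^{(\M)}$ with \emph{that} underlying module to identify it with $\bigoplus_i \DMO_{\oslash, \EX_i}$. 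Everything else is a bookkeeping translation through the correspondence ${^{\Diag}}\!\!(-)$ and its compatibility with direct sums.
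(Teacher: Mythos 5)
Your first step (applying Proposition-Definition \ref{P022} to ${^{\Diag}}\!\!\msF$) and your treatment of the monodromy statement and of the well-definedness of the exponent are all fine and match the paper. The gap is in the "transport back" step, and it is exactly where you flagged the main obstacle. You claim that two dormant flat bundles on the same underlying free $\mcO_\oslash$-module with the same diagonal reduction must coincide, and you attribute this to Proposition \ref{L0ddd10}, (ii). But that proposition asserts the \emph{converse} direction: given the flat bundle $\msF$, the $\mcD_0^{(\M)}$-module structure $\DMO_0^{(\M)}$ with $(\msF, \DMO_0^{(\M)}) \in \mr{Diag}_\M$ is unique — i.e., the diagonal \emph{reduction} is well defined. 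It does not say that the pair is determined by its $\mcD_0^{(\M)}$-component, which is what uniqueness of the diagonal \emph{lifting} means. Indeed, uniqueness of diagonal liftings is in general a nontrivial statement controlled by a Cartier-type operator (this is the whole content of \S\,\ref{S21} and of Theorem-Definition \ref{T04}, where it requires an ordinariness hypothesis); the only case established in the local setting is rank one (Proposition \ref{P72e}, (i), whose proof itself appeals to an external reference). So the sentence "which forces uniqueness of the lift when the underlying module is fixed" is a non sequitur, and nothing you cite closes it for rank $n > 1$.

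The missing idea is to construct the morphism $\bigoplus_i \msO_{\oslash, \EX_i} \migi \msF$ directly rather than inferring its existence from a uniqueness principle. The paper does this by twisting: for each $j$, tensoring $\zeta_0$ with $\msO_{\oslash, -\EX_j, 0}^{(\M)}$ makes the $j$-th basis vector $e_j$ a horizontal section of ${^{\Diag}}\!\!(\msF \otimes \msO_{\oslash, -\EX_j})$, i.e., a section of the solution sheaf of the level-$\M$ structure; property $(\beta)_\M$ of Proposition \ref{L0ddd10}, (i) (equivalently Corollary \ref{YY79}) then lifts it to a section of $\mcF$ horizontal for $\nabla \otimes \nabla_{\oslash, -\EX_j}$ over $\mbZ/p^{\M+1}\mbZ$. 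Untwisting gives a morphism of flat bundles $\zeta_j : \msO_{\oslash, \EX_j} \migi \msF$, and $\zeta := \bigoplus_j \zeta_j$ reduces modulo $p$ to $\zeta_0$, hence is an isomorphism by Nakayama. Your proposal invokes Nakayama but never produces a candidate morphism of \emph{flat bundles} to apply it to (an $\mcO_\oslash$-linear lift of the trivialization of $\mcF_0$ need not be horizontal), so as written the argument does not go through without supplying this lifting-of-horizontal-sections step.
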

\begin{proof}
According to Proposition-Definition \ref{P022},
there exists an isomorphism
\begin{align}
\zeta_0  : \bigoplus_{i=1}^n 
\msO_{\oslash, \EX_i, 0}^{(\M)} 
\isom {^{\Diag}}\!\!\msF
\end{align}
for some $\EX_1, \cdots, \EX_n \in \mbZ/p^{\M +1}\mbZ$.
Let us choose  $j \in \{ 1, \cdots, n \}$.
Since the functor  ${^{\Diag}}\!\!(-)$ commutes  with the formation of  tensor products, 
it follows from Propositions \ref{YY30}, (i),  and  \ref{P72e}, (i), that
$\zeta_0$ induces an isomorphism 
\begin{align}
\zeta_{0, j} : \bigoplus_{i=1}^n 
\msO_{\oslash, \EX_i-\EX_j, 0}^{(\M)}
 \isom {^{\Diag}}\!\! (\msF \otimes  \msO_{\oslash, -\EX_j})
  \left(=  {^{\Diag}}\!\!(\mcF, \nabla \otimes \nabla_{\oslash, -\EX_j})\right).
\end{align}
If $e_{j}$ denotes the image of $1 \in \mcO_{\oslash, 0}$ via  the inclusion into the $j$-th factor $\mcO_{\oslash, 0} \migiincl \mcO_{\oslash, 0}^{\oplus n}$,
then its image $\zeta_{0, j}(e_j)$  is a horizontal section in the codomain of $\zeta_{0, j}$.
By the property $(\beta)_\M$ asserted in Proposition \ref{L0ddd10}, (i) (or Corollary \ref{YY79}), we can find a lifting of  $\zeta_{0, j}(e_j)$ in $\mcF$ that is   horizontal  with respect to $\nabla \otimes \nabla_{\oslash, -\EX_j}^{(0)}$; this section
corresponds to  a morphism of flat bundles 
$\msO_{\oslash, 0}
 \migi (\mcF, \nabla \otimes \nabla_{\oslash, -\EX_j}^{(0)})$.
 The tensor  produce of this morphism and the identity morphism of 
 $\msO_{\oslash, \EX_j}$ determines 
a morphism  
$\zeta_{j} : \msO_{\oslash, \EX_j} \migi \msF$.
Thus, we obtain a morphism of flat bundles
\begin{align}
\zeta := \bigoplus_{i=1}^n \zeta_j : \msO_{\oslash, \EX_j} \migi \msF.
\end{align}
Since the reduction of $\zeta$ modulo $p$ coincides with the isomorphism $\zeta_0$ (after reducing the level to $0$),
it follows from Nakayama's lemma that $\zeta$ is an isomorphism.
This completes the proof of the first assertion.
The second assertion can be proved immediately from Proposition \ref{YY44}, (ii). \end{proof}

\vspace{10mm}
\section{Dormant $\mr{PGL}_n^{(\N)}$-opers on pointed stable curves} \label{DDe345}
\SSP

This section deals with higher-level generalizations of (dormant) $\mr{PGL}_n$-opers, which we call (dormant) $\mr{PGL}_n^{(\N)}$-opers.
The goal of this section is to show that the moduli category of dormant $\mr{PGL}_n$-opers may be represented by  a proper Deligne-Mumford stack (cf. Theorem \ref{P99} and Corollary \ref{T50}). 

Throughout  this section, we shall fix 
 an element  $(\CH, \N)$  of  $\mbZ_{\geq 0} \times \mbZ_{>0}$ with $\CH =0$ or $\N =1$.
Also, denote  by $B$ the Borel subgroup of $\mr{PGL}_n$  ($n \in \mbZ_{>0}$)
defined as the image,  via the natural projection $\mr{GL}_n \migisurj \mr{PGL}_n$,
of the group of invertible upper-triangular $n \times n$ matrices.

\LSP
\subsection{Moduli space of pointed stable curves} \label{QR780}

Let  $(g,r)$ be  a pair of nonnegative integers with $2g-2+r >0$.
For each commutative ring $R$, 
denote by 
\begin{align} \label{YY90}
\overline{\mcM}_{g,r, R} \ \left(\text{or simply}, \ \overline{\mcM}_{g,r}\right)
\end{align}
 the moduli  stack classifying $r$-pointed stable curves of genus $g$ over $R$.
Also, denote by $\mcM_{g,r, R}$ the dense open substack of $\overline{\mcM}_{g,r, R}$ classifying nonsingular curves.
The normal crossing divisor defined as the boundary $\overline{\mcM}_{g,r, R} \setminus \mcM_{g,r, R}$ on $\overline{\mcM}_{g,r, R}$ determines a log structure;
 we shall denote the resulting fs log stack  by 
 \begin{align} \label{QQ013}
 \overline{\mcM}_{g,r, R}^{\mr{log}} \ \left(\text{or simply}, \ \overline{\mcM}_{g,r}^\mr{log}\right).
 \end{align}

 Next, let  $\msX := (f:X \migi S, \{ \sigma_i \}_{i=1}^r)$ be an $r$-pointed stable curve of genus $g$ over an $R$-scheme $S$, where $\sigma_i$ denotes the $i$-th marked point $S \migi X$.
Recall from ~\cite[Theorem 2.6]{KaFu}  that
 there exists canonical log structures on  $X$ and $S$;  we shall denote the resulting log schemes by $X^\mr{log}$ and $S^\mr{log}$, respectively.
 (The log structure of $S^\mr{log}$ is obtained as the pull-back  from $\overline{\mcM}_{g, r}^\mr{log}$ via the classifying morphism $S \migi \overline{\mcM}_{g,r}$ of $\msX$.)
 
 The following assertion will be applied in the proof of Lemma \ref{LemLem}.

\SSP
\bpr \label{Prop277}
Let $X^\mr{log}/S^\mr{log}$ be as above, and suppose that $S$ is flat over $\mbZ/p^{\CH +1} \mbZ$.
Also, let $U$ be a scheme-theoretically dense  open subscheme of $X \setminus \bigcup_{i=1}^r \mr{Im}(\sigma_i)$, and 
 $\msL := (\mcL, \nabla)$  a flat line bundle on  $X^\mr{log}/S^\mr{log}$ whose restriction $\msL |_U$ to $U$ is dormant.
 Then, $\msL$ is dormant.
\epr
\begin{proof}
For simplicity, we shall write $\mcD^{(m)}$ (for each $m \geq 0$) instead of $\mcD^{(m)}_{X_0^\mr{log}/S_0^\mr{log}}$.
By Proposition \ref{P016dd}, (ii),
we may assume, without loss of generality, that $U = X \setminus \bigcup_{i=1}^r \mr{Im}(\sigma_i)$.
We shall prove the assertion by induction on $\CH$.

The base step, i.e., $\CH = 0$, is clear because $U$ is scheme-theoretically dense in $X$ and the $p$-curvature of a flat bundle in characteristic $p$ can be regarded as a global section of a certain associated vector bundle.

Next, to discuss the induction step,
suppose that we have proved the required assertion with $\CH$ replaced with $\CH -1$ ($\CH \in \mbZ_{> 0}$).
In particular, the reduction $(\mcL_{\CH -1}, \nabla_{\CH -1})$ modulo $p^{\CH}$ is dormant.
The  diagonal reduction of $\nabla_{\CH -1}$ determines 
 a $\mcD^{(\CH -1)}$-module structure $\nabla_0^{(\CH -1)}$ on $\mcL_0$.
Just as in \eqref{J13}, 
we have an $S^\mr{log}$-connection $\nabla_\msL$ on $\mcV_\msL := \mcS ol (\nabla_0^{(\CH -1)})$ associated to $\msL$; it  has vanishing $p$-curvature because $\msL |_U$ is dormant.
According to ~\cite[Corollaire 3.3.1]{Mon}, $\nabla_\msL$ induces a $\mcD^{(\CH )}$-module structure $\nabla^{(\CH)}_{\mcL^\flat_0}$ on $\mcL^\flat_0 := F_{X_0/S_0}^{(\CH)*}(\mcS ol (\nabla_0^{(\CH -1)}))$ with vanishing $p^{\CH +1}$-curvature that are compatible with $\nabla_0^{(\CH -1)}$ via the inclusion $\tau : \mcL^\flat_0 \hookrightarrow \mcL_0$ and the natural morphism $\mcD^{(\CH -1)}\rightarrow \mcD^{(\CH )}$.
Denote by $\nabla_{u_*(\mcL^\flat_0|_U)}^{(\CH)}$ the
 $\mcD^{(\CH )}$-module structure on $u_*(\mcL^\flat_0|_U) \left( = u_*(\mcL_0|_U) \right)$ extending $\nabla^{(\CH)}_{\mcL^\flat_0}$ via the open immersion $u : U \hookrightarrow X$.

 In what follows, we prove the claim that
 $\mcL_0 \left(\subseteq u_*(\mcL^\flat_0|_U) \right)$ is closed under 
  $\nabla_{u_*(\mcL^\flat_0|_U)}^{(\CH)}$.
  To  this end, we may assume that $S$ is connected.
Then,  the equality  $\mcL_0^\flat = \mcL_0 (-\sum_{i=1}^r a_i \sigma_i)$ holds for some  integers $a_i$ with $0 \leq a_i < p^{\CH}$ (cf. Proposition-Definition \ref{P022}).
  We choose  $i \in \{ 1, \cdots, r \}$.
 Each section $s$ of $\mcL_0$ over a sufficiently small open neighborhood of a point in $\mr{Im}(\sigma_i)$ may be described as $s = t^{-a_i} \cdot s'$ for some $s' \in \mcL^\flat_0$, where $t$ denotes a local function defining $\mr{Im}(\sigma_i)$. 
Denote by  $\{ \partial^{\langle j \rangle} \}_{j \geq 0}$ 
 the local basis   of $\mcD^{(\CH )}$ associated to $t$ in  the manner of \S\,\ref{SS04f4}.
Since the $\mcO_X$-algebra $\mcD^{(\ell)}$ is locally  generated by $\{ \partial^{\langle p^{a} \rangle} \}_{a=0}^{\ell}$,
the problem is reduced to show that $ \nabla_{u_*(\mcL^\flat_0|_U)}^{(\CH)} (\partial^{\langle p^{a} \rangle} ) (s) \in \mcL_0$ for $a =0, \cdots, \ell$.
If $a = 0, \cdots, \ell -1$, this  is true because $\nabla_{u_*(\mcL^\flat_0|_U)}^{(\CH)} (\partial^{\langle p^{a} \rangle} ) (s)$ coincides with $\nabla_0^{(\ell -1)} (\partial^{\langle p^a \rangle})(s)$ via $\mcD^{(\ell -1)} \rightarrow \mcD^{(\ell)}$.
As for the case of $a = \ell$,
 we have 
 \begin{align}
 \nabla_{u_*(\mcL^\flat_0|_U)}^{(\CH)} (\partial^{\langle p^{\CH} \rangle} )(t^{-a_i} \cdot s') &= 
 \sum_{j=0}^{p^{\CH}} 
     \partial^{\langle j \rangle} (t^{-a_i}) \cdot 
 \nabla_{u_*(\mcL^\flat_0|_U)}^{(\CH)} (\partial^{\langle p^{\CH} -j \rangle} )
 (s') \\
 &= \sum_{j=0}^{p^{\CH}} 
 \partial^{\langle j \rangle} (t^{-a_i}) \cdot \nabla^{(\ell)}_{\mcL_0^\flat} (\partial^{\langle p^{\CH} -j \rangle}) (s') \notag \\
 \end{align}
 (cf. ~\cite[Corollaire 2.6.1]{Mon}).
 Since 
 $ \partial^{\langle j \rangle} (t^{-a_i}) \in t^{-a_i} \cdot \mcO_X$ for any $j$ 
 (cf. ~\cite[Lemme 2.3.3]{Mon}) and $\nabla^{(\ell)}_{\mcL_0^\flat} (\partial^{\langle p^{\CH} -j \rangle}) (s') \in \mcL_0^\flat$,
 we have  $\nabla_{u_*(\mcL^\flat_0|_U)}^{(\CH)} (\partial^{\langle p^{\CH} \rangle} ) (s) \in \mcL_0$.
 This proves the claim, as desired. 
 
Now,  let $\nabla^{(\ell)}_{\mcL_0}$ denote the resulting $\mcD^{(\ell)}$-module structure on $\mcL_0$.
One may verify (from the inequalities $a_i < p^\ell$ for $i$'s) that $(\msL, \nabla_{\mcL_0}^{(\ell)}) \in \mr{Diag}_{\ell}$ (cf. \eqref{ee1}), i.e., $\msL$ is dormant.
 This completes the proof of the assertion.
\end{proof}

\LSP
\subsection{$\mr{PGL}_n^{(\N)}$-opers on log curves} \label{SS041}

Let $S^\mr{log}$ be an fs log scheme  whose underlying scheme $S$ is 
flat over $\mbZ/p^{\CH +1}\mbZ$, and let
 $f^\mr{log} : X^\mr{log} \migi S^\mr{log}$ be a log curve over $S^\mr{log}$.
 Suppose that $S$ is equipped with an $(\N-1)$-PD structure that extends to  $X$ via $f$.
For simplicity, we write $\Omega := \Omega_{X^\mr{log}/S^\mr{log}}$, $\mcT := \mcT_{X^\mr{log}/S^\mr{log}}$,  $\mcD^{(\N-1)} := \mcD^{(\N-1)}_{X^\mr{log}/S^\mr{log}}$, and $\mcD^{(\N-1)}_{\leq j} := \mcD^{(\N-1)}_{X^\mr{log}/S^\mr{log}, \leq j}$ ($j\in \mbZ_{\geq 0}$).
 In this subsection, we shall 
 fix an integer $n$ with 
 $1 < n < p$.

Let  us consider a pair
\begin{align} \label{e100}
\msE^\spadesuit := (\mcE_B, \STR)
\end{align}
consisting of a $B$-bundle $\mcE_B$ on $X$ and an $(\N-1)$-PD stratification on 
$\mcE$, where  $\mcE := \mcE_B \times^B \mr{PGL}_n$, i.e., $\mcE$ is 
 the $\mr{PGL}_n$-bundle  induced from  $\mcE_B$ via change of structure group by the natural inclusion $B \migiincl \mr{PGL}_n$.
 Denote by $\DMO_\STR$ the $S^\mr{log}$-connection on $\mcE$ corresponding,  via 
 the equivalence of categories \eqref{Efjj2},  to  the $0$-PD stratification induced from $\STR$.

\SSP
\bde \label{D39}
\begin{itemize}
\item[(i)]
We shall say that $\msE^\spadesuit$ is a {\bf  $\mr{PGL}_n^{(\N)}$-oper}  (or a {\bf $\mr{PGL}_n$-oper of level $\N$}) on $X^\mr{log}/S^\mr{log}$ if the pair $(\mcE_B, \nabla_{\STR})$ forms an $\mfs \mfl_n$-oper on $X^\mr{log}/S^\mr{log}$ in the sense of ~\cite[Definition 2.1]{Wak8}.
(Note that the notion of an $\mfs \mfl_n$-oper  can be defined in exactly the same way as  ~\cite{Wak8}, even though  we are  working over  $\mbZ/p^{\CH +1}\mbZ$.
In particular, a $\mr{PGL}_n^{(1)}$-oper
can be formulated in terms of flat connections,
i.e., without fixing a ($0$-)PD structure on $S$.)
If the log curve $X^\mr{log}/S^\mr{log}$ arises from a pointed stable curve $\msX$,
then  we will refer to any $\mr{PGL}_n^{(\N)}$-oper on $X^\mr{log}/S^\mr{log}$ as a {\bf $\mr{PGL}_n^{(\N)}$-oper  on $\msX$}.
Also, for simplicity, we shall refer to any $\mr{PGL}_n^{(1)}$-oper as a {\bf $\mr{PGL}_n$-oper}.
\item[(ii)]
Let $\msE^\spadesuit_\circ := (\mcE_{\circ, B}, \STR_{\circ})$
and  $\msE^\spadesuit_\bullet := (\mcE_{\bullet, B}, \STR_{\bullet})$
 be $\mr{PGL}_n^{(\N)}$-opers  on $X^\mr{log}/S^\mr{log}$.
 Write $\mcE_\circ := \mcE_{\circ, B} \times^B \mr{PGL}_n$ and 
 $\mcE_\bullet := \mcE_{\bullet, B} \times^B \mr{PGL}_n$.
Then, an {\bf isomorphism of $\mr{PGL}_n^{(\N)}$-opers} from $\msE^\spadesuit_\circ$ to $\msE^\spadesuit_\bullet$ is defined as an isomorphism $\mcE_{\circ, B} \isom \mcE_{\bullet, B}$   of $B$-bundles  that induces, via change of structure group by $B \migiincl \mr{PGL}_n$, an isomorphism of $(\N-1)$-PD stratified $\mr{PGL}_n$-bundles $(\mcE_\circ, \STR_\circ) \isom (\mcE_\bullet, \STR_\bullet)$.
\end{itemize}
\ede
\SSP

Moreover, we shall define the notion of a {\it dormant} $\mr{PGL}_n^{(\N)}$-oper, generalizing ~\cite[Chap.\,II, Definition 1.1]{Mzk2}, ~\cite[Definition 3.15]{Wak8}, and ~\cite[Definition 4.1.1]{Wak9}.

\SSP
\bde \label{DD45YY}
Let $\msE^\spadesuit := (\mcE_B, \STR)$ be a $\mr{PGL}_n^{(\N)}$-oper on $X^\mr{log}/S^\mr{log}$.
\begin{itemize}
\item[(i)]
Suppose that $\CH = 0$, i.e., $S$ is a scheme over $\mbF_p$.
Then, we shall say that $\msE^\spadesuit$ is {\bf dormant} if $\STR$ has vanishing $p^\N$-curvature.
\item[(ii)]
Suppose that $\N =1$.
Then, we shall say that $\msE^\spadesuit$ is {\bf dormant} if the
induced   flat $\mr{PGL}_n$-bundle
  is dormant in the sense of Definition \ref{D019}, (ii).
\end{itemize}
\ede

\SSP
\begin{rema}[$G$-oper of higher level] \label{RRe3}
In a similar manner to the above definition, 
we can extend  the definition  of a $G$-oper (discussed in  ~\cite[Definition 2.1]{Wak8}) to  higher level  at least when $G$ is a semisimple algebraic group of adjoint type. 
\end{rema}
\SSP

Let $\msE^\spadesuit := (\mcE_B, \STR)$ be a $\mr{PGL}_n^{(\N)}$-oper on $X^\mr{log}/S^\mr{log}$.
Suppose that
we are given  an fs log scheme $S'^{\mr{log}}$ flat over $\mbZ/p^{\CH +1} \mbZ$ and a morphism of log schemes $s^\mr{log} : S'^{\mr{log}} \migi S^\mr{log}$  over $\mbZ/p^{\CH +1} \mbZ$.
Then, the pair of base-changes
\begin{align} \label{e408}
s^* (\msE^\spadesuit) := (s^* (\mcE_B), s^* (\STR))
\end{align}
forms a $\mr{PGL}_n^{(\N)}$-oper on the log curve $(S'^{\mr{log}} \times_{S^\mr{log}} X^\mr{log})/S'^{\mr{log}}$.
Conversely, the formation of a $\mr{PGL}_n^{(\N)}$-oper has descent with respect to, e.g.,  the \'{e}tale topology on $S$.

Let us prove the following fundamental property of  $\mr{PGL}_n^{(\N)}$-opers, which is a generalization of ~\cite[\S\,1.3, Proposition]{BD2} and  ~\cite[Proposition 2.9]{Wak8} (in the case of  $\mr{PGL}_n$-opers).

\SSP
\bpr \label{P44}
Any $\mr{PGL}_n^{(\N)}$-oper  on $X^\mr{log}/S^\mr{log}$ does not have nontrivial automorphisms.
\epr
\begin{proof}
We prove the assertion by  induction  on $\CH$.
The base step, i.e., the case of $\CH = 0$,  follows from  
   ~\cite[Proposition 2.9]{Wak8}.

Next,  to prove the induction step,
we suppose that the assertion with $\CH$ replaced by $\CH -1$ ($\CH \geq 1$)
has been proved.
We may assume, without loss of generality, that $S$ is affine.
Let $\msE^\spadesuit := (\mcE_B, \STR)$ be a $\mr{PGL}_n^{(\N)}$-oper on $X^\mr{log}/S^\mr{log}$ and $h : \msE^\spadesuit \isom \msE^\spadesuit$ an automorphism of $\msE^\spadesuit$.
By the induction hypothesis, the reduction  of $h$ modulo $p^{\CH}$ is equal to  the identity morphism.
Denote by $h^{(1)}$ the automorphism of the $\mr{PGL}_n$-oper $(\msE^\spadesuit)^{(1)} := (\mcE_B, \nabla_\phi)$ induced by $\msE^\spadesuit$.
According to   an argument similar to the argument in ~\cite[\S\,6.3]{Wak8} (which deals with the case where the base space $S^\mr{log}$ is a log scheme over a field),
$h^{(1)}$ coincides with the identify morphism.
Indeed,   the space of automorphisms of $(\msE^\spadesuit)^{(1)}$ inducing the identity morphism via reduction modulo $p^{\CH}$ has a structure of torsor modeled on  
 $H^0 (X, \mr{Ker}(\nabla^{\mr{ad} (0)}))$, where ``$\nabla^{\mr{ad} (0)}$" denotes the morphism defined  in ~\cite[Eq.\,(759)]{Wak8} (for $\mfg = \mfs \mfl_n$) associated to
 the reduction modulo $p$ of $(\msE^\spadesuit)^{(1)}$;
but the equality $H^0 (X, \mr{Ker}(\nabla^{\mr{ad} (0)})) = 0$ holds by ~\cite[Proposition 6.5]{Wak8}, which implies 
 $h= \mr{id}_{\mcE_B}$.
 This proves  the induction step, and hence we have finished the proof of  the assertion.
\end{proof}
\SSP

We shall set $R := \mbZ/p^{\CH +1}\mbZ$ ($\ell \in \mbZ_{\geq 0}$).
Denote by $\mcS et$  the category of (small) sets and by $\mcS c h_{R}^{\mr{flat}}/S$ the category of flat $R$-schemes $S'$ equipped with an $R$-morphism $S' \migi S$.
 We shall 
 denote by
\begin{align} \label{e401}
\mcO p_\spadesuit 
\ \left(\text{resp.,} \  \mcO p_\spadesuit^\ZZZ \right) : 
\mcS ch_{R}^{\mr{flat}}/S \migi \mcS et
\end{align}
the contravariant functor  on  $\mcS ch_{R}^{\mr{flat}}/S$, which to any $S$-scheme $S'$ in $\mr{Ob}(\mcS ch_{R}^{\mr{flat}}/S)$, assigns the set of isomorphism classes of $\mr{PGL}_n^{(\N)}$-opers (resp., dormant $\mr{PGL}_n^{(\N)}$-opers) on the log curve $(S' \times_S X^\mr{log})/(S' \times_S S^\mr{log})$.
By   Proposition \ref{P44} above, 
$\mcO p_\spadesuit$  (resp., $\mcO p_\spadesuit^\ZZZ$)
 turns out to be  a sheaf with respect to the big \'{e}tale topology on $\mcS ch_{R}^{\mr{flat}}/S$.

\LSP
\subsection{$\mr{GL}_n^{(\N)}$-opers on log curves} \label{SS042}

Next, we shall define the notion of a $\mr{GL}_n^{(\N)}$-oper and define a certain equivalence relation in the set of $\mr{GL}_n^{(\N)}$-opers.
Fix an integer  $n$ with
 $1 < n \leq  p^{\N}$.

Let us consider a collection of data
\begin{align} \label{TTT1}
\msF^\heartsuit := (\mcF, \DMO, \{ \mcF^j \}_{j =0}^n),
\end{align}
where
\begin{itemize}
\item
$\mcF$ is a vector bundle on $X$ of rank $n$;
\item 
$\nabla$ is a $\mcD^{(\N-1)}$-module structure  on $\mcF$;
\item
$\{ \mcF^j \}_{j=0}^n$ is an $n$-step decreasing filtration
\begin{align} \label{TTT2}
0 = \mcF^n \subseteq \mcF^{n-1} \subseteq \cdots \subseteq \mcF^0 = \mcF
\end{align}
on $\mcF$ consisting of subbundles such that the subquotients  $\mcF^j/\mcF^{j+1}$ are line bundles.
\end{itemize}

\SSP
\bde  \label{D01234}
 \begin{itemize}
 \item[(i)]
 We shall say that $\msF^\heartsuit$ is a {\bf $\mr{GL}_n^{(\N)}$-oper} (or a {\bf $\mr{GL}_n$-oper of level $\N$}) on $X^\mr{log}/S^\mr{log}$
if, for every $j=0, \cdots, n-1$,
the $\mcO_X$-linear morphism $\mcD^{(\N -1)} \otimes \mcF \migi \mcF$ induced by $\nabla$ restricts to   an isomorphism
\begin{align} \label{YY92}
\mcD_{\leq n-j-1}^{(\N -1)} \otimes \mcF^{n-1} \isom \mcF^{j}.
\end{align}
When $X^\mr{log}/S^\mr{log}$ arises from a pointed stable curve $\msX$, we shall  refer to any $\mr{GL}_n^{(\N)}$-oper on $X^\mr{log}/S^\mr{log}$ as a {\bf $\mr{GL}_n^{(\N)}$-oper on $\msX$}.
(When $\N =1$,  this definition is the same as the usual definition of a $\mr{GL}_n$-oper, as defined in, e.g., ~\cite[Definition 4.17]{Wak8}.)
\item[(ii)]
Let $\msF_\circ^\heartsuit := (\mcF_\circ, \DMO_\circ, \{ \mcF_\circ^j \}_j)$ and  $\msF_\bullet^\heartsuit := (\mcF_\bullet, \DMO_\bullet, \{ \mcF_\bullet^j \}_j)$
 be $\mr{GL}_n^{(\N)}$-opers on $X^\mr{log}/S^\mr{log}$.
An {\bf isomorphism of $\mr{GL}_n^{(\N)}$-opers}
from $\msF_\circ^\heartsuit$ to $\msF_\bullet^\heartsuit$
is an isomorphism $(\mcF_\circ, \DMO_\circ) \isom (\mcF_\bullet, \DMO_\bullet)$ of $\mcD^{(\N-1)}$-modules preserving the filtration.
\end{itemize}
  \ede
\SSP

\bde \label{D50}
Let $\msF^\heartsuit := (\mcF, \nabla, \{ \mcF^j \}_j)$ be a $\mr{GL}_n^{(\N)}$-oper on $X^\mr{log}/S^\mr{log}$.
\begin{itemize}
\item[(i)]
Suppose that $\CH =0$, i.e., $S$ is a scheme over $\mbF_p$.
Then, we shall say that $\msF^\heartsuit$ is {\bf dormant} if $\DMO$ has  vanishing $p^{\N}$-curvature.
\item[(ii)]
Suppose that $\N =1$.
Then, we shall say that $\msF^\heartsuit$ is {\bf dormant}
if   the flat bundle $(\mcF, \DMO)$ is dormant in the sense of Definition \ref{D019}, (i).
\end{itemize}
\ede

\SSP
\begin{exa}[$\mr{GL}_2^{(\N)}$-opers for   large $\N$'s] \label{EEdf34}
Despite the fact that
dormant $\mr{GL}_2^{(1)}$-opers have been substantially investigated in many references,  there are few examples 
for higher level 
at the time of writing this manuscript.
We here  give an example of a dormant $\mr{GL}_2^{(\N)}$-oper (for a  general $\N$) constructed  recently in ~\cite{Wak10}.

Let $k$ be an algebraically closed field over $\mbF_p$ and $X$ a  curve embedded in the projective plane $\mbP^2 := \{ [t_0: t_1: t_2] \, | \, (t_0, t_1, t_2) \neq (0, 0, 0) \}$ over $k$.
Denote by $\mr{Grass} (2, 3)$ the Grassman variety classifying $2$-dimensional quotient spaces of the $k$-vector space $k^3$; it may be identified with the space of $1$-planes in $\mbP^2$.
Recall that the {\it Gauss map} on $X$ is the rational morphism $\gamma : X \dashrightarrow \mr{Grass}(2,3)$ 
that assigns to each smooth point $x$  the embedded tangent space to $X$ at $x$ in $\mbP^2$.

Now, suppose that $p>2$ and that
 $X$ is  the Fermat hypersurface of degree $p^\N +1$ in 
 $\mbP^2$,
 i.e., the smooth hypersurface defined by the homogenous polynomial $t_0^{p^{\N}+1} + t_1^{p^\N +1}+t_2^{p^\N +1}$.
It is well-known  that the Gauss map on $X$ factors through the $N$-th relative Frobenius morphism $F_{X/k}^{(\N)}$.
In particular, we obtain a rank $2$ vector bundle $\mcF^\nabla$   on  $X^{(\N)}$ by 
 pulling-back  the universal quotient bundle on $\mr{Grass}(2,3)$ via the resulting morphism $X^{(\N)} \migi \mr{Grass}(2,3)$.
 If 
 $\{ F^{(\N)*}_{X/k}(\mcF^\nabla)^j\}_j$ denotes the Harder-Narasimhan filtration on 
  $F^{(\N)*}_{X/k}(\mcF^\nabla)$, then it forms a $2$-step (decreasing) filtration.
Moreover, it follows from ~\cite[Theorem C]{Wak10} that the collection of data
\begin{align}
(F^{(\N)*}_{X/k}(\mcF^\nabla), \nabla^{(\N -1)}_{\mcF^\nabla, \mr{can}}, \{ F^{(\N)*}_{X/k}(\mcF^\nabla)^j\}_j)
\end{align}
(cf. \eqref{QQwkko} for the definition of $\DMO_{(-), \mr{can}}^{(\M)}$) defines  a dormant $\mr{GL}_2^{(\N)}$-oper on $X/k$.
\end{exa}
\SSP

\bpr \label{YY439}
Let $\mcF^\heartsuit := (\mcF, \DMO^{(\N -1)}, \{ \mcF^j \}_j)$ be a (dormant) $\mr{GL}_n^{(\N)}$-oper on $X^\mr{log}/S^\mr{log}$.
Also, let $\N'$ be a positive integer with $n< p^{\N'}$.
Then, the  collection 
\begin{align} \label{YY467}
(\mcF, \DMO^{(\N -1) \Rightarrow (\N' -1)}, \{\mcF_j \}_j)
\end{align}
 forms a (dormant) $\mr{GL}_n^{(\N')}$-oper on $X^\mr{log}/S^\mr{log}$.
\epr
\begin{proof}
The assertion follows from the fact that the natural morphism $\mcD_{\leq n - j-1}^{(\N'-1)} \otimes \mcF \migi \mcD_{\leq n - j-1}^{(\N -1)} \otimes \mcF$ is an isomorphism  for every $j =0, \cdots, n-1$.
\end{proof}
\SSP

Next, we shall define an equivalence relation in the set of $\mr{GL}_n^{(\N)}$-opers.
Let $\msF^\heartsuit := (\mcF, \DMO, \{ \mcF^j\}_j)$ be 
 a $\mr{GL}_n^{(\N)}$-oper on $X^\mr{log}/S^\mr{log}$ and 
    $\msL := (\mcL, \DMO_\mcL)$ an invertible $\mcD^{(\N-1)}$-module.
In particular, we obtain a $\mcD^{(\N -1)}$-module structure $\DMO \otimes \DMO_\mcL$ on the tensor product $\mcF \otimes \mcL$
arising from $\DMO$ and $\DMO_\mcL$.
One may verify that the collection
\begin{align} \label{Ep2}
\msF^\heartsuit_{\otimes \msL} := (\mcF \otimes \mcL, \DMO \otimes \DMO_\mcL, \{ \mcF^j  \otimes \mcL \}_{j=0}^n)
 \end{align}
forms 
a $\mr{GL}_n^{(\N)}$-oper  on $X^\mr{log}/S^\mr{log}$.
If  $\msF^\heartsuit$ and $\msL$ are dormant, then $\msF^\heartsuit_{\otimes \msL}$ is verified to be dormant.

\SSP
\bde  \label{D0f30}
 Let $\msF^\heartsuit_\circ := (\mcF_\circ, \DMO_\circ, \{ \mcF_\circ^j \}_j)$ 
 and $\msF^\heartsuit_\bullet := (\mcF_\bullet, \DMO_\bullet, \{ \mcF_\bullet^j \}_j)$
   be $\mr{GL}_n^{(\N)}$-opers (resp., dormant $\mr{GL}_n^{(\N)}$-opers) on $X^\mr{log}/S^\mr{log}$.
 We shall say that 
 $\msF^\heartsuit_\circ$ is {\bf equivalent to $\msF^\heartsuit_\bullet$}
 if 
 there exists an invertible (resp., a dormant invertible) $\mcD^{(\N-1)}$-module $\msL := (\mcL, \DMO_\mcL)$
   such that 
the $\mr{GL}_n^{(\N)}$-oper $\msF^\heartsuit_{\circ, \otimes \msL}$
 is isomorphic to 
 $\msF_{\bullet}^\heartsuit$. 
 We use the notion ``$\msF^\heartsuit_\circ \sim \msF^\heartsuit_\bullet$"
 to  indicate the situation that $\msF^\heartsuit_\circ$ is equivalent to $\msF_\bullet^\heartsuit$.
 (The binary relation ``$\sim$" in the set of (dormant) $\mr{GL}_n^{(\N)}$-opers on $X^\mr{log}/S^\mr{log}$ actually  defines an equivalence relation.)
 Moreover, for a (dormant) $\mr{GL}_n^{(\N)}$-oper $\msF^\heartsuit$, we shall write $[\msF^\heartsuit]$ for the equivalence class represented by $\msF^\heartsuit$.
 \ede
\SSP

Let $s : S' \migi S$ be  an $S$-scheme classified by $\mcS ch_{R}^\mr{flat}/S$.
The base-change $s^*(\msF^\heartsuit)$ of $\msF^\heartsuit$ along $s$
 can be constructed,   and the  formulation of base-changes  preserves the equivalence relation ``$\sim$".
Hence, we obtain the 
contravariant functor
\begin{align} \label{e745}
\mcO p_\heartsuit
\ \left(\text{resp.,} \  \mcO p_\heartsuit^\ZZZ\right) : \mcS ch_{R}^\mr{flat}/S \migi \mcS et
\end{align}
on $\mcS ch_{R}^\mr{flat}/S$  defined as the sheaf (with respect to the \'{e}tale topology) associated to 
the functor  
which, to any $S$-scheme $S'$ in $\mr{Ob}(\mcS ch_{R}^\mr{flat}/S)$,
assigns the set of equivalence classes of $\mr{GL}_n^{(\N)}$-opers (resp., dormant $\mr{GL}_n^{(\N)}$-opers) on $(S' \times_S X^\mr{log})/(S' \times_S S^\mr{log})$.

\SSP
\bpr \label{YY104}
Suppose that $p \nmid n$.
Then, the natural morphism of functors 
 $\mcO p_\heartsuit^\ZZZ \!\migi \mcO p_\heartsuit$ is injective.
 (In the subsequent discussion, we shall regard $\mcO p_\heartsuit^\ZZZ\!$ as a subfunctor of  $\mcO p_\heartsuit$ by using this injection.)
\epr
\begin{proof}
Suppose that  two dormant $\mr{GL}_n^{(\N)}$-opers $\msF^\heartsuit_\circ := (\mcF_\circ, \DMO_\circ, \{ \mcF_\circ^j \}_j)$ and  $\msF^\heartsuit_\bullet := (\mcF_\bullet, \DMO_\bullet, \{ \mcF_\bullet^j \}_j)$
  are  equivalent  in the set of ({\it non-dormant}) $\mr{GL}_n^{(\N)}$-opers on $X^\mr{log}/S^\mr{log}$.
  This means that there exists an invertible $\mcD^{(\N-1)}$-module $\msL := (\mcL, \DMO_\mcL)$ with $\msF^\heartsuit_{\circ, \otimes \msL}\cong \msF^\heartsuit_\bullet$.
By taking determinants, we obtain 
\begin{align} \label{YY98}
(\mr{det}(\mcF_\circ), \mr{det}(\DMO_\circ)) \otimes (\mcL^{\otimes n}, \DMO_\mcL^{\otimes n}) \cong (\mr{det}(\mcF_\bullet), \mr{det}(\DMO_\bullet)).
\end{align}
 Since both $\mr{det}(\DMO_\circ)$ and $\mr{det}(\DMO_\bullet)$ are dormant, $\nabla_\mcL^{\otimes n}$ is dormant by \eqref{YY98}.
It follows from Proposition \ref{P129}, (ii), that $\DMO_\mcL$ turns out to be  dormant.
 That is to say, $\msF_\circ^\heartsuit$ is equivalent to $\msF_\bullet^\heartsuit$   in the set of {\it dormant} $\mr{GL}_n^{(\N)}$-opers.
 \end{proof}
 \SSP

Suppose further that $n <p$.
Let  $\msF^\heartsuit := (\mcF, \DMO, \{ \mcF^j\}_j)$ be a 
 $\mr{GL}_n^{(\N)}$-oper (resp., a dormant $\mr{GL}_n^{(\N)}$-oper) on $X^\mr{log}/S^\mr{log}$.
Denote by $\mcE_\mcF$
   the $\mr{PGL}_n$-bundle associated to the vector bundle $\mcF$ via a change of structure group by the projection $\mr{GL}_n \migisurj \mr{PGL}_n$.
The filtration  $\{ \mcF^j \}_j$ determines a $B$-reduction $\mcE_{\{ \mcF^j \}_j}$ of  $\mcE_\mcF$.
Also, $\DMO$ induces an $(\N -1)$-PD stratification $\STR_\DMO$ on $\mcE_\mcF$ (cf. Remark \ref{dGGg5}).
The resulting pair
\begin{align} \label{e448}
\msF^{\heartsuit \Rightarrow \spadesuit} := (\mcE_{\{ \mcF^j \}_j}, \STR_\DMO)
\end{align}
forms a 
 $\mr{PGL}_n^{(\N)}$-oper  (resp., a dormant $\mr{PGL}_n^{(\N)}$-oper) on $X^\mr{log}/S^\mr{log}$ (cf. ~\cite[\S\,4.4.6]{Wak8}).
The isomorphism class of $\msF^{\heartsuit \Rightarrow \spadesuit}$ depends only on the equivalence class $[\msF^\heartsuit]$ of $\msF^\heartsuit$.
Hence, 
the assignment 
 $[\msF^\heartsuit] \mapsto \msF^{\heartsuit \Rightarrow \spadesuit}$ determines a well-defined morphism of functors
\begin{align} \label{e777}
\Lambda_{\heartsuit \Rightarrow \spadesuit} : \mcO p_\heartsuit \migi \mcO p_\spadesuit. 
\ \left(\text{resp.,} \ \Lambda_{\heartsuit \Rightarrow \spadesuit}^\ZZZ : 
  \mcO p_\heartsuit^\ZZZ \migi \mcO p_\spadesuit^\ZZZ\right).
\end{align}

\LSP
\subsection{$n^{(\N)}$-theta characteristics} \label{SS055}

We introduce  $n^{(\N)}$-theta characteristics, generalizing $n$-theta characteristic defined in ~\cite[Definition 4.31]{Wak8}.
Let $n$ be an integer $>1$.

\SSP
\bde \label{D41}
An {\bf $n^{(\N)}$-theta characteristic} (or an {\bf $n$-theta characteristic of level $\N$}) of $X^\mr{log}/S^\mr{log}$ is a pair
\begin{align} \label{TTT4}
\vartheta := (\varTheta, \DMO_\vartheta),
\end{align}
consisting of a line bundle $\varTheta$ on $X$ and a $\mcD^{(\N -1)}$-module structure $\DMO_\vartheta$ on the line bundle $\mcT^{\otimes \frac{n (n-1)}{2}} \otimes \varTheta^{\otimes n}$.
Also, we say that an $n^{(\N)}$-theta characteristic $\vartheta := (\varTheta, \DMO_\vartheta)$ is {\bf dormant} if  $(\mcT^{\otimes \frac{n (n-1)}{2}} \otimes \varTheta^{\otimes n}, \DMO_\vartheta)$ is dormant.
\ede
\SSP

\begin{exa}[$n^{(\N)}$-theta characteristics arising from theta characteristics] \label{Ex20}
Recall that
 a {\it theta characteristic}  of $X^\mr{log}/S^\mr{log}$ is  a line bundle $\varTheta_0$ on $X$ together with an isomorphism $\mcO_X \isom \mcT \otimes \varTheta_0^{\otimes 2}$.

Suppose that we are given   a theta characteristic
$\varTheta_0$  of $X^\mr{log}/S^\mr{log}$, and 
fix an isomorphism $\tau_{\varTheta_0} : \mcO_X \isom \mcT \otimes \varTheta_0^{\otimes 2}$.
Then, the line bundle $\varTheta_0$ together with  the $\mcD^{(\N -1)}$-module structure on $\mcT \otimes \varTheta_0^{\otimes 2}$ corresponding to $\DMO_{X, \mr{triv}}^{(\N -1)}$ via  $\tau_{\varTheta_0}$ specifies a  $2^{(\N)}$-theta characteristic.

More generally, by setting  $\varTheta := \varTheta_0^{\otimes (n-1)}$, the line bundle $\mcT^{\otimes \frac{n (n-1)}{2}} \otimes \varTheta^{\otimes n}$ admits a $\mcD^{(\N -1)}$-module structure $\DMO_{\vartheta_0}$ corresponding to $\DMO^{(\N -1)}_{X, \mr{triv}}$ via the composite isomorphism
\begin{align} \label{e466}
\mcO_X \xrightarrow{\tau_{\varTheta_0}^{\otimes \frac{n (n-1)}{2}}} (\mcT \otimes \varTheta_0^{\otimes 2})^{\otimes \frac{n (n-1)}{2}}\isom \mcT^{\otimes \frac{n (n-1)}{2}} \otimes (\varTheta^{\otimes (n-1)}_0)^{\otimes n} \left(=  \mcT^{\otimes \frac{n (n-1)}{2}} \otimes \varTheta^{\otimes n}\right).
\end{align}
Thus, the  resulting pair
\begin{align} \label{TTT56}
\vartheta_0 := (\varTheta, \DMO_{\vartheta_0})
\end{align}
forms an $n^{(\N)}$-theta characteristic of $X^\mr{log}/S^\mr{log}$ (cf. ~\cite[Example 4.34]{Wak8} for the case of $(\CH, \N) =(0, 1)$).
Moreover, this $n^{(\N)}$-theta characteristic is verified to be  dormant.

Note that if, for example,  $X^\mr{log}/S^\mr{log}$ arises from a pointed stable curve, then the resulting $n^{(\N)}$-theta characteristic does not depend on the choice of the isomorphism $\tau_\varTheta$; this is because
 any automorphism of a line bundle on $X$ is given by multiplication by an element of $H^0 (S, \mcO_S^\times) \left(= H^0 (X, \mcO_X^{\times })\right)$, which is therefore compatible with any  $\mcD^{(\N-1)}$-action.
\end{exa}
\SSP

The following assertion is a higher-level generalization of the fact proved in ~\cite[\S\,4.6.4]{Wak8}.

\SSP
\bpr \label{P204}
Suppose that $p \nmid n$.
Then, there always  exists a dormant $n^{(\N)}$-theta characteristic on $X^\mr{log}/S^\mr{log}$.
\epr
\begin{proof}
Since $(p^{\N + \CH}, n) = 1$,
one may find a pair of integers $(a, b)$ with $a \cdot n + b \cdot p^{\N + \CH} = \frac{n (n-1)}{2}$.
By letting $\varTheta_{a, b} := \Omega^{\otimes a}$, we have a composite isomorphism
\begin{align} \label{e550}
\mcT^{\otimes \frac{n(n-1)}{2}} \otimes \varTheta_{a, b}^{\otimes n} \isom \mcT^{\otimes \left(\frac{n(n-1)}{2} - a \cdot n \right)} \isom \mcT^{\otimes b \cdot p^{\N + \CH}} \isom (\mcT^{\otimes b})^{\otimes p^{\N + \CH}}.
\end{align}
Let us take  an open covering $\{ U_\alpha \}_{\alpha \in I}$ of $X$ together with a collection $\{ \tau_\alpha \}_{\alpha \in I}$, where each $\tau_\alpha$ denotes a trivialization $\mcT^{\otimes b}|_{U_\alpha} \isom \mcO_{U_\alpha}$ of $\mcT^{\otimes b}$ over $U_\alpha$.
For each $\alpha \in I$, denote by $\DMO_\alpha$ the $\mcD^{(\N-1)}|_{U_\alpha}$-module structure on $(\mcT^{\otimes b})^{\otimes p^{\N + \CH-1}} |_{U_\alpha}$ corresponding to 
$\DMO_{U_\alpha, \mr{triv}}^{(\N -1)}$
via 
the isomorphism
\begin{align}
\breve{\tau}_\alpha := \tau_\alpha^{\otimes p^{\N + \CH-1}} : (\mcT^{\otimes b})^{\otimes p^{\N + \CH-1}}|_{U_\alpha} \isom \left(\mcO_{U_\alpha}^{\otimes p^{\N + \CH-1}} =  \right)\mcO_{U_\alpha}.
\end{align}
For each pair $(\alpha, \beta) \in I \times I$ with $U_\alpha \cap U_\beta \neq \emptyset$,
the automorphism $\breve{\tau}_{\alpha} \circ \breve{\tau}_{\beta}^{-1}$  of $\mcO_{U_{\alpha}\cap U_\beta}$ is given by  multiplication by a section of the form $u^{p^{\N + \CH-1}}$ for some $u \in H^0 (U_{\alpha}\cap U_\beta, \mcO_X^\times)$.
By the local description of $\DMO_{(-), \mr{triv}}^{(\N -1)}$ displayed in \eqref{dd123},
 this automorphism is verified  to preserve the $\mcD^{(\N-1)}|_{U_{\alpha}\cap U_\beta}$-module structure.
It follows that the collection $\{ \DMO_\alpha \}_{\alpha \in I}$ may be glued together to obtain a $\mcD^{(\N -1)}$-module structure on $(\mcT^{\otimes b})^{\otimes p^{\N +\CH}}$;
if $\DMO_{a, b}$ denotes the corresponding $\mcD^{(\N -1)}$-module structure on $\mcT^{\otimes \frac{n(n-1}{2}} \otimes \varTheta_{a, b}^{\otimes n}$ via \eqref{e550},
then the resulting pair $(\varTheta_{a, b}, \nabla_{a, b})$ specifies a required $n^{(\N)}$-theta characteristic.
\end{proof}
\SSP

Let us  fix an $n^{(\N)}$-theta characteristic $\vartheta := (\varTheta,  \DMO_\vartheta)$ of $X^\mr{log}/S^\mr{log}$, and set
\begin{align} \label{e607}
\mcF_\varTheta := \mcD_{\leq  n-1}^{(\N-1)} \otimes \varTheta, \hspace{5mm} \mcF_\varTheta^j := \mcD_{\leq n-j-1}^{(\N-1)} \otimes \varTheta \ \ (j =0, \cdots, n).
\end{align}
Since $\mcF^j_\varTheta / \mcF^{j+1}_\varTheta$ ($j=0, \cdots, n-1$) may be identified with $\mcT^{\otimes (n-j-1)} \otimes \varTheta$,
we obtain the composite isomorphism between line bundles
\begin{align} \label{e703}
\mr{det} (\mcF_\varTheta) \isom \bigotimes_{j=0}^{n-1} \mcF^j_\varTheta / \mcF^{j+1}_\varTheta
\isom \bigotimes_{j=0}^{n-1} \mcT^{\otimes (n-j-1)} \otimes \varTheta \isom 
\mcT^{\otimes \frac{n (n-1)}{2}} \otimes \varTheta^{\otimes n}.
\end{align}

\SSP
\bde \label{D42}
\begin{itemize}
\item[(i)]
A {\bf (dormant) $(\mr{GL}_n^{(\N)}, \vartheta)$-oper} on $X^\mr{log}/S^\mr{log}$ is 
a $\mcD^{(\N -1)}$-module structure $\DMO^\diamondsuit$ on $\mcF_\varTheta$
such that the collection of data
\begin{align} \label{e700}
\DMO^{\diamondsuit \Rightarrow \heartsuit} := (\mcF_\varTheta, \DMO^\diamondsuit, \{ \mcF_\varTheta^j \}_{j=0}^n)
\end{align}
forms a (dormant) $\mr{GL}_n^{(\N)}$-oper on $X^\mr{log}/S^\mr{log}$ and
the equality $\mr{det}(\DMO^\diamondsuit) = \DMO_\vartheta$ holds 
under the identification $\mr{det} (\mcF_\varTheta) = \mcT^{\otimes \frac{n (n-1)}{2}} \otimes \varTheta^{\otimes n}$ given by \eqref{e703}.
When $X^\mr{log}/S^\mr{log}$ arises from a pointed stable curve $\msX$, we will refer to any (dormant) $(\mr{GL}_n^{(\N)}, \vartheta)$-oper on $X^\mr{log}/S^\mr{log}$ as a {\bf (dormant) $(\mr{GL}_n^{(\N)}, \vartheta)$-oper on $\msX$}.
 \item[(ii)]
 Let $\DMO_\circ^\diamondsuit$ 
 and $\DMO_\bullet^\diamondsuit$ 
  be (dormant) $(\mr{GL}_n^{(\N)}, \vartheta)$-opers on $X^\mr{log}/S^\mr{log}$.
 We say that $\DMO_\circ^\diamondsuit$ is {\bf isomorphic to $\DMO_\bullet^\diamondsuit$}
 if the associated $\mr{GL}_n^{(\N)}$-opers $\DMO_\circ^{\diamondsuit \Rightarrow \heartsuit}$ and $\DMO_\bullet^{\diamondsuit \Rightarrow \heartsuit}$ are isomorphic.
 \end{itemize}
\ede
\SSP

We shall prove the following two propositions  concerning $(\mr{GL}_n^{(\N)}, \vartheta)$-opers.

\SSP
\bpr \label{P114}
Let $\msF^\heartsuit:= (\mcF, \DMO, \{ \mcF^j \}_j)$ be a $\mr{GL}_n^{(\N)}$-oper on $X^\mr{log}/S^\mr{log}$.
\begin{itemize}
\item[(i)]
There exists a triple
\begin{align} \label{WW301}
(\vartheta_0, \DMO_0^\diamondsuit, h_0)
\end{align}
consisting of 
an $n^{(\N)}$-theta characteristic $\vartheta_0 := (\varTheta_0, \DMO_{\vartheta_0})$ of $X^\mr{log}/S^\mr{log}$,
a $(\mr{GL}_n, \vartheta)$-oper $\DMO_0^\diamondsuit$, and
an isomorphism of $\mr{GL}_n$-opers $h_0 : \DMO^{\diamondsuit \Rightarrow \heartsuit} \isom \msF^\heartsuit$.
(Such an $n^{(\N)}$-theta characteristic is uniquely determined up to isomorphism in a certain sense.)
\item[(ii)]
Suppose that $p\nmid n$.
Then, there exists a pair
\begin{align}
(\DMO^\diamondsuit, \msL)
\end{align}  
consisting of 
a $(\mr{GL}_n^{(\N)}, \vartheta)$-oper  $\DMO^\diamondsuit$ on $X^\mr{log}/S^\mr{log}$ and an invertible $\mcD^{(\N-1)}$-module $\msL$ with 
$\msF^\heartsuit_{\otimes \msL} \cong \DMO^{\diamondsuit \Rightarrow \heartsuit}$.
If, moreover, both $\vartheta$ and $\msF^\heartsuit$ are dormant,
we can choose such $\DMO^\diamondsuit$ and $\msL$ as being dormant.
\end{itemize}
\epr
\begin{proof}
First, we shall prove assertion (i).
Let us set $\varTheta_0 := \mcF^{n-1}$.
Consider the composite
\begin{align} \label{WW340}
h_0 : \mcF_{\varTheta_0} \left(= \mcD_{\leq n-1}^{(\N -1)} \otimes \varTheta_0 \right) \migi  \mcD_{}^{(\N -1)} \otimes \mcF \xrightarrow{\DMO} \mcF, 
\end{align}
where the first arrow arises from the inclusions $\mcD_{\leq n-1}^{(\N -1)} \migiincl  \mcD^{(\N -1)}$ and $\varTheta_0 \migiincl \mcF$.
Since $\msF^\heartsuit$ is a $\mr{GL}_n$-oper, this composite turns out to be an isomorphism.
Denote by $\DMO_0^\diamondsuit$ (resp., $\DMO_{\vartheta_0}$) the $\mcD^{(\N -1)}$-module structure on  $\mcF_{\varTheta_0}$ (resp., $\mcT^{\otimes \frac{n (n-1)}{2}} \otimes \varTheta_0^{\otimes n} \left(= \mr{det} (\mcF_{\varTheta_0}) \right)$)  corresponding to $\DMO$ (resp., $\mr{det} (\DMO)$) via the isomorphism $h_0$  (resp., the isomorphism $\mr{det} (\mcF_{\varTheta_0}) \isom \mr{det} (\mcF)$ induced by $h_0$).
Then,  the resulting collection  $(\vartheta_0, \DMO_0^\diamondsuit, h_0)$, where $\vartheta_0 := (\varTheta_0, \DMO_{\vartheta_0})$,   determines the desired triple.

Next, we shall prove assertion (ii).
Write $\mcL := \varTheta \otimes \mcF^{n-1 \vee}$, and consider 
 the  composite of isomorphisms
\begin{align} \label{e710}
(\mcT^{\otimes \frac{n(n-1)}{2}} \otimes \varTheta^{\otimes n})   \otimes \mr{det}(\mcF)^\vee 
&\isom
(\mcT^{\otimes \frac{n(n-1)}{2}} \otimes \varTheta^{\otimes n}) \otimes (\mcT^{\otimes \frac{n(n-1)}{2}} \otimes (\mcF^{n-1})^{\otimes n})^\vee \\
& \isom \mcT^{\otimes \frac{n(n-1)}{2}} \otimes (\mcT^{\otimes \frac{n(n-1)}{2}})^\vee \otimes \varTheta^{\otimes n} \otimes (\mcF^{n-1 \vee})^{\otimes n} \notag \\
& \isom \left( (\varTheta \otimes \mcF^{n-1 \vee})^{\otimes n}=\right) \mcL^{\otimes n}, \notag
\end{align}
where the first arrow is the morphism induced by 
the isomorphism
$\mr{det}(\mcF) \isom \mcT^{\otimes \frac{n(n-1)}{2}} \otimes (\mcF^{n-1})^{\otimes n}$ in ~\cite[Eq.\,(501)]{Wak8} (applied to the $\mr{GL}_n$-oper determined by $\msF^\heartsuit$).
It follows from Proposition \ref{P129}, (i), that there exists a unique $\mcD^{(\N-1)}$-module structure $\DMO_\mcL$ on $\mcL$ whose $n$-th tensor product corresponds to $\DMO_\vartheta \otimes \mr{det}(\DMO)^\vee$ via \eqref{e710}.
Let us consider the composite
\begin{align} \label{e780}
\mcF_\varTheta \left(=\mcD_{\leq n-1}^{(\N-1)} \otimes \varTheta\right) &\isom \mcD_{\leq n-1}^{(\N-1)}  \otimes (\mcL\otimes \mcF^{n-1})  \\
& \xrightarrow{\mr{inclusion}}  \mcD^{(\N-1)} \otimes  (\mcL \otimes \mcF) \notag\\
&\xrightarrow{\DMO_\mcL \otimes \DMO} \mcL \otimes \mcF. \notag
\end{align}
Since $\msF^\heartsuit$ is a $\mr{GL}_n^{(\N)}$-oper, 
this composite turns out  to be an isomorphism.
Moreover,  the equality $\nabla_\vartheta = \mr{det}(\mcL \otimes \mcF)$ holds under the identification
$\mcT^{\otimes \frac{n(n-1)}{2}} \otimes \varTheta^{\otimes n} = \mr{det}(\mcL \otimes \mcF)$ given by \eqref{e703} and 
 the determinant  of \eqref{e780}.
 Hence, if $\DMO^\diamondsuit$ denotes the $\mcD^{(\N-1)}$-module structure on $\mcF_\varTheta$ corresponding to $\DMO_\mcL \otimes \DMO$ via \eqref{e780},
 then the pair $(\DMO^\diamondsuit, \msL)$, where $\msL := (\mcL, \DMO_\mcL)$, 
 defines  the desired pair. 
  Also, the second  assertion follows from Proposition \ref{PPYY1}, (ii).
 This completes the proof  of the assertion.
\end{proof}
\SSP

\bpr \label{L30}
Suppose that $n < p$ and $\CH = 0$.
Let $\DMO^\diamondsuit_\circ$ and  $\DMO^\diamondsuit_\bullet$
 be $(\mr{GL}_n^{(\N)}, \vartheta)$-opers on $X^\mr{log}/S^\mr{log}$.
Write $\vartheta^{(1)}$ for  the $n^{(1)}$-theta characteristic obtained by reducing the level of $\vartheta$ to $1$.
Also, for each $\star =\circ, \bullet$,
denote by $\DMO^{(1)\diamondsuit}_\star$ the $(\mr{GL}_n^{(1)}, \vartheta^{(1)})$-oper induced by $\DMO_\star^\diamondsuit$.
Suppose that 
$\DMO^\diamondsuit_\circ$ is isomorphic to $\DMO^\diamondsuit_\bullet$ and
the equality $\DMO^{(1)\diamondsuit}_\circ = \DMO^{(1)\diamondsuit}_\bullet$ holds.
Then, we have  $\DMO^\diamondsuit_\circ = \DMO^\diamondsuit_\bullet$.
\epr
\begin{proof}
Let us take an isomorphism  of $\mr{GL}_n^{(\N)}$-opers
$h : \DMO^{\diamondsuit \Rightarrow \heartsuit}_\circ \isom \DMO_\bullet^{\diamondsuit \Rightarrow \heartsuit}$. 
By the assumption  $\DMO^{(1)\diamondsuit}_\circ = \DMO^{(1)\diamondsuit}_\bullet$,
 the isomorphism of $\mr{GL}_n^{(1)}$-opers
$h^{(1)} : \DMO^{(1)\diamondsuit \Rightarrow \heartsuit}_\circ \isom \DMO_\bullet^{(1)\diamondsuit \Rightarrow \heartsuit}$
induced by $h$ defines an automorphism of $\DMO_\circ^{(1)\diamondsuit\Rightarrow \heartsuit}$.
According to  ~\cite[Proposition 4.25]{Wak8},
$h\left( = h^{(1)}\right)$ coincides with the automorphism $\mr{mult}_a : \mcF_\varTheta \isom \mcF_\varTheta$  given by  multiplication by an element $a$ of $H^0 (X, \mcO_X^{\times})$.
The multiplication by  $a^n$ defines an isomorphism
$(\mr{det}(\mcF_\varTheta), \mr{det}(\DMO_\circ^\diamondsuit)) \isom (\mr{det}(\mcF_\varTheta), \mr{det}(\DMO_\bullet^\diamondsuit))$; this  
may be regarded as  an automorphism  of $(\mcT^{\otimes \frac{n(n-1)}{2}} \otimes \varTheta^{\otimes n}, \DMO_\vartheta)$ via \eqref{e703}.
Hence, we have $a^n \in H^0 (X, (F_{X/S}^{(\N)})^{-1}(\mcO_{X^{(\N)}}))$.
This implies $a \in H^0 (X, (F_{X/S}^{(\N)})^{-1}(\mcO_{X^{(\N)}}))$ because of  the assumption  $n < p$.
Since 
$\DMO_\circ^{\diamondsuit}$ is the unique $\mcD^{(\N)}$-module structure  on $\mcF_\varTheta$ compatible with $\DMO_\circ^{\diamondsuit}$ itself via $\mr{mult}_a \left(=h \right)$,
the equality $\DMO_\circ^{\diamondsuit} = \DMO_\bullet^{\diamondsuit}$  must be satisfied.
This completes the proof of the assertion.
\end{proof}

\LSP
\subsection{Representability of the moduli space} \label{SS100}

For  any morphism  $s : S' \migi S$ between flat $R$-schemes,  the pair of base-changes
\begin{align}
s^*(\vartheta)  \ \left(\text{or} \ \vartheta_{S'} \right):= (s^*(\varTheta), s^*(\DMO_\vartheta))
\end{align}
specifies  an $n^{(\N)}$-theta characteristic  of  the log curve $(S' \times_{S}X^\mr{log})/(S' \times_S S^\mr{log})$.
Hence, we obtain the $\mcS et$-valued contravariant functor
\begin{align} \label{e733}
\mcO p_{\diamondsuit, \vartheta} \ \left(\text{resp.,} \ \mcO p_{\diamondsuit, \vartheta}^\ZZZ  \right) : \mcS ch_{R}^\mr{flat}/S \migi \mcS et
\end{align}
on $\mcS ch_{R}^\mr{flat}/S$ which, to any $S$-scheme $s : S' \migi S$ in $\mr{Ob}(\mcS ch_{R}^\mr{flat}/S)$, assigns the set of isomorphism classes of $(\mr{GL}_n^{(\N)}, s^*(\vartheta))$-opers  (resp., dormant $(\mr{GL}_n^{(\N)}, s^*(\vartheta))$-opers) on $(S' \times_{S}X^\mr{log})/(S' \times_S S^\mr{log})$.

Suppose that $(1 <) n < p$, and 
let $\star$ denote either the absence or presence  of ``\,$\mr{Zzz...}$".
Then, the assignments $\DMO^\diamondsuit \mapsto (\DMO^{\diamondsuit \Rightarrow \heartsuit})^{\Rightarrow \spadesuit}$ and $\DMO^\diamondsuit \mapsto \DMO^{\diamondsuit \Rightarrow \heartsuit}$ define morphisms  of  functors
\begin{align} \label{e781}
\Lambda_{\diamondsuit \Rightarrow \spadesuit, \vartheta}^\star 
:
 \mcO p_{\diamondsuit, \vartheta}^\star \migi \mcO p_{\spadesuit}^\star \hspace{5mm} \text{and} \hspace{5mm}
 \Lambda_{\diamondsuit \Rightarrow \heartsuit, \vartheta}^\star : 
  \mcO p_{\diamondsuit, \vartheta}^\star \migi \mcO p_{\heartsuit}^\star,
\end{align}
respectively.
These morphisms  make the following diagram commute:
\begin{align} \label{E783}
\vcenter{\xymatrix@C=46pt@R=36pt{
\mcO p_{\diamondsuit,  \vartheta}^\star \ar[rr]^-{\Lambda_{\diamondsuit \Rightarrow \heartsuit, \vartheta}^\star}  \ar[rd]_-{\Lambda_{\diamondsuit \Rightarrow \spadesuit, \vartheta}^\star}&& \mcO p_{\heartsuit}^\star  \ar[ld]^-{\Lambda_{\heartsuit \Rightarrow \spadesuit}^\star}
\\
& \mcO p_{\spadesuit}^\star. &
}}
\end{align}
Moreover, we  can obtain the following assertion.

\SSP
\bt \label{P14}
Suppose that $n < p$, and 
let $\star$ denote either the absence or presence  of ``\,$\mr{Zzz...}$".
In the case of $\star = \mr{Zzz...}$, 
we assume that the fixed $n^{(\N)}$-theta characteristic $\vartheta$ is dormant.
Then, the morphisms of functors 
$\Lambda_{\heartsuit \Rightarrow \spadesuit}^\star$,
$\Lambda_{\diamondsuit \Rightarrow \spadesuit, \vartheta}^\star$, and 
$\Lambda_{\diamondsuit \Rightarrow \heartsuit, \vartheta}^\star$ are all isomorphisms.
\et
\begin{proof}
The assertion for $\Lambda_{\diamondsuit \Rightarrow \spadesuit, \vartheta}^\star$
follows immediately  from Proposition \ref{PPYY1}, (i) and (ii).
The surjectivity  of $\Lambda^\star_{\diamondsuit \Rightarrow \heartsuit, \vartheta}$ is a direct consequence of Proposition  \ref{P114}, (ii).
The injectivity of $\Lambda_{\diamondsuit \Rightarrow \heartsuit, \vartheta}^\star$ can be proved by an argument entirely similar to the argument in the injectivity of ``$\Lambda_\vartheta^{\diamondsuit \Rightarrow \heartsuit}$" discussed in ~\cite[Proposition 4.40]{Wak8}.
Moreover, these results together with the commutativity of \eqref{E783} deduce the remaining portion, i.e., $\Lambda_{\heartsuit \Rightarrow \spadesuit}^\star$ turns out to be an isomorphism.
\end{proof}
\SSP

\bco \label{UU39}
Suppose that $n < p$.
\begin{itemize}
\item[(i)]
For any (dormant) $\mr{PGL}_n^{(\N)}$-oper $\msE^\spadesuit$ on $X^\mr{log}/S^\mr{log}$,
there always exists  a (dormant) $\mr{GL}_n^{(\N)}$-oper $\msF^\heartsuit$ on $X^\mr{log}/S^\mr{log}$ with $\msF^{\heartsuit \Rightarrow \spadesuit} \cong \msE^\spadesuit$.
\item[(ii)]
Suppose further that the relative characteristic of $X^\mr{log}/S^\mr{log}$ is trivial.
Let $\msE^\spadesuit := (\mcE_B, \STR)$ be a dormant $\mr{PGL}_n^{(\N)}$-oper on $X^\mr{log}/S^\mr{log}$, and write
 $\mcE := \mcE_B \times^B \mr{PGL}_n$.
Then,  $(\mcE, \STR)$ is, Zariski locally on $X$, isomorphic to the trivial $(\N-1)$-PD stratified  $\mr{PGL}_n$-bundle $(X \times \mr{PGL}_n, \STR_\mr{triv})$ (cf. Example \ref{NN601}). 
\end{itemize}
\eco
\begin{proof}
Assertion (i) follows from Proposition \ref{P204} and the surjectivity of $\Lambda^{^{(\mr{Zzz...})}}_{\diamondsuit \Rightarrow \spadesuit, \vartheta}$.
Assertion (ii) follows from  assertion (i) and the equivalence (a) $\Leftrightarrow$ (b)  obtained  in Proposition \ref{P016dd}, (ii).
\end{proof}
\SSP

By applying Theorem  \ref{P14} above, we can prove the representability of the functors $\mcO p_{\spadesuit}$ and $\mcO p_{\spadesuit}^\ZZZ$ for $\CH = 0$, as follows.

\SSP
\bt \label{P99}
Let $(g, r)$ be a pair of nonnegative integers with $2g-2+r >0$.
Suppose that $n< p$ and that  $X^\mr{log}/S^\mr{log}$ arises from an $r$-pointed stable curve 
 of genus $g$ over an $\mbF_p$-scheme $S$.
Then, both $\mcO p_{\spadesuit}$ and $\mcO p_{\spadesuit}^\ZZZ$ may be represented by (possibly empty) affine schemes of finite type over $S$, and the natural  inclusion
$\mcO p_{\spadesuit}^\ZZZ \migiincl \mcO p_{\spadesuit}$ defines a closed immersion between $S$-schemes.
(In the rest of this manuscript, we will use the notations $\mcO p_{\spadesuit}$ and $\mcO p_{\spadesuit}^\ZZZ$
  for writing the schemes representing  the functors $\mcO p_{\spadesuit}$ and $\mcO p_{\spadesuit}^\ZZZ$, respectively.)
\et
\begin{proof}
Let us choose a (dormant) $n^{(\N)}$-theta characteristic $\vartheta := (\varTheta, \DMO_\vartheta)$ of $X^\mr{log}/S^\mr{log}$ (cf. Proposition \ref{P204}).
It induces an $n^{(1)}$-theta characteristic  $\vartheta^{(1)}$  via  reduction to level $1$.
Recall from ~\cite[Theorem 4.66]{Wak8}
that  $T := \mcO p_{\diamondsuit, \vartheta^{(1)}}$ (with  ``$\N$" taken to be $1$) may be represented by an affine $S$-scheme.
Denote by 
$\msY:= (h : Y \migi T, \{ \sigma_{Y, i}\}_i)$  the base-change over $T$ of the pointed stable curve defining $X^\mr{log}/S^\mr{log}$.
 Also, for each $M \in \{ 0, \N -1 \}$, we shall set 
  \begin{align}
  \mcV_M := \mcH om_{\mcO_Y} (\mcD_{Y^\mr{log}/T^\mr{log}, \leq p^{\N -1}}^{(M)} \otimes \mcF_{\varTheta_T}, \mcF_{\varTheta_T}),
  \end{align}
   where $\varTheta_T$ denotes the base-change of $\varTheta$ over $T$.
 Note that $\mcV_M$  can be equipped with a filtration induced by  the filtrations on both $\mcD_{Y^\mr{log}/T^\mr{log}, \leq p^{\N -1}}^{(M)}$ and $\mcF_{\varTheta_T}$ whose 
 graded pieces are isomorphic to $\Omega_{Y^\mr{log}/T^\mr{log}}^{\otimes j}$ (for some $j$'s).
 Hence, the direct image $h_*(\mcV_M)$ is a vector bundle on $T$.
 We shall define  $\mbV (h_* (\mcV_M))$ to be the relative affine  space over $T$ associated to  $h_*(\mcV_M)$.
 The natural morphism $\mcD_{Y^\mr{log}/T^\mr{log}, \leq p^{\N -1}}^{(0)} \otimes \mcF_{\varTheta_T} \migi \mcD_{Y^\mr{log}/T^\mr{log}, \leq p^{\N -1}}^{(\N -1)} \otimes \mcF_{\varTheta_T}$  induces an $S$-morphism 
 \begin{align}
 \nu :  \mbV (h_* (\mcV_{\N -1})) \migi \mbV (h_* (\mcV_0)).
 \end{align}
 The formation of this morphism commutes with base-change to $T$-schemes.
 
 Next, the  universal $(\mr{GL}_n^{(1)}, \vartheta^{(1)}_T)$-oper  on $\msY$
 determines  an $\mcO_Y$-linear morphism
 \begin{align}
 \DMO^\diamondsuit : \mcD_{Y^\mr{log}/T^\mr{log}}^{(0)} 
 \otimes \mcF_{\varTheta_T} \migi \mcF_{\varTheta_T}.
 \end{align}
The restriction  of $\DMO^\diamondsuit$ to $\mcD_{Y^\mr{log}/T^\mr{log}, \leq p^{\N-1}}^{(0)} 
 \otimes \mcF_{\varTheta_T} \left(\subseteq \mcD_{Y^\mr{log}/T^\mr{log}}^{(0)} 
 \otimes \mcF_{\varTheta_T}\right)$ specifies a global section $\sigma$ of
 $\mbV (h_* (\mcV_0))$.
Note that, for any $T$-scheme $T'$,  a $\mcD^{(\N-1)}_{Y'^{\mr{log}}/T'^{\mr{log}}}$-module structure  (where $T'^{\mr{log}} := T' \times_T T^\mr{log}$ and $Y'^{\mr{log}} := T' \times_T Y^\mr{log}$) on
a sheaf 
  is completely  
determined by its restriction to $\mcD^{(\N-1)}_{Y'^{\mr{log}}/T'^{\mr{log}}, \leq p^{\N-1}} \left(\subseteq \mcD^{(\N-1)}_{Y'^{\mr{log}}/T'^{\mr{log}}}\right)$.
 Hence,  Proposition  \ref{L30} implies 
that the set of isomorphism classes of 
  $(\mr{GL}_n^{(\N)}, \vartheta_{T'})$-opers 
 on $Y'^{\mr{log}}/T'^{\mr{log}}$ extending (the base-change to $T'$ of) $\DMO^\diamondsuit$
 may be identified with a subset  of $\nu^{-1}(\sigma)(T')$.
  This identification enables us to consider 
  $\mcO p_{\diamondsuit, \vartheta}$ 
  as 
 a closed subscheme of $\mbV (h_* (\mcV_{\N-1}))$. 
 In particular, $\mcO p_{\spadesuit}$ ($\cong \mcO p_{\diamondsuit, \vartheta}$  by Theorem  \ref{P14}) may  be represented by an affine scheme over $T$, which is affine over $S$.
 The assertions for $\mcO p_{\spadesuit}^\ZZZ$ follow immediately from the fact just proved and  the definition of $p^\N$-curvature. 
\end{proof}
\SSP

\begin{rema}[Representability for higher rank cases] \label{WW14}
If the notion of a dormant $\mr{PGL}_n^{(\N)}$-oper can be defined  for a large $n$, 
 then 
 it will be   expected that   Theorem \ref{P99}  is   true even when $p \geq  n$.
Indeed, as proved in ~\cite[Theorem B]{Wak9},
there exists a canonical duality
between dormant $\mr{PGL}_n^{(\N)}$-opers and dormant
$\mr{PGL}_{p^\N -n}^{(\N)}$-opers.
 (Although  the discussion in ~\cite{Wak9} only deals with the case where $X$ is smooth, we can immediately extend  the duality  to pointed stable curves in the same manner.)
This result  and Theorem \ref{P99} together imply the representability of $\mcO p_{\spadesuit}^\ZZZ$ for $p^\N - p  < n < p^\N -1$.
\end{rema}
\SSP

Denote by $\mcS ch_{R}^\mr{flat}$ the
category of flat $R$-schemes, where $R := \mbZ/p^{\CH +1}\mbZ$ .
Given a pair of nonnegative integers $(g, r)$  with $2g-2+r>0$, we shall 
 denote by
\begin{align} \label{eY401}
\mcO p_{n, \N, g,r, R},  \ \text{or simply} \ \mcO p_{g,r} 
\ \left(\text{resp.,} \  \mcO p_{n, \N, g,r, R}^\ZZZ,  \ \text{or simply} \ \mcO p^\ZZZ_{g,r}  \right)
\end{align}
the category over $\mcS ch_{R}^\mr{flat}$ defined as follows:
\begin{itemize}
\item
The objects are the pairs $(\msX, \msE^\spadesuit)$, where $\msX$ denotes an $r$-pointed stable curve of genus $g$ over a flat $R$-scheme $S$
  and $\msE^\spadesuit$ denotes
a $\mr{PGL}_n^{(\N)}$-oper 
(resp., a dormant $\mr{PGL}_n^{(\N)}$-oper)
 on $\msX$;
\item
The morphisms from $(\msX_\circ, \msE_\circ^\spadesuit)$ to $(\msX_\bullet, \msE_\bullet^\spadesuit)$ are morphisms $(\nu_S, \nu_X) : \msX_\circ \migi \msX_\bullet$ of $r$-pointed stable curves (cf. ~\cite[Definition 1.36, (ii)]{Wak8}) with $\msE^\spadesuit_\circ \cong \nu_S^*(\msE_\bullet^\spadesuit)$;
\item
The projection $\mcO p_{n, \N, g, r, R} \migi \mcS ch_{R}^\mr{flat}$ 
(resp., $\mcO p_{n, \N, g, r, R}^\ZZZ \migi \mcS ch_{R}^\mr{flat}$)
   is given by assigning, to each pair $(\msX, \msE^\spadesuit)$ as above,
the base scheme $S$ of $\msX$.
\end{itemize}

 Forgetting the data of $\mr{PGL}_n^{(\N)}$-opers yields the projection
 $\mcO p^\ZZZ_{g, r} \rightarrow \overline{\mcM}_{g, r}$, which restricts to a morphism
 \begin{align} \label{YY144}
 \Pi_{n, \N, g,r, R} \ \left(\text{or simply}, \  \Pi_{g,r} \right) : \mcO p^\ZZZ_{g,r} \migi \overline{\mcM}_{g,r}.
 \end{align}
 By  Proposition \ref{P44}, 
$\mcO p_{g, r}$  (resp., $\mcO p_{g, r}^\ZZZ$)
 turns out to be  fibered in equivalence relations over $\overline{\mcM}_{g, r}$, i.e., specifies a set-valued  \'{e}tale sheaf on $\overline{\mcM}_{g, r}$.

Also, we obtain 
a projective system
\begin{align} \label{NN8123}
\cdots \migi \mcO p^\ZZZ_{n, \N, g,r, R} \migi \cdots \migi \mcO p^\ZZZ_{n, 2, g,r, R} \migi \mcO p^\ZZZ_{n, 1, g,r, R}
\end{align}
each of whose morphisms is given by reducing the level of dormant $\mr{PGL}_n$-opers.

By applying Theorem \ref{P99} to various pointed stable curves,
we obtain the following assertion.

\SSP
\bt \label{T40}
Suppose that $n < p$ and $R = \mbF_p$.
Then, the  fibered category $\mcO p_{g,r}$ may be  represented by a (possibly empty) Deligne-Mumford stack of finite type over $\mbF_p$ and the natural projection $\mcO p_{g,r} \migi \overline{\mcM}_{g,r}$ is (represented by schemes  and) affine.
Moreover, 
$\mcO p_{g,r}^\ZZZ\!$ may be represented by a closed substack of $\mcO p_{g,r}$.
\et
\SSP

Let us take a 
 pair  $(\msX, \msE^\spadesuit)$  classified by  $\mcO p^\ZZZ_{n, 1, g,r, \mbZ/p^\N \mbZ}$,  where $\msE^\spadesuit := (\mcE_B, \STR)$.
 Then, the  pair 
 \begin{align} \label{YY259}
 {^{\Diag}}\!\!\msE^\spadesuit := (\mcE_{B, 0}, {^{\Diag}}\!\!\STR)
 \end{align}
 consisting of 
the reduction $\mcE_{B, 0}$  modulo $p$  of $\mcE_B$ and the diagonal reduction ${^{\Diag}}\!\!\STR$   of $\STR$ (cf. \eqref{eeYY3}) specifies   a dormant $\mr{PGL}_n^{(\N)}$-oper on $\msX_0$ (= the reduction modulo $p$ of $\msX$).
 
 \SSP
\bde \label{YY300}
We shall refer to ${^{\Diag}}\!\!\msE^\spadesuit$ as the {\bf diagonal reduction} of $\msE^\spadesuit$.
\ede
\SSP

 The resulting assignment $(\msX, \msE^\spadesuit) \mapsto (\msX_0,  {^{\Diag}}\!\!\msE^\spadesuit)$ defines 
a functor
\begin{align}  \label{YY260}
{^{\Diag}}\!\!(-) : \mcO p^\ZZZ_{n, 1, g,r, \mbZ/p^\N \mbZ} \migi  \mcO p^\ZZZ_{n, \N, g,r, \mbF_p}.
\end{align}
Moreover,  the square diagram of categories 
\begin{align} \label{YY203}
\vcenter{\xymatrix@C=46pt@R=36pt{
\mcO p^\ZZZ_{n, 1, g,r, \mbZ/p^\N \mbZ} \ar[r]^-{{^{\Diag}}\!\!(-)} \ar[d]_-{\mr{projection}} & \mcO p^\ZZZ_{n, \N, g,r, \mbF_p} \ar[d]^-{\mr{projection}}
\\
\mcS ch_{\mbZ/p^\N \mbZ}^{\mr{flat}}\ar[r] & \mcS ch_{\mbF_p}^{\mr{flat}}
}}
\end{align}
is $1$-commutative, 
where the lower horizontal arrow denotes the functor given by base-change along the closed immersion $\mr{Spec}(\mbF_p) \migiincl \mr{Spec}(\mbZ/p^\N \mbZ)$.

\LSP
\subsection{Finiteness of the moduli space} \label{SS300}

Suppose that  $X^\mr{log}/S^\mr{log}$ arises from 
an $r$-pointed stable curve $\msX := (f: X \migi S, \{ \sigma_i \}_i)$ of genus $g$ over an $\mbF_p$-scheme $S$.
Also, let us fix a dormant $n^{(\N)}$-theta characteristic $\vartheta := (\varTheta, \DMO_\vartheta)$ of $X^\mr{log}/S^\mr{log}$ (cf. Proposition \ref{P204}).
We equip $\mcD_{}^{(\N-1)} \otimes \varTheta$ with  the  left  $\mcD_{}^{(\N-1)}$-module structure   given by left multiplication.

Write
\begin{align} \label{YY107}
\mcP_\varTheta
\end{align}
 for the quotient of 
 the $\mcD_{}^{(\N-1)}$-module
  $\mcD_{}^{(\N-1)} \otimes \varTheta$ by the $\mcD_{}^{(\N-1)}$-submodule generated by the image of $\psi_{X^\mr{log}/S^\mr{log}} \otimes \mr{id}_\varTheta : \mcT^{\otimes p^\N} \otimes \varTheta \migi \mcD_{}^{(\N-1)} \otimes \varTheta$ (cf. \eqref{dGG50} for the definition of $\psi_{X^\mr{log}/S^\mr{log}}$);
 this   $\mcD^{(\N-1)}$-module structure on  $\mcP_{\varTheta}$ will be denoted by $\DMO_\varTheta$ (or $\DMO_\varTheta^{(\N -1)}$).
  By construction,  $\nabla_{\varTheta}$ has  vanishing  $p^\N$-curvature.
For each $j = 0, \cdots, p^\N$, we shall set $\mcP^j_\varTheta$ to  be the subbundle of $\mcP_\varTheta$ defined as 
\begin{align}
\mcP^j_\varTheta  := \mr{Im} \left(\mcD_{\leq p^\N -j -1}^{(\N-1)} \otimes \varTheta\xrightarrow{\mr{inclusion}} \mcD_{}^{(\N-1)} \otimes \varTheta \xrightarrow{\mr{quotient}} \mcP_\varTheta \right),
\end{align}
where $\mcD^{(\N -1)}_{\leq -1} := 0$.

The natural composite   $\mcF_\varTheta \migiincl  \mcD^{(\N-1)} \otimes \varTheta  \migisurj \mcP_\varTheta$  is injective and restricts,  for each $j=0, \cdots, n$, to an isomorphism
\begin{align} \label{e790}
\mcF_\varTheta^j \isom \mcP_\varTheta^{p^\N -n + j}.
\end{align}
In particular, this isomorphism for $j=0$ gives 
 a short exact sequence
\begin{align} \label{e803}
0 \migi   \mcF_\varTheta \migi \mcP_\varTheta \migi  \mcP_\varTheta/\mcP_\varTheta^{p^\N -n} \migi 0.
\end{align}
By  the definition of $\DMO_{\varTheta}$, 
the collection of data
 \begin{align} \label{e766}
 \msP^\heartsuit_\varTheta := (\mcP_\varTheta, \DMO_{\varTheta}, \{ \mcP^j_\varTheta \}_{j= 0}^{p^\N})
 \end{align}
  forms a dormant 
  $\mr{GL}_{p^\N}^{(\N)}$-oper  on $\msX$.

Next, let 
$\DMO^\diamondsuit$
be a dormant $(\mr{GL}_n^{(\N)}, \vartheta)$-oper  on $\msX$.
The composite
\begin{align} \label{e4002}
\mcD_{}^{(\N-1)} \otimes  \varTheta \left(= \mcD_{}^{(\N-1)} \otimes  \mcF_\varTheta^{n-1} \right)
\xrightarrow{\mr{inclusion}}
 \mcD_{}^{(\N-1)} \otimes \mcF_\varTheta \xrightarrow{\nabla^\diamondsuit}
 \mcF_\varTheta
 \end{align}
 factors through the quotient $\mcD_{}^{(\N-1)} \otimes \varTheta \migisurj \mcP_\varTheta$ because
it preserves the $\mcD_{}^{(\N-1)}$-module structure and 
$\DMO^\diamondsuit$ has vanishing $p^\N$-curvature.
Thus, this composite induces  a morphism of $\mcD_{}^{(\N-1)}$-modules
\begin{align} \label{Ey669}
\nu_{\DMO^\diamondsuit} : (\mcP_\varTheta, \nabla_{\varTheta}) \migi (\mcF_\varTheta, \nabla^\diamondsuit).
\end{align}
This morphism restricts to 
the identity morphism  of $\varTheta \left(= \mcP_\varTheta^{p^\N-1} =  \mcF_\varTheta^{n-1}  \right)$,
so it follows from the definition of a $\mr{GL}_n^{(\N)}$-oper  that
the restriction   
\begin{align}
\nu_{\DMO^\diamondsuit} |_{\mcP_\varTheta^{p^\N-n}} : \mcP_\varTheta^{p^\N-n} \migi  \mcF_\varTheta \left(= \mcF^0_\varTheta \right)
\end{align}
of $\nu_{\DMO^\diamondsuit}$ to  $\mcP_\varTheta^{p^\N-n}$
is an isomorphism.
In particular, $\nu_{\DMO^\diamondsuit}$ 
specifies
  a split surjection of \eqref{e803}.

Using the surjection $\nu_{\DMO^\diamondsuit}$, we prove the following assertion.
(When $\N =1$ and $\msX$ is an unpointed  smooth curve  over a single point, assertion (ii) described below can be found in ~\cite[Corollary 6.1.6]{JP}.
On the other hand, when $\N =1$ and $\msX$ is an arbitrary pointed stable curve,  we already established the same assertion   in ~\cite[Corollary 3.32]{Wak8}.
Although this result was proved under the assumption that $n < \frac{p}{2}$, it also contains the proof of the remaining situation, i.e, $\frac{p}{2}\leq  n < p$, because of the duality for  dormant opers resulting from ~\cite[Theorems A, B]{Wak3}.)

\SSP
\bpr \label{T50dddd}
Let us keep the above situation.
\begin{itemize}
\item[(i)]
The functor $\mcO p_{\diamondsuit, \vartheta}^\ZZZ$  is empty unless $n \leq p^\N$.
\item[(ii)]
Suppose further that $n < p$.
Then, the  scheme  (representing the functor) $\mcO p_{\spadesuit}^\ZZZ$
 is
   finite over $S$.
 In particular, there are only finitely many isomorphism classes of dormant $\mr{PGL}_n^{(\N)}$-opers on $\msX$.
 \end{itemize}
\epr
\begin{proof}
Assertion (i)
follows immediately from the fact that
the morphism $\nu_{\DMO^\diamondsuit}$ defined 
for each  dormant $(\mr{GL}_n^{(\N)}, \vartheta)$-oper $\DMO^\diamondsuit$
  is surjective and the vector bundle $\mcP_\varTheta$ has rank $p^\N$.

Next, let us consider  assertion (ii).
Since we already know that $\mcO p_{\diamondsuit, \vartheta}^\ZZZ$ may be represented by an  affine scheme over $S$ (cf. Theorems \ref{P14} and  \ref{P99}),
it suffices to prove  the properness of $\mcO p_{\diamondsuit, \vartheta}^\ZZZ/S$.

Suppose that $S = \mr{Spec}(A)$, where $A$ denotes a discrete  valuation ring for a field $K$ over $\mbF_p$.
Denote by $\msX_K := (X_K, \{ \sigma_{K, i} \}_i)$  (resp.,  $\msX_0 := (X_0, \{ \sigma_{0, i}\}_i)$) the generic  (resp., special) fiber  of $\msX$,  and by $\vartheta_K := (\varTheta_K, \DMO_{\vartheta_K})$  (resp., $\vartheta_0 := (\varTheta_0, \DMO_{\vartheta_0})$) the restriction of $\vartheta$ to $X_K$ (resp., $X_0$).
In particular, just as in the case of $\vartheta$,
we obtain vector bundles $\mcP_{\varTheta_K}$ and $\mcP_{\varTheta_K}^j$ (resp., $\mcP_{\varTheta_0}$ and $\mcP_{\varTheta_0}^j$) for various $j$'s.
Let us take an arbitrary  dormant $(\mr{GL}_n^{(\N)}, \vartheta_K)$-oper $\DMO_K^\diamondsuit$ on  $\msX_K$.
By the standard valuative criterion using discrete valuation rings (cf. ~\cite[Chap.\,II, Exercise 4.11]{Har}), the problem is reduced to  finding  a dormant  $(\mr{GL}_n^{(\N)}, \vartheta)$-oper extending  $\DMO_K^\diamondsuit$  over $\msX$.

It follows from the discussion preceding this proposition that $\DMO_K^\diamondsuit$
 induces a  surjection $\nu_{\DMO^\diamondsuit_K} : (\mcP_{\varTheta_K}, \DMO_{\varTheta_K}) \migisurj (\mcF_{\varTheta_K}, \DMO_K^\diamondsuit)$.
By the  properness of Quot schemes, 
there exists a pair 
$(\mcF, \nu)$, where
\begin{itemize}
\item
$\mcF$ denotes a coherent  $\mcO_{X}$-module   that is flat over $S$ and  satisfies 
$\mr{rank} (\mcF) = \mr{rank} (\mcF_{\varTheta_K})$ and $\mr{deg} (\mcF) = \mr{deg} (\mcF_{\varTheta_K})$;
\item
$\nu$ denotes 
 an $\mcO_X$-linear surjection $\mcP_{\varTheta} \migisurj \mcF$ with $(\mcF, \nu) |_{X_K} = (\mcF_{\varTheta_K}, \nu_{\DMO^\diamondsuit_K})$.
\end{itemize}
 The morphism  $\nu_{\DMO^\diamondsuit_K}$ preserves  the $\mcD^{(\N -1)}|_{X_K}$-action, so $\mr{Ker}(\nu) |_{X_K} \left(= \mr{Ker}(\nu_{\DMO_K^\diamondsuit}) \right)$ is closed under $\nabla_{\varTheta_K}$.
By the $S$-flatness of $\mcF$,  the kernel $\mr{Ker}(\nu)$ must be closed under $\nabla_\varTheta$.
It follows that there exists   a $\mcD^{(\N -1)}$-module structure on $\mcF$ that extends  $\nabla_K^\diamondsuit$ and commutes with $\nabla_\varTheta$ via  $\nu$.
Moreover, since 
the morphism \eqref{YY92} for $j=0$ associated to  $\nabla_K^\diamondsuit$ is an isomorphism,
the generic fiber $h_K$  of the composite
\begin{align} \label{Ep3}
h : \mcF_\varTheta \xrightarrow{\eqref{e790} \ \text{for} \ j=0} \mcP_\varTheta^{p^\N -n} \xrightarrow{\mr{inclusion}} \mcP_\varTheta \xrightarrow{\nu} \mcF
\end{align}
is an isomorphism.
This also  implies the injectivity of  $h$, as  $\mcF_\varTheta$ is locally free.

According to the first assertion in  Lemma \ref{Lem77k} stated below,
the coherent sheaf $\mcF$ is  locally free.
It follows that $\mr{Ker}(\nu)$ defines a subbundle of $\mcP_\varTheta$ and satisfies $\mr{Ker}(\nu) |_{X_0} = \mr{Ker}(\nu |_{X_0})$ in $\mcP_{\varTheta_0}$.
For each $j=0, \cdots, p^\N -n$, we set $\mr{Ker}(\nu)^j := \mr{Ker}(\nu)\cap \mcP_{\varTheta}^j$.
Since 
the composite $\mr{Ker}(\nu) |_{X_K} \hookrightarrow \mcP_{\varTheta_K} \twoheadrightarrow \mcP_{\varTheta_K}/\mcP_{\varTheta_K}^{p^\N -n}$ is an isomorphism,
 the identity    $\mr{Ker}(\nu)^{p^\N -n} = 0$ holds.
The subquotient $\mr{Ker}(\nu)^j/\mr{Ker}(\nu)^{j+1}$ is contained in   $\mcP_{\varTheta_K}^j/\mcP_{\varTheta_K}^{j+1}$, so it is 
$S$-flat.
 Hence, the collection $\{ \mr{Ker}(\nu)^j |_{X_0} \}_{j=0}^{p^\N -n}$ forms a decreasing filtration on $\mr{Ker}(\nu |_{X_0}) \left(= \mr{Ker}(\nu) |_{X_0} \right)$ and, for each  $j =0, \cdots, p^\N -n - 1$,
we have   
 \begin{align} \label{Euu3}
 \mr{deg}\left((\mr{Ker}(\nu)^j |_{X_0})/(\mr{Ker}(\nu)^{j+1} |_{X_0})\right) & =  \mr{deg}\left((\mr{Ker}(\nu)^j |_{X_K})/(\mr{Ker}(\nu)^{j+1}|_{X_K})\right) \\
 & = \mr{deg} \left(\mcP_{\varTheta_K}^j/\mcP_{\varTheta_K}^{j+1}\right)  \notag \\
 & = \mr{deg}\left(\mcT^{\otimes (p^\N-1-j)} \otimes \varTheta_K \right) \notag \\
 & = \mr{deg}(\varTheta) - (2g-2+r)(p^\N-1-j) \notag \\
 & <  \mr{deg}(\varTheta)  - (2g-2 +r) (n-1). \notag
 \end{align}

Suppose now  that 
 the special fiber $h_0 : \mcF_{\varTheta_0} \rightarrow \mcF |_{X_0}$ of $h$
  is {\it not} injective.
  By the second assertion of  Lemma \ref{Lem77k},  there exists a line subbundle
  of $\mr{Ker} (h_0)$ whose degree is equal to or greater than the value $\mr{deg}(\mcF_{\varTheta_0}^0 /\mcF_{\varTheta_0}^1) =  \mr{deg}(\varTheta) -(2g-2+r)(n-1)$.
Since   $\mr{Ker} (h_0)$ can be identified with $\mr{Ker}(\nu |_{X_0}) \cap \mcP_{\varTheta_0}^{p^\N -n}$ via   the isomorphism  \eqref{e790} for $j=0$,
  this contradicts the computation  in the sequence  \eqref{Euu3}.
  Thus,  $h_0$ must be  injective.
  By comparing the degrees of $\mcF_{\varTheta_0}$ and $\mcF |_{X_0}$,
  we see that  $h_0$, and hence $h$,  is an isomorphism.

Under the identification $\mcF_{\varTheta} = \mcF$ given by the isomorphism $h$, 
the $\mcD^{(\N -1)}$-module structure  on $\mcF$ constructed above determines a $\mcD^{(\N -1)}$-module structure $\nabla^\diamondsuit$ on $\mcF_\varTheta$, which has vanishing $p^\N$-curvature.
It is immediately verified that
$\DMO^\diamondsuit$ 
forms a dormant $(\mr{GL}_n^{(\N)}, \vartheta)$-oper   on $\msX$ whose generic fiber coincides with $\DMO_K^\diamondsuit$.
Consequently,   the  $K$-rational point of $\mcO p_{\diamondsuit,  \vartheta}^\ZZZ$ classifying   $\DMO_K^\diamondsuit$ extends to an $A$-rational point.
This completes   the proof of the assertion.
\end{proof}
\SSP

The following lemma was used in the proof of the above proposition.

\SSP
\ble \label{Lem77k}
Let us retain  the notation  used 
 in the proof of the Proposition  \ref{T50dddd}.
 Then, $\mcF$ is locally free.
 Moreover, for every $j= 0, \cdots, n-1$,
 the $\mcO_{X}$-module $(\mr{Ker}(h) \cap \mcF_\varTheta^j)/(\mr{Ker}(h) \cap \mcF_\varTheta^{j+1})$ is  isomorphic to either $0$ or the line bundle 
 $\mcF_\varTheta^j/\mcF_\varTheta^{j+1} \left( \cong \mcT^{\otimes (n-1-j)} \otimes \varTheta\right)$.
\ele
\begin{proof}
For each $j=0, \cdots, n$, we define $\mcF^j$ to be the kernel of the composite  of natural surjections $\mcF \twoheadrightarrow \mcF/h (\mcF_\varTheta^j) \twoheadrightarrow (\mcF/h (\mcF_\varTheta^j))/\mcH$, where $\mcH$ denotes the torsion subsheaf of $\mcF/h (\mcF_\varTheta^j)$.
Then, $\{ \mcF^j \}_{j=0}^n$ forms an $n$-step decreasing filtration on $\mcF$ with $\mcF^0 = \mcF$ and $\mcF^n = 0$.
We write $\mr{gr}^j \mcF_\varTheta := \mcF^j_\varTheta/\mcF_\varTheta^{j+1}$ and $\mr{gr}^j \mcF:= \mcF^j/\mcF^{j+1}$.

For  $j \in \{0, \cdots, n-1 \}$,
the morphism  $h$ induces a morphism $h^j : \mr{gr}^j \mcF_\varTheta \rightarrow \mr{gr}^j \mcF$.
The rank and degree of $\mcF$ are equal to  those of $\mcF_\varTheta$, so 
 $h^j$ is an isomorphism when restricted to $X_K$.
Also, $h^j$  is  injective, as 
 $\mr{gr}^j \mcF_\varTheta$ is a line bundle.
Since $\mr{gr}^j \mcF$ is torsion-free by the definition of $\{ \mcF^j \}_j$,
 there exists $\ell \in \mbZ_{\geq 0}$ (depending on $j$) such that the morphism $\frac{1}{t^\ell} \cdot h^j |_{X_K} : \mr{gr}^j \mcF_\varTheta |_{X_K} \rightarrow \mr{gr}^j \mcF  |_{X_K}$ (where $t$ denotes the uniformizer of $A$) restricts to a morhism $\breve{h}^j : \mr{gr}^j \mcF_\varTheta \hookrightarrow \mr{gr}^j \mcF$ whose special fiber is injective.
Observe that
\begin{align}
\mr{deg}(\mcF_\varTheta) = \sum_{j=0}^{n-1} \mr{deg} (\mr{gr}^j \mcF_\varTheta) \leq \sum_{j=0}^{n-1}\mr{deg} (\mr{gr}^j \mcF) = \mr{deg} (\mcF) = \mr{deg}(\mcF_\varTheta),
\end{align}
where the last equality follows from the fact that $h_K$ is an isomorphism.
This implies $\mr{deg} (\mr{gr}^j \mcF_\varTheta) = \mr{deg} (\mr{gr}^j \mcF)$, and  
 $\breve{h}^j$ turns out to be an isomorphism.
In particular, $\mr{gr}^j \mcF$ is a line bundle for every $j$.
It follows that
$\mcF$ is locally free, which completes the proof of the first assertion.

Moreover, the second assertion follows directly from the proof of the first assertion.
\end{proof}
\SSP


By applying the above result to various pointed stable curves, we obtain 
the following theorem,  
which is a higher-level generalization of  ~\cite[Theorem 3.38, (i)]{Wak8}
for $\mr{PGL}_n$-opers
  (or a part of ~\cite[Chap.\,II, Theorem 2.8]{Mzk2}).

\SSP
\bco \label{T50}
Suppose that $n < p$ and $R = \mbF_p$.
Then, the Deligne-Mumford stack
representing  $\mcO p_{g,r}^\ZZZ$
is   finite over $\overline{\mcM}_{g, r}$ (hence also proper over $\mbF_p$).
In particular, for every positive integer  $\N'$ with $1 \leq \N' < \N$,  the morphism of stacks   $\Pi_{\N \Rightarrow \N'} : \mcO p_{n, \N, g,r, \mbF_p}^\ZZZ \migi \mcO  p^\ZZZ_{n, \N', g,r, \mbF_p}$ 
obtained by reducing  the level of dormant $\mr{PGL}_n$-opers   to $\N'$ is finite.
\eco

\LSP
\subsection{Correspondence with projective connections} \label{SS0454}

We here describe the bijective correspondence between $\mr{GL}_n$-opers and ordinary linear differential operators.
As most  of the arguments in this section are exactly the same as those made in  ~\cite[\S\,4]{Wak8}, 
 the details of the proofs are omitted.
 Hereinafter, we shall suppose that $\N =1$ and  fix an integer $n$ with $1 < n <p$.
 Also,   set 
  $R := \mbZ/p^{\CH +1}\mbZ$.

Let  $S^\mr{log}$ be an fs log scheme whose underlying scheme $S$ is flat over $R$ and  $f^\mr{log} : X^\mr{log} \migi S^\mr{log}$ a log curve over $S^\mr{log}$.
Also, let $\mcL$ and  $\mcN$ be 
 line bundles  on $X$.

\SSP
\bde \label{Def5892}
By  an {\bf $n$-th order  (linear) differential operator} (over $S^\mr{log}$) from $\mcL$ to $\mcN$, we mean 
an $f^{-1}(\mcO_S)$-linear morphism $\mpD : \mcL \rightarrow \mcN$ locally expressed, by using some local identifications $\mcL = \mcN = \mcO_X$ and  the local basis $\{ \partial^j \left(:= \partial^{\langle j \rangle}\right)\}_{j}$ of $\mcD^{(0)}$ associated to some logarithmic coordinate of $X^\mr{log}/S^\mr{log}$, as 
\begin{align} \label{eQ1}
\mpD  : v \mapsto \mpD (v) = \sum_{j = 0}^n a_j \cdot \partial^{j} (v),
\end{align}
where $a_0, \cdots, a_n$'s are local sections of $\mcO_X$ with $a_n \neq 0$.
\ede
\SSP

Denote by 
\begin{align} \label{eQ2}
\mcD {\it iff}_{\!\leq n} (\mcL, \mcN)
\end{align} 
the Zariski sheaf on $X$ consisting of locally defined $m$-th order differential operators from $\mcL$ to $\mcN$ with $m \leq n$.
It is verified that $\mcD {\it iff}_{\!\leq n} (\mcL, \mcN)$ forms  a  subsheaf of $\mcH om_{f^{-1}(\mcO_S)} (\mcL, \mcN)$.
Moreover, the composition  with the $f^{-1}(\mcO_S)$-linear morphism $\mcN \otimes \mcD^{(0)}_{\leq n} \rightarrow \mcN$ given by $v \otimes D \mapsto v \otimes D (1)$ yields an isomorphism
\begin{align} \label{eQ3}
\mcH om_{\mcO_X} (\mcL, \mcN \otimes \mcD^{(0)}_{\leq n}) \isom \mcD {\it iff}_{\leq n} (\mcL, \mcN) \left(\subseteq \mcH om_{f^{-1}(\mcO_S)} (\mcL, \mcN) \right).
\end{align}
This isomorphism for $\mcL = \mcN = \mcO_X$ is nothing but (the restriction to $\mcD_{\leq n}^{(0)}$ of) the $\mcD^{(0)}$-module structure $\nabla^{(0)}_{X, \mr{triv}}$ on $\mcO_X$ defined in \eqref{dGG6}.

\SSP
\bde \label{Def898}
Let $\mpD$ be 
  an $n$-th order differential operator $\mcL \rightarrow \mcN$.
The  composite
\begin{align} \label{eQ7}
\Sigma_1 (\mpD) : \mcL \xrightarrow{D} \mcN  \otimes \mcD^{(0)}_{\leq n} \migisurj \left(\mcN \otimes (\mcD^{(0)}_{\leq n}/\mcD^{(0)}_{\leq n-1}) = \right) \mcN \otimes \mcT^{\otimes n},
\end{align}
where $D$ denotes the morphism 
corresponding to $\mpD$ via \eqref{eQ3},
is called the {\bf principal symbol} of $\mpD$.
Also, we shall say that $\mpD$ {\bf has unit principal symbol}
if $\mcL = \mcN \otimes \mcT^{\otimes n}$ and $\Sigma_1 (\mpD)$ coincide with the identity morphism $\mr{id}_\mcL$ of $\mcL$.
\ede
\SSP

Let us fix a line bundle $\varTheta$ on $X$,
and 
consider an $n$-th order differential operator  $\mpD^\clubsuit : \varTheta^\vee \rightarrow \Omega^{\otimes n}\otimes \varTheta^\vee$ from $\varTheta^\vee$ to $\Omega^{\otimes n} \otimes \varTheta^\vee$ with unit principal symbol.
 It defines an $\mcO_X$-linear morphism $\varTheta^\vee \rightarrow (\Omega^{\otimes n} \otimes \varTheta^\vee) \otimes \mcD_{\leq n}^{(0)}$ by  \eqref{eQ3} and moreover determines an $\mcO_X$-linear morphism
 $D^{\CC} : \mcT^{\otimes n} \otimes \BB \migi \mcD_{\leq n}^{(0)} \otimes \BB$
   via  the composite  of natural isomorphisms
\begin{align} \label{WW248}
\mcH om_{\mcO_X} (\BB^\vee, (\Omega^{\otimes n} \otimes \BB^\vee) \otimes \mcD^{(0)}_{\leq n}) &\isom \Omega^{\otimes n} \otimes \BB^\vee \otimes \mcD_{\leq n}^{(0)} \otimes \BB  \\
&\isom \mcH om_{\mcO_X} (\mcT^{\otimes n} \otimes \BB, \mcD_{\leq n}^{(0)}\otimes \BB). \notag
\end{align}
The quotient  $(\mcD^{(0)}\otimes \BB)/\langle \mr{Im}(D^{\CC})\rangle$
of $\mcD^{(0)}\otimes \BB$ by the $\mcD^{(0)}$-submodule generated by the image 
 of  $D^{\CC}$ specifies a left $\mcD^{(\N -1)}$-module.
  Since  $\Sigma_1 (\mpD^\clubsuit)=1$,  the composite
\begin{align}  \label{GL13}
 \mcF_\BB
 \xrightarrow{\mr{inclusion}}
  \mcD^{(0)} \otimes \BB
   \xrightarrow{\mr{quotient}}
   \mcD^{(0)}\otimes \BB/\langle \mr{Im}(D^{\! \CC})
   \rangle\end{align}
(cf. \eqref{e607} for the definition of $\mcF_\BB$) turns out to be  an isomorphism of  $\mcO_X$-modules.
The  $\mcD^{(0)}$-action on  $(\mcD^{(0)}\otimes \BB)/\langle \mr{Im}(D^{ \CC})\rangle$ is transposed into  
an $S^\mr{log}$-connection
\begin{align}
\label{GL14}  \mpD^{\clubsuit \Rightarrow\diamondsuit} :  \mcF_\varTheta \rightarrow \Omega \otimes \mcF_\varTheta
    \end{align}
on  $\mcF_\BB$ via this composite  isomorphism.
The collection of data
\begin{align} 
\label{GL61}
 \mpD^{\clubsuit \Rightarrow\heartsuit} := (\mcF_\BB,
  \mpD^{\clubsuit \Rightarrow\diamondsuit}, \{ \mcF^j_\BB
  \}_{j=0}^n) 
    \end{align}
 forms a $\mr{GL}_n$-oper on $X^\mr{log}/S^\mr{log}$, and 
the 
determinant
\begin{align} \label{eQ8}
\Sigma_2 (\mpD^\clubsuit) := \mr{det}(\mpD^{\clubsuit \Rightarrow \diamondsuit})
\end{align}
may be regarded as an $S^\mr{log}$-connection 
 on $\mcT^{\otimes \frac{n(n-1)}{2}} \otimes \varTheta^{\otimes n}$ via \eqref{e703}.

\SSP
\bde \label{WW288}
\begin{itemize}
\item[(i)]
We shall refer to $\Sigma_2 (\mpD^\clubsuit)$ as the {\bf subprincipal symbol} of $\mpD^{\clubsuit}$.
Also,  under the assumption that $\varTheta = \varTheta_0^{\otimes (1-n)}$ for some theta characteristic $\varTheta_0$ of $X^\mr{log}/S^\mr{log}$ (cf. Example \ref{Ex20}), we say that $\mpD^\clubsuit$ {\bf has vanishing subprincipal symbol} if  $\Sigma_2 (\mpD^\clubsuit)$ coincides with the $\nabla_{\vartheta_0}$ defined in \eqref{TTT56}.
\item[(ii)]
Suppose further that we are given an $n^{(1)}$-theta characteristic  $\vartheta := (\varTheta, \nabla_\vartheta)$  of $X^\mr{log}/S^\mr{log}$.
We say that $\mpD^\clubsuit$ is an {\bf $(n, \vartheta)$-projective connection} on $X^\mr{log}/S^\mr{log}$ if the equality $\Sigma_2 (\mpD^\clubsuit) = \nabla_\vartheta$ holds, or equivalently,  $\mpD^{\clubsuit \Rightarrow\diamondsuit}$ specifies a $(\mr{GL}_n, \vartheta)$-oper on $X^\mr{log}/S^\mr{log}$.
(In particular, if $\varTheta = \varTheta_0^{\otimes (1-n)}$ for $\varTheta_0$ as above, then $\mpD^\clubsuit$ has vanishing subprincipal symbol if and only if it is an $(n, \vartheta_0)$-projective connection, where $\vartheta_0$ is as defined in \eqref{TTT56}.)
\end{itemize}
\ede
\SSP

 In what follows, we  fix an $n^{(1)}$-theta characteristic $\vartheta := (\BB, \DMO_\vartheta)$ of $X^\mr{log}/S^\mr{log}$.
Denote by
\begin{align} \label{WW249}
\mcD i\!f\!f_{\clubsuit, \bb} : \mcS ch_{R}^{\mr{flat}}/S \migi \mcS et
\end{align}
 the contravariant functor on $\mcS ch_{R}^{\mr{flat}}/S$ which, to any $S$-scheme $s : S' \migi S$ in $\mr{Ob}(\mcS ch_{R}^{\mr{flat}}/S)$, 
assigns the set of 
$(n, s^*(\vartheta))$-projective connections on $(S' \times_S X^\mr{log})/(S' \times_S S^\mr{log})$ 
(where $s^*(-)$ denotes the result of base-changing  along $s$).

Then, the  assignment $\mpD^\clubsuit \mapsto \mpD^{\clubsuit \Rightarrow \heartsuit}$
 determines a morphism of functors
\begin{align} 
\label{GL62}  \Lambda_{\clubsuit \Rightarrow \diamondsuit, \bb} :
\mcD i\!f\!f_{\clubsuit, \bb}
\migi \mcO p_{\diamondsuit, \vartheta}. 
\end{align}
Just as in the proof  of ~\cite[Proposition 4.28]{Wak8}, this morphism is verified to be an isomorphism.
Thus, by an argument similar to the proof of ~\cite[Theorem 4.66]{Wak8},
we obtain the following assertion. 

\SSP
\bpr \label{Prop7883}
We shall set 
\begin{align} \label{eQ44}
\mcV_\varTheta := \mcH om_{\mcO_X} (\varTheta^\vee, (\Omega^{\otimes n} \otimes \varTheta^\vee) \otimes \mcD^{(0)}_{\leq (n-2)}).
\end{align}
(Similarly to ~\cite[Lemma 4.67, (ii)]{Wak8}, the direct image  $f_* (\mcV_\varTheta)$ is verified to be a vector bundle on $S$ of rank $(n^2 -1) (g-1) + \frac{(n^2 + n -2)r}{2}$.)
Then, the  functor $\mcO p_{\diamondsuit, \vartheta}$ (which is isomorphic to both $\mcO p_\heartsuit$ and $\mcO p_{\spadesuit}$ via $\Lambda_{\heartsuit \Rightarrow \spadesuit, \vartheta}$ and $\Lambda_{\diamondsuit \Rightarrow \spadesuit, \vartheta}$, respectively) forms a $f_* (\mcV_\varTheta)$-torsor. 
\epr

\LSP
\subsection{Projective connections having a full set of root functions} \label{Seer54}

In the rest of this section,
 we suppose that the log structures of $X^\mr{log}$ and $S^\mr{log}$ are trivial, or more generally the relative characteristic of $X^\mr{log}/S^\mr{log}$ is trivial  (which implies that $X/S$ is smooth and $\mcD^{(0)} = \mcD_{X/S}^{(0)}$).
Let $\mcL$, $\mcN$ be line bundles on $X$ and $\mpD$ an $n$-th order differential operator from $\mcL$ to $\mcN$.
Since 
$\mpD$ is $\mcS ol (\DMO_{X, \mr{triv}}^{(0)})$-linear, 
the kernel $\mr{Ker}(\mpD)$ forms an $\mcS ol (\DMO_{X, \mr{triv}}^{(0)})$-submodule of $\mcL$.

\SSP
\bde \label{D01fgh}
We shall say that $\mpD$   {\bf has  a full set of  root functions}
(or the differential equation associated to $\mpD$  {\bf  has a full set of solutions})  if  the $\mcS ol (\DMO_{X, \mr{triv}}^{(0)})$-module  $\mr{Ker} (\mpD)$  is locally free of rank $n$.
\ede
\SSP

The condition of having 
 a full set of root functions  is closed under base-change, so
 we obtain the subfunctor
\begin{align} \label{QQ392}
\mcD i\!f\!f_{\clubsuit, \bb}^\mr{full}
\end{align}
 of $\mcD i\!f\!f_{\clubsuit, \bb}$ consisting of
 $(n, \bb)$-projective connections
 having a full set of root functions.
 
 By an argument similar to the proof of ~\cite[Proposition 4.65]{Wak8}, we see that
  $\Lambda_{\clubsuit \Rightarrow \diamondsuit, \bb}$ restricts to an isomorphism 
 \begin{align} \label{QQ379}
 \Lambda_{\clubsuit \Rightarrow \diamondsuit, \vartheta}^\mr{full} : \mcD i\!f\!f_{\clubsuit, \bb}^\mr{full} \isom \mcO p^\ZZZ_{\diamondsuit, \bb}.
 \end{align}
(In fact, the argument in ~\cite{Wak8} can be applied by using  the equivalence (a) $\Leftrightarrow$ (c) obtained in 
Proposition \ref{P016dd}, (ii), instead of ~\cite[Proposition 4.60]{Wak8}.)
In particular, by combining with Theorem \ref{P14}, 
we obtain the following sequence consisting of isomorphisms between functors:
\begin{align}  \label{GGGjjG}
\vcenter{\xymatrix@C=46pt@R=36pt{
\mcD i\!f\!f_{\clubsuit, \bb}^\mr{full}
\ar[rd]^-{\sim}_-{\Lambda_{\clubsuit \Rightarrow \diamondsuit, \vartheta}^\mr{full}} && \mcO p^\ZZZ_\heartsuit \ar[rd]^-{\sim}_-{\Lambda_{\heartsuit \Rightarrow \spadesuit}^\ZZZ}&
\\
& \mcO p^\ZZZ_{\diamondsuit, \bb}  \ar[ru]_-{\sim}^-{\Lambda_{\diamondsuit \Rightarrow \heartsuit, \vartheta}^\ZZZ}&& \mcO p^\ZZZ_{\spadesuit}
}}
\end{align}
(cf. ~\cite[Theorem D]{Wak8}).
Also, 
the following assertion holds.

\SSP 
 \bco \label{WW456}
Let $\varTheta_0$ be a theta characteristic of $X/S$.
 Then, there exists a bijective correspondence between the following two sets:
 \begin{itemize}
 \item
  The set of isomorphism classes of dormant $\mr{PGL}_n$-opers on $X/S$;
  \item
 The set of   $n$-th order differential  operators on $\varTheta_0^{\otimes (1-n)}$ with unit principal symbol and vanishing subprincipal symbol.
   \end{itemize}
 \eco
\begin{proof}
The  assertion follows from 
the isomorphisms  $\Lambda_{\diamondsuit \Rightarrow \spadesuit, \vartheta}^\ZZZ$ (obtained in Theorem \ref{P14}) and  $\Lambda_{\clubsuit \Rightarrow \diamondsuit, \vartheta}^\mr{full}$ (constructed above), where ``$\vartheta$" is taken to be $\vartheta_0$.
\end{proof}

\vspace{10mm}
\section{Radii of dormant $\mr{PGL}_n^{(\N)}$-opers}\label{S0133}

This section discusses  the radii of  dormant $\mr{PGL}_n^{(\N)}$-opers, which  determines a sort of boundary condition to glue together  dormant $\mr{PGL}_n^{(\N)}$-opers on pointed curves along the fibers over the points of attachment.
After proving a certain factorization property of the moduli space
$\mcO p^\ZZZ_{g, r}$ 
 in accordance with  the data of radii,
we obtain the nonemptiness of that space (cf. Corollary \ref{C56}).

Let us fix 
  a pair of nonnegative integers $(g,r)$ with $2g-2 +r >0$. 
  Also, fix  a pair of nonnegative integers $(\CH, \M)$  with  $\CH=0$ or $\M =0$.

\LSP
\subsection{Monodromy/Exponent at a marked point} \label{SS045}

Let $S$ be a flat scheme over $\mbZ/p^{\CH +1}\mbZ$, and  let $\msX := (f: X \migi S, \{ \sigma_i \}_i)$ be  an $r$-pointed stable curve of genus $g$ over $S$.
In particular, we obtain a log curve $X^\mr{log}/S^\mr{log}$.
For simplicity, we write  $\Omega := \Omega_{X^\mr{log}/S^\mr{log}}$, $\mcT_{} := \mcT_{X^\mr{log}/S^\mr{log}}$, and $\mcD^{(\M)} := \mcD_{X^\mr{log}/S^\mr{log}}^{(\M)}$.

Suppose that $r >0$, and  
  fix $i \in \{ 1, \cdots, r \}$.
Note that there exists a canonical isomorphism of $\mcO_S$-algebras
 \begin{align} \label{e66}
\sigma^*_{i} ({^L}\mcD^{(\M)}) \isom \mcB_S 
\end{align}
(cf. \eqref{YY11} for the definition of $\mcB_S$)
determined by the following condition:
if $\{ \partial^{\langle j \rangle} \}_{j \in \mbZ_{\geq 0}}$ is the  local basis  of  $\mcD^{(\M)}$ associated, in the manner of  \S\,\ref{SS04f4},  to any local section $t$ defining the closed subscheme $\mr{Im}(\sigma_i)$ (which determines a logarithmic coordinate on $X^\mr{log}/S^\mr{log}$ around $\mr{Im}(\sigma_i)$),
then $\sigma_i^{-1}(\partial^{\langle j \rangle})$ is mapped to $\partial_\mcB^{\langle j \rangle}$ via \eqref{e66} for every $j$.

Let us take 
a $\mcD^{(\M)}$-module $(\mcF, \DMO)$.
 The $\mcD^{(\M)}$-module structure  $\DMO$ induces a $\sigma_i^* (\mcD^{(\M)})$-action $\sigma_i^* (\DMO)$ on
 $\sigma^*_i (\mcF)$; this action gives 
 the composite
\begin{align} \label{e445}
\mu_i (\DMO) : \mcB_S \xrightarrow{\eqref{e66}^{-1}}  \sigma^*_{i} ({^L}\mcD^{(\M)}) \xrightarrow{\sigma^*_i (\DMO)} \mcE nd_{\mcO_S} (\sigma^*_i (\mcF)).
\end{align}
In particular, we obtain 
\begin{align} \label{YY121}
\mu_i (\DMO)^{\langle \bullet \rangle} := (\mu_i (\DMO)^{\langle 1 \rangle}, \mu_2 (\DMO)^{\langle p \rangle}, \cdots, \mu_i (\DMO)^{\langle p^\M \rangle}) \in \mr{End}_{\mcO_S} (\sigma_i^* (\mcF))^{\oplus (\M +1)}.
\end{align}
(cf. \eqref{e187}).

\SSP
\bde \label{D19}
We shall refer to $\mu_i (\DMO)$ and  $\mu_i (\DMO)^{\langle \bullet \rangle}$ as  the {\bf monodromy operator} of $\DMO$ at $\sigma_i$.
Also, for each $a = 0, \cdots, \M$, we shall refer to $\mu_i (\DMO)^{\langle p^a \rangle}$ as the {\bf $a$-th monodromy operator} of $\DMO$ at $\sigma_i$.
\ede
\SSP

Let us take an open subscheme $U$ of $X$ meeting   $\mr{Im}(\sigma_i)$,  and take 
a section $t \in \mcO_X$  on $U$  defining   the closed subscheme $\mr{Im}(\sigma_i) \cap U$ of $U$.
The $t$-adic formal completion $\widehat{U}_t$ of $U$ may be identified with $U_\oslash$ (cf. \eqref{YY124}).
Under this  identification $\widehat{U}_t = U_\oslash$,
the restriction $\mcD^{(\M)} |_{\widehat{U}_t}$ of $\mcD^{(\M)}$  to $\widehat{U}_t$  may be considered as   $\mcD_\oslash^{(\M)}$ (cf. \eqref{dE91}).
The monodromy operator $\mu_i (\DMO)$ of $\DMO$ at $\sigma_i$ coincides with
that of 
the $\mcD_\oslash^{(\M)}$-module structure  on $\mcF |_{\widehat{U}_t}$ (in the sense of Definition \ref{D98})
obtained by restricting $\DMO$.

Moreover, we suppose that $(\mcF, \DMO)$ is dormant, $\mcF$ is a vector bundle of rank $n >0$, and 
  $S$ is connected.
According to Proposition-Definitions \ref{P022}  and \ref{P724},
there exists a well-defined multiset
\begin{align} \label{YY130}
e_i (\DMO) := [\EX_1, \cdots, \EX_n]
\end{align}
of elements in $\mbZ/p^{\M +1}\mbZ$ 
satisfying the following condition:
for 
any local section $t$ as above,
the $\mcD_\oslash^{(\M)}$-module $(\mcF, \DMO) |_{\widehat{U}_t}$ obtained by restricting $(\mcF, \DMO)$ to $\widehat{U}_t \left( = U_\oslash \right)$ has  exponent $e_i (\DMO)$.
The following definition generalizes  ~\cite[Definition 4.1.3]{Wak9}.

\SSP
\bde \label{D30}
\begin{itemize}
\item[(i)]
With  the above notation,
we shall refer to $e_i (\DMO)$ as  
the {\bf (local) exponent} of $\DMO$ at the marked point $\sigma_i$.
\item[(ii)]
We shall set $\N := \M +1$.
Also, let $\msF^\heartsuit := (\mcF, \DMO, \{ \mcF^j \}_j)$ be a dormant $\mr{GL}_n^{(\N)}$-oper   on $\msX$.
Then,   the multiset 
\begin{align} \label{YY567}
e_i (\msF^\heartsuit) := e_i (\DMO)
\end{align}
  is called the {\bf (local) exponent} of $\msF^\heartsuit$ at $\sigma_i$.
\end{itemize}
\ede
\SSP

The following assertion will be applied in the proofs of Propositions \ref{P39} and \ref{P00245}.

\SSP
\bpr \label{NN77}
Let us keep the above notation.
Also, let $n$ be an integer with $1 < n \leq p^\N$ and $\msF^\heartsuit := (\mcF, \DMO, \{ \mcF^j \}_j)$ a dormant $\mr{GL}_n^{(\N)}$-oper on $\msX$.
Suppose  that we are given  an isomorphism of $\mcD_\oslash^{(\N -1)}$-modules
\begin{align}
h : (\mcF, \DMO) |_{\widehat{U}_t} \isom \bigoplus_{j =1}^{n} \msO_{\oslash, \EX_j}^{(\N -1)}
\end{align}
 for some $d_1, \cdots, d_n \in \mbZ/p^\N \mbZ$.
(In particular,  the exponent  $e_i (\msF^\heartsuit)$ of $\msF^\heartsuit$ at $\sigma_i$ coincides with $[d_1, \cdots, d_n]$.)
Then, the following assertions hold:
\begin{itemize}
\item[(i)]
Let $v$ be a global section of 
 the line subbundle $\mcF^{n-1}|_{\widehat{U}_t} \left(\subseteq \mcF |_{\widehat{U}_t} \right)$, and  write
\begin{align}
(v_1 (t), \cdots, v_n (t)) := h (v) \in H^0 (\widehat{U}_t, \mcO_S [\![t]\!])^{\oplus n}.
\end{align}
Suppose that $v$ formally generates   $\mcF^{n-1}|_{\widehat{U}_t}$.
Then, each section $v_j (t)$ ($j =1, \cdots, n$) belongs to $\mcO_S [\![t]\!]^{\times}$, or equivalently $v_j (0) \in H^0 (S, \mcO_S^\times)$.
\item[(ii)]
The elements   $d_1, \cdots, d_n$ are mutually distinct.
Moreover, this fact implies that, if we are given  global sections $v_j (t) \in H^0 (\widehat{U}_t, \mcO_S [\![t]\!]^\times)$ for $i=1, \cdots, n$,
then $h^{-1} (v_1 (t), \cdots, v_n (t))$ formally generates the $\mcD_\oslash^{(\N -1)}$-module $(\mcF, \nabla)$.
\end{itemize}
\epr
\begin{proof}
Both assertions (i) and  (ii) follow from arguments entirely similar to the proof of ~\cite[Proposition 3.4.1]{Wak9}, so we omit the details of the proofs.
(The case of $\N =1$ can also be found in ~\cite[Proposition 8.4]{Wak8}.)
\end{proof}

\LSP
\subsection{Gluing $\mcD^{(\M)}$-modules} \label{SS1070}

We recall the definition of a semi-graph in the sense of  ~\cite[Definition 7.1]{Wak8}. 

\SSP
\bde \label{Defui3}
A {\bf  semi-graph} is a triple 
\begin{align} \label{eQ332f}
\GR := (V, E, \zeta)
\end{align}
 consisting of the following data:
\begin{itemize}
\item
a set $V$, whose elements are called {\it vertices};
\item
a set $E$, whose elements are called {\it edges}, consisting of sets with cardinality $2$ such that $e \neq e' \in E$ implies $e \cap e' = \emptyset$;
\item
a map $\zeta : \bigsqcup_{e \in E} e \migi V\sqcup \{ \circledast \}$ (where ``$\circledast$" denotes an abstract symbol with $\circledast \notin V$), called a {\it coincidence map}, such that $\zeta (e) \neq \{ \circledast \}$ for any $e \in E$.
\end{itemize}
Each edge $e \in E$ with $\circledast \in \zeta (e)$ (resp., $\circledast \notin \zeta (e)$) is called {\bf open} (resp., {\bf closed}), and write $E^{\mr{op}}$ (resp., $E^{\mr{cl}}$) for the set of open (resp., closed) edges in $E$.
(In particular, we have $E = E^{\mr{op}}\sqcup E^{\mr{cl}}$.)
Also, for each edge $e \in E$, we will refer to any element $b$ of $e$ 
  as a {\bf branch}  of  $e$.
Also, for each 
 $v\in V \sqcup \{ \circledast\}$, we write
 \begin{align}
 B_v  :=  \zeta^{-1} (\{ v\}).
 \end{align}
\ede
\SSP

Moreover, we recall some notions related to semi-graphs.

\SSP
\bde[cf. ~\cite{Wak8}, Definition 7.2] \label{Def3320}
Fix a semi-graph  $\GR := (V, E, \zeta)$.
\begin{itemize}
\item[(i)]
We  say that $\GR$ is {\bf finite} if both $V$ and $E$ are finite.
\item[(ii)]
We say that  $\GR$ is  {\bf connected} if for any two distinct vertices $u, v \in V$, there exists a sequence of edges $e_1, \cdots, e_\ell \in E$ ($\ell \geq 1$) such that $u \in \zeta (e_1)$, $v \in \zeta (e_\ell)$, and $\zeta (e_j) \cap \zeta (e_{j+1}) \neq \emptyset$ for any $j=1, \cdots, \ell -1$. 
 \item[(iii)]
 We  say that  $\GR$ is {\bf trivalent} (or {\bf $3$-regular}) if, for  any vertex $v \in V$, the cardinality of  $B_v$ is precisely $3$.
 \item[(iv)]
Suppose further that  $\GR$ is trivalent.
Then,  we 
shall set
\begin{align} \label{eeQQ47}
g (\GR) := 1  - \sharp (V) + \sharp (E) - \sharp (B_{\circledast}), \hspace{10mm}
r (\GR) := \sharp (B_{\circledast}).
\end{align}
Also, for a pair of nonnegative integer $(g', r')$, we say that
$\GR$  is {\bf of type $(g',r')$} if the equalities $g (\GR) = g'$ and $r (\GR) = r'$ hold.
\end{itemize}
\ede
\SSP

\bde[cf. ~\cite{Wak8}, Definition 7.4] \label{Def112}
\begin{itemize}
\item[(i)]
By {\bf clutching data}, we mean a collection of data
\begin{align} \label{eQQer}
\mbG := (\GR, \{ (g_j, r_j)\}_{j=1}^J, \{ \lambda_j\}_{j=1}^J)
\end{align}
where
\begin{itemize}
\item
$\GR := (V, E, \zeta)$ is a finite connected semi-graph with $J$ vertices $V := \{ v_1, \cdots, v_J \}$ (where $J \in \mbZ_{>0}$), numbered $1$  through $J$;
\item
$(g_j, r_j)$ ($j=1, \cdots, J$) is a pair of nonnegative integers with $2g_j - 2 + r_j > 0$;
\item
$\lambda_j$ ($j=1, \cdots, J$) denotes a bijection of sets $B_{v_j} \isom \{1, \cdots, r_j \}$.
\end{itemize}
\item[(ii)]
Let $\mbG$ be clutching data as in \eqref{eQQer} and $(g', r')$ a pair of nonnegative integers.
We say that $\mbG$ is {\bf of type $(g', r')$} if the equalities $g' = g (\GR) + \sum_{j=1}^J g_j$ and $r' = r (\GR)$ hold.
\item[(iii)]
Let $\mbG$ be clutching data as in \eqref{eQQer}.
We say that $\mbG$ is {\bf trivalent} (or {\bf $3$-regular})
if $(g_j, r_j)=(0, 3)$ for every $j=1, \cdots, J$.
Since trivalent clutching data is determined by ``$\GR$'' and ``$\{ \lambda_j \}_j$" (and satisfies $J = 2g'- 2 +r'$ when $\mbG$ is of type $(g', r')$),
we often indicate it by $(\GR, \{ \lambda_j \}_{j=1}^J)$ (or $(\GR, \{ \lambda_j \}_{j=1}^{2g' -2 +r'})$).
\end{itemize}
\ede
\SSP

\begin{rema}[The natural ordering of open edges] \label{Rem7878}
Let   $\mbG := (\GR, \{ (g_j, r_j)\}_{j=1}^J, \{ \lambda_j\}_{j=1}^J)$  be clutching data  of type $(g,r)$.
We order the elements of the set $B_\circledast$ {\it lexicographically}, i.e., a branch incident  to $v_j$ comes before a branch incident to $v_{j'}$ if $j < j'$, and among branches incident to a common $v_j$, we take the ordering induced by $\lambda_j$.
According to the resulting ordering, we occasionally  write $B_\circledast := \{ b_{\circledast, 1}, \cdots, b_{\circledast, r} \}$.
\end{rema}
\SSP

Let us fix   clutching data   $\mbG := (\GR, \{ (g_j, r_j)\}_{j=1}^J, \{ \lambda_j\}_{j=1}^J)$   of type $(g,r)$.
For each $j = 1, \cdots, J$, suppose that we are given  an $r_j$-pointed stable curve  ${\msX_j} := (X_j /S_j, \{ \sigma_{j, i} \}^{r_j}_{i=1})$  of genus $g_j$  over $S_j := S$.
Also, suppose that $\msX$ may be obtained by gluing together the $\msX_j$'s 
by means of $\mbG$ in the manner of ~\cite[\S\,7.2.1]{Wak8}.
(The discussions in ~\cite{Wak8} only deal with schemes over a {\it field}, but the gluing construction mentioned there  can be applied to a general base space $S$.)
In particular, there exist natural morphisms
\begin{align} \label{YY140}
\mr{Clut}_j : X_j \migi X
\end{align}
 ($j =1, \cdots, J$)  of $S \left(=S_j\right)$-schemes.

Recall  that, for each $j$,   
the scheme $S_j$ (resp., $X_j$) is equipped with  the   log structure  pulled-back from  $\overline{\mcM}_{g_j,r_j}^\mr{log}$ (resp., 
the universal family of log curves over $\overline{\mcM}_{g_j,r_j}^\mr{log}$)
  via the classifying morphism of ${\msX_j}$.
Denote by  $S_j^{\mr{log}}$ (resp., $X^{\mr{log}}_j$) the resulting log scheme.
For simplicity, we shall write 
 $\mcT_j := \mcT_{X_j^\mr{log}/S_j^\mr{log}}$
 and 
 $\mcD_j^{(\M)} := \mcD_{X_j^\mr{log}/S_j^\mr{log}}^{(\M)}$.

 On the other hand, write $X^{\mr{log} |_X}_j$ for the log scheme obtained by equipping $X_j$ with the log structure pulled-back from  $X^\mr{log}$ via $\mr{Clut}_j$.
The structure morphism $X_j \migi S_j \left(=S \right)$ of  $X_j/S_j$ extends to a morphism $X_j^{\mr{log} |_X} \migi S^{\mr{log}}$ of log schemes.
Moreover, the natural morphisms $S^\mr{log} \migi S \left(=S_j \right)$ and  $X_j^{\mr{log} |_X} \migi X_j$
extend to
 morphisms $S^\mr{log} \migi S_j^{\mr{log}}$ and   $X_j^{\mr{log} |_X} \migi X_j^{\mr{log}}$,  respectively,  which make the following square diagram commute:
\begin{align}  \label{fff01}
\vcenter{\xymatrix@C=46pt@R=36pt{
 X_j^{\mr{log} |_X} \ar[r] \ar[d]_-{\mr{projection}} &X_j^{\mr{log}}  \ar[d]^-{\mr{projection}}\\
S^{\mr{log}} \ar[r]&S_j^{\mr{log}}.
}}
\end{align}
This diagram induces  a morphism 
$X_j^{\mr{log} |_X} \migi X_j^{\mr{log}} \times_{S_j^{\mr{log}}} S^{\mr{log}}$
   over $S^{\mr{log}}$ 
   whose
underlying morphism of  $S$-schemes  coincides with the identity morphism of $X_j$.
The differential of this morphism  yields  isomorphisms of $\mcO_{X_j}$-modules
 \begin{align} \label{fff02}
 \delta_{j}^\mcT : \mcT_{X_j^{\mr{log} |_X}/S^\mr{log}} \left(= \mr{Clut}_{j}^*(\mcT)\right)
  \isom  \mcT_j,
  \hspace{5mm}
 \delta_{j}^\mcD : {^L}\mcD^{(\M)}_{X_j^{\mr{log} |_X}/S^\mr{log}} \left(= \mr{Clut}_{j}^*({^L}\mcD^{(\M)})\right)
  \isom  {^L}\mcD_j^{(\M)}.
 \end{align}

 Now,  
 let  $\mcF$ be an $\mcO_X$-module.
  For each $j = 1, \cdots, J$, denote by $\mcF_j$  the pull-back  of $\mcF$ to $X_j$.

\SSP
 \bde \label{QG2}
 A  {\bf $\mbG$-$\mcD^{(\M)}$-module structure}  on $\mcF$ is a collection
\begin{align}
\{ \DMO_j \}_{j=1}^J,
\end{align}
where
    each $\DMO_j$ denotes  a $\mcD_{j}^{(\M)}$-module structure  on $\mcF_j$,
such that, for  any closed edge  $\{ b, b' \} \in  E^{\mr{cl}}$ of $\GR$ with $\zeta (b) = v_{j}$,  $\zeta (b') = v_{j'}$ ($j, j' \in \{1, \cdots, J \}$), 
 the equality 
 \begin{align}
 \mu_{\lambda_j (b)}(\DMO_{j}) \circ  \mr{sw}= \mu_{\lambda_{j'}(b')}(\DMO_{j'})
  \ \left( \Longleftrightarrow  \mr{sw}^\bullet_{\mcO_S} ( \mu_{\lambda_j (b)}(\DMO_{j}))^{\langle \bullet \rangle} =  \mu_{\lambda_{j'}(b')}(\DMO_{j'})^{\langle \bullet \rangle} \ \text{by \eqref{dE110}}\right) 
 \end{align}
  holds under the natural identification $\sigma_{j, \lambda_j (b)}^*(\mcF_{j}) =  \sigma_{j', \lambda_{j'} (b')}^*(\mcF_{j'})$.
 \ede
\SSP

 Let  
 $\DMO : {^L}\mcD^{(\M)} \migi \mcE nd_{\mcO_S}(\mcF)$ be a $\mcD^{(\M)}$-module structure 
 on $\mcF$.
For each $j =1, \cdots, J$, 
the composite
\begin{align} \label{YY153}
\DMO |_{j} : {^L}\mcD_j \xrightarrow{(\delta^\mcD_{j})^{-1}} {^L}\mcD^{(\M)}_{X_j^{\mr{log}|_X}/S^\mr{log}} \xrightarrow{\mr{Clut}^*_j (\nabla)} \mcE nd_{\mcO_S}(\mcF_j)
\end{align}
   specifies 
   a $\mcD_{j}^{(\M)}$-module structure 
    on $\mcF_j$. 
    We shall  refer to $\DMO |_j$ (resp., $(\mcF_j, \DMO |_j)$) as the {\bf restriction} of $\DMO$ (resp., $(\mcF, \DMO)$) to $\mcF_j$ (resp., $X_j$).
 
 \SSP
\bpr \label{YY157}
Let $\DMO$ be a 
$\mcD^{(\M)}$-module structure 
 on $\mcF$.
 \begin{itemize}
 \item[(i)]
The collection  of its restrictions $\{ \DMO |_j \}_{j=1}^J$ specifies
a $\mbG$-$\mcD^{(\M)}$-module structure 
on $\mcF$.
 Moreover, the resulting assignment $\DMO \mapsto \{ \DMO  |_j \}_{j=1}^J$ determines a bijection of sets
\begin{align}
 \begin{pmatrix}
\text{the set of } \\
\text{$\mcD^{(\M)}$-module structures on $\mcF$} 
\end{pmatrix}
\isom 
\begin{pmatrix}
\text{the set of } \\
\text{$\mbG$-$\mcD^{(\M)}$-module structures on $\mcF$} 
\end{pmatrix}.
\end{align}
\item[(ii)]
$(\mcF, \DMO)$ is dormant if and only if $(\mcF_j, \DMO |_j)$ is dormant for every $j =1, \cdots , J$.
Moreover,  for each $j$, 
the formation of restrictions $(\mcF, \DMO) \mapsto (\mcF_j, \DMO |_j)$ commutes with diagonal reduction.
\end{itemize}
 \epr
\begin{proof}
Assertion (i) follows from Proposition  \ref{P27}, (i) and (ii) (cf. the discussion in the proof of ~\cite[Proposition 7.6]{Wak8}).

Assertion (ii) for $\CH =0$ follows from the commutativity of  the following square diagram defined  for every $j =1, \cdots, J$:
\begin{align} \label{Eh5007}
\vcenter{\xymatrix@C=76pt@R=36pt{
F_{X_j}^{(\M +1)*}(\mcT_{X_j^{\mr{log}|_X}/S^\mr{log}})\ar[r]^-{\mr{Clut}_j^*(\psi_{X^\mr{log}/S^\mr{log}})} \ar[d]^-{\wr}_-{F_{X_j}^*(\delta^\mcT_{j})}& 
\mcD^{(\M)}_{X_j^{\mr{log}|_X}/S^\mr{log}}
 \ar[d]_-{\wr}^-{\delta^\mcD_{j}}\\
F_{X_j}^{(\M +1)*}(\mcT_j) \ar[r]_-{\psi_{X_j^\mr{log}/S^\mr{log}}} &  \mcD_j^{(\M)}.
}}
\end{align}
Also, the assertion for  $\M = 0$ can be proved by applying  the assertion  for $\CH =0$   together with the definition of the functor ${^{\Diag}}\!\!(-)$.
\end{proof}

\LSP
\subsection{Definition of radius} \label{SS044}

Let  $n$ be  an integer with $1 < n < p$.
We shall 
discuss the notion  of radius associated to a dormant $\mr{PGL}_n$-oper  of finite level on a pointed stable curve
(cf. ~\cite[Definition 4.3.2]{Wak9} for the case of pointed smooth curves).
 The level $1$ case  was already  defined  in  ~\cite[Definition 2.32]{Wak8}
 under the identification between   elements in     ``$\mfS_n \backslash \mbF_p^{\times n} /\Delta$" (cf. \eqref{e847} below) and   certain $\mbF_p$-rational points in  the adjoint quotient of the Lie algebra $\mfs \mfl_n$. 

For convenience, we shall set $\N := \M +1$ and 
  $\CL := \CH + \N$. 
Denote by $\Delta$ the image of the diagonal embedding  $\mbZ/p^\CL \mbZ \migiincl (\mbZ/p^\CL \mbZ)^{\times n}$, which is a group homomorphism.
In particular,  we obtain the quotient  set $(\mbZ/p^\CL \mbZ)^{\times n}/\Delta$.
Note that the set $(\mbZ/p^\CL \mbZ)^{\times n}$ is equipped with the action of the symmetric group of $n$ letters $\mfS_n$  by permutation;
this action induces a well-defined $\mfS_n$-action on $(\mbZ/p^\CL \mbZ)^{\times n}/\Delta$.
Hence, we obtain the quotient sets 
\begin{align} \label{e847}
\mfS_n \backslash (\mbZ/p^\CL \mbZ)^{\times n}, \hspace{5mm}
\mfS_n \backslash (\mbZ/p^\CL \mbZ)^{\times n}/\Delta,
\end{align}
 and moreover, obtain the natural projection
\begin{align} \label{ssak}
\pi_\Delta : \mfS_n \backslash (\mbZ/p^\CL \mbZ)^{\times n} \migisurj \mfS_n \backslash (\mbZ/p^\CL \mbZ)^{\times n}/\Delta.
\end{align}

Each element of $\mfS_n \backslash (\mbZ/p^\CL \mbZ)^{\times n}$ may be regarded as a multiset of $\mbZ/p^\CL \mbZ$ whose cardinality equals $n$.
Also, we occasionally identify $\mfS_n \backslash (\mbZ/p^\CL \mbZ)^{\times n}$ (resp., $\mfS_n \backslash (\mbZ/p^\CL \mbZ)^{\times n}/\Delta$)  with the scheme defined as the disjoint union of copies of $\mr{Spec}(\mbZ)$ indexed by  the set $\mfS_n \backslash (\mbZ/p^\CL \mbZ)^{\times n}$ (resp., $\mfS_n \backslash (\mbZ/p^\CL \mbZ)^{\times n}/\Delta$).

\SSP
\begin{rema}[Radii for $n=2$] \label{ewgk}
Denote by $(\mbZ/p^\CL \mbZ)/\{ \pm 1 \}$ the set of equivalence classes of elements $a \in \mbZ/p^\CL \mbZ$, in which 
 $a$ and $-a$ are identified.
 Then, the assignment $a \mapsto [a, -a]$ determines a well-defined bijection
\begin{align} \label{wpaid}
(\mbZ/p^\CL \mbZ)/\{ \pm 1 \} \isom \mfS_2 \backslash (\mbZ/p^\CL \mbZ)^{\times 2}/\Delta.
\end{align}
We occasionally identify $\mfS_2 \backslash (\mbZ/p^\CL \mbZ)^{\times 2}/\Delta$ with  $(\mbZ/p^\CL \mbZ)/\{ \pm 1 \}$
by using this bijection.
Under this identification, the notion of radius introduced below coincides with the classical notion of radius for  torally indigenous bundles
 in the sense of
 ~\cite[Chap.\,I, Definition 4.1]{Mzk2} (see also ~\cite[Remark 4.3.3]{Wak9}).
\end{rema}
\SSP

Let  us take   a dormant $\mr{PGL}_n^{(\N)}$-oper $\msE^\spadesuit$ on $\msX$.
According to Corollary \ref{UU39}, (i), we can find  a dormant $\mr{GL}_n^{(\N)}$-oper $\msF^\heartsuit := (\mcF, \DMO, \{ \mcF^j \}_j)$ on $\msX$ with $\msF^{\heartsuit \Rightarrow \spadesuit} \cong \msE^\spadesuit$.
For each $i=1, \cdots, r$,
the exponent of  $\msF^\heartsuit$ at $\sigma_i$ determines an element
\begin{align}
e_i (\msF^\heartsuit) := [\EX_{i, 1}, \cdots, \EX_{i, n}] \in  (\mfS_n \backslash (\mbZ/p^\CL \mbZ)^{\times n}) (S),
\end{align}
where  $\EX_{i, 1}, \cdots, \EX_{i, n} \in (\mbZ/p^\CL \mbZ)^{\times n} (S)$.
Its  image  
\begin{align} \label{YY161}
\rho_i (\msE^\spadesuit) := \pi_\Delta (e_i (\msF^\heartsuit)) \in (\mfS_n \backslash (\mbZ/p^\CL \mbZ)^{\times n}/\Delta)(S)
\end{align}
 via $\pi_\Delta$ depends only on the equivalence class $[\msF^\heartsuit]$, i.e., the isomorphism class of $\msE^\spadesuit$.
In fact, if $\msL := (\mcL, \DMO_\mcL)$ is a dormant invertible  $\mcD^{(\N-1)}$-module whose exponent at $\sigma_i$ is $c_i \in (\mbZ/p^\CL \mbZ) (S)$,
then the exponent of 
$\msF_{\otimes \msL}^\heartsuit$
 at $\sigma_i$ coincides with
$[\EX_{i, 1}+c_i, \cdots, \EX_{i, n}+ c_i]$.
Since  $\pi_\Delta ([\EX_{i, 1}, \cdots, \EX_{i, n}]) = \pi_\Delta ([\EX_{i, 1}+c_i, \cdots, \EX_{i, n}+ c_i])$, we see that
$\rho_i (\msE^\spadesuit)$ is a well-defined element  associated to $\msE^\spadesuit$.

\SSP
\bde \label{epaddd}
\begin{itemize}
\item[(i)]
With   the above notation, 
we shall refer to $\rho_{ i}(\msE^\spadesuit)$ as the {\bf radius} of $\msE^\spadesuit$ at $\sigma_i$.
\item[(ii)]
Let $\rho := (\rho_i )_{i=1}^r$ be an element of $(\mfS_n \backslash (\mbZ/p^\CL \mbZ)^{\times n}/\Delta)^{\times r}$.
We shall say that a dormant $\mr{PGL}_n^{(\N)}$-oper  $\msE^\spadesuit$ is {\bf of radii $\rho$} if $\rho_i = \rho_{i} (\msE^\spadesuit)$ for every $i =1, \cdots, r$.
When $r =0$, we will refer to any dormant $\mr{PGL}_n^{(\N)}$-oper as being {\bf of radii $\emptyset$}.
\end{itemize}
\ede
\SSP

The following two propositions describe  some basic properties of the  radius of a dormant $\mr{PGL}_n^{(\N)}$-oper.

\SSP
\bpr \label{YY499}
Let $\msE^\spadesuit$ be a dormant $\mr{PGL}_n^{(\N)}$-oper on $\msX$ of radii $\rho \in (\mfS_n \backslash (\mbZ/p^\CL \mbZ)^{\times n}/\Delta)^{\times r}$.
For each integer $\CL'$  with $1 \leq  \CL' \leq \CL$, we denote by 
\begin{align} \label{NN8121}
q_{\CL \Rightarrow \CL'} : (\mfS_n \backslash (\mbZ/p^\CL \mbZ)^{\times n}/\Delta)^{\times r} \migisurj (\mfS_n \backslash (\mbZ/p^{\CL'} \mbZ)^{\times n}/\Delta)^{\times r}
\end{align}
the surjection induced from the natural quotient $\mbZ/p^\CL \mbZ \migisurj \mbZ/p^{\CL'} \mbZ$.
\begin{itemize}
\item[(i)]
Suppose further that $\CH =0$ (hence $\N = \CL$).
Then, for each positive integer $\N' \leq \N$, the dormant $\mr{PGL}_n^{(\N')}$-oper on $\msX$ obtained  by reducing the level of $\msE^\spadesuit$ to  $\N'$ is of radii $q_{\N \Rightarrow \N'}(\rho)$.
\item[(ii)]
Suppose further that $\N =1$ (hence $\CH +1= \CL$).
Then, for each nonnegative integer $\CH' \leq \CH$,  
the dormant $\mr{PGL}_n$-oper on $\msX_{\CH'}$ (:= the reduction of $\msX$ modulo $p^{\CH' +1}$) obtained  by reducing  $\msE^\spadesuit$  modulo $p^{\CH' +1}$  is of radii $q_{\CH +1 \Rightarrow \CH' +1}(\rho)$.
\end{itemize}
\epr
\begin{proof}
Assertion (i) follows from Proposition \ref{YY30}, (iii).
Also, assertion (ii) follows from the definition of the radius of a dormant $\mr{PGL}_n$-oper.
\end{proof}
\SSP

\bpr \label{WWed3}
We shall  fix $i \in \{1, \cdots, r \}$,  $a_0\in\mbZ/p^\CL \mbZ$, and $\rho_0 \in \mfS_n \backslash (\mbZ/p^\CL \mbZ)^{\times n}/\Delta$.
Note that (since $n <p$) there exists a unique 
multiset $e_0 := [d_1, \cdots, d_n] \in \mfS_n \backslash (\mbZ/p^\CL \mbZ)^{\times n}$ with $\pi_\Delta (e_0) = \rho_0$ and $\sum_{i=1}^n d_i = a_0$.
Also, let $\vartheta := (\varTheta, \DMO_{\vartheta})$ be a dormant $n^{(\N)}$-theta characteristic of $X^\mr{log}/S^\mr{log}$ such that the exponent of $\DMO_{\vartheta}$ at $\sigma_i$ is $a_0$,  and let  $\DMO^\diamondsuit$ be a dormant $(\mr{GL}_n^{(\N)}, \vartheta)$-oper on $\msX$.
Denote by $\msE^\spadesuit$ the dormant $\mr{PGL}_n^{(\N)}$-oper corresponding to $\DMO^\diamondsuit$ via $\Lambda_{\diamondsuit \Rightarrow \spadesuit, \vartheta}^\ZZZ$.
Then, the radius of $\msE^\spadesuit$ at $\sigma_i$ is $\rho_0$ if and only if the exponent of  $\DMO^{\diamondsuit \Rightarrow \heartsuit}$ at $\sigma_i$ is $e_0$.
\epr
\begin{proof}
The assertion follows from the various definitions involved.
\end{proof}
\SSP

Denote by 
$\widetilde{\Xi}_{n, \CL}$
 the subset of $\mfS_n \backslash (\mbZ/p^\CL \mbZ)^{\times n}$ consisting of multisets $[\EX_1, \cdots, \EX_n]$ 
 such that the elements 
 $\EX_{1[0]}, \cdots, \EX_{n[0]}$ of $\mbF_p$ determined by $\EX_1, \cdots, \EX_n$
 are mutually distinct.
 (In particular, $\widetilde{\Xi}_{n, \CL}$ may be identified with a set of subsets of $\mbZ/p^\CL \mbZ$ with cardinality $n$.)
Also,  we denote 
 the image of $\widetilde{\Xi}_{n, \CL}$ via the projection $\pi_\Delta : \mfS_n \backslash (\mbZ/p^\CL \mbZ)^{\times n} \migisurj \mfS_n \backslash (\mbZ/p^\CL \mbZ)^{\times n}/\Delta$ by 
\begin{align} \label{e401YY}
\Xi_{n, \CL}  \ \left(\subseteq \mfS_n \backslash (\mbZ/p^\CL \mbZ)^{\times n}/\Delta  \right).
\end{align}
Under the identification $(\mbZ/p^\CL \mbZ)/\{ \pm 1 \} = \mfS_2 \backslash (\mbZ/p^\CL \mbZ)^{\times 2}/\Delta$ given by \eqref{wpaid},
$\Xi_{2, \CL}$ may be identified with the subset $(\mbZ/p^\CL \mbZ)^\times /\{\pm 1 \}$ of $(\mbZ/p^\CL \mbZ)/\{ \pm 1 \}$.

\SSP
\bpr \label{P39}
Let $\rho$  be 
 an element of  $ \mfS_n \backslash (\mbZ/p^\CL \mbZ)^{\times n}/\Delta$ such that
 there exists a dormant $\mr{PGL}_n^{(\N)}$-oper   on $\msX$ whose radius at $\sigma_i$ (for some $i \in \{1, \cdots, r\}$) coincides with $\rho$.
Then, $\rho$ belongs to $\Xi_{n, \CL}$.
\epr
\begin{proof}
Let us choose a dormant $\mr{PGL}_n^{(\N)}$-oper $\msE^\spadesuit$ on $\msX$ whose radius at $\sigma_i$ coincides with $\rho$.
There exists   a dormant $\mr{GL}_n^{(\N)}$-oper $\msF^\heartsuit := (\mcF, \DMO, \{ \mcF^j\}_j)$ on $\msX$ with $\msF^{\heartsuit \Rightarrow \spadesuit}\cong \msE^\spadesuit$ (cf. Corollary \ref{UU39}, (i)).
The exponent $e_i (\msF^\heartsuit)$ of $\msF^\heartsuit$ at $\sigma_i$ can be described as a multiset $[\EX_1, \cdots, \EX_n]$ ($\EX_1, \cdots, \EX_n \in \mbZ/p^\CL \mbZ$).
Now, let us consider the case where  $\CH = 0$, or equivalently $\CL = \N$ (resp., $\N =1$, or equivalently $\CL = \CH +1$).
By  
Propositions \ref{YY439} and \ref{YY499}, (i) (resp., Proposition \ref{YY499}, (ii)),
the radius at $\sigma_i$ of the dormant $\mr{PGL}_n$-oper obtained by reducing the level of $\msE^\spadesuit$ to $1$ (resp., by reducing $\msE^\spadesuit$ modulo $p$)
coincides with the image of $[\EX_{1[0]}, \cdots, \EX_{n[0]}]$ in $\mfS_n \backslash (\mbZ/p^\CL \mbZ)^{\times n}/\Delta$.
Then, it follows from
Propositions \ref{YY439} and  \ref{NN77}, (ii) (or
 ~\cite[Proposition 3.4.1]{Wak9}) that $\EX_{1[0]}, \cdots, \EX_{n[0]}$ are mutually distinct.
This means that $\rho_i (\msE^\spadesuit) \in \Xi_{n, \CL}$, and hence completes the proof of this proposition.
\end{proof}
\SSP

\begin{rema}[Comparison with the classical definition] \label{Rem89e}
Suppose that $\N = 1$.
Then, the notion of radius defined above is essentially the same as 
 ~\cite[Definitions 2.32 and  4.46]{Wak8}, and the
definitions discussed there  can be naturally extended to our setting, i.e., the case where the ground ring has   prime-power characteristic.

To see this,
 let us take an element $\rho$ of   $\Xi_{n, \N}$, which can be expressed as $\rho = \pi_\Delta ([d_1, \cdots, d_n])$ for a unique element  $[d_1, \cdots, d_n]$ of $\widetilde{\Xi}_{n, \N}$ with $\sum_{j=1}^n d_j = 0$.
It determines  a well-defined element  
\begin{align}
s^\mft (\rho) := (s_2^\mft (d_1, \cdots, d_n), s_3^\mft (d_1, \cdots, d_n), \cdots, s_n^\mft (d_1, \cdots, d_n)) \in H^0 (S, \mcO_S)^{\oplus (n-1)},
\end{align}
where $s^\mft_i$'s are the elementary symmetric polynomials, i.e., 
\begin{align}
s_1^\mft (\lambda_1, \cdots, \lambda_n):= \sum_{j=1}^n \lambda_j, \hspace{3mm}
s_2^\mft (\lambda_1, \cdots, \lambda_n) := \sum_{j < j'} \lambda_j \cdot \lambda_{j'}, \hspace{3mm} \cdots, \hspace{3mm}
s_n^\mft (\lambda_1, \cdots, \lambda_n) := \prod_{j=1}^n \lambda_j.
\end{align}
The assignment $\rho \mapsto s^\mft (\rho)$ defines an injection
\begin{align} \label{eQ2234}
\Xi_{n, \N} \hookrightarrow H^0 (S, \mcO_S)^{\oplus (n-1)}.
\end{align}

Now, let $\msE^\spadesuit$ be  a $\mr{PGL}_n$-oper  on $\msX$ (that is not necessarily dormant).
By Corollary \ref{UU39}, (i),  there exists a $\mr{GL}_n$-oper $\msF^\heartsuit := (\mcF, \nabla, \{ \mcF^j \}_{j=0}^n)$ with $\msF^{\heartsuit \Rightarrow \spadesuit} \cong \msE^\spadesuit$, where we use the notation $\nabla$ to denote an $S^\mr{log}$-connection on $\mcF$.
For each $i =1, \cdots, r$,
the monodromy operator of $\nabla$ at $\sigma_i$ (in the sense of ~\cite[Definition 4.42]{Wak8}) is the element $\mu_i^\nabla$ of $\mr{End} (\sigma_i^*(\mcF))$ defined to be the composite
\begin{align}
\mu_i^\nabla : \sigma_i^*(\mcF) \xrightarrow{\sigma_i^*(\nabla)} \sigma_i^* (\Omega \otimes \mcF) \xrightarrow{\sim}
\sigma_i^*(\Omega) \otimes \sigma_i^*(\mcF) \xrightarrow{\sim}\sigma_i^*(\mcF),
\end{align}
where the last arrow arises from the residue isomorphism $\sigma^*_i (\Omega) \isom \mcO_S$.

Then, there exists a tuple 
\begin{align}
\varrho_i (\msE^\spadesuit) := (s_2 (\rho), \cdots, s_n (\rho)) \in H^0 (S, \mcO_S)^{\oplus (n-1)}
\end{align}
of elements of $H^0 (S, \mcO_S)$ uniquely determined by the equality
\begin{align}
\mr{det}\left(t \cdot \mr{id}_{\sigma_i^* (\mcF)} - \breve{\mu}_i^\nabla \right) = t^n + \sum_{j=2}^n (-1)^j \cdot s_j (\rho) \cdot t^{n-j} \in H^0 (S, \mcO_S)[t],
\end{align}
where  $\breve{\mu}_i^\nabla := \mu_i^\nabla - \frac{\mr{tr}(\mu_i^\nabla)}{n} \cdot \mr{id}_{\sigma_i^*(\mcF)}$, and  $\varrho_i (\msE^\spadesuit)$ does not depend on the choice of $\msF^\heartsuit$.
We refer to   $\varrho_i (\msE^\spadesuit)$   as the {\bf radius} of $\nabla$ at $\sigma_i$
(cf.  ~\cite[Definition 4.43]{Wak8}).
It follows from the various definitions involved that if $\msE^\spadesuit$ is dormant, then  the equality $\rho_i (\msE^\spadesuit) = \varrho_i (\msE^\spadesuit)$ holds via the injection \eqref{eQ2234}.

Moreover, for $\rho \in (\mfS_n \backslash (\mbZ/p^\CL \mbZ)^{\times n}/\Delta)^{\times r}$ (where $\rho := \emptyset$ if $r =0$),  we denote by 
$\mcO p_{\spadesuit, \rho}$ the subfunctor of $\mcO p_{\spadesuit}$ classifying $\mr{PGL}_n$-opers of radii $\rho$.
Then, it follows from an argument similar to the arguments in ~\cite[\S\S\,4.7.5-4.7.6 and \S\,4.11.1]{Wak8} that $\mcO p_{\spadesuit, \rho}$ admits a structure of  $f_* (\mcV_\varTheta (-\sum_{i=1}^r \sigma_i))$-torsor  (cf. \eqref{eQ44} for the definition of $\mcV_\varTheta$), where  the direct image $f_* (\mcV_\varTheta (-\sum_{i=1}^r \sigma_i))$ forms   a vector bundle on $S$ of rank $(n^2 -1)(g-1) + \frac{(n^2 - n)r}{2}$   (cf. ~\cite[Lemma 4.67]{Wak8}).
\end{rema}
\SSP

Write  $R := \mbZ/p^{\CH +1}\mbZ$.
Given an $r$-tuple   
$\rho:= (\rho_1, \cdots, \rho_r) \in 
(\mfS_n \backslash (\mbZ/p^\CL \mbZ)^{\times n}/\Delta)^{\times r}$ (where $\rho := \emptyset$ if $r =0$), 
 we shall  write 
\begin{align} \label{e4000}
\mcO p_{n, \N, \rho, g,r, R}^\ZZZ, \  \text{or simply}  \  \mcO p_{\rho, g,r}^\ZZZ, 
\end{align}
 for the full subcategory of  $\mcO p_{n, \N, g,r, R}^\ZZZ$ consisting of pairs $(\msX, \msE^\spadesuit)$ such that the dormant $\mr{PGL}_n^{(\N)}$-oper $\msE^\spadesuit$ is  of radii $\rho$.
The projection $\Pi_{g, r}$ restricts to a morphism
\begin{align} \label{TTT15}
\Pi_{n, \N, \rho, g, r, \mbF_p} \left(\text{or simply} \ \Pi_{\rho, g, r}\right) : \mcO p^\ZZZ_{\rho, g, r} \migi \overline{\mcM}_{g, r}.
\end{align}

It follows from  Proposition \ref{P39} that 
$\mcO p_{\rho, g,r}^\ZZZ$ is empty unless $\rho \in \Xi_{n, \CL}^{\times r}$, and  that 
$\mcO p_{g, r}^\ZZZ\!$ and $\Pi_{g, r}$ decompose as the disjoint unions
\begin{align} \label{e888}
\mcO p_{g,r}^\ZZZ = \coprod_{\rho \in \Xi_{n, \CL}^{\times r}}\mcO p_{\rho, g,r}^\ZZZ \hspace{5mm} \text{and} \hspace{5mm} \Pi_{g, r} =  \coprod_{\rho \in \Xi_{n, \CL}^{\times r}}\Pi_{\rho, g, r},
\end{align}
respectively, where  $\Xi_{n, \CL}^{\times r} := \{ \emptyset \}$ if $r =0$.

\SSP
\begin{rema}[Changing the ordering of marked points] \label{Rem3391i}
Choose an  element $\varsigma$ of the symmetric group  of $r$ letters $\mfS_r$.
 Let $\msX$ be as before, and  write 
 \begin{align}\label{Curve22}
 \msX^\varsigma := (f: X \rightarrow S, \{ \sigma_{\varsigma (i)} \}_{i=1}^r),
 \end{align}
  i.e.,   the pointed stable curve obtained from $\msX$ by changing the order of its marked points  via the action of $\varsigma$.
Then, each dormant $\mr{PGL}_n^{(\N)}$-oper on $\msX$ of radii $\rho$ can be regarded as that on $\msX$ whose radii is given by $\rho^\varsigma := (\rho_{\varsigma (1)}, \cdots, \rho_{\varsigma (r)})$.
Hence, 
the assignment $(\msX, \msE^\spadesuit) \mapsto (\msX^\varsigma, \msE^\spadesuit)$ determines  an isomorphism  $\mcO p_{\rho, g,r}^\ZZZ \xrightarrow{\sim} \mcO p_{\rho^\varsigma, g,r}^\ZZZ$, fitting into the following square diagram
\begin{align} \label{Errfe1}
\vcenter{\xymatrix@C=46pt@R=36pt{
 \mcO p_{\rho, g,r}^\ZZZ \ar[r]^-{\sim} \ar[d]_-{\Pi_{\rho, g, r}} & \mcO p_{\rho^\varsigma, g,r}^\ZZZ \ar[d]^-{\Pi_{\rho^\varsigma, g, r}} \\
 \overline{\mcM}_{g, r} \ar[r]_-{\sim} & \overline{\mcM}_{g, r},
 }}
\end{align}
where the lower horizontal arrow denotes the automorphism of $\overline{\mcM}_{g, r}$ induced by $\msX \mapsto \msX^\varsigma$.
\end{rema}
\SSP

By Proposition \ref{P72e}, (i),  the functor ${^{\Diag}}\!\!(-)$ (cf. \eqref{YY260}) restricts to a functor 
\begin{align} \label{YY329}
{^{\Diag}}\!\!(-) : \mcO p_{n, 1, \rho, g,r, \mbZ/p^\N \mbZ}^\ZZZ \migi\mcO p_{n, \N, \rho, g, r,  \mbF_p}^\ZZZ.
\end{align}
That is to say,  a dormant $\mr{PGL}_n^{(1)}$-oper on a pointed stable curve classified by $\overline{\mcM}_{g, r, \mbZ/p^\N \mbZ}$  is of radii $\rho$ if and only if its diagonal reduction is of radii $\rho$
(cf. Definition \ref{YY300}).

\SSP
\bt \label{P91}
Suppose  that $\CH =0$.
Then, 
for each $\rho \in\Xi_{n, \CL}^{\times r}$,
the category $\mcO p_{\rho, g,r}^\ZZZ$ may be represented by a (possibly empty) proper Deligne-Mumford stack over $\mbF_p$,  and the projection  $\Pi_{\rho, g, r}$
 is finite.
\et
\begin{proof}
The assertion follows from the decompositions displayed in  \eqref{e888} together with Theorem \ref{T40} and  Corollary \ref{T50}.
\end{proof}

\LSP
\subsection{Gluing dormant $\mr{PGL}_n^{(\N)}$-opers} \label{QG1090}

In this subsection, we shall discuss a gluing procedure for  dormant $\mr{PGL}_n^{(\N)}$-opers by means of clutching data.

Given each $\EX := (\EX_1, \cdots, \EX_n) \in (\mbZ/p^\CL \mbZ)^{\times n}$, we write
$\EX^\veebar := (-\EX_1, \cdots, -\EX_n)$.
The assignment $\EX \mapsto \EX^\veebar$ induces a well-defined involution 
\begin{align} \label{e56}
(-)^\veebar : \mfS_n \backslash (\mbZ/p^\CL \mbZ)^{\times n} / \Delta \isom \mfS_n \backslash (\mbZ/p^\CL \mbZ)^{\times n} / \Delta
\end{align}
on the set $\mfS_n \backslash (\mbZ/p^\CL \mbZ)^{\times n} / \Delta$.
Also, for each $\rho := (\rho_1, \cdots, \rho_r) \in (\mfS_n \backslash (\mbZ/p^\CL \mbZ)^{\times n} / \Delta)^{\times r}$,
we set $\rho^\veebar := (\rho_1^\veebar, \cdots, \rho_r^\veebar)$.
Note that $(-)^\veebar$ restricts to an involution on $\Xi_{n, \CL}$ (hence also on $\Xi_{n, \CL}^{\times r}$).

Let 
$\mbG$ and 
$\msX_j$ ($j=1, \cdots, J$) be as in \S\,\ref{SS1070}.

\SSP
\bde \label{y0175}
A {\bf  set of  $\mbG$-$\Xi_{n, \CL}$-radii}  is an ordered  set 
\begin{align} \label{YY172}
 \rho_\mbG := \{  \rho^j \}_{j=1}^J,
 \end{align}
where each $\rho^j := (\rho^j_i )_{i=1}^{r_j}$  is an element  of 
$\Xi_{n, \CL}^{\times r_j}$
 such   that for 
every closed edge $\{ b, b' \} \in E^{\mr{cl}}$ of $\GR$ with  $\zeta (b) = v_{j}$, $\zeta (b') = v_{j'}$ (for some $j$, $j' \in \{ 1, \cdots, J \}$),
the equality  $\rho^{j}_{\lambda_{j} (b)} = (\rho^{j'}_{\lambda_{j'} (b')})^\veebar$ holds.
 \ede

\bde \label{QG8}
\begin{itemize}
\item[(i)]
A {\bf dormant $\mbG$-$\mr{PGL}_n^{(\N)}$-oper}  on $\msX$ is a collection
\begin{align}
\msE_\mbG^\spadesuit := (\mcE_B, \{ \STR_j \}_{j=1}^J),
\end{align}
where $\mcE_B$ denotes a $B$-bundle on $X$ and each $\STR_j$ ($j=1, \cdots, J$) denotes an $(\N-1)$-PD stratification  
  on
   $(\mcE_B \times^B \mr{PGL}_n)_j \left(:= \mcE_{B, j} \times^B \mr{PGL}_n\right)/X_j^\mr{log}/S_j^\mr{log}$, satisfying the following conditions:
\begin{itemize}
\item
For each $j =1, \cdots, J$,  the pair $\msE^\spadesuit_{\mbG, j} := (\mcE_{B, j}, \STR_j)$ forms a dormant $\mr{PGL}_n^{(\N)}$-oper on $\msX_j$.
\item
For each closed edge $\{ b, b' \} \in E^{\mr{cl}}$ of $\GR$ with $\zeta (b) = v_j$, $\zeta (b') = v_{j'}$ (for some $j, j' \in \{1, \cdots, J \}$), the equality $\rho_{\lambda_j (b)} (\msE^\spadesuit_{\mbG, j})= (\rho_{\lambda_{j'}(b')} (\msE^\spadesuit_{\mbG, j'}))^\veebar$ holds.
\end{itemize}
\item[(ii)]
Let $\msE_{\circ, \mbG}^\spadesuit := (\mcE_{\circ, B}, \{ \STR_{\circ, j} \}_{j=1}^J)$ and 
$\msE_{\bullet, \mbG}^{\spadesuit} := (\mcE_{\bullet, B}, \{ \STR_{\bullet, j} \}_{j=1}^J)$
be dormant $\mbG$-$\mr{PGL}_n^{(\N)}$-opers on $\msX$.
An {\bf isomorphism of dormant $\mbG$-$\mr{PGL}_n^{(\N)}$-opers}  from 
$\msE_{\circ, \mbG}^\spadesuit$ to $\msE_{\bullet, \mbG}^\spadesuit$ is defined as an isomorphism $\mcE_{\circ, B} \isom \mcE_{\bullet, B}$ of $B$-bundles such  that, for every $j=1, \cdots, J$, the induced isomorphism $\mcE_{\circ, B,  j} \isom \mcE_{\bullet, B,  j}$ defines an isomorphism of $\mr{PGL}_n^{(\N)}$-opers $\msE^\spadesuit_{\circ, \mbG,  j} \isom \msE_{\bullet, \mbG,  j}^{\spadesuit}$.
\end{itemize}
\ede

\bde
Let $\rho_\mbG := \{\rho^j \}_{j=1}^J$ be a set of $\mbG$-$\Xi_{n, \CL}$-radii  and 
$\msE_\mbG^\spadesuit := (\mcE_B, \{ \STR_j \}_{j=1}^J)$ a  dormant $\mbG$-$\mr{PGL}_n^{(\N)}$-oper on $\msX$.
Then, we shall say that $\msE_\mbG^\spadesuit$ is {\bf of radii $\rho_\mbG$} if
for any $j=1, \cdots, J$, the dormant  $\mr{PGL}_n^{(\N)}$-oper  $\msE^\spadesuit_{\mbG, j}$ is of radii $\rho^j$.
\ede

\SSP
\bpr  \label{YY277}
\begin{itemize}
\item[(i)]
Any dormant $\mbG$-$\mr{PGL}_n^{(\N)}$-oper on $\msX$ does not have nontrivial automorphisms.
\item[(ii)]
Suppose that $\N =1$, and  we set $\N' := \CH +1$.
Denote by
 $\msX_0$
 the reduction of $\msX$ modulo $p$.
Also, let $\msE_\mbG^\spadesuit := (\mcE_B, \{ \STR_j \}_{j})$ be a dormant $\mbG$-$\mr{PGL}_n^{(1)}$-oper on $\msX$.
Then, the collection of data
\begin{align} \label{YY278}
{^{\Diag}}\!\!\msE_\mbG^\spadesuit := (\mcE_{B, 0}, \{ {^{\Diag}}\!\!\STR_j \}_{j=1}^J)
\end{align}
forms a dormant $\mbG$-$\mr{PGL}_n^{(\N')}$-oper on $\msX_0$.
(We shall refer to ${^{\Diag}}\!\!\msE_\mbG^\spadesuit$ as the {\bf diagonal reduction} of $\msE_\mbG^\spadesuit$.)
If, moreover, $\rho_\mbG$ is a set of $\mbG$-$\Xi_{n, \N}$-radii,
then $\msE^\spadesuit$ is of radii $\rho_\mbG$ if and only if ${^{\Diag}}\!\!\msE_\mbG^\spadesuit$ is of radii $\rho_\mbG$.
\end{itemize}
\epr
\begin{proof}
Assertion (i) follows from Proposition \ref{P44}.
Also, assertion (ii) follows from the various definitions involved and the functor ${^{\Diag}}\!\!(-)$ (cf. \eqref{YY329}).
\end{proof}
 \SSP
 
\bpr \label{YY188}
\begin{itemize}
\item[(i)]
   There exists  an equivalence of categories 
 \begin{align} \label{YY291}
 \begin{pmatrix}
\text{the  groupoid of  dormant} \\
\text{$\mr{PGL}_n^{(\N)}$-opers on $\msX$} 
\end{pmatrix}
\isom 
\begin{pmatrix}
\text{the groupoid of dormant} \\
\text{$\mbG$-$\mr{PGL}_n^{(\N)}$-opers on $\msX$} 
\end{pmatrix}.
\end{align}
Moreover,  this equivalence commutes with the formation of diagonal reductions.
\item[(ii)]
Let $\rho_\mbG := \{ \rho^j \}_{j=1}^J$ be a set of $\mbG$-$\Xi_{n, \CL}$-radii.
Then, there exists an equivalence of categories
 \begin{align} \label{YY391}
\begin{pmatrix}
\text{the groupoid of dormant} \\
\text{$\mbG$-$\mr{PGL}_n^{(\N)}$-opers of radii $\rho_\mbG$ on $\msX$} 
\end{pmatrix}
\isom 
 \prod_{j=1}^J \begin{pmatrix}
\text{the  groupoid of  dormant} \\
\text{$\mr{PGL}_n^{(\N)}$-opers of radii $\rho^j$ on $\msX_j$} 
\end{pmatrix}.
\end{align}
Moreover,  this equivalence commutes with the formation of diagonal reductions.
\end{itemize}
 \epr
\begin{proof}
First, we shall prove assertion (i).
Choose a dormant $n^{(\N)}$-theta characteristic $\vartheta := (\varTheta, \DMO_{\vartheta})$ of $X^\mr{log}/S^\mr{log}$.
Also, let $\msE^\spadesuit := (\mcE_B, \STR)$ be a dormant $\mr{PGL}_n^{(\N)}$-oper on $\msX$.
By Theorem \ref{P14}, $\msE^\spadesuit$ defines a dormant $(\mr{GL}_n^{(\N)}, \vartheta)$-oper $\DMO^\diamondsuit$ on $\msX$ via the isomorphism $\Lambda_{\diamondsuit \Rightarrow \spadesuit, \vartheta}^\ZZZ$.
It follows from Proposition \ref{YY157} that  the collection $\{ \DMO^\diamondsuit |_{j} \}_{j=1}^J$ (cf. \eqref{YY153})  forms a $\mbG$-$\mcD^{(\N -1)}$-module structure on $\mcF_\varTheta$.
For each $j=1, \cdots, J$,  denote by $\mcE_{B, j}$ (resp., $\mcE_j$)  the restriction of $\mcE_B$ (resp.,  $\mcE := \mcE_B \times^B \mr{PGL}_n$) to $X_j$, and by 
  $\STR_j$  the $(\N-1)$-PD stratification on $\mcE_j/X_j^\mr{log}/S_j^\mr{log}$ induced from $\DMO^\diamondsuit |_{j}$ via projectivization.
  Then,  
the pair $\msE^\spadesuit_{\mbG, j} := (\mcE_{B, j}, \STR_j)$ forms a dormant $\mr{PGL}_n^{(\N)}$-oper on $\msX_j$.
Let    $\{ b, b' \} \in E^{\mr{cl}}$ be  
a closed edge of $\GR$  with $\zeta (b) = v_j$, $\zeta (b') = v_{j'}$ (for some $j, j' \in \{1, \cdots, J \}$),   and 
 write $i := \lambda_j (b)$,   $i' := \lambda_{j'}(b')$.
Then, Proposition \ref{L090} implies the equality 
\begin{align} \label{YY293}
e_i (\DMO^\diamondsuit |_{j}) = e_{i'} (\DMO^\diamondsuit |_{j'})^\veebar \ \left(\Longrightarrow \rho_i (\msE_{\mbG, j}^\spadesuit) = (\rho_{i'} (\msE_{\mbG, j'}^\spadesuit))^\veebar \right).
\end{align}
 That is to say,
 the collection $\msE^\spadesuit_{\emptyset \Rightarrow \mbG} := (\mcE_B, \{ \STR_j \}_{j=1}^J)$ specifies   a dormant $\mbG$-$\mr{PGL}_n^{(\N)}$-oper on $\msX$.
The resulting assignment $\msE^\spadesuit \mapsto \msE^\spadesuit_{\emptyset \Rightarrow \mbG}$ defines  a functor  of the form \eqref{YY291}.
Also, 
this functor  is fully faithful by Propositions \ref{P44} and  \ref{YY277}, (i).

In what follows, 
we shall prove the essential surjectivity of this functor.
To this end, {\it we are always free to replace $S$ with its  covering in the \'{e}tale topology   because  \eqref{YY291} is fully faithful}.
Let   $\msE_\mbG^\spadesuit := (\mcE_B, \{ \STR_j \}_{j=1}^J)$ be a  dormant $\mbG$-$\mr{PGL}_n^{(\N)}$-oper on $\msX$.
Note that, for each $j =1, \cdots, J$, the pair of restrictions $\vartheta_j := (\varTheta_j, \DMO_{\vartheta}|_j)$, where $\varTheta_j := \mr{Clut}_j^*(\varTheta)$,  forms a dormant $n^{(\N)}$-theta characteristic of $X_j^\mr{log}/S_j^\mr{log}$. 
Denote by $\DMO_j^\diamondsuit$ ($j=1, \cdots, J$) the dormant $(\mr{GL}_n^{(\N)}, \vartheta_j)$-oper on $\msX_j$ corresponding to $\msE_j^\spadesuit := (\mcE_{B, j}, \STR_j)$.
For each $i=1, \cdots, r_j$, 
let us fix an identification of $U_\oslash$ (cf. \eqref{YY124}) with the formal neighborhood  $\widehat{U}_{j, i}$ of 
$\mr{Im}(\sigma_{j, i})$ in $X_j$.
  (We can choose such an identification   thanks to the italicized fact described above.)
Also, we shall fix an isomorphism of $\mcD_\oslash^{(\N -1)}$-modules 
\begin{align}
h_{j, i} : (\mcF_{\varTheta_j}, \DMO^\diamondsuit_j)|_{\widehat{U}_{j, i}} \isom \bigoplus_{s =1}^n \msO_{\oslash, d_{i, s}^j}^{(\N -1)},
\end{align}
 where $[d_{i, 1}^j, \cdots, d_{i, n}^j]$ are the exponent of $\DMO^{\diamondsuit \Rightarrow \heartsuit}_j$ at $\sigma_{j, i}$.
This isomorphism restricts to an isomorphism $\sigma_\oslash^* (h_{j, i}) : \sigma_{j, i}^* (\mcF_{\varTheta_j}) \isom \mcO_{S}^{\oplus n} \left(= \sigma_\oslash^*(\mcO_\oslash^{\oplus n}) \right)$.
Here, let us take a closed edge $\{b, b' \} \in E^{\mr{cl}}$
 of $\GR$ with $\zeta (b) = v_j$, $\zeta (b') = v_{j'}$ (for some $j, j' \in \{ 1, \cdots, J\}$).
 Then, the equalities  $e_i (\DMO_{\vartheta}|_{j}) = - e_{i'} (\DMO_{\vartheta}|_{j'})$  and $\rho_i (\msE^\spadesuit_{j}) = \rho_{i'}(\msE^\spadesuit_{j'})^\veebar$ hold (cf. Propositions \ref{YY157}, (i), and \ref{YY277}).
It follows from  Proposition \ref{WWed3} that, after possibly rearranging  the indexes of elements of the $e_i (\DMO^\diamondsuit_j)$'s,
we obtain the equality  $d^j_{i, s} = -d_{i', s}^{j'}$  for every $s =1, \cdots, n$.
In particular,  the equality 
 \begin{align} \label{WWWW1}
 \mu_i (\DMO^\diamondsuit_j)\circ \mr{sw} = \mu_{i'} (\DMO^\diamondsuit_{j'})
 \end{align}
  holds under the identifications given by $h_{j, i}$ and $h_{j', i'}$ (cf. \eqref{e200}).
Also,
according to Proposition \ref{NN77}, (i), we may assume, 
after possibly  composing $h_{j, i}$  with an automorphism of $\bigoplus_{s =1}^n \msO_{\oslash, d^j_{i, s}}^{(\N -1)}$ determined  by an element of $\mr{GL}_n (\mcO_S)$,
 that the equality 
 $\sigma_\oslash^* (h_{j, i}) (\mcF_{\varTheta_j}^{n-1}) = \sigma_\oslash^* (h_{j', i'}) (\mcF_{\varTheta_{j'}}^{n-1})$ holds in  $\mcO_S^{\oplus n}$. 
By  the definition of a $\mr{GL}_n^{(\N)}$-oper,
 this equality implies  
 \begin{align} \label{WWWW2}
 \sigma_\oslash^* (h_{j, i}) (\mcF_{\varTheta_j}^{s}) = \sigma_\oslash^* (h_{j', i'}) (\mcF_{\varTheta_{j'}}^{s})
 \end{align}
  for every $s=0, \cdots, n$.
By \eqref{WWWW1} and \eqref{WWWW2},
 the collections  $(\mcF_{\varTheta_j}, \DMO_j^{\diamondsuit}, \{ \mcF_{\varTheta_j}^s \}_{s=0}^n)$ ($j=1, \cdots, J$) may  be glued together to  obtain 
a dormant $\mr{GL}_n^{(\N)}$-oper $\msF^\heartsuit$ (cf. Proposition \ref{P27}, (ii)).
If $\msE^\spadesuit$ denotes the  dormant $\mr{PGL}_n^{(\N)}$-oper induced from $\msF^\heartsuit$,
then the  dormant $\mbG$-$\mr{PGL}_n^{(\N)}$-oper  $\msE^\spadesuit_{\emptyset \Rightarrow \mbG}$ given by its  image  via \eqref{YY291} is isomorphic to $\msE^\spadesuit$ by construction.
This means that the functor  \eqref{YY291} is essentially surjective, and hence it is an equivalence of categories.
The second  assertion of (i) follows from the definitions of \eqref{YY291} and ${^{\Diag}}\!\!(-)$.

Next, we shall prove assertion (ii).
The assignment $\msE^\spadesuit_{\mbG} := (\mcE_B, \{ \STR_j \}_{j=1}^J) \mapsto (\msE_{\mbG, j}^\spadesuit)_{j=1}^J$ (where $\msE^\spadesuit_{\mbG, j} := (\mcE_{B, j}, \STR_j)$) defines a functor  of the form \eqref{YY391}.
This functor is immediately verified to commute  with the formation of diagonal reductions.
Also, it is  fully faithful  because of  Propositions \ref{P44} and  \ref{YY277}, (i),  so the remaining portion is  the essential  surjectivity.

Suppose that we are given a collection $(\msE^\spadesuit_{j} )_{j=1}^J$, where each $\msE^\spadesuit_{j} := (\mcE_{B, j}, \STR_{j})$ ($j=1, \cdots, J$) is  a dormant $\mr{PGL}_n^{(\N)}$-oper of radii $\rho^j$ on $\msX_j$.
Denote by ${^\dagger}\mcE_B$ the $B$-bundle defined in ~\cite[Eq.\,(605)]{Wak8}.
If  we write $({^\dagger}\mcE_B)_j$ ($j =1, \cdots, J$) for  the restriction  of ${^\dagger}\mcE_B$ to $X_j$, then 
there exists an isomorphism of $B$-bundles $\mcE_{B, j}\isom ({^\dagger}\mcE_B)_j$ for every $j$ (cf. ~\cite[Proposition 4.55]{Wak8}).
We obtain an $(\N -1)$-PD stratification ${^\dagger}\STR_j$ on $({^\dagger}\mcE_B)_j/X_j^\mr{log}/S_j^\mr{log}$ corresponding to $\STR_{j}$ via this isomorphism.
The resulting collection $({^\dagger}\mcE_B, \{ {^\dagger}\STR_j \}_{j=1}^J)$ forms a dormant $\mbG$-$\mr{PGL}_n^{(\N)}$-oper of radii $\rho_\mbG$ on $\msX$ which is mapped to   $(\msE_{j}^\spadesuit)_{j=1}^J$ via \eqref{YY391}.
This proves the essential surjectivity of \eqref{YY391}, and hence we have finished the proof of assertion (ii).
\end{proof}
\SSP

\begin{rema}[The choice of an $n^{(\N)}$-theta characteristic] \label{RR223}
Despite the fact that
 a specific $n^{(\N)}$-theta characteristic $\vartheta$  is fixed in the proof of     assertion (i),
the resulting functor  \eqref{YY291} does not depend on the choice of $\vartheta$.
\end{rema}
\SSP

Just as in   the discussion of ~\cite[\S\,7.1.3]{Wak8},
 we obtain the  morphism between moduli stacks
\begin{align} \label{WW81}
\mr{Clut}_\mbG : \prod_{j=1}^{J} \overline{\mcM}_{g_j, r_j} \migi \overline{\mcM}_{g,r},
\end{align}
given by assigning $\{ \msX_j \}_{j=1}^n \mapsto \msX$,
where the product ``$\prod$" in the domain  is taken over $R$.

Let $\rho_\mbG := \{ \rho^j \}_{j=1}^J$ be a set of $\mbG$-$\Xi_{n, \CL}$-radii  and 
$\msE^\spadesuit_\mbG$ be a dormant $\mbG$-$\mr{PGL}_n$-oper on $\msX$ of radii $\rho_\mbG$.
By 
the equivalence of categories \eqref{YY291}  resulting from 
Proposition \ref{YY188}, (i),
$\msE^\spadesuit_\mbG$
corresponds to 
a dormant   $\mr{PGL}_n$-oper $\msE_{\mbG \Rightarrow \emptyset}^\spadesuit$   on $\msX$.
 The radii  of $\msE_{\mbG \Rightarrow \emptyset}^\spadesuit$ can be determined by   $\rho^j$ ($j = 1, \cdots, J$) in accordance with   the clutching data $\mbG$ (cf. Remark \ref{Rem7878}); we denote the resulting radii of $\msE_{\mbG \Rightarrow \emptyset}^\spadesuit$  by 
\begin{align} \label{fff036}
\rho_{\mbG \Rightarrow \emptyset} \in \Xi_{n, \CL}^{\times r}.
 \end{align}

On the other hand, it follows from Proposition \ref{YY188}, (ii), that
 any point of $\prod_{j =1}^J \mcO p_{\rho^j, g_j,r_j}^{^\mr{Zzz...}}$ classifies  a  unique (up to isomorphism) dormant 
$\mbG$-$\mr{PGL}_n^{(\N)}$-oper of radii $\rho_\mbG$.
Hence, the assignment $(\{\msX_j\}_{j=1}^J, \msE^\spadesuit_{\mbG}) \mapsto (\msX, \msE_{\mbG \Rightarrow \emptyset}^\spadesuit)$  determines
a well-defined morphism
\begin{align} \label{YY337}
\mr{Clut}_{\mbG, \rho_\mbG} : \prod_{j =1}^J \mcO p_{\rho^j, g_j,r_j}^{^\mr{Zzz...}} \migi  
\mcO p_{\rho_{\mbG \Rightarrow \emptyset}, g,r}^{^\mr{Zzz...}}.
\end{align}

\SSP
\bt \label{y0176}
Let us keep the above notation.
Also, let $\rho$ be  an element of $\Xi_{n, \CL}^{\times r}$ (where $\rho := \emptyset$ if $r  =0$).
Then, the following commutative square diagram  is Cartesian:
\begin{align} \label{1050}
\vcenter{\xymatrix@C=76pt@R=36pt{
\coprod_{ \rho_\mbG := \{ \rho^j\}_{j=1}^J}\prod_{j =1}^J \mcO p^\ZZZ_{\rho^j, g_j,r_j} 
\ar[r]^-{\coprod_{\rho_\mbG} \mr{Clut}_{\mbG, \rho_\mbG}} \ar[d]_-{\coprod_{\rho_\mbG} \prod_j \Pi_{\rho^j, g_j, r_j}}& \mcO p^\ZZZ_{\rho, g,r} 
\ar[d]^-{\Pi_{\rho, g, r}}\\
 \prod_{j=1}^J \overline{\mcM}_{g_j, r_j}  \ar[r]_-{\mr{Clut}_\mbG}& \overline{\mcM}_{g,r},
}}
\end{align}
 where the products ``$\prod$'' are taken over $R$ and 
  the disjoint union on the upper-left corner runs  over  the  sets of $\mbG$-$\Xi_{n, \CL}$-radii $\rho_\mbG$  with $\rho_{\mbG \Rightarrow \emptyset} = \rho$. 
  Finally, this diagram is compatible with the functors ${^{\Diag}}\!\!(-)$ (cf.  \eqref{YY329}) in a natural sense.
 \et
\begin{proof}
The assertion follows from Proposition \ref{YY188}, (i) and (ii).
\end{proof}

\LSP
\subsection{Nonemptiness of the moduli space} \label{SS30d2}

Let $S$ be as before.
Denote  by $\mbP$ the projective line over $S$,  i.e., $\mbP := \mcP roj (\mcO_S [x, y])$.
Also, denote by 
$[0]$, $[1]$, and $[\infty]$ the $S$-rational points of   $\mbP$ determined by the values $0$, $1$, and $\infty$, respectively.
After ordering  the three points $ [0], [1], [\infty]$, we obtain 
 a unique (up to isomorphism) $3$-pointed stable   curve 
\begin{align} \label{1051}
\msP := (\mbP/S, \{ [0], [1], [\infty] \})
\end{align}
 of genus $0$ over $S$.
 In particular, we obtain a log curve $\mbP^\mr{log}$ over $S \left(= S^\mr{log} \right)$.

We shall write $\mcL: = \mcO_{\mbP}(-1) \otimes \mcO_{\mbP}([0]+ [1]+ [\infty])$, and write $\kappa$ for the $\mcO_{\mbP}$-linear injection $\mcO_{\mbP}(-1) \migiincl \mcO_\mbP^{\oplus 2}$ given by $w \mapsto (wx, w y)$ for each local section
$w \in \mcO_\mbP (-1)$.
Also, let $\mcF$ be a vector bundle on $\mbP$ which makes the following square diagram cocartesian:
\begin{align} \label{dE61}
\vcenter{\xymatrix@C=46pt@R=36pt{
\mcO_\mbP (-1)\ar[r]^-{\kappa} \ar[d]_-{\mr{inclusion}} & \mcO_\mbP^{\oplus 2}  \ar[d] \\
\mcL \ar[r] & \mcF.
}}
\end{align}
The trivial $\mcD^{(\N -1)}$-module structure $(\DMO_{\mbP, \mr{triv}}^{(\N -1)})^{\oplus 2}$ on $\mcO_\mbP^{\oplus 2}$ extends uniquely to a $\mcD^{(\N -1)}$-module structure  $\DMO_\mcF^{(\N -1)}$ on $\mcF$.

Note that  the $\mcO_{\mbP}$-linear  composite
\begin{align} \label{dE100}
\mcL \migiincl  \mcF \xrightarrow{\DMO_\mcF^{(\N -1) \Rightarrow (0)}} \Omega \otimes \mcF \migisurj \Omega \otimes (\mcF/\mcL)
\end{align} 
is nonzero.
Since $\mbP/S$ is a smooth curve and $\Omega \otimes (\mcF/\mcL)$ is a line bundle,  this composite must be  injective.
Moreover, the following equalities of relative degrees  hold:
\begin{align}
\mr{deg}(\Omega \otimes (\mcF/\mcL)) = \mr{det}(\Omega) + \mr{deg}(\mcF) - \mr{deg}(\mcL) = 1 + 3 -2 =2 \left( = \mr{deg}(\mcL)\right).
\end{align}
It follows that \eqref{dE100} turns out to be an isomorphism.
That is to say, the triple 
\begin{align}
\msF^{\heartsuit}_{\N, S} := (\mcF, \DMO_\mcF^{(\N -1)}, \mcL)
\end{align}
 forms a  $\mr{GL}_2^{(\N)}$-oper on $\msP$.
 
 Also,
 we shall set 
 \begin{align} \label{YY400}
 \msE^{\spadesuit}_{\N, S} := \msF^{\heartsuit\Rightarrow \spadesuit}_{\N, S},
 \end{align}
 which defines a $\mr{PGL}_2^{(\N)}$-oper on $\msP$.
  Both $\msF^{\heartsuit}_{0, S}$ and  $\msE^{\spadesuit}_{0, S}$ are  dormant 
 and  
satisfy
\begin{align} \label{dE101}
{^{\Diag}}\!\!\msF^{\heartsuit}_{0, S} = \msF^{\heartsuit}_{\N, S_0}, \hspace{10mm}
{^{\Diag}}\!\!\msE^{\spadesuit}_{0, S} = \msE^{\spadesuit}_{\N, S_0}.
\end{align}

Denote by 
\begin{align} \label{YY334}
\epsilon
  \in 
   (\mbZ/p^\CL \mbZ)^\times/\{ \pm 1 \}
\end{align}
 the element  defined as  the image of $\frac{1}{2} \in (\mbZ/p^\CL \mbZ)^\times$ via the natural  quotient
$\mbZ/p^\CL \mbZ \migisurj (\mbZ/p^\CL \mbZ)/\{ \pm 1 \}$.
(By the bijection \eqref{wpaid}, we occasionally regard it as an element of $\mfS_2 \backslash (\mbZ/p^\CL \mbZ)^{\times 2}/\Delta$.)
 The exponent of $\msF^{\heartsuit}_{\N, S_0}$ at every marked point coincides with  the multiset $[1, 0]$ (cf. Proposition \ref{Prop15} for a more general assertion), so we have $\rho_i (\msE^{\spadesuit}_{\N, S_0}) = \epsilon$ for every $i=1, \cdots, r$.

Here, we shall make the definition of a totally degenerate curve over a general base space $S$ (cf., e.g.,   ~\cite[Definition 7.15]{Wak8} for the case where $S$ is the spectrum of  an algebraically closed field).

\SSP
\bde \label{DD3WW}
 Let  $\msX := (X/S, \{ \sigma_i \}_{i=1}^r)$ be  an $r$-pointed stable curve of genus $g$ over $S$.
We say  that $\msX$ is  {\bf totally degenerate}
if there exists  a pair $(\varsigma, \mbG)$ consisting of an  element $\varsigma$ of the symmetric group  of $r$ letters $\mfS_r$ and   trivalent clutching data $\mbG := (\GR, \{\lambda_j \}_{j=1}^J)$  of type $(g, r)$ (cf. Definition \ref{Def112})
satisfying  the following conditions:
\begin{itemize}
\item
$\GR$ 
coincides with  the dual semi-graph of $\msX$ (cf. ~\cite[\S\,7.1.2]{Wak8});
\item
$\msX$ is isomorphic to $\msY^\varsigma$ (cf. \eqref{Curve22} for the definition of $(-)^\varsigma$), where $\msY$ denotes the pointed stable curve  obtained by gluing together $J$ copies of  the $3$-pointed  projective line  $\msP$ by means of $\mbG$ in the manner of ~\cite[\S\,7.2.1]{Wak8}.
(Note that the ordering of the marked points in $\msY$
  is compatible with that of  elements in $B_\circledast$ defined in the manner of Remark \ref{Rem7878}.)
\end{itemize}
The isomorphism class of $\msX$ depends only on  
such a pair $(\varsigma, \mbG)$, and 
 we will say 
 that {\bf $(\varsigma, \mbG)$ induces  $\msX$}.
If, moreover,  we can choose the element $\varsigma$ as the identity permutation,  then we say that {\bf $\mbG$  induces $\msX$}
\ede

\SSP
\bt \label{T52}
Let $\msX$ be  an $r$-pointed totally  degenerate curve  of genus $g$ over 
$S$.
We shall  set  
\begin{align} \label{NN8122}
\epsilon^{\times r} := 
(\epsilon, \cdots, \epsilon)
 \in ((\mbZ/p^\CL \mbZ)^\times / \{ \pm 1 \})^{\times r},
\end{align}
 where $\epsilon^{\times r} := \emptyset$ if $r =0$.
Then, 
 there exists a dormant $\mr{PGL}_2^{(\N)}$-oper  on $\msX$ of radii $\epsilon^{\times r}$.
In particular, the category $\mcO p^\ZZZ_{\epsilon^{\times r}, g, r}$ (hence also $\mcO p^\ZZZ_{g,r}$) for $n =2$ is nonempty.
\et
\begin{proof}
We may assume, without loss of generality, that there exists
 trivalent clutching data inducing  $\msX$ (cf. Remark \ref{Rem3391i}).
 In particular, $\msX$ may be obtained by gluing together  $3$-pointed projective lines $\msX_j \left(\cong \msP \right)$ ($j=1, \cdots, J$) by means of $\mbG$.
Since $\epsilon = \epsilon^\veebar$,
the dormant $\mr{PGL}_2^{(\N)}$-opers $\msE^\spadesuit_{\N, S}$ defined on the respective  components $\msX_j$  may be glued together to form a dormant $\mr{PGL}_2^{(\N)}$-oper on $\msX$ (cf. Proposition \ref{YY188}, (i) and (ii)).
The resulting  dormant $\mr{PGL}_2^{(\N)}$-oper   is verified to be of radii $\epsilon^{\times r}$, so this completes the proof of the assertion.
\end{proof}
\SSP

Next, let $S^\mr{log}$ and $f^\mr{log} : X^\mr{log} \migi S^\mr{log}$ be as in \S\,\ref{SS041}.
Also, let us take a $\mr{GL}_2^{(\N)}$-oper $\msF^\heartsuit := (\mcF, \DMO, \{ \mcF^j \}_{j=0}^2)$ on $X^\mr{log}/S^\mr{log}$.
For an integer $n$ with $1 < n < p$,
the $(n-1)$-st symmetric product $S^{n-1}(\mcF)$  of $\mcF$ over $\mcO_X$
 forms a rank $n$ vector bundle.
 It admits a $\mcD^{(N-1)}$-module structure  $S^{n-1}(\DMO)$  induced  naturally by $\DMO$.
Moreover,  $S^{n-1}(\mcF)$ is equipped  with an $n$-step decreasing filtration $\{ S^{n-1}(\mcF)^j\}_{j=0}^n$ induced from $\{ \mcF^j \}_j$;
to be precise,  we set $S^{n-1}(\mcF)^0 := S^{n-1}(\mcF)$, $S^{n-1}(\mcF)^n := 0$, and $S^{n-1}(\mcF)^j$ (for each $j =1, \cdots , n-1$) is defined as  
the image of $(\mcF^1)^{\otimes j} \otimes \mcF^{\otimes (n-1-j)}$ via the natural quotient $\mcF^{\otimes (n-1)} \migisurj S^{n-1}(\mcF)$.
This filtration satisfies 
\begin{align} \label{E61}
S^{n-1}(\mcF)^j/S^{n-1}(\mcF)^{j+1} \cong \mcF^{n-1} \otimes \mcT^{\otimes (n-1-j)}
\end{align}
 for every $j=0, \cdots, n-1$.
 By the assumption  $n < p$,  the collection 
\begin{align}\label{e810}
S^{n-1}(\msF^\heartsuit) := (S^{n-1}(\mcF), S^{n-1}(\DMO), \{ S^{n-1}(\mcF)^j \}_{j=0}^n)
\end{align}
is verified to  form a  $\mr{GL}_n^{(\N)}$-oper  on $X^\mr{log}/S^\mr{log}$.
If $\msF^\heartsuit$ is dormant, then so is $S^{n-1}(\msF^\heartsuit)$.

Since the formation of $S^{n-1}(\msF^\heartsuit)$ commutes with base-change over  $S$-schemes and preserves the equivalence class (cf. Definition \ref{D0f30}),
 the assignment $\msF^\heartsuit \mapsto S^{n-1}(\msF^\heartsuit)$ 
defines a morphism of functors
\begin{align} \label{e814}
\mcO p_{\heartsuit, n=2} \migi \mcO p_\heartsuit \ \left(\text{resp.,} \  \mcO p_{\heartsuit, n=2}^\ZZZ  \migi \mcO p_{\heartsuit}^\ZZZ \right),
\end{align}
where  $\mcO p_{\heartsuit, n=2}$ (resp., $\mcO p_{\heartsuit, n=2}^\ZZZ$) denotes the functor  $\mcO p_\heartsuit$ (resp., $\mcO p_{\heartsuit}^\ZZZ$) in the case of $n=2$.
By using this morphism, we obtain the following assertion.

\SSP
\bpr \label{C33}
\begin{itemize}
\item[(i)]
Suppose that 
$S$ is an $\mbF_p$-scheme (i.e., $\CH =0$) and  that $X^\mr{log}/S^\mr{log}$ arises from 
an unpointed, geometrically connected, proper, and   smooth curve of genus $g >1$.
Then, the fiber of $\mcO p^\ZZZ_{\spadesuit}\!$  over each geometric point of $S$ is nonempty.
\item[(ii)]
Suppose that $X^\mr{log}/S^\mr{log}$ arises from an unpointed totally degenerate curve of genus $g >1$.
Then,  the fiber of $\mcO p^\ZZZ_{\spadesuit}\!$ over each $S$-scheme in $\mr{Ob}(\mcS ch_{R}^\mr{flat}/S)$ is nonempty.
\end{itemize}
 \epr
\begin{proof}
First, we shall prove assertion (i).
To this end,    we may assume, without loss of generality,
that $S = \mr{Spec}(k)$ for an algebraically closed field $k$ over $\mbF_p$.
Under this assumption,   $\mcO p_{\heartsuit, n=2}^\ZZZ$ is known to be nonempty  by   ~\cite[Theorem 7.5.2]{Wak6}.
Hence, 
by using the morphism \eqref{e814}, we see that $\mcO p^\ZZZ_{\heartsuit}\! \neq \emptyset$, which implies $\mcO p^\ZZZ_{\spadesuit}\! \neq \emptyset$ via the isomorphism $\Lambda_{\diamondsuit \Rightarrow \spadesuit}^\ZZZ$ (cf. Theorem \ref{P14}).

Also, assertion (ii) is a direct consequence of Theorem \ref{T52}.
\end{proof}

\SSP
\bco \label{C56}
Suppose that $R = \mbF_p$.
Also, let $n$ and  $\N$ be positive  integers with $1 < n < p$.
\begin{itemize}
\item[(i)]
The fibers of the projection $\Pi_{g, r}$ 
over the geometric points of $\overline{\mcM}_{g,r}$ classifying totally degenerate curves are  nonempty.
In particular, $\mcO p^\ZZZ_{g,r}$ is nonempty.
\item[(ii)]
Suppose further that $r = 0$ (and $g >1$).
Then,  
the fibers of  the projection $\Pi_{g, r}$
over the geometric  points of $\overline{\mcM}_{g,0}$ classifying smooth curves is nonempty.
In particular,  $\mcO p^\ZZZ_{g,0}   \times_{\overline{\mcM}_{g,0}} \mcM_{g,0}$ is nonempty and $\Pi_{g, 0}$ is surjective.
\end{itemize}
\eco
\begin{proof}
Assertions (i)  and the first assertion of (ii)  (as well as  the nonemptiness of $\mcO p^\ZZZ_{g,r}   \times_{\overline{\mcM}_{g,0}} \mcM_{g,0}$)   follow from   Proposition  \ref{C33}.
Also, the first  assertion of (ii) implies the surjectivity of $\Pi_{g, 0}$    because  $\mcO p_{g, 0}^\ZZZ$ is proper over $\overline{\mcM}_{g, 0}$ (cf.  Corollary \ref{T50}) and   $\overline{\mcM}_{g, 0}$ is irreducible.
\end{proof}
\SSP

\begin{rema}[The nonemptiness assertion] \label{TTT34}
The nonemptiness of $\mcO p^\ZZZ_{g,r} \!  \times_{\overline{\mcM}_{g,r}} \mcM_{g,r}$ (including the case of $r >0$) will be proved in Corollary \ref{C90}  by a different approach.
\end{rema}

\LSP
\subsection{Factorization property of generic degrees} \label{SSeew}

As a corollary of \ref{y0176},
we can  prove  that, under an \'{e}taleness assumption,  
the collection of 
the generic degrees $\mr{deg}(\Pi_{\rho, g, r})$ of $\Pi_{\rho, g, r}$
 satisfies  specific nice factorization properties determined in accordance with  
 various clutching morphisms $\mr{Clut}_\mbG$.

\SSP
\bt \label{Theorem44}
Let $\rho$ be an element of $\Xi_{n, \N}^{\times r}$ and $\mbG :=  ( \GR, \{ (g_j, r_j) \}_{j=1}^J, \{ \lambda_j\}_{j=1}^J)$ clutching data   of type $(g, r)$.
Assume   that 
$\Pi_{\rho, g, r}$ is \'{e}tale  over all the points  of $\overline{\mcM}_{g, r}$ classifying totally degenerate curves.
Then,   
the finite morphisms  $\Pi_{\rho, g, r}$ and  $\Pi_{\rho^j, g_j, r_j}$ ($j=1, \cdots, J$) are   generically \'{e}tale, 
i.e., any irreducible component that dominates $\overline{\mcM}_{g, r}$ admits a dense open substack which is \'{e}tale over $\overline{\mcM}_{g, r}$.
Moreover, their  generic degrees $\mr{deg} (\Pi_{\rho, g, r})$,  $\mr{deg} (\Pi_{\rho^j, g_j, r_j})$  satisfy the equality
\begin{align} \label{Eqqei4}
\mr{deg} (\Pi_{\rho, r, g}) = \sum_{\rho_\mbG := \{ \rho^j \}_{j=1}^J}\prod_{j=1}^J \mr{deg}(\Pi_{\rho^j, g_j, r_j}),
\end{align}
  where the sum in the right-hand side runs over the  sets of $\mbG$-$\Xi_{n, \N}$-radii $\rho_\mbG$  with $\rho_{\mbG \Rightarrow \emptyset} = \rho$.
\et
\begin{proof}
Let us consider the square diagram \eqref{1050}.
This is Cartesian by Theorem \ref{y0176}, and the image of its  lower horizontal arrow $\mr{Clut}_\mbG$ contains a point classifying a totally degenerate curve.
Hence, by 
our  assumption, 
the left-hand vertical arrow  turns out to be  generically \'{e}tale.
This implies the first assertion.

The second assertion follows from the observation that the right-hand side of the desired equality  \eqref{Eqqei4} is nothing but the generic degree of the left-hand vertical arrow in   the Cartesian diagram \eqref{1050}.
\end{proof}

\SSP
\begin{exa}[Cutting edges]
Let us describe the cartesian diagram \eqref{1050}, as well as the equality \eqref{Eqqei4}, in two particular cases corresponding to clutching morphisms, in the classical sense, between moduli spaces of pointed stable curves.

\begin{itemize}
\item[(i)]
First, let $g_1$, $g_2$, $r_1$, and $r_2$ be nonnegative integers  with $2g_i -1 + r_i > 0$ ($i=1,2$) and $g = g_1 + g_2$, $r = r_1 + r_2$.
These integers associate the gluing morphism 
\begin{align}
\Phi_\mr{tree} : \overline{\mcM}_{g_1, r_1 +1} \times \overline{\mcM}_{g_2, r_2 +1} \migi \overline{\mcM}_{g, r}
\end{align}
 obtained by attaching the respective last marked points of curves classified by $\overline{\mcM}_{g_1, r_1 +1}$ and $\overline{\mcM}_{g_2, r_2 +1}$ to form a node.

For $\rho_1\in \Xi_{n, \N}^{\times r_1}$, $\rho_2\in \Xi_{n, \N}^{\times r_2}$,   and $\rho_0 \in \Xi_{n, \N}$,
there exists a morphism
\begin{align}
{^p}\Phi_{\mr{tree}, \rho_0} : \mcO p^{^\mr{Zzz...}}_{(\rho_1, \rho_0), g_1, r_1+1} \times \mcO p^{^\mr{Zzz...}}_{(\rho_2, \rho^\veebar_0), g_2, r_2 +1} \migi \mcO p^{^\mr{Zzz...}}_{(\rho_1, \rho_2), g,r}
\end{align}
obtained by gluing together two dormant $\mr{PGL}_n^{(\N)}$-opers along the fibers over the respective last marked points of the underlying curves.
These morphisms for various $\rho_0$'s make the following square diagram commute:
\begin{align} \label{Eq41}
\vcenter{\xymatrix@C=46pt@R=36pt{
\coprod_{\rho_0 \in \Xi_{n, \N}}\mcO p^{^\mr{Zzz...}}_{(\rho_1, \rho_0), g_1, r_1 +1} \times \mcO p^{^\mr{Zzz...}}_{(\rho_2, \rho^\veebar_0), g_2, r_2 +1} \ar[r]^-{\coprod_{\rho_0}{^p}\Phi_{\mr{tree}}, \rho_0} \ar[d]_-{\coprod_{\rho_0}\Pi_{(\rho_1, \rho_0), g_1, r_1+1} \times \Pi_{(\rho_2, \rho^\veebar_0), g_2, r_2 +1}} & \mcO p^{^\mr{Zzz...}}_{(\rho_1, \rho_2), g, r} \ar[d]^-{\Pi_{(\rho_1, \rho_2), g, r}} \\
\overline{\mcM}_{g_1, r_1 +1} \times \overline{\mcM}_{g_2, r_2 +1} \ar[r]_-{\Phi_{\mr{tree}}} & \overline{\mcM}_{g, r}.
}}
\end{align}
This diagram coincides with  \eqref{1050} in the case where $\mbG$ is taken to be the clutching data ``$\msG_\mr{tree}$" defined in ~\cite[Eq.\,(908)]{Wak8}, i.e.,  the clutching data whose underlying semi-graph is visualized  as in Figure 1 below.

Under the assumption in Theorem \ref{Theorem44},
the equality \eqref{Eqqei4} in our situation here  reads 
\begin{align} \label{Eq101}
\mr{deg}(\Pi_{(\rho_1, \rho_2), g, r}) = \sum_{\rho_0 \in \Xi_{n, \N}} \mr{deg}(\Pi_{(\rho_1, \rho_0), g_1, r_1+1}) \cdot \mr{deg}(\Pi_{(\rho_2, \rho_0^\veebar), g_2, r_2 +1}).
\end{align}

\item[(ii)]
Next, given nonnegative integers $g$, $r$ with $2g + r > 0$, we shall write
\begin{align} \label{eq1}
\Phi_\mr{loop} : \overline{\mcM}_{g, r+2} \migi \overline{\mcM}_{g+1, r}
\end{align}
 for the gluing morphism obtained by attaching the last two marked points of each curve classified by $\overline{\mcM}_{g, r+2}$ to form a node.

For $\rho \in \Xi_{n, \N}^{\times r}$ and $\rho_0 \in \Xi_{n, \N}$,
 there exists a morphism
 \begin{align} \label{eq2}
 {^p}\Phi_{\mr{loop}, \rho_0} : \mcO p^{^\mr{Zzz...}}_{(\rho, \rho_0, \rho_0^\veebar), g, r+2} \migi \mcO p^{^\mr{Zzz...}}_{\rho, g+1, r}
 \end{align}
 obtained by gluing each dormant $\mr{PGL}_n^{(\N)}$-oper along the fibers over the last two marked points of the underlying curve.
 These morphisms for various $\rho_0$'s make the following square diagram commute:
 \begin{align} \label{Eq41}
\vcenter{\xymatrix@C=46pt@R=36pt{
\coprod_{\rho_0 \in \Xi_{n, \N}}\mcO p^{^\mr{Zzz...}}_{(\rho, \rho_0, \rho_0^\veebar), g, r+2}\ar[r]^-{\coprod_{\rho_0}{^p}\Phi_{\mr{loop}, \rho_0}} \ar[d]_-{\coprod_{\rho_0}\Pi_{(\rho, \rho_0, \rho_0^\veebar), g, r+2}} & \mcO p^{^\mr{Zzz...}}_{\rho, g+1, r} \ar[d]^-{\Pi_{\rho, g+1, r}} \\
\overline{\mcM}_{g, r +2} \ar[r]_-{\Phi_{\mr{loop}}} & \overline{\mcM}_{g+1, r}.
}}
\end{align}
This diagram coincides with \eqref{1050} in the case where $\mbG$ is taken to be the clutching data ``$\msG_{\mr{loop}}$" defined in ~\cite[Eq.\,(912)]{Wak8}, i.e., the clutching data whose underlying semi-graph is  visualized as in Figure 2 below.

Under the assumption  in Theorem \ref{Theorem44}, 
the equality \eqref{Eqqei4} in our situation here reads 
\begin{align} \label{Eq100}
\mr{deg}(\Pi_{(\rho, g+1, r)}) = \sum_{\rho_0 \in \Xi_{n, \N}} \mr{deg}(\Pi_{(\rho, \rho_0, \rho_0^\veebar), g, r+2}).
\end{align}
\end{itemize}
\hspace{20mm} 
 \includegraphics[width=14cm,bb=0 0 1212 350,clip]{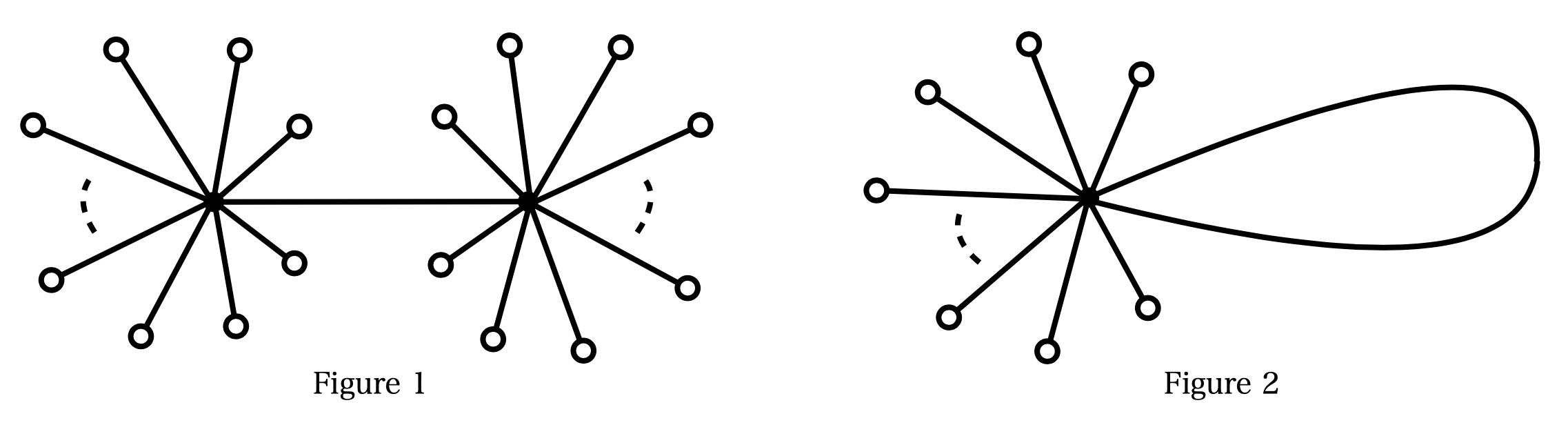}

(Here, ``$\circ\hspace{-1.5mm}-\hspace{-1.5mm}-\hspace{-1.5mm}\bullet$'' represents an open edge and 
``$\bullet\hspace{-1.5mm}-\hspace{-1.5mm}-\hspace{-1.5mm}\bullet$'' represents a closed edge.)
\end{exa}

\vspace{10mm}
\section{$2$d TQFT for dormant $\mr{PGL}_n^{(\N)}$-opers} \label{S18}\SSP

In this section, we prove some assertions concerning  the moduli spaces $\mcO p^{^\mr{Zzz...}}_{\rho, g, r}$, as well as their generic degrees $\mr{deg}(\mr{\Pi}_{\rho, g, r})$.
These facts together with   the factorization property resulting from Theorem \ref{Theorem44}  are collectively explained by the notion of a  $2$d TQFT or  a fusion rule (cf. Theorem \ref{Theorem4f4}).
It gives  an effective way of computing the values $\mr{deg}(\Pi_{\rho, g, r})$ by means of the ring-theoretic structure of the corresponding Frobenius algebra (cf. Theorem \ref{Prop16}).

Let $n$ be an integer with $1 < n < p$, $\N$ a positive integer, $(g, r)$ a pair of nonnegative integers with $2g-2+r > 0$.
We shall set $\overline{\mcM}_{g, r} := \overline{\mcM}_{g, r, \mbF_p}$,
$\mcO p^{^\mr{Zzz...}}_{\rho, g, r} := \mcO p^{^\mr{Zzz...}}_{n, \N, \rho, g, r, \mbF_p}$, and $\Pi_{\rho, g, r} := \Pi_{n, \N, \rho, g, r, \mbF_p}$ 
for each $\rho \in \Xi_{n, \N}^{\times r}$.

\LSP
\subsection{Forgetting tails} \label{SS060}

We denote by 
\begin{align} \label{eeQwe1}
\Phi_{\mr{tail}} : \overline{\mcM}_{g, r+1} \migi \overline{\mcM}_{g,r}
\end{align}
 the morphism obtained by forgetting the last marked point and successively contracting any resulting unstable components of each curve classified by $\overline{\mcM}_{g, r+1}$. 
 In what follows, we consider the behavior of the moduli stack of dormant $\mr{PGL}_n^{(\N)}$-opers according to pull-back along $\Phi_{\mr{tail}}$.

Let $S$ be a scheme over $\mbF_p$
  and 
 $\msX := (X/S, \{ \sigma_{i} \}_{i=1}^{r+1})$ an $(r+1)$-pointed {\it smooth} curve over $S$ of genus $g$ (hence $S^\mr{log} = S$).
We shall write 
\begin{align} \label{eeQwe13}
\msX_{\mr{tail}} := (X_{\mr{tail}}, \{ \sigma_{\mr{tail}, i} \}_{i=1}^r)
\end{align}
for the $r$-pointed curve obtained from $\msX$ by forgetting the last marked point.
  That is to say, $X_{\mr{tail}} = X$ and $\sigma_{\mr{tail}, i} = \sigma_{i}$ for every $i \in \{1, \cdots, r \}$.

Next, let $\vartheta := (\varTheta, \nabla_\vartheta)$ be a dormant $n^{(\N)}$-theta characteristic of $X^\mr{log}_{\mr{tail}}/S$,
which consists of a line bundle $\varTheta$ on $X$ and a $\mcD^{(\N -1)}_{X^\mr{log}_\mr{tail}/S}$-module structure $\nabla_\vartheta$ on $\mcT^{\otimes \frac{n(n-1)}{2}}_{X^\mr{log}_\mr{tail}/S}\otimes \varTheta^{\otimes n}$ with vanishing $p^\N$-curvature.
Note that there always exists such a pair because of the assumption $p \nmid n$ (cf. Proposition \ref{P204}).

By letting $\varTheta_+ := \varTheta ((n-1) \sigma_{r+1})$,
 we have a sequence of natural isomorphisms
\begin{align} \label{Eq29}
\mcT_{X^\mr{log}/S}^{\otimes \frac{n (n-1)}{2}} \otimes \varTheta_+^{\otimes n} &\isom \mcT_{X^\mr{log}_\mr{tail}/S} (-\sigma_{r+1})^{\otimes \frac{n(n-1)}{2}} \otimes \varTheta ((n-1)\sigma_{r+1})^{\otimes n} \\
&\isom \mcT_{X^\mr{log}_\mr{tail}/S}^{\otimes \frac{n(n-1)}{2}} \otimes \varTheta^{\otimes n} \left(\frac{n(n-1)}{2}\sigma_{r+1}\right). \notag
\end{align}
The $\mcD_{X^\mr{log}/S}^{(\N-1)}$-module structure on $\mcT^{\otimes \frac{n(n-1)}{2}}_{X^\mr{log}_\mr{tail}/S}\otimes \varTheta^{\otimes n}$  associated to $\nabla_\vartheta$ extends, via \eqref{Eq29},  to a  unique $\mcD^{(N-1)}_{X^\mr{log}/S}$-module structure $\nabla_{\vartheta, +}$ on $\mcT_{X^\mr{log}/S}^{\otimes \frac{n(n-1)}{2}} \otimes \varTheta_+^{\otimes n}$.
The resulting pair
\begin{align}
\vartheta_+ := (\varTheta_+, \nabla_{\vartheta, +})
\end{align}
 forms a dormant $n^{(\N)}$-theta characteristic of $X^\mr{log}/S$.

In our discussion here, we shall set
\begin{align}
\mcF_\varTheta := \mcD_{X^\mr{log}_\mr{tail}/S, \leq n-1}^{(\N -1)} \otimes \varTheta  \ \ \  \text{and} \  \ \ 
\mcF_\varTheta^j := \mcD_{X^\mr{log}_\mr{tail}/S, \leq n-j- 1}^{(\N -1)}\otimes \varTheta \hspace{12mm}\\ 
\left(\text{resp.,} \ \mcF_{\varTheta, +} := \mcD_{X^\mr{log}/S, \leq n-1}^{(\N -1)} \otimes \varTheta_+   \  \ \ \text{and} \  \ \ 
\mcF_{\varTheta, +}^j := \mcD_{X^\mr{log}/S, \leq n-j- 1}^{(\N -1)}\otimes \varTheta_+   \right)
\end{align}
for  $j=0, \cdots, n$.
In particular, $\{ \mcF_\varTheta^j \}_j$ and $\{ \mcF_{\varTheta, +}^j \}_j$ define decreasing filtrations of $\mcF_{\varTheta}$ and $\mcF_{\varTheta, +}$, respectively, such that
\begin{align} \label{eeQQ113}
\mcF^j_\varTheta / \mcF^{j+1}_\varTheta \cong \mcT_{X^\mr{log}_\mr{tail}/S}^{\otimes (n-1-j)} \otimes \varTheta, 
\hspace{10mm}
\mcF^j_{\varTheta, +}/\mcF^{j+1}_{\varTheta, +} \cong  \mcT_{X^\mr{log}/S}^{\otimes (n-1-j)} \otimes \varTheta_+.  
\end{align} 
The tensor product of the natural  inclusion
$\mcD_{X^\mr{log}_{\mr{tail}}/S, \leq n-1}^{(\N -1)} \migiincl \mcD_{X^\mr{log}/S, \leq n-1}^{(\N -1)}((n-1) \sigma_{r+1})$ and the identity morphism of $\varTheta$ forms an inclusion $\mcF_\varTheta \migiincl \mcF_{\varTheta, +}$, which  preserves  the filtration.

 \SSP
\bpr \label{Prop15}
Let $\nabla^\diamondsuit$ be a $(\mr{GL}_n^{(\N)}, \vartheta)$-oper on $\msX_\mr{tail}$.
Then, there exists a unique $(\mr{GL}_n^{(\N)}, \vartheta_+)$-oper $\nabla^\diamondsuit_+$ on $\msX$ extending $\nabla^\diamondsuit$.
If, moreover, $\nabla^\diamondsuit$ is dormant, then
$\nabla_+^\diamondsuit$ is dormant and 
its  exponent   at $\sigma_{r+1}$ coincides with 
$[\overline{0}, \overline{1}, \cdots, \overline{n-1}]$, where $\overline{m}$ (for each integer $m$) denotes the image of $m$ via $\mbZ \migisurj \mbZ/p^\N \mbZ$.
\epr
\begin{proof}
We shall write  $U := X \setminus \mr{Im}(\sigma_{r+1})$ and write $u$ for the open immersion $U \migiincl X$.
The natural morphism $\mcF_\varTheta \rightarrow u_* (\mcF_\varTheta |_U) \left(=  u_* (\mcF_{\varTheta, +} |_U) \right)$ is injective, and  
 $\nabla^\diamondsuit$ extends, via this injection,  to a $\mcD_{X^\mr{log}/S}^{(\N -1)}$-module structure $\breve{\nabla}^\diamondsuit$   on $u_* (\mcF_\varTheta |_U)$.

In what follows, we prove the claim that $\mcF_{\varTheta, +} \left(\subseteq  u_* (\mcF_\varTheta |_U) \right)$ is  closed under  $\breve{\nabla}^\diamondsuit$.
Choose a local function $t$ defining $\mr{Im}(\sigma_{r +1})$
and a local generator $v$ of the line bundle $\varTheta$ around $\mr{Im} (\sigma_{r +1})$ (hence $\varTheta$ is locally identified with $\mcO_X v$).
The   local basis  $\{ \partial^{\langle j \rangle}_{\mr{non}\text{-}\mr{log}} \}_{j \in \mbZ_{\geq 0}}$  of $\mcD_{X_{\mr{tail}}^\mr{log}/S}^{(\N -1)}$ associated to  $t$ (in the manner of  ~\cite[\S\,1.2.3]{PBer1}) gives
locally defined decompositions 
\begin{align} \label{eeQQ1227}
\mcD_{X_{\mr{tail}}^\mr{log}/S}^{(\N -1)} =  \bigoplus_{j \in \mbZ_{\geq 0}}\mcO_X  \cdot \partial^{\langle  j \rangle}_{\mr{non}\text{-}\mr{log}}
 \ \ \ \text{and} \ \ \ 
\mcF_{\varTheta} = \bigoplus_{j= 0}^{n-1} \mcO_X  \cdot \partial^{\langle j \rangle}_{\mr{non}\text{-}\mr{log}} \otimes v.
\end{align} 
These decompositions extend to 
\begin{align} \label{eeQQ120}
\mcD_{X^\mr{log}/S}^{(\N -1)} =  \bigoplus_{j \in \mbZ_{\geq 0}}\mcO_X  \cdot t^{j} \cdot \partial^{\langle  j \rangle}_{\mr{non}\text{-}\mr{log}}
 \ \ \ \text{and} \ \ \ 
\mcF_{\varTheta, +} = \bigoplus_{j= 0}^{n-1} \mcO_X  \cdot t^{j-n+1} \cdot \partial^{\langle j \rangle}_{\mr{non}\text{-}\mr{log}} \otimes v,
\end{align}
respectively. 
Let 
$a$ and $j'$ be nonnegative integers with $j' \leq n-1$,
and regard 
$t^{p^a} \cdot \partial^{\langle  p^a\rangle}_{\mr{non}\text{-}\mr{log}}$ and 
$t^{j' - n+1} \cdot  \partial^{\langle  j' \rangle}_{\mr{non}\text{-}\mr{log}} \otimes v$
as local sections of $\mcD_{X^\mr{log}/S}^{(\N -1)}$ and $\mcF_{\varTheta, +}$, respectively, via \eqref{eeQQ120}.
Then, the following sequence of equalities  holds: 
\begin{align} \label{eeQQ129}
& \ \ \ \ \breve{\nabla}^\diamondsuit (t^{p^a} \cdot \partial^{\langle  p^a  \rangle}_{\mr{non}\text{-}\mr{log}}) (t^{j' - n+1} \cdot  \partial^{\langle  j' \rangle}_{\mr{non}\text{-}\mr{log}} \otimes v)  \\
& =t^{p^a} \cdot \sum_{j=0}^{p^a} \frac{q_{p^a}!}{q_{j}! \cdot q_{p^a -j}!} \cdot \partial_{\mr{non}\text{-}\mr{log}}^{\langle j \rangle} (t^{j' - n +1}) \cdot \breve{\nabla}^\diamondsuit  (\partial_{\mr{non}\text{-}\mr{log}}^{\langle p^a  - j \rangle}) (\partial_{\mr{non}\text{-}\mr{log}}^{\langle j' \rangle} \otimes v) \notag \\
& = t^{p^a} \cdot \sum_{j=0}^{\mr{min} \left\{p^a, \,   p-n +1 + j' \right\}} \frac{q_{p^a}!}{q_{j}! \cdot q_{p^a -j}!} \cdot \partial_{\mr{non}\text{-}\mr{log}}^{\langle j \rangle} (t^{j' - n +1}) \cdot \nabla^\diamondsuit  (\partial_{\mr{non}\text{-}\mr{log}}^{\langle p^a  - j \rangle}) (\partial_{\mr{non}\text{-}\mr{log}}^{\langle j' \rangle} \otimes v), 
\notag
\end{align}
where the first equality follows from  ~\cite[Eq.\,(1.2.3.2)]{PBer1} and the second equality follows from the equality
$\partial_{\mr{non}\text{-}\mr{log}}^{\langle j \rangle} (t^{-m}) = 0$ for $m + j > p$ induced from   the discussion following ~\cite[Definition 2.5]{GLQ} (i.e., ~\cite[Eq.\,(64)]{Wak6}).
If
 $a = 0$, then
the rightmost of \eqref{eeQQ129} equals
\begin{align}
& \ \ \ \ t \cdot \left( t^{j' - n+1} \cdot \nabla^\diamondsuit (\partial^{\langle 1 \rangle}_{\mr{non}\text{-}\mr{log}}) (\partial^{\langle j' \rangle}_{\mr{non}\text{-}\mr{log}} \otimes v) +  (j' - n+1)\cdot t^{j' - n}  \cdot \partial^{\langle j' \rangle}_{\mr{non}\text{-}\mr{log}} \otimes v \right) \\
& = t^{(j' +1)-n +1} \cdot \nabla^\diamondsuit (\partial^{\langle 1 \rangle}_{\mr{non}\text{-}\mr{log}}) (\partial^{\langle j' \rangle}_{\mr{non}\text{-}\mr{log}} \otimes v) +  (j' - n+1)\cdot t^{j' - n +1}  \cdot \partial^{\langle j' \rangle}_{\mr{non}\text{-}\mr{log}} \otimes v. \notag
\end{align} 
This local section  lies in $\mcF_{\varTheta, +}$ because 
 $\nabla^\diamondsuit (\partial^{\langle 1 \rangle}_{\mr{non}\text{-}\mr{log}}) (\partial^{\langle j' \rangle}_{\mr{non}\text{-}\mr{log}} \otimes v)$ is contained in $\mcF_{\varTheta}^{n -2 -j'}$ (resp., $\mcF_{\varTheta}$) when $j' < n-1$ (resp., $j' = n-1$).
Similarly for the case of $a > 0$,
 the rightmost of \eqref{eeQQ129} lies in $\mcF_{\varTheta, +}$ because of the fact that
 $t^{p^a} \cdot \partial_{\mr{non}\text{-}\mr{log}}^{\langle j \rangle} (t^{j' - n +1}) \in \mcO_X$ and $\nabla^\diamondsuit  (\partial_{\mr{non}\text{-}\mr{log}}^{\langle p^a  - j \rangle}) (\partial_{\mr{non}\text{-}\mr{log}}^{\langle j' \rangle} \otimes v) \in \mcF_{\varTheta}$.
Since the $\mcO_X$-algebra  $\mcD_{X^\mr{log}/S}^{(\N -1)}$ is locally generated by the sections $t^{p^a} \cdot \partial_{\mr{non}\text{-}\mr{log}}^{\langle p^a \rangle}$ ($a \geq 0$),
 we have 
  $\breve{\nabla}^\diamondsuit (\delta) (\mcF_{\varTheta, +}) \subseteq \mcF_{\varTheta, +}$ for any $\delta \in \mcD_{X^\mr{log}/S}^{(\N -1)}$.
This completes the proof of the claim.

Now, denote by $\nabla_+^\diamondsuit$ the $\mcD_{X^\mr{log}/S}^{(\N -1)}$-module structure on $\mcF_{\varTheta, +}$ obtained by restricting $\breve{\nabla}^\diamondsuit$.
By the second ``$\cong$" in \eqref{eeQQ113},
$\nabla_+^\diamondsuit$ is verified to form a $(\mr{GL}_n^{(\N)}, \vartheta_+)$-oper, extending $\nabla^\diamondsuit$.
Since the uniqueness portion follows from the equality $\mcF_\varTheta |_U = \mcF_{\varTheta, +} |_U$,  
the proof of the first assertion is completed.

Next, to prove the second assertion,
we
suppose further  that $\nabla^\diamondsuit$ is dormant.
After possibly taking the geometric fibers of points in $S$, we may assume that $S = \mr{Spec}(k)$ for an algebraically closed field $k$ over $\mbF_p$.
Since $\breve{\nabla}^\diamondsuit$ has vanishing $p^\N$-curvature, 
its restriction $\nabla_+^\diamondsuit$ turns out to be dormant.
The formal neighborhood  of  $\mr{Im}(\sigma_{r+1})$ in $X$ may be identified with $U_\oslash := \mr{Spec} (k [\![t]\!])$
(cf.  \eqref{YY124})
 by using the function $t$.
 It follows from Proposition-Definition  \ref{P022}  that
there exists an isomorphism
\begin{align} \label{Eq201}
(\mcF_{\varTheta, +}, \nabla^\diamondsuit_+)|_{U_\oslash} \isom  \bigoplus_{i=1}^n (\mcO_\oslash, \nabla_{\oslash, d_i}^{(\N -1)})
\end{align}
for some $d_1, \cdots, d_n \in \mbZ/p^\N \mbZ$,
where $\nabla_{\oslash, d}^{(\N -1)}$ for each $d \in \mbZ$ denotes the  $\mcD_{\oslash}^{(\N -1)}$-module structure on $\mcO_{\oslash} := \mcO_{U_\oslash}$ defined in \eqref{YY52}.
Here, for each $\mcD_{\oslash}^{(\N -1)}$-module $(\mcF, \nabla)$, we shall write $\mr{Res}(\mcF, \nabla)$ for the cokernel of the natural morphism $F^{(\N)*}_{U_\oslash /k}(\mcS ol (\nabla)) \migi \mcF$.
 The assignment $(\mcF, \nabla) \mapsto \mr{Res}(\mcF, \nabla)$ is functorial, so
the isomorphism \eqref{Eq201} yields an isomorphism of $k$-vector spaces
\begin{align} \label{Eq203}
\mr{Res}((\mcF_{\varTheta, +}, \nabla^\diamondsuit_+)|_{U_\oslash}) \isom \bigoplus_{i=1}^n \mr{Res}(\mcO_\oslash, \nabla_{\oslash, d_i}^{(\N -1)}).
\end{align}
The $\mcD_{\oslash}^{(\N -1)}$-module  $(\mcF_{\varTheta, +}, \nabla^\diamondsuit_+)|_{U_\oslash}$ restricts to
$(\mcF_\varTheta, \nabla^\diamondsuit) |_{U_\oslash}$, which is isomorphic to the direct sum of $n$ copies of $(\mcO_\oslash, \nabla^{(\N -1)}_{\oslash, 0})$.
It follows that
 $\mr{Res}((\mcF_{\varTheta, +}, \nabla^\diamondsuit_+)|_{U_\oslash}) $ is isomorphic to $\mcF_{\varTheta, +}/\mcF_\varTheta$.
On the other hand, by Proposition \ref{L093},
$\mr{Res}(\mcO_\oslash, \nabla_{\oslash, d_i}^{(\N -1)})$ is isomorphic to $\mcO_\oslash/(t^{\widetilde{d}_i})$.
Thus,  \eqref{Eq203} 
implies
\begin{align}
\bigoplus_{i=1}^n \mcO_\oslash/(t^{\widetilde{d}_i})
& \cong 
\mcF_{\varTheta, +}/\mcF_{\varTheta} \\
& \cong \bigoplus_{j=0}^{n-1} 
 (\mcO_X \cdot t^{j-n+1} \cdot \partial^{\langle j \rangle}_{\mr{non}\text{-}\mr{log}}  \otimes v)
 / (\mcO_X \cdot \partial^{\langle j \rangle}_{\mr{non}\text{-}\mr{log}}  \otimes v)
\notag \\
&\cong  \bigoplus_{j=0}^{n-1} \mcO_\oslash / (t^{n-1-j}).   \notag
\end{align}
This yields the equality of sets
 $\{ d_1, \cdots, d_n \} = \{ \overline{0}, \cdots, \overline{n-1} \}$, meaning  that 
 the exponent of $\nabla_+^\diamondsuit$ at $\sigma_{r+1}$ coincides with $[\overline{0}, \overline{1}, \cdots, \overline{n-1}]$.
This completes the proof of the second assertion.
\end{proof}
\SSP

\bpr \label{Prop2234}
Let $\nabla^\diamondsuit$ be a dormant $(\mr{GL}_n^{(\N)}, \vartheta_+)$-oper  on $\msX$ whose exponent at $\sigma_{r+1}$ coincides with $[\overline{0}, \overline{1}, \cdots, \overline{n-1}]$.
Then, there exists a unique  dormant $(\mr{GL}_n^{(\N)}, \vartheta)$-oper $\nabla^\diamondsuit_\mr{tail}$ on $\msX_{\mr{tail}}$
 such that the $(\mr{GL}_n^{(\N)}, \vartheta_+)$-oper  $(\nabla^\diamondsuit_\mr{tail})_+$ associated to $\nabla^\diamondsuit_\mr{tail}$ (by applying the construction of Proposition \ref{Prop15}) coincides with $\nabla^\diamondsuit$.
\epr
\begin{proof}
Let $\nabla_\mr{can}$ denote  the canonical $\mcD_{X^\mr{log}/S}^{(\N -1)}$-module structure on  $F_{X/S}^{(\N)*}(\mcS ol (\nabla^\diamondsuit))$ in the sense of   Definition \ref{dGGe11}, i.e., $\nabla_\mr{can} := \nabla^{(\N -1)}_{\mcS ol (\nabla^\diamondsuit), \mr{can}}$.
The inclusion $\mcS ol (\nabla^\diamondsuit) \migiincl \mcF_{\varTheta, +}$ induces 
 a  morphism of $\mcD_{X^\mr{log}/S}^{(\N -1)}$-modules
 \begin{align}
 \eta : 
 (F^{(\N)*}_{X/S}(\mcS ol (\nabla^\diamondsuit)), \nabla_{\mr{can}}) \migi (\mcF_{\varTheta, +}, \nabla^{\diamondsuit}).
 \end{align}
 This morphism becomes an isomorphism when restricted to $X \setminus \bigcup_{i=1}^{r+1} \mr{Im}(\sigma_i)$.
 Hence, one can glue together $(\mcF_{\varTheta, +}, \nabla^\diamondsuit) |_{X \setminus \mr{Im}(\sigma_{r +1})}$ and  
 $ (F^{(\N)*}_{X/S}(\mcS ol (\nabla^\diamondsuit)), \nabla_{\mr{can}})|_{X \setminus \bigcup_{i=1}^{r} \mr{Im}(\sigma_i)}$ by using the isomorphism  $\eta |_{X \setminus \bigcup_{i=1}^{r+1} \mr{Im}(\sigma_i)}$;
 the resulting  $\mcD_{X^\mr{log}/S}^{(\N -1)}$-module will be denoted by  $(\mcF, \nabla^\diamondsuit_\mr{tail})$.
 Since   $ (F^{(\N)*}_{X/S}(\mcS ol (\nabla^\diamondsuit)), \nabla_{\mr{can}})$ comes from  a (non-logarithmic) $\mcD^{(\N -1)}_{X/S}$-module,
 $(\mcF, \nabla^\diamondsuit_\mr{tail})$ specifies a $\mcD_{X^\mr{log}_\mr{tail}/S}^{(\N -1)}$-module.

 In what follows, we prove the equality $\mcF = \mcF_{\varTheta}$ of $\mcO_{X}$-submodules of $\mcF_{\varTheta, +}$.
To this end, we may assume, after possibly restricting $\msX$ over each geometric point of $S$, that $S = \mr{Spec}(k)$ for an algebraically closed field $k$ over $\mbF_p$.
Let us fix a local function $t$  defining $\mr{Im}(\sigma_{r +1})$, which gives an identification of the formal neighborhood of $\mr{Im}(\sigma_{r+1}) \subseteq X$ with $U_\oslash$ (cf. \eqref{YY124}).
The assumption on $\nabla^\diamondsuit$ implies the existence of 
 an isomorphism between  $\mcD_{\oslash}^{(\N -1)}$-modules
\begin{align} \label{Eq205}
(\mcF_{\varTheta, +}, \nabla^\diamondsuit) |_{U_\oslash} \isom \bigoplus_{i=0}^{n-1} (\mcO_\oslash, \nabla_{\oslash, \overline{i}}^{(\N -1)}).
\end{align}
According to  Proposition \ref{NN77}, (ii), 
the global section $(1, 1, \cdots, 1) \in H^0 (U_\oslash, \mcO_\oslash^{\oplus n})$
 formally generates its codomain  (as a $\mcD_{\oslash}^{(\N -1)}$-module).
Hence, 
 the  $\mcO_\oslash$-module $\mcF_{\varTheta} |_{U_\oslash}$ is formally generated  by the sections corresponding, via \eqref{Eq205}, to the sections
 \begin{align}
& \ \ \ \  ((\nabla_{\oslash, \overline{1}}^{(\N -1)} ( \partial_{\mr{non}\text{-}\mr{log}}^{\langle 1 \rangle}))^j (t^{n-1}), \cdots, (\nabla_{\oslash, \overline{n -1}}^{(\N -1)} (\partial_{\mr{non}\text{-}\mr{log}}^{\langle 1 \rangle}))^j (t^{n-1})) \\
 & = (\binom{n-1 - 0}{j}t^{n-1-j}, \cdots, \binom{n- 1 - (n-1)}{j}t^{n-1-j}) \in H^0 (U_\oslash, \mcO_\oslash^{\oplus n})
 \end{align}
 for $j=0, \cdots, n-1$, where 
 $\partial_{\mr{non}\text{-}\mr{log}}^{\langle 1 \rangle}$ is as in the proof of Proposition \ref{Prop15}, and 
 $\binom{m}{0} := 1$ for any $m \in \mbZ$.
 By  this fact together with  Proposition \ref{L093}, 
 $\mcF_\varTheta |_{U_\oslash}$
  coincides with 
  $F^{(\N)*}_{U_\oslash/k}(\mcS ol (\nabla^\diamondsuit |_{U_\oslash}))$.
 This proves  the desired equality $\mcF = \mcF_\varTheta$.
  
 The collection 
  $(\mcF_\varTheta, \nabla^\diamondsuit_\mr{tail}, \{ \mcF_\varTheta^j \}_{j=0}^n)$
  forms a $\mr{GL}_n^{(\N -1)}$-oper on $\msX_{\mr{tail}}$ because of the above discussion and the fact that its restriction 
   to $X \setminus \mr{Im}(\sigma_{r+1})$ defines a $\mr{GL}_n^{(\N -1)}$-oper.
   Moreover, it follows from the various definitions involved that
   $\nabla^\diamondsuit_\mr{tail}$  specifies a dormant $(\mr{GL}_n^{(\N -1)}, \vartheta)$-oper satisfying the required conditions. 
  The proof of the assertion is completed.
\end{proof}
\SSP

 Let $\msE^\spadesuit$ be a dormant  $\mr{PGL}_n^{(\N)}$-oper on $\msX_\mr{tail}$.
 This corresponds, via  the isomorphism $\Lambda^{^\mr{Zzz...}}_{\diamondsuit \Rightarrow \spadesuit, \vartheta}$ (cf. Theorem \ref{P14}), to
  a dormant $(\mr{GL}_n^{(\N)}, \vartheta)$-oper $\nabla^\diamondsuit$.
 We shall denote by $\msE^\spadesuit_+$ the dormant $\mr{PGL}_n^{(\N)}$-oper on $\msX$ corresponding to the $(\mr{GL}_n^{(\N)}, \vartheta_+)$-oper $\nabla_+^\diamondsuit$ constructed from $\nabla^\diamondsuit$ in the  manner of Proposition \ref{Prop15}.
 Note that the isomorphism classes of $\msE^\spadesuit_+$ depend only on that of $\msE^\spadesuit$ (i.e., does not depend on the choice of $\vartheta$).

 Moreover, by Proposition \ref{Prop15} again, 
 the radius of $\msE^\spadesuit_+$ at $\sigma_{r+1}$ coincides with
 $\varepsilon$.
 Here,   we set
 \begin{align} \label{Eq20}
\varepsilon := \pi_\Delta ([\overline{0}, \cdots, \overline{n -1}]) \in \mfS_n \backslash (\mbZ/p^\N \mbZ)^{\times n}/\Delta
\end{align}
(cf. \eqref{ssak} for the definition of $\pi_\Delta$).
Note that this element for $n=2$ corresponds to  
``$\epsilon$" (cf. \eqref{YY334})  via the bijection \eqref{wpaid}.

The following assertion follows immediately  from Proposition \ref{Prop2234}.

 \SSP
\bpr \label{Prop12}
Let $\rho$ be  an element of $\Xi_{n, \N}^{\times r}$.
Then, the  assignment
\begin{align}
(\msX, (\msX_\mr{tail}, \msE^\spadesuit)) \mapsto (\msX, (\msX, \msE^\spadesuit_+))
\end{align}
constructed above is functorial with respect to base-change over $S$-schemes, and 
determines an isomorphism
\begin{align} \label{Eq211}
\mcM_{g, r+1} \times_{\Phi_\mr{tail}, \overline{\mcM}_{g, r}} \mcO p_{\rho, g,r}^{^\mr{Zzz...}} \isom \mcM_{g, r+1} \times_{\overline{\mcM}_{g, r+1}} \mcO p_{(\rho, \varepsilon), g, r+1}^{^\mr{Zzz...}}
\end{align}
of stacks over $\mcM_{g, r+1}$.
\epr
\SSP

Moreover, the following assertion is a direct consequence of  the above proposition.

\SSP
\bco \label{Cor11}
Let $(g, r)$ and $\rho$ be as in Proposition \ref{Prop12}.
Also, let us keep the assumption in Theorem \ref{Theorem44}.
Then, 
 the following equality holds:
\begin{align} \label{eeQQ200}
\mr{deg} 
(\Pi_{(\rho, \varepsilon), g, r+1}) = \mr{deg}(\Pi_{\rho, g, r}).
\end{align}
\eco

\LSP
\subsection{$\mr{PGL}_2^{(\N)}$-opers on a $2$-pointed projective line} \label{SS083}

Let $S$ be a scheme over $\mbF_p$, and let $\mbP := \mcP roj (\mcO_S [x, y])$ denote the projective line over $S$.
Denote by $\sigma_{1}$ and $\sigma_{2}$ the marked points of $\mbP$ determined by the values 
$0$ and $\infty$, respectively (i.e., $\sigma_1 := [0]$ and $\sigma_2 := [\infty]$ in the terminology of \S\,\ref{SS30d2}).
In particular, we have a $2$-pointed curve 
\begin{align}
\msP' := (\mbP, \{ \sigma_{1}, \sigma_{2}\})
\end{align}
 over $S$.
The $S$-scheme $\mbP$ has two open subschemes $U_1 := \mbP \setminus \mr{Im}(\sigma_{2}) = \mcS pec (\mcO_S [t_1])$ (where $t_1 := x/y$) and $U_2 := \mbP \setminus \mr{Im}(\sigma_{1}) = \mcS pec (\mcO_S [t_2])$ (where $t_2 := y/x$).

Just as in the hyperbolic case, we can equip $\mbP$ with a log structure induced from the relative divisor defined as the union of $\mr{Im}(\sigma_{1})$ and $\mr{Im}(\sigma_{2})$; the resulting log curve  will be denoted by $\mbP^{\mr{log}'}$.
Also, the definition of radius $\rho_i (-)$ at $\sigma_i$ ($i=1, 2$)  can be formulated as in Definition \ref{epaddd}.

\SSP
\bpr \label{Prop13}
\begin{itemize}
\item[(i)]
Let $\msE^\spadesuit$ be a dormant $\mr{PGL}_n^{(\N)}$-oper on $\msP'$.
Then, the equality $\rho_1 (\msE^\spadesuit) = \rho_2 (\msE^\spadesuit)^\veebar$ holds.
\item[(ii)]
For each $\rho \in \Xi_{n, \N}$, there exists a unique (up to isomorphism) dormant $\mr{PGL}_n^{(\N)}$-oper on $\msP'$ with $\rho = \rho_1 (\msE^\spadesuit) = \rho_2 (\msE^\spadesuit)^\veebar$.
\end{itemize}
\epr
\begin{proof}
First, we shall prove assertion (i).
For each $i=1, 2$, denote by $\{ \partial_i^{\langle j \rangle} \}_{j \in \mbZ_{\geq 0}}$
the (globally defined) basis of $\mcD_{\mbP^{\mr{log}'}/S}^{(\N -1)}$ associated to the function $t_i$.
In particular, we have $\mcD_{\mbP^{\mr{log}'}/S}^{(\N -1)} = \bigoplus_{j \in \mbZ_{\geq 0}} \mcO_{\mbP} \cdot \partial_i^{\langle j \rangle}$.
The morphism $\mu_{(-)}$ and the sheaves $\mcP^\ell_{(-)}$ ($\ell \in \mbZ_{\geq 0}$) discussed in \S\S\,\ref{SS040}-\ref{SS04f4} can be defined even in our situation here.
In particular, we obtain a section $\eta_i := \mu_{(\N -1)} (t_i) -1$ of $\mcP_{(\N -1)}^\ell$.
Since $\mu_{(\N -1)}$ preserves the monoid structure, we have
\begin{align}
(1 + \eta_1)(1 + \eta_2) = \mu_{(\N -1)} (t_1) \cdot \mu_{(\N -1)} (t_2) = \mu_{(\N -1)}(t_1 \cdot t_2) = \mu_{(\N -1)}(1) = 1.
\end{align}
Just as in the proof of Lemma \ref{P238},
the resulting equality $(1 + \eta_1)(1 + \eta_2) =1$  induces
\begin{align}
\partial_2^{\langle p^a \rangle} = - \partial_1^{\langle p^a \rangle} - 1 + \prod_{b= 0}^{a-1}(1- (\partial_1^{\langle p^b \rangle})^{p-1}).
\end{align}
Then, the assertion follows from \eqref{dE10122} and Proposition \ref{L090}.

Next, we shall prove assertion (ii).
Denote by $\nabla_{\mr{triv}}^{(\N -1)}$ the trivial $\mcD^{(\N-1)}_{\mbP^{\mr{log}'}/S}$-module structure on $\mcO_{\mbP}$.
By  fixing  an identification  $\Omega_{\mbP^{\mr{log}'}/S} = \mcO_\mbP$,
we regard the pair $\vartheta := (\mcO_\mbP, \nabla_{\mr{triv}}^{(\N -1)})$ 
as a dormant $n^{(\N)}$-theta characteristic of $\mbP^{\mr{log}'}/S$.
 Since $n < p$, there exists   an $n$-tuple  of integers $(d_1, \cdots, d_n)$ such that
 $\pi_\Delta ([d_1, \cdots, d_n]) = \rho$, $p^\N \mid \sum_{j=1}^n d_j$, and $0 \leq d_j \leq p^\N-1$ for every $j$.
Given each $j =1, \cdots, n$, we denote by $\nabla_{j}$ the 
 $\mcD^{(\N-1)}_{\mbP^{\mr{log}'}/S}$-module structure  on $\mcO_\mbP (d_j [\sigma_{1}] -d_j [\sigma_{2}])$ whose restriction to $\mbP \setminus (\mr{Im}(\sigma_{1}) \cup \mr{Im}(\sigma_{2}))$ coincides with the trivial one.
It is verified that the exponents of $\nabla_j$ at $\sigma_{1}$ and $\sigma_{2}$ are $d_j$ and $-d_j$ (as elements of $\mbZ/p^\N \mbZ$), respectively.
The direct sum $\nabla := \bigoplus_{j=1}^n \nabla_{j}$ defines  a $\mcD^{(\N -1)}_{\mbP^{\mr{log}'}/S}$-module structure on the rank $n$ vector bundle
\begin{align}
\mcF := \bigoplus_{j=1}^n \mcO_\mbP (d_j [\sigma_{1}] -d_j [\sigma_{2}]).
\end{align}
Let us fix  isomorphisms of $\mcO_{\mbP}$-modules $\tau_j : \mcO_{\mbP} \isom \mcO_\mbP (d_j [\sigma_{1}] - d_j [\sigma_{2}])$ ($j =1, \cdots, n$).
Denote by  $\mcF^{n-1}$ the line subbundle of $\mcF$ defined to as the image of
the diagonal embedding $(\tau_1, \cdots, \tau_n) : \mcO_\mbP \migiincl \mcF$.
Also, for each $j = 0, \cdots, n-2$, let  $\mcF^j$ denote the subbundle of $\mcF$ generated locally by  $v, \nabla (v), \cdots, \nabla^{n- 1 - j}(v)$ for  local sections  $v \in \mcF^{n-1}$.
Since the mod $p$ reductions of $d_1, \cdots, d_n$ are mutually distinct, we see that
the  collection
\begin{align}
(\mcF, \nabla, \{ \mcF^j \}_{j=0}^n)
\end{align}
forms a dormant $\mr{GL}_n^{(\N)}$-oper on $\msP'$ (cf. Proposition \ref{NN77}, (ii)), and its  exponent at $\sigma_1$ coincides with  $[d_1, \cdots, d_n]$.
The composite
\begin{align} \label{eeQQ301}
\mcF_{\mcO_\mbP} \left(=\mcD^{(\N -1)}_{\mbP^{\mr{log}'}/S, \leq n-1} \otimes \mcO_X = \mcD^{(\N -1)}_{\mbP^{\mr{log}'}/S, \leq n-1} \otimes \mcF^{n-1} \right) \xrightarrow{\mr{inclusion}} \mcD^{(\N -1)}_{\mbP^{\mr{log}'}/S} \otimes \mcF \xrightarrow{\nabla} \mcF
\end{align}
is an isomorphism, so $\nabla$ is transposed into a $\mcD^{(\N -1)}_{\mbP^{\mr{log}'}/S}$-module structure $\nabla^\diamondsuit$ on $\mcF_{\mcO_\mbP}$ via this composite.
Then, $\nabla^\diamondsuit$ specifies a dormant $(\mr{GL}_n^{(\N)}, \vartheta)$-oper, and 
the induced $\mr{PGL}_n^{(\N)}$-oper  $\msE^\spadesuit$ satisfies the required condition.
This completes 
the existence assertion of (ii).

To prove the uniqueness portion, 
let us take  another dormant 
$\mr{PGL}_n^{(\N)}$-oper $\msE^\spadesuit_1$ on $\msP'$ with $\rho = \rho_1 (\msE_1^\spadesuit) = \rho_2 (\msE^\spadesuit_1)^\veebar$;
it corresponds to a dormant $(\mr{GL}_n^{(\N)}, \vartheta)$-oper $\nabla_1^\diamondsuit$ on $\msP'$, whose exponent at $\sigma_1$ coincides with $[d_1, \cdots, d_n]$.
Under the identification $\mcF_{\mcO_\mbP} = \mcO_\mbP^{\oplus n}$ given by
composing \eqref{eeQQ301} and $\bigoplus_{j=1}^n \tau_j^{-1} : \bigoplus_{j=1}^n\mcO_\mbP (d_j [\sigma_1] -d_j [\sigma_2]) \isom \mcO_\mbP^{\oplus n}$,
 both $\nabla_1^\diamondsuit$ and $\nabla^\diamondsuit$ may be regarded as   $\mcD_{\mbP^{\mr{log}'}/S}^{(\N -1)}$-module structures on $\mcO_\mbP^{\oplus n}$.
After possibly applying a suitable  gauge transformation of $\mcO_\mbP^{\oplus n}$,
we may assume that
the monodromy operator $\mu_1 (\nabla_1^\diamondsuit)$ of $\nabla_1^\diamondsuit$ at $\sigma_1$ (cf. Definition \ref{D19}) coincide with that of $\nabla^\diamondsuit$.
For each $a = 0, \cdots, \N -1$ and each $\mcD_{\mbP^{\mr{log}'}/S}^{(\N -1)}$-module structure $\nabla'$ on $\mcO_\mbP^{\oplus n} \left(= \mcF_{\mcO_\mbP} \right)$,  we shall set
\begin{align}
M^a (\nabla') := (\nabla' (\partial_1^{\langle p^a \rangle}) (\vec{e}_1), \nabla' (\partial_1^{\langle p^a \rangle})  (\vec{e}_2), \cdots, \nabla'
  (\partial_1^{\langle p^a \rangle}) (\vec{e}_n)) \in  \mr{End}_{\mcO_S} (H^0 (\mbP, \mcO_\mbP)^{\oplus n}),
\end{align} 
where  
  $\vec{e}_1, \cdots, \vec{e}_n$ denote the canonical basis vectors. 
Because of ~\cite[Eq.\,(2.5)]{Mon}, 
each such $\nabla'$ 
 is uniquely determined by  $\{ M^a (\nabla')\}_{a= 0}^{\N -1}$.
On the other hand, the morphism 
\begin{align}
\mr{End}_{\mcO_S}(H^0 (\mbP, \mcO_\mbP)^{\oplus n})  \left(= \mr{End}_{\mcO_S}(H^0 (\mbP, \mcF_{\mcO_\mbP})) \right)\rightarrow \mr{End}_{\mcO_S} (\sigma_1^* (\mcF_{\mcO_X}))
\end{align}
 given by restriction  to $\sigma_1$ is bijective and the image of $M^a (\nabla')$ coincides with $\mu_1 (\nabla')^{\langle a \rangle}$ (cf. \eqref{YY121}).
 Hence, the equality $\mu_1 (\nabla_1^\diamondsuit) = \mu_1 (\nabla^\diamondsuit)$ implies $\nabla^\diamondsuit_1 = \nabla^\diamondsuit$.
This completes the proof of the uniqueness assertion.
\end{proof}
\SSP

By combining Propositions \ref{Prop2234} and \ref{Prop13}, (i) and (ii),
we obtain the following assertion.

\SSP
\bco \label{Cor209}
Let $\rho_1, \rho_2$ be elements of $\Xi_{n, \N}$.
Then,
we have
\begin{align}
\mcO p_{(\rho_1, \rho_2, \varepsilon), 0, 3}^{^\mr{Zzz...}} \cong \begin{cases} \mr{Spec}(\mbF_p)& \text{if $\rho_1 = \rho_2$}; \\ \emptyset & \text{if otherwise}. \end{cases}
\end{align}
In particular,  the following equality holds:
\begin{align}
\mr{deg}
(\Pi_{(\rho_1, \rho_2, \varepsilon), 0, 3}) =
\begin{cases} 1 & \text{if $\rho_1 = \rho_2$}; \\ 0  & \text{if otherwise}. \end{cases}
\end{align}
\eco

\LSP
\subsection{Dual of $\mr{GL}_n^{(\N)}$-opers} \label{SS080}
Let $(g, r)$ be a pair of nonnegative integers with $2g-2+r > 0$ and $\msX := (X, \{ \sigma_i \}_{i=1}^r)$ an $r$-pointed  stable curve of genus $g$ over an $\mbF_p$-scheme $S$.
 For simplicity, we write $\Omega := \Omega_{X^\mr{log}/S^\mr{log}}$, $\mcT := \mcT_{X^\mr{log}/S^\mr{log}}$, and $\mcD^{(\N -1)}:= \mcD^{(\N -1)}_{X^\mr{log}/S^\mr{log}}$.

Consider a $\mr{GL}_n^{(\N)}$-oper
$\msF^\heartsuit := (\mcF, \nabla, \{ \mcF^j \}_{j=0}^n)$   on $\msX$.
For each $j=0, \cdots, n$, we regard $\mcF^{j \vee} := (\mcF/\mcF^{n-j})^\vee$ as a subbundle  of $\mcF^\vee$.
According to ~\cite[Corollaire 2.6.1, (ii)]{Mon},
$\nabla$ induces a $\mcD^{(\N -1)}$-module structure $\nabla^\vee$ on $\mcF^\vee$.
The resulting collection
\begin{align}
\msF^{\heartsuit \blacktriangledown} := (\mcF^\vee, \nabla^\vee, \{ \mcF^{\vee j}\}_{j=0}^n)
\end{align}
 forms  a $\mr{GL}_n^{(\N)}$-oper on $\msX$, which will be  called  the {\bf dual} of $\msF^\heartsuit$.
Note that 
$(\msF^{\heartsuit \blacktriangledown})^\blacktriangledown$ is isomorphic to $\msF^\heartsuit$ itself, and  that $\msF^{\heartsuit \blacktriangledown}$ is dormant when $\msF^\heartsuit$ is dormant.

Next, let $\vartheta := (\varTheta, \nabla_\vartheta)$ be an $n^{(\N)}$-theta characteristic of $X^\mr{log}/S^\mr{log}$.
Write
\begin{align}
\varTheta^\blacktriangledown := \Omega^{\otimes (n-1)} \otimes \varTheta^\vee.
\end{align}
Then, we have a composite isomorphism
\begin{align}
\mcT^{\otimes \frac{n (n-1)}{2}} \otimes (\varTheta^\blacktriangledown)^{\otimes n}
\isom \mcT^{\otimes \frac{n (n-1)}{2}} \otimes \mcT^{\otimes (-n (n-1))} \otimes (\varTheta^{\vee})^{\otimes n} \isom (\mcT^{\otimes \frac{n(n-1)}{2}} \otimes \varTheta^{\otimes n})^\vee.
\end{align}
By this composite, the dual $\nabla_\vartheta^\vee$ of $\nabla_\vartheta$ may be regarded as a $\mcD^{(\N -1)}$-module structure  on $\mcT^{\otimes \frac{n(n-1)}{2}} \otimes (\varTheta^\blacktriangledown)^{\otimes n}$.
Thus, we obtain an $n^{(\N)}$-theta characteristic
\begin{align}
\vartheta^\blacktriangledown := (\varTheta^\blacktriangledown, \nabla_\vartheta^\vee)
\end{align}
of $X^\mr{log}/S^\mr{log}$.
We shall refer to  $\vartheta^\blacktriangledown$  as the {\bf dual} of $\vartheta$.
It is immediately verified that $(\vartheta^\blacktriangledown)^\blacktriangledown$ coincides with  $\vartheta$, and that $\vartheta^\blacktriangledown$ is dormant when $\vartheta$ is dormant.

Fix  a $(\mr{GL}_n^{(\N)}, \vartheta)$-oper $\nabla^\diamondsuit$  on $\msX$.
Since 
\begin{align}
\mcF_\varTheta^{\vee n-1} = (\mcF_\varTheta / \mcF_\varTheta^1)^\vee = (\mcT^{\otimes  (n-1)} \otimes \varTheta)^\vee = \varTheta^\blacktriangledown,
\end{align}
we have an inclusion $\varTheta^\blacktriangledown \migiincl \mcF_\varTheta^\vee$.
The composite
\begin{align}
\mcF_{\varTheta^\blacktriangledown} \left(=  \mcD_{\leq n-1}^{(\N -1)} \otimes \varTheta^\blacktriangledown \right) \xrightarrow{\mr{inclusion}} \mcD^{(\N -1)} \otimes \mcF_\varTheta^\vee \xrightarrow{\nabla^{\diamondsuit \vee}} \mcF_\varTheta^\vee
\end{align}
turns out to be an isomorphism, and
the dual $\nabla^{\diamondsuit \vee}$ of $\nabla^\diamondsuit$ corresponds, via this composite, to a $\mcD^{(\N-1)}$-module structure 
\begin{align} \label{eeQQ205}
\nabla^{\diamondsuit \blacktriangledown}
\end{align}
on $\mcF_{\varTheta^\blacktriangledown}$.
Moreover, $\nabla^{\diamondsuit \blacktriangledown}$ forms a $(\mr{GL}_n^{(\N)}, \vartheta^\blacktriangledown)$-oper on $\msX$ whose underlying $\mr{GL}_n^{(\N)}$-oper coincides with the dual of $\nabla^{\diamondsuit \Rightarrow \heartsuit}$, i.e., $(\nabla^{\diamondsuit \blacktriangledown})^{\Rightarrow \heartsuit} = (\nabla^{\diamondsuit \Rightarrow \heartsuit})^\blacktriangledown$.

If both $\vartheta$ and $\nabla^\diamondsuit$ are dormant, then 
$\nabla^{\diamondsuit \blacktriangledown}$
 is dormant.
Thus,
the assignment $\nabla^\diamondsuit \mapsto \nabla^{\diamondsuit \blacktriangledown}$ defines an equivalence of categories between the categories of  (dormant) $(\mr{GL}_n^{(\N)}, \vartheta)$-opers and (dormant) $(\mr{GL}_n^{(\N)}, \vartheta^\blacktriangledown)$-opers.

Suppose that we are given a $\mr{PGL}_n^{(\N)}$-oper $\msE^\spadesuit$ on $\msX$.
The dual of the $(\mr{GL}_n^{(\N)}, \vartheta)$-oper corresponding  to $\msE^\spadesuit$  induces a $\mr{PGL}_n^{(\N)}$-oper 
\begin{align} \label{eeQQ310}
\msE^{\spadesuit \blacktriangledown}.
\end{align}
 The isomorphism classes of $\msE^{\spadesuit \blacktriangledown}$ does not depend on the choice of $\vartheta$, and $\msE^\spadesuit$ is dormant if and only if $\msE^{\spadesuit \blacktriangledown}$ is dormant.
 Also, we have $(\msE^{\spadesuit \blacktriangledown})^\blacktriangledown \cong \msE^\spadesuit$.
 
\SSP
\bpr
\begin{itemize}
\item[(i)]
The assignment $\msE^\spadesuit \mapsto \msE^{\spadesuit \blacktriangledown}$ is functorial with respect to pull-back over $S$, and hence, determines an isomorphism of $\overline{\mcM}_{g, r}$-stacks 
\begin{align} \label{Eq209}
\DDual : \mcO p_{g, r} \isom \mcO p_{g, r} \ \left(\text{resp.,} \ {^p}\DDual : \mcO p_{g, r}^{^\mr{Zzz...}} \isom \mcO p_{g, r}^{^\mr{Zzz...}}  \right),
\end{align}
satisfying $\DDual \circ \DDual \cong \mr{id}$ (resp., ${^p}\DDual \circ {^p}\DDual \cong \mr{id}$).
\item[(b)]
Let $\rho$ be an element of $\Xi_{n, \N}^{\times r}$,
Then,  \eqref{Eq209} restricts to an isomorphism   of $\overline{\mcM}_{g, r}$-stacks
\begin{align}
{^p}\DDual_{\rho} : \mcO p_{\rho, g,r}^{^\mr{Zzz...}} \isom \mcO p_{\rho^\veebar, g, r}^{^\mr{Zzz...}}
\end{align}
with ${^p}\DDual_{\rho^\veebar} \circ {^p}\DDual_{\rho} \cong \mr{id}$ (cf. \eqref{e56} for the definition of $(-)^\veebar$).
\end{itemize}
\epr
\begin{proof}
Assertion (i) follows from the above discussion.
Also, assertion (ii) follows from  assertion (i) together with the second assertion of Proposition \ref{YY30}, (i).
\end{proof}
\SSP

The following assertion is a direct consequence of the above theorem.

\SSP
\bco \label{Cor459}
Let us keep the assumption in Theorem \ref{Theorem44}.
Then, for each $\rho \in \Xi^{\times r}_{n, \N}$, the following equality holds:
\begin{align}
\mr{deg}(\Pi_{\rho, g, r}) = \mr{deg} (\Pi_{\rho^\veebar, g, r}).
\end{align}
\eco

\LSP
\subsection{$2$d TQFT for dormant $\mr{PGL}_n^{(\N)}$-opers} \label{SS322}

This section aims  to  describe the various factorization properties on  the generic degrees $\Pi_{\rho, g, r}$ proved so far  in terms of  $2$d TQFTs (= $2$-dimensional topological quantum field theories).
To begin with,  let us briefly  recall what a $2$d TQFT is.
For details on its  precise  definition, we refer the reader to ~\cite{Koc} (or  ~\cite{Ati}, ~\cite{DuMu1}, ~\cite{DuMu2}).

Let us fix a field $K$ (say, $\mbQ$ or $\mbC$).
Denote by 
 $\mcV ect_K$ the category consisting of vector spaces over $K$ and $K$-linear maps between them.
 Equipped with the ordinary tensor product $\otimes_K$ as the multiplication,
 with the ground field $K$ as the unit, and with 
     the collection of maps $T_{\mcV ect} := \{ T_{V, V'} : V \otimes_K V' \migi  V' \otimes_K V \}_{V, V' \in \mr{Ob}(\mcV ect_K)}$  given by interchanging the two factors of $\otimes_K$ as the symmetric braiding,
   the quadruple  
   \begin{align} \label{eeQQ320}
   (\mcV ect_K, \otimes_K, K, T_{\mcV ect})
   \end{align}
    specifies a symmetric monoidal category.
 
 Next, let $\Sigma$ and $\Sigma'$ be closed oriented $(\ell -1)$-dimensional manifolds ($\ell \in \mbZ_{> 0}$).
 An {\it oriented cobordism} from $\Sigma$ to $\Sigma'$ is a compact oriented $\ell$-dimensional manifold $M$ together with smooth maps $\Sigma\migi M$, $\Sigma' \rightarrow M$
  such that $\Sigma$ maps diffeomorphically (preserving orientation) onto the in-boundary of $M$, and $\Sigma'$ maps diffeomorphically (preserving orientation) onto the out-boundary of $M$.
 We will denote it by $M : \Sigma \Rightarrow \Sigma'$.
 Two 
 oriented cobordisms $M, M' : \Sigma \Rightarrow \Sigma'$ are {\it equivalent} if there is an orientation-preserving diffeomorphism $\psi : M \isom M'$ inducing the identity morphisms of $\Sigma$ and $\Sigma'$.

 Denote by $\ell\text{-}\mcC ob$ the category defined as follows:
 \begin{itemize}
 \item
 The objects are $(\ell-1)$-dimensional closed oriented manifolds;
  \item
  Given two such objects $\Sigma$ and $\Sigma'$, a morphism  from $\Sigma$ to $\Sigma'$ is 
  an equivalence class
   of oriented cobordisms $M : \Sigma \Rightarrow  \Sigma'$ (in the above sense).
   The identity morphisms are just the cylinders, and
  the composition of morphisms is given by gluing cobordism classes.
 \end{itemize}
Equipped with the disjoint union $\sqcup$ as the multiplication, with the empty manifold $\emptyset$ as the unit, and with the collection  of twist diffeomorphisms 
 $T_{\ell\text{-}\mcC ob} := \{ T_{\Sigma, \Sigma'} : \Sigma \sqcup \Sigma' \Rightarrow \Sigma' \sqcup \Sigma \}_{\Sigma, \Sigma' \in \mr{Ob}(\ell\text{-}\mcC ob)}$ as the 
 the symmetric braiding,  the quadruple
\begin{align}
(\ell\text{-}\mcC ob, \sqcup, \emptyset, T_{\ell\text{-}\mcC ob})
\end{align}
forms a symmetric monoidal category.

  \SSP
\bde[cf. ~\cite{Koc}, \S\,1.3.32] \label{Def5932}
An {\bf $\ell$-dimensional topological quantum field theory} (over  $K$), or {\bf $\ell$d TQFT} for short,  
is 
a symmetric monoidal functor  from $(\ell\text{-}\mcC ob, \sqcup, \emptyset, T_{\ell\text{-}\mcC ob})$ to 
$(\mcV ect_K, \otimes_K, K,  T_{\mcV ect})$.
\ede
 \SSP

Hereinafter, we focus on $2$d TQFTs.
We can uniquely classify an isomorphism class of objects in $2\text{-}\mcC ob$ with an integer $n \in \mbZ_{\geq 0}$ indicating the number of connected components, i.e., the number of disjoint circles $\mbS:= \left\{ (x, y) \in \mbR^2 \, | \, x^2 + y^2 =1 \right\}$.
In other words, the full subcategory whose objects are $\{ \mbS^{r} \, | \, r \in \mbZ_{\geq 0} \}$, where $\mbS^{ 0} := \emptyset$ and  $\mbS^{r}$ denotes the disjoint union of $r$ copies of $\mbS$,  forms 
a skeleton of $2\text{-}\mcC ob$.

Also, 
each connected oriented cobordism in $2\text{-}\mcC ob$ may be represented by
 $\mbM^{r \Rightarrow  s}_g$  for some triple of nonnegative integers $(g, r, s)$,
 where $\mbM^{r \Rightarrow  s}_g$ denotes a connected, compact oriented surface whose in-boundary and out-boundary are  $\mbS^{r}$  and $\mbS^{s}$, respectively.
According to ~\cite[Lemma 1.4.19]{Koc},  every oriented cobordism in $2\text{-}\mcC ob$ factors as a permutation cobordism, followed by a disjoint union of  $\mbM^{r \Rightarrow  s}_g$'s (for various triples  $(g, r, s)$), followed by a permutation cobordisms.

It follows  that a $2$d TQFT $\mcZ : \mcV ect_K \rightarrow 2\text{-}\mcC ob$ is uniquely determined by the $K$-vector space $A := \mcZ (\mbS^1)$ and the collection of $K$-linear maps 
\begin{align}
\omega^{r \Rightarrow s}_g := \mcZ (\mbM^{r \Rightarrow  s}_g) : A^{\otimes r}  \left(= \mcZ (\mbS^r) \right)\rightarrow  A^{\otimes s}\left(= \mcZ (\mbS^s) \right)
\end{align}
 for $(g, r, s) \in \mbZ_{\geq 0}^{\times 3}$ (where $A^{\otimes 0} := K$).
This collection of data satisfies the following properties (cf. ~\cite[\S\,3]{DuMu1}, ~\cite[Definition 2.1]{DuMu2}):
\begin{itemize}
\item[(1)]
The $K$-linear map $\omega^{1 \Rightarrow 1}_0$ coincides with $\mr{id}_A$ (because $\mcZ$ is a functor), 
and 
$\omega^{r \Rightarrow s}_g$ (for $r \geq 1$)
 is symmetric with respect to the  action of $\mfS_r$ (:=  the symmetric group of $r$ letters)  arising from permutations of the $r$ factors  in the domain $A^{\otimes r}$;
\item[(2)]
If $\breve{\omega}_0^{2 \Rightarrow 0}$ denotes the $K$-linear morphism $A \rightarrow A^\vee$ induced by $\omega^{2 \Rightarrow 0}_0 : A^{\otimes 2} \rightarrow K$,
then the square diagram
\begin{align} \label{eeQQ401}
\vcenter{\xymatrix@C=46pt@R=36pt{
 A^{\otimes r} \ar[r]^-{\omega^{r \Rightarrow s}_g} \ar[d]_-{(\breve{\omega}_0^{2 \Rightarrow 0})^{\otimes r}} & A^{\otimes s}\ar[d]^-{(\breve{\omega}_0^{2 \Rightarrow 0})^{\otimes s}} \\
 (A^\vee)^{\otimes r} \ar[r]_-{(\omega^{s \Rightarrow r}_g)^\vee} &(A^\vee)^{\otimes s}
 }}
\end{align}
is commutative for any $(g, r, s)$.
\item[(3)]
For each triples $(g_1, r_1, s_1), (g_2, r_2, s_2) \in \mbZ_{\geq 0}^{\times 3}$ and each positive  integer $\ell$ with $\ell \leq s_1$ and $\ell \leq r_2$, the  equality 
\begin{align} \label{eeQQ404}
(\mr{id}_A^{\otimes (s_1 -\ell)} \otimes \omega_{g_2}^{r_2 \Rightarrow s_2})\circ (\omega_{g_1}^{r_1 \Rightarrow s_1} \otimes \mr{id}_A^{\otimes (r_2 -\ell)})
= \omega_{g_1 + g_2 + \ell -1}^{(r_1 + r_2 - \ell) \Rightarrow (s_1 + s_2 - \ell)} 
\end{align}
of $K$-linear maps $A^{\otimes (r_1 + r_2 - \ell)} \rightarrow A^{\otimes (s_1 + s_2 -\ell)}$ 
(arising  from gluing only $\ell$ pairs of boundary circles in two cobordisms) holds;
\end{itemize}

By the condition (3) for 
$(g_1, r_1, s_1) = (0, 0, 2)$, $(g_2, r_2, s_2) = (0, 2, 0)$,
  and $\ell =1$,
 the morphism $\breve{\omega}_0^{2 \Rightarrow 0}$ introduced in (2) turns out to be an isomorphism (i.e., $\omega^{2 \Rightarrow 0}_0$ is nondegenerate).
On the other hand, the case of 
$(g_1, r_1, s_1) = (0, 1, 2)$, $(g_2, r_2, s_2)  = (0, 1, 0)$,
and $\ell =1$ implies that 
  $\omega^{1 \Rightarrow 0}_0$ is nontrivial.

Conversely, a $K$-vector space $A$ together with a collection of various $K$-linear maps $\omega_g^{r \Rightarrow s} : A^{\otimes r} \rightarrow A^{\otimes s}$ satisfying the conditions (1)-(3) above extends to  a unique $2$d TQFT over $K$.

Returning to our discussion, we obtain the following assertion.

\SSP
\bt \label{Theorem4f4}
Suppose that $\Pi_{\rho, g, r}$ (for any $(g, r, \rho)$) is \'{e}tale over all the points of $\overline{\mcM}_{g, r}$ classifying totally degenerate curves.
Then, there exists a unique  $2$d TQFT
 \begin{align} \label{DeeeQQ}
 \mcZ_{n, \N} : (2\text{-}\mcC ob, \sqcup, \emptyset, T_{2\text{-}\mcC ob}) \rightarrow (\mcV ect_K, \otimes_K, K, T_{\mcV ect})
 \end{align}
  over $K$ determined by the following conditions:
\begin{itemize}
\item
$\mcZ_{n, \N} (\mbS^1) = K^{\Xi_{n, \N}}$, i.e., the $K$-vector space with basis $\Xi_{n, \N}$;
\item
$\mcZ_{n, \N} (\mbM_0^{0 \Rightarrow 0}) = \mr{id}_K$, and 
$\mcZ_{n, \N} (\mbM_1^{0 \Rightarrow 0}) = \sharp (\Xi_{n, \N}) \cdot \mr{id}_K \left(= \binom{p}{n} \cdot p^{n \N -n -\N} \cdot \mr{id}_K \right)$;
\item
$\mcZ_{n, \N} (\mbM_0^{0 \Rightarrow 1}) : K \rightarrow K^{\Xi_{n, \N}}$ 
and 
$\mcZ_{n, \N} (\mbM_0^{0 \Rightarrow 2})  : K \rightarrow    (K^{\Xi_{n, \N}})^{\otimes 2}$ satisfy
\begin{align} \label{eeQwe91}
\mcZ_{n, \N} (\mbM_0^{0 \Rightarrow 1})(1) = \varepsilon 
\hspace{5mm} \text{and} \hspace{5mm}
\mcZ_{n, \N} (\mbM_0^{0 \Rightarrow 2}) (1) = \sum_{\lambda \in \Xi_{n, \N}} \lambda \otimes \lambda^\veebar, 
\end{align}
respectively.
\item
$\mcZ_{n, \N} (\mbM_0^{1 \Rightarrow 0}) : K^{\Xi_{n, \N}} \rightarrow K$ and 
$\mcZ_{n, \N} (\mbM_0^{2 \Rightarrow 0}) : (K^{\Xi_{n, \N}})^{\otimes 2} \rightarrow K$ satisfy
\begin{align} \label{eeQwe90}
\mcZ_{n, \N} (\mbM_0^{1 \Rightarrow 0}) (\lambda) = 
\begin{cases} 1 & \text{if $\lambda = \varepsilon$}; \\
0 & \text{if otherwise},
\end{cases}
\hspace{5mm} \text{and} \hspace{5mm}
\mcZ_{n, \N} (\mbM_0^{2 \Rightarrow 0}) (\lambda \otimes \eta) = 
\begin{cases}
1 & \text{if $\eta = \lambda^\veebar$}; \\
0 & \text{if otherwise},
\end{cases}
\end{align}
respectively.
\item
For any triple of nonnegative integers $(g, r, s)$ with $2g-2 + r +s > 0$,
the $K$-linear map $\mcZ_{n, \N} (\mbM_g^{r \Rightarrow s}) : (K^{\Xi_{n, \N}})^{\otimes r} \rightarrow (K^{\Xi_{n, \N}})^{\otimes s}$ is given  by 
\begin{align} \label{eeQQ405}
\mcZ_{n, \N} (\mbM_g^{r \Rightarrow s}) (\bigotimes_{i=1}^r \rho_i) = \sum_{(\lambda_j)_j \in \Xi_{n, \N}^{\times s}} \mr{deg}(\Pi_{((\rho_i)_i, ( \lambda_{j}^\veebar)_{j}), g, r +s}) \bigotimes_{j=1}^{s} \lambda_{j}.
\end{align}
\end{itemize}
\et 
\begin{proof}
For simplicity, we write $\omega_g^{r \Rightarrow s} := \mcZ_{n, \N} (\mbM_g^{r \Rightarrow s})$, $\Xi := \Xi_{n, \N}$,  $\Xi^r := \Xi_{n, \N}^{\times r}$, and moreover, we abbreviate   
 $\mr{deg}(\Pi_{(\rho_i)_i, g, r})$ to the notation 
$D_{(\rho_i)_i, g, r}$ or $D_{(\rho_i)_i}$ if there is no fear of confusion.

The condition (1) described above is fulfilled because of the definition of $\mcZ_{n, \N}$ (together with the fact that $D_{(\rho_i)_i}$ does not depend on  the ordering of $(\rho_i)_i$).

Given an element  $\lambda \in \Xi$, we shall write $\lambda^\vee$ for the element of $K^{\Xi \vee}\left(= \mr{Hom}_K (K^\Xi, K) \right)$ determined by $\lambda^\vee (\rho) = 1$ if $\lambda = \rho$ and $\lambda^\vee (\rho) = 0$ if $\lambda \neq \rho \in \Xi$.
For each triple $(g, r, s) \in \mbZ_{\geq 0}^{\times 3}$ with $2g-2+r +s > 0$ and each $(\rho_i)_i \in \Xi^{r}$,  the following sequence of equalities holds:
\begin{align}
(\omega_g^{s \Rightarrow r})^\vee \circ (\breve{\omega}_0^{2 \Rightarrow 0})^{\otimes r} ( \bigotimes_{i=1}^r \rho_i) 
&= (\omega_g^{s \Rightarrow r})^\vee ( \bigotimes_{i=1}^r (\rho_i^\veebar)^\vee) \\
& =  \sum_{(\lambda_j)_j} 
D_{((\lambda_j)_j, (\rho_i)_i)}
  \bigotimes_{j=1}^s \lambda_j^\vee
\notag \\
&=  (\breve{\omega}_0^{2 \Rightarrow 0})^{\otimes s} (\sum_{(\lambda_j)_j} 
D_{((\rho_i)_i, (\lambda_j)_j)}
 \bigotimes_{j=1}^s \lambda^\veebar_j)  \notag \\
&= (\breve{\omega}_0^{2 \Rightarrow 0})^{\otimes s} \circ  \omega_g^{r \Rightarrow s} (\bigotimes_{i=1}^r \rho_i), \notag
\end{align}
where the third follows from the fact that 
$D_{((\rho_i)_i, (\lambda_j)_j)}$ 
 does not depend on  the ordering of $\rho_1, \cdots, \rho_r, \lambda_1, \cdots, \lambda_s$.
That is to say,  the square diagram \eqref{eeQQ401} is commutative.
The commutativities   for the remaining cases of $(g, r, s)$'s  are  immediately  verified, so $\mcZ_{n, \N}$ satisfies the condition (2).

Finally, let us take two triples $(g_j, r_j, s_j) \in \mbZ_{\geq 0}^{\times 3}$ ($j=1, 2$) with $2g_j - 2 + r_j + s_j > 0$ and  a positive integer $\ell$ with $\ell \leq s_1$ and $\ell \leq r_2$.
Consider the clutching data
\begin{align}
\mbG := (\GR, \{ (g_j, r_j + s_j) \}_{j=1}^2, \{ \lambda_j  \}_{j=1}^2)
\end{align}
determined by the following conditions:
\begin{itemize}
\item
$\mbG$ is of type $(g_1 + g_2 + \ell -1, r_1 + r_2 + s_1 + s_2 -2\ell)$;
\item
$\GR := (\{ v_1, v_2 \}, \{e_{i} \}_{i=1}^{\ell} \sqcup \{ e_{1, i} \}_{i=1}^{r_1 + s_1-\ell} \sqcup \{ e_{2, i} \}_{i=1}^{r_2 + s_2 -\ell}, \zeta)$, where 
$v_j$'s, $e_{i}$'s, and $e_{j, i}$'s are abstract symbols, and 
$\zeta$ is given by $\zeta (e_i) = \{ v_1, v_2 \}$ (for $i=1, \cdots, \ell$) and 
$\zeta (e_{j, i}) = \{ \circledast, v_j \}$ (for $j=1, 2$ and $i=1, \cdots, r_j + s_j -\ell$).
\item
For each  $j=1, 2$, the bijection $\lambda_j : B_{v_j} \isom \{1, \cdots, r_j + s_j \}$ is given by $\lambda (b) = i$ if $b \in e_{j, i}$ ($1 \leq i \leq r_j + s_j -\ell$) and $\lambda (b) = r_j + s_j -\ell + i$ if $b \in e_i$.
\end{itemize}
By applying Theorem \ref{Theorem44} to this clutching data,
we obtain, for each 
$(\rho_{1, i_1})_{i_1} \in \Xi^{r_1 + s_1 -\ell}$
and $(\rho_{2, i_2})_{i_2} \in \Xi^{r_2 + s_2 -\ell}$,
  the equality 
\begin{align} \label{eeQQ420}
D_{((\rho_{1, i_1})_{i_1}, (\rho_{2, i_2})_{i_2})} = \sum_{\rho_3 \in \Xi^{\ell}}
D_{((\rho_{1, i_1})_{i_1}, \rho_3)} \cdot D_{((\rho_{2, i_2})_{i_2}, \rho_3^\vee)}.
\end{align}
It follows that, for  $(\rho_i)_i \in \Xi^{r_1 + r_2 -\ell}$,
the following sequence of equalities holds:
\begin{align}
& \ \ \ \ (\mr{id}_A^{\otimes (s_1 -\ell)} \otimes \omega_{g_2}^{r_2 \Rightarrow s_2})\circ (\omega_{g_1}^{r_1 \Rightarrow s_1} \otimes \mr{id}_A^{\otimes (r_2 -\ell)} ) (\bigotimes_{i =1}^{r_1 + r_2 -\ell} \rho_i) \\
& = (\mr{id}_A^{\otimes (s_1 -\ell)} \otimes \omega_{g_2}^{r_2 \Rightarrow s_2}) (\sum_{(\lambda_j)_{j=1}^{s_1} \in \Xi^{s_1}} D_{((\rho_i)_{i=1}^{r_1}, (\lambda_j^\veebar)_{j=1}^{s_1})})\bigotimes_{j=1}^{s_1} \lambda_j \otimes \bigotimes_{i=r_1+1}^{r_1+r_2-\ell}\rho_i) \notag \\
& = \sum_{(\lambda_j)_{j=1}^{s_1} \in \Xi^{s_1}}
\sum_{(\eta_{j'})_{j' =1}^{s_2} \in \Xi^{s_2}}  
D_{((\rho_i)_{i=1}^{r_1}, (\lambda_j^\veebar)_{j=1}^{s_1})} \cdot D_{((\lambda_j)_{j= s_1 -\ell +1}^{s_1}, (\rho_i)_{i=r_1 + 1}^{r_1 + r_2 -\ell},  (\eta_{j'}^\veebar)_{j'=1}^{s_2})} \bigotimes_{j=1}^{s_1 -\ell} \lambda_j \otimes \bigotimes_{j' =1}^{s_2} \eta_{j'}\\
& \stackrel{\eqref{eeQQ420}}{=} 
\sum_{(\lambda_j)_{j=1}^{s_1-\ell +1} \in \Xi^{s_1}}
\sum_{(\eta_{j'})_{j' =1}^{s_2} \in \Xi^{s_2}} 
D_{((\rho_i)_{i=1}^{r_1 +r_2 -\ell}, (\lambda_j^\veebar)_{j=1}^{s_1 -\ell}, (\eta_{j'}^\veebar)_{j' =1}^{s_2})}
\bigotimes_{j=1}^{s_1 -\ell} \lambda_j \otimes \bigotimes_{j' =1}^{s_2} \eta_{j'}
\notag \\
& = \omega_{g_1 + g_2 + \ell -1}^{(r_1 + r_2 - \ell) \Rightarrow (s_1 + s_2 - \ell)} (\bigotimes_{i =1}^{r_1 + r_2 -\ell} \rho_i).
\end{align}
This proves   \eqref{eeQQ404}.
Also, the  equalities  for the remaining cases of $(g_1, r_1, s_1)$ and $(g_2, r_2, s_2)$ can be immediately verified from the various results obtained in this section, so $\mcZ_{n, \N}$ satisfies the condition (3).
Thus, the proof of the assertion is completed.
\end{proof}

\LSP
\subsection{Dormant fusion ring} \label{SS064}

The factorization property of $\Pi_{\rho, g, r}$'s resulting from Theorem \ref{Theorem44}  is also described in terms of {\it fusion rules}, which can be  essentially regarded as a special type of $2$d TQFT in a certain sense.
Applying a discussion in the general theory of fusion rules (cf.  ~\cite{Beau}), we will see that 
 our fusion rule associates  a commutative ring encoding this factorization property, as well as the data of  the values  $\mr{deg}(\Pi_{\rho, g, r})$.

Let $I$ be a finite set with an involution $(-)^*$ (i.e., a bijection of order $2$).
Denote by $\mbN^{I} := \bigoplus_{a \in I} \mbN a$ be the free commutative monoid generated by the elements of $I$.
The involution of $I$ extends, in an evident manner, to an involution of $\mbN^{I}$.

\SSP
\bde[cf. ~\cite{Beau}, \S\,5, Definition] \label{Def491}
A {\bf fusion rule} on $I$ is a map $F : \mbN^{I} \migi \mbZ$ satisfying the following three conditions:
\begin{itemize}
\item[(1)]
One has $F (0) = 1$, and $F (\alpha) > 0$ for some $\alpha \in I$;
\item[(2)]
$F(x^*) = F(x)$ for every $x \in \mbN^{I}$;
\item[(3)]
For $x$, $y \in \mbN^{(I)}$, one has $F (x + y) = \sum_{\lambda \in I} F(x + \lambda) \cdot F(y +\lambda^*)$.
\end{itemize}
Also, a fusion rule $F$ on $I$ is said to be {\bf nondegenerate} if it satisfies the following condition:
\begin{itemize}
\item[(4)]
For any $\alpha \in I$,
there exists an element $\beta \in I$ with $F (\alpha + \beta) \neq 0$.
\end{itemize}
\ede
\SSP

Now, suppose that $\Pi_{\rho, g, r} := \Pi_{n, \N, \rho, g, r, \mbF_p}$ (for any $(g, r, \rho)$) is \'{e}tale over all the points of $\overline{\mcM}_{g, r}$ classifying totally degenerate curves.
Write
\begin{align} \label{eeQQ439}
F_{n, \N} : \mbN^{\Xi_{n, \N}} \migi \mbZ
\end{align}
for the map  determined  by 
the following rules:
\begin{itemize}
\item
$F_{n, \N} (0) := 1$;
\item
For $\lambda \in \Xi_{n, \N}$, we set 
$F_{n, \N} (\lambda) := 1$ if $\lambda = \varepsilon$ and $F_{n, \N} (\lambda) = 0$ if 
otherwise;
\item
For $(\lambda, \eta) \in \Xi_{n, \N}^{\times 2}$, we set  
$F_{n, \N} (\lambda + \eta) := 1$ if $\eta = \lambda^\vee$ and $F_{n, \N} (\lambda + \eta) := 0$ if otherwise;
\item
For any $(\rho_i)_i \in \Xi_{n, \N}^{\times r}$ (with $r \geq 3$),
we set
$F_{n, \N} (\sum_{i=1}^r \rho_i) := \mr{deg}(\Pi_{(\rho_i)_i, 0, r})$.
\end{itemize}

Regarding this map, 
one can prove  the following assertion, which is a higher-level generalization of   ~\cite[Proposition 7.33]{Wak8}.

\SSP
\bt \label{Prop221}
(Recall that we have assumed that the morphism $\Pi_{\rho, g, r}$ for any $(g, r, \rho)$ is \'{e}tale over all the points of $\overline{\mcM}_{g, r}$ classifying totally degenerate curves.)
The map  $F_{n, \N}$ forms a nondegenerate fusion rule on the finite set $\Xi_{n, \N}$ (with respect to the involution $(-)^\veebar$).
\et
\begin{proof}
The conditions (1) and (4) are  fulfilled because of  
the definition of $F_{n, \N}$ and Corollary \ref{Cor209}.
Also, Corollary \ref{Cor459}  and \eqref{Eq101} show that $F_{n, \N}$ satisfies the conditions (2) and (3), respectively.
\end{proof}
\SSP

According to the discussion in  ~\cite[\S\,5]{Beau},  the fusion rule $F_{n, \N}$ associates a ring encoding its structure.
To be precise, 
we define a multiplication law $* : \mbZ^{\Xi_{n, \N}} \times \mbZ^{\Xi_{n, \N}} \migi \mbZ^{\Xi_{n, \N}}$ on $\mbZ^{\Xi_{n, \N}}$
 by putting
\begin{align}
\alpha * \beta = \sum_{\lambda \in \Xi_{n, \N}} F_{n, \N} (\alpha + \beta + \lambda^\veebar) \lambda
\end{align}
for any $\alpha, \beta \in \Xi_{n, \N}$,
and extending by bilinearity.
The abelian group $\mbZ^{\Xi_{n, \N}}$ together with this multiplication law defines a unital, associative, and commutative ring
\begin{align} \label{eeQQ45}
\Fus_{n, \N}
\end{align}
with  identity element  $\varepsilon$.

The tensor product $K^{\Xi_{n, \N}} := K \otimes_\mbZ \Fus_{n, \N}$  (for the field $K$ fixed at the beginning of \S\,\ref{SS322}) may be identified with  the Frobenius algebra corresponding to the $2$d TQFT $\mcZ_{n, \N}$, and the Frobenius  pairing is determined  by $(\alpha, \beta) \mapsto F_{n, \N}(\alpha  + \beta)$ for $\alpha, \beta \in \Xi_{n, \N}$.
(Recall  from, e.g., ~\cite[\S\,2.2.5]{Koc}, that a {\bf Frobenius algebra} over $K$ is a $K$-algebra $A$ of finite dimension equipped with an associative  nondegenerate pairing $A \otimes_K A \migi K$, called the {\bf Frobenius pairing}.
As a classical result, there exists an equivalence of categories between the category of $2$d TQFTs over $K$ and the category of commutative Frobenius algebras over $K$. See ~\cite[Theorem 3.3.2]{Koc}.)
Since $\mbC \otimes_\mbZ \Fus_{n, \N}$ is isomorphic to a direct product of copies of $\mbC$ (cf. ~\cite[Proposition 6.1]{Beau}),
the Frobenius algebra defined  for $\mcZ_{n, \N}$ is semisimple.

\SSP
\bde \label{Def989}
We shall refer to $\Fus_{n, \N}$ as the {\bf dormant fusion ring of type $\mr{PGL}_n^{(\N)}$}.
 \ede
\SSP

We shall write
\begin{align}
\mfS_{n, \N} \left(:= \mr{Hom}(\Fus_{n, \N}, \mbC) \right)
\end{align}
for the set of ring homomorphisms $\Fus_{n, \N} \migi \mbC$, i.e., the  set of $\mbC$-rational points of  $\mr{Spec}(\Fus_{n, \N})$.
Also, write
\begin{align}
\mr{Cas}_{n, \N} := \sum_{\lambda \in \Xi_{n, \N}} \lambda * \lambda^\veebar   \left(\in \Fus_{n, \N}\right).
\end{align}

An explicit knowledge of the ring homomorphisms $\Fus_{n, \N} \migi \mbC$ 
 and the element $\mr{Cas}_{n, \N}$ of $\Fus_{n, \N}$ allow us to perform some computation that we need in the ring $\Fus_{n, \N}$, as follows.

\SSP
\bpr\label{Prop16}
(Recall that we have assumed that the morphism  $\Pi_{\rho, g, r}$ for any $(g, r, \rho)$ is \'{e}tale over all the points of $\overline{\mcM}_{g, r}$ classifying totally degenerate curves.)
Let $(g, r)$ be a pair of nonnegative integers  with $2g- 2 +r > 0$ and $\rho := (\rho_i)_i$ an element of $\Xi_{n, \N}^{\times r}$.
Then, the following equality holds:
\begin{align} \label{eeQQ451}
\mr{deg}(\Pi_{\rho, g, r}) =
\sum_{\chi \in \mfS_{n, \N}} \chi (\mr{Cas}_{n, \N})^{g-1} \cdot \prod_{i=1}^r \chi (\rho_i).
\end{align}
Moreover, if $r = 0$ (which implies $g >1$), then this equality means 
\begin{align} \label{eeQQ450}
\mr{deg} (\Pi_{\emptyset, g, 0})= \sum_{\chi \in \mfS_{n, \N}} \chi (\mr{Cas}_{n, \N})^{g-1}.
\end{align}
\epr
\begin{proof}
The assertion follows from ~\cite[Proposition 6.3]{Beau} and the fact that, in our situation here,  the map ``$N_g$" introduced in ~\cite[Proposition 5.9]{Beau} coincides with the degree function $\sum_{i=1}^r \rho_i \mapsto \mr{deg}(\Pi_{(\rho_i), g, r})$.
\end{proof}

\vspace{10mm}
\section{Deformation theory of dormant $\mr{PGL}_2^{(\N)}$-opers} \label{S44} \SSP

This section focuses on the study of dormant $\mr{PGL}_2^{(\N)}$-opers in characteristic $p$ from 
the viewpoint of deformation theory.
By applying  cohomological descriptions of their deformation spaces,
we prove that  the moduli space
$\mcO p^\ZZZ_{2, \N, g, r, \mbF_p}$ 
  is smooth  (cf. Corollary \ref{T34}, (i)) and the projection onto $\overline{\mcM}_{g, r}$ is generically \'{e}tale (cf. Theorem  \ref{c43}).

Let $B$ denote  the Borel subgroup of $\mr{PGL}_2$ consisting of the images of invertible upper triangular $2 \times 2$ matrices via the quotient $\mr{GL}_2 \migisurj \mr{PGL}_2$.
 Also, denote by $\mfg$ and $\mfb$ the Lie algebras of $\mr{PGL}_2$ and $B$, respectively.

Next,  let us fix  a positive integer $\N$ and 
 a pair of nonnegative integers  $(g, r)$  with $2g-2 +r >0$.
Throughout  this section, 
we suppose that $p > 2$, and
the Deligne-Mumford stack
$\mcO p^\ZZZ_{2, \N, g,r, \mbF_p}$ (resp., $\mcO p^\ZZZ_{2, \N, \rho, g,r, \mbF_p}$ for $\rho \in \Xi_{2, \N}^{\times r}$)  will be denoted by
 $\mcO p^\ZZZ_{g,r}$ (resp.,  $\mcO p^\ZZZ_{\rho, g,r}$)
  if there is no fear of confusion.
  By using the bijection \eqref{wpaid},
we identify $\Xi_{2, \N}$ with the set $(\mbZ/p^\N \mbZ)^\times /\{ \pm 1 \}$.

\LSP
\subsection{Adjoint bundle associated to a dormant $\mr{PGL}_2^{(\N)}$-oper} \label{SS078}

Let 
 $S$ be  a  scheme over $\mbF_p$
 and $\msX := (f: X \migi S, \{ \sigma_i \}_{i=1}^r)$ an $r$-pointed stable curve of genus $g$ over $S$.
 In particular,  we obtain a log curve $X^\mr{log}/S^\mr{log}$.
 For simplicity, we write $\Omega := \Omega_{X^\mr{log}/S^\mr{log}}$, $\mcT := \mcT_{X^\mr{log}/S^\mr{log}}$, and $\mcD^{(\N -1)} :=\mcD^{(\N -1)}_{X^\mr{log}/S^\mr{log}}$.
 
Let $\rho := (\rho_i)_{i=1}^r$ be an element of $((\mbZ/p^\N \mbZ)^\times/\{ \pm 1 \})^{\times r}$ (where $\rho := \emptyset$ if $r = 0$).
For each $i = 1, \cdots, r$, we denote by $\widetilde{\lambda}_i$ the 
unique integer  with  $0 \leq \widetilde{\lambda}_i \leq  \frac{p^\N-1}{2}$  mapped to 
$\rho_i$ via the natural quotient $\mbZ \migisurj (\mbZ/p^\N \mbZ)/\{ \pm 1 \}$. 
Also, denote by $\lambda_i$ the image of $\widetilde{\lambda}_i$ via $\mbZ \migisurj \mbZ/p^\N \mbZ$.

Next, let  $\msE^\spadesuit := (\mcE_B, \STR)$ 
 be a dormant $\mr{PGL}_2^{(\N)}$-oper on $\msX$ of radii $\rho$, and let 
$\pi : \mcE \rightarrow X$ be the $\mr{PGL}_2$-bundle induced from  $\mcE_B$.
In particular, we obtain the $\mcO_X$-algebra $\mcO_\mcE$ corresponding to  $\mcE$ (cf. the discussion following Remark \ref{dGGg5}).
By pulling-back the log structure of $X^\mr{log}$ via $\pi$,
one may obtain a log structure on $\mcE$; we denote the resulting log scheme by $\mcE^\mr{log}$.
The $\mr{PGL}_2$-action on $\mcE$ induces a $\mr{PGL}_2$-action on the direct image $\pi_* (\mcT_{\mcE^\mr{log}/S^\mr{log}})$ of $\mcT_{\mcE^\mr{log}/S^\mr{log}}$, so we obtain 
\begin{align} \label{YY671}
\widetilde{\mcT}_{\mcE^\mr{log}/S^\mr{log}} :=  \pi_* (\mcT_{\mcE^\mr{log}/S^\mr{log}})^{\mr{PGL}_2},
\end{align}
i.e.,  $\widetilde{\mcT}_{\mcE^\mr{log}/S^\mr{log}}$ is the subsheaf of $\mr{PGL}_2$-invariant sections of $\pi_* (\mcT_{\mcE^\mr{log}/S^\mr{log}})$.

Denote by $\mfg_\mcE$ (resp., $\mfb_{\mcE_B}$) the adjoint vector bundle of $\mcE$ (resp., $\mcE_B$), i.e., the vector bundle on $X$ associated to $\mcE$ (resp., $\mcE_B$) via change of structure group by the adjoint representation $\mr{PGL}_2 \migi \mr{GL}(\mfg)$ (resp., $B \migi \mr{GL}(\mfb)$).
We regard  $\mfg_{\mcE}$  as an $\mcO_X$-submodule of $\mcE nd_{\mcO_S} (\mcO_\mcE)$ under the natural identification $\mfg_\mcE = \pi_*(\mcT_{\mcE/X})^{\mr{PGL}_2} \left(= \pi_* (\mcT_{\mcE^\mr{log}/X^\mr{log}})^{\mr{PGL}_2} \right)$.
Differentiating $\pi$ yields a short exact sequence of $\mcO_X$-modules 
\begin{align} \label{YY662}
0 \migi \mfg_\mcE \xrightarrow{\mr{inclusion}} \widetilde{\mcT}_{\mcE^\mr{log}/S^\mr{log}} \xrightarrow{d_\mcE} \mcT \migi 0
\end{align}
(cf. ~\cite[\S\,1.2.5]{Wak8}).

Note that $\STR$ induces a $\mcD^{(\N -1)}$-module structure
 \begin{align} \label{YY788}
 \DMO^\mr{ad}_\STR : {^L}\mcD^{(\N -1)} \migi  \mcE nd_{\mcO_S}(\mfg_\mcE)
 \end{align}
on $\mfg_\mcE$.
To be precise,  if $\STR^{\natural \natural}$ denotes the $\mcD^{(\N -1)}$-module structure on $\mcO_\mcE$ corresponding to $\STR$ (cf. Remark \ref{Eruy78}), then   $\DMO^\mr{ad}_\STR$ is 
   given by $\DMO^\mr{ad}_\STR (D) (v) = [\STR^{\natural \natural} (D), v]$ for any local sections $D \in \mcD^{(\N -1)}$ and $v \in \mfg_\mcE$.
In particular, we obtain a $\mcD^{(\N-1)}$-module $(\mfg_\mcE, \DMO_\STR^\mr{ad})$,  which has vanishing    $p^\N$-curvature.

Recall from ~\cite[\S\,2.1.2]{Wak8} that
there exists a canonical $3$-step decreasing filtration 
\begin{align}
0 = \mfg_\mcE^2 \subseteq \mfg_\mcE^1 \subseteq \mfg_\mcE^0 \subseteq \mfg_\mcE^{-1} = \mfg_\mcE
\end{align}
on $\mfg_\mcE$ together with natural  identifications 
$\mfg_\mcE^i/\mfg_\mcE^{i+1} = \Omega^{\otimes i}$ ($i=-1, 0, 1$) and $\mfg_\mcE^0 = \mfb_{\mcE_B}$.
Then, we  can prove  the following assertion.

\SSP
\bpr \label{P300}
The collection of data
\begin{align} \label{YY568}
\mfg_\mcE^\heartsuit := (\mfg_\mcE, \DMO_\phi^\mr{ad}, \{ \mfg_\mcE^{j-1} \}_{j=0}^{3})
\end{align}
forms a dormant $\mr{GL}_3^{(\N)}$-oper on $\msX$.
Moreover,  for each $i=1, \cdots, r$,  the exponent of $\mfg_\mcE^\heartsuit$ at $\sigma_i$ coincides with  $[0, \lambda_i, - \lambda_i]$.
\epr
\begin{proof}
Let us fix a dormant $2^{(\N)}$-theta characteristic $\vartheta := (\varTheta, \DMO_\vartheta)$ of $X^\mr{log}/S^\mr{log}$.
The dormant $\mr{PGL}_2^{(\N)}$-oper $\msE^\spadesuit$ corresponds to a dormant $(\mr{GL}_2^{(\N)}, \vartheta)$-oper $\msF^\heartsuit := (\mcF, \DMO, \{ \mcF^j \}_j)$ on $\msX$ via the isomorphism $\Lambda_{\diamondsuit \Rightarrow \spadesuit}^\ZZZ$ (cf. Theorem \ref{P14}).
Let  $\mcE nd^0 (\mcF)$ denote the sheaf of $\mcO_X$-linear endomorphisms of $\mcF$ with vanishing trace.
This sheaf  has 
a decreasing filtration 
\begin{align}
0 = \mcE nd^0 (\mcF)^2 \subseteq  \mcE nd^0 (\mcF)^1 \subseteq \mcE nd^0 (\mcF)^0 \subseteq \mcE nd^0 (\mcF)^{-1} = \mcE nd (\mcF)
\end{align}
determined by the condition that
$\mcE nd^0 (\mcF)^1$ (resp., $\mcE nd^0 (\mcF)^0$) consists of local sections $h$ with $h (\mcF^1) = 0$ (resp., $h (\mcF^1) \subseteq \mcF^1$).
Also, $\mcE nd^0 (\mcF)$ admits a $\mcD^{(\N -1)}$-module structure $\DMO_{\mcE nd^0}$ induced naturally from $\DMO$.
One may verify that the resulting collection
\begin{align}
(\mcE nd^0 (\mcF), \DMO_{\mcE nd^0}, \{ \mcE nd^0 (\mcF)^{j-1} \}_{j=0}^3)
\end{align}
forms a dormant $\mr{GL}_3^{(\N)}$-oper on $\msX$.
On the other hand, there exists a canonical identification $\mcE nd^0 (\mcF) = \mfg_\mcE$, by which $\{ \mcE nd^0 (\mcF)^{j-1} \}_j$ and $\DMO_{\mcE nd^0}$ correspond to $\{ \mfg_\mcE^{j-1} \}_j$  and $\DMO_\STR^{\mr{ad}}$, respectively.
This implies that $\mfg_\mcE^\heartsuit$ is  a dormant $\mr{GL}_3^{(\N)}$-oper.
This proves the first assertion.

Next,  to prove the second assertion, we fix  $i \in \{ 1, \cdots, r \}$.
Let $[\EX_{i, 1}, \EX_{i, 2}]$ be the exponent of $\msF^\heartsuit$ at $\sigma_i$.
Since $\msF^{\heartsuit \Rightarrow \spadesuit} \cong \msE^\spadesuit$,
the equality $\EX_{i, 1} - \EX_{i, 2} = \lambda_i$ holds.
We see that the exponent of $\DMO_{\mcE nd^0}$ at $\sigma_i$ is given by $[0, \EX_{i, 1} -\EX_{i, 2}, \EX_{i, 2} - \EX_{i, 1}] = [0, \lambda_i, - \lambda_i]$ (cf. Proposition \ref{YY30}, (i)), which completes the proof of the second  assertion.
\end{proof}
\SSP

\SSP
 \bpr \label{Pee4}
Let us fix 
 $a \in \{ 0, 1, \cdots, \N -1\}$, and write $\Omega^{(a)} := \Omega_{X^{(a)\mr{log}}/S^\mr{log}}$.
 Recall  that $(\mfg_\mcE, \DMO_\STR^{\mr{ad}})$ induces  a flat module 
 $(\mfg_\mcE^{[a]}, \DMO_\STR^{\mr{ad}[a]} : \mfg_\mcE^{[a]} \migi \Omega^{(a)} \otimes \mfg_\mcE^{[a]})$ on $X^{(a)\mr{log}}/S^\mr{log}$ with vanishing $p$-curvature (cf. \eqref{J16}, Proposition \ref{UU577}, (i) and (ii)).
  \begin{itemize}
 \item[(i)]
 The
following equalities of $\mcO_S$-modules  hold: 
  \begin{align}
 \mbR^2 f_* (\mcK^\bullet 
 [\DMO^{\mr{ad}[a]}_\phi])
 =f_*(\mr{Ker}(\DMO_\phi^{\mr{ad} [a]})) = \mbR^1 f_* (\mr{Coker}(\DMO^{\mr{ad}[a]}_\phi)) = 0.
 \end{align}
 \item[(ii)]
 $\mbR^1 f_* (\mcK^\bullet [\DMO^{\mr{ad}[a]}_\phi])$
 is a vector bundle  on $S$ of rank $6g-6 + 3r$.
 Also,  
  $\mbR^1f_*(\mr{Ker}(\DMO_\STR^{\mr{ad} [a]}))$ and $f_* (\mr{Coker}(\DMO^{\mr{ad}[a]}_\STR))$) are  
 vector bundles on $S$   of 
 rank  $3g-3+r$ and $3g-3+2r$, respectively.
 In particular,  $\mbR^1 f_* (\mcS ol (\DMO_\STR^{\mr{ad}}))$ is a vector bundle  of rank $3g-3 +r$. 
   \end{itemize}
 \epr
 \begin{proof}
 We shall prove both assertions (i) and (ii) by induction on $a$.
The base step, i.e., 
 the case  of $a = 0$, was already proved in ~\cite[Propositions 6.5, (ii), and  6.18]{Wak8}.
 To consider the induction step, we shall suppose that the assertions with $a$ replaced by $a-1$ ($a \geq 1$) have been proved.
 For simplicity, we write 
 $\mcF^{[a]} := \mfg_\mcE^{[a]}$ and  $\nabla^{[a]} := \DMO_\STR^{\mr{ad}[a]}$.
 
First, we shall consider assertion (i).
Since $\mr{Ker} (\DMO^{[a]}) \subseteq \mr{Ker}(\DMO^{[a-1]})$,
the equality 
\begin{align} \label{YY900}
f_* (\mr{Ker}(\DMO^{[a]})) = 0
\end{align}
holds because of the induction hypothesis $f_* (\mr{Ker}(\DMO^{[a -1]})) = 0$.
On the other hand, let us consider the Hodge to de Rham spectral sequence
\begin{align} \label{dE180}
'E_{1}^{q_1, q_2} := \mbR^{q_2} f_* (\mcK^{q_1} [\DMO^{[a]}]) \Rightarrow \mbR^{q_1 + q_2} f_* (\mcK^\bullet [\DMO^{[a]}])
\end{align}
associated to the complex $\mcK^\bullet [\DMO^{[a]}]$ (cf. ~\cite[Eq.\,(755)]{Wak8}).
Since the relative dimension $\mr{dim}(X/S)$ of $X/S$ is  $1$, the equality  $\mbR^2 f_* (\mcF^{[a]}) = 0$  holds.
It follows that
the sequence
\begin{align} \label{UU1}
\mbR^1f_* (\mcF^{[a]}) \xrightarrow{\mbR^1 f_* (\nabla^{[a]})}\mbR^1 f_* (\Omega^{(a)} \otimes \mcF^{[a]}) \migi \mbR^2 f_* (\mcK^\bullet [\DMO^{[a]}]) \migi 0
\end{align}
 induced by 
\eqref{dE180} is exact.
By the comment following ~\cite[Proposition 1.2.4]{Og},
$\mbR^1 f_* (\Omega^{(a)} \otimes \mcF^{[a]})$ is isomorphic to
$\mbR^1 f_* (\mr{Coker}(\DMO^{[a-1]}))$.
But,  the equality  $\mbR^1 f_* (\mr{Coker}(\DMO^{[a-1]})) =0$ holds  by 
the induction hypothesis, so the exactness of  \eqref{UU1} implies
\begin{align} \label{dE189}
\mbR^2 f_* (\mcK^\bullet [\DMO^{[a]}]) = 0.
\end{align}
Moreover,  consider 
the conjugate spectral sequence
\begin{align} \label{dE200}
''E_2^{q_1, q_2} := \mbR^{q_1} f_*(\mcH^{q_2}(\mcK^\bullet [\DMO^{[a]}])) \Rightarrow \mbR^{q_1 + q_2} f_* (\mcK^\bullet [\DMO^{[a]}])
\end{align}
associated to $\mcK^\bullet [\DMO^{[a]}]$ (cf. ~\cite[Eq.\,(757)]{Wak8}).
Since the equality $\mbR^j f_* (\mr{Ker}(\DMO^{[a]})) = 0$ holds for every $j \geq 2$ because of  $\mr{dim}(X/S) =1$,  
the morphism
$\mbR^2 f_* (\mcK^\bullet [\DMO^{[a]}]) \migi \mbR^1 f_* (\mr{Coker}(\DMO^{[a]}))$
induced by \eqref{dE200} is surjective.
Hence, it follows from \eqref{dE189} that 
\begin{align} \label{dE193}
\mbR^1 f_* (\mr{Coker}(\DMO^{[a]})) = 0.
\end{align}
By \eqref{YY900}, \eqref{dE189}, and \eqref{dE193},
the proof of assertion (i) is completed.

Next, we shall consider assertion (ii).
It follows from Proposition \ref{PPer4}, (i), that  $\mr{Ker}(\nabla^{[a]})$ is a relatively torsion-free sheaf on $X^{(a+1)}$.
Also, since
 $\mr{Coker}(\nabla^{[a]})$ is isomorphic to $\Omega^{(a+1)} \otimes \mr{Ker}(\nabla^{[a]})$ (cf. the comment following ~\cite[Proposition 1.2.4]{Og}), 
 $\mr{Coker}(\nabla^{[a]})$ is a relatively torsion-free sheaf on $X^{(a+1)}$.
By the equality $\mbR^2 f_* (\mr{Ker}(\nabla^{[a]})) = 0$ and  \eqref{dE193},
both $\mbR^1 f_*(\mr{Ker}(\nabla^{[a]}))$ and $f_*(\mr{Coker}(\nabla^{[a]}))$
turn out to be vector bundles (cf. ~\cite[Chap.\,III, Theorem 12.11, (b)]{Har}).
Moreover, the exactness of  
the sequence 
\begin{align} \label{UU2}
0 \migi  f_* (\Omega^{(a)} \otimes \mcF^{[a]}) \left(= f_* (\mr{Coker}(\DMO^{[a-1]})) \right) &\migi \mbR^1 f_* (\mcK^\bullet [\DMO^{[a]}])\\
 &\migi \mbR^1 f_* (\mcF^{[a]}) \left(= \mbR^1 f_* (\mr{Ker}(\DMO^{[a-1]})) \right)\migi 0  \notag
\end{align}
induced by \eqref{dE180} and  the induction hypothesis together  imply that
$\mbR^1 f_* (\mcK^\bullet [\nabla^{[a]}])$ is a vector bundle.

 In what follows, we shall compute the ranks of the vector bundles under consideration.
 To this end, we may assume that $S = \mr{Spec}(k)$ for an algebraically closed  field $k$  over $\mbF_p$.
Denote by $\msX^{(a)}$ the pointed stable curve obtained as the $a$-th Frobenius twist of $\msX$ over $k$, and by $D^{(a)}$ the reduced effective  divisor on $X^{(a)}$ determined by the union of the marked points of $\msX^{(a)}$.
For each flat module $(\mcV, \DMO_\mcV)$ on $X^{(a)\mr{log}}/k^\mr{log}$,
we write ${^c}\mcV := \mcV (-D^{[a]})$ and write ${^c}\DMO_\mcV$ for the $S^\mr{log}$-connection on ${^c}\mcV \left(\subseteq \mcV \right)$ obtained by restricting  $\DMO_\mcV$.
By  ~\cite[Corollary 6.16]{Wak8} in the case where ``$(\mcF, \nabla)$" is taken to be $(\mcF^{[a]}, \DMO^{[a]})$,
there exists a canonical isomorphism of $k$-vector spaces
 \begin{align}
H^1 (X^{(a)},  \mr{Ker} ({^c}(\DMO^{[a]\vee}))) \isom H^0 (X^{(a)}, \mr{Coker}(\DMO^{[a]}))^\vee.
 \end{align}
 In particular,
 we have
 \begin{align} \label{ddE1}
 h^1 (\mr{Ker} ({^c}(\DMO^{ [a]\vee}))) = h^0 (\mr{Coker}(\DMO^{[a]})).
 \end{align}
 Next, the Killing form on $\mfg$ induces an isomorphism of $\mcD^{(\N -1)}$-modules
 $(\mfg_\mcE, \DMO_\phi^{\mr{ad}}) \isom (\mfg_\mcE^\vee, (\DMO_\phi^\mr{ad})^\vee)$.
 This isomorphism restricts to an isomorphism of flat modules
\begin{align} \label{ddE2}
({^c}\mcF^{[a]}, {^c}\DMO^{[a]}) \isom 
({^c}(\mcF^\vee)^{[a]}, {^c}(\DMO^\vee)^{[a]}).
\end{align}
This isomorphism gives  the equality $\mr{Ker}({^c}\DMO^{[a]}) = \mr{Ker}({^c}(\DMO^\vee)^{[a]})$, which implies
\begin{align} \label{dE155}
h^1 (\mr{Ker}({^c}\DMO^{[a]})) = h^1 (\mr{Ker}({^c}(\DMO^\vee)^{[a]})).
\end{align}
Hence,
the following sequence of equalities holds:
\begin{align} \label{dE149}
h^0 (\mr{Coker}(\DMO^{[a]})) & \stackrel{\eqref{ddE1}}{=} h^1 (\mr{Ker}({^c}(\DMO^{[a]\vee}))) \\
& = h^1 (\mr{Ker}({^c}(\DMO^\vee)^{[a]})) -R_\IN \notag \\
&\stackrel{\eqref{dE155}}{=} h^1 (\mr{Ker}({^c}\DMO^{[a]})) -R_\IN  \notag \\
& = (h^1 (\mr{Ker}(\DMO^{[a]})) + r + R_\IN) -R_\IN \notag \\
& = h^1 (\mr{Ker}(\DMO^{[a]})) +r \notag 
\end{align}
(cf. \eqref{dE140} for the definition of $R_\IN$), 
where the second and  fourth equalities follow from Lemma \ref{L010} described below.
On the other hand,  
there exists a short exact sequence
\begin{align} \label{ddE6}
0 \migi H^1 (X^{(a)}, \mr{Ker}(\DMO^{[a]})) \migi \mbH^1 (X^{(a)}, \mcK^\bullet [\DMO^{[a]}]) \migi H^0 (X^{(a)}, \mr{Coker}(\DMO^{[a]}))
\migi 0 
\end{align}
induced by  \eqref{dE200}.
 This implies 
\begin{align} \label{eq+}
h^1 (\mr{Ker}(\nabla^{[a]}))  + h^0 (\mr{Coker}(\nabla^{[a]}))  & = \mr{dim}(\mbH^1 (X^{(a)}, \mcK^\bullet [\nabla^{[a]}]))  \\
&= h^1 (\mr{Ker}(\nabla^{[a-1]})) + h^0 (\mr{Coker}(\nabla^{[a-1]})) \notag  \\
& = (3g-3+r) + (3g-3+2r) \notag \\
& =  6g-6 + 3r, \notag
\end{align} 
where the second equality follows from the exactness of \eqref{UU2} and the third equality follows from the induction hypothesis.
Consequently,     it follows from 
\eqref{dE149} and \eqref{eq+} that 
\begin{align}
h^1(\mr{Ker}(\nabla^{[a]})) = 3g-3+r, \ \ \ h^0(\mr{Coker}(\nabla^{[a]})) = 3g-3+2r.
\end{align}
This completes the proof of the proposition.
 \end{proof}
\SSP

The following lemma was applied in the proof of the above proposition.

\SSP
\ble \label{L010}
We shall  keep the notation in the proof of the above proposition.
Let $\IN \in \{1, \cdots, \N -1\}$, and 
suppose that $S = \mr{Spec}(k)$ for an algebraically closed field $k$ over $\mbF_p$.
Also, we shall set 
\begin{align} \label{dE140}
R_\IN := \sharp \left\{ i \, \Big| \, 
1 \leq i \leq r,   \ (\lambda_{i})_{[\IN]} \in \{0, p-1 \} \left(\subseteq \mbF_p \right)
\right\}
   \end{align}
(cf. \S\,\ref{SS182} for the definition of $(-)_{[\IN]}$).
Then, 
the following two equalities hold:
\begin{align} \label{dE141}
h^1 (\mr{Ker}({^c}\DMO^{[\IN]})  &= h^1 (\mr{Ker}(\DMO^{[\IN]})) + r + R_\IN,   \\
h^1 (\mr{Ker}({^c}(\DMO^\vee)^{[\IN]})) &= h^1 (\mr{Ker}({^c}(\DMO^{[\IN]\vee})))   +  R_\IN. 
\notag
\end{align}
\ele 
\begin{proof}
To begin with, we prepare some notations.
Let $U_\oslash$ and $\mcD_\oslash^{(\N -1)}$ be as in \eqref{YY124} and \eqref{dE91}, respectively.
Let $\msV := (\mcV, \DMO)$ be a  $\mcD^{(\N-1)}$-module (resp., a $\mcD_\oslash^{(\N-1)}$-module) such that $\psi (\DMO) = 0$ and $\mcV$ is a vector bundle of rank $n >0$.
In the non-resp'd portion, $({^c}\mcV^{[a]}, {^c}\DMO^{[a]})$ is defined as in the proof of the above proposition.
Also,  
in the resp'd portion, we set ${^c}\mcV^{[a]} := t^{p^a} \cdot \mcV^{[a]} \left(\subseteq \mcV^{[a]} \right)$, which has an $S^\mr{log}$-connection  ${^c}\DMO^{[a]}$ obtained by restricting $\DMO^{[a]}$.
Then, the  inclusion 
${^c}\mcV^{[\IN]}  \migiincl \mcV^{[\IN]}$ restricts to an inclusion
\begin{align} \label{dE123}
\alpha_{\msV}^{[\IN]} : \mr{Ker} ({^c}\DMO^{[\IN]}) \migiincl \mr{Ker} (\DMO^{[\IN]}).
\end{align}
On the other hand, the natural morphism
 ${^c}(\mcV^\vee)^{[a]}
 \migi 
{^c}(\mcV^{[\IN]\vee})$
restricts to a morphism of $\mcO_{X^{(\IN+1)}}$-modules
\begin{align} \label{UU7}
\beta_\msV^{[\IN]} : \mr{Ker}({^c} (\DMO^\vee)^{[\IN]}) \migi \mr{Ker} ({^c}(\DMO^{[\IN]\vee})).
\end{align}
In what follows, we shall  examine  the morphisms $\alpha_\msV^{[\IN]}$ and $\beta_\msV^{[\IN]}$ in the case where   $\msV$ is taken to be the $\mcD_\oslash^{(\N-1)}$-module  $\msO_{\oslash, \EX}^{(\N -1)} := (\mcO_\oslash, \DMO_{\oslash, \EX}^{(\N -1)})$ for 
 $\EX \in (\mbZ/p^\N \mbZ)^\times$ (cf. \eqref{UU3}).
For simplicity, write $\msO_{d} := \msO_{\oslash, \EX}^{(\N -1)}$ and $\DMO_{\EX}:= \DMO_{\oslash, \EX}^{(\N -1)}$.
First, 
 a straightforward calculation  shows 
\begin{align} \label{UU6}
\mr{Ker} ({^c}\DMO^{[\IN]}_{\EX}) = \begin{cases} t^{\widetilde{\EX}_{[0, \IN]}} \cdot \mcO^{(\IN+1)}_{\oslash}& \text{if $\EX_{[\IN]} \neq 0$}; 
\\
 t^{\widetilde{d}_{[0, \IN]}+p^{\IN +1}} \cdot \mcO^{(\IN+1)}_{\oslash}
  & \text{if $\EX_{[\IN]} = 0$}. \end{cases}
\end{align}
Hence, it follows from Proposition \ref{L093} that $\alpha_{\msO_\EX}^{[\IN]}$ is injective and its cokernel satisfies
\begin{align} \label{dE128}
\mr{length}_{\mcO_\oslash^{(a+1)}}(\mr{Coker}(\alpha^{[\IN]}_{\msO_\EX})) = \begin{cases} 0 & \text{if $\EX_{[\IN]} \neq 0$};\\ 1 & \text{if $\EX_{[\IN]} =0$}.  \end{cases}
\end{align}
Also, by putting
 $c := -\EX$, we see (from $\EX \in (\mbZ/p^\N \mbZ)^\times$) 
that 
\begin{align} \label{UU6}
\mr{Ker} ({^c}(\DMO_{\EX}^{\vee})^{[\IN]}) \left(= \mr{Ker}({^c} \DMO_{c}^{[\IN]}) \right) = \begin{cases} t^{\widetilde{c}_{[0, \IN]}} \cdot \mcO_{\oslash}^{(\IN+1)} &  \text{if
$c_{[\IN]} \neq 0 \left(\Leftrightarrow \EX_{[\IN]} \neq p-1 \right)$};
\\
t^{\widetilde{c}_{[0, \IN]}+p^{\IN+1}} \cdot \mcO_\oslash^{(\IN+1)} &
\text{if
$c_{[\IN]} = 0 \left(\Leftrightarrow \EX_{[\IN]} = p-1 \right)$}.
\end{cases}
\end{align}
and that
\begin{align} \label{UU5}
 \mr{Ker}({^c}(\DMO_{\EX}^{[\IN]})^{\vee}) = 
t^{-\widetilde{d}_{[0, \IN]}+p^{\IN+1}} \cdot \mcO_\oslash^{(\IN+1)} \left(= t^{\widetilde{c}_{[0, \IN]}} \cdot \mcO_\oslash^{(\IN+1)} \right).
\end{align}
Hence, 
$\beta_{\msO_\EX}^{[\IN]}$ is injective and its cokernel satisfies
\begin{align} \label{dE124}
\mr{length}_{\mcO_\oslash^{(a+1)}} (\mr{Coker}(\beta^{[\IN]}_{\msO_\EX})) =  \begin{cases} 1 & \text{if   $\EX_{[\IN]} = p-1$}; \\
0 & \text{if   $\EX_{[\IN]}\neq  p-1$}.
\end{cases}
 \end{align}
 The same  arguments with $d$ replaced by $c$ give
 \begin{align} \label{dE125}
\mr{length}_{\mcO_\oslash^{(a+1)}} (\mr{Coker}(\alpha^{[\IN]}_{\msO_{-\EX}})) &= \begin{cases} 0 & \text{if $\EX_{[\IN]} \neq p-1$};\\ 1 & \text{if $\EX_{[\IN]} =p-1$},  \end{cases}  \\
\mr{length}_{\mcO_\oslash^{(a+1)}} (\mr{Coker}(\beta^{[\IN]}_{\msO_{-\EX}})) &=  \begin{cases} 1 & \text{if   $\EX_{[\IN]} =  0$}; \\
0 & \text{if   $\EX_{[\IN]} \neq  0$}. \notag
\end{cases}
 \end{align}

Now,  let us go back to our situation.
We shall set $(\mcF, \DMO) := (\mfg_\mcE, \DMO_\STR^{\mr{ad}})$.
Both $\alpha^{[a]}_{(\mcF, \DMO)}$ and $\beta^{[a]}_{(\mcF, \DMO)}$  become  isomorphisms
when restricted to $X \setminus \bigcup_{i=1}^r \mr{Im}(\sigma_i)$.
Given each $\EX \in \mbZ/p^\N \mbZ$, we shall set
\begin{align}
\mcQ_{\EX}^{[\IN]} := \begin{cases} \mcO_\oslash^{(\IN+1)} /(t^{p^{\IN+1}}) & \text{if $\EX_{[\IN]} \in \{0, p-1 \}$};  \\ 0  & \text{if $\EX_{[a]} \notin \{0, p-1 \}$}.\end{cases}
\end{align}
For each $i =1, \cdots, r$, denote by $\widehat{U}_i$ the formal neighborhood of $\mr{Im}(\sigma_i)$ in $X$, and by 
  $\mr{incl}_{i} : \widehat{U}_i  \migi X$ the natural morphism.
It follows from Proposition \ref{P300} that
the restriction $(\mcF|_{\widehat{U}_i}, \DMO |_{\widehat{U}_i})$ of $(\mcF, \DMO)$ to $\widehat{U}_i$
  is isomorphic to
 $(\mcO_\oslash^{\oplus 3}, \DMO_{\oslash, 0}^{(\N-1)}\oplus \DMO_{\oslash, \lambda_i}^{(\N-1)} \oplus \DMO_{\oslash, -\lambda_i}^{(\N-1)})$ via a fixed identification $\widehat{U}_i = U_\oslash$.
By \eqref{dE128} and the first equality  of \eqref{dE125},
 the cokernel  of $\alpha_{(\mcF, \DMO)}^{[\IN]}$ restricted to $\widehat{U}_i$ is isomorphic 
 to $\mcQ^{[\IN]}_{\lambda_i} \oplus \mcO_\oslash^{(\IN+1)} / (t^{p^{\IN+1}})$.
This implies that  $\alpha^{[\IN]}_{(\mcF, \DMO)}$ fits into the short exact sequence 
\begin{align} \label{dE132}
0 \migi \mr{Ker}({^c}\DMO^{[\IN]}) \xrightarrow{\alpha^{[\IN]}_{(\mcF, \DMO)}} \mr{Ker}(\DMO^{[\IN]}) \migi \bigoplus_{i=1}^r \mr{incl}_{i*} (\mcQ^{[\IN]}_{\lambda_i}\oplus \mcO^{(\IN+1)}_{\oslash}/(t^{p^{\IN+1}})) \migi 0.
\end{align}
Here, we shall write  $H^j (-) := H^j (X^{(\IN+1)}, -)$ ($j \geq 0$) for simplicity.
Since $H^0 (\mr{Ker}(\DMO^{[\IN]})) 
 =  0$ (cf. \eqref{YY900}),
 the sequence \eqref{dE132} yields  a short exact sequence of $k$-vector spaces
\begin{align}
0 \migi  \bigoplus_{i=1}^r H^0 (\mr{incl}_{i*} (\mcQ^{[\IN]}_{\lambda_i}\oplus \mcO^{(\IN+1)}_\oslash/(t^{p^{\IN+1}})))
\migi  H^1 (\mr{Ker}({^c}\DMO^{[\IN]}))
\migi H^1 (\mr{Ker}(\DMO^{[\IN]}))
\migi 0.
\end{align}
This sequence implies 
\begin{align}
h^1 (\mr{Ker}({^c}\DMO^{[\IN]})) &= h^1 (\mr{Ker}(\DMO^{[\IN]})) + \sum_{i=1}^r\mr{dim} (H^0(\mr{incl}_{i*} (\mcQ^{[\IN]}_{\lambda_i} \oplus \mcO^{(\IN+1)}_\oslash /(t^{p^{\IN+1}})))) \\
& = h^1 (\mr{Ker}(\DMO^{[\IN]})) + \sum_{i=1}^r (\mr{length}_{\mcO_\oslash^{(\IN +1)}} (\mcQ^{[\IN]}_{\lambda_i})+1) \notag \\
& =  h^1 (\mr{Ker}(\DMO^{[\IN]}))   +R_a +r. \notag
\end{align}
Thus, we have proved the first equality in \eqref{dE141}.
Moreover,  it follows from \eqref{dE124} and the second equality  of \eqref{dE125} that we have the following short exact sequence:
\begin{align} \label{dE130}
0 \migi \mr{Ker}({^c}\DMO^{\vee [\IN]}) \xrightarrow{\beta^{[\IN]}_{(\mcF, \DMO)}} \mr{Ker}({^c}\DMO^{[\IN]\vee})  \migi \bigoplus_{i=1}^r \mr{incl}_{i*} (\mcQ^{[\IN]}_{\lambda_i} )
\migi  0.
\end{align}
Similarly  to the above argument, 
 \eqref{dE130} implies  the equality
\begin{align}
h^1 (\mr{Ker}({^c}(\DMO^\vee)^{[\IN]})) = h^1 (\mr{Ker}({^c}(\DMO^{[\IN]\vee}))) + R_{\IN},
\end{align}
which is precisely the second equality in \eqref{dE141}.
Thus, we have finished the proof of this lemma.
\end{proof}

\LSP
\subsection{Horizontal sections of the adjoint bundle} \label{SS07ff8}

 Just as in the case of $\mfg_\mcE$,
 $\widetilde{\mcT}_{\mcE^\mr{log}/S^\mr{log}}$ can be regarded  as an $\mcO_X$-submodules of $\mcE nd_{\mcO_S} (\mcO_\mcE)$.
We shall write
\begin{align} \label{WW200}
\mfg_{\mcE, \phi}  \ \left(\text{resp.,} \ \widetilde{\mcT}_{\mcE^\mr{log}/S^\mr{log}, \phi}\right)
\end{align}
for the subsheaf of  
$\mfg_{\mcE}$ (resp., $\widetilde{\mcT}_{\mcE^\mr{log}/S^\mr{log}}$)  consisting of local sections $v$ such that the equality $[\STR^{\natural \natural} (D),  v] = 0$ (resp., $[\STR^{\natural \natural} (D), v] - \STR^{\natural \natural} ([D, d_\mcE (v)]) =0 $) holds for any $D\in \mcD^{(\N -1)}$.
In particular,  the equality  $\mfg_{\mcE, \STR} = \mcS ol (\DMO_\STR^{\mr{ad}})$ holds.
Both  $\mfg_{\mcE, \STR}$ and $\widetilde{\mcT}_{\mcE^\mr{log}/S^\mr{log}, \STR}$ may be regarded as  $\mcO_{X^{(\N)}}$-modules via the underlying homeomorphism of 
  $F^{(\N)}_{X/S} : X \migi X^{(\N)}$.

Denote by 
\begin{align} \label{TTT89}
\DMO_\STR : \mcT \migi \widetilde{\mcT}_{\mcE^\mr{log}/S^\mr{log}}
\end{align}
 the $S^\mr{log}$-connection on $\mcE$ obtained by reducing the level of $\STR$  to $0$ (cf.  ~\cite[Definition 1.17]{Wak8} and \eqref{Efjj32}).

The following lemma will be applied in the proof of Lemma
\ref{UU78} described later.

 \SSP
\ble \label{UU104}
\begin{itemize}
\item[(i)]
The inclusion 
$\mfg_{\mcE}
 \migiincl \widetilde{\mcT}_{\mcE^\mr{log}/S^\mr{log}}$ restricts to an injective morphism 
\begin{align} \label{e121}
\mr{incl}_{\msE^\spadesuit} : 
\mfg_{\mcE, \phi} \migiincl \widetilde{\mcT}_{\mcE^\mr{log}/S^\mr{log}, \phi},
\end{align}
and the image of $\nabla_\phi$  is contained in $\widetilde{\mcT}_{\mcE^\mr{log}/S^\mr{log}, \phi} \left( \subseteq \widetilde{\mcT}_{\mcE^\mr{log}/S^\mr{log}}\right)$.
In particular, 
the sequence 
\begin{align} \label{e123}
0 \migi 
\mfg_{\mcE, \phi}\xrightarrow{\mr{incl}_{\msE^\spadesuit}} \widetilde{\mcT}_{\mcE^\mr{log}/S^\mr{log}, \phi} \migi \mcT \migi 0
\end{align}
is exact and split by the morphism $\nabla_\phi$.
\item[(ii)]
The $\mcO_S$-module $\mbR^1 f_* (\widetilde{\mcT}_{\mcE^\mr{log}/S^\mr{log}, \STR})$ forms a vector bundle.
\end{itemize}
\ele
\begin{proof}
We shall consider assertion (i).
The first assertion follows immediately  from the definitions of $\mfg_{\mcE, \STR}$ and $\widetilde{\mcT}_{\mcE^\mr{log}/S^\mr{log}, \STR}$.
Moreover, 
 for any local sections $v \in \mcT$ and $D \in \mcD^{(\N-1)}$, we have
\begin{align}
[\phi^{\natural \natural} (D), \nabla_\phi (v)] - \phi^{\natural \natural} ([D, d_\mcE (\nabla_\phi (v))]) =  [\phi^{\natural \natural} (D), \phi^{\natural \natural} (v)] - \phi^{\natural \natural} ([D, v]) = 0.
\end{align}
This implies  the inclusion relation $\mr{Im}(\nabla_\phi) \subseteq \widetilde{\mcT}_{\mcE^\mr{log}/S^\mr{log}, \phi}$.
The last  assertion of (i) follows from the first two  assertions.

Next, we shall prove assertion (ii).
By Proposition \ref{PPer4}, (i),
$\mfg_{\mcE, \STR} \left(= \mcS ol (\DMO_\STR^\mr{ad}) \right)$ is flat over $S$.
Since the equality $\mbR^2 f_* (\mfg_{\mcE, \STR}) = 0$ holds,
$\mbR^1 f_* (\mfg_{\mcE, \STR})$ turns out to be a vector bundle (cf. ~\cite[Chap.\,III, Theorem 12.11, (b)]{Har}).
On the other hand, the short exact sequence \eqref{e123} induces the short exact sequence
\begin{align}
0 \migi \mbR^1 f_* (\mfg_{\mcE, \STR}) \migi \mbR^1 f_* (\widetilde{\mcT}_{\mcE^\mr{log}/S^\mr{log}, \STR}) \migi \mbR^1 f_* (\mcT) \migi 0.
\end{align}
Both $\mbR^1 f_* (\mfg_{\mcE, \STR})$ and  $\mbR^1 f_* (\mcT)$ are vector bundles, so  the exactness of this sequence implies that $\mbR^1 f_* (\widetilde{\mcT}_{\mcE^\mr{log}/S^\mr{log}, \STR})$ is a vector bundle.
This completes the proof of assertion (ii).
\end{proof}
\SSP

 Next, denote by
\begin{align} \label{e603}
\nabla^{\mr{ad} (0)}_\phi: \mfg_\mcE \migi \Omega_{} \otimes \mfg_\mcE
\end{align}
the $S^\mr{log}$-connection on the vector bundle $\mfg_\mcE$
obtained by reducing the level of $\DMO_\STR^\mr{ad}$ to $0$.
In other words, $\nabla^{\mr{ad} (0)}_\phi$ is obtained from $\DMO_\STR$ via 
  change of structure group by the adjoint representation $\mr{PGL}_2 \migi \mr{GL} (\mfg)$ (cf. ~\cite[Eq.\,(745)]{Wak8}).

Also, denote by
\begin{align} \label{UU10}
\widetilde{\nabla}^{\mr{ad}(0)}_\STR : \widetilde{\mcT}_{\mcE^\mr{log}/S^\mr{log}} \migi \Omega \otimes \mfg_\mcE
\end{align}
the unique $f^{-1}(\mcO_S)$-linear morphism determined by the condition that
\begin{align} \label{UU11}
\langle \partial_1, \widetilde{\nabla}^{\mr{ad}(0)}_\STR (\partial_2) \rangle = [\nabla_\STR (\partial_1), \partial_2] - \nabla_\STR ([\partial_1, d_\mcE (\partial_2)])
\end{align}
for any local sections $\partial_1 \in \mcT$ and $\partial_2 \in \widetilde{\mcT}_{\mcE^\mr{log}/S^\mr{log}}$, where $\langle -, - \rangle$ denotes the pairing $\mcT \times (\Omega \otimes \mfg_\mcE) \migi \mfg_\mcE$ arising from the natural pairing $\mcT \times \Omega \migi \mcO_X$.
The restriction of $\widetilde{\nabla}^{\mr{ad}(0)}_\STR$ to $\mfg_\mcE \left(\subseteq \widetilde{\mcT}_{\mcE^\mr{log}/S^\mr{log}} \right)$ coincides with $\nabla^{\mr{ad}(0)}_\STR$.

Consider  the composite
\begin{align} \label{UU12}
\widetilde{\nabla}_{\STR, B}^{\mr{ad}(0)} : \widetilde{\mcT}_{\mcE_B^\mr{log}/S^\mr{log}} \xrightarrow{\mr{inclusion}} \widetilde{\mcT}_{\mcE^\mr{log}/S^\mr{log}} \xrightarrow{\widetilde{\nabla}^{\mr{ad}(0)}_\STR} \Omega \otimes \mfg_\mcE,
\end{align}
where $\widetilde{\mcT}_{\mcE_B^\mr{log}/S^\mr{log}}$ denotes  the $\mcO_X$-module obtained from the $B$-bundle $\mcE_B$ in the same manner as $\widetilde{\mcT}_{\mcE^\mr{log}/S^\mr{log}}$.
The pair of the  natural  inclusion 
$\widetilde{\mcT}_{\mcE_B^\mr{log}/S^\mr{log}} \migiincl \widetilde{\mcT}_{\mcE^\mr{log}/S^\mr{log}}$
and the identity morphism of $\Omega \otimes \mfg_\mcE$  defines  
an inclusion of complexes
\begin{align} \label{UU20}
\gamma_1 : \mcK^\bullet [\widetilde{\nabla}_{\STR, B}^{\mr{ad}(0)}] \migiincl \mcK^\bullet [\widetilde{\nabla}_\STR^{\mr{ad}(0)}].
\end{align}
On the other hand, since  $\widetilde{\nabla}_{\STR}^{\mr{ad}(0)}$ becomes the zero map  when restricted to $\widetilde{\mcT}_{\mcE^\mr{log}/S^\mr{log}, \phi} \left(\subseteq \widetilde{\mcT}_{\mcE^\mr{log}/S^\mr{log}} \right)$,  we obtain 
an inclusion 
\begin{align} \label{UU30}
\gamma_2 : \widetilde{\mcT}_{\mcE^\mr{log}/S^\mr{log}, \phi}[0] \migiincl \mcK^\bullet [\widetilde{\nabla}_\STR^{\mr{ad}(0)}].
\end{align}

For each $i \geq 0$, we shall set
\begin{align} \label{UU31}
\mcG_{\msE^\spadesuit}^i := \mr{Ker}\left(\mbR^i f_* (\mcK^\bullet [\widetilde{\nabla}^{\mr{ad}(0)}_{\STR, B}]) \oplus \mbR^i f_* (\widetilde{\mcT}_{\mcE^\mr{log}/S^\mr{log}, \phi})  \xrightarrow{\mbR^i f_*(\gamma_1 \oplus (-\gamma_2))} \mbR^i f_*(\mcK^\bullet [\widetilde{\nabla}_\STR^{\mr{ad}(0)}])\right).
\end{align}
Then, we obtain the composite
\begin{align} \label{e130}
D_{\msE^\spadesuit} : \mcG^1_{\msE^\spadesuit} 
&\xrightarrow{\mr{inclusion}} \mbR^1 f_* (\mcK^\bullet [\widetilde{\nabla}^{\mr{ad}(0)}_{\STR, B}]) \oplus \mbR^1 f_* (\widetilde{\mcT}_{\mcE^\mr{log}/S^\mr{log}, \phi})  \\
 &\xrightarrow{\mr{pr}_1} 
 \mbR^1 f_* (\mcK^\bullet [\widetilde{\nabla}^{\mr{ad}(0)}_{\STR, B}]) \notag \\
 &\migi\mbR^1 f_*(\mcT), \notag
\end{align}
where the last arrow arises from $d_{\mcE_B} : \widetilde{\mcT}_{\mcE^\mr{log}_B/S^\mr{log}} \migi \mcT$.

The following two lemmas will be applied in the proof of Proposition \ref{e128} described below.

\SSP
\ble \label{L10}
Denote by $\xi$  the composite $\mfg_{\mcE, \phi} \migiincl \mfg_\mcE \migisurj \mfg_{\mcE}/\mfb_{\mcE_B}$.
Then,  $\xi$ is injective, and  the  direct image of its cokernel  $f_*(\mr{Coker}(\xi))$   is a vector bundle on $S$.
\ele
\begin{proof}
First, we shall prove the injectivity of  $\xi$.
Let us take a dormant $\mr{GL}_2^{(\N)}$-oper $\msF^\heartsuit :=(\mcF, \DMO, \{ \mcF^j \}_{j=1}^2)$ on $\msX$ with  $\msF^{\heartsuit \Rightarrow \spadesuit} \cong \msE^\spadesuit$ (cf. Corollary \ref{UU39}, (i)).
Denote by $\mcE nd^0  (\mcF)$ (resp., $\mcE nd^0 (\mcF)^\nabla$) the subsheaf of $\mcE nd_{\mcO_X} (\mcF)$ consisting of  endomorphisms with vanishing trace (resp., endomorphisms with vanishing trace  preserving $\DMO$).
Under the natural identification $\mcE nd^0 (\mcF) = \mfg_{\mcE}$, 
the sheaf $\mcE nd^0 (\mcF)^\nabla$ corresponds to $\mfg_{\mcE, \STR}$.
Also, the surjection $\mfg_\mcE \migisurj \mfg_{\mcE}/\mfb_{\mcE_B}$ corresponds to  the morphism $\mcE nd^0 (\mcF) \migisurj \mcH om_{\mcO_X} (\mcF^1, \mcF/\mcF^1)$ induced by composing  with $\mcF^1 \migiincl \mcF$ and $\mcF \migisurj \mcF/\mcF^1$.
Thus, $\xi$ may be identified with the composite 
\begin{align} \label{TTT66}
\mcE nd^0 (\mcF)^\nabla \xrightarrow{\mr{inclusion}} \mcE nd^0 (\mcF) \migisurj \mcH om_{\mcO_X} (\mcF^1, \mcF/\mcF^1).
\end{align}

Let $U$ be an open subscheme  of $X$ and $h$ an element of $H^0 (U, \mcE nd^0 (\mcF)^\nabla)$
which is mapped to the zero map via \eqref{TTT66}.
The inclusion relation  $h (\mcF^1 |_U) \subseteq \mcF^1 |_U$ holds, 
and  the endomorphism of  $\mcF^1 |_U$ induced by $h$ is given by multiplication by  some element $a \in H^0 (U, \mcO_X)$.
Since 
the morphism \eqref{YY92}
 associated to $\msF^\heartsuit$ is an isomorphism and $h$ preserves the $\mcD^{(\N -1)}$-module structure, 
 the induced endomorphism of $(\mcF/\mcF^1)|_U$
 is  given by multiplication by $a$.
However,  $h$ has vanishing trace, so we have $a = 0$.
 This implies (since $h$ preserves the $\mcD^{(\N -1)}$-module structure)  the equality $h = 0$, completing the 
 proof of  the injectivity of  $\xi$.

Next, we shall prove the remaining assertion.
Note that the formation of $\xi$ is functorial with respect to base-change over $S$-schemes.
Hence, the above discussion implies that $\xi$ is universally injective relative to $S$.
By ~\cite[p.\,17, Theorem 1]{MAT},  $\mr{Coker}(\xi)$ turns out to be  flat over $S$.
In particular, $\mbR^1 f_* (\mr{Coker}(\xi))$ is a vector bundle on $S$ because the relative dimension of $X/S$ equals $1$ (cf. ~\cite[Chap.\,III, Theorem 12.11, (b)]{Har}).
Also, by  the equality $\mfg_{\mcE, \STR} = \mcS ol (\DMO_\STR^{\mr{ad}})$ together with Proposition \ref{PPer4}, (i),   $\mfg_{\mcE, \STR}$ is flat over $S$.
This implies that $\mbR^1 f_* (\mfg_{\mcE, \STR})$ is a vector bundle.
The short exact sequence $0 \migi \mfg_{\mcE, \STR} \xrightarrow{\xi} \mfg_\mcE /\mfb_{\mcE_B} \migi \mr{Coker}(\xi) \migi 0$ yields an  exact sequence of $\mcO_S$-modules
\begin{align}
0 \migi f_* (\mr{Coker}(\xi)) \migi \mbR^1 f_* (\mfg_{\mcE, \STR}) \migi \mbR^1 f_* (\mfg_\mcE/\mfb_{\mcE_B}) \left(\cong \mbR^1 f_* (\mcT) \right) \migi \mbR^1 f_*(\mr{Coker}(\xi)) \migi 0,
\end{align}
where the exactness at $f_* (\mr{Coker}(\xi))$  follows from the fact that   $f_*(\mfg_{\mcE}/\mfb_{\mcE_B}) \cong f_*(\mcT) = 0$.
Since the sheaves appearing in this sequence except  for $f_*(\mr{Coker}(\xi))$ 
are all vector bundles, 
 $f_*(\mr{Coker}(\xi))$ turns out to be   a vector bundle.
 This completes the proof of the assertion.
\end{proof}
\SSP

\ble \label{UU78}
We shall write 
\begin{align} \label{UU79}
\gamma := \gamma_1 \oplus (-\gamma_2) : \mcK^\bullet [\widetilde{\DMO}_{\STR, B}^{\mr{ad}(0)}] \oplus \widetilde{\mcT}_{\mcE^\mr{log}/S^\mr{log}, \STR} [0]\migi \mcK^\bullet [\widetilde{\DMO}_{\STR}^{\mr{ad}(0)}].
\end{align}
Also, write $\mr{Im}(\gamma)$ for the subcomplex of $\mcK^\bullet [\widetilde{\DMO}_\STR^{\mr{ad}(0)}]$ defined as the image of $\gamma$.
Then, the complex  $\mr{Coker}(\xi)[0]$ fits into the following short exact sequence:
\begin{align} \label{UU82}
0 \migi \mr{Im}(\gamma) \xrightarrow{\mr{inclusion}} \mcK^\bullet [\widetilde{\DMO}_\STR^{\mr{ad}(0)}] \migi \mr{Coker}(\xi) [0] \migi 0.
\end{align}
\ele
\begin{proof}
Denote by $\DMO_{\STR, B}^{\mr{ad}(0)} :\mfb_{\mcE_B} \migi \Omega \otimes \mfg_\mcE$ the morphism obtained by restricting 
the domain of $\DMO_{\STR}^{\mr{ad}(0)}$  (i.e., $\mfg_\mcE$) to $\mfb_{\mcE_B}$.
Then, we obtain the following diagram of complexes:
\begin{align} \label{UU90}
\vcenter{\xymatrix@C=46pt@R=36pt{
0 \ar[r] & \mcK^\bullet [\DMO_{\STR, B}^{\mr{ad}(0)}] \ar[r] \ar[d]^-{\mr{id}} & 
\mcK^\bullet [\DMO_{\STR, B}^{\mr{ad}(0)}]  \oplus \mfg_{\mcE, \STR}[0]\ar[r] \ar[d]^-{\overline{\gamma}_1\oplus( -\overline{\gamma}_2)} & \mfg_{\mcE, \STR}[0]\ar[r] \ar[d]^-{\xi [0]} & 0
\\
0 \ar[r] & \mcK^\bullet [\DMO_{\STR, B}^{\mr{ad}(0)}] \ar[r]_-{\mr{inclusion}} & 
\mcK^\bullet [\DMO_{\STR}^{\mr{ad}(0)}] \ar[r] & \mfg_{\mcE}/\mfb_{\mcE_B}[0]\ar[r] & 0,
}}
\end{align}
where 
\begin{itemize}
\item
the second (resp., third) upper horizontal  arrow denotes the inclusion into the first  factor (resp., the projection onto the second factor);
\item
$\overline{\gamma}_1$ (resp., $\overline{\gamma}_2$) denotes the natural inclusion
$\mcK^\bullet [\DMO_{\STR, B}^{\mr{ad}(0)}] \migiincl \mcK^\bullet [\DMO_{\STR}^{\mr{ad}(0)}]$ (resp., $\mfg_{\mcE, \STR}[0] \migiincl \mcK^\bullet [\DMO_{\STR}^{\mr{ad}(0)}]$);
\item
the third lower horizontal arrow denotes the morphism induced by the natural surjection $\mfg_\mcE \migisurj \mfg_\mcE /\mfb_{\mcE_B}$ and the zero map $\Omega \otimes \mfg_\mcE \migi 0$.
\end{itemize}
Since the two horizontal sequences are exact,  
  the snake lemma shows  that 
there exists an isomorphism
\begin{align} \label{UU99}
\mr{Coker} (\overline{\gamma}_1 \oplus (-\overline{\gamma}_2)) \isom \mr{Coker}(\xi) [0].
\end{align}
Next, let us consider the following diagram:
\begin{align} \label{UU96}
\vcenter{\xymatrix@C=26pt@R=36pt{
0 \ar[r] &  \mcK^\bullet [\DMO_{\STR, B}^{\mr{ad}(0)}]\oplus \mfg_{\mcE, \STR}[0]\ar[r]^-{\mr{incl.}} \ar[d]^-{\overline{\gamma}_1 \oplus (-\overline{\gamma}_2)} & 
\mcK^\bullet [\widetilde{\DMO}_{\STR, B}^{\mr{ad}(0)}]\oplus \widetilde{\mcT}_{\mcE^\mr{log}/S^\mr{log}, \STR}[0]
\ar[r]^-{\delta_1 \oplus \delta_2} \ar[d]^-{\gamma \left(=\gamma_1 \oplus (-\gamma_2)\right)} &
\mcT [0]\oplus \mcT[0]
\ar[r] \ar[d]^-{\mr{id} \oplus (-\mr{id})} & 0
\\
0 \ar[r] & \mcK^\bullet [\DMO_\STR^{\mr{ad}(0)}]  \ar[r]_-{\mr{inclusion}} & 
\mcK^\bullet [\widetilde{\DMO}_\STR^{\mr{ad}(0)}] 
 \ar[r] & \mcT [0]\ar[r] & 0,
}}
\end{align}
where 
\begin{itemize}
\item
$\delta_1$ is the morphism induced from $d_{\mcE_B} : \widetilde{\mcT}_{\mcE_B^\mr{log}/S^\mr{log}} \migi \mcT$ and the zero map $\Omega \otimes \mfg_\mcE \migi 0$;
\item
$\delta_2$ is the morphism induced from the composite of the inclusion $\widetilde{\mcT}_{\mcE^\mr{log}/S^\mr{log}, \STR} \migiincl \widetilde{\mcT}_{\mcE^\mr{log}/S^\mr{log}}$ and $d_\mcE : \widetilde{\mcT}_{\mcE^\mr{log}/S^\mr{log}} \migisurj \mcT$;
\item
the third lower horizontal arrow denotes the morphism induced from $d_\mcE : \widetilde{\mcT}_{\mcE^\mr{log}/S^\mr{log}} \migisurj \mcT$ and the zero map $\Omega \otimes \mfg_\mcE \migi 0$.
\end{itemize}
The two horizontal sequences in this diagram are exact.
Here, denote by 
$\varsigma_1$ the morphism $\mcT [0] \migi \mcK^\bullet [\widetilde{\DMO}_{\STR, B}^{\mr{ad}(0)}]$ determined by $\DMO_{\STR}^{(0)}$, and by $\varsigma_2$ the morphism $\mcT [0] \migi \widetilde{\mcT}_{\mcE^\mr{log}/S^\mr{log}, \STR}[0]$ determined by the split injection of \eqref{e123} resulting from Lemma \ref{UU104}, (i).
Since  the direct sum $\varsigma_1 \oplus \varsigma_2$ specifies a split injection  of the upper horizontal sequence in \eqref{UU96}, 
 it follows from the snake lemma again that there exists an isomorphism
\begin{align} \label{UU100}
\mr{Coker} (\overline{\gamma}_1 \oplus (-\overline{\gamma}_2)) \isom  \mr{Coker} (\gamma).
\end{align}
Thus,  the assertion follows from  \eqref{UU99} and \eqref{UU100}.
\end{proof}

\SSP
\bpr \label{e128}
The equality $\mcG_{\msE^\spadesuit}^i =  0$ holds except for $i= 1$.
Moreover, $\mcG_{\msE^\spadesuit}^1$ is a vector bundle on $S$ and the formation of $\mcG_{\msE^\spadesuit}^1$ commutes with  base-change over $S$-schemes.
\epr
\begin{proof}
First, we shall prove  the  assertion for  $i=0$.
Let us consider 
the short exact sequence of complexes
\begin{align} \label{UU49}
0 \migi \mcK^\bullet [\DMO_{\STR}^{\mr{ad}(0)}] 
\xrightarrow{\mr{inclusion}}\mcK^\bullet [\widetilde{\DMO}_{\STR}^{\mr{ad} (0)}] 
\xrightarrow{(d_\mcE, 0)} \mcT [0] \migi 0.
\end{align}
As already proved in ~\cite[Proposition 6.5]{Wak8},  the equality $\mbR^0 f_* (\mcK^\bullet [\nabla_{\STR}^{\mr{ad}(0)}]) = 0$ holds.
Hence, 
applying the functor $\mbR^0 f_*(-)$ to \eqref{UU49} yields 
\begin{align} \label{UU50}
\mbR^0 f_*(\mcK^\bullet [\widetilde{\DMO}_{\STR}^{\mr{ad} (0)}]) \left(= f_* (\mr{Ker}(\widetilde{\DMO}_{\STR}^{\mr{ad} (0)})) \right) = 0.
\end{align}
Both $\mcK^\bullet [\widetilde{\DMO}_{\STR, B}^{\mr{ad} (0)}]$ 
and $\widetilde{\mcT}_{\mcE^\mr{log}/S^\mr{log}, \STR}[0]$ are subcomplexes of 
$\mcK^\bullet [\widetilde{\DMO}_{\STR}^{\mr{ad} (0)}]$, so 
\eqref{UU50} implies
\begin{align}
\mbR^0 f_*(\mcK^\bullet [\widetilde{\DMO}_{\STR, B}^{\mr{ad} (0)}])  = \mbR^0 f_* (\widetilde{\mcT}_{\mcE^\mr{log}/S^\mr{log}, \STR}) = 0.
\end{align}
Since $\mcG_{\msE^\spadesuit}^0$ is a subsheaf of the direct sum
$\mbR^0 f_* (\mcK^\bullet [\widetilde{\DMO}_{\STR, B}^{\mr{ad}(0)}]) \oplus \mbR^0 f_* (\widetilde{\mcT}_{\mcE^\mr{log}/S^\mr{log}, \STR})$, we have $\mcG_{\msE^\spadesuit}^0 = 0$.

Next,  we shall prove the assertion for  $i=1$.
Note that the short exact sequence \eqref{UU82} obtained in Lemma \ref{UU78} induces
 an exact sequence of $\mcO_S$-modules
\begin{align} \label{UU110}
0 &\migi \mbR^0 f_* (\mr{Im}(\gamma)) \migi \mbR^0 f_* (\mcK^\bullet [\widetilde{\DMO}_\STR^{\mr{ad}(0)}])
 \migi  f_*(\mr{Coker}(\xi)) \\
 &\migi \mbR^1 f_* (\mr{Im}(\gamma)) \migi \mbR^1 f_* (\mcK^\bullet [\widetilde{\nabla}_{\STR}^{\mr{ad}(0)}]) \migi \mbR^1 f_* (\mr{Coker}(\xi)) \migi 0. \notag
\end{align}
The exactness at $\mbR^0 f_* (\mr{Im}(\gamma))$  in this  sequence and  \eqref{UU50} together  imply the equality 
 \begin{align} \label{UU200}
 \mbR^0 f_* (\mr{Im}(\gamma)) = 0.
 \end{align}
 The  sheaves appearing in  this sequence except for  $\mbR^1 f_* (\mr{Im}(\gamma))$ 
are all vector bundles (cf. Lemma \ref{L10}),
so  $\mbR^1 f_*(\mr{Im}(\gamma))$ turns out to be a vector bundle.
Also, let us consider  the short exact sequence 
\begin{align} \label{UU140}
0 \migi \mr{Ker}(\gamma) \xrightarrow{\mr{inclusion}} \mcK^\bullet [\widetilde{\nabla}^{\mr{ad}(0)}_{\STR, B}] \oplus \widetilde{\mcT}_{\mcE^\mr{log}/S^\mr{log}, \phi} [0]\xrightarrow{\gamma} \mr{Im}(\gamma) \migi 0.
\end{align}
Since $\gamma$ is an isomorphism at degree $1$, the complex $\mr{Ker}(\gamma)$ is concentrated  only at degree $0$.
This implies $\mbR^2 f_* (\mr{Ker}(\gamma)) = 0$, and hence \eqref{UU140} induces 
the following short exact sequence
\begin{align} \label{UU203}
0 \migi \mbR^1 f_* (\mr{Ker}(\gamma)) \migi 
 \mbR^1 f_* (\mcK^\bullet [\widetilde{\nabla}^{\mr{ad}(0)}_{\STR, B}]) \oplus \mbR^1 f_* (\widetilde{\mcT}_{\mcE^\mr{log}/S^\mr{log}, \phi})  \migi 
 \mbR^1 f_* (\mr{Im}(\gamma)) \migi 0,
\end{align}
where the exactness at $\mbR^1 f_* (\mr{Ker}(\gamma))$  follows from \eqref{UU200}.
 Since   $ \mbR^1 f_* (\mcK^\bullet [\widetilde{\nabla}^{\mr{ad}(0)}_{\STR, B}])$, $\mbR^1 f_* (\widetilde{\mcT}_{\mcE^\mr{log}/S^\mr{log}, \phi})$, and $\mbR^1 f_* (\mr{Im}(\gamma))$
are vector bundles (cf. ~\cite[Eq.\,(788)]{Wak8} and Lemma \ref{UU104}, (ii)), we see that $\mbR^1 f_* (\mr{Ker}(\gamma))$ is  a vector bundle.
Here,  consider the following diagram
\begin{align} \label{1050NN}
\vcenter{\xymatrix@C=11pt@R=36pt{
0 \ar[r] & \mbR^1 f_* (\mr{Ker}(\gamma))\ar[r] \ar[d] & \mbR^1 f_* (\mcK^\bullet [\widetilde{\nabla}^{\mr{ad}(0)}_{\STR, B}]) \oplus \mbR^1 f_* (\widetilde{\mcT}_{\mcE^\mr{log}/S^\mr{log}, \phi}) \ar[r] \ar[d]^{\mr{id}} & \mbR^1 f_* (\mr{Im}(\gamma)) \ar[d] \ar[r]& 0
\\
0 \ar[r] &\mcG^1_{\msE^\spadesuit}\ar[r] &  \mbR^1 f_* (\mcK^\bullet [\widetilde{\nabla}^{\mr{ad}(0)}_{\STR, B}]) \oplus \mbR^1 f_* (\widetilde{\mcT}_{\mcE^\mr{log}/S^\mr{log}, \phi}) \ar[r] & \mbR^1 f_* (\mcK^\bullet [\widetilde{\nabla}_\STR^{\mr{ad}(0)}])   &
}}
\end{align}
where   the right-hand and left-hand vertical arrows are the natural morphisms, and the upper horizontal sequence is \eqref{UU203}. 
By the snake lemma applied  to  this diagram and the exactness of \eqref{UU110}, we obtain  the  short exact sequence
\begin{align} \label{UU421}
0 \migi \mbR^1 f_*(\mr{Ker}(\gamma)) \migi \mcG^1_{\msE^\spadesuit} \migi  f_* (\mr{Coker}(\xi))  \migi 0.
\end{align}
Since both $\mbR^1 f_* (\mr{Ker}(\gamma))$ and $f_* (\mr{Coker}(\xi))$ are vector bundles on $S$ (cf. Lemma \ref{L10}),   $\mcG_{\msE^\spadesuit}^1$ turns out to be  a vector bundle.
Moreover, it follows from the various definitions involved that the formations of  $\mbR^1 f_*(\mr{Ker}(\gamma))$ and 
$f_* (\mr{Coker}(\xi))$ commute with base-change over $S$-schemes.
Hence, the exactness of  \eqref{UU421} implies the required  commutativity  of the formation of $\mcG_{\msE^\spadesuit}^1$.
This completes the proof of the assertion for $i=1$.

Finally, the assertion for $i \geq 2$ follows immediately from the fact that $\mbR^i f_* (\mcK^\bullet [\widetilde{\DMO}_{\STR, B}^{\mr{ad}(0)}]) = \mbR^i f_* (\widetilde{\mcT}_{\mcE^\mr{log}/S^\mr{log}, \STR}) = 0$ for every $i \geq 2$ (cf. ~\cite[Lemma 6.6]{Wak8}).
\end{proof}

\LSP
\subsection{Deformation space of a  dormant $\mr{PGL}_2^{(\N)}$-oper} \label{SS068}

In this subsection, we suppose that $S = \mr{Spec}(k)$ for an algebraically closed field $k$ over $\mbF_p$.
We will regard $\mcG_{\msE^\spadesuit}^i$ ($i \in \mbZ_{\geq 0}$) as $k$-vector spaces.  
Denote by 
$\mcA rt_{/k}$ the category of local Artinian $k$-algebras with residue field $k$.
For each $R \in \mr{Ob}(\mcA rt_{/k})$, we write $\mfm_R$ for the maximal ideal of $R$.

\SSP
\bde \label{UU444}
Let $R \in \mr{Ob}(\mcA rt_{/k})$.
A {\bf deformation of $(\msX, \msE^\spadesuit)$}
over $R$ is a pair 
\begin{align} \label{UU445}
(\msX_R, \msE_R^\spadesuit),
\end{align}
where 
\begin{itemize}
\item
$\msX_R$ denotes an $r$-pointed stable curve of genus $g$ over $R$ equipped with an isomorphism $\nu :\msX \isom k \times_R \msX$ between  $\msX$ and the reduction $k \times_R \msX_R$ of $\msX_R$ modulo $\mfm_R$;
\item 
$\msE^\spadesuit_R$ denotes a dormant $\mr{PGL}_2^{(\N)}$-oper on $\msX_R$ whose reduction modulo $\mfm_R$ corresponds to $\msE^\spadesuit$ via
$\nu$.
\end{itemize}
The notion of an isomorphism between two  deformations of $(\msX, \msE^\spadesuit)$
   can be defined naturally (so the precise definition is left to the reader).
\ede
\SSP

Recall that any deformation of the pointed stable curve $\msX$ (or equivalently,  the log curve $X^\mr{log}/k^\mr{log}$) is locally trivial (cf. ~\cite[\S\,6.1.2]{Wak8}).
Also, the local triviality of  deformations  holds for a dormant $\mr{PGL}_2^{(\N)}$-oper, as shown in the following proposition.

\SSP
\bpr \label{P78}
Let $R$ be an element of $\mr{Ob}(\mcA rt_{/k})$ and 
$(\msX_R, \msE^\spadesuit_R)$, where $\msE^\spadesuit_R := ((\mcE_B)_R, \STR_R)$,   a deformation of $(\msX, \msE^\spadesuit)$ over $R$.
We shall write $\mcE_R := (\mcE_B)_R \times^B \mr{PGL}_2$.
Then, the $(\N -1)$-PD stratified $\mr{PGL}_2$-bundle $(\mcE_R, \STR_R)$ is, Zariski locally on $X$, isomorphic to the trivial deformation   of $(\mcE, \STR)$ (i.e., the base-change of $(\mcE, \STR)$  over $R$).
\epr
\begin{proof}
We shall prove the assertion by double induction on $\N$ and $\ell := \mr{dim}_k(R)$.
There is nothing to prove when $\ell = 1$, i.e., $R = k$.
Also, the assertion for  $\N = 1$ can be proved by an argument entirely similar to the proof of ~\cite[Corollary 6.12]{Wak8}.
 
Next, we shall consider the induction step.
Suppose that $\N \geq 2$, $\ell \geq 2$, and that we have proved the assertion  in the case where the level of $\msE^\spadesuit$ and the dimension of $R$ are smaller than $\N$ and $\ell$, respectively.
To clarify the level of PD stratifications,
we write $\STR^{(\N -1)} := \STR$, $\STR_R^{(\N -1)} := \STR_R$,  and use the notation $(-)^{(\N')}$ ($\N' <  \N$) to denote the result of reducing the level of an $(\N-1)$-PD stratification $(-)$ to $\N'$.
Also, we use the notation $R \times_k (-)$ to denote the result of base-changing  objects over $k$ via $k \migi R$.

Let us take a nonzero element $\ep$ of  $R$ with $\ep^2 =0$,
and write $R_0 := R/(\ep)$.
By the induction hypothesis,
each point $q$ of $X$ has an open neighborhood $U \left(\subseteq X \right)$ 
on which 
$((\mcE_B)_R, \STR^{(\N -2)}_R)$  is isomorphic to the base-change $(R \times_k \mcE_B, R \times_k \STR^{(\N -2)})$ of the dormant $\mr{PGL}_2^{(\N -1)}$-oper $(\mcE_B, \STR^{(\N -2)})$.
We shall fix an identification 
\begin{align} \label{UU501}
((\mcE_B)_R |_U, \STR^{(\N -2)}_R |_U)  = (R \times_k \mcE_B |_U, R \times_k \STR^{(\N -2)} |_U),
\end{align}
by which we regard $\STR_R^{(\N -1)}|_U$ as an $(\N-1)$-PD stratification on $(R\times_k \mcE |_U)/U^\mr{log}/k^\mr{log}$ (where $U^\mr{log} := U \times_X X^\mr{log}$) satisfying the equality
\begin{align} \label{UU510}
\STR_R^{(\N -2)} |_U= R \times_k \STR^{(\N-2)}|_U.
\end{align}
If 
$\STR_{R_0}^{(\N -1)}$
  denotes   the reduction modulo $\ep$ of 
  $\STR_{R}^{(\N -1)}$,
then the induction hypothesis asserts that  $(R_0 \times_k \mcE_B, \STR_{R_0}^{(\N -1)})$ is locally  isomorphic to the base-change $(R_0 \times_k \mcE_B, R_0 \times_k \STR^{(\N-1)})$ of $(\mcE_B, \STR^{(\N-1)})$ over $R_0$.
Hence, after possibly shrinking $U$, we may assume that 
there exists an isomorphism
\begin{align} \label{UU502}
(R_0 \times_k \mcE_B |_U, \STR_{R_0}^{(\N -1)}|_U) \isom (R_0 \times_k \mcE_B |_U, R_0 \times_k\STR^{(\N -1)} |_U).
\end{align}
By Proposition \ref{P44}, 
this isomorphism coincides, via reducing the level to $\N-2$,  with the identity morphism  of  
$(R_0 \times_k \mcE_B |_U, R_0 \times_k \STR^{(\N -2)} |_U)$ under the identification \eqref{UU501}.
This implies  the equality 
\begin{align} \label{UU508}
\STR_{R_0}^{(\N -1)}|_U = R_0 \times_k \STR^{(\N -1)} |_U.
\end{align}
It follows that there exists a well-defined $\mcO_X$-linear morphism 
$h : \mcD^{(\N -1)} |_U\migi \mcE nd_{k}(\mcO_\mcE)|_U$ satisfying
 \begin{align} \label{UU902}
 \STR_R^{ \natural \natural (\N -1)} |_U = R \times_k \STR^{\natural \natural (\N -1) }|_U + \ep \cdot h
 \end{align}
  (cf. Remark \ref{Eruy78} for the definition of $(-)^{\natural \natural}$).
 According to  \eqref{UU510}, 
 the composite
 \begin{align} \label{UU901}
 \mcD^{(\N -2)}_{\leq p^{\N -1}-1} |_U \isom \mcD^{(\N -1)}_{\leq p^{\N -1}-1} |_U  \xrightarrow{\mr{inclusion}} \mcD_{\leq p^{\N-1}}^{(\N -1)} |_U \xrightarrow{h} \mcE nd_{k} (\mcO_\mcE) |_U
 \end{align}
 coincides with the zero map.
 Hence, $h$ induces 
 an $\mcO_X$-linear morphism  
 \begin{align} \label{UU900}
 \overline{h} : F^{(\N -1)*}_{X/k} (\mcT^{(\N -1)}) |_U\left(= \mcT^{\otimes p^{\N -1}}|_U = (\mcD_{\leq p^{\N-1}}^{(\N -1)}/\mcD_{\leq p^{\N-1}-1}^{(\N -1)})  |_U \right) \migi  \mcE nd_{k} (\mcO_\mcE) |_U,
 \end{align}
 where $\mcT^{(\N-1)}:= \mcT_{X^{(\N -1)\mr{log}}/S^\mr{log}}$.
Since $\ep^2 =0$ and $h$ is $\mr{PGL}_2$-equivariant (in an evident sense),
the image of $\overline{h}$ lies in 
 $\mfg_\mcE \left(= \pi_* (\mcT_{\mcE/X})^{\mr{PGL}_2} \right)$.
 Moreover,   \eqref{UU510} implies that $\overline{h}$ restricts to a morphism
 \begin{align}
 \overline{h}^\nabla : \mcT^{(\N -1)} |_U\migi \mfg_\mcE^{[\N -1]} |_U.
 \end{align}
 We shall use the same notation $\overline{h}^\nabla$ to denote the induced  $\mcO_{U^{(\N -1)}}$-linear  morphism $\mcO_\mcE^{[\N -1]} |_U\migi \Omega^{(\N -1)} \otimes \mcO_\mcE^{[\N -1]}|_U$ (where $\Omega^{(\N -1)} := \mcT^{(\N -1)\vee}$) via the natural morphism  $\mfg_\mcE^{[\N -1]} \migi \mcE nd_{\mcO_{X^{(\N -1)}}} (\mcO_\mcE^{[\N -1]})$.
If $\DMO_R$ (resp., $R \times_k \DMO$) denotes  the $S^\mr{log}$-connection on the $\mcO_{U^{(\N -1)}}$-module  $\mcO_{\mcE}|_U$ induced   by $\STR_R^{(\N -1)}|_U$ (resp., $R \times_k \STR^{(\N -1)}|_U$), then 
   \eqref{UU902} implies 
  $\DMO_R = R \times_k \DMO + \ep \cdot \overline{h}^\nabla$.
 Since both $\DMO_R$ and $R \times_k \DMO$ have vanishing $p$-curvature  (cf. Proposition \ref{UU577}, (ii)),
 it follows from Lemma \ref{UU888} described below that
we can find, after possibly shrinking $U$,  an element $v \in H^0 (U, \mfg_{\mcE}^{[\N -1]})$ satisfying 
  \begin{align} \label{UU944}
  (\DMO_\STR^{\mr{ad}})^{[\N -1]} (v) = \overline{h}^\nabla.
  \end{align}

Denote by $h_v$ the automorphism of $R \times_k \mcE$ corresponding to  the automorphism of $\mcO_{R \times_k \mcE}$ described as   $\mr{id}_{\mcO_{R \times_k \mcE}} - \ep \cdot v$.
Also, denote by  $h_v^* (R \times_k \STR^{\natural \natural}|_U)^{(\N -1)}$  the $\mcD^{(\N -1)}$-module structure on $\mcO_{R \times_k \mcE |_U}$ defined as the pull-back of $R \times_k \STR^{(\N -1)\natural \natural}|_U$ by  $h_v$.
Since $v$ belongs to $\mfg_\mcE^{[\N -1]}$,
we have 
\begin{align} \label{UU945}
h_v^* (R \times_k \STR^{\natural \natural}|_U)^{(\N -2)} = \left( R \times_k \STR^{\natural \natural (\N -2)}|_U = \right) \STR_R^{\natural \natural (\N -2)}|_U.
\end{align}
On the other hand,   the equality \eqref{UU944} together with 
the same argument as ~\cite[Remark 6.2]{Wak8} shows that
\begin{align} \label{UU955}
 h_v^* (R \times_k \STR^{\natural \natural}|_U)^{[\N -1]} =\STR_R^{\natural \natural [\N -1]}|_U.
 \end{align} 
By   \eqref{UU945} and  \eqref{UU955}, we can apply Lemma \ref{NN1} to obtain 
the equality $h_v^* (R \times_k \STR^{\natural \natural}|_U)^{(\N -1)} = \STR_R^{\natural \natural (\N -1)}|_U$.
This means that $\STR_R^{\natural \natural (\N -1)}|_U$ can be transformed into   the trivial deformation of $\STR^{\natural \natural (\N -1)}$ via 
a suitable trivialization $\mcE_R |_U \isom R \times_k \mcE|_U$.
By applying this argument to various points $q$ of $X$,
  we finish  the proof of this proposition.
\end{proof}
\SSP

The following lemma was  used  in the proof of the above proposition.

\SSP
\ble \label{UU888}
Let us keep the notation in the proof of Proposition \ref{P78}.
Then, the $p$-curvature of $\DMO_R$ satisfies the equality 
\begin{align}
\psi (\DMO_R) = -\ep \cdot C_{(\mfg_\mcE^{[\N-1 ]}, (\DMO_\STR^\mr{ad})^{[\N -1 ]})} (\overline{h}^\nabla)
\end{align}
under the composite of natural  inclusions
\begin{align} \label{TTT91}
H^0 (U^{(\N)}, \Omega^{(\N)} \otimes \mfg_\mcE^{[\N]}) 
&\isom \mr{Hom}_{\mcO_{U^{(\N)}}} (\mcT^{[\N]}|_{U^{(\N)}}, \mfg_\mcE^{[\N]}|_{U^{(\N)}}) \\ 
&\migiincl
\mr{Hom}_{\mcO_{U^{(\N)}}} (\mcT^{[\N]}|_{U^{(\N)}}, \mcE nd_{k} (\mcO_\mcE)|_{U^{(\N)}}) \notag \\
& \isom  \mr{Hom}_{\mcO_{U^{(\N -1)}}} (F_{X^{(\N -1)}/k}^{*}(\mcT^{[\N]})|_{U^{(\N -1)}}, \mcE nd_{k}(\mcO_\mcE)|_{U^{(\N-1)}}). \notag
\end{align} 
In particular, if the $p$-curvature of $\DMO_R$ vanishes, then $\overline{h}^\nabla$ may be expressed, Zariski locally on $X$, as $\overline{h}^\nabla = (\DMO_\STR^\mr{ad})^{[\N -1]}(v)$ for some local section $v$ of $\mfg_\mcE^{[\N-1]}$.
\ele
\begin{proof}
The assertion follows from computations similar to those in the proof of ~\cite[Proposition 6.11]{Wak8}.
So we omit the details of the proof.
\end{proof}
\SSP

\begin{rema}[Higher rank cases] \label{UU500}
The same assertion as  Proposition \ref{P78} also holds for 
a general rank $n \left( < p\right)$ not only  the case of $n=2$.
In fact, it can be proved by a similar argument 
 because the result of Proposition \ref{P44}, which is one of the essential points  in the  proof, holds for a general $n$.
\end{rema}
\SSP

For each $R \in \mr{Ob}(\mcA rt_{/k}$), we shall set
 \begin{align}
 \mr{Def}_{(\msX, \msE^\sss)} (R) := 
 \left(\begin{matrix} \text{the set of isomorphism classes} \\ \text{of deformations of $(\msX, \msE^\sss)$ over $R$} 
 \end{matrix} \right).
 \end{align}
If $\sigma : R' \migi R$ is a morphism in $\mcA rt_{/k}$,
then the base-change along  the induced morphism $\mr{Spec}(R) \migi \mr{Spec}(R')$ gives a map of sets $\sigma^* : \mr{Def}_{(\msX, \msE^\spadesuit)}(R') \migi \mr{Def}_{(\msX, \msE^\sss)}(R)$.
Thus, the assignments $R \mapsto \mr{Def}_{(\msX, \msE^\sss)}(R)$
 and $\sigma \mapsto \sigma^*$ together determine a functor
  \begin{align} \label{Ew3}
   \mr{Def}_{(\msX, \msE^\sss)}  : \mcA rt_{/k} \migi \mcS et.
 \end{align}

\SSP
\bt \label{P47}
The functor  $\mr{Def}_{(\msX, \msE^\sss)}$ is a rigid deformation functor with good deformation theory, in the sense of  ~\cite[\S\,6.1]{Wak7}.
Moreover,  the triple 
\begin{align}
(\mr{Def}_{(\msX, \msE^\spadesuit)}, \mcG_{\msE^\spadesuit}^1,  \mcG_{\msE^\spadesuit}^2)
\end{align}
forms a deformation triple, in the sense of ~\cite[Definition 6.1.1]{Wak7}, i.e., 
$\mr{Def}_{(\msX, \msE^\spadesuit)}$ has a tangent-obstruction theory by putting
$\mcG_{\msE^\spadesuit}^1$ as the tangent space and 
$\mcG_{\msE^\spadesuit}^2$ as the obstruction space.
In particular, $\mr{Def}_{(\msX, \msE^\sss)}$ is smooth and pro-representable 
by $k[\![t_1, \cdots, t_{\mr{dim}(\mcG_{\msE^\spadesuit}^1)}]\!]$ (cf. ~\cite[Remark 6.1.3, (i)]{Wak7}).
\et
\begin{proof}
First, we shall consider the tangent space of $\mr{Def}_{(\msX, \msE^\spadesuit)}$.
Let $\mfe := (0 \migi M \migi R \migi R_0 \migi 0)$ be a small extension  in 
$\mcA rt_{/k}$ and $(\msX_R, \msE^\spadesuit_R)$ (where $\msX_R := (X_R/R, \{ \sigma_{R, i} \}_i)$ and  $\msE^\spadesuit_R := ((\mcE_B)_R, \STR_R)$) a deformation of $(\msX, \msE^\spadesuit)$ over $R$.
Set $\mcE_R := (\mcE_B)_R \times^B \mr{PGL}_2$.
Also, let us take an element $v$ of $M \otimes_k \mcG_{\msE^\spadesuit}^1$, which may be described as $v = (v_1, v_2)$ for $v_1 \in \mbH^1 (X, \mcK^\bullet [\mr{id}_M \otimes \widetilde{\DMO}_{\STR, B}^{\mr{ad}(0)}])$, $v_2 \in H^1 (X, M \otimes_k \widetilde{\mcT}_{\mcE^\mr{log}/k^\mr{log}, \STR})$,
 where $\mr{id}_M \otimes \widetilde{\DMO}_{\STR, B}^{\mr{ad}(0)}$ denotes the tensor product 
$M \otimes_k \widetilde{\mcT}_{\mcE_B^\mr{log}/k^\mr{log}} \migi M \otimes_k (\Omega \otimes \mfg_\mcE)$
 of $\mr{id}_M$ and $\widetilde{\nabla}_{\STR, B}^{\mr{ad}(0)}$.
Note here that 
$H^1 (X, M \otimes_k \widetilde{\mcT}_{\mcE^\mr{log}/k^\mr{log}, \STR})$
can be realized as the cohomology of the \v{C}ech  complex   of
$M \otimes_k \widetilde{\mcT}_{\mcE^\mr{log}/k^\mr{log}, \STR}$ associated with
an affine covering.
 That is to say, we can find an affine open covering $\mcU := \{ U_\alpha \}_{\alpha \in I}$ (where 
$I$ is a finite index set) of $X$,  and  $v$ may be represented
by
 a \v{C}ech $1$-cocycle 
 \begin{align} \label{NN4}
 v_2 =  \{ \widetilde{\partial}_{\alpha \beta} \}_{(\alpha, \beta) \in I_2} \in \check{C}^1 (\mcU, M \otimes_k \widetilde{\mcT}_{\mcE^\mr{log}/k^\mr{log}, \STR}),
 \end{align}
where  $I_2$ denotes  the subset of $I \times I$ consisting of pairs $(\alpha, \beta)$ with $U_{\alpha \beta}:= U_\alpha \cap U_\beta \neq \emptyset$, and $\widetilde{\partial}_{\alpha\beta} \in H^0 (U_{\alpha \beta},  M \otimes_k \widetilde{\mcT}_{\mcE^\mr{log}/k^\mr{log}, \STR})$. 
  For each $(\alpha, \beta) \in I_2$,
 we set $\partial_{\alpha \beta} := d_{\mcE_B} (\widetilde{\partial}_{\alpha \beta}) \in H^0 (U_{\alpha \beta}, M \otimes_k \mcT)$.
Also, we shall set $U_{R, \alpha}^\mr{log} := X_{R}^\mr{log} |_{U_\alpha}$ for each $\alpha \in I$ and set 
$U_{R, \alpha \beta}^\mr{log} := X_{R}^\mr{log} |_{U_{\alpha\beta}}$ for each $(\alpha, \beta) \in I_2$.
 The element $\partial_{\alpha \beta}$ (resp.,  $\widetilde{\partial}_{\alpha \beta}$)  may be regarded as  a $k$-linear morphism 
 \begin{align} \label{WW34}
 \partial_{\alpha \beta} : \mcO_{U_{\alpha \beta}} \migi M \otimes_k \mcO_{U_{\alpha \beta}} \ 
    \left(\text{resp.,}  \ \widetilde{\partial}_{\alpha \beta} : \mcO_{\mcE |_{U_{\alpha \beta}}} \migi M \otimes_k \mcO_{\mcE |_{U_{\alpha \beta}}}\right)
    \end{align}
      satisfying
    the Leibniz rule.
 Then, 
 $\mr{id}_{\mcO_{U_{R, \alpha \beta}}} + \partial_{\alpha \beta}$  (resp., $\mr{id}_{\mcO_{\mcE_{R}|_{U_{R, \alpha \beta}}}} + \widetilde{\partial}_{\alpha \beta}$) defines a well-defined  automorphism $\partial^\sharp_{\alpha \beta}$ (resp., $\widetilde{\partial}^\sharp_{\alpha \beta}$) of  $U_{R, \alpha \beta}^\mr{log}$ (resp., $\mcE_{R}^\mr{log}|_{U_{R, \alpha \beta}}$).
 By means of the $\partial_{\alpha \beta}^\sharp$'s ($(\alpha, \beta) \in I_2$), we can glue together  the $U_{R, \alpha}^\mr{log}$'s ($\alpha \in I$) to
  obtain a log curve  $X^\mr{log}_{R, v}$ over $R \times_k S^\mr{log}$.
  It determines a deformation $\msX_{R, v}$ of $\msX$ over $R$ (cf. ~\cite[Theorem 4.1]{KaFu}).
Also, the automorphism  $\widetilde{\partial}^\sharp_{\alpha \beta}$ is $\mr{PGL}_2$-equivariant, and    
 the following square diagram is commutative and Cartesian:
  \begin{align} \label{DD010}
\vcenter{\xymatrix@C=46pt@R=36pt{
\mcE_{R}^\mr{log}|_{U_{R, \alpha \beta}} \ar[r]_-{\sim}^{\widetilde{\partial}^\sharp_{\alpha \beta}} \ar[d]_-{\mr{projection}}
& \mcE_{R}^\mr{log}|_{U_{R, \alpha \beta}} \ar[d]^-{\mr{projection}}
\\
U_{R, \alpha \beta}^\mr{log} \ar[r]^-{\sim}_{\partial^\sharp_{\alpha \beta}}& U_{R, \alpha \beta}^\mr{log}.
}}
\end{align}
 For each local section $D \in \mcD^{(\N -1)}_{U_{R, \alpha \beta}^\mr{log}/R^\mr{log}}$, 
the following sequence of equalities holds: 
\begin{align}
& \hspace{5mm} (\widetilde{\partial}^\sharp_{\alpha \beta})^* (\STR_{R}^{\natural \natural} |_{U_{R, \alpha \beta}}) (D) \\
&  = (\mr{id} + \widetilde{\partial}_{\alpha \beta})\circ \STR_{R}^{\natural \natural} |_{U_{R, \alpha \beta}} (D- [\partial_{\alpha \beta}, D]) \circ (\mr{id} + \widetilde{\partial}_{\alpha \beta})^{-1}
\notag \\
&  = (\mr{id} + \widetilde{\partial}_{\alpha \beta})\circ \STR_{R}^{\natural \natural} |_{U_{R, \alpha \beta}} (D- [\partial_{\alpha \beta}, D]) \circ (\mr{id} - \widetilde{\partial}_{\alpha \beta})
\notag \\
&= \STR_{R}^{\natural \natural} |_{U_{R, \alpha \beta}} (D) - \left([\STR_{R}^{\natural \natural} |_{U_{R, \alpha \beta}} (D), \widetilde{\partial}_{\alpha \beta}] -
\STR_{R}^{\natural \natural} |_{U_{R, \alpha \beta}} ([D, \partial_{\alpha \beta}]) \right)\notag \\
& = \STR_{R}^{\natural \natural} |_{U_{R, \alpha \beta}} (D) \notag
\end{align}
(cf. ~\cite[Remark 6.2]{Wak8} for the first equality), where the last equality follows from $\widetilde{\partial}_{\alpha \beta} \in M \otimes_k \widetilde{\mcT}_{\mcE^\mr{log}/k^\mr{log}, \STR}$.
This means that $\widetilde{\partial}^\sharp_{\alpha \beta}$ preserves the $(\N -1)$-PD stratification $\STR_R |_{U_{R, \alpha \beta}}$.
Hence, 
 the  pairs $(\mcE_{R} |_{U_{R, \alpha}}, \STR_{R} |_{U_{R, \alpha \beta}})$
  may be glued together by means of 
 the $\widetilde{\partial}^\sharp_{\alpha \beta}$'s; 
 we denote the resulting $(\N -1)$-PD stratified  $\mr{PGL}_2$-bundle  by $(\mcE_{R, v}, \STR_{R, v})$.
 By construction, $\STR_{R, v}$ has vanishing $p^\N$-curvature.

On the other hand, 
according to ~\cite[Eq.\,(792)]{Wak8},
the element $v_1$ determines a deformation $((\mcE_{B})_{R, v}, (\DMO_{\STR})_{R, v})$ over $R$ of the  $\mr{PGL}_2$-oper   $(\mcE_B, \DMO_\STR)$.
Since  $v_1$ and $v_2$ agree under  the maps $\mbH^1 (\gamma_1)$ and $\mbH^1 (\gamma_2)$, 
we see that $(\mcE_{B})_{R, v} \times^B \mr{PGL}_2 \cong \mcE_{R, v}$ and that the $k^\mr{log}$-connection on $\mcE_{R, v}$ induced by  $\STR_{R, v}$  coincides with  $(\DMO_\STR)_{R, v}$.
Hence, 
$\msE^\spadesuit_{R, v} := ((\mcE_{B})_{R, v}, \STR_{R, v})$ forms 
a dormant $\mr{PGL}_2^{(\N)}$-oper on 
$\msX_{R, v}$, 
 and the pair  $(\msX_{R, v}, \msE_{R, v}^\spadesuit)$ specifies an element of $\mr{Def}_{(\msX, \msE^\spadesuit)}(R)$.
 By Proposition \ref{P78} and the bijection displayed in ~\cite[Eq.\,(792)]{Wak8},  
 the assignment  
 $(\msX_R, \msE^\spadesuit_R) \mapsto (\msX_{R, v}, \msE_{R, v}^\spadesuit)$ (for every $v \in M \otimes_k \mcG^1_{\msE^\spadesuit}$)
 defines a transitive  action of $M \otimes_k \mcG^1_{\msE^\spadesuit}$ on 
the fibers of $\mr{Def}_{(X, \msE^\sss)}(\sigma) : \mr{Def}_{(\msX, \msE^\spadesuit)}(R) \migi \mr{Def}_{(\msX, \msE^\spadesuit)} (R_0)$.
This action is, by  construction,  verified to be free if $R = k$.
Consequently, $\mcG^1_{\msE^\spadesuit}$  is isomorphic to the tangent space of $\mr{Def}_{(\msX, \msE^\spadesuit)}$.

The 
  assertion concerning an obstruction space can be proved by a routine argument using 
  the \v{C}ech cohomological descriptions of relevant deformations as discussed above,
   so the proof is left to the reader.
   (Similar discussions can be found in the proofs of ~\cite[Propositions 6.4.3 and  6.8.2]{Wak7}.) 
Finally, the last  assertion 
 follows from ~\cite[Theorem 2.11]{Sch} together with  
 the fact that  $\mr{dim} (\mcG_{\msE^\spadesuit}^1) < \infty$ and  $\mcG^2_{\msE^\spadesuit} = 0$ (cf. Proposition \ref{e128}).
\end{proof}
\SSP

As a consequence of the above theorem, we obtain the following assertion.

\SSP
 \bco\label{T34} 
   \begin{itemize}
 \item[(i)]
  The Deligne-Mumford stack $\mcO p^\ZZZ_{g,r}$\!   is  smooth over $\mbF_p$.  \item[(ii)]
 Let
 $k$ be an algebraically closed  field over  $\mbF_p$,
  $\msX$  an $r$-pointed stable curve of genus $g$ over $k$,   and $\msE^\spadesuit$ a dormant $\mr{PGL}_2^{(\N)}$-oper on $\msX$.
Denote by $s : \mr{Spec}(k) \migi \overline{\mcM}_{g,r}$ (resp.,  $\widetilde{s} : \mr{Spec}(k) \migi \mcO p_{g,r}^{^\mr{Zzz...}}$)
 the 
 $k$-rational point
    classifying $\msX$ (resp., $(\msX, \msE^\spadesuit)$).
Also,  denote by $T_{s} \overline{\mcM}_{g,r}$ (resp., $T_{\widetilde{s}} \mcO p_{g,r}^{^\mr{Zzz...}}$)
the $k$-vector space defined as the tangent space of  $\overline{\mcM}_{g,r}$ (resp., $\mcO p_{g,r}^{^\mr{Zzz...}}$) at $s$ (resp., $\widetilde{s}$).
 Then, there exists a canonical  isomorphism of $k$-vector spaces
 \begin{align} \label{e101}
 \mr{KS}_{(\msX, \msE^\spadesuit)} : 
 T_{\widetilde{s}} \mcO p_{g,r}^{^\mr{Zzz...}}
  \isom \mcG_{\msE^\spadesuit}^1
 \end{align}
 which makes the following diagram commute:
\begin{align} \label{UU499}
\vcenter{\xymatrix@C=46pt@R=36pt{
 T_{\widetilde{s}} \mcO p_{g,r}^{^\mr{Zzz...}} \ar[d]^-{\wr}_-{\mr{KS}_{(\msX, \msE^\spadesuit)}}  \ar[r]^-{d \Pi_{g, r}} & 
 T_{s} \overline{\mcM}_{g,r}
  \ar[d]^-{\mr{KS}_{\msX}}_-{\wr}
\\
 \mcG_{\msE^\spadesuit}^1\ar[r]_-{D_{\msE^\spadesuit}}& 
 H^1(X, \mcT),
}}
\end{align}
where $\mr{KS}_{\msX}$ denotes the usual  Kodaira-Spencer morphism (cf., e.g., ~\cite[\S\,6.1.1]{Wak8}) and the upper horizontal arrow $d \Pi_{g, r}$ denotes  the differential of the projection $\Pi_{g, r}$ (cf. \eqref{YY144}).
\end{itemize}
\eco
\begin{proof}
The assertion follows from Theorem \ref{P47} and 
the definition of $D_{\msE^\spadesuit}$.
\end{proof}

\LSP
\subsection{Dimension of the moduli space} \label{SS034}

Let $S$, $\msX$, and $\msE^\spadesuit$ be as introduced  at the beginning of \S\,\ref{SS078}.
Suppose further  that the $\mr{PGL}_2$-oper induced from  $\msE^\spadesuit$ is normal, in the sense of ~\cite[Definition 4.53]{Wak8}.
(According to ~\cite[Proposition 4.55]{Wak8}, any  $\mr{PGL}_2$-oper is isomorphic to a normal  one.)

Consider the composite surjection
\begin{align} \label{dE33}
d'_{\mcE} : \widetilde{\mcT}_{\mcE^\mr{log}/S^\mr{log}} \migisurj \widetilde{\mcT}_{\mcE^\mr{log}/S^\mr{log}}/\widetilde{\mcT}_{\mcE_B^\mr{log}/S^\mr{log}}   \isom \mfg_\mcE/\mfb_{\mcE_B}  \left(=\mfg_{\mcE}^{-1}/\mfg_{\mcE}^0 \right) \isom \mcT,
\end{align}
where the second and  third  arrows are the isomorphisms defined in 
~\cite[Eq.\,(163)]{Wak8}  and ~\cite[Eq.\,(617)]{Wak8}, respectively.
By applying the functor $\mbR^1 f_* (-)$ to the composite
\begin{align} \label{NN29}
\left( \mfg_{\mcE, \STR} = \right) \mcS ol (\DMO_\phi^{\mr{ad}}) \xrightarrow{\mr{inclusion}}
\left(\mfg_\mcE \migiincl  \right)\widetilde{\mcT}_{\mcE^\mr{log}/S^\mr{log}} \xrightarrow{d'_\mcE}
\mcT,
\end{align}
we obtain an $\mcO_S$-linear morphism
\begin{align} \label{dE201}
D'_{\msE^\spadesuit} : \mbR^1 f_* (\mcS ol (\DMO_\phi^{\mr{ad}})) \migi \mbR^1 f_* (\mcT).
\end{align}

Next, let us consider the $\mcO_X$-linear endomorphism of  $\widetilde{\mcT}_{\mcE^\mr{log}/S^\mr{log}}$ defined as
\begin{align} \label{TTT77}
\chi := \mr{id}_{\widetilde{\mcT}_{\mcE^\mr{log}/S^\mr{log}}} -  \nabla_\STR \circ d_\mcE - \nabla_\STR \circ d'_{\mcE} : \widetilde{\mcT}_{\mcE^\mr{log}/S^\mr{log}} \migi \widetilde{\mcT}_{\mcE^\mr{log}/S^\mr{log}}.
\end{align}
 
\SSP
\ble \label{L22}
The automorphism $\chi$ of $\widetilde{\mcT}_{\mcE^\mr{log}/S^\mr{log}}$ is involutive (i.e., $\chi \circ \chi = \mr{id}$), and 
the following equalities hold:
\begin{align}
  d_\mcE \circ \chi = -d'_\mcE, \hspace{10mm}
 d'_\mcE \circ \chi = -d_\mcE, \hspace{10mm}
\widetilde{\nabla}_\phi^{\mr{ad} (0)} \circ  \chi =  \widetilde{\nabla}_\phi^{\mr{ad} (0)}
\end{align}
\ele
\begin{proof}
Since we have assumed that the $\mr{PGL}_2$-oper induced from  $\msE^\spadesuit$ is normal, 
the equality $d'_\mcE \circ \nabla_\STR = \mr{id}_{\mcT}$ holds (cf. ~\cite[Example 2.7 and Remark 2.16]{Wak8}).
Hence, the following sequence of equalities holds:
\begin{align} \label{NN10}
\chi \circ \chi & = \left(\mr{id} - \nabla_\STR\circ d_\mcE - \nabla_\STR\circ d'_\mcE \right) \circ  \left(\mr{id} - \nabla_\STR\circ d_\mcE - \nabla_\STR\circ d'_\mcE \right)  \\
& =  \mr{id} - \nabla_\STR\circ d_\mcE - \nabla_\STR\circ d'_\mcE 
\notag \\
& \hspace{5mm}- \nabla_\STR \circ d_\mcE \circ \left(\mr{id} - \nabla_\STR\circ d_\mcE - \nabla_\STR \circ d'_\mcE\right) 
\notag \\
&  \hspace{5mm}
- \nabla_\STR \circ d'_\mcE \circ \left(\mr{id} - \nabla_\STR \circ d_\mcE - \nabla_\STR \circ d'_\mcE\right)
\notag \\
& =  \mr{id} - \nabla_\STR\circ d_\mcE - \nabla_\STR\circ d'_\mcE \notag \\
& \hspace{5mm}  -\nabla_\STR \circ d_\mcE + \nabla_\STR\circ d_\mcE + \nabla_\STR \circ d'_\mcE\notag \\
& \hspace{5mm}  -\nabla_\STR\circ d'_\mcE + \nabla_\STR\circ d_\mcE + \nabla_\STR \circ d'_\mcE \notag \\
& = \mr{id}.\notag 
\end{align}
Moreover,  we have
\begin{align}
d_\mcE \circ \chi &= d_\mcE \circ (\mr{id} - \nabla_\STR\circ d_\mcE - \nabla_\STR\circ d'_\mcE) \\
& = d_\mcE - d_\mcE \circ \nabla_\STR \circ d_\mcE - d_\mcE \circ  \nabla_\STR \circ d'_\mcE \notag  \\
& =   d_\mcE -d_\mcE - d'_\mcE \notag \\
& = -d'_\mcE. \notag
\end{align}
A similar  calculation shows  $d'_\mcE \circ \chi = -d_\mcE$.
Finally, the equality  $\widetilde{\nabla}_\phi^{\mr{ad} (0)} \circ  \chi =  \widetilde{\nabla}_\phi^{\mr{ad} (0)}$ follows from the equality $\widetilde{\nabla}_\phi^{\mr{ad} (0)} \circ  \nabla_\phi = 0$.
\end{proof}
\SSP

Moreover, we can  prove the following assertion.

\SSP
\bpr \label{P44WW}
The automorphism $\chi$ restricts to an automorphism $\chi_\STR : \widetilde{\mcT}_{\mcE^\mr{log}/S^\mr{log}, \STR} \isom \widetilde{\mcT}_{\mcE^\mr{log}/S^\mr{log}, \STR}$ of $\widetilde{\mcT}_{\mcE^\mr{log}/S^\mr{log}, \STR}$, and restricts to  an isomorphism  $\chi_B : \mfg_\mcE \isom \widetilde{\mcT}_{\mcE_B^\mr{log}/S^\mr{log}}$.
That is to say, we obtain the following commutative square diagrams:
\begin{align} \label{dE445}
\vcenter{\xymatrix@C=56pt@R=36pt{
\widetilde{\mcT}_{\mcE^\mr{log}/S^\mr{log}, \STR} \ar[r]_-{\sim}^-{\chi_\STR} \ar[d]_-{\mr{inclusion}} & \widetilde{\mcT}_{\mcE^\mr{log}/S^\mr{log}, \STR} \ar[d]^{\mr{inclusion}}
\\
\widetilde{\mcT}_{\mcE^\mr{log}/S^\mr{log}} \ar[r]^-{\sim}_-{\chi} &
\widetilde{\mcT}_{\mcE^\mr{log}/S^\mr{log}},
}}
\hspace{10mm}
\vcenter{\xymatrix@C=56pt@R=36pt{
\mfg_\mcE \ar[r]^-{\chi_B}_-{\sim} \ar[d]_-{\mr{inclusion}} & \widetilde{\mcT}_{\mcE_B^\mr{log}/S^\mr{log}} \ar[d]^-{\mr{inclusion}} 
\\
\widetilde{\mcT}_{\mcE^\mr{log}/S^\mr{log}} \ar[r]^-{\sim}_-{\chi} &
\widetilde{\mcT}_{\mcE^\mr{log}/S^\mr{log}}.
}}
\end{align}

\epr
\begin{proof}
By the definition of $\nabla_\STR$,
the equality 
\begin{align} \label{dE233}
\STR^{\natural \natural} (D) = \nabla_\STR (D) \left(\in \mcE nd_{\mcO_S} (\mcO_\mcE) \right)
\end{align}
holds for every local section $D \in \mcT \left(\subseteq \mcD^{(\N -1)} \right)$.
For  local sections $v \in \widetilde{\mcT}_{\mcE^\mr{log}/S^\mr{log}, \STR}$ and $D \in \mcD^{(\N-1)}$, the following sequence of equalities holds:
\begin{align}
&  \hspace{5mm} [\STR^{\natural \natural} (D), \chi (v)] - \STR^{\natural \natural} ([D, d_\mcE (\chi (v))])\\
 &=  [\STR^{\natural \natural} (D), v - (\nabla_\STR \circ d_\mcE)(v) - (\nabla\circ d'_\mcE) (v)]
 \notag \\
 & \hspace{5mm}-\phi^{\natural \natural} ([D, d_\mcE (v- (\nabla_\STR\circ d_\mcE)(v)-(\nabla_\STR\circ d'_\mcE)(v))]) \notag \\
 & = \cancel{[\STR^{\natural \natural} (D), v]} - [\STR^{\natural \natural} (D),  (\nabla_\STR \circ d_\mcE)(v)]-[\STR^{\natural \natural} (D),  (\nabla_\STR \circ d'_\mcE)(v)] \notag \\
 & \hspace{5mm}- \cancel{\STR^{\natural \natural} ([D, d_\mcE (v)])} + \STR^{\natural \natural} ([D, d_\mcE (v)])  + \STR^{\natural \natural} ([D, d'_\mcE (v)]) \notag\\
 & = -[\phi^{\natural \natural} (D), \STR^{\natural \natural} (d_\mcE (v))]-[\STR^{\natural \natural} (D), \STR^{\natural \natural} (d'_\mcE (v))] + \STR^{\natural \natural} ([D, d_\mcE (v)]) + \STR^{\natural \natural}([D, d'_\mcE (v)]) \notag \\
 & = 0, \notag
\end{align}
where the third equality follows from \eqref{dE233} together with the equality $[\STR^{\natural \natural} (D), v] - \STR^{\natural \natural} ([D, d_\mcE (v)]) =0$,   and the fourth equality follows from the fact that $\STR^{\natural \natural}$ preserves the Lie bracket operator  $[-, -]$.
This implies that $\chi (v) \in \widetilde{\mcT}_{\mcE^\mr{log}/S^\mr{log}, \STR}$, and hence $\chi$ restricts to  an automorphism of $\widetilde{\mcT}_{\mcE^\mr{log}/S^\mr{log}, \STR}$. 

Moreover,
since $\mr{Ker}(d_\mcE) = \mfg_\mcE$ and $\mr{Ker}(d'_\mcE) = \widetilde{\mcT}_{\mcE^\mr{log}_B/S^\mr{log}}$,
 the equality $d'_\mcE \circ \chi = - d_\mcE$ asserted  in  Lemma \ref{L22} implies that
 $\chi$ restricts to an isomorphism $\mfg_\mcE \isom \widetilde{\mcT}_{\mcE_B^\mr{log}/S^\mr{log}}$.
 This completes the proof of the assertion.
\end{proof}

\SSP
\bpr \label{C91}
There exists an isomorphism  of $\mcO_S$-modules
\begin{align}
\Xi : \mbR^1 f_* (\mcS ol(\nabla_\phi^{\mr{ad}})) \isom \mcG_{\msE^\spadesuit}^1,
\end{align}
which makes the following diagram commute:
\begin{align} \label{dE333}
\vcenter{\xymatrix@C=46pt@R=36pt{
\mbR^1 f_* (\mcS ol(\nabla_\phi^{\mr{ad}})) \ar[rr]_-{\sim}^-{\Xi} \ar[rd]_-{D'_{\msE^\spadesuit}}&& \mcG^1_{\msE^\spadesuit} \ar[ld]^{D_{\msE^\spadesuit}}
\\
& \mbR^1 f_* (\mcT).&
}}
\end{align}
\epr
\begin{proof}
Since the equality $\widetilde{\nabla}_\phi^{\mr{ad} (0)} \circ  \chi =  \widetilde{\nabla}_\phi^{\mr{ad} (0)}$ holds by Lemma \ref{L22},
the pair $(\chi, \mr{id})$ of $\chi$ and the identity morphism $\mr{id}$ of $\Omega \otimes \mfg_\mcE$ determines an automorphism of $\mbR^1 f_* (\mcK^\bullet [\widetilde{\nabla}_\phi^{\mr{ad}(0)}])$.
It follows from  Proposition \ref{P44WW} that  the following square diagram is commutative:
\begin{align} \label{NN19}
\vcenter{\xymatrix@C=86pt@R=36pt{
\mbR^1 f_* (\mcK^\bullet [\nabla_\phi^{\mr{ad}(0)}]) \oplus \mbR^1 f_* (\widetilde{\mcT}_{\mcE^\mr{log}/S^\mr{log}, \STR}) \ar[r]_-{\sim}^-{\mbR^1f_* ( \chi_B, \mr{id}) \oplus \mbR^1 f_*(\chi_\STR)} \ar[d]_-{\mbR^1 f_* (\gamma'_1 \oplus (-\gamma'_2))} & \mbR^1 f_* (\mcK^\bullet [\widetilde{\nabla}_{\phi, B}^{\mr{ad}(0)}]) \oplus  \mbR^1 f_* (\widetilde{\mcT}_{\mcE^\mr{log}/S^\mr{log}, \STR})  \ar[d]^-{\mbR^1f_* (\gamma_1 \oplus (-\gamma_2))}
\\
\mbR^1 f_* (\mcK^\bullet [\widetilde{\nabla}_\phi^{\mr{ad}(0)}])\ar[r]^-{\sim}_-{(\chi, \mr{id})} & \mbR^1 f_* (\mcK^\bullet [\widetilde{\nabla}_\phi^{\mr{ad}(0)}]), 
}}
\end{align}
where $\gamma'_1$ and $\gamma'_2$ denote  the natural inclusions 
$\mcK^\bullet [\nabla_\phi^{\mr{ad}(0)}] \hookrightarrow \mcK^\bullet [\widetilde{\nabla}_\phi^{\mr{ad}(0)}]$ and $\widetilde{\mcT}_{\mcE^\mr{log}/S^\mr{log}, \STR}[0]  \hookrightarrow \mcK^\bullet [\widetilde{\nabla}_\phi^{\mr{ad}(0)}]$,
 respectively.
This induces an isomorphism of $\mcO_S$-modules
\begin{align} \label{NN20}
\mr{Ker}(\mbR^1 f_* (\gamma'_1 \oplus (-\gamma'_2))) \isom \mcG_{\msE^\spadesuit}^1.
\end{align}

On the other hand,
it follows from Lemma \ref{UU104}, (i),  that the commutative  square diagram
\begin{align} \label{dE332}
\vcenter{\xymatrix@C=46pt@R=36pt{
\mbR^1 f_* (\mfg_{\mcE, \STR})\ar[r] \ar[d] & \mbR^1 f_* (\mcK^\bullet [\nabla_\phi^{\mr{ad}(0)}]) \ar[d]^-{\mbR^1 f_* (\gamma'_1)}
\\
\mbR^1f_* (\widetilde{\mcT}_{\mcE^\mr{log}/S^\mr{log}, \STR})\ar[r]_-{\mbR^1f_* (\gamma'_2)} & 
\mbR^1 f_* (\mcK^\bullet [\widetilde{\nabla}_\phi^{\mr{ad}(0)}])
}}
\end{align}
 is  Cartesian, where the upper horizontal arrow (resp., the left-hand vertical arrow) arises from the natural inclusion   $\mfg_{\mcE, \STR} \migiincl \mfg_\mcE$ (resp., $\mfg_{\mcE, \STR} \migiincl \widetilde{\mcT}_{\mcE^\mr{log}/S^\mr{log}, \STR}$).
 Hence,
 this diagram yields an isomorphism  of $\mcO_S$-modules
 \begin{align} \label{NN21}
 \mbR^1 f_* (\mfg_{\mcE, \STR}) \isom  \mr{Ker}(\mbR^1 f_* (\gamma'_1 \oplus (-\gamma'_2))).
 \end{align}
By composing (\ref{NN20}) and \eqref{NN21}, we obtain an isomorphism 
$\mbR^1 f_* (\mfg_{\mcE, \STR}) \isom \mcG_{\msE^\spadesuit}^1$, as desired.
 \end{proof}

\SSP
\bco \label{C90}
The smooth Deligne-Mumford $\mbF_p$-stack $\mcO p^\ZZZ_{g,r}$ is   equidimensional   of  dimension $3g-3+r$.
Moreover, 
the projection $\Pi_{g, r}$ is 
faithfully flat.
In particular, $\mcO p^\ZZZ_{g,r} \times_{\overline{\mcM}_{g, r}} \mcM_{g, r}$ is nonempty.
\eco
\begin{proof}
The first assertion follows from Corollary \ref{T34}, (ii),    Propositions \ref{Pee4}, (ii), and   \ref{C91}.

Next, suppose that  the projection $\Pi_{g, r}$ is not surjective.
Since $\Pi_{g, r}$ is proper (cf. Corollary \ref{T50}) and $\overline{\mcM}_{g, r}$ is irreducible,
the image $\mr{Im}(\Pi_{g, r})$ of $\Pi_{g, r}$ does not coincides with the entire space $\overline{\mcM}_{g, r}$ and  has  dimension $< 3g-3 +r$.
In particular, we have  $\mr{dim}(\mcO p^\ZZZ_{g,r}) - \mr{dim} (\overline{\mcM}_{g,r}) > 0$.
This implies that each fiber of $\Pi_{g, r}$  has  positive dimension (cf. ~\cite[Chap.\,II, Exercise 3.22]{Har}); it  contradicts the finiteness of $\Pi_{g, r}$ asserted in  Corollary \ref{T50}.
Thus, $\Pi_{g, r}$ turns out to be surjective.

The flatness of $\Pi_{g, r}$ follows immediately from 
  ~\cite[Chap.\,III, Exercise 10.9]{Har} because  both $\overline{\mcM}_{g, r}$,  $\mcO p^\ZZZ_{g,r}$ are smooth over $\mbF_p$ and $\Pi_{g, r}$ is  finite. 
Thus, we have finished  the proof  of the assertion.
\end{proof}

\LSP
\subsection{Ordinariness} \label{SS0d34}

Let $S$, $\msX$, and $\msE^\spadesuit$ be as before.

\SSP
\bde\label{D20} 
We shall say that $\msE^\spadesuit$ is {\bf ordinary} if  the morphism $D_{\msE^\spadesuit}$ (cf. \eqref{e130})  is an isomorphism.
 \ede
\SSP

The following assertion is a direct consequence of Proposition \ref{C91}.

\SSP
\bpr \label{WW222}
$\msE^\spadesuit$ is ordinary if and only if the morphism $D'_{\msE^\spadesuit}$ (cf. \eqref{dE201}) is an isomorphism.
\epr
\SSP

\begin{rema}[Previous definition of ordinariness] \label{WW223}
When 
$\msX$ is unpointed and smooth, 
 it follows from Proposition \ref{WW222}  that the definition of ordinariness
 introduced here  is equivalent to the ordinariness  of the corresponding {\it indigenous $(\mr{PGL}_2, B)$-bundle}, in the sense of  ~\cite[Definition 6.7.1]{Wak7}.
\end{rema}
\SSP

Denote by
\begin{align} \label{dE204}
\mcO p_{n, \N, g,r,  \mbF_p,  \mr{ord}}^{^\mr{Zzz...}}, \ \text{or simply} \  \mcO p_{g,r, \mr{ord}}^{^\mr{Zzz...}},
\end{align}
the open substack   of $\mcO p^\ZZZ_{g, r}$ (i.e.,  $\mcO p_{2, \N, g,r, \mbF_p}^{^\mr{Zzz...}}$) classifying {\it ordinary} dormant $\mr{PGL}_2^{(\N)}$-opers.
According to Corollary \ref{T34},
this stack coincides with 
the \'{e}tale locus  of $\mcO p_{g,r}^{^\mr{Zzz...}}$ relative to $\overline{\mcM}_{g,r}$.

\SSP
\bpr \label{P3990}
The projection $\Pi_{\N \Rightarrow 1} : \mcO p^\ZZZ_{2, \N, g,r, \mbF_p}\migi \mcO p^\ZZZ_{2, 1, g,r, \mbF_p}$ obtained by reducing the level  of dormant $\mr{PGL}_2$-opers to $1$ restricts to a morphism 
$\mcO p^\ZZZ_{2, \N, g,r, \mbF_p, \mr{ord}} \migi \mcO p^\ZZZ_{2, 1,  g, r, \mbF_p,  \mr{ord}}$.
\epr
\begin{proof}
As shown  in ~\cite[Chap.\,II, Theorem 2.8]{Mzk2}, $\mcO p^\ZZZ_{2, 1, g, r, \mbF_p}$ is irreducible.
Hence, by an argument similar to the proof of Corollary \ref{C90},
the projection $\Pi_{\N \Rightarrow 1}$ turns out to be  flat.
If the projection $\Pi_{2, N, g, r, \mbF_p} : \mcO p^\ZZZ_{2, \N, g,r, \mbF_p} \migi \overline{\mcM}_{g,r}$ is unramified (or equivalently,  \'{e}tale) at a geometric point  $q$ of $\mcO  p^\ZZZ_{2, \N, g, r, \mbF_p}$,
then 
$\Pi_{\N \Rightarrow 1}$
 is  unramified (or equivalently, \'{e}tale) at the same point.
It follows that the differential of the projection $\Pi_{2, 1, g, r, \mbF_p} :\mcO p^\ZZZ_{2, 1, g,r, \mbF_p} \migi \overline{\mcM}_{g,r}$
at $\Pi_{\N \Rightarrow 1} (q)$ yields an isomorphism between the respective tangent spaces.
This means that  $\Pi_{2, 1, g, r, \mbF_p}$  is \'{e}tale at $\Pi_{\N \Rightarrow 1} (q)$.
So  the proof of the assertion is completed.
\end{proof}
\SSP

Next, 
suppose that $\msX$ is unpointed (i.e., $r = 0$) and smooth (which implies $X^\mr{log} = X$ and $S^\mr{log} = S$).
We shall write $\msG := (\mfg_\mcE, \DMO^{\mr{ad}}_\STR)$ for simplicity.
Since the equivalence of categories \eqref{YY6} commutes with the formation of duals,
the identification $(\mfg_\mcE, \DMO_\STR^\mr{ad}) \isom (\mfg_\mcE^\vee, \DMO_\STR^{\mr{ad}\vee})$ induced from the Killing form on $\mfg$ 
gives 
an identification $\mfg_{\mcE, \STR} = \mfg_{\mcE, \STR}^\vee$.
Under these identifications,
the  diagram \eqref{dEEr2} where   ``$(\mcF, \DMO)$" is taken to be $\msG$ defines   a commutative square  diagram
\begin{align} \label{dEEr5}
\vcenter{\xymatrix@C=96pt@R=36pt{
f_* (\Omega \otimes \mfg_\mcE)
\ar[r]^-{\tiny{{\Dual}_{\mr{Coker}}} \left(= f_*(C_{\msG}) \right)} \ar[d]_-{\int_{\msG}^{(0)}}^-{\wr} &
f_*^{(\N)} (\Omega^{(\N)}\otimes \mfg_{\mcE, \STR})
 \ar[d]^-{\int_{\msG}^{(\N)}}_-{\wr}
\\
\mbR^1 f_{*}(\mfg_\mcE)^\vee
\ar[r]_-{\tiny{{\Dual}_{\mr{Ker}}}} &
\mbR^1 f_*^{(\N)}(\mfg_{\mcE, \STR})^\vee.
}}
\end{align}

Let us consider the composite
\begin{align}
\Omega^{\otimes 2} 
\migiincl \Omega\otimes \mfg_\mcE 
\xrightarrow{C_\msG} \Omega^{(\N)}\otimes \mfg_{\mcE, \STR},
\end{align}
 where the first arrow denotes the morphism induced by the inclusion $\mfg^1_\mcE \migiincl \mfg_\mcE$ under  the natural identification $\left(\mfg^1_\mcE/\mfg^2_\mcE =  \right) \mfg_\mcE^1  =   \Omega$ (cf. ~\cite[Eq.\,(617)]{Wak8}).
The direct image via $f$ of this composite defines 
 an $\mcO_S$-linear morphism
\begin{align} \label{NN25}
D''_{\msE^\spadesuit} : f_*(\Omega^{\otimes 2}) \migi f_*(\Omega^{(\N)} \otimes \mfg_{\mcE, \STR})).
\end{align}

\SSP
\bpr \label{P29}
The following square diagram is commutative up to multiplication by an invertible factor in $\mbF_p$:
\begin{align}
\vcenter{\xymatrix@C=46pt@R=36pt{
 f_*(\Omega^{\otimes 2}) \ar[d]_-{\int}^-{\wr}  \ar[r]^-{D''_{\msE^\spadesuit}} & f_* (\Omega^{(\N)} \otimes \mfg_{\mcE, \phi})   \ar[d]_{\wr}^{\int^{(\N )}_\msG}
\\
\mbR^1 f_*(\mcT)^\vee\ar[r]_{(D'_{\msE^\spadesuit})^\vee}&
\mbR^1 f_*(\mfg_{\mcE, \phi})^\vee,
}}
\end{align}
where $\int$ denotes the isomorphism obtained by applying 
  Grothendieck-Serre duality to $\Omega$.
In particular, $\msE^\spadesuit$ is ordinary if and only if $D''_{\msE^\spadesuit}$ is an isomorphism.
\epr
\begin{proof}
Consider 
 the diagram
\begin{align}
\vcenter{\xymatrix@C=46pt@R=36pt{
f_* (\Omega^{\otimes 2})\ar[r] \ar[d]^-{\wr}_-{\int} & f_* (\Omega \otimes \mfg_\mcE) \ar[d]_-{\wr}^-{\int_\msG^{(0)}} \\
\mbR^1 f_* (\mcT)^\vee\ar[r] & \mbR^1  f_* (\mfg_\mcE)^\vee,
}}
\end{align}
where the upper and lower horizontal arrows arise from the isomorphisms $\Omega \isom \mfg_\mcE^1 \left(\migiincl \mfg_\mcE \right)$ and $\left(\mfg_\mcE \migisurj \right) \mfg_\mcE /\mfg_\mcE^0 \isom \mcT$, respectively,  defined in ~\cite[Eq.\,(617)]{Wak8}.
By the definition of the Killing form on $\mfg$ (i.e., $(a, b) \mapsto \frac{1}{4} \cdot \mr{tr}(\mr{ad} (a)\cdot \mr{ad} (b)) \left(= \mr{tr}(ab) \right)$ for $a, b \in \mfg$), this diagram
is commutative up to multiplication by an invertible factor.
Hence, the assertion follows from the commutativity (in this sense)  of this diagram  and  that of \eqref{dEEr5}.
The second  assertion follows  from the first  assertion and Proposition \ref{WW222}.
\end{proof}
\SSP

\LSP
\subsection{Dormant $\mr{PGL}_2^{(\N)}$-opers on  a $3$-pointed projective line} \label{SS042e}

The goal of this section is to show that the stack $\mcO p^\ZZZ_{\rho, 0, 3}$ defined for each $\rho \in ((\mbZ/p^\N \mbZ)^\times /\{\pm \})^{\times 3}$ is, if it is nonempty, isomorphic to $\mr{Spec}(\mbF_p)$ (cf. Theorem \ref{Per33}).
Since we have already obtained  the finiteness and  smoothness of the moduli space (cf. Theorem \ref{P91} and Corollary \ref{T34}, (i)), 
the problem is reduced to proving
that $\mcO p^\ZZZ_{\rho, 0, 3}$ consists of exactly one point in the  set-theoretic sense.
So the desired assertion follows  directly from ~\cite[Proposition 6.4.1]{Wak9}.
However, in this subsection,   we give an alternative (somewhat simpler) proof of ``$\mcO p^\ZZZ_{\rho, 0, 3} \cong \mr{Spec}(\mbF_p)$" without applying  the finiteness and smoothness, which were actually proved by technically   complicated arguments.
We do so by slightly generalizing the discussion in ~\cite{Wak9}.

We shall consider the case where $\msX$ is 
 the $3$-pointed projective line $\msP := (\mbP/S, \{ [0], [1], [\infty] \})$,  introduced in \eqref{1051}.
 Let $k$ be an algebraically closed field over $\mbF_p$, and 
suppose  that $S = \mr{Spec}(R)$ for $R \in \mr{Ob}(\mcA rt_{/k})$.
To clarify the base space ``$\mr{Spec}(R)$",
we occasionally write $\mbP_R$ and  $\msP_R$ instead of $\mbP$ and  $\msP$, respectively.

Let us take a triple $(\rho_0, \rho_1, \rho_\infty)$ of elements of $(\mbZ /p^{\N} \mbZ)^{\times}/\{ \pm 1 \}$.
Then, there exists  a triple of integers $(\lambda_0, \lambda_1, \lambda_\infty)$   satisfying the following conditions:
\begin{itemize}
\item
$2  \cdot \rho_x = \lambda_x$ as elements of  $(\mbZ/p^\N \mbZ)/\{ \pm 1 \}$ and 
$0 < \lambda_x < p^{\N}$
 for every  $x=0,1, \infty$;
\item
The sum $\lambda_0 + \lambda_1 + \lambda_\infty$ is odd $< 2 \cdot p^{\N}$.
\end{itemize}
Set  $\mcO_{\mbP}^+ := \mcO_{\mbP} (\lambda_0 \cdot [0]+ \lambda_1 \cdot [1] + \lambda_{\infty} \cdot [\infty])$.
Note that we can find a unique $\mcD^{(\N -1)}$-module structure  $\DMO^+$ on $\mcO_{\mbP}^+$ whose restriction to    $U := \mbP \setminus \{ [0], [1], [\infty] \}$ coincides with  the trivial $\mcD_{U/S}^{(\N-1)}$-module structure  on $\mcO_{\mbP}^+ |_U = \mcO_U$.
Also, let $\mcL$ be a line bundle on  $\mbP$ of relative degree $\frac{\lambda_0 + \lambda_1 + \lambda_\infty +1}{2}$, which is uniquely determined up to isomorphism because $R$ is a  local ring.
Let us fix an identification  $\mcL^{\otimes 2} \otimes \mcT = \mcO_{\mbP}^+$.
Under this identification, the pair $\vartheta := (\mcL, \DMO^+)$ form a dormant $2^{(\N)}$-theta characteristic of $\mbP^\mr{log}/S$.

Now, let $\msE^\spadesuit$ be a dormant $\mr{PGL}_2^{(\N)}$-oper on $\msP$ of  radii $(\rho_0, \rho_1, \rho_\infty)$, i.e., whose radius at $[x]$ ($x \in \{ 0, 1, \infty \}$) coincides with $\rho_x$.
We shall write
\begin{align} \label{NN31}
\msF^{\heartsuit} : = (\mcF, \DMO, \{ \mcF^j \}_{j=0}^2)
\end{align}
for  
the dormant $\mr{GL}_2^{(\N)}$-oper  determined by 
a unique  (up to isomorphism)
dormant $(\mr{GL}_2^{(\N)}, \vartheta)$-oper $\DMO^\diamondsuit$ on $\msP$ 
corresponding to $\msE^\spadesuit$ via the isomorphism $\Lambda_{\diamondsuit \Rightarrow \spadesuit}^\ZZZ$ (cf. Theorem \ref{P14}).
That is to say, we set $\msF^\heartsuit := \DMO^{\diamondsuit \Rightarrow \heartsuit}$ (hence $\mcF = \mcF_\mcL$ and  $\DMO = \DMO^{\diamondsuit}$).
In particular, since $\mcF^1 = \mcL$, we can regard $\mcL$ as a line subbundle of $\mcF$.

\SSP
\bde \label{NN49}
We shall refer to $\msF^\heartsuit$ as the {\bf canonical dormant $\mr{GL}_2^{(\N)}$-oper} associated to $\msE^\spadesuit$.
(Note that the formation of $\msF^\heartsuit$ depends on the choice of $(\lambda_0, \lambda_1, \lambda_\infty)$, but its isomorphism class does not depend on the choice of the identification $\mcL^{\otimes 2} \otimes \mcT = \mcO_{\mbP}^+$.)
\ede
\SSP

\begin{rema}[Previous works on canonical $\mr{GL}_2^{(\N)}$-opers] \label{NN54}
The  construction of the  canonical $\mr{GL}_2$-opers was discussed  in ~\cite[Chap.\,IV, \S\,2.1]{Mzk2} (for $S = \mr{Spec}(k)$ and $\N =1$) and ~\cite[\S\,6.3]{Wak9} (for $S = \mr{Spec}(k)$) to 
 establish a bijective  correspondence between dormant $\mr{PGL}_2$-opers on $\msP$ and certain tamely ramified coverings on $\msP$ (i.e., dynamical Belyi maps).
\end{rema}

We shall write  $\mcF^\flat:=  F^{(\N)*}_{\mbP/S}(\mcS ol (\DMO))$ and write
\begin{align} \label{NN30}
\tau : \mcF^\flat 
  \migiincl  \mcF
\end{align}
for  the $\mcO_{\mbP}$-linear injection 
 extending the
  $\mcO_{\mbP^{(\N)}}$-linear  inclusion $\mcS ol (\DMO) \migiincl \mcF$;
 this morphism commutes with the $\mcD^{(\N -1)}$-module structures $\DMO^{(\N -1)}_{\mcS ol (\DMO), \mr{can}}$ (cf. \eqref{QQwkko}) and  $\DMO$.
Next, we shall set $\mcL^\flat := \mcL \cap \mr{Im}(\tau)$.
Since the restriction of $\tau$ to $U := \mbP \setminus \{ [0], [1], [\infty] \}$ is an isomorphism (cf. \eqref{YY6}),
the quotient sheaf $\mcL/\mcL^\flat$ is a torsion sheaf supported on $\{ [0], [1], [\infty] \}$.

\ble \label{P100}
Let us keep the above notation.
\begin{itemize}
\item[(i)]
$\mcL/\mcL^\flat$ is flat over $S$, and 
the stalk of  $\mcL/\mcL^\flat$ at the marked point $x \in \{[0], [1], [\infty] \}$ is 
 of rank   $\lambda_x$ (as a free $\mcO_S$-module).
 Moreover, the injection 
 \begin{align} \label{NN40}
 \iota : \mcL/\mcL^\flat \migiincl \mr{Coker}(\tau)
 \end{align}
  induced from the inclusion $\mcL \migiincl \mcF$ is an isomorphism.
\item[(ii)]
There exists an $\mcO_{\mbP^{(\N)}}$-linear isomorphism $\gamma :  \mcO_{\mbP^{(\N)}}^{\oplus 2} \isom \mcS ol (\DMO)$.
\end{itemize}
\ele
\begin{proof}
In what follows, we shall use the notation $(-)_k$ to denote the result of reducing objects and morphisms over $R$  modulo $\mfm_R$.

First, let us consider assertion (i).
By Proposition \ref{P78} (and the isomorphism $\Lambda_{\diamondsuit \Rightarrow \spadesuit}^\ZZZ$), $(\mcF, \DMO)$ is locally 
isomorphic to the base-change of $(\mcF_k, \DMO_k)$ over $R$.
Hence,  $\mr{Coker}(\tau)$ can be locally identified with the base-change  of  $\mr{Coker}(\tau_k)$.
This implies that $\mr{Coker}(\tau)$ is flat over $S$.
Recall from  ~\cite[Lemma 6.3.2]{Wak9} that
the reduction modulo $\mfm_R$ of 
\eqref{NN40}
 is an isomorphism.
By  Nakayama's lemma,  the morphism
\eqref{NN40}
    turns out to be  an isomorphism.
In particular, $\mcL/\mcL^\flat$ is flat  over $S$, and the rank of  its stalk  at $x$  (as a free $\mcO_S$-module)
coincides with the rank of its reduction modulo $\mfm_R$, i.e., $\lambda_x$.
This completes the proof of assertion (i).

Next, we shall consider assertion (ii).
Since $(\mcF, \DMO)$ is locally isomorphic to the base-change of $(\mcF_k, \DMO_k)$ (as mentioned above),
the vector bundle  $\mcS ol (\DMO)$ on $\mbP^{(\N)}$ is locally isomorphic to the base-change  of $\mcS ol (\DMO_k)$. 
Hence,  $\mcS ol (\DMO)$ specifies a deformation of $\mcS ol (\DMO_k)$ over $R$.
Here, let us observe the following fact from  well-known generalities of  deformation theory: if 
we are given a small extension in $\mcA rt_{/k}$ of the form $0 \migi k \migi R_1 \migi R_0 \migi 0$ and a deformation 
$\mcS ol (\DMO_k)_{R_0}$  of $\mcS ol (\DMO_k)$ over $R_0$, then 
the  set of deformations of $\mcS ol (\DMO_k)_{R_0}$ over $R_1$  has a structure of torsor modeled on $H^1 (\mbP_k^{(\N)}, \mcE nd (\mcS ol (\DMO_k)))$.
On the other hand,  $\mcS ol (\DMO_k)$ is  isomorphic to
$\mcO_{\mbP_k^{(\N)}}^{\oplus 2}$ as proved in
 ~\cite[Lemma 6.3.3]{Wak9},
 so we have 
\begin{align}
H^1 (\mbP_k^{(\N)}, \mcE nd (\mcS ol (\DMO_k))) \cong H^1 (\mbP^{(\N)}_k, \mcE nd (\mcO_{\mbP_k^{(\N)}}^{\oplus 2})) \cong H^1 (\mbP_k^{(\N)}, \mcO_{\mbP_k^{(\N)}})^{\oplus 4} = 0.
\end{align}
This implies that 
any deformation of  $\mcS ol (\DMO_k)$ over $R$  is trivial.
In particular, there exists an isomorphism  $\mcO_{\mbP^{(\N)}}^{\oplus 2} \isom \mcS ol (\DMO)$.
\end{proof}
\SSP

The following assertion is a slight generalization of ~\cite[Proposition 6.4.1]{Wak9}.
(The proof is entirely similar.)

\SSP
\bpr \label{P00245}
Let $\rho := (\rho_0, \rho_1, \rho_\infty)$ be   an element of $((\mbZ/p^\N \mbZ)^\times /\{\pm 1 \})^{\times 3}$.
Then, any two dormant $\mr{PGL}_2^{(\N)}$-opers on $\msP$ of radii $\rho$ are isomorphic.
\epr
\begin{proof}
Suppose that we are given  two dormant $\mr{PGL}_2^{(\N)}$-opers   $\msE_\circ^\spadesuit$,  $\msE^\spadesuit_\bullet$ on $\msP$ of radii $\rho$.
For each $\star \in \{ \circ, \bullet \}$,
denote by $\msF^\heartsuit_\star := (\mcF_\star, \nabla_\star, \{ \mcF^j_\star \}_j)$ the canonical dormant  $\mr{GL}_n^{(\N)}$-oper associated to $\msE^\spadesuit_\star$.
As mentioned previously,
the inclusion $\mcS ol (\DMO_\star) \migiincl \mcF_\star$ extends to 
 an $\mcO_{\mbP}$-linear injection  $\tau_\star : F^{(\N)*}_{\mbP/S} (\mcS ol (\DMO_\star)) \migiincl  \mcF_\star$, and  we obtain
$\mcL_\star^\flat := \mcL_\star \cap \mr{Im}(\tau_\star)$ by regarding $\mcL_\star := \mcL$ as a line subbundle of $\mcF_\star$.
The morphism $\iota_\star : \mcL_\star/\mcL_\star^\flat \migi  \mr{Coker}(\tau_\star)$ induced by $\mcL_\star \migiincl \mcF_\star$ is an isomorphism (cf. Lemma  \ref{P100}, (i)).
Write   $\DMO_\star^{(0)}$ for  the $\mcD^{(0)}$-module structure on $\mcF_\star$ induced  from $\DMO_\star$.
Then, the collection  $\msF_\star^{\heartsuit (1)} := (\mcF_\star, \nabla_\star^{(0)}, \{ \mcF_\star^j \}_j)$ defines a dormant $(\mr{GL}_2^{(1)}, \vartheta^{(1)})$-oper, where $\vartheta^{(1)}$ denotes the $2^{(1)}$-theta characteristic  obtained by reducing the level of $\vartheta$ to $1$.
Recall from ~\cite[Chap.\,I, Theorem 4.4]{Mzk2}  that   dormant $\mr{PGL}_2$-opers on $\msP$ are  completely determined by their radii.
In particular, 
since $(\msF^{\heartsuit (1)}_\circ)^{\Rightarrow \spadesuit}$ and $(\msF^{\heartsuit (1)}_\bullet)^{\Rightarrow \spadesuit}$ have the same radii (cf. Proposition \ref{YY499}, (i)), 
we have  $(\msF^{\heartsuit (1)}_\circ)^{\Rightarrow \spadesuit}  \cong  (\msF^{\heartsuit (1)}_\bullet)^{\Rightarrow \spadesuit}$.
It follows  that there exists an isomorphism 
\begin{align}
\alpha : \msF^{(1) \heartsuit}_\circ \isom  \msF^{(1) \heartsuit}_\bullet
\end{align}
 (cf. Theorem \ref{P14}).
This isomorphism 
restricts to an isomorphism $\alpha |_{\mcL_\circ} : \mcL_\circ \isom \mcL_\bullet$, and hence
  induces  an isomorphism $\alpha |_{\mcL_\circ /\mcL^\flat_\circ} : \mcL_\circ /\mcL_\circ^\flat \isom \mcL_\bullet/\mcL^\flat_\bullet$.
  Thus, we obtain the composite isomorphism
\begin{align}
\alpha |_{\mr{Coker}(\tau_\circ)} := \iota_\bullet \circ \alpha |_{\mcL_\circ/\mcL^\flat_\circ} \circ  \iota_\circ^{-1} : \mr{Coker}(\tau_\circ) \isom \mr{Coker}(\tau_\bullet).
\end{align}

In what follows, we shall prove the commutativity of the following square diagram:
\begin{align} \label{Eed3df}
\vcenter{\xymatrix@C=56pt@R=36pt{
\mcF_\circ \ar[r]^-{\alpha}_-{\sim} \ar[d]_-{\pi_\circ} & \mcF_\bullet\ar[d]^{\pi_\bullet} 
\\
\mr{Coker}(\tau_\circ) \ar[r]^-{\sim}_-{\alpha |_{\mr{Coker}(\tau_\circ)}} & \mr{Coker}(\tau_\bullet),
}}
\end{align}
where $\pi_\star$ ($\star \in \{\circ, \bullet \}$) denotes the natural projection $\mcF_\star \migisurj \mr{Coker}(\tau_\star)$.
Let us take $\star \in \{\circ, \bullet\}$ and  $x \in \{ [0], [1], [\infty] \}$.
Also, choose  a local function $t$ on $\mbP$ defining $x$.
 The formal neighborhood $\widehat{U}_x$ of $x$ in $\mbP$  may be identified with $U_\oslash$  (cf. \eqref{YY124}).
By Proposition \ref{L093} and Lemma \ref{P100},
one may verify that the restriction $(\mcF_\star, \DMO_\star) |_{\widehat{U}_x}$  of $(\mcF_\star, \DMO_\star)$ to $\widehat{U}_x$
is isomorphic to $\msO_{\oslash, \overline{\lambda}_x}^{(\N -1)}\oplus \msO_{\oslash, 0}^{(\N -1)}$, where $\overline{\lambda}_x$ denotes the image of $\lambda_x$ via the natural quotient $\mbZ \migisurj \mbZ/p^\N \mbZ$.
Let us fix
 an identification $(\mcF_\star, \DMO_\star) |_{\widehat{U}_x} = \msO_{\oslash, \overline{\lambda}_x}^{(\N -1)}\oplus \msO_{\oslash, 0}^{(\N -1)}$. 
 Under this identification, 
the restriction of $\alpha$ to $\widehat{U}_x$ determines an automorphism
 $\alpha_{\widehat{U}_x}$ of 
$\msO_{\oslash, \overline{\lambda}_x}^{(\N -1)}\oplus \msO_{\oslash, 0}^{(\N -1)}$.
Here, we shall use the notation $\mr{mult}_{(-)}$ to  denote the endomorphism of $\mcO_\oslash$ given  by multiplication by  $(-)$.
Since   $\overline{\lambda}_x \neq 0$,
the automorphism $\alpha |_{\widehat{U}_x}$ may be expressed as $\mr{mult}_{v} \oplus \mr{mult}_w$ for some $v, w \in \mcO_\oslash^\times$ after possibly replacing  the fixed identification 
$(\mcF_\star, \DMO_\star) |_{\widehat{U}_x} = \msO_{\oslash, \overline{\lambda}_x}^{(\N -1)}\oplus \msO_{\oslash, 0}^{(\N -1)}$  with another (cf.  Proposition \ref{YY30}, (ii)). 
By Proposition \ref{NN77}, (i),
the inclusion $\mcL_\star \migiincl \mcF_\star$ corresponds, after choosing a suitable trivialization
$H^0 (\widehat{U}_x, \mcL_\star |_{\widehat{U}_{x}}) \isom \mcO_\oslash$,
to the $\mcO_\oslash$-linear morphism $\mcO_\oslash \migi \mcO_\oslash^{\oplus 2}$ given by 
$1 \mapsto (u_\star, 1)$ for some $u_\star \in \mcO_\oslash^\times$.
Then, the restriction of   $\alpha |_{\mcL_\star}$ to $\widehat{U}_x$ may be expressed as $\mr{mult}_w$, and the equality $v \cdot u_\circ = w \cdot  u_\bullet$ holds.
Hence, for each $(g, h) \in \mcO_\oslash^{\oplus 2} \left(= \mcF_\circ |_{\widehat{U}_x} \right)$,
we have
\begin{align}
(\pi_\bullet\circ\alpha) ((g, h)) & = \pi_\bullet ((v \cdot g, w \cdot h)) \\
& =  \left(\frac{v \cdot g}{u_\bullet} \cdot u_\bullet, \frac{v\cdot g}{u_\bullet} \cdot 1 \right)  \text{mod} \ \mr{Im}(\tau_\bullet) \notag \\
& = \iota_\bullet \left( \frac{v\cdot g}{u_\bullet} \ \mr{mod} \ \mcL^\flat_\bullet\right) \notag  \\
& = (\iota_\bullet \circ \alpha |_{\mcL_\circ/\mcL^\flat_\circ}) \left(\frac{v\cdot g}{w u_\bullet}  \ \mr{mod} \ \mcL^\flat_\circ \right)  \notag \\
& = (\alpha |_{\mr{Coker}(\tau_\circ)}\circ\iota_\circ) \left(\frac{g}{u_\circ}  \ \mr{mod} \ \mcL^\flat_\circ \right)\notag \\
& =  \alpha |_{\mr{Coker}(\tau_\circ)} \left(\left(\frac{g}{u_\circ} \cdot u_\circ, \frac{g}{u_\circ} \cdot 1 \right) \text{mod} \ \mr{Im}(\tau_\circ) \right) \notag \\
& = (\alpha |_{\mr{Coker}(\tau_\circ)} \circ \pi_\circ) ((g, h)). \notag
\end{align}
This shows the desired commutativity of  \eqref{Eed3df}.

Moreover, the commutativity of \eqref{Eed3df} just proved  implies that $\alpha$ is restricted, via $\iota_\circ$ and $\iota_\bullet$,  to an isomorphism 
\begin{align} \label{NN78}
\alpha' : F^{(\N)*}_{\mbP/S}(\mcS ol (\DMO_\circ)) \isom F^{(\N)*}_{\mbP/S}(\mcS ol (\DMO_\bullet)).
\end{align}
Since $\mcS ol (\DMO_\circ) \cong \mcS ol (\DMO_\bullet) \cong \mcO_{\mbP^{(\N)}}^{\oplus 2}$ (cf. Lemma \ref{P100}, (ii)), the morphism 
\begin{align}
\mr{Hom}_{\mcO_{\mbP^{(\N)}}} (\mcS ol (\DMO_\circ), \mcS ol (\DMO_\bullet)) \migi \mr{End}_{\mcO_\mbP} (F^{(\N)*}_{\mbP/S}(\mcS ol (\DMO_\circ)), F^{(\N)*}_{\mbP/S}(\mcS ol (\DMO_\bullet)))
\end{align}
 arising  from  pull-back by  $F^{(\N)}_{\mbP/S}$ is bijective.
In particular, $\alpha'$ comes from an isomorphism $\mcS ol (\DMO_\circ) \isom \mcS ol (\DMO_\bullet)$, and hence $\alpha'$ is compatible with the respective $\mcD^{(\N-1)}$-actions $\nabla^{(\N-1)}_{\mcS ol (\DMO_\circ), \mr{can}}$, $\nabla^{(\N-1)}_{\mcS ol (\DMO_\bullet), \mr{can}}$.
Since $\nabla_\star$ ($\star\in \{ \circ, \bullet \}$) is the unique $\mcD^{(\N-1)}$-module structure on $\mcF_\star$ extending $\nabla^{(\N-1)}_{\mcS ol (\DMO_\star), \mr{can}}$ via $\tau_\star$,
the isomorphism $\alpha$, being an extension of $\alpha'$, preserves the $\mcD^{(\N-1)}$-module structure.
It follows that $\alpha$ defines an isomorphism  of  $\mr{GL}_2^{(\N)}$-opers  $\msF^\heartsuit_\circ \isom \msF^\heartsuit_\bullet$.
Thus, we obtain   $\msE_\circ^\spadesuit \cong  \msE_\bullet^\spadesuit$, as desired.
  \end{proof}
\SSP

\bt \label{Per33}
Let $\rho \in ((\mbZ/p^\N \mbZ)^\times /\{ \pm 1 \})^{\times 3}$.
Then, the   stack $\mcO p^\ZZZ_{\rho, 0, 3}$ is either empty or  isomorphic to $\mr{Spec}(\mbF_p)$.
In particular, any dormant $\mr{PGL}_2^{(\N)}$-oper on $\msP$ is ordinary, i.e., we have $\mcO p^\ZZZ_{0, 3, \mr{ord}} = \mcO p^\ZZZ_{0, 3}$. 
\et
\begin{proof}
The assertion follows from Proposition \ref{P00245}.
\end{proof}
\SSP

\bco \label{NN812}
For each $\N \in \mbZ_{>0}$, denote by $\pi_0 (\mcO p^\ZZZ_{2, \N, g,r, \mbF_p})$
the set of connected components of  $\mcO p^\ZZZ_{2, \N, g,r, \mbF_p}$.
Let us consider  the projective system of sets
\begin{align} \label{UU834}
\cdots \migi \pi_0(\mcO p^\ZZZ_{2, \N, g,r, \mbF_p}) \migi \cdots \migi \pi_0(\mcO p^\ZZZ_{2, 2, g,r, \mbF_p}) \migi \pi_0 (\mcO p^\ZZZ_{2, 1, g,r, \mbF_p})
\end{align}
arising from \eqref{NN8123}.
Then, the projective limit 
\begin{align} \label{UU835}
\varprojlim_{\N \in \mbZ_{>0}} \pi_0 (\mcO p^\ZZZ_{2, \N, g,r, \mbF_p})
\end{align}
of this system is nonempty. 
\eco
\begin{proof}
Let us take an $r$-pointed totally degenerate curve $\msX$ of genus $g$ over an algebraically closed field $k$ in characteristic $p$.
Choose trivalent 
clutching data $\mbG := (\GR,  \{ \lambda_j \}_{j=1}^J)$
corresponding to  $\msX$ (cf. Definition \ref{DD3WW}).
In particular,
 $\msX$ may be obtained by gluing together $J$ copies of $\msP$ by means of $\mbG$.
Denote by $\rho_\mbG := \{ \rho^j \}_{j=1}^J$ the set of $\mbG$-$\Xi_{2, \N}$-radii defined by $\rho^j := \varepsilon^{\times 3}$ (cf. \eqref{NN8122}).
By Theorem \ref{Per33} and the equivalence of categories \eqref{YY391},
there exists   a unique (up to isomorphism) dormant  $\mbG\text{-}\mr{PGL}_2^{(\N)}$-oper  of radii $\rho_\mbG$ on $\msX$;
we shall denote by $\msE^\spadesuit_{\msX, \N}$  the corresponding dormant $\mr{PGL}_2^{(\N)}$-oper on $\msX$ via the equivalence of categories \eqref{YY291}.
Also,  denote by $\mcR_\N$ the connected component of $\mcO p^\ZZZ_{2, \N, g,r, \mbF_p}$ containing the point $s_\N$ classifying $(\msX, \msE^\spadesuit_{\msX, \N})$.
For each $\N' \leq \N$, the equality $q_{\N \Rightarrow \N'} (\varepsilon^{\times 3}) = \varepsilon^{\times 3}$ holds  (cf. \eqref{NN8121} for the definition of $q_{\N \Rightarrow \N'}$).
This implies that the dormant $\mr{PGL}_2^{(\N')}$-oper obtained from $\msE^\spadesuit_{\msX, \N}$ by reducing its level is isomorphic to $\msE^\spadesuit_{\msX, \N'}$.
In other words, the projection $\mcO p^\ZZZ_{2, \N, g,r, \mbF_p} \migi \mcO p^\ZZZ_{2, \N', g, r, \mbF_p}$ sends $s_\N$ to $s_{\N'}$.
Since the stacks $\mcR_\N$ are  irreducible because of their smoothness  proved in Corollary  \ref{T34}, (i),
the projective system \eqref{NN8123} restricts to a projective system
\begin{align}
\cdots \migi \mcR_\N \migi \cdots \migi \mcR_2 \migi \mcR_1.
\end{align}
In particular,  this system defines an element of the set $\varprojlim_{\N} \pi_0 (\mcO p^\ZZZ_{2, \N, g,r, \mbF_p})$, so we conclude that $\varprojlim_{\N} \pi_0 (\mcO p^\ZZZ_{2, \N, g,r, \mbF_p}) \neq \emptyset$, as desired.
 \end{proof}

\LSP
\subsection{Generic \'{e}taleness of the moduli space}\label{y0177}

 When  $\N=1$,
the generic \'{e}taleness of $\mcO p^\ZZZ_{g,r}/\overline{\mcM}_{g, r}$ was already shown in ~\cite[Chap.\,II, Theorem 2.8]{Mzk2}.
By using Theorem  \ref{Per33}, we can prove  the following higher-level generalization.

\SSP
\bt \label{c43} 
The   stack 
 $\mcO p^\ZZZ_{g,r, \mr{ord}}$  is a dense open substack of $\mcO p^\ZZZ_{g, r}$ containing all the points lying over the points in $\overline{\mcM}_{g, r}$ classifying 
 totally degenerate curves.
 In particular, the projection $\Pi_{g, r} : \mcO p^\ZZZ_{g, r} \!\migi \overline{\mcM}_{g, r}$ is generically \'{e}tale. 
\et
\begin{proof}
First, we shall prove  the first assertion.
Let $k$ be an algebraically closed field over $\mbF_p$ and $\rho$ an element of $\Xi_{2, \N}^{\times r}$.
Also, let us take an arbitrary $r$-pointed  totally degenerate curve $\msX$
of genus $g$ over $k$.
We may assume, without loss of generality, that
there exists 
 trivalent clutching data
  $\mbG = (\GR, \{ \lambda_j\}_{j=1}^J)$ inducing  $\msX$ (cf. Remark \ref{Rem3391i}, Definition \ref{DD3WW}).
  The point $q_\mbG$ of $\overline{\mcM}_{g, r}$ determined by the morphism
  \begin{align}
 \mr{Clut}_\mbG : \left(\mr{Spec}(\mbF_p) =  \right) \prod_{j=1}^J \overline{\mcM}_{0, 3}  \migi  \overline{\mcM}_{g, r}
  \end{align}
  (cf. \eqref{WW81}) classifies $\msX$. 
By  applying  Theorem \ref{y0176},
we obtain the Cartesian square diagram \eqref{1050} for $\mbG$ and $\rho$. 
It follows from  Theorem  \ref{Per33} that
the left-hand vertical arrow  in  this diagram  is unramified.
Hence,  the projection  $\Pi_{\rho, g, r} : \mcO p^\ZZZ_{\rho, g,r} \migi \overline{\mcM}_{g,r}$, i.e., the right-hand vertical arrow in  \eqref{1050},  is unramified over the point $q_\mbG$.
By the flatness of $\Pi_{g, r}$ obtained in  Corollary \ref{C90}, $\Pi_{\rho, g, r}$ is moreover  \'{e}tale over the same point.
This completes the proof of the  first assertion.

Also,  since the projection $\Pi_{g, r}$
is finite and faithfully flat, one may verify that any irreducible component of $\mcO p^\ZZZ_{g,r}$ contains a point lying over the point of $\overline{\mcM}_{g, r}$ classifying a totally degenerate curve.
Thus,  the second  assertion follows from the first assertion together with the open nature of \'{e}taleness. 
\end{proof}
\SSP

\begin{rema}[Divisor defined by the nonordinary locus] \label{NN80}
The complement of the ordinary locus forms a divisor on $\mcO p^\ZZZ_{g,r}$.
Indeed, let us consider
the morphism $D_{\msE^\spadesuit}$ (cf. \ref{e130}) in the case where the pair ``$(\msX, \msE^\spadesuit)$" is taken to be the universal object
 over $\mcO p^\ZZZ_{g,r}$.
This morphism determines  a morphism
\begin{align}
\bigwedge^{3g-3 +r} D_{\msE^\spadesuit} : \bigwedge^{3g-3}\mbR^1 f_* (\mcG^1_{\msE^\spadesuit}) \migi \bigwedge^{3g-3 +r}\mbR^1 f_* (\mcT)
\end{align}
between line bundles on $\mcO p^\ZZZ_{g, r}$ (cf. Propositions \ref{Pee4}, (ii), \ref{e128}, and \ref{C91}), which corresponds to a global section of the line bundle
\begin{align}
\left(\bigwedge^{3g-3+r}\mbR^1 f_* (\mcT) \right) \otimes \left( \bigwedge^{3g-3 +r}\mbR^1 f_* (\mcG^1_{\msE^\spadesuit}) \right)^\vee.
\end{align}
The morphism $D_{\msE^\spadesuit}$ is an isomorphism exactly on the complement of the divisor
$\mcD_{\cancel{\mr{ord}}} \left(\subseteq \mcO p^\ZZZ_{g,r} \right)$ associated to this section.
Thus, the nonordinary locus in $\mcO p^\ZZZ_{g, r}$ coincides with the support of $\mcD_{\cancel{\mr{ord}}}$.
\end{rema}
\SSP

\begin{rema}[Generic \'{e}taleness for higher rank cases] \label{WW1}
For a (sufficiently small) general $n>1$, 
the projection  $\mcO p^\ZZZ_{n, 1, g, r, \mbF_p}\migi \overline{\mcM}_{g, r}$ 
is generically \'{e}tale (cf. ~\cite[Theorem G]{Wak8}).
The generic \'{e}taleness
 is essential in the proofs of Joshi's conjecture described in ~\cite{Wak} and ~\cite{Wak8}.
Indeed, we used this geometric property to lift relevant moduli spaces to characteristic $0$ and then compared them with   
certain Quot schemes over $\mbC$; this argument  enables us to compute explicitly the number of all possible dormant $\mr{PGL}_n$-opers on a general curve.

To develop the enumerative geometry of higher-level dormant opers in such a way that this argument works,  we expect  that the following conjecture is true:
\begin{quote}
{\it Suppose that $p$ is sufficiently large relative to $n$, $g$, and $r$.
Then, the  stack $\mcO p^\ZZZ_{n, \N, g, r, \mbF_p}$ associated to such a collection  $(p, n, g, r)$   is generically \'{e}tale over $\overline{\mcM}_{g, r}$, i.e., any irreducible component of $\mcO p^\ZZZ_{n, \N, g, r, \mbF_p}$ that dominates $\overline{\mcM}_{g, r}$  admits a dense open substack  which is \'{e}tale over $\overline{\mcM}_{g, r}$.}
\end{quote}
\end{rema}
\SSP

For each positive integer $\N' < \N$, we shall write 
$\Pi_{\N \Rightarrow \N'}$ for the morphism $\mcO p^\ZZZ_{2, \N, g, r, \mbF_p} \migi \mcO  p^\ZZZ_{2, \N', g, r, \mbF_p}$ obtained by reducing the level of dormant $\mr{PGL}_2$-opers to $\N'$.
We shall set
\begin{align} \label{fE77}
\mcJ := \bigcap_{\N' \leq \N} \Pi_{\N \Rightarrow \N'}^{-1} (\mcO p^\ZZZ_{2, \N', g,r, \mbF_p,  \mr{ord}}),
\end{align}
which is a dense open substack of $\mcO p^\ZZZ_{2, \N, g,r, \mbF_p}$ (cf. Theorem  \ref{c43}).
That is to say, 
$\mcJ$ classifies dormant $\mr{PGL}_2^{(\N)}$-opers $\msE^\spadesuit$ such that
the morphisms $D_{\msE^{\spadesuit (\N')}}$ are isomorphisms  for all $\N' \leq \N$, where $\msE^{\spadesuit (\N')}$ denotes the dormant $\mr{PGL}_2^{(\N')}$-oper induced from  $\msE^\spadesuit$.

Hence, we obtain the dense open substack 
\begin{align} \label{fE34}
\overline{\mcM}_{g, r, N\text{-}\mr{ord}}
\end{align}
of  $\overline{\mcM}_{g, r}$ defined as the complement of the image of the natural projection $\mcO p_{2, \N, g, r, \mbF_p}^\ZZZ \setminus \mcJ \migi \overline{\mcM}_{g,r}$.

If $\msX$ is  a pointed stable curve over a geometric point classified by $\overline{\mcM}_{g, r, N\text{-}\mr{ord}}$,
then all the dormant $\mr{PGL}_2^{(\N)}$-opers on $\msX$ and their reductions to level $\N' \leq \N$ are ordinary.
In particular, the set of isomorphism classes of dormant $\mr{PGL}_2^{(\N)}$-opers on $\msX$ has cardinality equal to the degree of the projection   $\Pi_{g, r}$.

\SSP
\bde \label{DD456}
An $r$-pointed stable curve   of genus $g$ is  called {\bf (dormant-)$\N$-ordinary} if it is classified by the open substack $\overline{\mcM}_{g, r, N\text{-}\mr{ord}}$.
(Since  $\overline{\mcM}_{g, r, N\text{-}\mr{ord}}$ is dense in $\overline{\mcM}_{g, r}$, a general pointed stable curve  is $\N$-ordinary.)
\ede
\SSP

The following assertion describes the relationship between  $\N$-ordinariness and $1$-ordinariness.

\SSP
\bpr \label{P3991}
The inclusion relation $\overline{\mcM}_{g, r, N\text{-}\mr{ord}} \subseteq \overline{\mcM}_{g, r, 1\text{-}\mr{ord}}$ holds.
That is to say, if $\msX$ is
 $\N$-ordinary, then it is also $1$-ordinary.
\epr
\begin{proof}
The assertion follows from Proposition \ref{P3990}.
\end{proof}
\SSP

\begin{rema}[$\N$-ordinariness vs.\,$\N'$-ordinariness] \label{NN100}
 For   the same reason as above,
 the inclusion relation $\overline{\mcM}_{g, r, \N\text{-}\mr{ord}} \subseteq \overline{\mcM}_{g, r, \N'\text{-}\mr{ord}}$ (where $\N' \leq \N$) holds if the projection $\Pi_{\N \Rightarrow \N'} : \mcO p^\ZZZ_{2, \N, g, r, \mbF_p} \migi \mcO p^\ZZZ_{2, \N', g, r, \mbF_p}$  is surjective.
 In particular,  if we can show the connectedness of the stacks $\mcO p^\ZZZ_{2, \N, g, r, \mbF_p}$ 
 (which implies the surjectivity of the $\Pi_{\N \Rightarrow \N'}$'s), there exists   a sequence of open immersions
 \begin{align} \label{NN123}
 \cdots \subseteq \overline{\mcM}_{g, r, \N\text{-}\mr{ord}} \subseteq \cdots \subseteq \overline{\mcM}_{g, r, 2\text{-}\mr{ord}}  \subseteq \overline{\mcM}_{g, r, 1\text{-}\mr{ord}}  \subseteq \overline{\mcM}_{g, r}.
 \end{align}

 It will be natural to ask what the intersection $\bigcap_{\N \in \mbZ_{>0}} \overline{\mcM}_{g, r, \N\text{-}\mr{ord}}$ is.
 At the time of writing this manuscript, the author does not know the existence of  a {\it closed} point of $\mcM_{g, r} \left(\subseteq \overline{\mcM}_{g, r} \right)$ 
 classifying a pointed  curve that  is $\N$-ordinary for every $\N$. 
\end{rema}

\vspace{10mm}
\section{Canonical diagonal liftings} \label{dEr45}\SSP

In this section, we   construct the canonical diagonal lifting of a dormant  $\mr{PGL}_2^{(\N)}$-oper, as well as of  a Frobenius-projective structure, on a  general  smooth curve of genus $>1$ (cf. Theorem-Definitions \ref{T04} and \ref{T1dddd}).
A key observation for doing  this is that the ordinariness introduced in the previous section 
enables  us to connect
  various deformation  spaces involved.
As a consequence, we obtain an approach to solving  the counting problem of dormant $\mr{PGL}_2$-opers in characteristic $p^\N$ via reduction modulo $p$.

 Let us fix 
 a positive integer $\N$, and suppose that $p >2$.

\LSP
\subsection{Canonical diagonal liftings of dormant $(\mr{GL}_2^{(\N)}, \vartheta)$-opers}\label{SS107}

 Let $R$ be 
   a flat $\mbZ/p^\N\mbZ$-algebra  and  $X$  a geometrically connected, proper, and  smooth curve  over $S := \mr{Spec}(R)$ of genus $g >1$.
   For each positive integer $\N' \leq \N$, we shall denote by a subscript ``$\N'-1$" the result of reducing an object over $\mbZ/p^\N \mbZ$ modulo $p^{\N'}$.
For simplicity, we write $\Omega_{0}:= \Omega_{X_0/S_0}$ and  $\Omega^{(\N')}_0 := \Omega_{X_0^{(\N')}/S_0}$.

We shall fix  a dormant $2^{(1)}$-theta characteristic $\vartheta := (\varTheta, \DMO_\vartheta)$  of $X/S$ (cf. Definition \ref{D41} and Proposition \ref{P204}) and fix an integer $\N'$ with $1 \leq \N' < \N$.
The diagonal reduction of $\vartheta_{\N' -1}$ defines a 
$2^{(\N')}$-theta characteristic  $\vartheta_0^{(\N')}$ of $X_{0}/S_{0}$.

Also, let 
$\DMO^{\diamondsuit}_{\N' -1}$ be 
a dormant $(\mr{GL}_2^{(1)}, \vartheta_{\N' -1})$-oper on $X_{\N' -1}/S_{\N' -1}$, and 
write  $\msF := (\mcF_{\N' -1}, \DMO_{\N' -1}^{\diamondsuit})$, where $\mcF := \mcF_{\varTheta}$.
  The diagonal reduction of $\DMO^{\diamondsuit}_{\N'-1}$ defines a $(\mr{GL}_2^{(\N')}, \vartheta_0^{(\N')})$-oper  $\DMO_0^{\diamondsuit (\N')}$  on $X_0/S_0$.
  The $(\mr{GL}_2^{(1)}, \vartheta_0)$-oper on $X_0/S_0$ induced by $\DMO_0^{\diamondsuit (\N')}$  via level reduction will be denoted by $\DMO_0^{\diamondsuit (1)}$.

We shall write
\begin{align}
\mr{Lift} (\DMO_{\N'-1}^\diamondsuit)_\vartheta
\end{align}
for the set of isomorphism classes of $(\mr{GL}_2^{(1)}, \vartheta_{\N'})$-opers $\DMO^\diamondsuit_{\N'}$  on $X_{\N'}/S_{\N'}$ with $(\DMO^{\diamondsuit}_{\N'})_{\N' -1} = \DMO^{\diamondsuit}_{\N' -1}$.
Then,  $\mr{Lift} (\DMO_{\N'-1}^\diamondsuit)_\vartheta$ is nonempty 
 and  has a canonical structure of torsor modeled on the $R$-module $H^0 (X_0, \Omega_{0}^{\otimes 2})$ (cf. ~\cite[Chap.\,I, Corollary 2.9]{Mzk1}).

Let us consider
the composite
\begin{align} \label{fE14}
 H^0 (X_0, \Omega_{0}^{\otimes 2}) \migiincl  H^0 (\Omega_{0} \otimes \mcE nd^0 (\mcF_0)) \migi \mbH^1 (X, \mcK^\bullet [(\nabla^{\diamondsuit (1)}_0)_{\mcE nd^0}])
\end{align}
(cf. \eqref{fE12} for the definition of $(-)_{\mcE nd^0}$), 
where the first arrow denotes  the canonical  inclusion  (cf. ~\cite[\S\,4.9.5]{Wak8}) and 
the second arrow denotes the morphism induced by the natural  inclusion of complexes $(\Omega_{0} \otimes \mcE nd^{0} (\mcF_0))[-1] \migi \mcK^\bullet [(\nabla_0^{\diamondsuit (1)})_{\mcE nd^0}]$.
Then, 
the assignment $\DMO_{\N'}^\diamondsuit \mapsto (\mcF_{\N'}, \DMO_{\N'}^\diamondsuit)$ defines a map of sets
\begin{align} \label{fE15}
\mr{Lift} (\DMO_{\N'-1}^\diamondsuit)_\vartheta \migi \mr{Lift} (\msF)_{\msL}
\end{align}
where $\msL := (\mr{det}(\mcF), \DMO_\vartheta)$ (cf. \eqref{NN203} for the definition of $\mr{Lift} (\msF)_{\msL}$).

Recall from the discussion preceding Proposition \ref{WWW1} that $\mr{Lift} (\msF)_{\msL}$ has a structure of $\mbH^1 (X, \mcK^\bullet [(\nabla_0^{\diamondsuit (1)})_{\mcE nd^0}])$-torsor.
The map \eqref{fE15} is verified to commute  with the respective torsor structures via the morphism  \eqref{fE14} (cf. ~\cite[\S\,6.3.2]{Wak8}).
By combining this fact and Proposition \ref{P016},
we obtain the following assertion.

\SSP
\bpr \label{P037} 
Taking the diagonal reductions   $\DMO_{\N'}^{\diamondsuit}  \mapsto {^{\Diag}}\!\!\DMO_{\N'}^{\diamondsuit}$
yields  a map of sets
\begin{align} \label{fE24}
\mr{Lift}(\DMO_{\N' -1}^{\diamondsuit})_\vartheta \migi  \mr{Lift}(\DMO_0^{\diamondsuit (\N')})_\msL 
\end{align}
(cf. \eqref{NN288} for the definition of the codomain of this map),
and this map commutes with the respective torsor structures via the composite
\begin{align} \label{fE21}
D''_{\DMO_0^{\diamondsuit (\N' )}} : H^0 (X_0, \Omega_{0}^{\otimes 2}) &\xrightarrow{\eqref{fE14}}
\mbH^1 (X_0, \mcK^\bullet [(\nabla_0^{\diamondsuit (1)})_{\mcE nd^0}]) \\
&\xrightarrow{\eqref{fE8}}
H^0 (X_0^{(\N')}, \Omega^{(\N')}_0\otimes \mcE nd^0 (\mcS ol (\DMO_0^{\diamondsuit (\N')}))). \notag
\end{align}
In particular,   
the map \eqref{fE24} becomes bijective when $D''_{\DMO_0^{\diamondsuit (\N')}}$ is bijective.
 \epr
\SSP

Denote by 
\begin{align} \label{NN199}
\mr{Lift} (\DMO_{\N' -1}^{\diamondsuit})_\vartheta^{\psi}
\end{align}
the subset of 
$\mr{Lift} (\DMO_{\N' -1}^{\diamondsuit})_\vartheta$ consisting of {\it dormant} $(\mr{GL}_2^{(1)}, \vartheta_{\N'})$-opers.
Propositions \ref{Pr459} and \ref{P037} together imply the following proposition.

\SSP
\bpr \label{Pr5221}
\begin{itemize}
\item[(i)]
Taking the diagonal reductions   $\DMO_{\N'}^{\diamondsuit}  \mapsto {^{\Diag}}\!\!\DMO_{\N'}^{\diamondsuit}$
 yields a map of sets
\begin{align} \label{fE29}
\mr{Lift} (\DMO_{\N' -1}^{\diamondsuit})_\vartheta^{\psi} \migi \mr{Lift} (\DMO_0^{\diamondsuit (\N')})^{\psi},
\end{align} 
and the following commutative square diagram is  Cartesian:
\begin{align}  \label{fE400}
\vcenter{\xymatrix@C=56pt@R=36pt{
\mr{Lift} (\DMO_{\N' -1}^{\diamondsuit})_\vartheta^{\psi}  \ar[r]^-{\eqref{fE29}} \ar[d]_-{\mr{inclusion}} & \mr{Lift} (\DMO_0^{\diamondsuit (\N')})^{\psi}\ar[d]^{\mr{inclusion}} 
\\
\mr{Lift} (\DMO_{\N' -1}^{\diamondsuit})_\vartheta \ar[r]_-{\eqref{fE24}} & \mr{Lift} (\DMO_0^{\diamondsuit (\N')}).
}}
\end{align}
\item[(ii)]
Suppose that  $D''_{\DMO_0^{\diamondsuit (\N' )}}$ is bijective.
Then, the map 
\eqref{fE29}
 is bijective.
 In particular, for each dormant $(\mr{GL}_2^{(\N'+1)}, \vartheta^{(\N' +1)}_0)$-oper $\DMO_0^{\diamondsuit (\N' +1)}$ of $X_0/S_0$
 that induces   $\DMO_0^{\diamondsuit (\N')}$ by reducing its level,
  there exists a unique (up to isomorphism) dormant $(\mr{GL}_2^{(1)}, \vartheta_{\N'})$-oper $\DMO_{\N'}^{\diamondsuit}$ on $X_{\N'}/S_{\N'}$ with $(\DMO_{\N'}^{\diamondsuit})_{\N'-1} = \DMO_{\N'-1}^{\diamondsuit}$ and ${^{\Diag}}\!\!\DMO_{\N'}^{\diamondsuit} = \DMO_0^{\diamondsuit (\N' +1)}$.
 \end{itemize}
\epr

\SSP
\bco \label{Coeedk}
Let $\nabla^{\diamondsuit (\N)}_0$ be  a dormant $(\mr{GL}_2^{(\N)}, \vartheta_0^{(\N)})$-oper on $X_0/S_0$.
 Suppose that
the morphism $D''_{\DMO_0^{\diamondsuit (\N')}}$ is bijective for every $\N' \leq \N$, where $\DMO_0^{\diamondsuit (\N')}$ denotes the dormant $(\mr{GL}_2^{(\N')}, \vartheta^{(\N')}_0)$-oper obtained by reducing the level of $\nabla^{\diamondsuit (\N)}_0$ to $\N'$.
Then,  there exists  a unique (up to isomorphism)
dormant $(\mr{GL}_n^{(1)}, \vartheta)$-oper $\DMO^{\diamondsuit}$ on $X/S$ whose diagonal reduction coincides with  $\nabla^{\diamondsuit (\N)}_0$.
\eco
\begin{proof}
The assertion can be proved by successively applying assertion (ii) of Proposition \ref{Pr5221}  with respect to $\N'$ ($1 \leq \N' < \N$).
\end{proof}

\LSP
\subsection{Canonical diagonal liftings of  dormant $\mr{PGL}_2^{(\N)}$-opers}\label{SS1300}

Denote by
\begin{align} \label{e209}
\mr{O p}_{1, X}^{^\mr{Zzz...}} \ \left(\text{resp.,} \  \mr{O p}_{\N, X_0}^{^\mr{Zzz...}}\right)
\end{align}
the set of isomorphism classes of dormant $\mr{PGL}_2$-opers on $X/S$ (resp., dormant $\mr{PGL}_2^{(\N)}$-opers on $X_0/S_0$).
By regarding it as a (discrete) category, we obtain, from Proposition \ref{P44}, an equivalence of categories 
\begin{align}
\mr{Op}_{1, X}^{^\mr{Zzz...}} \cong \mcO p_{2, 1, g, 0, \mbZ/p^\N \mbZ}^\ZZZ \! \times_{\overline{\mcM}_{g, 0, \mbZ/p^\N\mbZ}, \sigma_X} S \ \left(\text{resp.,} \  \mr{Op}_{\N, X_0}^{^\mr{Zzz...}} \cong \mcO p_{2, \N, g, 0, \mbF_p}^\ZZZ \! \times_{\overline{\mcM}_{g, 0, \mbF_p}, \sigma_{X_0}} S \right),
\end{align}
where $\sigma_X$ (resp., $\sigma_{X_0}$) denotes the classifying morphism $S \migi \overline{\mcM}_{g, r, \mbZ/p^\N \mbZ}$ (resp., $S_0 \migi \overline{\mcM}_{g, r, \mbF_p}$) of $X$ (resp., $X_0$).

Taking the diagonal reductions of dormant $\mr{PGL}_2$-opers yields  a map of sets
\begin{align} \label{e210www}
\Diag_{\!\!\spadesuit} : \mr{Op}_{1, X}^{^\mr{Zzz...}}
 \migi \mr{Op}_{\N, X_0}^{^\mr{Zzz...}}
\end{align}
(cf. \eqref{YY260}).
Then, we have the following assertion.

\SSP
\begin{tdef} \label{T04}
Suppose that $X_0/S_0$ is $\N$-ordinary (cf. Definition \ref{DD456}).
Then, the map $\Diag_{\!\!\spadesuit} : \mr{Op}_{1, X}^{^\mr{Zzz...}}
 \migi \mr{Op}_{\N, X_0}^{^\mr{Zzz...}}$ is bijective.
 In particular, for each dormant  $\mr{PGL}_2^{(\N)}$-oper $\msE^\spadesuit_0$ on $X_0/S_0$,
 there exists a unique (up to isomorphism) dormant $\mr{PGL}_2$-oper $\msE^\spadesuit$ on $X/S$ with 
 ${^{\Diag}}\!\!\msE^\spadesuit = \msE^\spadesuit_0$.
We shall refer to $\msE^\spadesuit$ as the {\bf canonical diagonal lifting} of $\msE^\spadesuit_0$.
 \end{tdef}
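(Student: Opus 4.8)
The strategy is to reduce the statement about $\mr{PGL}_2$-opers to the already-established statement about $(\mr{GL}_2^{(1)},\vartheta)$-opers, namely Corollary \ref{Coeedk}, by fixing a dormant $2^{(1)}$-theta characteristic $\vartheta$ on $X/S$ and passing back and forth through the isomorphism $\Lambda_{\diamondsuit \Rightarrow \spadesuit, \vartheta}^\ZZZ$ of Theorem \ref{P14}. First I would fix, via Proposition \ref{P204}, a dormant $2^{(1)}$-theta characteristic $\vartheta := (\varTheta, \DMO_\vartheta)$ of $X/S$; reducing modulo $p$ and taking diagonal reductions successively produces dormant $2^{(\N')}$-theta characteristics $\vartheta_0^{(\N')}$ of $X_0/S_0$ for each $\N' \leq \N$, and by Theorem \ref{P14} the functors $\Lambda^{\ZZZ}_{\diamondsuit\Rightarrow\spadesuit,(-)}$ give bijections between isomorphism classes of dormant $\mr{PGL}_2^{(\N')}$-opers and dormant $(\mr{GL}_2^{(\N')},\vartheta_0^{(\N')})$-opers (and likewise over $X/S$ at level $1$). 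One must check that these bijections are compatible with diagonal reduction; this is exactly what is recorded in the construction of the functor ${^{\Diag}}\!\!(-)$ in \eqref{YY260} together with the compatibility of $\Lambda^{\ZZZ}$ with the operations on $\mcD^{(\M)}$-modules, so the square relating $\Diag_{\!\!\spadesuit}$ and the $(\mr{GL}_2,\vartheta)$-level map of Proposition \ref{P037} commutes.

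The key step is to translate the hypothesis ``$X_0/S_0$ is $\N$-ordinary'' into the bijectivity of the maps appearing in Proposition \ref{Pr5221}(ii) and Corollary \ref{Coeedk}. Given a dormant $\mr{PGL}_2^{(\N)}$-oper $\msE_0^\spadesuit$ on $X_0/S_0$, let $\nabla_0^{\diamondsuit(\N)}$ be the corresponding $(\mr{GL}_2^{(\N)},\vartheta_0^{(\N)})$-oper and $\nabla_0^{\diamondsuit(\N')}$ its level-$\N'$ reductions. By Definition \ref{DD456}, $\N$-ordinariness of $X_0/S_0$ means that every dormant $\mr{PGL}_2^{(\N)}$-oper on $X_0$ and all its level reductions are ordinary, i.e.\ the morphisms $D_{\msE^{\spadesuit(\N')}_0}$ are isomorphisms; by Proposition \ref{C91} (the isomorphism $\Xi$ intertwining $D_{\msE^\spadesuit}$ and $D'_{\msE^\spadesuit}$) and then Proposition \ref{P29} (relating $D'_{\msE^\spadesuit}$ to $D''_{\msE^\spadesuit}$ via Grothendieck--Serre duality), ordinariness of $\msE^{\spadesuit(\N')}_0$ is equivalent to bijectivity of $D''_{\nabla_0^{\diamondsuit(\N')}}$ for each $\N' \leq \N$. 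One small point to be careful about: Propositions \ref{C91}, \ref{P29} are stated for an unpointed smooth curve, which is exactly our situation ($r=0$, $g>1$), and they presuppose the $\mr{PGL}_2$-oper is normal --- by \cite[Proposition 4.55]{Wak8} we may always replace $\msE^\spadesuit$ by a normal representative in its isomorphism class, so this is harmless.

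With the bijectivity of all $D''_{\nabla_0^{\diamondsuit(\N')}}$ in hand, Corollary \ref{Coeedk} applies and yields a unique (up to isomorphism) dormant $(\mr{GL}_2^{(1)},\vartheta)$-oper $\nabla^\diamondsuit$ on $X/S$ whose diagonal reduction is $\nabla_0^{\diamondsuit(\N)}$. Transporting back through $\Lambda^{\ZZZ}_{\diamondsuit\Rightarrow\spadesuit,\vartheta}$ gives a dormant $\mr{PGL}_2$-oper $\msE^\spadesuit$ on $X/S$ with ${^{\Diag}}\!\!\msE^\spadesuit \cong \msE_0^\spadesuit$; this proves surjectivity of $\Diag_{\!\!\spadesuit}$, and uniqueness in Corollary \ref{Coeedk} together with the faithfulness of $\Lambda^{\ZZZ}$ and the fact that dormant $\mr{PGL}_2^{(\N)}$-opers have no nontrivial automorphisms (Proposition \ref{P44}) gives injectivity. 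A final cosmetic step is to note that the construction is independent of the auxiliary choice of $\vartheta$, which follows as in Remark \ref{RR223}: two choices of $\vartheta$ differ by a dormant invertible $\mcD^{(0)}$-module, and twisting commutes with diagonal reduction and with $\Lambda^{\ZZZ}$. \textbf{The main obstacle} I anticipate is not any single hard estimate but the bookkeeping of the equivalence between the $\mr{PGL}_2$-picture and the $(\mr{GL}_2,\vartheta)$-picture being compatible, simultaneously, with (a) diagonal reduction at all intermediate levels, (b) level reduction, and (c) the precise identification of ``ordinary'' (a condition on $D_{\msE^\spadesuit}$, hence on the adjoint bundle $\mfg_\mcE$) with ``$D''$ bijective'' (a condition phrased on $\mcE nd^0$ of the $\mr{GL}_2$-oper) --- all of which have been set up in the preceding sections but must be threaded together without slippage in the indexing of levels.
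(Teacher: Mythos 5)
Your proposal is correct and follows essentially the same route as the paper's proof: fix a dormant $2^{(1)}$-theta characteristic via Proposition \ref{P204}, transport the problem to $(\mr{GL}_2,\vartheta)$-opers via $\Lambda^{\ZZZ}_{\diamondsuit\Rightarrow\spadesuit,\vartheta}$, identify $\mcE nd^0(\mcF_{\varTheta_0})$ with $\mfg_{\mcE_0}$ so that $\N$-ordinariness yields bijectivity of the maps $D''_{\nabla_0^{\diamondsuit(\N')}}$ for all $\N'\leq \N$ (via Propositions \ref{C91} and \ref{P29}), and then apply Corollary \ref{Coeedk}. The points you flag as delicate (normality, level bookkeeping, independence of $\vartheta$) are handled the same way in the paper.
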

\begin{proof}
According to Proposition \ref{P204}, we can find  a dormant $2^{(1)}$-theta characteristic $\vartheta := (\varTheta, \DMO_\vartheta)$ of $X/S$.
Denote by $\vartheta_0^{(\N)}$ the $2^{(\N)}$-theta characteristic of $X_0/S_0$ obtained as the diagonal reduction of $\vartheta$.
Now, let $\msE_0^\spadesuit := (\mcE_{B, 0}, \STR_0^{(\N)})$ be a dormant $\mr{PGL}_2^{(\N)}$-oper on $X_0/S_0$.
For each positive integer $\N' \leq \N$,
we denote by $\STR^{(\N')}_0$ the $(\N'-1)$-PD stratification on $\mcE$ induced by $\STR^{(\N)}_0$.
The resulting dormant $\mr{PGL}_2^{(\N')}$-oper $\msE_0^{\spadesuit (\N')} := (\mcE_{B, 0}, \STR_0^{(\N')})$
 corresponds, via
 the bijection $\Lambda_{\diamondsuit \Rightarrow \spadesuit, \vartheta}^\ZZZ$ (cf.  Theorem \ref{P14}), to 
   a dormant $(\mr{GL}_2^{(\N')}, \vartheta_0^{(\N')})$-oper $\DMO^{\diamondsuit (\N')}_0$ on $X_0/S_0$.
Note that there exists a canonical identification $\mcE nd^0 (\mcF_{\varTheta_0}) = \mfg_{\mcE_0}$ (where $\mcE_{0} := \mcE_{B, 0} \times^B \mr{PGL}_2$), which is compatible with the respective $\mcD_{X_0/S_0}^{(\N' -1)}$-module structures, i.e., $(\nabla_{0}^{\diamondsuit (\N')})_{\mcE nd^0}$ and $\DMO_{\STR_0^{(\N')}}^{\mr{ad}}$.
In particular, we have 
\begin{align}
\mcE nd^0 (\mcS ol (\DMO_0^{\diamondsuit (\N')})) \left(= \mcS ol ((\nabla_{0}^{\diamondsuit (\N')})_{\mcE nd^0})\right) = \mfg_{\mcE_0, \STR_0^{(\N')}}.
\end{align}
Under this identification, the morphism $D''_{\DMO_0^{\diamondsuit (\N')}}$   coincides with  $D''_{\msE_0^{\spadesuit (\N')}}$ (cf. \eqref{NN25}).
Hence, since $X_0/S_0$ is $\N$-ordinary, the morphisms $D''_{\DMO_0^{\diamondsuit (\N')}}$  are bijective for all $\N'$'s.
By Corollary \ref{Coeedk},
there exists a unique  (up to isomorphism) dormant $(\mr{GL}_2^{(1)}, \vartheta)$-oper $\DMO_\N^\diamondsuit$ on $X/S$ whose diagonal reduction coincides with  $\DMO^{\diamondsuit (\N)}_0$.
The dormant $\mr{PGL}_2$-oper $\msE^\spadesuit$ corresponding to  $\DMO_\N^\diamondsuit$ specifies a diagonal lifting of  $\msE^\spadesuit_0$.
The resulting assignment  $\msE_0^\spadesuit \mapsto \msE^\spadesuit$ specifies a well-defined map of sets $\mr{Op}_{\N, X_0}^\ZZZ \migi \mr{Op}_{1, X}^\ZZZ$, and it gives   an inverse map of  $\Diag_{\!\!\spadesuit}$.
This completes the proof of the assertion.
\end{proof}
\SSP

\begin{rema}[Affineness assumption] \label{WW90}
By the uniqueness  assertion  in the above theorem, the formation of  the canonical diagonal liftings commutes with base-change over $S$-schemes.
Hence,  Theorem-Definition \ref{T04} is verified to be  true even when we remove the   affineness assumption on $S$.
\end{rema}
\SSP

\begin{rema}[Representability of an ordinary locus of $\mcO p^\ZZZ_{2, \N, g, 0, \mbZ/p^\N \mbZ}$] \label{WW91}
Denote by 
$\mcM_{\mbF_p}$ (resp., $\mcM_{\mbZ/p^\N \mbZ}$) the open substack of $\mcM_{g, 0, \mbF_p}$ (resp., $\mcM_{g, 0, \mbZ/p^\N \mbZ}$) consisting of points lying on  $\overline{\mcM}_{g, 0, \N\text{-}\mr{ord}}$.
Since the projection $\mcO p_{2, \N, g,0, \mbF_p}^\ZZZ \times_{\overline{\mcM}_{g, 0, \mbF_p}} \mcM_{\mbF_p}\migi \mcM_{\mbF_p}$ is \'{e}tale,
there exists a unique (up to isomorphism) \'{e}tale stack $\mcU$ over $\mcM_{\mbZ/p^\N \mbZ}$ whose reduction modulo $p$ is isomorphic to 
$\mcO p_{2, \N, g,0, \mbF_p,  \mr{ord}}^\ZZZ \times_{\overline{\mcM}_{g, 0, \mbF_p}} \mcM_{\mbF_p}$.

Now,   let  us take   an \'{e}tale scheme $S$ over $\mcM_{g, 0, \mbZ/p^\N \mbZ}$ and a section $u$ of $\mcU$ over $S$.
  The structure morphism $S \migi \mcM_{g, 0, \mbZ/p^\N \mbZ}$ of $S$ classifies a  smooth curve $X/S$ whose reduction $X_0/S_0$ modulo $p$ is $\N$-ordinary, and the reduction modulo $p$ of $u$ defines a dormant $\mr{PGL}_2^{(\N)}$-oper $\msE^\spadesuit_0$ on $X_0/S_0$.
  According to Theorem-Definition \ref{T04},
  we obtain  the canonical diagonal lifting  $\msE^\spadesuit$ of $\msE^\spadesuit_0$ defined on $X/S$.
  In particular, the pair $(X/S, \msE^\spadesuit)$ specifies an object of the category $\mcO p_{2, \N, g,0, \mbZ/p^\N \mbZ}^\ZZZ \times_{\overline{\mcM}_{g, 0, \mbZ/p^\N \mbZ}} \mcM_{\mbZ/p^\N \mbZ}$ over $S$.
 
  One may verify that  the functor 
  \begin{align}
  \mcU \migi \mcO p_{2, \N, g,0, \mbZ/p^\N \mbZ}^\ZZZ \times_{\overline{\mcM}_{g, 0, \mbZ/p^\N \mbZ}} \mcM_{\mbZ/p^\N \mbZ}
  \end{align}
   given by assigning $u \mapsto (X/S, \msE^\spadesuit)$ becomes an isomorphism between the \'{e}tale sheaves on $\mcM_{\mbZ/p^\N \mbZ}$ represented by the respective fibered categories.
 In particular, by using this isomorphism, we can construct   a universal family of
ordinary dormant $\mr{PGL}_2^{(\N)}$-opers  parametrized by  $\mcU$.
\end{rema}
\SSP

\begin{rema}[Comparison with $p$-adic Teichm\"{u}ller theory] \label{WW92}
One of the main achievements in $p$-adic Teichm\"{u}ller theory asserts the existence of  
a canonical $p$-adic  lifting of a curve equipped with 
a certain additional structure involving   a $\mr{PGL}_2$-oper (=
a torally  indigenous bundle).

Each object classified by the {\it VF-stack}, or the {\it shifted  VF-stack}, {\it of pure tone} (cf. ~\cite[Chap.\,IV, Definition 2.6]{Mzk2}) is  related to the notion of a dormant $\mr{PGL}_2^{(\N)}$-oper because both are characterized by  vanishing  of  the $p$-curvature of the associated  flat bundles.
However, despite the similarity in concept, 
there seems to be no (at least direct) correspondence with 
 the canonical diagonal liftings asserted in the above theorem.

Also, other lifting constructions of $\mr{PGL}_2$-opers
  can be found in ~\cite{Mzk1} (which is also in the context of $p$-adic Teichm\"{u}ller theory, but in a more classical setting) and ~\cite{LSYZ}.
These constructions are entirely different from  ours because the $\mr{PGL}_2$-opers treated there  
never have vanishing $p$-curvature.
\end{rema}

\SSP
\bco \label{C998}
Let $\N$ be a positive integer and  $k$ an algebraically closed field over $\mbF_p$.
Denote by   
 $W_\N$ the ring of Witt vectors of length  $\N$ over $k$.
Also, let $X$ be a geometrically connected, proper, and  smooth curve of genus $g>1$ over $W_\N$.
Denote by $X_0$ the reduction modulo $p$ of $X$. 
Then, there are only finitely many isomorphism classes of dormant $\mr{PGL}_2$-opers on $X$.
If, moreover, $X_0$ is general in $\mcM_{g, 0,}$, then
the cardinality $\sharp (\mr{Op}^\ZZZ_{1, X})$ of $\mr{Op}^\ZZZ_{1, X}$ satisfies  the following equalities:
\begin{align} \label{NN347}
\sharp (\mr{Op}^\ZZZ_{1, X}) = \sharp (\mr{Op}^\ZZZ_{\N, X_0}) = \mr{deg}(\Pi_{2, \N, g, 0, \mbF_p}).
\end{align} 
\eco
\begin{proof}
The assertion follows from Theorem \ref{P91} and Theorem-Definition \ref{T04}.
\end{proof}
\SSP

\begin{rema}[$p$-adic liftings of  dormant $\mr{PGL}_2^{(\infty)}$-opers] \label{NN840}
Let $K$ be an algebraically closed field over $\mbF_p$, and  let $X_0$ be a connected  proper smooth curve of genus $g >1$ over $K$ classified  by a $K$-rational {\it generic} point of $\mcM_{g, 0}$.
Denote by $W$ the ring of Witt vectors over $K$.
 Also, choose
 a  deformation  $X$ of  $X_0$ over $W$.

According to Corollary \ref{NN812},
we can find
a collection of data
\begin{align}
\{ s_\N \}_{\N \in \mbZ_{>0}},
\end{align}
where 
\begin{itemize}
\item
each $s_\N$ ($\N \in \mbZ_{>0}$) denotes a $K$-valued generic point of an irreducible component of $\mcO p^\ZZZ_{2, \N, g, r, \mbF_p}$;
\item
the points $\{ s_\N\}_{\N \in \mbZ_{>0}}$ are compatible with respect to the projective system \eqref{NN8123}, i.e., $s_\N$ is mapped to $s_{\N'}$ via the natural projection $\Pi_{\N \Rightarrow \N'} : \mcO p^\ZZZ_{2, \N, g, r, \mbF_p} \migi \mcO p^\ZZZ_{2, \N', g, r, \mbF_p}$ for every $\N' \leq \N$.
\end{itemize}
The point $s_\N$ determines a dormant $\mr{PGL}_2^{(\N)}$-oper $\msE_\N^\spadesuit$ on $X_0/K$.
 The resulting  {\it compatible} collection $\{ \msE^\spadesuit_\N \}_{\N \in \mbZ_{>0}}$ should be  called  a dormant $\mr{PGL}_2$-oper {\it of level $\infty$}, or  {\it dormant $\mr{PGL}_2^{(\infty)}$-oper}.

Since $X_0$ is $\N$-ordinary for every $\N$, we can apply
 Theorem-Definition \ref{T04}
  to the $\msE_\N^\spadesuit$'s, and  
 obtain  a dormant $\mr{PGL}_2$-oper $\msE_\infty^\spadesuit$ defined on a curve $X/W$  via algebraization.
This construction would partially realize  the expected correspondence (at least in the generic situation)  between dormant opers of level $\infty$
on a curve  in characteristic $p$ and dormant opers of level $1$ on its $p$-adic lifting.
\end{rema}

\LSP
\subsection{Canonical diagonal liftings of  $F^\N$-projective structures}\label{SS101}

The notion of an
 $F^\N$-projective structures (i.e., a Frobenius-projective structures of level $\N$) on a curve in characteristic $p$ was   originally introduced  by
   Y. Hoshi  (cf.   ~\cite[Definition 2.1]{Hos2}) as an analogue of a complex projective structure on a Riemann surface,  and later investigated by the author (cf. ~\cite{Wak6}, ~\cite{Wak7}).
 Roughly speaking, 
 an $F^\N$-projective structure is a  maximal collection of \'{e}tale coordinate charts on
a curve
  valued in the projective line whose transition functions descend to its $\N$-th Frobenius twist.

The statement of  ~\cite[Theorem A]{Hos2} may be interpreted as the existence of  a natural bijection  between dormant $\mr{PGL}_2^{(\N)}$-opers and $F^\N$-projective structures.
(In  ~\cite[Definition 2.2.2]{Wak7}, the author introduced  Frobenius-Ehresmann structures, which are   multi-dimensional generalizations of  Frobenius-projective structures. 
The same kind of  bijection  as mentioned above holds for those structures, see
  ~\cite[Theorem A]{Wak6} and ~\cite[Theorem A]{Wak7}.)

In this final subsection, we extend the definition of  a projective structure  to 
 the case of  characteristic $p^\N$ (cf. Definition \ref{D0188} below).
 After generalizing Hoshi's bijection,
 we formulate the canonical diagonal liftings  of $F^\N$-projective structures as a direct consequence  of  Theorem-Definition \ref{T04}.

Let  $\CH$ be  a nonnegative integer, 
$S$ a flat $\mbZ/p^{\CH +1} \mbZ$-scheme,
and $X$ a geometrically connected, proper, and  smooth  
curve  of genus $g >1$ over $S$.
 Suppose that $S$ is equipped with an $(\N -1)$-PD structure extending $X$.
Denote by  $(\mr{PGL}_{2})_X$ the Zariski sheaf on $X$ represented by $\mr{PGL}_2$; this may be identified with the  sheaf of automorphisms of  the trivial $\mr{PGL}_2$-bundle $X \times \mr{PGL}_2$.
Also, denote by
\begin{align} \label{NN603}
(\mr{PGL}_2^{(\N)})_X
\end{align}
the subsheaf of $(\mr{PGL}_{2})_X$ consisting of (locally defined) automorphisms of
$X \times \mr{PGL}_2$ preserving the $(\N -1)$-PD stratification $\STR_\mr{triv}$  (cf. \eqref{NN600} for the definition of $\STR_\mr{triv}$).

Next, let us write
\begin{align} \label{Ep120}
\mcP^{\text{\'{e}t}}
\end{align}
for the Zariski  sheaf of sets on $X$ that assigns, to each open subscheme  $U$ of $X$, the set of \'{e}tale $S$-morphisms
  from $U$ to the projective line  $\mbP$ over $S$.

\SSP
 \ble
Let $U$ be an open subscheme of $X$, $\varphi : U \migi \mbP$ an element of  $\mcP^{\text{\'{e}t}}(U)$.
Also,  let $h$ be an element  of $(\mr{PGL}_2^{(\N)})_X (U)$, regarded as an $S$-morphism $U \migi \mr{PGL}_2$.
Then,  
  the composite 
\begin{align} \label{Eg69}
{^h}\varphi : U 
\xrightarrow{(h, \varphi)} \mr{PGL}_2 \times \mbP   \xrightarrow{\Psi}\mbP
\end{align}
belongs to $\mcP^{\text{\'{e}t}}(U)$, where the second arrow $\Psi$ denotes the usual  $\mr{PGL}_2$-action on $\mbP$.
\ele
\begin{proof}
After possibly replacing $U$ with its covering with respect to the \'{e}tale topology,
we may assume that 
there exists  an automorphism $\widetilde{h}$ of $(\mcO_X^{\oplus 2}, (\DMO_{X, \mr{triv}}^{(\N -1)})^{\oplus 2})$ inducing   $h$ via projectivization; we will use the same notation $\widetilde{h}$ to denote the associated morphism $U \migi \mr{GL}_2$.
Let us choose  a point $q \in U (k)$, where $k$ denotes a field.
The differential  of $(h, \varphi): U \migi \mr{PGL}_2 \times \mbP$ at $q$ yields  a morphism  of $k$-vector spaces
\begin{align} \label{e555}
(dh, d\varphi) : \mcT_{U/S} |_q  \migi \mcT_{\mr{PGL}_2} |_{h (q)} \oplus \mcT_{\mbP/S} |_{\varphi (q)} \left(= \mcT_{(\mr{PGL}_2 \times \mbP)/S} |_{(h (q), \varphi (q))} \right).
\end{align}
Note that the differential  of $\widetilde{h}$ coincides with  the zero map because $\widetilde{h}$ lies in $\mr{Aut}(\mcO_X^{\oplus 2}, (\DMO_{X, \mr{triv}}^{(\N-1)})^{\oplus 2})$.
Hence, since $h$ factors through $\widetilde{h}$, the morphism   $d h : \mcT_{U/S} |_q \migi \mcT_{\mr{PGL}_2} |_{h (q)}$  coincides with  the zero map.
On the other hand, by the \'{e}taleness assumption on $\varphi$,
the morphism  $d \varphi : \mcT_{U/S} |_q \migi \mcT_{\mbP/S} |_{\varphi (q)}$ is an isomorphism.
Also, the differential $d \Psi : \mcT_{\mr{PGL}_2} |_{h (q)} \oplus \mcT_{\mbP/S} |_{\varphi (q)}\migi \mcT_{\mbP/S} |_{h(\varphi)(q)}$ of the $\mr{PGL}_2$-action  $\Psi$
restricts  to an isomorphism $\mcT_{\mbP/S}|_{\varphi (q)} \left(=0 \oplus \mcT_{\mbP/S}|_{\varphi (q)}\right) \isom \mcT_{\mbP/S} |_{{^h}\varphi (q)}$.
It follows that the differential $d ({^h}\varphi) \left(=  d\Psi \circ (d h, d \varphi)\right)$ of ${^h}\varphi$ at $q$ is  an isomorphism.
Since both $U$ and $\mbP$ are smooth over $S$,  the morphism  ${^h}\varphi$ turns out to be  \'{e}tale.
This completes the proof of this lemma.
\end{proof}
\SSP

By the above lemma, the assignment $(h, \varphi) \mapsto {^h}\varphi$   defines a  $(\mr{PGL}_2^{(\N)})_X$-action
\begin{align}
\Psi^{(\N)}: (\mr{PGL}_2^{(\N)})_X \times \mcP^{\text{\'{e}t}} \migi \mcP^{\text{\'{e}t}}
\end{align}
 on the sheaf $\mcP^{\text{\'{e}t}}$.

\SSP\bde
 \label{D0188} 
 Let $\mcS^\ST$  be a subsheaf of $\mcP^{\text{\'{e}t}}$.
 We shall say that $\mcS^\ST$ is a {\bf Frobenius-projective} {\bf  structure of level $\N$}  (or simply,  an {\bf $F^\N$-projective  structure})  on $X/S$  if it  is closed under the  $(\mr{PGL}_2^{(\N)})_X$-action $\Psi^{(\N)}$ 
   and forms a $(\mr{PGL}_2^{(\N)})_X$-torsor  on $X$ with respect to the resulting $(\mr{PGL}_2^{(\N)})_X$-action  on $\mcS^\ST$.
     \ede

\SSP
\begin{rema}[Comparison with the classical definition] \label{NN607}
Suppose that $\CH =0$.
Then, since 
 $(\mr{PGL}_2^{(\N)})_X$ coincides with the subsheaf $(F_{X/S}^{(\N)})^{-1}((\mr{PGL}_{2})_{X^{(\N)}})$ of $(\mr{PGL}_2)_X$,
the definition of an $F^{\N}$-projective structure described 
above
  is the same as the classical  definition discussed   in  ~\cite[Definition 2.1]{Hos2} and  ~\cite[Definition 1.2.1]{Wak6}.
\end{rema}
\SSP

 Denote by
\begin{align} \label{Eg61}
F^\N\text{-}\mr{Pr}_{X}
\end{align}
the set of $F^\N$-projective  structures on $X/S$.
The following assertion generalizes  ~\cite[Theorem A]{Hos2} and ~\cite[Theorem B]{Wak7} (for $(G, H) = (\mr{PGL}_2, B)$) to characteristic $p^\N$.

\SSP
\bt \label{T14}
Suppose that either $\CH =0$ or $\N =1$ is satisfied (hence the set $\mr{Op}_{\N, X}^\ZZZ$ can be defined as in \eqref{e209}).
Then, there exists a canonical bijection of sets
\begin{align} \label{e778}
 \mr{Op}_{\N, X}^\ZZZ \isom  F^\N\text{-}\mr{Pr}_{X}.
\end{align}
In particular, there are only 
 finitely many $F^\N$-projective structures on $X/S$.
\et
\begin{proof}
Since the assertion for  $\CH =0$ was already proved in ~\cite[Theorem A]{Hos2},
it suffices to consider the case of $\CH \neq 0$ (and $\N =1$).
First, we shall construct a map of sets  $\mr{Op}_{1, X}^\ZZZ \migi F^1\text{-}\mr{Pr}_{X}$.
Let $\msE^\spadesuit := (\mcE_B, \DMO)$ be a dormant $\mr{PGL}_2$-oper on $X/S$, and write  $\mcE := \mcE_B \times^B \mr{PGL}_2$.
Denote by $\mbP (\mcE)$ the projective line bundle associated to $\mcE$.
The $B$-reduction $\mcE_B$ of $\mcE$ determines a global section $\sigma : X \migi \mbP (\mcE)$.
Next, let us take an open subscheme $U$ of $X$ on which  $(\mcE, \DMO)$ is 
trivialized
 (cf. 
  Corollary \ref{UU39}, (ii)).
Choose  a trivialization 
\begin{align} \label{NN677}
\alpha : (\mcE |_U, \DMO  |_U)\isom (U \times \mr{PGL}_2, \DMO_\mr{triv}),
\end{align} 
where $\DMO_\mr{triv}$ denotes the trivial $S$-connection on  $U \times \mr{PGL}_2$ (cf. ~\cite[Eq.\,(78)]{Wak8}); it  induces an isomorphism 
$\alpha_\mbP : \mbP (\mcE) |_U \isom U \times_S \mbP$.
Thus, we obtain the composite
\begin{align} \label{NN611}
\varphi_\alpha : U \xrightarrow{\sigma|_U} \mbP (\mcE) |_{U} \xrightarrow{\alpha_\mbP} U \times_S \mbP \xrightarrow{\mr{pr}_2} \mbP.
\end{align}
Similarly to  the proof of ~\cite[Corollary 1.6.2]{Wak6}, we see that $\varphi_\alpha$ is  \'{e}tale.
Let  $\msE^{\spadesuit \Rightarrow \ST} (U)$ denote the set of \'{e}tale morphisms $\varphi_\alpha$  constructed in this way for all possible  trivializations $\alpha$ as in \eqref{NN677}.
The set  $\msE^{\spadesuit \Rightarrow \ST} (U)$ has a structure of $(\mr{PGL}_2^{(1)})_X (U)$-torsor with respect to the $(\mr{PGL}_2^{(1)})_X (U)$-action defined by $\varphi_\alpha \mapsto \varphi_{h \circ \alpha}$ for each $h \in (\mr{PGL}_2^{(1)})_X (U)$.
Hence, the sheaf associated to the assignment $U \mapsto \msE^{\spadesuit \Rightarrow \ST} (U)$ specifies a subsheaf 
\begin{align} \label{NN772}
\msE^{\spadesuit \Rightarrow \ST}
\end{align}
of $\mcP^{\text{\'{e}t}}$.
One may verify that  $\msE^{\spadesuit \Rightarrow \ST}$ forms an $F^1$-projective structure on $X/S$, and 
the resulting assignment $\msE^\spadesuit \mapsto \msE^{\spadesuit \Rightarrow \ST}$ defines a map of sets  $\mr{Op}_{1, X}^\ZZZ \migi F^1\text{-}\mr{Pr}_{X}$.
Moreover, we can reverse the steps in the above construction, so
 the map  $\mr{Op}_{1, X}^\ZZZ \migi F^1\text{-}\mr{Pr}_{X}$  is verified to be bijective.

Finally, the second  assertion follows from the first  assertion together with Proposition \ref{T50dddd}, (ii).
This completes the proof of the theorem.
\end{proof}
\SSP

Hereinafter, we suppose that $\CH +1 = \N$.
Denote by $X_0/S_0$ the  curve obtained as the reduction modulo $p$ of $X/S$.
Let us define a map 
\begin{align} \label{NN763}
\Diag_{\!\!\ST} : F^1 \text{-} \mr{Pr}_{X} \migi F^\N\text{-}\mr{Pr}_{X_0}
\end{align}
to be the unique  map that  makes the following  square diagram commute:
\begin{align}  \label{fE66}
\vcenter{\xymatrix@C=56pt@R=36pt{
F^1 \text{-} \mr{Pr}_{X/S} \ar[r]^-{\Diag_{\!\!\ST}} \ar[d]^-{\wr}_-{\eqref{e778}}& 
F^\N\text{-}\mr{Pr}_{X_0}
\ar[d]_-{\wr}^{\eqref{e778}}\\
\mr{Op}^\ZZZ_{1, X}\ar[r]_-{\Diag_{\!\!\spadesuit}} & \mr{Op}^\ZZZ_{\N, X_0}.
}}
\end{align}

\SSP
\begin{tdef} \label{T1dddd}
Suppose that $X_0/S_0$ is $\N$-ordinary.
Then, 
the map $\Diag_{\!\!\ST}$ is bijective.
That is to say, 
for any $F^\N$-projective structure $\mcS^\ST_0$ on $X_0/S_0$,
there exists a unique $F^1$-projective structure $\mcS^\ST$ on $X/S$ which is mapped to $\mcS^\ST_0$ via $\Diag_{\!\!\ST}$.
We shall refer to $\mcS^\ST$ as the {\bf canonical diagonal lifting} of $\mcS^\ST_0$.
\end{tdef}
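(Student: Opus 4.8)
The plan is to deduce this statement as an immediate formal consequence of Theorem-Definition \ref{T04} together with the commutativity of the square diagram \eqref{fE66}. Recall that Theorem \ref{T14} provides, for each of the two base situations ($\CH = 0$ and $\N = 1$), a canonical bijection $\mr{Op}^\ZZZ_{\N, X} \isom F^\N\text{-}\mr{Pr}_{X}$; applying it at level $1$ over $X/S$ and at level $\N$ over $X_0/S_0$ gives the two vertical arrows of \eqref{fE66}, which are bijections by construction. The map $\Diag_{\!\!\ST}$ is \emph{defined} to be the unique map making \eqref{fE66} commute, so in particular it exists and is well-defined; what remains is exactly its bijectivity.

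First I would invoke Theorem-Definition \ref{T04}: since $X_0/S_0$ is assumed $\N$-ordinary, the bottom horizontal arrow $\Diag_{\!\!\spadesuit} : \mr{Op}^\ZZZ_{1, X} \migi \mr{Op}^\ZZZ_{\N, X_0}$ of \eqref{fE66} is a bijection, with inverse given by forming canonical diagonal liftings of dormant $\mr{PGL}_2$-opers. Then, since the two vertical arrows of \eqref{fE66} are bijections and the square commutes, the top horizontal arrow $\Diag_{\!\!\ST}$ is forced to be a bijection as well: concretely, $\Diag_{\!\!\ST} = (\text{right vertical})\circ \Diag_{\!\!\spadesuit} \circ (\text{left vertical})^{-1}$ is a composite of three bijections. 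This gives the first assertion. For the uniqueness/existence rephrasing, given an $F^\N$-projective structure $\mcS^\ST_0$ on $X_0/S_0$, transport it through the right vertical bijection of \eqref{fE66} to the dormant $\mr{PGL}_2^{(\N)}$-oper ${^{\Diag}}(-)$-image it corresponds to, take the canonical diagonal lifting supplied by Theorem-Definition \ref{T04} to obtain a dormant $\mr{PGL}_2$-oper on $X/S$, and transport that back through the left vertical bijection to obtain the desired $\mcS^\ST$; uniqueness follows from the injectivity of each of the three maps composed. Finally, I would note (as in Remark \ref{WW90}) that because the canonical diagonal lifting of Theorem-Definition \ref{T04} commutes with base change over $S$-schemes and Theorem \ref{T14} is similarly functorial, the affineness hypothesis on $S$ implicit in the earlier discussion can be dropped, so the statement holds in the stated generality.

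There is essentially no hard step here: the entire content has been front-loaded into Theorem-Definition \ref{T04} (whose proof uses the ordinariness hypothesis through the bijectivity of the morphisms $D''_{\DMO_0^{\diamondsuit(\N')}}$, i.e.\ of $D''_{\msE_0^{\spadesuit(\N')}}$, via Corollary \ref{Coeedk}) and into the level-$\N$ version of Hoshi's bijection, Theorem \ref{T14}. The only minor point requiring care is checking that $\Diag_{\!\!\ST}$ as defined by \eqref{fE66} is genuinely compatible with the constructions — i.e.\ that the bijection $\mr{Op}^\ZZZ_{\N, X} \isom F^\N\text{-}\mr{Pr}_X$ of Theorem \ref{T14} intertwines the operation "take diagonal reduction of the underlying $(\mr{PGL}_2$-)oper" with some natural operation on projective structures — but this is forced: \eqref{fE66} is imposed as the \emph{definition} of $\Diag_{\!\!\ST}$, so no separate verification is needed and the diagram-chase above is complete. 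If one instead wanted an intrinsic description of $\Diag_{\!\!\ST}$ on the level of coordinate charts, one could unwind the construction $\msE^\spadesuit \mapsto \msE^{\spadesuit \Rightarrow \ST}$ from the proof of Theorem \ref{T14} and check chart-by-chart that diagonal reduction of the trivializations $\alpha$ in \eqref{NN677} induces the expected reduction of the \'etale charts $\varphi_\alpha$ in \eqref{NN611}; this is the one place where a short but genuine computation would be needed, though it is not required for the theorem as stated.
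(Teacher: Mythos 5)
Your proposal is correct and follows exactly the paper's argument: the paper's proof consists precisely of citing Theorem-Definition \ref{T04}, Remark \ref{WW90}, and Theorem \ref{T14}, and since $\Diag_{\!\!\ST}$ is by definition the unique map making \eqref{fE66} commute, its bijectivity is the immediate diagram-chase you describe. Your closing remarks about functoriality and the optional chart-by-chart unwinding go slightly beyond what the paper records but do not change the substance.
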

\begin{proof}
The assertion follows from  Theorem-Definition \ref{T04}, Remark \ref{WW90},  and Theorem \ref{T14}.
\end{proof}
\SSP

\bco
Suppose that $X_0/S_0$ is $\N$-ordinary.
Then,  there are only finitely many $F^1$-projective structures on $X/S$ and the following equality holds:
\begin{align}
\sharp (F^1 \text{-}\mr{Pr}_{X/S}) = \sharp (F^\N\text{-}\mr{Pr}_{X_0/S_0}).
\end{align}
\eco
\begin{proof}
The assertion follows Theorem \ref{T14} and Theorem-Definition \ref{T1dddd}.
\end{proof}

\vspace{10mm}
\section{Combinatorial description of dormant $\mr{PGL}_2^{(\N)}$-opers} \label{S10}\SSP

In this final section, the $2$d TQFT $\mcZ_{2, \N}$ resulting from Theorem \ref{Theorem4f4} is translated into some combinatorial objects  to solve our counting problem in a practical manner.
To this end, we study Gauss hypergeometric differential operators in characteristic $p^\N$ with a full set of root functions, which amounts to the study of  dormant $\mr{PGL}_2^{(\N)}$-opers on a $3$-pointed projective line via diagonal  lifting/reduction (cf. Proposition \ref{Prop332}).
Since such differential operators are determined by their exponent differences,
one can  describe    dormant $\mr{PGL}_2^{(\N)}$-opers by means of certain edge numberings on the trivalent semi-graph associated to a totally degenerate curve, as well as lattice points inside a generalized rational polytope (cf. Propositions \ref{LLL001}, \ref{Prop4378}).
As a consequence, we prove that the numbers of dormant $\mr{PGL}_2$-opers and  $2$nd order differential operators on a general curve in characteristic $p^\N$ can be expressed as a quasi-polynomial function (cf. Theorem \ref{Thm5}, \ref{Theorem895}).

 Let us fix 
 a positive integer $\N$, and suppose that $p >2$.

\LSP
\subsection{Gauss hypergeometric differential operators} \label{SS33}

(The following  discussion in the case of $\ell = 0$ can be found in ~\cite[\S\,4.12]{Wak8}.)
Denote by $W_\N$ the ring of Witt vectors of length $\N$ over an algebraically closed field  $k$ of  characteristic $p$.
Let us consider  (linear) differential operators on 
the $3$-pointed projective line $\msP := (\mbP/S, \{ [0], [1], [\infty]\})$ over $S := \mr{Spec}(W_\N)$ (cf. \eqref{1051}).

Each triple $(a, b, c) \in W_\N^{\times 3}$ of elements of $W_\N$ determines   the  Gauss hypergeometric differential operator 
\begin{align}
\mpD_{a,b, c}^\clubsuit := \partial_x^2 + \left(\frac{c}{x} + \frac{1 -c + a + b}{x-1} \right) \cdot \partial_x +\frac{ab}{x (x-1)} 
\end{align}
on $\mr{Spec}(W_\N [x])  = \mbP \setminus \{\infty\}$, where
  $\partial_x := \frac{d}{dx}$.
It defines  a $2$nd  differential operator $\mcO_{\mbP^1} \migi \Omega^{\otimes 2}$ with unit principal symbol under the natural identification $\Omega^{\otimes 2} \otimes \mcT^{\otimes 2} = \mcO_\mbP$ (cf. Definition \ref{Def898}) in such a way that 
 $D_{a, b, c}^\clubsuit := dx^{\otimes 2} \otimes \mpD_{a, b, c}^\clubsuit$ specifies  a global section of 
$\mcD {\it iff}_{\!\leq 2} (\mcO_{\mbP}, \Omega^{\otimes 2})$.
If we write $y := x-1$, $z := 1/x$, then the following equalities hold:
\begin{align} \label{eQ221i}
 D_{a,b, c}^\clubsuit &= \left(\frac{dx}{x}\right)^{\otimes 2} \otimes \left((x \partial_x)^2 + \frac{(a+b) \cdot x+ 1-c}{x-1} \cdot x \partial_x + \frac{ab \cdot  x}{x-1} \right) \\
&= \left( \frac{dy}{y}\right)^{\otimes 2} \otimes \left((y\partial_y)^2 + \frac{(a+b) \cdot y -c + a+b}{y+1} \cdot y \partial_y + \frac{ab \cdot y}{y+1} \right) \notag \\
& = \left( \frac{dz}{z}\right)^{\otimes 2} \otimes \left((z \partial_z)^2 + \frac{(1-c) \cdot z  + a+b}{z-1} \cdot z \partial_z - \frac{ab}{z-1} \right). \notag
\end{align}
In this section, we shall  call such a differential operator (associated to some triple $(a, b, c) \in R^{\times 3}$)  a {\bf hypergeometric operator}.

The $S^\mr{log}$-connection
 $\mpD_{a, b, c}^{\clubsuit \Rightarrow \diamondsuit}$ on $\mcF_{\mcO_\mbP} |_{\mbP \setminus \{ \infty \}} = \mcO_{\mbP \setminus \{ \infty \}} \oplus \mcO_{\mbP \setminus \{ \infty \}} \cdot  x (x-1) \partial_x$ (cf. \eqref{GL14})
 satisfies 
 \begin{align}
\mpD_{a,b,c}^{\clubsuit \Rightarrow \diamondsuit} = d + \frac{dx}{x (x-1)} \otimes  \begin{pmatrix} 0& -ab \cdot x (x-1)\\  1 & (1-a-b)\cdot x -1+c  \end{pmatrix}.
\end{align}
Denote by  $\nabla_{a,b,c}$ the  
$S^\mr{log}$-connection
   on $\mcT$ 
expressed
  as 
\begin{align}
\nabla_{a,b, c} := d  + \frac{d x}{x(x-1)} \otimes  ((1-a-b) \cdot x -1 +c)
\end{align}
under   the identification $\mcO_{\mbP \setminus \{ \infty \}} = \mcT |_{\mbP \setminus \{ \infty \}}$ given by 
$v \leftrightarrow v \cdot x (x-1) \partial_x$.
Then, the pair 
  $\bb_{a,b, c} := (\mcO_{\mbP}, \nabla_{a,b, c})$ specifies   a $2^{(1)}$-theta characteristic of $\mbP^\mr{log}/S^\mr{log}$, and
$\mpD_{a,b,c}^{\clubsuit}$ (resp., $\mpD_{a,b,c}^{\clubsuit \Rightarrow \diamondsuit}$) lies in    $\mcD{\it iff}_{\!\clubsuit, \vartheta_{a, b, c}}$ (resp., $\mcO p_{\diamondsuit, \vartheta_{a, b, c}}$).

\SSP
\ble \label{LemLem}
Suppose that $a + b \in \mbZ/p^\N\mbZ$ and $c \in \mbZ /p^\N\mbZ$.
Then, the flat line bundle $(\mcT, \nabla_{a, b, c})$ (i.e., the $2^{(1)}$-theta characteristic $\vartheta_{a, b, c}$) is dormant.
\ele
\begin{proof}
We shall set $U := \mbP \setminus \{ [0], [1], [\infty] \}$.
By Proposition \ref{Prop277},
it suffices to show that the restriction $(\mcT |_U, \nabla_{a, b, c}|_U)$ over  $U$ is dormant. 
Let $s$ (resp., $t$) be   the  integer defined as the unique lifting of $a + b -c$ (resp.,  $-1 + c$), which  is an  element of $\mbZ /p^\N \mbZ$ by assumption, via the natural surjection $\mbZ \migisurj \mbZ/p^\N \mbZ$ satisfying $0 \leq s < p^\N$ (resp., $0 \leq t < p^\N$).
Consider the gauge transformation of $\nabla_{a, b, c}|_U$ by an element $x^{s} (x-1)^t \in  H^0 (U, \mcO_X)$.
If  $d + dx \otimes A$ denotes the resulting connection, then the section $A$ is computed as follows: 
\begin{align}
A & =
  \frac{d(x^s (x-1)^t)}{x^s (x-1)^t} + \frac{(1-a-b)\cdot x -1 + c}{x(x-1)} \\
& =
\left(\frac{a+ b -c}{x-1} + \frac{-1 + c}{x}\right) + \frac{(1-a-b)\cdot x -1 + c}{x(x-1)} \notag \\
& = 0.
\end{align}
This implies that $(\mcT|_U, \nabla_{a, b, c}|_U)$ can be trivialized, in particular,  it is dormant.
\end{proof}
\SSP

If $\msE^\spadesuit_{a, b, c}$ denotes the $\mr{PGL}_2$-oper  defined to be the image of  $\mpD^\clubsuit_{a, b, c}$ via the composite $\Lambda_{\diamondsuit \Rightarrow \spadesuit, \vartheta_{a, b, c}} \circ \Lambda_{\clubsuit \Rightarrow \diamondsuit, \vartheta_{a, b, c}}$, then 
the equalities in \eqref{eQ221i} show that  the radius  of $\msE^\spadesuit_{a, b, c}$ at the marked point $[0]$ (resp., $[1]$; resp., $[\infty]$) in the sense of Remark \ref{Rem89e} is given by  
\begin{align} \label{QG2000}
\varrho_{[0]} (\msE^\spadesuit_{a, b, c})= \frac{(1-c)^2}{4} \ \left(\text{resp.,} \ \varrho_{[1]}  (\msE^\spadesuit_{a, b, c}) = \frac{(c-a-b)^2}{4}; \text{resp.,} \ 
\varrho_{[\infty]} (\msE^\spadesuit_{a, b, c}) = \frac{(b-a)^2}{4}  \right) \hspace{10mm}
\end{align} 
as an element of $W_\N$.

Here, recall that  the {\bf exponent differences} of $\mpD^\clubsuit_{a,b,c}$ at $0$, $1$, $\infty$ are $1-c$, $c-a-b$, $b-a$, respectively, and 
set
\begin{align} \label{eQ33m}
\mr{Ex}_{a,b,c} :=(1-c, c-a-b, b-a).
\end{align}
This triple  will be regarded as a triple of elements in $W_\N /\{ \pm 1 \}$ ($:=$ the quotient set of $W_\N$ by the equivalence relation generated by $v \sim -v$ for every $v \in W_\N$).

Applying the fact mentioned at the end of Remark \ref{Rem89e} to the case of $n=2$, we see that (the isomorphism class of)  an $\mr{PGL}_2$-oper is uniquely determined by its radii.
Hence,   \eqref{QG2000} implies 
the following assertion.

\SSP
\bpr  \label{Prop554}
\begin{itemize}
\item[(i)]
Let $\msE^\spadesuit$ be a $\mr{PGL}_2$-oper on $\msP$ whose   radius at every marked point   belongs to $W_N^\times \left(= W_\N \setminus pW_\N \right)$.
  Then, the  preimage   of $\{ \msE^\spadesuit \}$ via  
the map of sets
 \begin{align} \label{Eq333}
\left\{\begin{matrix} \text{the set of} \\
\text{hypergeometric operators} \end{matrix} \right\} \migi 
\left\{\begin{matrix} \text{the set of isomorphism classes} \\
\text{of $\mr{PGL}_2$-opers on $\msP$}  \end{matrix} \right\}
\end{align}
given by 
 assigning  $\mpD^{\clubsuit}_{a, b, c} \mapsto \msE^\spadesuit_{a, b, c}$ is nonempty. 
 \item[(ii)]
 Let 
$(a, b, c)$ and $(a', b', c')$ be elements of $W_\N^{\times 3}$.
Then,  $ \msE^\spadesuit_{a, b, c}$ is isomorphic to $\msE^\spadesuit_{a', b', c'}$ 
  if and only if the equality  $\mr{Ex}_{a,b,c} = \mr{Ex}_{a', b', c'}$ holds in $(W_\N/\{ \pm 1 \})^{\times 3}$.
  \end{itemize}
 \epr
\SSP
 
 Next, we discuss {\it dormant} $\mr{PGL}_2$-opers arising from hypergeometric  operators.
 To this end, we first prove the following assertion.

\SSP
\ble \label{Prop11}
Let $\msE^\spadesuit$ be a $\mr{PGL}_2$-oper on $\msP$ and $U$ a dense open subscheme of $\mbP \setminus \{ [0], [1], [\infty]\}$.
Then,
$\msE^\spadesuit$ is dormant if and only if its restriction $\msE^\spadesuit |_U$ is dormant
 \ele
\begin{proof}
For simplicity, we shall write $\mcD^{(m)}$ (for each $m \geq 0$) instead of $\mcD^{(m)}_{\mbP_0^\mr{log}/S_0^\mr{log}}$.
The ``only if" part of the required equivalence is clear.

We shall prove  the ``if" part by induction on $\N$.
By Proposition \ref{P016dd}, the second assertion of (i),  we may assume, without loss of generality, that
$U = \mbP \setminus \{ [0], [1], [\infty]\}$.
The base step, i.e., the case of $\N = 1$,  follows from the density of $U \left(\subseteq \mbP \right)$ because 
the $p$-curvature of a flat bundle on $\mbP^\mr{log}_0/S^\mr{log}_0$ can be regarded as a global section of a certain associated  vector  bundle.
Next,  to discuss the induction step, suppose that we have proved the required assertion with $\N$ replaced  with $\N -1$ ($\N >1$).
Also,  suppose that $\msE^\spadesuit |_U$ is dormant.
Let $\rho_x \in (\mbZ /p^{\N -1}\mbZ)^\times / \{ \pm 1 \}$ ($x \in \{ 0, 1, \infty \}$) be the radius of $\msE_{\N -2}^\spadesuit$ at $[x]$.
Then, there exists a triple of integers $(\lambda_0, \lambda_1, \lambda_\infty)$ satisfying the following conditions (cf. the discussion at the beginning of \S\,\ref{SS042e}):
\begin{itemize}
\item
$2 \cdot \rho_x = \lambda_x$ as elements of $(\mbZ/p^{\N -1}\mbZ)/\{ \pm 1 \}$ and $0 < \lambda_x < p^{\N -1}$ for every $x = 0, 1, \infty$;
\item
The sum $\lambda_0 + \lambda_1 + \lambda_\infty$ is odd $< 2 \cdot p^{\N -1}$.
\end{itemize}
We can find a unique $S^\mr{log}$-connection $\nabla^+$ on $\mcO_{\mbP}^+ := \mcO_\mbP (\lambda_0 \cdot [0] + \lambda_1 \cdot [1] + \lambda_\infty \cdot [\infty])$ extending the trivial connection on $\mcO_\mbP$.
Also, let $\mcL$ be a unique (up to isomorphism) line bundle on $\mbP$ of relative degree $\frac{\lambda_0 + \lambda_1 + \lambda_\infty +1}{2}$, and fix an identification $\mcL^{\otimes 2} \otimes \mcT = \mcO^+_\mbP$.
Under this identification, the pair $\vartheta := (\mcL, \nabla^+)$ forms a dormant $2^{(\N)}$-theta characteristic of $\mbP^\mr{log}/S^\mr{log}\left( =S\right)$ (cf. Proposition \ref{P72e}, (i)).
Denote by $\nabla^\diamondsuit$ be the $(\mr{GL}_{2}^{(\N)}, \vartheta)$-oper on $\msP$ corresponding to $\msE^\spadesuit$ via the isomorphism $\Lambda_{\diamondsuit \Rightarrow \spadesuit, \vartheta}$ (cf. Theorem \ref{P14}).
By the induction assumption,  $\msE^\spadesuit_{\N -2}$ is dormant, and hence, 
the reduction $\nabla^\diamondsuit_{\N -2}$ of $\nabla^\diamondsuit$ modulo $p^{\N -1}$ is dormant.
It follows from the definition of $\nabla^\diamondsuit$ that
the diagonal reduction of 
$\nabla_{\N -2}^{\diamondsuit \Rightarrow \heartsuit}$
  is canonical in the sense of  Definition \ref{NN49}; we denote it by 
 $\msF^\heartsuit_0 := (\mcF_0, \nabla_0^{(\N -2)}, \{ \mcF^j_0 \}_{j=0}^2)$. 
Just as in \eqref{J13}, we have an $S^\mr{log}$-connection $\nabla_\msF$ on $\mcV_\msF := \mcS ol (\nabla_0^{(\N -2)})$ associated to $\msF := (\mcF_\mcL, \nabla^\diamondsuit)$, and $\nabla_\msF$ has vanishing $p$-curvature because $\nabla^\diamondsuit |_U$ is dormant.
According to ~\cite[Corollaire 3.3.1]{Mon},
$\nabla_\msF$ induces a $\mcD^{(\N -1)}$-module structure $\nabla_0^{(\N -1)}$ on 
$\mcF^\flat_0 := F^{(N-1)*}_{\mbP_0/S_0} (\mcV_\msF)$ with vanishing $p^\N$-curvature that are compatible with $\nabla_0^{(\N -2)}$ via the inclusion $\tau : \mcF^\flat_0 \hookrightarrow \mcF_0$ and the natural morphism $\mcD^{(\N -2)} \rightarrow \mcD^{(\N -1)}$.
Denote by $\breve{\nabla}^{(\N -1)}_0$ the $\mcD^{(\N -1)}$-module structure on $u_* (\mcF^\flat_0 |_U)$ extending $\nabla_0^{(\N -1)}$ via the open immersion $u : U \hookrightarrow \mbP$.
Here, note that 
 $\mcF_0$ can be obtained as 
 the extension of $\mcF_0^\flat$ along the inclusion $\mcF_0^\flat \cap \mcL_0    \left(= \mcF^\flat_0 \cap \mcF_0^1 \right) \hookrightarrow \mcL_0 \left(= \mcF^1_0\right)$, i.e., 
 the push-forward of the diagram
\begin{align} \label{Eq1000}
\vcenter{\xymatrix@C=46pt@R=36pt{
\mcF^\flat_0 \cap \mcL_0  \ar[r]^-{\mr{inclusion}} \ar[d]_-{\mr{inclusion}} &\mcF^\flat_0 \\
 \mcL_0  & 
}}
\end{align}
(cf. Lemma \ref{P100}, (i)).
By applying this fact together with an argument similar to the proof of Proposition \ref{Prop277},
we see that 
$\mcF_0 \left(\subseteq  u_* (\mcF^\flat_0 |_U) \right)$ is  closed under $\breve{\nabla}_0^{(\N -1)}$.
 The resulting $\mcD^{(\N -1)}$-module structure $\breve{\nabla}_0^{(\N -1)} |_{\mcF_0}$ on $\mcF_0$ satisfies
$(\msF, \breve{\nabla}_0^{(\N -1)} |_{\mcF_0}) \in \mr{Diag}_{\N -1}$ (cf. \eqref{ee1}), which means  that $\msE^\spadesuit $ is dormant. 
This completes the proof  of  the  ``if" part of the desired equivalence. 
\end{proof}

\SSP
\bpr \label{Prop21}
The assignment $\mpD_{a, b, c}^\clubsuit \mapsto \msE^\spadesuit_{a, b, c}$ determines an $8$-$1$ correspondence 
\begin{align} \label{QG2010}
\left\{\begin{matrix}
\text{the set of hypergeometric operators} \\
\text{whose restrictions to some dense} \\
\text{open subschemes of $\mbP \setminus \{ [0], [1], [\infty] \}$} \\
\text{have  full sets of root functions}
\end{matrix} \right\}
\stackrel{8 : 1}{\longleftrightarrow}
\left\{\begin{matrix}
\text{the set of isomorphism classes} \\
\text{of dormant $\mr{PGL}_2$-opers on $\msP$}
\end{matrix} \right\}
\end{align}
(cf. Definition \ref{D01fgh} for the definition of having a full set of root functions).
In particular, if a hypergeometric operator $\mpD_{a, b, c}^\clubsuit$ has a full set of root functions when restricted to some dense open subscheme of $\mbP \setminus \{ [0], [1], [\infty] \}$, then we have $(a, b, c) \in (\mbZ/p^\N \mbZ)^{\times 3}$.
\epr
\begin{proof}
The first assertion follows from  Proposition \ref{P39}, Proposition  \ref{Prop554}, (i) and (ii),  Lemma \ref{Prop11}, and the fact that the morphism $\Lambda^\mr{full}_{\clubsuit \Rightarrow \diamondsuit, (-)}$ (cf. \eqref{QQ379}) is an isomorphism.

Moreover, the first assertion and  
a comment in Remark \ref{Rem89e}
 together imply that $\frac{1- c}{2}$, $\frac{c- a - b}{2}$, $\frac{b-a}{2} \in \mbZ/p^\N \mbZ$.
This proves 
the second assertion.
\end{proof}

\LSP
\subsection{Combinatorial patterns of radii} \label{SS30}

We try  to understand 
  which triples of integers  $(a, b, c)$ yield  hypergeometric operators $\mpD^\clubsuit_{a, b, c}$ having a full set of solutions.

Given nonnegative integers $m$, $\ell$ with $m  < \ell$,
we set $\mbZ_{[m, \ell]} := \{m, m + 1, \cdots, \ell \}$.
The assignments $(a, b, c) \mapsto \left(\overline{\frac{1-c}{2}},  \overline{\frac{c-a-b}{2}}, \overline{\frac{b-a}{2}}\right)$ and  $(s_1, s_2, s_3) \mapsto \left(\overline{\frac{2s_1 +1}{2}}, \overline{\frac{2s_2 +1}{2}}, \overline{\frac{2s_3 +1}{2}}\right)$ determine  maps of sets
\begin{align}
\xi_\N : \mbZ_{[1, p^\N]}^{\times 3} \migi ((\mbZ/p^\N \mbZ)/\{\pm 1 \})^{\times 3} 
\hspace{3mm} \text{and} \hspace{3mm}
\zeta_\N : \mbZ_{[0, p^\N-1]}^{\times 3} \migi ((\mbZ/p^\N \mbZ)/\{\pm 1 \})^{\times 3},
\end{align}
respectively.
Also, given  an integer $a$, 
we denote by $[a]_\N$ the remainder obtained by dividing $a$ by $p^\N$ (taken to be an element of $\mbZ_{[0, p^\N -1]}$ even when $a < 0$), and write 
\begin{align} \label{e2}
[a]'_\N := \begin{cases}
[a]_\N
 & \text{if $a \notin p^N \mbZ$;}\\ p^N & \text{if $a \in p^N \mbZ$.} \end{cases} 
\end{align}

If $\N'$ is a  positive integer  with $\N' \leq \N$,
then the assignment 
$(s_1, s_2, s_3) \mapsto ([s_1]'_{\N'}, [s_2]'_{\N'}, [s_3]'_{\N'})$  and 
$(s_1, s_2, s_3) \mapsto ([s_1]_{\N'}, [s_2]_{\N'}, [s_3]_{\N'})$ determine  surjections of sets
\begin{align} \label{e200ddd}
\tau'_{\N \Rightarrow \N'} : \mbZ_{[1, p^\N]}^{\times 3} \migisurj 
\mbZ_{[1, p^{\N'}]}^{\times 3} \hspace{3mm} \text{and} \hspace{3mm}
\tau_{\N \Rightarrow \N'} : \mbZ_{[0, p^\N-1]}^{\times 3} \migisurj 
\mbZ_{[0, p^{\N'}-1]}^{\times 3},
\end{align}
respectively, and
these make the following square diagrams commute:
\begin{align} \label{E456}
\vcenter{\xymatrix@C=46pt@R=36pt{
\mbZ_{[1, p^\N]}^{\times 3} \ar[r]^-{\xi_\N} \ar[d]_{\tau'_{\N \Rightarrow \N'}} & ((\mbZ/p^\N \mbZ)/\{\pm 1 \})^{\times 3} \ar[d]^-{\mr{quotient}} \\
\mbZ_{[1, p^{\N'}]}^{\times 3} \ar[r]_-{\xi_{\N'}}  & ((\mbZ/p^{\N'} \mbZ)/\{\pm 1 \})^{\times 3},
}}
\hspace{3mm} 
\vcenter{\xymatrix@C=46pt@R=36pt{
\mbZ_{[0, p^\N-1]}^{\times 3} \ar[r]^-{\zeta_\N} \ar[d]_-{\tau_{\N \Rightarrow \N'}} & ((\mbZ/p^\N \mbZ)/\{\pm 1 \})^{\times 3} \ar[d]^-{\mr{quotient}} \\
\mbZ_{[0, p^{\N'}-1]}^{\times 3} \ar[r]_-{\zeta_{\N'}} &  ((\mbZ/p^{\N'} \mbZ)/\{\pm 1 \})^{\times 3},
}}
\end{align}
where the right-hand vertical arrows in both diagrams  arise  from  the natural quotient $\mbZ/p^\N \mbZ \migisurj \mbZ/p^{\N'}\mbZ$.

We shall write
\begin{align} \label{e46}
B_\N := \left\{ (a, b, c) \in \mbZ_{[1, p^\N]}^{\times 3} \, \Big| \, \text{$a < c \leq b$ or  $b < c \leq a$}\right\}.
\end{align}
The map  $\xi_\N$ restricts to  an $8$-to-$1$ correspondence between  $B_\N$ and $\xi_{\N}(B_\N)$.
To be precise,
two triples $(a, b, c)$, $(a', b', c')$ of elements of $\mbZ_{[1, p^\N]}^{\times 3}$  have the same image via $\xi_\N$ 
if and only if $(a', b', c')$ is one of the following $8$ triples:
\begin{align} 
&(a, b, c), \hspace{10mm}  (b, a, c),  \hspace{10mm}    ([ c-b ]'_\N, [c-a]'_\N, c), \hspace{10mm}    ([c-a ]'_\N, [c-b ]'_\N, c), \notag  \\
&(1 +p^\N -a, 1 + p^\N -b, 2 + p^\N -c), \hspace{5mm}  (1 + p^\N -b, 1 + p^\N - a, 2+p^\N -c),  \notag \\ 
 &([1+a -c ]'_\N, [ 1 + b-c ]'_\N, 2 + p^\N-c), \hspace{5mm} 
   ([ 1 + b -c ]'_\N, [ 1 + a - c ]'_\N, 2+ p^\N -c). \notag
\end{align}

On the other hand, we set
\begin{align}\label{eekode}
C_\N := \left\{ (s_1, s_2, s_3) \in \mbZ_{[0, p^\N -1]}^{\times 3} \, \Big| \, \text{$\sum_{i=1}^3 s_i \leq p^N -2$ and $|s_2 - s_3|  \leq s_1 \leq s_2 + s_3$}\right\}.
\end{align}
The restriction $\zeta_\N |_{C_\N} : C_\N \migi ((\mbZ/p^\N \mbZ)/\{\pm 1 \})^{\times 3}$ of $\zeta_\N$   is injective.
Then, the following assertion can be proved by a straightforward calculation.

\SSP
\ble \label{P06}
The equality $\xi_{\N}(B_\N) = \zeta_N (C_N)$ holds as  subsets of $((\mbZ/p^\N \mbZ)/\{\pm 1 \})^{\times 3}$.
In particular,  we obtain the map of sets
\begin{align} \label{e210}
B_\N \migi C_\N 
\end{align}
determined  by assigning $(s_1, s_2, s_3) \mapsto \zeta_\N^{-1}(\xi_\N (s_1, s_2, s_3))$, and this map gives  an $8$-to-$1$ correspondence
 between  $B_\N$ and $C_\N$.
\ele
\begin{proof}
Let  $(a, b, c)$ and $(s_1, s_2, s_3)$ be elements of $\mbZ_{[1, p^\N]}^{\times 3}$ and $\mbZ_{[0, p^\N -1]}^{\times 3}$, respectively, satisfying $\xi_\N (a, b, c) = \zeta_\N (s_1, s_2, s_3) = :s$.
Then, we have
\begin{align} \label{eQ1124}
[\pm (1-c) ]'_\N =  2 s_1 +1, \hspace{5mm} [ \pm (c-a -b) ]'_\N= 2s_2 +1, \hspace{5mm}
[\pm (b-a) ]'_\N = 2s_3 +1.
\end{align}

Let us consider the case where the symbols ``$\pm$" in \eqref{eQ1124}  are all taken to be ``$+$".
On the one hand, 
if $a > b$, then
the parities of $a$, $b$, and $c$ imply  that $c > a + b$ and 
\begin{align}
1 -c + p^\N = 2s_1 +1, \hspace{5mm} c-a-b  = 2s_2 +1, \hspace{5mm}
b-a + p^\N = 2s_3 +1.
\end{align}
But,
since $s_3 \leq s_1 + s_2 \Leftrightarrow b \leq 0$, 
 the element  $s$ belongs neither to $\xi_\N (B_\N)$ nor to $\zeta_\N (C_\N)$.
On the other hand, 
if $b \geq a$, then  
the parities of $a$, $b$, and $c$ imply
\begin{align}
1 -c + p^\N = 2s_1 +1, \hspace{5mm} c-a-b + p^\N = 2s_2 +1, \hspace{5mm}
b-a = 2s_3 +1.
\end{align}
It follows that the inequality $\sum_{i=1}^3 s_i \leq p^\N -2$ (resp., $s_1 \leq s_2 + s_3$; resp., $s_2 \leq s_3 + s_1$; resp., $s_3 \leq s_1 + s_2$) is equivalent to   the inequality 
$1 \leq a$ (resp., $a < c$; resp., $c \leq b$; resp., $b \leq p^\N$).
Thus,  $(a, b, c) \in B_\N$ if and only if $(s_1, s_2, s_3) \in C_\N$.

The remaining cases can be proved by  similar discussions, so
their proofs are left to the reader.
\end{proof}
\SSP

We shall set
\begin{align} \label{eeQQ112}
{^\dagger}B_\N := \bigcap_{\N' \leq  \N} ({\tau'_{\N \Rightarrow \N'}})^{-1}(B_{\N'}), \hspace{5mm}
{^\dagger}C_\N  
  := C_\N \cap \bigcap_{\N' < \N} \tau_{\N \Rightarrow \N'}^{-1} (\zeta^{-1}_{\N'} (\zeta_{\N'} (C_{\N'}))). 
\end{align}
In other words, an element $(a, b, c)$ of $B_\N$ (resp., an element $(s_1, s_2, s_3)$ of $C_\N$) belongs to ${^\dagger}B_\N$  (resp., ${^\dagger}C_\N$)
if and only if, for every positive   $\N'$ with $\N' < \N$,
either $[a]'_{\N'} < [c]'_{\N'} \leq [b]'_{\N'}$ or $[b]'_{\N'} < [c]'_{\N'} \leq [a]'_{\N'}$ is fulfilled (resp., 
$(s'_1, s'_2, s'_3)$ belongs to $C_{\N'}$ for some  $s'_i \in \{ [s_i]_{\N'}, p^{\N'}-1-[s_i]_{\N'} \}$).
Note that both subsets ${^\dagger}B_\N$,  ${^\dagger}C_\N$ of $\mbZ^{\times 3}$ are invariant under permutations of factors. 

By  Lemma \ref{P06} (applied to various $\N' \leq \N$) together with the definitions of ${^\dagger}B_\N$ and ${^\dagger}C_\N$,
 we obtain the following assertion.

\SSP
\bpr \label{C03}
The map \eqref{e210} restricts to a surjective map 
\begin{align} \label{e20efr9}
{^\dagger}B_\N \migisurj  {^\dagger}C_\N,
\end{align}
and it gives an $8$-to-$1$ correspondence between 
${^\dagger}B_\N$ and ${^\dagger}C_\N$.
Moreover, if we regard ${^\dagger}C_\N$ as a subset of $((\mbZ/p^\N \mbZ)/\{ \pm 1\})^{\times 3}$, then this surjection is compatible with \eqref{QG2010} via the injective assignments $(a, b, c) \mapsto \mpD_{\overline{a}, \overline{b}, \overline{c}}^\clubsuit$ (for $(a, b, c) \in {^\dagger}B_\N$) and  $(a, b, c) \mapsto \msE^\spadesuit_{a, b, c}$  (for $(a, b, c) \in {^\dagger}C_\N$).
\epr

\LSP
\subsection{Hypergeometric operators with a full set of root functions} \label{SS17}
In this section, we examine the relationship between the elements of ${^\dagger}B_\N$ and the exponent differences of hypergeometric operators with a full set of root functions.

Let $\widehat{U} := \mr{Spec} (W_\N [\![x ]\!] \left[ x^{-1}\right])$ be the  formal punctured disc in $\mbP$ centered at the origin    $[0]$.
If $\nabla^{(0)}_{\widehat{U}, \mr{triv}}$ denotes the trivial connection on $\mcO_{\widehat{U}}$, then we have $\mcS ol (\nabla^{(0)}_{\widehat{U}, \mr{triv}}) =   \bigcup_{s = 1}^\N p^{\N -s} \cdot W_\N [\![x^{p^s} ]\!] \left[ x^{-p^s} \right] =:R$.

For each $(a, b, c) \in \mbZ_{[1, p^\N]}^{\times 3}$,
denote by $\widehat{\mpD}_{a, b, c}^\clubsuit$
be the differential operator on $\widehat{U}$ defined as 
\begin{align}
\widehat{\mpD}_{a, b, c}^\clubsuit := x (1-x) \partial_x^2 + (c - (a+b+1)x) \partial_x - ab,
\end{align}
i.e., the restriction of $x (1-x) \mpD_{a, b, c}^\clubsuit$ to  $\widehat{U}$.
The kernel  $\mr{Ker}(\widehat{\mpD}_{a, b, c}^\clubsuit)$ of $\widehat{\mpD}_{a, b, c}^\clubsuit$ has a structure of $R$-module.

\SSP
\bpr \label{Pro78ju}
If  $\mr{Ker}(\widehat{\mpD}_{a, b, c}^\clubsuit)$ is a free $R$-module of rank $2$,  then the triple of integers $(a, b, c)$ belongs to ${^\dagger}B_\N$ and there exists a basis 
$\left\{ v_1 (x), v_2 (x) \right\}$  of $\mr{Ker}(\widehat{\mpD}_{a, b, c}^\clubsuit)$, where  
$v_i (x) := \sum_{j=n_i}^\infty q_{i, j}x^j$
($i=1,2$, $q_{i, j}\in W_\N$, $q_{i, n_i} \in  W_\N^\times$),
 satisfying the following conditions:
\begin{itemize}
\item
$n_1 = 0$ and $n_2 = 1-c$;
\item
 $(1+n)(c+n) \cdot q_{i, n+1} = (a+n)(b+n) \cdot q_{i, n}$ for every $n \geq n_i$.
\end{itemize}
\epr
\begin{proof}
We prove the assertion by induction on $\N$.
The base step, i.e., $\N =1$, follows from ~\cite[\S\,1.6]{Ihara1} or ~\cite[\S\,6.4]{Katz2}.
To discuss the induction step, we assume that we have proved the assertion with $\N$ replaced by $\N -1$ ($\N \geq 2$).
Also, suppose that $\mr{Ker}(\widehat{\mpD}_{a, b, c}^\clubsuit)$ is a free $R$-module of rank $2$.
That is to say, there exists
 a basis consisting of two elements  $\{ v_1 (x), v_2 (x) \}$, where
 $v_i (x) := 
 \sum_{j=n_i}^\infty q_{i, j} x^j
   \in W_\N [\![x ]\!] \left[ x^{-1}\right]$ ($n_i \in \mbZ, q_{i, n_i} \neq 0$).
(In particular, for $\N' \leq \N$, the mod $p^{\N'}$ reductions of $v_1 (x), v_2 (x)$ forms a basis of $\mr{Ker}(\mpD_{[a]'_{\N'}, [b]'_{\N'}, [c]'_{\N'}}^\clubsuit)$.
By induction hypothesis, $([a]'_{\N'}, [b]'_{\N'}, [c]'_{\N'})$ belongs to ${^\dagger}B_{\N'}$.)
By comparing both sides of the equality $\widehat{\mpD}_{a, b, c}^\clubsuit (v_i (x)) = 0$ ($i=1, 2$), we see that
$v_i (x)$  satisfies the following conditions:
\begin{itemize}
\item[($*$)]
$n_i (n_i-1 + c) q_{i, n_i} = 0$;
\item[($**$)]
$(1+n)(c+n) \cdot q_{i, n+1} = (a+n)(b+n) \cdot q_{i, n}$ for every $n \geq n_i$.
\end{itemize}
After possibly  multiplying $v_i (x)$'s by elements in $\{ x^{\ell \cdot p^\N} \, | \, \ell \in\mbZ \} \left(\subseteq R^\times \right)$,
we may assume that $n_1, n_2 \in \mbZ_{[-p^\N +1, 0]}$.
If $\mr{ord}_p (n_i) > 0$ and $\mr{ord}_p (n_i -1 + c) > 0$ (where for an integer $m$ we denote by $\mr{ord}_p (m)$ the $p$-adic order of $m$),
then we have $p \mid 1- c \left(=  n_i - (n_i -1 +c)\right)$ (which implies $1-[c]'_1 = 0$); however
this contradicts the base step of our induction argument because  the mod $p$ reductions of $v_1 (x), v_2 (x)$ form a basis of $\mr{Ker}(\mpD^\clubsuit_{[a]'_1, [b]'_1, [c]'_1})$.
Hence, the condition $(*)$ shows that 
 either $n_i q_{i, n_i} = 0$ or $(n_i -1 +c)q_{i, n_i} = 0$ is fulfilled  for any $i=1, 2$.
Here, suppose that $\mr{ord}_p (q_{i, n_i}) > 0$.
By induction hypothesis, 
the nonzero term of lowest degree  of $v_i (x)$ mod $p^{\N -1}$ has an invertible  coefficient.
This implies $\mr{ord}_p (q_{i, n_i}) = \N -1$.
Since $([a]'_1, [b]'_1, [c]'_1) \in B_1$,
it follows from the condition $(**)$  that 
there exists $m_i > n_{i}$ with $q_{i, n_i}, q_{i, n_i+1}, \cdots, q_{i, m_i} \in p^{\N -1}W_\N$ and $q_{i, \ell} = 0$ for $\ell > m_i$; this is a contradiction.
In particular,  the equality $\mr{ord}_p (q_{i, n_i}) = 0$ holds, i.e., $q_{i, n_i} \in W_\N^\times$, so 
 either  $n_i =0$ or $n_i = 1-c$ must be  satisfied.
Since $\{ v_1 (x), v_2 (x) \}$ is a basis,
we see (after possibly interchanging  the indices) that $n_1 = 0$ and $n_2 = 1-c$.
Moreover,
by the existence of such power series $v_1 (x)$, $v_2 (x)$ (satisfying the condition $(**)$), one may verify from a straightforward calculation  that $(a, b, c)$ turns out to be an element of  ${^\dagger}B_\N$.
This completes the proof of  the induction step, and hence we finish the proof of this assertion.
\end{proof}
\SSP

\begin{rema} \label{Rem4ddi}
Suppose that $(a, b, c)$ belongs to ${^\dagger}B_\N$.
Then, one may verify that $\mr{Ker}(\widehat{\mpD}_{a, b, c}^\clubsuit)$ has a basis consisting of the two polynomials 
\begin{align}
{_2}\overline{F}_1 (a, b; c; x), \hspace{5mm} x^{1-c} \cdot {_2} \overline{F}_1 (a-c+1, b-c+1; 2-c; x) 
\end{align}
of $W_\N [x, x^{-1}] \left(\subseteq W_\N [\![x]\!] \left[x^{-1}\right] \right)$.
Here,  ${_2}\overline{F}_1 (a, b; c; x)$ denotes a polynomial of $x$ defined by the following truncated hypergeometric series:
\begin{align}
{_2}\overline{F}_1 (a, b; c; x) := 1 & + \frac{a \cdot b}{1 \cdot c} \cdot x + \frac{a \cdot (a+1) \cdot b \cdot (b+1)}{1 \cdot 2 \cdot c \cdot (c+1)} \cdot x^2 \\ 
& +
 \frac{a \cdot (a+1) \cdot (a+2)\cdot b \cdot (b+1) \cdot (b+2)}{1 \cdot 2 \cdot 3 \cdot c \cdot (c+1) \cdot (c+2)} \cdot x^3 + \cdots, \notag
\end{align}
where we stop the series as soon as the numerator vanishes.
Since $(a, b, c) \in {^\dagger}B_\N$, the denominator  does not vanish before the numerator does, so each coefficient of this series is well-defined.
(Also, the same is true for ${_2} \overline{F}_1 (a-c+1, b-c+1; 2-c; x)$.)
When they vanish at the same time, we stop the series right before that term.
\end{rema}
\SSP

\SSP
\bpr \label{P05}
Let $(a, b, c)$ be an element of 
$(\mbZ/p^\N \mbZ)^{\times 3}$.
Then, 
the following conditions (1), (2) are equivalent to each other:
\begin{itemize}
\item[(1)]
The hypergeometric operator $\mpD_{a, b, c}^\clubsuit$
has a full set of root functions on some open subscheme $U$ of $\mbP \setminus \{[0], [1], [\infty] \}$; 
\item[(2)]
The element of $\mbZ_{[1, p^\N]}^{\times 3}$ corresponding to  $(a, b, c)$ via the natural bijection $\mbZ_{[1, p^\N]} \isom \mbZ/p^\N \mbZ$ belongs to ${^\dagger}B_\N$.
\end{itemize}
\epr
\begin{proof}
One may verify from Proposition \ref{P016dd}, (i) and (ii), that
 (a) is equivalent to the  condition that
 $\mr{Ker}(\widehat{\mpD}_{a, b, c}^\clubsuit)$ is a free  $R$-module  of rank $2$.
Hence, the assertion follows from Proposition \ref{Pro78ju} and the fact described in Remark \ref{Rem4ddi}.
\end{proof}
\SSP

\LSP
\subsection{Explicit computations for $(g, r) = (0, 3)$} \label{SS02k92}

We shall write
\begin{align} \label{e51}
T_{\leq, \leq} &:= \left\{ (r_1, r_2, r_3) \in \mbZ_{[0, p-1]}^{\times 3} \, \Big| \,  r_1 \leq  r_2 \leq r_3\right\}, \\
T_{<, \leq} &:= \left\{ (r_1, r_2, r_3) \in \mbZ_{[0, p-1]}^{\times 3} \, \Big| \,  r_1 <  r_2 \leq r_3\right\}, \notag \\
T_{>, >} &:= \left\{ (r_1, r_2, r_3) \in \mbZ_{[0, p-1]}^{\times 3} \, \Big| \,  r_1 >  r_2 > r_3\right\}. \notag
\end{align}

Let us  take an $\N$-tuple  
$(\vec{r}_{\ell})_{\ell =1}^\N \in T_{<, \leq} \times (T_{\leq, \leq} \sqcup T_{>, >})^{\times (\N -1)}$, where $\vec{r}_1 := (r_{1, 1}, r_{1, 2}, r_{1, 3}) \in T_{<, \leq}$ and $\vec{r}_j := (r_{j, 1}, r_{j, 2}, r_{j, 3}) \in T_{\leq, \leq} \sqcup T_{>, >}$ ($j =2, \cdots, \N$).
This  $\N$-tuple defines inductively   triples 
$(q_{j, 1}, q_{j, 2}, q_{j, 3}) \in \mbZ_{> 0}^{\times 3}$ ($j =1, \cdots, \N$) starting with $(q_{1, 1}, q_{1, 2}, q_{1, 3}) :=  (1 + r_{1, 1}, 1 + r_{1, 3}, 1 + r_{1, 2})$, as follows:
\begin{itemize}
\item
If
the inequalities  $q_{j, 1} < q_{j, 3} \leq q_{j, 2}$ hold, then we set 
\begin{align} \label{e54}
(q_{j+1, 1}, q_{j+1, 2}, q_{j+1, 3}):= (q_{j, 1}+ p^{j} \cdot r_{j+1, 1}, q_{j, 2}+ p^{j} \cdot r_{j+1, 3}, q_{j, 3}+ p^j \cdot r_{j+1, 2}).
\end{align}
\item
If 
the inequalities   $q_{j, 2} < q_{j, 3} \leq q_{j,1}$ hold, then we set 
\begin{align} \label{e55}
(q_{j+1, 1}, q_{j+1, 2}, q_{j+1, 3}):= (q_{j, 1}+ p^{j} \cdot r_{j+1, 3}, q_{j, 2}+ p^{j} \cdot r_{j+1, 1}, q_{j, 3}+ p^j \cdot r_{j+1, 2}).
\end{align}
\end{itemize}
(Note that the resulting triple $(q_{j+1, 1}, q_{j+1, 2}, q_{j+1, 3})$ satisfies  
either 
 $q_{j+1, 1} < q_{j+1, 3} \leq q_{j+1, 2}$ or $q_{j+1, 2} < q_{j+1, 3} \leq q_{j+1, 1}$.)

Since $([q_{\N, 1}]'_{\N'}, [q_{\N, 2}]'_{\N'}, [q_{\N, 3}]'_{\N'}) = (q_{\N', 1}, q_{\N', 2}, q_{\N', 3})$ for $\N' \leq \N$, 
the assignment  $(\vec{r}_\ell)_\ell \mapsto (q_{\N, 1}, q_{\N, 2}, q_{\N, 3})$ (resp., $(\vec{r}_\ell)_\ell \mapsto (q_{\N, 2}, q_{\N, 1}, q_{\N, 3})$) defines a map of sets
\begin{align}
\delta_1 \ \left(\text{resp.,} \ \delta_2\right) : T_{<, \leq} \times (T_{\leq, \leq} \sqcup T_{>, >})^{\times (\N -1)}\migi {^\dagger}B_\N.
\end{align}

\SSP
\bpr \label{Prop53k}
The map of sets
\begin{align}
\delta_1 \sqcup \delta_2 :  (T_{<, \leq} \times (T_{\leq, \leq} \sqcup T_{>, >})^{\times (\N -1)})^{\sqcup 2}\migi {^\dagger}B_\N
\end{align}
induced from $\delta_1$ and $\delta_2$ is bijective. 
In particular, the cardinality of ${^\dagger}B_\N$ is explicitly given by the equality
\begin{align} \label{eQ331}
\sharp ({^\dagger}B_\N) = \frac{(p^2 -1)p^{\N} (p^2 +2)^{\N -1}}{3^{\N}}.
\end{align}
\epr
\begin{proof}
The first assertion follows from the various definitions involved.
Also, 
it is immediately verified that
\begin{align}
\sharp (T_{\leq, \leq}) = \frac{p (p+1)(p+2)}{6}, \  \ \ 
\sharp (T_{<, \leq}) = \frac{(p-1)p (p+1)}{6}, \ \ \ 
\sharp (T_{>, >}) = \frac{(p-2) (p-1)p}{6},
\end{align}
 This implies 
\begin{align}
\sharp ({^\dagger}B_\N) &= 2 \cdot \sharp (T_{<, \leq}) \cdot (\sharp (T_{\leq, \leq} \sqcup T_{>, >}))^{\N-1} \\
& =  2 \cdot  \frac{(p-1)p (p+1)}{6} \cdot \left(\frac{p (p+1)(p+2)}{6} + \frac{(p-2) (p-1)p}{6}\right)^{\N-1}\notag  \\
& = \frac{(p^2 -1) \cdot p^{\N} \cdot  (p^2 +2)^{\N -1}}{3^{\N}},\notag
\end{align}
thus completing the proof of the second assertion.
\end{proof}
\SSP

By combining the results proved so far, we obtain the following assertion.

\SSP
\bt \label{Pte2}
Let $\rho := (\rho_0, \rho_1, \rho_\infty)$ be an element of $((\mbZ/p^\N \mbZ)^{\times}/\{\pm 1\})^{\times 3}$.
Then, there exists a dormant $\mr{PGL}_2$-oper on $\msP$ of radii $\rho$ if and only if $\rho \in \zeta_\N ({^\dagger}C_\N)$.
In particular, the cardinality of  the set of isomorphism classes of dormant $\mr{PGL}_2$-opers on $\msP$ coincides with the value
\begin{align} \label{e47}
\frac{(p^2 -1)\cdot p^\N \cdot  (p^2 +2)^{\N-1}}{8 \cdot 3^\N}.
\end{align}
\et
\begin{proof}
The first assertion follows from  Propositions \ref{Prop21}, \ref{C03}, and  \ref{P05}.
The second assertion follows from the first one, Proposition \ref{Prop53k}, and the decomposition \eqref{e888}.
\end{proof}
\SSP

Next, we consider the relationship between dormant $\mr{PGL}_2$-opers on $\msP$ and dormant $\mr{PGL}_2^{(\N)}$-opers on the mod $p$ reduction $\msP_0$ of $\msP$.
To do this, 
we shall write 
\begin{align}
\mr{Cov}
\end{align}
for the set of equivalence classes of finite, separable, and tamely ramified coverings $\phi : \mbP_0 \rightarrow \mbP_0$ satisfying the following conditions:
\begin{itemize}
\item
The set of ramification points of $\phi$ is contained in  $\{[0], [1], [\infty] \}$; 
\item
If $\lambda_x$ ($x = 0, 1, \infty$) denotes the ramification index of $\phi$ at $[x]$,
then $\lambda_0, \lambda_1, \lambda_\infty$ are all odd and satisfy the inequality $\lambda_0 + \lambda_1 + \lambda_\infty < 2 \cdot p^\N$.
\end{itemize}
Here, the equivalence relation is defined in such a way that two coverings $\phi_1, \phi_2 : \mbP_0 \rightarrow \mbP_0$ are equivalent if there exists an element $h \in \mr{PGL}_2 (\mbP_0) \left(= \mr{Aut}_k (\mbP_0) \right)$ with $\phi_2 = h \circ \phi_1$.

Since the identity morphism $\mr{id}_{\mbP_0}$ of $\mbP_0$ defines a tamely ramified covering with ramification indices $(1, 1, 1)$, the set $\mr{Cov}$ is nonempty.

\SSP
\bpr \label{Prop332}
Let $\rho := (\rho_0, \rho_1, \rho_\infty)$ be an element of $((\mbZ/p^\N \mbZ)^{\times}/\{\pm 1\})^{\times 3}$, and $(\lambda_0, \lambda_1, \lambda_\infty)$ the triple of integers induced from  $\rho$ as discussed  at the beginning of   \S\,\ref{SS042e}.
Then, the following three conditions (1)-(3) are equivalent to each other:
\begin{itemize}
\item[(1)]
There is a dormant $\mr{PGL}_2$-oper on $\msP$ of radii $\rho$;
\item[(2)]
There is a dormant $\mr{PGL}_2^{(\N)}$-oper on $\msP_0$ of radii $\rho$;
\item[(3)]
There is a covering $\phi_0 : \mbP_0 \rightarrow \mbP_0$ classified by $\mr{Cov}$  whose ramification index at $[x]$ ($x = 0, 1, \infty$) coincides with $\lambda_x$.
\end{itemize}
\epr
\begin{proof}
The equivalence (2) $\Leftrightarrow$ (3) follows from  ~\cite[Theorem 7.4.3]{Wak9}.

The implication (1) $\Rightarrow$ (2) follows from Propositions \ref{P72e}, (i), and  \ref{P00245}, which imply that the diagonal reduction of a dormant $\mr{PGL}_2$-oper on $\msP$ of radii $\rho$ specifies a dormant $\mr{PGL}_2^{(\N)}$-oper on $\msP_0$ of radii $\rho$.

Finally, we shall prove the implication (3) $\Rightarrow$ (1).
Let us take a covering $\phi_0 : \mbP_0 \rightarrow \mbP_0$ as required in (3).
After possibly composing it with an automorphism of $\mbP_0$, we may assumed that $\phi_0 ([0]) = [0]$, $\phi_0 ([1]) = [1]$, $\phi_0 ([\infty]) = [\infty]$ (cf. ~\cite[Proposition 7.2.1]{Wak9}).
Since $\phi_0$ is tamely ramified, the morphism $\phi_0$ extends to a log \'{e}tale morphism $\phi^\mr{log} : \mbP^\mr{log} \rightarrow \mbP^\mr{log}$.
Write $\mcL := \mcO_{\mbP} (-1) \otimes \mcO_\mbP ([0]+ [1]+ [\infty])$, and write $\tau_0$ for the $\mcO_\mbP$-linear injection $\mcO_\mbP (-1) \hookrightarrow \mcO_\mbP^{\oplus 2}$ given by $w \mapsto (wx, wy)$ (where $\mbP = \mr{Proj}(W_\N [x, y])$) for each local section $w \in \mcO_\mbP (-1)$.
Also, let $\mcF$ be a rank $2$ vector bundle on $\mbP$ which makes the following square diagram cocartesian:
\begin{align} \label{eQQe}
\vcenter{\xymatrix@C=46pt@R=36pt{
\mcO_\mbP (-1) \ar[r]^-{\tau_0} \ar[d]_-{\mr{inclusion}} & \mcO_\mbP^{\oplus 2} \ar[d]
\\
\mcL \ar[r] & \mcF.
}}
\end{align}
The trivial $S^\mr{log}$-connection on  $\mcO_\mbP^{\oplus 2}$ extends uniquely to an $S^\mr{log}$-connection $\nabla_\mcF$ on $\mcF$.
It follows from the various definitions involved that the composite
\begin{align}
\mcL \xrightarrow{\mr{inclusion}} \mcF \xrightarrow{\nabla_\mcF} \Omega \otimes \mcF \migisurj \Omega \otimes (\mcF/\mcL)
\end{align}
is $\mcO_\mbP$-linear and injective.
Moreover, since $\mr{deg}(\mcL) = \mr{deg}(\Omega \otimes (\mcF/\mcL)) \left(=2 \right)$,
this morphism turns out to be an isomorphism.
This means that the triple $(\mcF, \nabla_\mcF, \mcL)$ forms a dormant $\mr{GL}_2$-oper on $\msP$.
Hence, the pull-back of this data via the log \'{e}tale morphism $\phi^\mr{log}$ defines a $\mr{GL}_2$-oper $\msF^\heartsuit_\phi :=\phi^{\mr{log}*}(\mcF, \nabla_\mcF, \mcL)$   on $\msP$, which is dormant by Proposition \ref{Prop277}, Lemma \ref{Prop11}.
By an argument entirely similar to the proof of ~\cite[Proposition 7.1.1]{Wak9},
we see that
the $\mr{PGL}_2$-oper $\msF^{\heartsuit \Rightarrow \spadesuit}_\phi$ induced from $\msF^\heartsuit_\phi$ via projectivization is of radii $\rho$.
This proves the implication (3) $\Rightarrow$ (1).
\end{proof}
\SSP

By Theorem \ref{Pte2} and Proposition \ref{Prop332} (and ~\cite[Theorem 7.4.3]{Wak9}), we obtain the following assertion.

\SSP
\bt \label{T04rr}
Let $\rho := (\rho_0, \rho_1, \rho_\infty)$ be an element of $((\mbZ/p^\N \mbZ)^{\times}/\{\pm 1\})^{\times 3}$.
Then, we have 
\begin{align} \label{e10}
\mcO p_{2, \N, \rho, 0, 3, \mbF_p}^{^\mr{Zzz...}} \cong \begin{cases} \mr{Spec}(\mbF_p) & \text{if $\rho \in \zeta_N({^\dagger}C_\N$)}; \\  \emptyset & \text{if otherwise}. \end{cases}
\end{align}
In particular,  $\mcO p^{^\mr{Zzz...}}_{2, \N, 0, 3, \mbF_p}$ is isomorphic to the disjoint union of finite many copies of $\mr{Spec}(\mbF_p)$,  and  its degree $\mr{deg} (\Pi_{2, \N, 0, 3, \mbF_p})$ over $\mbF_p$ satisfies  the following  equalities:
\begin{align} \label{e47}
\mr{deg} (\Pi_{2, \N, 0, 3, \mbF_p}) = \sharp (\mr{Cov})
= \frac{(p^2 -1)\cdot p^\N \cdot  (p^2 +2)^{\N-1}}{8 \cdot 3^\N}.
\end{align}
\et
\SSP

The description \eqref{e10} enables us to obtain a detailed understanding of the dormant fusion ring $\Fus_{2, \N}$  of type $\mr{PGL}_2^{(\N)}$ (cf. Definition \ref{Def989}).
In fact, $\Fus_{2, \N}$ may be identified with the free abelian group $\mbZ^{C}$ with basis $C := (\mbZ/p^\N \mbZ)^{\times}/\{ \pm 1\}$ equipped with the multiplication $\ast$ given by 
\begin{align} \label{eeQQ59}
\alpha \ast \beta = \sum_{\gamma \in C, (\alpha, \beta, \gamma) \in {^\dagger}C_\N}  \gamma
\end{align}
for any $\alpha, \beta \in C$.
Also, by Proposition \ref{Prop16}, (ii),  
the degree $\mr{deg}(\Pi_{2, \N, g, r, \mbF_p})$  can be computed as follows:
\begin{align} \label{eeQQ61}
\mr{deg}(\Pi_{2, \N, (\rho_i)_i,  g, r, \mbF_p}) = \sum_{\chi \in \mfS_{2, \N}} \chi (\sum_{\alpha \in C} \alpha \ast \alpha)^{g-1} \cdot \prod_{i=1}^r \chi (\rho_i),
\end{align}
where $\mfS_{2, \N} :=  \mr{Hom}(\Fus_{2, \N}, \mbC)$.
This formula in the case of $r =0$ together with Corollary \ref{C998} induces  the following assertion.

\SSP
\bco \label{Eq223}
Let $k$ be an algebraically closed field over $\mbF_p$, and denote by $W_\N$ the ring of Witt vectors of length $\N$ over $k$.
Also, let $X$ be a geometrically connected, proper, and smooth curve of genus $g > 1$ over $W_\N$.
If, moreover, the mod $p$ reduction $X_0$ of $X$ is general in $\mcM_{g, 0}$, then
 the cardinality $\sharp (\mr{Op}^{^\mr{Zzz...}}_{1, X})$ of the set $\mr{Op}^{^\mr{Zzz...}}_{1, X}$ (cf. \eqref{e209}) is given by the formula
\begin{align} \label{eeQQ60}
\sharp (\mr{Op}^{^\mr{Zzz...}}_{1, X}) =  \sum_{\chi \in \mfS_{2, \N}} \chi (\sum_{\alpha \in C} \alpha \ast \alpha)^{g-1}. 
\end{align}
\eco
\SSP

\begin{rema}[Relationship with other enumerative geometries] \label{Rem442}
In ~\cite[\S\,7.8.2]{Wak8},
we have related a certain  algebra   (which is essentially the same as  $\Fus_{2, 1}$)
 encoding  the factorization rule of  the values  $\mr{deg}(\Pi_{2, 1, (\rho_i)_i, g, r, \mbF_p})$   to the fusion ring for the conformal field theory of the affine Lie algebra $\widehat{\mfs \mfl}_2$ in an explicit manner.
As mentioned in Introduction, this is crucial in establishing an analogue of  the  Verlinde formula computing the number of dormant $\mr{PGL}_2$-opers of level $1$  (cf. ~\cite[Theorem 7.41]{Wak8}).
However, at the time of writing this manuscript, we do not know any relationship between $\Fus_{2, \N}$ for $\N >1$ and other enumerative geometries such as the CFT with $\widehat{\mfs \mfl}_2$-symmetry.
\end{rema}
\SSP

Note that   \eqref{e47} also gives a computation of
the degree $\mr{deg}(\Pi_{2, \N, 2, 0, \mbF_p})$, i.e., the case of $(g, r) = (2, 0)$.
 Indeed,  let us consider  the unpointed stable curve $\msX$ of genus $2$ obtained by gluing together two copies of $\msP_0$ at the  respective corresponding marked points.
Since any  $\mr{PGL}_2^{(\N)}$-oper on $\msP_0$ is uniquely determined by its radii (cf. Proposition \ref{P00245}), 
 the restrictions  of a dormant  $\mr{PGL}_2^{(\N)}$-oper on $\msX$ to  the respective   components are  necessarily isomorphic (cf. Proposition \ref{YY188}, (i) and (ii),  for $n =2$).
 Conversely, each dormant $\mr{PGL}_2^{(\N)}$-oper on $\msX$ can be obtained by gluing together two isomorphic dormant $\mr{PGL}_2^{(\N)}$-opers on $\msP_0$ at the points of attachment.
It follows  that there exists a canonical
correspondence between dormant $\mr{PGL}_2^{(\N)}$-opers on $\msP_0$ and those on $\msX$.

Moreover,  by Theorem \ref{Theorem44} and the first assertion of Theorem \ref{T04rr}, 
the cardinalities  of these sets coincides with $\mr{deg}(\Pi_{2, \N, 2, 0, \mbF_p})$.
That is to say, we obtain the following assertion, generalizing 
 ~\cite[Chap.\,V, Corollary 3.7]{Mzk2}, ~\cite[Theorem 2]{LP}, and ~\cite[Theorem 1.2]{O4}.

\SSP
\bco \label{C19}
Let $X$ be as in Corollary \ref{Eq223} with $g =2$.
Then, the cardinality $\sharp (\mr{Op}^{^\mr{Zzz...}}_{1, X})$ and the generic degree 
$\mr{deg}(\Pi_{2, \N, 2, 0, \mbF_p})$ of $\Pi_{2, \N, 2, 0, \mbF_p}$
 are  explicitly computed  by the equalities 
\begin{align} \label{eeQQ70}
\sharp (\mr{Op}^{^\mr{Zzz...}}_{1, X}) =  \mr{deg} (\Pi_{2, \N, 2, 0, \mbF_p}) 
=   \frac{(p^2 -1)\cdot p^\N \cdot  (p^2 +2)^{\N-1}}{8 \cdot 3^\N}.
\end{align}
\eco

\LSP
\subsection{Edge numberings on trivalent semi-graphs} \label{SS10220}

The procedure of deriving the formulas \eqref{eeQQ61} and \eqref{eeQQ60} from the factorization  property defining our $2$d TQFT (or  fusion rule)
  can be translated into counting 
 the combinatorial  patterns of radii of dormant $\mr{PGL}_2^{(\N)}$-opers on a totally degenerate curve.
To observe this, we 
describe such patterns
  in terms of  certain edge numberings on trivalent graphs (cf. ~\cite{LO}, ~\cite{Mzk2}, and ~\cite{Wak2} for $\N =1$). 
 Following the terminology of ~\cite[Definition 3.1]{Wak31},
 we will refer to
  these numberings as  {\it balanced $(p, \N)$-edge numberings} (cf. Definition \ref{De113}, (i)).

Let us fix
trivalent clutching data $\mbG := (\GR, \{ \lambda_j \}_{j=1}^J)$ of type $(g, r)$ (cf. Definition \ref{Def112}, (iii)), where $\GR := (V, E, \zeta)$.
In particular, $\sharp (V) = J =  2g-2+ r$, $\sharp (E) = 3g-3+2r$, and one can write  $B_\circledast = \{ b_{\circledast, 1}, \cdots, b_{\circledast, r} \}$ (cf. Remark \ref{Rem7878}).

\SSP
\bde \label{De113}
\begin{itemize}
\item[(i)]
 A {\bf balanced $(p, \N)$-edge numbering} on $\mbG$ is 
 a collection 
 \begin{align} \label{eeQQ50}
 (a_{e})_{e \in E}
  \end{align}
  of elements of $\mbZ_{[0, p^\N -1]}$
   indexed by $E$  such that, for each vertex $v \in V$,
   the triple $(a_{\zeta (b)})_{b \in B_v}$ belongs to ${^\dagger}C_\N$.
  \item[(ii)]
 Let 
 $(a_e)_{e \in E}$ be a balanced $(p, \N)$-edge numbering on $\mbG$ and $\rho :=(\rho_i)_{i=1}^r$ an $r$-tuple of element of $(\mbZ /p^\N \mbZ)^{\times}/\{ \pm 1 \}$.
 We say that $(a_e)_e$ is {\bf of radii  $\rho$} if 
 the equality $\frac{\overline{2a_{e_i} +1}}{2} = \rho_i$ holds in $(\mbZ/p^\N \mbZ)/\{ \pm1 \}$ for every $i=1, \cdots, r$, where $e_i$ denotes the  unique open  edge of $\GR$ satisfying $e_i \ni b_{\circledast, i}$.
 For convenience, (regardless of whether $B_{\circledast}$ is empty or not) we shall refer to any balanced $(p, \N)$-edge numbering as being {\bf of radius $\emptyset$}. 
 \end{itemize}
 \ede
\SSP

Let $\rho$ be an element of 
  $((\mbZ/p^\N\mbZ)^{\times}/\{\pm 1\})^{\times r}$ (where  $\rho := \emptyset$ if $r =0$).
Denote by 
\begin{align} \label{eeQQ79}
\mr{Ed}_{p, \N, \mbG} \ \left(\text{resp.,}  \  \mr{Ed}_{p, \N, \mbG, \rho} \right)
\end{align}
the set of balanced $(p, \N)$-edge numberings   (resp., balanced $(p, \N)$-edge  numberings of radii  $\rho$) on $\mbG$.
The set 
 $\mr{Ed}_{p, \N, \mbG}$ decomposes into the disjoint union
 \begin{align} \label{YYYY}
\mr{Ed}_{p, \N, \mbG} = \coprod_{\rho \in ((\mbZ/p^\N\mbZ)^{\times}/\{\pm 1\})^{\times r}}  \mr{Ed}_{p, \N, \mbG, \rho}.
 \end{align}

Then, we can prove the following  proposition, which  
ensures
that the degree of $\Pi_{2, \N, g, r, \mbF_p}$ for each triple $(p, g, r)$ can be explicitly computed  by hand after  choosing  trivalent clutching data $\mbG$.

\SSP
\bpr \label{LLL001}
Let $\rho$ be an element of $((\mbZ /p^\N \mbZ)^\times /\{ \pm 1 \})^{\times r}$.
Denote by $\msX$ the totally degenerate curve over $k$ induced by  $\mbG$  (cf. Definition \ref{DD3WW}) and by $\mr{Op}^{^\mr{Zzz...}}_{\N, \msX}$ (resp., $\mr{Op}^{^\mr{Zzz...}}_{\N, \rho, \msX}$) the set of isomorphism classes of dormant $\mr{PGL}_2^{(\N)}$-opers (resp., dormant $\mr{PGL}_{2}^{(\N)}$-opers of radii $\rho$) on $\msX$.
Then, there exists a canonical bijection
\begin{align} \label{eeQQ30}
\mr{Op}^{^\mr{Zzz...}}_{\N, \msX} \isom \mr{Ed}_{p, \N, \mbG} \ \left(\text{resp.,} \ \mr{Op}^{^\mr{Zzz...}}_{\N, \rho, \msX} \isom \mr{Ed}_{p, \N, \mbG, \rho} \right).
\end{align}
In particular, the degree $\mr{deg}(\Pi_{2, \N, g, r, \mbF_p})$ (resp., $\mr{deg}(\Pi_{2, \N, \rho, g, r, \mbF_p})$)   satisfies the equality 
\begin{align} \label{eeQQ31}
\mr{deg}(\Pi_{2, \N, g, r, \mbF_p}) = \sharp (\mr{Ed}_{p, \N, \mbG}) \ \left(\text{resp.,} \ \mr{deg}(\Pi_{2, \N, \rho, g, r, \mbF_p}) = \sharp (\mr{Ed}_{p, \N, \mbG, \rho})  \right),
\end{align}
 and the value  $\sharp (\mr{Ed}_{p, \N, \mbG})$ (resp., $ \sharp (\mr{Ed}_{p, \N, \rho, \mbG})$) does not depend on the choice of $\mbG$, i.e., depends only on the type $(g, r)$.
 \epr
\begin{proof} 
The first  assertion follows from   Proposition \ref{YY188}, (i) and (ii), Theorem \ref{T04rr}.
Also, the generic \'{e}taleness of $\Pi_{g, r}$ proved in  Theorem \ref{c43} 
implies that $\mr{deg}(\Pi_{2, \N, g, r, \mbF_p})$ (resp., $\mr{deg}(\Pi_{2, \N, \rho, g, r, \mbF_p})$) coincides with  $\sharp (\mr{Op}^{^\mr{Zzz...}}_{\N, \msX})$ (resp., $\sharp (\mr{Op}^{^\mr{Zzz...}}_{\N, \rho, \msX})$),  so 
the second assertion follows directly  from the first assertion.
\end{proof}
\SSP

\SSP
\begin{exa}[Case of $(p, \N) = (3, 2)$]\label{Example33}
We here perform a few computations of  
the values $\mr{deg}(\Pi_{2, \N, g, r, \mbF_p})$ in   the case of $p=3$ and $\N =2$.
Note that
the set ${^\dagger}C_{2}$ (for $p=3$) is explicitly given by 
\begin{align} \label{eeQQ71}
{^\dagger}C_{2} = \{ & (0, 0, 0), (0, 2,2), (2, 0, 2), (2,2,0),
(2, 2, 2), \\
& (0, 3, 3), (3, 0, 3), (3, 3, 0), (3, 2,2), (2, 3, 2), (2, 2, 3)  \}.\notag
\end{align}

Now, let $\mbG_{0, 3}$ denote  the trivalent  clutching data   consisting of one vertex  and three edges $e_{0}, e'_{0}, e''_{0}$, which corresponds to the $3$-pointed projective line $\msP_0$
 (cf.  Figure 3 below).
The set  ${^\dagger}C_{2}$ may be identified with the set of balanced $(3, 2)$-edge numberings on $\mbG_{0, 3}$.
In particular, we have  $\mr{deg} (\Pi_{2, 2, 0, 3, \mbF_3}) =11$,
 which is consistent with \eqref{e47}.
 Regarding other pairs of nonnegative integers $(g, r)$, 
 we make the following observations by means of  \eqref{eeQQ71}:
 \begin{itemize}
 \item
 To begin with,  we consider the case of $g = 0$ and $r \geq 3$.
 Let  $\mbG_{0, r+2}$ (for $r \in \mbZ_{>0}$) denote trivalent clutching data of type $(0, r +2)$ whose underlying semi-graph is as  displayed  in Figure 4 below.
 Denote by  $u_r^{[t]}$ ($t = 0, 2, 3$)
 the cardinality of the subset of $\mr{Ed}_{3, 2, \mbG_{0, r+2}}$ consisting of
 balanced $(3, 2)$-edge numberings $(a_e)_e$ with $a_{e_{\sharp, r}} = t$.
  For example, we have $u_1^{[0]}= 3$, $u_1^{[2]} = 5$, and  $u_1^{[3]} = 3$.
 
Since
  $\mbG_{0, r+2}$ ($r >1$) may be constructed from both $\mbG_{0, r +1}$ and  $\mbG_{0, 3}$  in such a way that  the edges $e_{\sharp, r}$ and $e_{0}$ are attached to form a single edge,
  we obtain a recurrence relation for $u_r^{[t]}$ with $t =0$, as follows:
 \begin{align} \label{eeQQ96}
 u_{r}^{[0]} = \sum_{t= 0, 2, 3}  \sharp \left\{ (a_e)_e \in \mr{Ed}_{3, 2, \mbG_{0, 3}} \, \big| \, a_{e_0} = t, a_{e'_0} = 0 \right\} \cdot u_{r-1}^{[t]}   = u_{r-1}^{[0]} + u_{r-1}^{[2]} + u_{r-1}^{[3]}.  
 \end{align}
The  recurrence relations  for $t = 2, 3$ are expressed  in similar manners, and the following sequence of equalities holds:
 \begin{align} \label{eeQQ95}
 \begin{pmatrix}u^{[0]}_{r} \\ u^{[2]}_{r} \\ u^{[3]}_{r}\end{pmatrix}= \begin{pmatrix} 1 & 1 & 1 \\ 1 & 3 & 1 \\ 1 & 1 & 1\end{pmatrix}
 \begin{pmatrix}u^{[0]}_{r-1} \\ u^{[2]}_{r-1} \\ u^{[3]}_{r-1}\end{pmatrix} = \cdots =\begin{pmatrix} 1 & 1 & 1 \\ 1 & 3 & 1 \\ 1 & 1 & 1\end{pmatrix}^{r-1} \begin{pmatrix}u^{[0]}_{1} \\ u^{[2]}_{1} \\ u^{[3]}_{1}\end{pmatrix}
 = \frac{1}{3} \cdot \begin{pmatrix} 1+ 2 \cdot 4^r \\ -1 + 4^{r+1} \\ 1 + 2 \cdot 4^r \end{pmatrix}.
 \end{align}
 
 Thus, 
 it follows from \eqref{eeQQ31} that  the degree of $\Pi_{2, \N, 0, r+2, \mbF_p}$ satisfies 
 \begin{align} \label{eeQQ94}
 \mr{deg}(\Pi_{2, \N, 0, r+2, \mbF_p}) = u_{r}^{[0]} + u_r^{[2]} + u_r^{[3]} = \frac{1 + 2 \cdot 4^{r+1}}{3}.
 \end{align}
 \hspace{-5mm} 
 \includegraphics[width=16cm,bb=0 0 1212 220,clip]{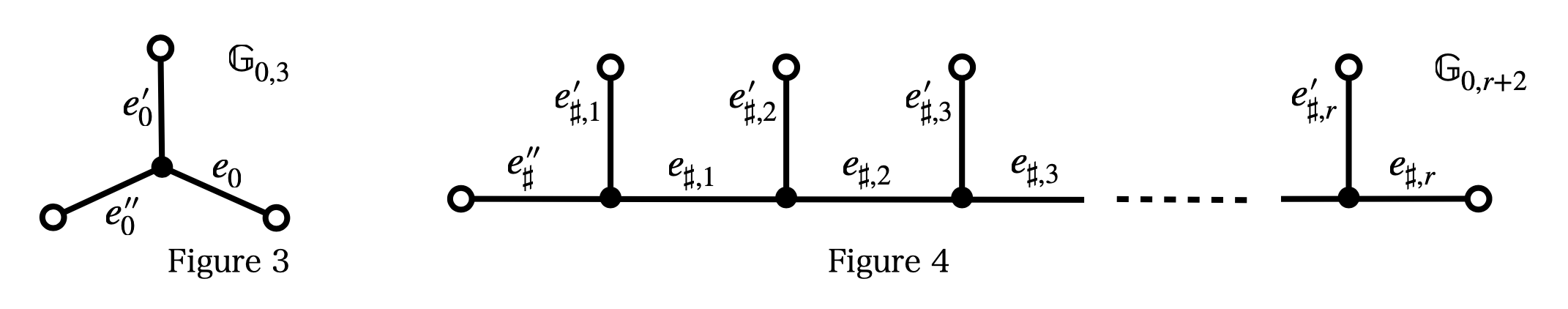}

\vspace{5mm}

 \item
 Next, we discuss the case of $g \geq 1$ and $r=1$.
 Denote by $\mbG_{g, 1}$ ($g \in \mbZ_{>0}$) clutching data of type $(g, 1)$   whose underlying semi-graph is as displayed in Figure 5 below.
 Also, denote by $v_{g}^{[t]}$ ($t =0, 2, 3$) the cardinality of the subset of $\mr{Ed}_{3, 2, \mbG_{g, 1}}$ consisting of balanced $(3, 2)$-edge numberings $(a_e)_e$ with $e_{\flat, g} = t$.
 For example, we have $v_{1}^{[0]} = 3$, $v_{1}^{[2]} = 1$, and $v_{1}^{[3]} = 1$.
 
 If $\mbG_{1, 2}$ denotes the trivalent clutching data   displayed in Figure 6, then
  $\mbG_{g, 1}$ ($g > 1$) may be obtained from both 
 $\mbG_{g, 1}$ and $\mbG_{1, 2}$
   in such a way that the edges $e_{\flat, g}$ and $e_1$ are attached to form a single edge.
 It follows that we obtain a recurrence relation for $v_{g}^{[t]}$ with $t = 0$, as follows:
 \begin{align} \label{eeQQ93}
 v_{g}^{[0]} = \sum_{t= 0, 2, 3}  \sharp \left\{ (a_e)_e \in \mr{Ed}_{3, 2, \mbG_{1, 2}} \, \big| \, a_{e_1} = t, a_{e'''_1} = 0 \right\} \cdot v_{r-1}^{[0]}   =  3 \cdot v_{g-1}^{[0]} + v_{g-1}^{[2]} + v_{g-1}^{[3]}.  
 \end{align}
 The recurrence relations for $t = 2, 3$ are expressed  in similar manners,
and  the following sequence of equalities holds:
 \begin{align} \label{eeQQ92}
 \begin{pmatrix}v^{[0]}_{g} \\ v^{[2]}_{g} \\ v^{[3]}_{g}\end{pmatrix}= \begin{pmatrix} 3 & 1 & 1 \\ 1 & 5 & 1 \\ 1 & 1 & 3 \end{pmatrix}
  \begin{pmatrix}v^{[0]}_{g-1} \\ v^{[2]}_{g-1} \\ v^{[3]}_{g-1}\end{pmatrix}
  = \cdots = \begin{pmatrix} 3 & 1 & 1 \\ 1 & 5 & 1 \\ 1 & 1 & 3 \end{pmatrix}^{g-1} \begin{pmatrix} v^{[0]}_{1} \\ v^{[2]}_{1} \\ v^{[3]}_{1}\end{pmatrix} = \begin{pmatrix} 2^{g-1} + 3^{g-1} + 6^{g-1} \\ -3^{g-1} + 2 \cdot 6^{g-1} \\ -2^{g-1} + 3^{g-1} + 6^{g-1}\end{pmatrix}.
 \end{align}
 Thus, the degree of $\Pi_{2, \N, g, 1, \mbF_p}$ satisfies  
 \begin{align} \label{eeQQ91}
 \mr{deg}(\Pi_{2, \N, g, 1, \mbF_p}) = v_g^{[0]} + v_g^{[2]} + v_g^{[3]} = 3^{g-1} + 4 \cdot 6^{g-1}.
 \end{align}
 
 Moreover,  note that 
 trivalent clutching data of type $(g, 0)$ ($g >1$) may be obtained by attaching 
 the respective unique open  edges of $\mbG_{g-1, 1}$ and $\mbG_{1, 1}$ (i.e., ``$\mbG_{g, 1}$" in the case of $g =1$) to form a single edge, so the following equalities hold: 
 \begin{align} \label{eeQQ90}
 \mr{deg}(\Pi_{2, \N, g, 0, \mbF_p}) = v_1^{[0]} \cdot v_{g-1}^{[0]} + v_1^{[2]} \cdot v_{g-1}^{[2]} + v_1^{[3]} \cdot v_{g-1}^{[3]} = 2^{g-1} + 3^{g-1} + 6^{g-1}.
 \end{align}
 \hspace{-5mm} 
 \includegraphics[width=16cm,bb=0 0 1212 220,clip]{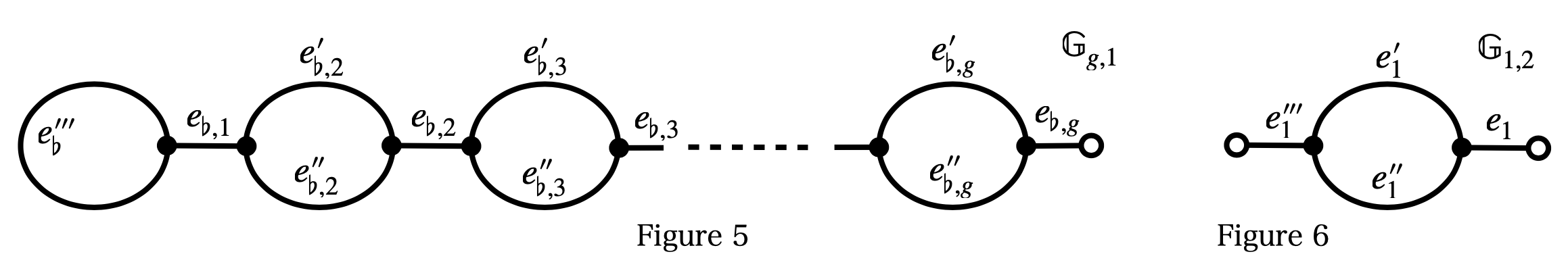}
 
 \vspace{5mm}
 
 \item
 Finally, we deal with  general $g$ and $r$.
 Observe  that some  trivalent clutching data of type $(g, r+1)$ ($g >1$) may be obtained from both $\mbG_{0, r+2}$ and $\mbG_{g, 1}$ by attaching 
 the respective last edges  to form a single edge.
 Hence,   we have 
 \begin{align} \label{eeQQ99}
 \mr{deg}(\Pi_{2, \N, g, r+1, \mbF_p}) = u_r^{[0]} \cdot v_{g}^{[0]} +u_r^{[2]} \cdot v_{g}^{[2]}+u_r^{[3]} \cdot v_{g}^{[3]} = 3^{g-1} + 6^{g-1} \cdot 4^{r+1}.
 \end{align}
 The resulting equality $ \mr{deg}(\Pi_{2, \N, g, r+1, \mbF_p}) = 3^{g-1} + 6^{g-1} \cdot 4^{r+1}$ 
  is true even when $g=0$ (resp., $r = 0$)  because of \eqref{eeQQ94} (resp., \eqref{eeQQ91}).

 \hspace{-5mm} 
 \includegraphics[width=16cm,bb=0 0 1212 240,clip]{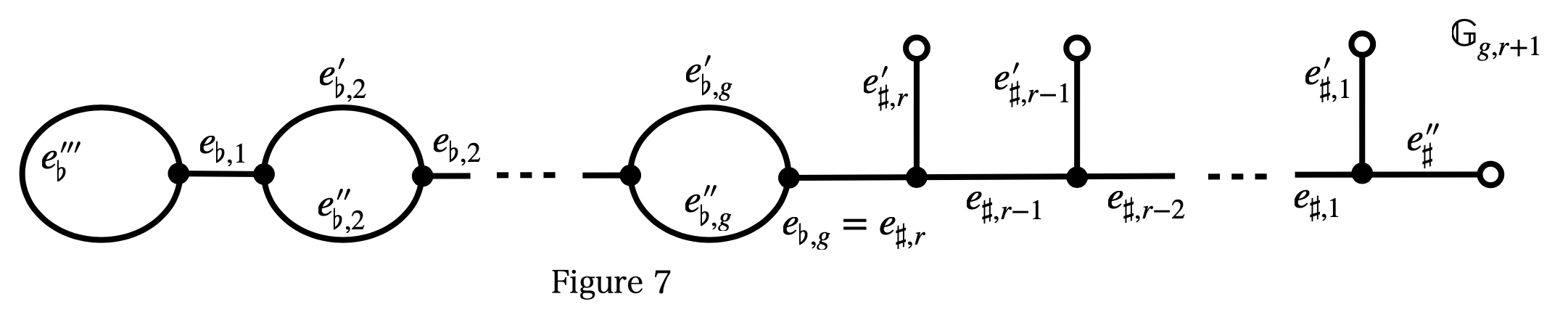}
 \end{itemize}
\end{exa}

\LSP
\subsection{Ehrhart quasi-polynomial counting dormant $\mr{PGL}_2^{(\N)}$-opers} \label{SS10223}

Next, 
in order to apply Ehrhart's theory,
we 
translate  balanced $(p, \N)$-edge numberings into
lattice points inside 
a certain generalized (rational) polytope.
 The main result shows that the values $\mr{deg}(\Pi_{2, \N, g, r, \mbF_p})$ may be expressed by using  a quasi-polynomial in $p$ (cf. Theorem \ref{Thm5}).

To begin with, we introduce the notion of a 
(rational) constructible subset of an $\mbR$-vector space.

\SSP
\bde \label{Def892}
Let  $V$ be a finite-dimensional $\mbR$-vector space equipped 
 with  a choice of a lattice  $L \left( \subseteq V \right)$, i.e., $L \otimes_\mbZ \mbR = V$.
A subset $\mcP$ of $V$ is called  {\bf constructible} 
 if there exist a finite number of 
 convex polytopes $\mcQ_1, \cdots, \mcQ_\ell$ in $V$
 satisfying the equality 
  $\mcP = \bigcup_{i=1}^\ell (\mcQ_i \setminus \partial \mcQ_i)$, where each $\partial \mcQ_i$ denotes the boundary  of $\mcQ_i$. 

 Moreover,  a constructible set $\mcP := \bigcup_{i=1}^\ell (\mcQ_i \setminus \partial \mcQ_i)$ is called {\bf rational} (with respect to $L$) if 
 one can choose  all such polytopes $\mcQ_1, \cdots, \mcQ_\ell$ as rational 
 (with respect to $L$) in the usual sense.
\ede
\SSP

The following property on constructible subsets can be  immediately verified, so the proofs are omitted.

\SSP
\bpr \label{Prop9920}
Let 
$V$ and $L$ be as in Definition \ref{Def892}, and let 
$\mcP_1$, $\mcP_2$  be  constructible subsets (resp., rational constructible subsets) of $V$.
Then, $\mcP_1 \cup \mcP_2$, $\mcP_1 \cap \mcP_2$, and $\mcP_1 \setminus \mcP_2$ are all constructible    (resp.,  constructible and rational).
\epr
\SSP

Given an element  $s$  in the unit interval $[0, 1] := \{ x \in \mbR \, | \, 0 \leq x \leq 1 \}$,
we shall set
$\langle s \rangle_{+1} := s$ and  $\langle s \rangle_{-1} := 1-s$.
Also, we write  $E_0 := \{1, 2, 3 \}$ and write
\begin{align} \label{eeQQ4}
\mr{Sgn}_\N
\end{align}
for the set of collections  $\mpa := (\mpa_{i, j})_{i \in E_0, j \in \{1, \cdots, \N \}}$
such that $\mpa_{i, j} \in \{ +1, -1 \}$ for any $i, j$, and that $(\mpa_{1, \N}, \mpa_{2, \N}, \mpa_{3, \N}) = (+1, +1, +1)$.

For each finite set $I$, we shall denote by 
$\mbR^{I}$ 
the set of all real-valued functions  $I \rightarrow \mbR$ on
$I$;
  it forms a $\sharp (I)$-dimensional $\mbR$-vector space, 
and the set of integer-valued functions $\mbZ^{I}$ forms  
 its   lattice. 
Each element of $\mbR^{I}$ (resp., $\mbZ^{I}$) may be identified with a collection $(s_{i})_{i \in I}$ with  $s_{i} \in \mbR$ (resp., $s_{i} \in \mbZ$).

To each   element $\mpa := (\mpa_{i, j})_{i, j} \in \mr{Sgn}_\N$, we associate the  subset
\begin{align} \label{eeQQ3}
\mcP_1 (\mpa)
\end{align}
of $\mbR^{E_0} \left(= \mbR^{E_0 \times \{1 \}} \right)$  consisting of 
collections  $(s_{i, 1})_{i \in E_0}$ that satisfies $s_{i, 1} \in [0, 1]$ for any $i$, and satisfies
 the following conditions:
\begin{itemize}
\item
 $\langle s_{1, 1} \rangle_{\mpa_{1, 1}} + \langle s_{2, 1} \rangle_{\mpa_{2, 1}} + \langle  s_{3, 1} \rangle_{\mpa_{3, 1}}< 1$,
$0 \leq \langle s_{1, 1}\rangle_{\mpa_{1, 1}} \leq \langle s_{2, 1} \rangle_{\mpa_{2, 1}} + \langle  s_{3, 1}\rangle_{\mpa_{3, 1}}$,
 $0 \leq\langle s_{2, 1}\rangle_{\mpa_{2, 1}} \leq  \langle s_{3, 1} \rangle_{\mpa_{3, 1}} + \langle s_{1, 1}\rangle_{\mpa_{1, 1}}$, and 
  $0 \leq \langle s_{3, 1} \rangle_{\mpa_{3, 1}} \leq  \langle s_{1, 1} \rangle_{\mpa_{1, 1}} + \langle s_{2, 1} \rangle_{\mpa_{2, 1}}$.
\end{itemize}
Moreover, when $\N \geq 2$,  we shall set
\begin{align}
\mcP_2 (\mpa)
\end{align}
to be the subset of $\mbR^{E_0 \times \{2, \cdots, \N \}}$ consisting of collections $(s_{i, j})_{i \in E_0, j \in \{2, \cdots, \N \}}$ that satisfies $s_{i, j} \in [0, 1]$ for any $i$ and $j$, and satisfies the following conditions:
\begin{itemize}
\item
If $(\mpa_{1, j}/\mpa_{1, j-1}, \mpa_{2, j}/\mpa_{2, j-1}, \mpa_{3, j}/\mpa_{3, j-1}) = (+1, +1, +1)$,
then  
$\langle s_{1, j} \rangle_{\mpa_{1, j}} + \langle s_{2, j} \rangle_{\mpa_{2, j}} + \langle  s_{3, j} \rangle_{\mpa_{3, j}}< 1$,
$0\leq\langle s_{1, j}\rangle_{\mpa_{1, j}} \leq \langle s_{2, j} \rangle_{\mpa_{2, j}} + \langle  s_{3, j}\rangle_{\mpa_{3, j}}$,
$0\leq\langle s_{2, j}\rangle_{\mpa_{2, j}} \leq  \langle s_{3, j} \rangle_{\mpa_{3, j}} + \langle s_{1, j}\rangle_{\mpa_{1, j}}$, and 
  $0\leq \langle s_{3, j} \rangle_{\mpa_{3, j}} \leq  \langle s_{1, j} \rangle_{\mpa_{1, j}} + \langle s_{2, j} \rangle_{\mpa_{2, j}}$;
  \item
If $(\mpa_{1, j}/\mpa_{1, j-1}, \mpa_{2, j}/\mpa_{2, j-1}, \mpa_{3, j}/\mpa_{3, j-1}) = (-1, +1, +1)$,
then  
$\langle s_{1, j} \rangle_{\mpa_{1, j}} + \langle s_{2, j} \rangle_{\mpa_{2, j}} + \langle  s_{3, j} \rangle_{\mpa_{3, j}}< 1$,
$0 <\langle s_{1, j}\rangle_{\mpa_{1, j}} \leq \langle s_{2, j} \rangle_{\mpa_{2, j}} + \langle  s_{3, j}\rangle_{\mpa_{3, j}}$,
$0\leq\langle s_{2, j}\rangle_{\mpa_{2, j}} <  \langle s_{3, j} \rangle_{\mpa_{3, j}} + \langle s_{1, j}\rangle_{\mpa_{1, j}}$, and 
  $0\leq \langle s_{3, j} \rangle_{\mpa_{3, j}} <  \langle s_{1, j} \rangle_{\mpa_{1, j}} + \langle s_{2, j} \rangle_{\mpa_{2, j}}$;
    \item
If $(\mpa_{1, j}/\mpa_{1, j-1}, \mpa_{2, j}/\mpa_{2, j-1}, \mpa_{3, j}/\mpa_{3, j-1}) = (+1, -1, +1)$,
then  
$\langle s_{1, j} \rangle_{\mpa_{1, j}} + \langle s_{2, j} \rangle_{\mpa_{2, j}} + \langle  s_{3, j} \rangle_{\mpa_{3, j}}< 1$,
$0\leq \langle s_{1, j}\rangle_{\mpa_{1, j}} < \langle s_{2, j} \rangle_{\mpa_{2, j}} + \langle  s_{3, j}\rangle_{\mpa_{3, j}}$,
$0<\langle s_{2, j}\rangle_{\mpa_{2, j}} \leq  \langle s_{3, j} \rangle_{\mpa_{3, j}} + \langle s_{1, j}\rangle_{\mpa_{1, j}}$, and
  $0\leq \langle s_{3, j} \rangle_{\mpa_{3, j}} <  \langle s_{1, j} \rangle_{\mpa_{1, j}} + \langle s_{2, j} \rangle_{\mpa_{2, j}}$;
  \item
  If $(\mpa_{1, j}/\mpa_{1, j-1}, \mpa_{2, j}/\mpa_{2, j-1}, \mpa_{3, j}/\mpa_{3, j-1}) = (+1, +1, -1)$,
then  
$\langle s_{1, j} \rangle_{\mpa_{1, j}} + \langle s_{2, j} \rangle_{\mpa_{2, j}} + \langle  s_{3, j} \rangle_{\mpa_{3, j}}< 1$,
$0\leq\langle s_{1, j}\rangle_{\mpa_{1, j}} < \langle s_{2, j} \rangle_{\mpa_{2, j}} + \langle  s_{3, j}\rangle_{\mpa_{3, j}}$,
$0\leq \langle s_{2, j}\rangle_{\mpa_{2, j}} <  \langle s_{3, j} \rangle_{\mpa_{3, j}} + \langle s_{1, j}\rangle_{\mpa_{1, j}}$, and
  $0<\langle s_{3, j} \rangle_{\mpa_{3, j}} \leq  \langle s_{1, j} \rangle_{\mpa_{1, j}} + \langle s_{2, j} \rangle_{\mpa_{2, j}}$;
    \item
  If $(\mpa_{1, j}/\mpa_{1, j-1}, \mpa_{2, j}/\mpa_{2, j-1}, \mpa_{3, j}/\mpa_{3, j-1}) = (-1, -1, +1)$,
then  
$\langle s_{1, j} \rangle_{\mpa_{1, j}} + \langle s_{2, j} \rangle_{\mpa_{2, j}} + \langle  s_{3, j} \rangle_{\mpa_{3, j}}\leq 1$,
$0<\langle s_{1, j}\rangle_{\mpa_{1, j}} \leq \langle s_{2, j} \rangle_{\mpa_{2, j}} + \langle  s_{3, j}\rangle_{\mpa_{3, j}}$,
$0<\langle s_{2, j}\rangle_{\mpa_{2, j}} \leq  \langle s_{3, j} \rangle_{\mpa_{3, j}} + \langle s_{1, j}\rangle_{\mpa_{1, j}}$, and 
  $0\leq \langle s_{3, j} \rangle_{\mpa_{3, j}} <  \langle s_{1, j} \rangle_{\mpa_{1, j}} + \langle s_{2, j} \rangle_{\mpa_{2, j}}$;
      \item
  If $(\mpa_{1, j}/\mpa_{1, j-1}, \mpa_{2, j}/\mpa_{2, j-1}, \mpa_{3, j}/\mpa_{3, j-1}) = (-1, +1, -1)$,
then  
$\langle s_{1, j} \rangle_{\mpa_{1, j}} + \langle s_{2, j} \rangle_{\mpa_{2, j}} + \langle  s_{3, j} \rangle_{\mpa_{3, j}}\leq 1$,
$0<\langle s_{1, j}\rangle_{\mpa_{1, j}} \leq \langle s_{2, j} \rangle_{\mpa_{2, j}} + \langle  s_{3, j}\rangle_{\mpa_{3, j}}$,
$0\leq\langle s_{2, j}\rangle_{\mpa_{2, j}} <  \langle s_{3, j} \rangle_{\mpa_{3, j}} + \langle s_{1, j}\rangle_{\mpa_{1, j}}$, and 
  $0<\langle s_{3, j} \rangle_{\mpa_{3, j}} \leq  \langle s_{1, j} \rangle_{\mpa_{1, j}} + \langle s_{2, j} \rangle_{\mpa_{2, j}}$;
      \item
  If $(\mpa_{1, j}/\mpa_{1, j-1}, \mpa_{2, j}/\mpa_{2, j-1}, \mpa_{3, j}/\mpa_{3, j-1}) = (+1, -1, -1)$,
then  
$\langle s_{1, j} \rangle_{\mpa_{1, j}} + \langle s_{2, j} \rangle_{\mpa_{2, j}} + \langle  s_{3, j} \rangle_{\mpa_{3, j}}\leq 1$,
$0\leq\langle s_{1, j}\rangle_{\mpa_{1, j}} < \langle s_{2, j} \rangle_{\mpa_{2, j}} + \langle  s_{3, j}\rangle_{\mpa_{3, j}}$,
$0<\langle s_{2, j}\rangle_{\mpa_{2, j}} \leq  \langle s_{3, j} \rangle_{\mpa_{3, j}} + \langle s_{1, j}\rangle_{\mpa_{1, j}}$, and
  $0<\langle s_{3, j} \rangle_{\mpa_{3, j}} \leq  \langle s_{1, j} \rangle_{\mpa_{1, j}} + \langle s_{2, j} \rangle_{\mpa_{2, j}}$;
       \item
  If $(\mpa_{1, j}/\mpa_{1, j-1}, \mpa_{2, j}/\mpa_{2, j-1}, \mpa_{3, j}/\mpa_{3, j-1}) = (-1, -1, -1)$,
then  
$\langle s_{1, j} \rangle_{\mpa_{1, j}} + \langle s_{2, j} \rangle_{\mpa_{2, j}} + \langle  s_{3, j} \rangle_{\mpa_{3, j}}\leq 1$,
$0<\langle s_{1, j}\rangle_{\mpa_{1, j}} < \langle s_{2, j} \rangle_{\mpa_{2, j}} + \langle  s_{3, j}\rangle_{\mpa_{3, j}}$,
$0<\langle s_{2, j}\rangle_{\mpa_{2, j}} <  \langle s_{3, j} \rangle_{\mpa_{3, j}} + \langle s_{1, j}\rangle_{\mpa_{1, j}}$, and
  $0<\langle s_{3, j} \rangle_{\mpa_{3, j}} <  \langle s_{1, j} \rangle_{\mpa_{1, j}} + \langle s_{2, j} \rangle_{\mpa_{2, j}}$.
\end{itemize}  
On the other hand, we write $\mcP_2 (\mpa) = \mbR^{E_0 \times\{2, \cdots, \N \}} := \{ 0\}$ when $\N = 1$.
Note that the  subsets $\mcP_1 (\mpa)$, $\mcP_2 (\mpa)$ are invariant under permutations of the elements in  $E_0$, and specify  full-dimensional rational constructible sets in $\mbR^{E_0}$ and $\mbR^{E_0 \times \{2, \cdots, \N \}}$, respectively.

 Let us take 
  an element  $s := (s_1, s_2, s_3)$ of ${^\dagger}C_{\N}$ (cf. \eqref{eeQQ112}), which 
   induces a collection $(s'_{i, j})_{i \in E_0, j \in \{1, \cdots, \N \}}$ of elements of $\{0, \cdots, p-1 \}$ uniquely determined by  the condition that
   $s_{i} = \sum_{j=0}^{\N -1} s'_{i, j+1}\cdot p^j$ for every $i=1, 2, 3$.
 This collection moreover  induces an element ${^\dagger}s := (s_{i, j})_{i, j}$  of $\mbR^{E_0 \times \{1, \cdots, \N \}}$ defined as  $s_{i, j} := s'_{i, j}$ if $j=1$,  and $s_{i, j} := s'_{i, j}$ (resp., $s_{i, j} := s'_{i, j}+1$) if $j \geq 2$ and $\mpa_{i, j-1} = +1$ (resp., $j \geq 2$ and $\mpa_{i, j-1} = -1$).
Then, by the various definitions involved, we obtain the following assertion.

\SSP
\ble \label{Lem78u8}
Under the natural identification $\mbR^{E_0} \times \mbR^{E_0 \times \{2, \cdots, \N \}} = \mbR^{E_0 \times \{1, \cdots, \N \}}$,
the  assignment  $s \mapsto {^\dagger}s$ determines 
 a bijection
\begin{align} \label{eeQQ111}
{^\dagger}C_{\N}  \isom \coprod_{\mpa \in \mr{Sgn}_\N} ((p-1)
\mcP_1 (\mpa) \cap \mbZ^{E_0}) \times (p\mcP_1 (\mpa) \cap \mbZ^{E_0 \times \{2, \cdots, \N \}}),
\end{align}
where 
for a polytope $\mcP$ and a nonnegative integer $m$ we denote by $m \mcP$
 the polytope $\mcP$ dilated by the factor $m$.
\ele
\begin{proof}
Given each $\mpa \in \mr{Sgn}_\N$, we set ${^\dagger}C_{\N} (\mpa)$ to be the subset of ${^\dagger}C_{\N}$ consisting of elements $(s_1, s_2, s_3)$ satisfying
 that, for any $i$ and $j$,  the equality $\mpa_{i, j} = +1$ holds  precisely when 
 $[s_i]_j \leq \frac{p^{j}-1}{2}$.
 Then,  we have ${^\dagger}C_{\N}  = \coprod_{\mpa \in \mr{Sgn}_\N}{^\dagger}C_{\N} (\mpa)$, and 
it is immediately verified  that the assignment $s \mapsto {^\dagger}s$ defines a bijection
 \begin{align}
 {^\dagger}C_{\N} (\mpa) \isom ((p-1)
\mcP_1 (\mpa) \cap \mbZ^{E_0}) \times (p\mcP_1 (\mpa) \cap \mbZ^{E_0 \times \{2, \cdots, \N \}}).
 \end{align}
 By taking the  union 
 of these bijections 
  for various $\mpa$'s, we obtain the desired bijection.
\end{proof}
\SSP

Next, let
$\mbG$ and $\GR$ be as in \S\,\ref{SS10220}.
We denote by
\begin{align} \label{DFG1}
\mr{Sgn}_{\N, \mbG}
\end{align}
the set of collections $\mpa := (\mpa_{i, j})_{i \in E, j \in \{1, \cdots, \N \}}$ such that $\mpa_{i, j} \in \{ + 1, - 1 \}$ for $i, j$,
 and that
 $\mpa_{i, \N} = +1$ for any $i$.
 Each $\mfa  := (\mpa_{i, j})_{i, j} \in \mr{Sgn}_{\N, \mbG}$ determines  the subset
\begin{align} \label{eeQQ25}
\mcP_{1, \mbG} (\mpa) \ \left(\text{resp.,} \ \mcP_{2, \mbG}(\mpa) \right)
\end{align}
of 
the $\mbR$-vector space $\mbR^E$ (resp.,  $\mbR^{E \times \{2, \cdots, \N \}}$) consisting of real-valued functions 
$(s_{e, 1})_{e \in E}$ (resp., $(s_{e, j})_{e \in E, j \in \{2, \cdots, \N \}}$)
 on 
 $E$ (resp.,
 $E \times \{2, \cdots, \N \}$) such that,
 for each $v \in V$, the collection $(s_{\zeta (b), 1})_{b \in B_v}$  (resp., $(s_{\zeta (b), j})_{b \in B_v, j \in \{2, \cdots, \N \}}$)
   belongs to $\mcP_1 ((\mpa_{\zeta (b), 1})_{b \in B_v})$  (resp., $\mcP_2 ((\mpa_{\zeta (b), j})_{b \in B_v, j \in \{2, \cdots, \N \}})$) under some (and hence, any) identification $E_0 = B_v$.
We set $\mcP_{2, \mbG} (\mpa) = \mbR^{E \times \{2, \cdots, \N \}} := \{ 0 \}$ when $\N =1$.
Note that the set $\mcP_{1, \mbG}(\mpa)$
for $\N =1$, forming  a rational convex polytope,   
 was introduced  in ~\cite[Definition 2.3]{LO}.

\SSP
\bpr \label{Prop4378}
\begin{itemize}
\item[(i)]
Both 
$\mcP_{1, \mbG} (\mpa)$ and $\mcP_{2, \mbG} (\mpa)$ (for any $\mpa \in \mr{Sgn}_\N$) form  full-dimensional  rational constructible subsets of
$\mbR^E$ and $\mbR^{E \times \{2, \cdots, \N \}}$, respectively.
\item[(ii)]
Write  $\msX$ for  the totally degenerate curve induced by $\mbG$ (cf. Definition \ref{DD3WW}) and $\mr{Op}^{^\mr{Zzz...}}_{\N, \msX}$ for  the set of isomorphism classes of dormant $\mr{PGL}_2^{(\N)}$-opers on $\msX$ (defined as in Proposition \ref{LLL001}).
Then, there exists a canonical bijection of sets
\begin{align}
\mr{Op}^{^\mr{Zzz...}}_{\N, \msX} \isom \coprod_{\mpa \in \mr{Sgn}_{\N, \mbG}} ((p-1)\mcP_{1, \mbG} (\mpa) \cap  \mbZ^E) \times (p\mcP_{2, \mbG} (\mpa)\cap \mbZ^{E \times \{2, \cdots, \N \}}).
\end{align}
\end{itemize}
\epr
\begin{proof}
First, let us  consider the case of $\mcP_{2, \mbG}(\mpa)$ in assertion (i).
For each $v \in V$, we shall  write $\pi_v$ for the projection $\mbR^{E \times \{2, \cdots, \N \}} \migisurj \mbR^{B_v \times \{2, \cdots, \N \}}$ given by
$(s_{e, j})_{e \in E, j \in \{2, \cdots, \N \}} \mapsto (s_{\zeta (b), j})_{b\in B_v, j \in \{2, \cdots, \N \}}$.
By using some  identification  $E_0 = B_v$, we shall regard $\mcP_2 (\mpa)$ as a  full-dimensional rational constructible subset of $\mbR^{B_v \times \{2, \cdots, \N \}}$.
It follows from the definition of $\mcP_{2, \mbG}(\mpa)$ that  the equality $\mcP_{2, \mbG} (\mpa) = \bigcap_{v \in V} \pi_v^{-1} (\mcP_2 (\mpa))$ holds.
Then, the assertion follows from
Proposition \ref{Prop9920}.
Since the proof for $\msP_{1, \mbG}(\mpa)$ is entirely similar, we finish the proof of assertion (i).

Also,  assertion (ii) follows from 
Theorem \ref{LLL001} and Lemma \ref{Lem78u8}.
\end{proof}
\SSP

Here, recall a result in the theory of lattice-point counting for rational  polytopes by E. Ehrhart.
Let $V$ and $L$ be as in Definition \ref{Def892}.
Given  a rational convex polytope $\mcP$ in $V$, we denote by 
 \begin{align} \label{eeQQ100}
i_\mcP  : \mbZ_{\geq 0} \rightarrow \mbZ_{\geq 0}
\end{align}
the lattice-point counting function for $\mcP$, i.e., the function which, to any nonnegative integer $m$, assigns the cardinality
of lattice points in $\mcP$ dilated by $m$:
\begin{align} \label{eeQQ101}
i_\mcP (m) := \sharp \left(m \mcP \cap L\right).
\end{align}

Suppose that $\mcP$ is of dimension $d$, i.e., $d$ is the dimension of the  smallest affine space of $V$ containing $\mcP$.
Then, E. Ehrhart proved (cf. ~\cite{Ehr1}, ~\cite{Ehr2}, ~\cite{Ehr3}) that the function $i_\mcP$ is a quasi-polynomial function of degree $d$ with coefficients in $\mbQ$.
That is to say, there exist a positive integer $\ell$ and a (unique) sequence of polynomials
\begin{align}
H_\mcP (t) := (H_{\mcP, s} (t))_{s \in \mbZ},
\end{align}
where $H_{\mcP, s} (t)$ denotes a polynomial of degree $d$ with coefficients in $\mbQ$ (i.e., an element of $\mbQ [t]$), such that 
\begin{align}
i_\mcP (m) = H_{\mcP, s} (m) \left(=:H_{\mcP} (m) \right)\ \text{for} \ m \equiv s\ (\mr{mod} \ \ell).
\end{align}
The sequence of polynomials  $H_\mcP (t)$ is called the {\it Ehrhart quasi-polynomial} of $\mcP$.
If  $\mr{Per} (\mcP)$  denotes the minimum period of $H_\mcP (t)$, then any positive integer $\ell$ as above is divided by  $\mr{Per} (\mcP)$.

One may immediately verify that the existence of such a quasi-polynomial can be extended to the case of rational constructible subsets of $V$.
In particular, by applying this result to $\mcP_{1, \mbG} (\mpa)$ and $\mcP_{2, \mbG} (\mpa)$ (for various $\mpa$'s), we obtain the following assertion, generalizing ~\cite[Theorem 2.1]{LO}.

\SSP
\bt \label{Thm5}
There exists 
a  quasi-polynomial  $H_{\N, \mbG} (t) := (H_{\N, \mbG, s} (t))_{s \in \mbZ}$ with coefficients in $\mbQ$ of degree $(3g-3+2r) \cdot \N$ 
such that its  minimum  period divides $4$ and satisfies
 the equalities
\begin{align} \label{eeQQ110}
\mr{deg}(\Pi_{2, \N, g, r, \mbF_p}) = \sharp (\mr{Ed}_{p, \N, \mbG})  = H_{\N, \mbG} (p)
\end{align}
for every odd prime $p$.
Moreover, the odd constituents of  $H_{\N, \mbG} (t)$ do not depend on the choice of $\mbG$ (i.e., depend only on the  type $(g, r)$ and the positive integer $\N$).
\et
\begin{proof}
The existence of a quasi-polynomial $H_{\N, \mbG} (t)$ satisfying 
the equalities \eqref{eeQQ110}  follows from Theorem \ref{c43} and Propositions \ref{LLL001},  \ref{Prop4378}.

Next, let us consider  periods of $H_{\N, \mbG} (t)$.
For each $\mpa := (\mpa_{i, j})_{i, j} \in \mr{Sgn}_{\N, \mbG}$,
we have $\mcP_{2, \mbG} (\mpa)= \prod_{j' =2}^\N \mcP_{2, \mbG, j'}(\mpa)$, where
 $\mcP_{2, \mbG, j'} (\mpa)$ ($j' =2, \cdots, \N$) denotes  the image of $\mcP_{2, \mbG} (\mpa)$ via the natural projection $\mbR^{E \times \{2, \cdots, \N \}} \twoheadrightarrow \mbR^{E \times \{ j' \}}$.
 The closure 
  $\overline{\mcP}_{2, \mbG, j'} (\mpa) \left(\subseteq\mbR^{E \times \{ j' \}}\right)$ of $\mcP_{2, \mbG, j'} (\mpa)$, which is  obtained from  $\mcP_{2, \mbG, j'} (\mpa)$ by adjoining its boundary, 
 decomposes as a disjoint union of rational convex polytopes $\mcQ_1, \mcQ_2, \cdots, \mcQ_M$ (for some $M \geq 1$).
After possibly applying reflections through coordinate axes and lattice translations,
 each $\mcQ_\ell$ coincides with  the polytope ``$\msP_G$" introduced in ~\cite[Definition 2.3]{LO} (or ~\cite[Definition 3.1]{Wak2})  for  $G = \mbG$.
Hence, it follows from  ~\cite[Lemma 3.4]{LO} that
the coordinates of any vertex of $\mcQ_\ell$  are equal to either $0$, $\frac{1}{2}$, or $\frac{1}{4}$.
This fact together with  a well-known fact of Ehrhart's theory implies
that the minimum periods of the Ehrhart quasi-polynomials of $\mcQ_\ell$ and its faces  divide $4$.
It shows  that
 $\mcP_{2, \mbG} (\mpa) \mid 4$, and the same is true for $\mcP_{1, \mbG} (\mpa)$ because of 
 a similar (but relatively simpler) argument.
 Therefore, by the bijection resulting from Proposition \ref{Prop4378}, (ii),  
  the minimal period of $H_{\N, \mbG} (t)$   turns out to 
  divide $4$.

Finally,  we prove 
 the last assertion.
 According to Dirichlet's theorem on arithmetic progressions,
there are infinitely many primes $p'$ with $p'  \equiv s$ (mod $4$) for every  fixed odd integer $s$.
In particular, 
if $H'_{\N, \mbG} (t) := (H'_{\N, \mbG, s} (t))_{s \in \mbZ}$ is another  quasi-polynomial of period $4$ satisfying the equalities \eqref{eeQQ110}, then $H_{\N, \mbG} (m) = H'_{\N, \mbG} (m)$  for infinitely many $m$'s with $m \equiv s$ (mod $4$).
It follows that $H'_{\N, \mbG, s} (t)$ must coincide with 
 $H_{\N, \mbG, s} (t)$ as a polynomial  whenever $s$ is odd.
 This completes the proof of this assertion.
\end{proof}


\LSP
\subsection{Counting $2$nd order ODE's with a full set of solutions} \label{SSfff}
As a consequence of the results obtained so far, a partial answer to the  question  displayed at the beginning of \S\,\ref{WW60} can be given as follows.

Let $X$ be a geometrically connected, proper, and smooth curve of genus $g >1$ over $S := \mr{Spec}(R)$ for a flat $\mbZ/p^\N \mbZ$-algebra $R$ such that $R/p R$ is an algebraically closed field over $\mbF_p$.
Given a line bundle  $\mcL$  on $X$,
we 
denote by
\begin{align} \label{eeQQ571}
\mr{Diff}_{2, \mcL}^\mr{full}
\end{align}
the set of $2$nd order linear differential operators (over $S$) on $\mcL$ with unit principal symbol and having a full set of root functions (cf. Definitions \ref{Def898} and \ref{D01fgh}).

\SSP
\bt \label{Theorem895}
\begin{itemize}
\item[(i)]
Suppose that  $\mr{obs}(\mcL^{\otimes 2} \otimes \Omega) \neq 0$ (cf.  \eqref{eeQQ468} for the definition of $\mr{obs}(-)$).
Then, we have $\mr{Diff}_{2, \mcL}^\mr{full} = \emptyset$.
\item[(ii)]
Suppose that 
$\mr{obs}(\mcL^{\otimes 2} \otimes \Omega) = 0$ and 
the mod $p$ reduction $X_0$ of $X$ is sufficiently general in $\mcM_{g, 0}$.
Then,  the set
$\mr{Diff}_{2, \mcL}^\mr{full}$ is finite and its 
 cardinality $\sharp (\mr{Diff}_{2, \mcL}^\mr{full})$ satisfies 
\begin{align}
\sharp (\mr{Diff}_{2, \mcL}^\mr{full}) 
  =  p^{g\N} \cdot \sharp (\mr{Ed}_{p, \N, \mbG})  = p^{g\N} \cdot H_{\N, \mbG} (p)
\end{align}
for any trivalent clutching data $\mbG$ of type $(g, 0)$,
where $H_{\N, \mbG} (t)$ denotes the quasi-polynomial resulting from Theorem \ref{Thm5}.
In particular,  $\sharp (\mr{Diff}_{2, \mcL}^\mr{full})$ may be expressed as a rational  quasi-polynomial in $p$  of degree $(4g-3) \cdot \N$. 
\end{itemize}
\et
\begin{proof}
For a $2^{(1)}$-theta characteristic $\vartheta$ of $X/S$, we 
denote by $\mr{Diff}_{\clubsuit, \vartheta}^\mr{full}$ the set of $(2, \vartheta)$-projective connections on $X/S$ having a full set of root functions (cf. Definitions \ref{WW288}, (ii), and \ref{D01fgh}).
Then, the  set $\mr{Diff}_{2, \mcL}^\mr{full}$ decomposes as 
\begin{align} \label{eeQQ6672}
\mr{Diff}_{2, \mcL}^\mr{full} = \coprod_{\vartheta \in W} \mr{Diff}^\mr{full}_{\clubsuit, \vartheta},
\end{align}
where 
$W$ denotes   the set of dormant $2^{(1)}$-theta characteristics $\vartheta := (\varTheta, \nabla_\vartheta)$ of $X/S$ with $\varTheta^\vee = \mcL$.

If $\mr{obs}(\mcL^{\otimes 2} \otimes \Omega) \neq 0$ (which implies that there is no $S$-connection on $\mcT \otimes (\mcL^\vee)^{\otimes 2}$), then $W$ must be empty.
Hence, in this case, the equality $\mr{Diff}_{2, \mcL}^\mr{full} = \emptyset$ holds.

On the other hand, suppose that $\mr{obs}(\mcL^{\otimes 2} \otimes \Omega) = 0$ and 
$X_0$ is sufficiently general, in particular, ordinary in the usual sense.
It follows from Proposition \ref{Prop7872} that $W$ is  in bijection with the set 
${^{\Diag}}\!\!W$
of dormant $2^{(\N)}$-theta characteristics  $\vartheta_0 := (\vartheta_0, \nabla_{\vartheta_0})$ of $X_0/S_0$ with $\varTheta_0^\vee = \mcL_0$.
Such an $\mcD_0^{(\N -1)}$-module structure $\nabla_{\vartheta_0}$ has vanishing $p^{\N}$-curvature, so the elements in ${^{\Diag}}\!\!W$ correspond bijectively to the set of isomorphism classes of line bundles $\mcN$ on $X_0^{(\N)}$ with $F^{(\N)*}_{X_0/S_0}(\mcN)\cong \mcL_0$ (cf. ~\cite[Corollary 3.2.4]{LSQ}).
Since the $\N$-th Verschiebung map $\mr{Pic}_{X^{(\N)}_0/S_0}^0 \rightarrow \mr{Pic}^0_{X_0/S_0}$ for  the Jacobian   of the relative curve $X_0/S_0$ is \'{e}tale and of degree $p^{g \N}$, the equalities  $\sharp ({^{\Diag}}\!\!W) = \sharp (W) = p^{g \N}$ hold.
It follows that
\begin{align}
\sharp (\mr{Diff}_{2, \mcL}^\mr{full}) = \sum_{\vartheta \in W} \sharp (\mr{Diff}^\mr{full}_{\clubsuit, \vartheta}) = p^{g \N} \cdot 
\sharp (\mr{Op}_{1, X}^{^\mr{Zzz...}})
\end{align}
(cf. \eqref{e209} for the definition of
 $\mr{Op}_{1, X}^{^\mr{Zzz...}}$),
where the first equality follows from \eqref{eeQQ6672} and the second equality follows from the sequence of isomorphisms \eqref{GGGjjG}.
Thus, the assertion follows from Corollary \ref{C998} and Theorem \ref{Thm5}.
\end{proof}

\LSP
\subsection{Conjectural relationship with the generalized Vershiebung maps} \label{SS101}

In this final subsection, we would like to formulate a conjecture concerning 
the relationship between the values $\mr{deg}(\Pi_{n, \N, g, 0, \mbF_p})$ and the generic degrees of the so-called ``{\it generalized Verschiebung}" rational maps, based on some intuitive observations.

Let $n$ be a positive integer with $n < p$,
$k$ an algebraically closed field   of characteristic $p$, and  $X$  a connected proper smooth  curve over $k$ of genus $g >1$.
For each integer $\N'$ with $0 \leq \N' \leq \N$, we  denote by $\mr{SU}^2_{X^{(\N')}}$ the moduli space of rank $n$ stable bundles with trivial determinant on $X^{(\N')}$ (where $X^{(0)} := X$).
Then, pulling-back  stable bundles on $X^{(\N')}$ ($\N' \geq 1$) via
the  relative  Frobenius morphism $F_{X^{(\N' -1)}/k} : X^{(\N'-1)} \migi X^{(\N' )}$ induces  a  rational   map
\begin{align} \label{e1}  
\mr{Ver}_{X^{(\N'-1)}}^n : \mr{SU}_{X^{(\N')}}^n \dashrightarrow \mr{SU}_{X^{(\N' -1)}}^n.
\end{align}
This is called  the {\bf generalized Verschiebung map} and 
   regarded as a higher-rank variant of the Verschiebung maps between Jacobians.

The geometry  of the rational map $\mr{Ver}_{X^{(\N')}}^n$, i.e., 
the dynamics of stable bundles with respect to Frobenius pull-back,  has been investigated 
for a long time.
In particular, the case of $(n, g)  =(2, 2)$ has already been investigated considerably.
We know that, 
if $X$ has genus $2$,
then the compactification of $\mr{SU}_{X}^n$ by semistable bundles is canonically isomorphic to the $3$-dimensional projective space  and the boundary locus can be identified with the Kummer surface associated to $X$.
By this description, 
the rational map $\mr{Ver}_{X}^2$ can be expressed  by polynomials of degree $p$ (cf. ~\cite[Proposition A.2]{LaPa2}), which are explicitly described in the cases $p=2$ (cf. ~\cite{LaPa}) and $p=3$ (cf. ~\cite{LaPa2}).
Moreover, the scheme-theoretic base locus of $\mr{Ver}_{X}^2$ were computed in  ~\cite[Theorem 2]{LP}.
This result enables us to specify the generic degree $\mr{deg}(\mr{Ver}_{X}^{2})$ of
$\mr{Ver}_{X}^{2}$, i.e., we have
 \begin{align} \label{eQgght}
 \mr{deg}(\mr{Ver}_{X}^{2}) = \frac{p^3+2p}{3}
 \end{align}
  (cf. ~\cite[Theorem 1.3]{O3}, ~\cite[Corollary]{LP}). 

For a general $g$, we gave an upper bound for $\mr{deg}(\mr{Ver}_{X}^{2})$ (cf. ~\cite[Theorem A]{HoWa}) by applying Holla's formula computing the degrees of certain Quot schemes.
However, we have not yet reached a comprehensive understanding of 
this value
 because
not much seems to be known   for 
general $(p, n, g)$.

Let us make the following observation about computing it explicitly.
Denote by $\vartheta^{(\N)}$ the $n^{(\N)}$-theta characteristic of $X/k$ arising from a fixed theta characteristic in the manner of Example \ref{Ex20}.
Suppose that we are given a dormant $\mr{PGL}_n$-oper on $X$, which corresponds to a dormant $(\mr{GL}_n, \vartheta^{(1)})$-oper $\msF^\heartsuit := (\mcF, \nabla, \{ \mcF^j \}_{j=0}^n)$.
The sheaf of horizontal sections $\mcS ol (\nabla) \left(= \mr{Ker}(\nabla) \right)$
is verified to be stable and specifies a $k$-rational point $q$ of $\mr{SU}_{X^{(1)}}^n$.
As proved in ~\cite[Theorem H]{Wak8}, the number  of  
all such points $q$ obtained  in this way  (i.e., associated to some $\msF^\heartsuit$)
 is given  by
 the right-hand side of \eqref{eeQQ782} under some assumptions.
Also, it follows from the equivalence \eqref{YY6} that giving a dormant $(\mr{GL}^{(\N)}_n,  \vartheta^{(\N)})$-oper extending $\mcF^\heartsuit$ amounts to giving a rank $n$ vector bundle $\mcV$ on $X^{(\N)}$ with $\mr{det}(\mcV) \cong \mcO_{X^{(\N)}}$ and  $F^{(\N -1)*}_{X^{(1)}/k} (\mcV) \cong \mcS ol (\nabla)$, or equivalently, a vector bundle classified by a point in the fiber $\mr{Ver}_{(\N -1) \Rightarrow (1)}^{-1}(q)$ of  $q$  via the composite rational map
\begin{align} 
\mr{Ver}_{(\N -1) \Rightarrow (1)} :=\mr{Ver}^n_{X^{(1)}} \circ \mr{Ver}^n_{X^{(2)}} \circ \cdots \circ  \mr{Ver}^n_{X^{(\N -1)}}  : \mr{SU}_{X^{(\N)}}^n \dashrightarrow \mr{SU}_{X^{(1)}}^n.
\end{align}
Since $\mr{Ver}^n_{X^{(\N')}}$ (for $\N' \geq 1$) can be obtained as the base-change of $\mr{Ver}_X^n$, 
 the generic degree $\mr{deg}(\mr{Ver}_{(\N -1) \Rightarrow (1)})$ of $\mr{Ver}_{(\N -1) \Rightarrow (1)}$ coincides with $\mr{deg}(\mr{Ver}_X^n)^{\N -1} \left(= \prod_{\N' =1}^{\N-1}  \mr{deg}(\mr{Ver}_{X^{(\N')}}^n)\right)$.
On the other hand, if $q$ lies   in a general position (as an ideal setting), then the  total number of points in  $\mr{Ver}_{(\N -1) \Rightarrow (1)}^{-1}(q)$ would  amount to  the value  $\mr{deg}(\mr{Ver}_{(\N -1) \Rightarrow (1)})$.
With these points in mind,
   the degree  $\mr{deg}(\Pi_{n, \N, g, 0, \mbF_p})$ for a general $n$ might be expected
to satisfy the equality
\begin{align} \label{align22}
\mr{deg}(\Pi_{n, \N, g, 0, \mbF_p}) =  \mr{deg}(\mr{Ver}_X^n)^{\N -1} \cdot  \frac{p^{(n-1)(g-1)-1}}{n!} \cdot  \hspace{-5mm}
 \sum_{\genfrac{.}{.}{0pt}{}{(\zeta_1, \cdots, \zeta_n) \in \mbC^{\times n} }{ \zeta_i^p=1, \ \zeta_i \neq \zeta_j (i\neq j)}}
 \frac{(\prod_{i=1}^n\zeta_i)^{(n-1)(g-1)}}{\prod_{i\neq j}(\zeta_i -\zeta_j)^{g-1}}.
\end{align}

Even if not all the points $q$  as above
  lie in a general position, we expect that  the situation approximates the ideal setting   as  $\N$ goes to infinity.
Thus, our  conjecture is formulated as follows.

\SSP
\begin{conj} \label{Conj4}
Suppose that $X$ is  general in $\mcM_{g, 0}$ and $p$ is sufficiently large relative to $n$ and $g$, 
Then, the relationship between the generic degrees of $\mr{Ver}^n_X$ and $\Pi_{n, \N, g, 0, \mbF_p}$  can be described as the following formula:
\begin{align}
\mr{deg}(\mr{Ver}_X^n) = \lim_{\N \to \infty} \mr{deg}(\Pi_{n, \N, g, 0, \mbF_p})^{1/\N}.
\end{align}
More strongly, the equality \eqref{align22} holds.
\end{conj}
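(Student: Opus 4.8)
The plan is to reduce Conjecture~\ref{Conj4} to the stronger equality \eqref{align22}: once \eqref{align22} is known, write $\mr{deg}(\Pi_{n, \N, g, 0, \mbF_p}) = \mr{deg}(\mr{Ver}_X^n)^{\N -1} \cdot C_{n, g}$, where $C_{n, g}$ is the $\N$-independent factor appearing in \eqref{align22}, i.e. the number \eqref{eeQQ782} of dormant $\mr{PGL}_n$-opers of level $1$ on a general curve of genus $g$. Since $\mr{deg}(\mr{Ver}_X^n)$ and $C_{n, g}$ are positive integers, $\mr{deg}(\Pi_{n, \N, g, 0, \mbF_p})^{1/\N} = \mr{deg}(\mr{Ver}_X^n)^{(\N -1)/\N} \cdot C_{n, g}^{1/\N}$, and letting $\N \to \infty$ gives $\mr{deg}(\mr{Ver}_X^n)$. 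Thus everything rests on proving \eqref{align22}.

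For \eqref{align22} itself I would proceed in three steps. First, establish the higher-rank analogue of Theorem~\ref{c43} --- the generic \'{e}taleness of $\Pi_{n, \N, \rho, g, r}$ conjectured in Remark~\ref{WW1} --- so that $\mr{deg}(\Pi_{n, \N, g, 0, \mbF_p})$ genuinely counts the dormant $\mr{PGL}_n^{(\N)}$-opers on a general $X$, with multiplicity one. Second, fix the $n^{(\N)}$-theta characteristic $\vartheta^{(\N)}$ arising from a theta characteristic (Example~\ref{Ex20}) and use Theorem~\ref{P14} to replace dormant $\mr{PGL}_n^{(\N)}$-opers on $X$ by dormant $(\mr{GL}_n^{(\N)}, \vartheta^{(\N)})$-opers. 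Each such oper refines a dormant $(\mr{GL}_n, \vartheta^{(1)})$-oper $(\mcF_\varTheta, \nabla, \{ \mcF_\varTheta^j \}_j)$ together with a $\mcD^{(\N -1)}$-module structure of vanishing $p^\N$-curvature on $\mcF_\varTheta$ extending $\nabla$; by the equivalence of categories \eqref{YY6} the latter amounts to a vector bundle $\mcV$ on $X^{(\N)}$ equipped with an isomorphism $F_{X/k}^{(\N)*}(\mcV) \cong \mcF_\varTheta$ compatible with the tower of Cartier operators. Tracking determinants and checking that the oper filtration is recovered automatically, I would identify the set of such extensions of a fixed $(\mcF_\varTheta, \nabla)$ with the fiber of the iterated generalized Verschiebung $\mr{Ver}_{(\N -1) \Rightarrow (1)}$ over the point $q := [\mcS ol(\nabla)] \in \mr{SU}_{X^{(1)}}^n$, which is stable by the argument recalled just before the statement of this conjecture. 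Third --- and this is where I expect the real difficulty --- prove a transversality statement: for $p$ large relative to $n$ and $g$ and for $X$ general in $\mcM_{g, 0}$, every $q$ arising as $[\mcS ol(\nabla)]$ from a dormant $\mr{PGL}_n$-oper avoids the indeterminacy and ramification loci of $\mr{Ver}_{(\N -1) \Rightarrow (1)}$, so that $\mr{Ver}_{(\N -1) \Rightarrow (1)}^{-1}(q)$ is reduced of cardinality $\mr{deg}(\mr{Ver}_{(\N -1) \Rightarrow (1)}) = \mr{deg}(\mr{Ver}_X^n)^{\N -1}$. Summing over the $C_{n, g}$ possible points $q$ then yields \eqref{align22}.

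The main obstacle is thus the transversality of the dormant-oper locus with respect to the Frobenius-pull-back dynamics; this locus in $\mr{SU}_{X^{(1)}}^n$ is far from arbitrary, and its position relative to the base loci of the $\mr{Ver}_{X^{(\N')}}^n$'s is not controlled by any existing result. I expect the attack here to require either a deformation/semicontinuity argument lifting the relevant moduli to characteristic $0$ and comparing with Quot schemes over $\mbC$ --- in the spirit of the proof of \eqref{eeQQ782} in \cite{Wak8}, which relates dormant opers to Quot schemes via Holla's formula \cite{Hol}, and of the bound on $\mr{deg}(\mr{Ver}_X^n)$ in \cite{HoWa} --- or a direct analysis of the obstruction (a variant of $p$-curvature at each level) governing the fibers of $\mr{Ver}_{X^{(\N')}}^n$ along this locus. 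The needed higher-rank generic \'{e}taleness (Remark~\ref{WW1}) is itself still open, and would presumably be obtained along the way by the same lifting technique.

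As a consistency check supporting \eqref{align22}, take $n = 2$ and $g = 2$. Then \eqref{align22} predicts $\mr{deg}(\Pi_{2, \N, 2, 0, \mbF_p}) = \left(\tfrac{p^3 + 2p}{3}\right)^{\N -1} \cdot \tfrac{p^3 - p}{24}$, using $\mr{deg}(\mr{Ver}_X^2) = \tfrac{p^3 + 2p}{3}$ from \eqref{eQgght} and $C_{2, 2} = \tfrac{p^3 - p}{24}$ from \eqref{eeQQ782}. This equals $\tfrac{(p^2 - 1)\, p^\N (p^2 + 2)^{\N -1}}{8 \cdot 3^\N}$, which is precisely the value of $\mr{deg}(\Pi_{2, \N, 2, 0, \mbF_p})$ computed combinatorially in Corollary~\ref{C19}. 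So the conjecture is at least compatible with the rank-$2$ theory developed in this paper, and the proposal above is consistent with how that theory was built.
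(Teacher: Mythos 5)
You should first be clear that the statement you are addressing is stated in the paper as a \emph{conjecture}: the paper offers no proof of it, only the heuristic derivation of \eqref{align22} given in the paragraphs immediately preceding the statement, together with a verification in the single case $(n,g)=(2,2)$ recorded in the corollary that follows it. Your proposal reproduces that heuristic faithfully. The reduction of the limit formula to \eqref{align22} is correct and immediate, and your identification (via Theorem \ref{P14} and the equivalence \eqref{YY6}) of the set of dormant $(\mr{GL}_n^{(\N)}, \vartheta^{(\N)})$-opers extending a fixed dormant $(\mr{GL}_n, \vartheta^{(1)})$-oper with the fiber of $\mr{Ver}_{(\N-1)\Rightarrow (1)}$ over $q := [\mcS ol (\nabla)]$ is exactly the observation the paper uses to motivate \eqref{align22}. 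Your consistency check for $n=2$, $g=2$ is likewise the paper's own: Corollary \ref{C19} together with \eqref{fffrr} and \eqref{eQgght} is precisely how the paper deduces the one case of the conjecture it actually establishes.

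The genuine gap is the one you yourself flag, and it is not closed anywhere in the paper: (a) the generic \'{e}taleness of $\Pi_{n, \N, \rho, g, r}$ for general $n$ (Remark \ref{WW1}) is open, so $\mr{deg}(\Pi_{n,\N,g,0,\mbF_p})$ is not yet known to count dormant $\mr{PGL}_n^{(\N)}$-opers on a general curve with multiplicity one; and (b) even granting (a), there is no control on the position of the finitely many points $q = [\mcS ol(\nabla)]$ relative to the indeterminacy and ramification loci of the iterated Verschiebung, so the equality $\sharp \left(\mr{Ver}_{(\N-1)\Rightarrow(1)}^{-1}(q)\right) = \mr{deg}(\mr{Ver}_X^n)^{\N-1}$ is unjustified --- this is exactly why the paper phrases the statement as an expectation that the situation ``approximates the ideal setting as $\N$ goes to infinity'' rather than as a theorem. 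Your proposed routes of attack (lifting to characteristic $0$ and comparison with Quot schemes in the spirit of the proof of \eqref{eeQQ782}, or a level-by-level obstruction analysis along the dormant locus) are plausible directions, but as written the proposal is a restatement of the conjecture's supporting heuristic plus an accurate list of the open ingredients, not a proof; no step of it goes beyond what the paper itself already records as motivation.
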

\SSP

After this conjecture has been affirmatively solved, the value $\mr{deg}(\mr{Ver}_X^n)$ may be explicitly computed, via Proposition \ref{LLL001},  by counting  balanced $(p, \N)$-edge numberings on a trivalent graph.
Note that the case of    $(n, g) = (2, 2)$ follows directly from     Corollary \ref{C19},  \eqref{fffrr}, and  \eqref{eQgght}.
That is to say, we have the following assertion.

\SSP
\bco
Conjecture  \ref{Conj4} is true when $n=2$ and $g=2$.
\eco

\LSP
\subsection*{Acknowledgements} 
This manuscript contains  the result of the author's last work among those he worked on during his time as an assistant professor at the Tokyo Institute of Technology.
The author has spent a very meaningful time at this university, learning many things.
This manuscript would not have existed without the staff and faculty members the author  met there,  who kindly supported and encouraged him in various ways.
 The author  would like to take this opportunity to express his gratitude to them.

Finally, the author was partially  supported by 
 Grant-in-Aid for Scientific Research (KAKENHI No.\,21K13770).

\vspace{10mm}

\end{document}